\let\csname equation*\endcsname\relax
\let\csname endequation*\endcsname\relax
\numberwithin{equation}{section}
\newtheorem{thm}{Theorem}[section]
\newtheorem{prop}[thm]{Proposition}
\newtheorem{conj}[thm]{Conjecture}
\newtheorem{cor}[thm]{Corollary}
\newtheorem{lem}[thm]{Lemma}
\theoremstyle{definition}
\newtheorem{rem}[thm]{Remark}
\newtheorem{ex}[thm]{Example}
\newtheorem{defn}[thm]{Definition}
\newtheorem{ass}[thm]{Assumption}
\newtheorem{cond}[thm]{Condition}
\def\bbP{\mathbb{P}}
\def\bbZ{\mathbb{Z}}
\def\bbQ{\mathbb{Q}}
\def\bbC{\mathbb{C}}
\def\bbV{\mathbb{V}}
\def\bfS{\mathbf{S}}
\def\bfA{\mathbf{A}}
\def\bfM{\mathbf{M}}
\def\mathcalP{\mathcal{P}}
\def\ve{\varepsilon}
\newcommand{\notch}{\scriptstyle\bowtie} 
\begin{document}
\bibliographystyle{amsalpha}

\title[Exact WKB analysis and cluster algebras]{Exact WKB analysis
 and cluster algebras\\
\it \small To the memory of Kentaro Nagao}
 

\author{Kohei Iwaki}
\address{\noindent Research Institute for Mathematical Sciences, Kyoto University, 
Kyoto,
657-8501, Japan}
\ead{iwaki@kurims.kyoto-u.ac.jp}

\author{Tomoki Nakanishi}
\address{\noindent Graduate School of Mathematics, Nagoya University, 
Chikusa-ku, Nagoya,
464-8602, Japan}
\ead{nakanisi@math.nagoya-u.ac.jp}


\begin{abstract}
We develop the mutation theory
in the exact WKB analysis using the framework of cluster algebras.
Under a continuous deformation of the potential of the
Schr\"odinger equation on a compact Riemann surface,
 the Stokes graph may change the topology.
 We call this phenomenon the mutation of  Stokes graphs.
Along the mutation of  Stokes graphs,
 the Voros symbols, which are
monodromy data of the equation,
also mutate due to the Stokes phenomenon.
We show that the Voros symbols mutate as
variables of a cluster algebra with surface realization.
As an application, we obtain the identities of  Stokes automorphisms
associated with periods of cluster algebras.
The paper also includes an extensive introduction of
the exact WKB analysis and the surface realization of cluster algebras
for nonexperts.
\end{abstract}

\ams{13F60,34M60}


\tableofcontents

\section{Introduction}
\label{sec:introduction}

In this paper we start to develop the mutation theory
in the {\em exact WKB analysis\/} using the framework of {\em cluster algebras}.

The  {\em WKB method\/} was originally initiated by
Wentzel, Kramers, and Brillouin in 1926
as
 the  method
for
obtaining approximate solutions of the {\em Schr\"odinger equation\/} in the semiclassical limit in  quantum mechanics.
Voros reformulated the theory based on the  Borel resummation method  \cite{Voros83},
and this new formulation has been further developed by \cite{Aoki91}, \cite{Delabaere93}, etc.,
and it is called the {\em exact} WKB analysis.
See the monograph \cite{Kawai05} for the introduction of the subject.
On the other hand, cluster algebras were introduced by Fomin and Zelevinsky around 2000
\cite{Fomin02} to study the coordinate rings of certain algebraic varieties
and subsequently developed in a series of the papers \cite{Fomin03a,Berenstein05,Fomin07};
it was also developed independently by Fock and Goncharov \cite{Fock03b,Fock03} from the viewpoint of
  higher Teichm\"uller theory.
 It turned out that cluster algebras are ``unexpectedly" related with several branches of mathematics beyond the original scope,
 for example, representation theories of quivers and quantum groups, 
 triangulated categories,  hyperbolic geometry,
 integrable systems, $T$-systems and $Y$-systems,
 the classical and quantum dilogarithms, Donaldson-Thomas theory, 
and so on.
 See the excellent surveys \cite{Keller08,Keller11} for the introduction of the subject.

Let us quickly explain the intrinsic reason why the above seemingly
unrelated two subjects are closely related.
Let us consider the Schr\"odinger equation on a compact
Riemann surface $\Sigma$
\begin{align}
\label{eq:Schro1}
\left(\frac{d^2}{dz^2}-\eta^2 Q(z,\eta)\right)
\psi(z,\eta)
=0,
\end{align}
where $z$ is a local complex coordinate of $\Sigma$,
$\eta=\hbar^{-1}$ is a large parameter,
and the potential $Q(z,\eta)$ is a function of both $z$ and $\eta$.
The principal part  $Q_0(z)$ of $Q(z,\eta)$  in the power series expansion 
in $\eta^{-1}$ defines a meromorphic {\em quadratic differential\/} $\phi$ on $\Sigma$.
The trajectories of the quadratic differential $\phi$ determine a graph $G$ 
on $\Sigma$ called
the {\em Stokes graph\/} of the equation \eqref{eq:Schro1},
 which plays the central role
in the exact WKB analysis.
On the other hand,  the Stokes graph $G$ can be  translated into a {\em triangulation\/} $T$ of the surface 
$\Sigma$ (with holes and punctures) \cite{Kawai05,Gaiotto09,Bridgeland13}.
Due to the works by Gekhtman, Shapiro, and Vainshtein
\cite{Gekhtman05},
Chekhov, Fock, and Goncharov (\cite{Fock03b}, \cite{Fock05} for a review),
and Fomin, Shapiro, and Thurston \cite{Fomin08,Fomin08b},
 the triangulation $T$ is further identified with a {\em seed\/}  $(B,x,y)$ of a certain cluster algebra,
which is the main object in  cluster algebra theory.

Our main purpose is to develop 
the {\em mutation theory\/} in the exact WKB analysis.
Under a continuous deformation of the potential $Q(z,\eta)$,
 the Stokes graph may change its topology.
 We call this phenomenon the 
 {\em mutation\/} of  Stokes graphs,
since they correspond to the mutation of triangulations through
 the above correspondence.
Along the mutation of  Stokes graphs,
the monodromy data of the equation \eqref{eq:Schro1}
called the {\em Voros symbols},
also mutate \cite{Delabaere93,Delabaere99}.
It turns out that  this  precisely coincides with
the mutation of seeds of the corresponding cluster algebra.
In short, this is the main result of the paper.

Before going into further detail of the results,
let us mention previous works
closely related to this work.
Our results have remarkable overlaps and resemblance with the wall-crossing
formula of the {\em Donaldson-Thomas invariants}
and {\em quantum dilogarithm identities}, since they are also related with (quantum) cluster algebras \cite{Fock07b,Kontsevich08,Kontsevich09,Nagao10,Keller11,Nagao11b,Kashaev11}.
To understand the {\em BPS spectrum\/} of the $d=4$, $\mathcal{N}=2$ field theories,
Gaiotto, Neitzke, and Moore \cite{Gaiotto09} studied the WKB approximation
for the flat connections of the {\em Hitchin system\/} on a Riemann surface, and
its mutation theory.
The Stokes graph naturally appeared also in their study,
and, in particular, they clarified that
there are two types of  ``elementary mutations'' of  Stokes graphs,
namely, {\em flips\/} and {\em pops\/}.
They also identify certain quantities for the Hitchin system
as the $y$-variables (the ``Fock-Goncharov coordinate'' therein)
 in cluster algebras.
 See  \cite{Xie12,Cirafici13}, for example, for a recent development.
 The mutation aspect of  Stokes graphs was further developed by Bridgeland and Smith \cite{Bridgeland13};
 their aim was
the construction of the {\em stability condition\/} in the 3-Calabi-Yau categories associated with
surface triangulations
based on the work of Labardini-Fragoso \cite{Labardini12}.
The connection between such 3-Calabi-Yau categories and cluster algebras
were studied by \cite{Kontsevich08,Nagao10}.
In this paper
we will rely on the result of \cite{Bridgeland13} for the mutation property
of  Stokes graphs.
We are also motivated by
Kontsevich and Soibelman's
observation that ``There is a striking similarity between our [their] wall-crossing
formula and identities for the Stokes automorphisms in the theory of WKB asymptotics...'' \cite[Section 7.5]{Kontsevich09}.
(See \eqref{eq:pentagon1} and \eqref{eq:dilog1} below.)
Our result  provides an understanding of this similarity at the level of cluster algebras.
We summarize the relation of previous and this works
schematically in Figure \ref{fig:outline}.

\begin{figure}
\begin{center}
\begin{pspicture}(0,0)(14.5,4.2)
\psset{linewidth=0.5pt}
\psline{->}(2.5,2.8)(3.5,2.4)
\psline{->}(2.5,1.2)(3.5,1.6)
\psline{->}(6.5,2)(7.5,2)
\psline{->}(10.5,2.4)(11.5,2.8)
\psline{->}(10.5,1.6)(11.5,1.2)
\psline{->}(13,2.5)(13,1.5)
\psline[linestyle=dashed]{->}(2.5,0.5)(11.5,0.5)
\rput[c]{0}(1,3.5){\makebox(2.3,1.2){
\begin{tabular}{c}
 Hitchin \\system \\
 \end{tabular}}}
 \rput[c]{0}(1,0.5){\makebox(2.3,1.2){
\begin{tabular}{c}
Schr\"odinger \\
equation\\
 \end{tabular}}}
\rput[c]{0}(5,2){\makebox(2.3,1.2){
\begin{tabular}{c}quadratic\\
 differentials \\
 \end{tabular}}}
\rput[c]{0}(9,2){\makebox(2.3,1.2){
\begin{tabular}{c}triangulations\\
 of surfaces \\
 \end{tabular}}}
\rput[c]{0}(13,3.5){\makebox(2.3,1.2){
\begin{tabular}{c}3-CY\\
 categories \\
 \end{tabular}}}
\rput[c]{0}(13,0.5){\makebox(2.3,1.2){
\begin{tabular}{c}cluster\\
 algebras \\
 \end{tabular}}}
 \rput[l](2.5,3.1){\small \cite{Gaiotto09}}
 \rput[l](2.5,0.9){\small \cite{Kawai05}}
 \rput[c](7,3.1){\small \cite{Kawai05,Gaiotto09,Bridgeland13}}
\rput[r](11.5,3.1){\small \cite{Labardini12,Bridgeland13}}
\rput[r](11.5,0.9){\small \cite{Gekhtman05,Fock03b,Fomin08}}
\rput[l](13.2,1.7){\small \cite{Nagao10}}
\rput[l](13.2,2.3){\small \cite{Kontsevich08}}
\rput[c](7,0.2){\small this work}
 \end{pspicture}
\end{center}
\caption{Outlines of previous and this works}
\label{fig:outline}
\end{figure}
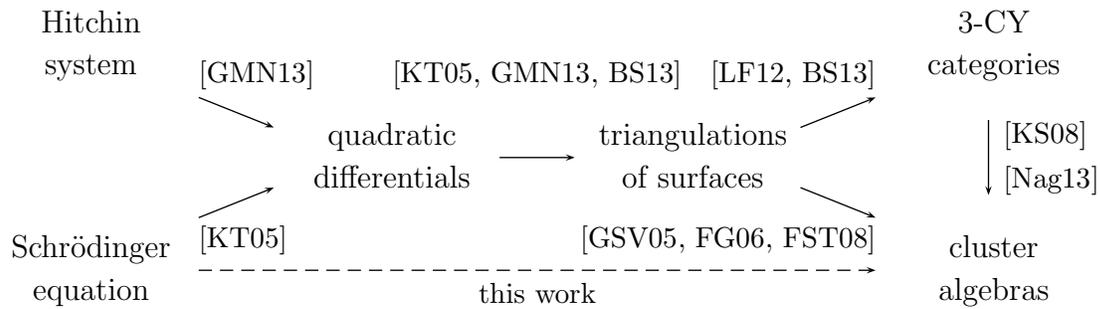

For those who are familiar with the subject,
let us give a  little more extended summary of our results
and also present some keywords without definitions.
The readers can safely skip this summary,
and come back later when the materials are discussed.
In any case Figure \ref{fig:outline} and
Table \ref{tab:dic} should be a useful guide to read through the paper.

{\em (a). Signed flips and signed pops.}
The mutation property of Stokes graphs itself is purely geometrical.
Here, we consider two kinds of elementary mutations, flips and pops.
To be precise, there are {\em two\/} ways to do flips and pops, namely,
to do them clockwise and anticlockwise.
We call them {\em signed flips} and {\em signed pops}.
Accordingly, we   need to extend the usual notions of tagged triangulations 
(or equivalently,  signed triangulations) and  seeds
to  what we call {\em Stokes triangulations} and {\em extended seeds}.
Then, we  define the signed flips (signed mutations for seeds) and signed pops
for Stokes triangulations and extended seeds.

{\em (b). Local result: Mutations of  simple paths, simple cycles, and Voros symbols.}
Let $\hat{\Sigma}$ be the covering of the surface $\Sigma$ to make
the square root of the quadratic differential $\phi$ single valued.
We introduce
the {\em simple paths} and the {\em simple cycles},
which are certain elements of the relative homology and
the homology of $\hat{\Sigma}$.
Under the mutation of Stokes graphs, they transform (= mutate)
as {\em monomial $x$-variables\/} and {\em monomial $y$-variables},
which are ingredients in our extended seeds (Proposition \ref{prop:cycle1}).
We consider the {\em Voros symbols\/}  associated with
the simple paths and the simple cycles.
As formal series in the parameter $\eta^{-1}$,
they mutate according to the mutations of the simple paths and the simple cycles.
In addition, by the {\em Borel resummation\/} the Voros symbols 
suffer nontrivial jumps along flips and pops of Stokes graphs
due to the Stokes phenomenon.
The jump formula  was known for flips (Theorem \ref{thm:DDP-analytic}) earlier by \cite{Delabaere93,Delabaere99},
and we call it the {\em Delabaere-Dillinger-Pham (DDP) formula}.
An analogous formula for pops  (Theorem \ref{thm:loop-type-degeneration-analytic}) are recently given by
\cite{Aoki14} in conjunction with this work.
Combining these geometric and analytic results, we conclude that the Voros symbols
for the simple paths mutate as {\em $x$-variables\/} in our extended seeds,
while the Voros symbols
for the simple cycles mutate as {\em $\hat{y}$-variables\/} therein (Theorem 
\ref{thm:localmutation}).
This is our first main result.
The correspondence between the data in the exact WKB analysis and
cluster algebras are summarized in Table
\ref{tab:dic}.
We note that much of our efforts are spent to work on {\em pops}.
In particular, if we concentrate on flips,
the setting becomes much lighter.

\begin{table}
\begin{center}
\begin{tabular}{ll}
\hline
exact WKB analysis & cluster algebra\\
\hline
signed flip  of Stokes graph & signed mutation\\
signed pop  of Stokes graph & signed pop (local rescaling)\\
simple path & monomial $x$-variable\\
simple cycle & monomial $y$-variable\\
Voros symbol for simple path &  $x$-variable in extended seed\\
Voros symbol for simple cycle & $\hat{y}$-variable in extended seed\\
\hline
\end{tabular}
\end{center}
\caption{Dictionary between exact WKB analysis and cluster algebras.}
\label{tab:dic}
\end{table}

{\em (c). Global result: Identities of  Stokes automorphisms.}
According to \cite{Delabaere93}, the mutation formula of
the Voros coefficients in (b) can be rephrased
in terms of the {\em Stokes automorphisms\/} acting on the field generated by the Voros symbols.
It is known that cluster algebras have a rich periodicity property.
Thanks to our result (b),
a periodicity in cluster algebras implies an identity of
Stokes automorphisms (Theorem \ref{thm:sid1}).
As the simplest example, if we apply it for the celebrated periodicity of flips of
triangulations of a pentagon with period 5 (Figure \ref{fig:pentagon1}),
we have the identity in \cite{Delabaere93}:
\begin{align}
\label{eq:pentagon1}
\mathfrak{S}_{\gamma_2}
\mathfrak{S}_{\gamma_1}
=
\mathfrak{S}_{\gamma_1}
\mathfrak{S}_{\gamma_1 + \gamma_2}
\mathfrak{S}_{\gamma_2},
\end{align}
where $\mathfrak{S}_{\gamma}$ is the Stokes automorphism for a cycle $\gamma$.
Our identities give a vast generalization of the identity \eqref{eq:pentagon1}.
This is our second main result.
We note that a  {\em quantum dilogarithm identity\/} is also associated with the same periodicity of the
cluster algebra \cite{Keller11,Nagao11b,Kashaev11}.
For example, 
the quantum dilogarithm identity associated with the
same period of a pentagon gives
the celebrated {\em pentagon identity\/} by
  \cite{Faddeev94},
and it looks as follows:
\begin{align}
\label{eq:dilog1}
\Psi_q(U_2)\Psi_q(U_1)
=
\Psi_q(U_1)
\Psi_q(q^{-1}U_2U_1)
\Psi_q(U_2),
\end{align}
where $\Psi_q(x)=\prod_{k=0}^{\infty}
(1+xq^{2k+1})$ is the quantum dilogarithm,
and $U_2U_1=q^2 U_1U_2$.
This is also interpreted as the simplest example of the wall-crossing formula
of the Donaldson-Thomas invariant in  \cite{Kontsevich08,Kontsevich09}.
The  similarity
between the identities 
\eqref{eq:pentagon1} and 
\eqref{eq:dilog1}
is the one observed by
\cite{Kontsevich09}.
Our derivation of \eqref{eq:pentagon1}
based on a periodicity of a cluster algebra
naturally explains the similarity.
It is desirable to understand
the similarity at a deeper level,
and we leave it as a future problem.

Let us explain the organization of the paper.
We anticipate that  most of the readers are  
 unfamiliar with at least one of two main subjects,
the exact WKB analysis or cluster algebras and their
surface realization.
So we provide an extensive introduction of both subjects
through Sections 2--5,
while setting up the formulation we will use.
In Section 2 we review the theory of  the exact WKB analysis,
mainly following \cite{Kawai05}.
Furthermore, we extend the method to a general compact
Riemann surface.
In Section 3 we introduce an important notion 
in  the exact WKB analysis, 
called the Voros symbols. We discuss the jump property 
of the Voros symbols caused by the Stokes phenomenon 
relevant to the appearance of saddle trajectories 
in the Stokes graph.
In Section 4 we introduce the basic notions and properties in cluster algebras
which we will use later.
In Section 5 the surface realization of cluster algebras by
 \cite{Gekhtman05,Fock03b,Fomin08,Fomin08b} is reviewed.
 Since careful treatment of mutations involving a self-folded triangle
 is crucial throughout the paper, we explain in detail how there are related
 to tagged triangulations and signed triangulations.
 The extended seeds and their signed mutations and pops
are also defined.

Then, we start 
 to integrate these two methods from Section 6.
In Section 6 we study the mutation of Stokes graphs,
which is purely geometric.
We  introduce Stokes triangulations, and their signed flips and pops.
They effectively control the
mutation of Stokes graphs.
We introduce the simple paths and the simple cycles of a Stokes graph,
and give their mutation formulas.
In Section 7 we combine the analytic and geometric results in Sections 3 and 6
and show that the Voros symbols for the simple paths and the simple cycles
mutate exactly as $x$-variables and $\hat{y}$-variables in our extended seeds.
In Section 8 by combining all results in the previous sections
we derive the identities of  Stokes automorphisms associated with
periods of seeds in cluster algebras.

\medskip
{\em Acknowledgements.}
We are grateful to Tatsuya Koike and Reinhard Sch\"afke to sharing their result
before publication.
We thank Takashi Aoki, Yuuki Hirako, Kazuo Hosomichi,
Akishi Ikeda, Takahiro Kawai,
Alastair King,
Hirokazu Maruhashi,
Andrew Neitzke, Michael Shapiro,
  Ivan Smith, Toshinori Takahashi, Yoshitsugu Takei,
  and Dylan Thurston for
useful discussions and communications.
The first author is supported by Research Fellowships of Japan Society
for the Promotion for Young Scientists.
We dedicate the paper to the memory of Kentaro Nagao,
who inspired us by his beautiful papers, talks, and private
conversations at various occasions.

\section{Exact WKB analysis}
\label{sec:exact}

In this section we review the theory of 
the exact WKB analysis (\cite{Voros83}). 
Most of our notations are consistent 
with those of \cite{Kawai05}. 
Usually, in the exact WKB analysis 
the Schr{\"o}dinger equation is studied on 
the Riemann sphere ${\mathbb P}^1$. 
Here, we extend the method to general 
compact Riemann surfaces.

\subsection{Schr{\"o}dinger equations  
and associated quadratic differentials}
Let $\Sigma$ be a compact Riemann surface,
by which we mean a compact, connected, and
oriented Riemann surface throughout the paper.

Consider a differential equation 
${\mathcal L} : L \psi = 0$ for a function 
$\psi$ on $\Sigma$. Here $L = L(z,d/dz,\eta)$ 
is a second order linear differential operator 
with meromorphic coefficients and 
containing a large parameter $\eta$. 
We usually regard $\eta$ as a real (positive) large parameter, 
but sometimes regard it as a complex large parameter. 
Assume that, in a local complex coordinate $z$ of $\Sigma$, 
${\mathcal L}$ is represented as follows:  
\begin{equation} \label{eq:Sch}
{\mathcal L} : 
L\varphi =\left( \frac{d^{2}}{dz^{2}} - 
\eta^{2}Q(z,\eta) \right) \psi(z,\eta) = 0,
\end{equation}
where 
\begin{equation}
Q(z,\eta)=Q_{0}(z)+\eta^{-1}Q_{1}(z) + \eta^{-2} Q_{2}(z)+ \cdots
\end{equation}
is a polynomial in $\eta^{-1}$ 
(i.e., $Q_n(z)=0$ for $n \gg 1$)
whose coefficients $\{Q_n(z) \}_{n \ge 0}$
are meromorphic functions on $\Sigma$. 
We remark that any ordinary differential equation of the form
\begin{equation} \label{eq:Sch-on-Sigma}
\left( \frac{d^{2}}{dz^{2}} + \eta p(z,\eta) \frac{d}{dz}
+ \eta^{2}q(z,\eta) \right) \varphi(z,\eta) = 0
\end{equation}
can be reduced to the form \eqref{eq:Sch} by a certain gauge 
transformation.
The equation \eqref{eq:Sch} is nothing but a one-dimensional 
stationary {\em Schr\"{o}dinger equation}, where $\eta^{-1}$ 
corresponds to the Planck constant $\hbar$, with the potential 
function $Q(z,\eta)$ whose principal term is given by $Q_0(z)$. 
We will impose some assumptions on the potential $Q(z,\eta)$
in subsequent subsections. 

We call \eqref{eq:Sch} {\em the Schr\"{o}dinger form
(of ${\mathcal L}$) in the local coordinate $z$}, since 
the potential function $Q(z,\eta)$ depends on the choice of 
the local coordinate. 
If we take a coordinate transformation $z=z(\tilde{z})$ 
and a gauge transformation, the Schr\"{o}dinger form 
in the local coordinate $\tilde{z}$ becomes
\begin{equation} \label{eq:Sch-w}
\left( \frac{d^{2}}{d\tilde{z}^{2}} - 
\eta^{2} \tilde{Q}(\tilde{z},\eta) \right) 
\tilde{\psi}(\tilde{z},\eta)= 0, \quad 
\tilde{\psi}(\tilde{z},\eta) 
= \psi\bigl(z(\tilde{z}),\eta\bigr)
\left(\frac{dz(\tilde{z})}{d\tilde{z}}\right)^{-1/2},
\end{equation}
\begin{equation} \label{eq:transformation-of-potentail}
\tilde{Q}(\tilde{z},\eta) = 
Q\bigl(z(\tilde{z}),\eta\bigr)
\left(\frac{dz(\tilde{z})}{d\tilde{z}}\right)^{2}-
\frac{1}{2}\eta^{-2}\{ z(\tilde{z}); \tilde{z}\},
\end{equation}
where $\{ z(\tilde{z}); \tilde{z}\}$ is 
the Schwarzian derivative
\[
\{ z(\tilde{z}); \tilde{z}\} = 
\left({\frac{d^{3}z(\tilde{z})}{d\tilde{z}^{3}}}\biggl/
{\frac{dz(\tilde{z})}{d\tilde{z}}}\right)-\frac{3}{2}
\left({\displaystyle 
\frac{d^{2}z(\tilde{z})}{d\tilde{z}^{2}}}\biggl/
{\displaystyle \frac{dz(\tilde{z})}{d\tilde{z}}}\right)^{2}.
\]
In particular, the transformation law 
\begin{equation} \label{eq:transformation-Q0}
\tilde{Q}_{0}(\tilde{z}) = Q_{0}\bigl(z(\tilde{z})\bigr)
\left(\frac{dz}{d\tilde{z}}\right)^{2}
\end{equation}
of the principal terms of the potential functions of 
the Schr\"{o}dinger 
form coincides with that of a 
{\em meromorphic quadratic differential}, 
that is, a meromorphic section of the line bundle 
$\omega_{\Sigma}^{\otimes2}$. Here $\omega_{\Sigma}$ 
is the holomorphic 
cotangent bundle on $\Sigma$. 
\begin{defn}
{\em The quadratic differential associated with ${\mathcal L}$}
is the meromorphic quadratic differential on $\Sigma$ 
which is locally given by
\begin{equation}
\phi = Q_{0}(z) dz^{\otimes2}
\end{equation}
in a local coordinate $z$. Here $Q_{0}(z)$ is 
the principal term of the potential function $Q(z,\eta)$ 
of the Schr\"{o}dinger form of ${\mathcal L}$ 
in the local coordinate $z$. 
\end{defn}

Geometry of zeros, poles, and trajectories of $\phi$ 
are important in  the exact WKB analysis. 
They relate to properties of solutions 
of ${\mathcal L}$ deeply. 


\subsection{Turning points and singular points}

The poles of the associated quadratic differential $\phi$ 
are singular points of the differential equation 
\eqref{eq:Sch}. In the exact WKB analysis 
the zeros of $\phi$ are also important.

\begin{defn}
A zero (resp., simple zero) of $\phi$ is called a {\it turning point} 
(resp., {\em simple turning point}) of ${\mathcal L}$.
\end{defn}

Let $P_0$ and $P_{\infty}$ be the set of the zeros and the poles of 
$\phi$, respectively, and set $P=P_{0} \cup P_{\infty}$. 
In this paper we always impose the following assumption.
\begin{ass} \label{assumption:zeros and poles} 
Let $\phi$ be the quadratic differential associated with 
${\mathcal L}$. We assume 
\begin{itemize} 
\item $\phi$ has at least one zero, and at least one pole,
\item all zeros of $\phi$ are simple,
\item the order of any pole of $\phi$ 
is more than or equal to  2.
\end{itemize}
\end{ass}

\begin{rem} {
The behavior of the WKB solutions around a simple pole was
studied by \cite{Koike00}, and it requires special attention
in our problem.
We will treat the simple pole case in a separate publication.}
\end{rem}
The quadratic differentials satisfying 
the above assumption are called 
{\em complete Gaiotto-Moore-Neitzke (GMN) differentials} 
in \cite[Section 2.2]{Bridgeland13}. This assumption makes treatment of 
trajectories easier. The assumption that all turning points 
are simple is also reasonable in  
the exact WKB analysis. For example, 
Theorem \ref{thm:Voros-formula} below can not 
be applied for higher order turning points. 

In addition to 
Assumption \ref{assumption:zeros and poles}, 
we also impose the following assumption 
for $Q_n(z)$ with $n \ge 1$. 
\begin{ass} \label{assumption:summability}
\begin{itemize}
\item[(i).] %
If a point $p \in \Sigma$ is a pole of $Q_n(z)$ 
for some $n\ge1$, then $p \in P_{\infty}$.
\item[(ii).] %
If $\phi$ has a pole $p$ of order $m \ge 3$, then 
the following condition holds.
\begin{equation} \label{eq:pole-order-ge3}
\text{(order of $Q_{n}(z)$ at $p$)}~<~
1+\frac{m}{2} \quad \text{for all $n\ge1$}.
\end{equation}
\item[(iii).] %
If $\phi$ has a pole $p$ of order $m = 2$, then 
the following conditions hold.
\begin{itemize}
\item[$\bullet$] %
$Q_n(z)$ has an at most simple pole at $p$ for all 
$n \ge 1$ except for $n=2$. 
\item[$\bullet$] %
$Q_2(z)$ has a double pole at $p$ and satisfies 
\begin{equation} \label{eq:assumption-Q2}
Q_2(z) = -\frac{1}{4z^2} (1+O(z)) \quad 
\text{as $z \rightarrow 0$},
\end{equation}
where $z$ is a local coordinate of $\Sigma$ near 
$p$ satisfying $z(p) = 0$. 
\end{itemize}
\end{itemize}
\end{ass}

Note that the conditions \eqref{eq:pole-order-ge3} and 
\eqref{eq:assumption-Q2} are independent 
of the choice of the local coordinate due to 
the transformation law 
\eqref{eq:transformation-of-potentail} 
of Schr\"{o}dinger forms. These assumptions will be 
necessary to define an integral of a certain 1-form 
from a point $p \in P_{\infty}$ 
(see Proposition \ref{proposition:integrability}). 
Moreover, Assumption \ref{assumption:summability}
is also used in the proof of the Borel summability 
of the WKB  solutions 
(see Theorem \ref{thm:summability-P1}). 
Let us give examples satisfying 
Assumption \ref{assumption:summability}.

\begin{ex} \label{example:some-examples}
(a). %
Let $\Sigma = \bbP^1$, and consider
the potential 
$Q(z,\eta) = Q_0(z)$ which
is independent of $\eta$ and a polynomial in $z$
of degree $m \ge 1$. Then,
the quadratic differential  $\phi$ has only one pole 
of order $m + 4$ at $\infty$.
This is the case that \cite{Voros83} and 
\cite{Delabaere93} considered. %
\par
(b). Let $\Sigma = \bbP^1$, and consider 
the following differential equation: 
\[
\left(\frac{d^2}{dz^2} - \eta^2 Q(z,\eta) \right)\psi=0, 
\quad Q(z,\eta) = Q_0(z)+\eta^{-2}Q_{2}(z), 
\]
\[
Q_0(z)=\frac{(\alpha-\beta)^2z^2+
2(2\alpha\beta-\alpha\gamma-\beta\gamma)z+\gamma^2}
{4z^2(z-1)^2}, \quad Q_2(z)=-\frac{z^2-z+1}{4z^2(z-1)^2}. 
\]
Here $\alpha$, $\beta$ and $\gamma$ are complex parameters. 
This equation is equivalent to Gauss' hypergeometric 
equation and studied in \cite{Aoki12}.
Under a generic condition for the parameters $\alpha$, $\beta$ 
and $\gamma$, the quadratic differential $\phi$ has two simple zeros and three poles 
of order 2 at $0,1,\infty$. 
We can easily check that \eqref{eq:assumption-Q2} 
is satisfied at each pole.
\end{ex}

\subsection{Riccati equation}
\label{secton:Riccati-equation}

Following \cite[Section 2]{Kawai05}, 
to construct the WKB solutions of \eqref{eq:Sch}, 
we consider the following auxiliary equation, which is called 
the {\em Riccati equation} associated with \eqref{eq:Sch}:
\begin{equation} \label{eq:Riccati}
\frac{dS}{dz}+S^{2} = \eta^{2}Q(z,\eta).
\end{equation}
A solution of \eqref{eq:Sch} and that of 
\eqref{eq:Riccati} are related as  
\begin{equation} \label{eq:WKBsol1}
\psi(z,\eta)=\exp\left(\int^{z} S(z,\eta)dz\right).
\end{equation}
We can construct a formal (series) solution of \eqref{eq:Riccati} 
in the following form:
\[
S(z,\eta) = \sum_{n=-1}^{\infty}\eta^{-n}S_{n}(z) = 
\eta \hspace{+.1em} S_{-1}(z)+S_{0}(z)+
\eta^{-1} S_{1}(z)+\cdots.  
\]
Here ``formal series'' means formal 
Laurent series in $\eta^{-1}$. 
The family of functions $\{ S_{n}(z) \}_{n \ge -1}$
 must satisfy 
the following recursion relation
\begin{align}
\begin{cases}
\label{eq:recursion}
S_{-1}^{2} = Q_{0}(z), \\[+.2em]
\displaystyle 2S_{-1}S_{n+1} + 
\sum_{
{ n_{1}+n_{2} = n \atop
0 \le n_{j} \le n}
}
S_{n_{1}}S_{n_{2}} + \frac{dS_{n}}{dz} = 
Q_{n+2}(z) \quad(n\ge-1).
\end{cases}
\end{align} 
We obtain two families of functions 
$\{ S^{(+)}_{n}(z) \}_{n \ge -1}$ and 
$\{ S^{(-)}_{n}(z) \}_{n \ge -1}$ which satisfy the 
recursion relation \eqref{eq:recursion},
 depending on the 
choice of the root  $S_{-1}=\pm\sqrt{Q_0(z)}$
for the initial condition in 
\eqref{eq:recursion}. Thus we have two formal solutions
\begin{equation} \label{eq:S-plus-minus}
S^{(\pm)}(z,\eta) = \sum_{n=-1}^{\infty}\eta^{-n} 
S^{(\pm)}_{n}(z) = 
\pm \eta \sqrt{Q_{0}(z)} + \cdots 
\end{equation} 
of the Riccati equation \eqref{eq:Riccati}. 
The functions $\{ S^{(\pm)}_{n}(z) \}_{n\ge-1}$ 
are singular on $P$, and multi-valued and holomorphic 
on $\Sigma\setminus{P}$. 

Following \cite[Remark 2.2]{Kawai05}, we define 
the {\em odd part} and 
the {\em even part} of $S(z,\eta)$ by 
\begin{equation} \label{eq:Sodd-and-Seven}
S_{\rm odd}(z,\eta) = \frac{1}{2}\left( 
S^{(+)}(z,\eta) - S^{(-)}(z,\eta) \right),\quad
S_{\rm even}(z,\eta) = \frac{1}{2}
\left(S^{(+)}(z,\eta) + S^{(-)}(z,\eta) \right).
\end{equation}
These quantities have the following properties. 
\begin{prop} \label{prop:Sodd} 
(a).%
The equality 
\begin{align}
S^{(\pm)}(z,\eta) = 
\pm S_{\rm odd}(z,\eta) + S_{\rm even}(z,\eta)
\end{align}
holds,
 and the even part is given by the logarithmic 
derivative of the odd part: 
\begin{equation}
S_{\rm even}(z,\eta) = -\frac{1}{2 S_{\rm odd}(z,\eta)}
\frac{d S_{\rm odd}(z,\eta)}{dz}.
\end{equation}
\par
(b).
The (formal series valued) 1-form 
$S_{\rm odd}(z,\eta)dz$ is invariant under coordinate 
transformations. That is, the odd part 
$\tilde{S}_{\rm odd}(\tilde{z},\eta)$ of a formal solution 
of the Riccati equation associated with \eqref{eq:Sch-w} 
is given by 
\begin{equation} \label{eq:Sodd-is-coordinate-free}
\tilde{S}_{\rm odd}(\tilde{z},\eta) = S_{\rm odd}
\bigl( z(\tilde{z}),\eta \bigr) 
\frac{dz(\tilde{z})}{d\tilde{z}}
\end{equation}
if we choose the square root in \eqref{eq:S-plus-minus} 
so that the following equality holds 
(cf.~\eqref{eq:transformation-Q0}):
\begin{equation} \label{eq:relation-of-S-1}
\sqrt{\tilde{Q}_{0}(\tilde{z})} = 
\sqrt{Q_{0}\bigl(z(\tilde{z})\bigr)}
\frac{dz}{d\tilde{z}}.
\end{equation}
\end{prop}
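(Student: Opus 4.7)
The plan for part (a) is to split it into two independent verifications. The first equality $S^{(\pm)} = \pm S_{\mathrm{odd}} + S_{\mathrm{even}}$ is an immediate algebraic consequence of the definitions in \eqref{eq:Sodd-and-Seven}, so I would dispatch it in one line. For the logarithmic-derivative formula, I would start from the fact that both $S^{(+)}$ and $S^{(-)}$ satisfy the same Riccati equation \eqref{eq:Riccati}, and subtract the two copies:
\begin{equation*}
\frac{d}{dz}\bigl(S^{(+)} - S^{(-)}\bigr) + \bigl((S^{(+)})^2 - (S^{(-)})^2\bigr) = 0.
\end{equation*}
Factoring the difference of squares as $(S^{(+)}+S^{(-)})(S^{(+)}-S^{(-)})= 2S_{\mathrm{even}}\cdot 2S_{\mathrm{odd}}$ and using $S^{(+)}-S^{(-)}=2S_{\mathrm{odd}}$ turns this into $\frac{dS_{\mathrm{odd}}}{dz}+2S_{\mathrm{even}}S_{\mathrm{odd}}=0$, and dividing by $S_{\mathrm{odd}}$ (which is a nonzero formal series since its leading term is $\eta\sqrt{Q_0}$) gives the claimed identity.

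For part (b) the natural approach is to use the WKB ansatz \eqref{eq:WKBsol1} to convert the gauge factor $(dz/d\tilde z)^{-1/2}$ in \eqref{eq:Sch-w} into an explicit correction for $S$. Taking the logarithmic $\tilde z$-derivative of $\tilde\psi=\psi\cdot(dz/d\tilde z)^{-1/2}$ yields
\begin{equation*}
\tilde S(\tilde z,\eta) = S\bigl(z(\tilde z),\eta\bigr)\frac{dz}{d\tilde z} - \frac12\frac{d}{d\tilde z}\log\frac{dz}{d\tilde z}.
\end{equation*}
This relation holds separately for the $(+)$ and $(-)$ branches once the sign of the leading term is fixed; the compatibility \eqref{eq:relation-of-S-1} is exactly what guarantees that the $(\pm)$-branch of $\tilde S$ corresponds to the $(\pm)$-branch of $S$ term by term under the recursion \eqref{eq:recursion}. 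The gauge correction term $-\tfrac12(d/d\tilde z)\log(dz/d\tilde z)$ is independent of the branch, so it drops out of the difference: taking $(\tilde S^{(+)}-\tilde S^{(-)})/2$ cancels the correction and gives precisely \eqref{eq:Sodd-is-coordinate-free}.

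The only subtle point, and the one I would spell out carefully, is the branch-matching claim invoked above. It amounts to checking that the recursion \eqref{eq:recursion} is compatible with the transformation law, i.e., that if $S_{-1}(z)=\sqrt{Q_0(z)}$ and $\tilde S_{-1}(\tilde z)=\sqrt{\tilde Q_0(\tilde z)}$ are related by \eqref{eq:relation-of-S-1}, then the induced higher-order $\tilde S_n$ coincide with $S_n(z(\tilde z))(dz/d\tilde z)+\text{(branch-independent correction)}$. Rather than verify this recursion term by term (which would be tedious and not very illuminating), I would argue it indirectly: the displayed identity for $\tilde S$ follows formally from the ansatz \eqref{eq:WKBsol1}, which characterizes the formal solutions of the Riccati equation uniquely up to the choice of the leading sign, so once the leading signs are aligned by \eqref{eq:relation-of-S-1}, the full formal series must agree. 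The main obstacle is therefore not computational but notational: keeping the $(\pm)$ branches tracked consistently between $z$ and $\tilde z$ coordinates so that the cancellation of the Schwarzian-type gauge term in the odd part is transparent.
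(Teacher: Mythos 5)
Your proof is correct and supplies the standard argument that the paper simply delegates to \cite{Kawai05}: part (a) from the definitions plus subtraction of the two Riccati equations, part (b) from the gauge transformation $\tilde\psi = \psi\,(dz/d\tilde z)^{-1/2}$ and the observation that the branch-independent logarithmic correction cancels in the odd part once the leading signs are aligned by \eqref{eq:relation-of-S-1}. This is essentially the same route the cited reference takes, so there is nothing to contrast; your discussion of the branch-matching subtlety via uniqueness of formal Riccati solutions with a given leading sign is exactly the right way to close the gap.
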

\begin{proof} 
The claims (a) and (b) are proved by the same argument 
in \cite[Remark 2.2]{Kawai05} and 
\cite[Corollary 2.17]{Kawai05}, respectively. 
\end{proof}

Proposition \ref{prop:Sodd} implies that the 1-form 
$S_{\rm odd}(z,\eta)dz$ is globally defined 
(but multi-valued) on $\Sigma\setminus{P}$. 
This is not integrable at a point in $P_{\infty}$
because the principal term $\eta \sqrt{Q_0(z)}dz$ 
is singular. However, under Assumption 
\ref{assumption:summability}, 
we can show the following fact. 

\begin{prop}\label{proposition:integrability} 
For any point $p \in P_{\infty}$ and any local coordinate 
$z$ of $\Sigma$ around $p$ such that $z = 0$ at $p$, 
the formal power series valued 1-form defined by
\begin{equation}\label{eq:Soddreg}
S_{\rm odd}^{\rm reg}(z,\eta)~dz = 
\left(S_{\rm odd}(z,\eta)-\eta \sqrt{Q_{0}(z)}\right)dz,
\end{equation}
is integrable at $z=0$. Namely, for any $n \ge 0$, 
there exists a real number $\ell > -1$ such that 
\begin{equation} \label{eq:integrability-estimate}
S_{{\rm odd},n}(z) = O(z^{\ell}) \quad 
\text{as $z \rightarrow 0$}.
\end{equation} 
Here $S_{{\rm odd},n}(z)$ is the coefficient of 
$\eta^{-n}$ in the formal series $S_{\rm odd}(z,\eta)$.
Especially, all coefficients of 
$S_{\rm odd}^{\rm reg}(z,\eta)$ are holomorphic at $p$ 
if it is an even order pole of $\phi$.
\end{prop}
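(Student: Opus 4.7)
The plan is to proceed by induction on $n \ge 0$, using the recursion \eqref{eq:recursion} in the explicit form
\begin{equation*}
S_{n+1}^{(\pm)}(z) = \frac{1}{2 S_{-1}^{(\pm)}(z)}\Bigl(Q_{n+2}(z) - \sum_{\substack{n_1+n_2=n\\ 0 \le n_j \le n}} S_{n_1}^{(\pm)}(z)\,S_{n_2}^{(\pm)}(z) - \frac{dS_n^{(\pm)}(z)}{dz}\Bigr),
\end{equation*}
with $S_{-1}^{(\pm)} = \pm\sqrt{Q_0}$. The key structural observation is that $S_{{\rm odd},n}$ is exactly the antisymmetric part of $S_n^{(\pm)}$ under the sign change $\sqrt{Q_0}\to -\sqrt{Q_0}$, so taking odd parts in the recursion cancels the symmetric (\emph{even-part}) contributions automatically and presents $S_{{\rm odd},n+1}$ in the schematic form (numerator)$/\sqrt{Q_0}$, where the numerator is built from the $Q_k$'s, the lower-order $S_{{\rm odd},j}$'s and $S_{{\rm even},j}$'s (the latter being logarithmic derivatives of the former by Proposition \ref{prop:Sodd}(a)), and their $z$-derivatives. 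Fixing a pole $p\in P_\infty$ of order $m$ and a local coordinate $z$ with $z(p)=0$, the task is to track the pole order at $z=0$ through this recursion and establish $\mathrm{ord}_p(S_{{\rm odd},n})<1$ for every $n\ge 0$, improving to $\mathrm{ord}_p(S_{{\rm odd},n})\le 0$ when $m$ is even.

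For $m\ge 3$ the argument is essentially mechanical. Assumption \ref{assumption:summability}(ii) gives $\mathrm{ord}_p(Q_{n+2})<1+m/2$, so dividing by $\sqrt{Q_0}\sim\sqrt{c}\,z^{-m/2}$ lowers the order by $m/2$ and leaves a contribution of order strictly less than $1$, hence $O(z^\ell)$ with some $\ell>-1$. The bilinear terms $S_{n_1}S_{n_2}$ and the derivative $dS_n/dz$ are controlled by the inductive hypothesis together with routine pole-order bookkeeping; the margin $m/2\ge 3/2$ provides enough slack for the derivative to raise the pole order by one. When $m$ is even these strict inequalities reduce to integer bounds $\mathrm{ord}_p\le 0$, giving the claimed holomorphicity at $p$.

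The main obstacle is the case of an even-order pole $m=2$, where $\sqrt{Q_0}\sim\sqrt{c}/z$ already has a simple pole and the naive estimate would leave $S_{{\rm odd},n}$ with a simple pole at $p$. Here the precise normalization \eqref{eq:assumption-Q2} of Assumption \ref{assumption:summability}(iii) is indispensable. A direct computation at $n=1$ gives
\begin{equation*}
S_{{\rm odd},1}(z) = \frac{1}{2\sqrt{Q_0(z)}}\Bigl(Q_2(z) - \frac{Q_1(z)^2}{4 Q_0(z)} + \frac{Q_0''(z)}{4 Q_0(z)} - \frac{5\,Q_0'(z)^2}{16\,Q_0(z)^2}\Bigr),
\end{equation*}
and the coefficient of $z^{-2}$ in the parentheses is $-\tfrac14+\tfrac32-\tfrac54=0$: the $-1/(4z^2)$ from $Q_2$ cancels against the Schwarzian-type combination of derivatives of $Q_0$ exactly because of the normalization fixed by \eqref{eq:assumption-Q2}. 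The inductive step at general $n$ follows the same pattern; the potentially dangerous $z^{-2}$ contributions in the numerator come only from $Q_2$, from derivatives of the logarithmic piece $-Q_0'/(4Q_0)$ of $S_{\rm even}$, and from products of the inductively controlled $S_{{\rm odd},k}$ and $S_{{\rm even},k}$ with $k<n$, and all such contributions cancel because the normalization \eqref{eq:assumption-Q2} is precisely tuned for this. The bulk of the technical work is carrying this cancellation bookkeeping through the induction, which yields the stronger holomorphicity statement at every even-order pole.
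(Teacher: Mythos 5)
Your treatment of the base cases is sound: the explicit formula for $S_{\mathrm{odd},1}$ in the $m=2$ case is correct, and the verification that the $z^{-2}$ coefficient $-\tfrac14+\tfrac32-\tfrac54$ vanishes is exactly the cancellation the paper uses via $Q_2 - S_0^{(\pm)2} - dS_0^{(\pm)}/dz = O(z^{-1})$. The pole-order bookkeeping you sketch for $m\ge 3$ is also fine. However, your account of the inductive step for $m=2$ and $n\ge 2$ contains a genuine gap, and in fact describes the wrong mechanism.

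You assert that at general $n$ the dangerous $z^{-2}$ terms come ``from $Q_2$, from derivatives of $-Q_0'/(4Q_0)$, and from products of $S_{\mathrm{odd},k}$ and $S_{\mathrm{even},k}$'' and that ``all such contributions cancel because the normalization \eqref{eq:assumption-Q2} is precisely tuned for this.'' But for $n\ge 1$ the recursion for $S_{n+1}^{(\pm)}$ involves $Q_{n+2}$ (which by Assumption \ref{assumption:summability}(iii) has at most a simple pole for $n+2\ge 3$), not $Q_2$; it involves $dS_n^{(\pm)}/dz$, not $dS_0^{(\pm)}/dz$; and the only way products $S_{n_1}^{(\pm)}S_{n_2}^{(\pm)}$ produce a double pole is if two factors each have a simple pole. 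There is no reason at all why the coefficients would conspire so that \eqref{eq:assumption-Q2} ``precisely tunes'' an infinite family of cancellations — and in fact no such cancellations occur or are needed. What actually makes the induction close is that the single cancellation at the $S_0^{(\pm)} \to S_1^{(\pm)}$ step yields the \emph{stronger} conclusion that $S_1^{(\pm)}$ (not merely $S_{\mathrm{odd},1}$) is holomorphic, and the correct inductive hypothesis for $m=2$ is ``$S_0^{(\pm)}$ has at most a simple pole, and $S_k^{(\pm)}$ is holomorphic for $1\le k\le n$.'' Under this hypothesis the parenthesis $Q_{n+2}-\sum S_{n_1}^{(\pm)}S_{n_2}^{(\pm)} - dS_n^{(\pm)}/dz$ is $O(z^{-1})$ by inspection (the only surviving simple pole coming from the $S_0^{(\pm)}S_n^{(\pm)}$ cross terms and $Q_{n+2}$), so dividing by $\sqrt{Q_0}=O(z^{-1})$ gives $S_{n+1}^{(\pm)}=O(1)$, hence holomorphic by single-valuedness. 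Your version does not strengthen the inductive hypothesis in this way, and instead relies on unverified cancellations; this is a genuine missing step that would not fill in as stated.
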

\begin{proof}
Fix any local coordinate $z$ around $p$ as above. 
It follows from the recursion relation 
\eqref{eq:recursion} and the definition 
\eqref{eq:Sodd-and-Seven} of $S_{\rm odd}(z,\eta)$ 
that $S^{(\pm)}_0(z)$ and $S_{{\rm odd},0}(z)$ 
are given by 
\begin{equation} \label{eq:S0-and-Sodd0}
S^{(\pm)}_{0}(z) =  - \frac{1}{4Q_0(z)}
\frac{d Q_0(z)}{dz} \pm \frac{Q_1(z)}{2\sqrt{Q_0(z)}}, 
\quad S_{{\rm odd},0}(z) = \frac{Q_1(z)}{2\sqrt{Q_0(z)}}.
\end{equation}
Then, although $S^{(\pm)}_{0}(z) = O(z^{-1})$ as 
$z \rightarrow 0$, we can show that 
\eqref{eq:integrability-estimate} 
holds for $n=0$ due to Assumption 
\ref{assumption:summability}.  
Similarly, $S^{(\pm)}_1(z)$ is given by 
\begin{equation} \label{eq:Splusminus1}
S^{(\pm)}_{1}(z) = \frac{\pm1}{2 \sqrt{Q_0(z)}} 
\left( Q_2(z) - S^{(\pm)}_{0}(z)^2 - 
\frac{d S^{(\pm)}_{0}(z)}{dz} \right).
\end{equation}
Denote by $m$ the pole order of $\phi$ at $p$.
If $m \ge 3$, we can verify that 
$S^{(\pm)}_{1}(z) = O(z^{\ell})$ 
for some $\ell > - 1$ 
since $\sqrt{Q_0(z)} = O(z^{-m/2})$,  
$S^{(\pm)}_{0}(z) = O(z^{-1})$ and we 
have Assumption \ref{assumption:summability} (ii).
Hence we have \eqref{eq:integrability-estimate} for $n=1$.
On the other hand, the situation is different when $m=2$.
In view of \eqref{eq:Splusminus1}, 
$S^{(\pm)}_{1}(z)$ may have a simple pole at $p$ 
since $\sqrt{Q_0(z)} = O(z^{-1})$ when $m=2$.
However, with the aid of Assumption 
\ref{assumption:summability} (iii), 
we can show that $S^{(\pm)}_{1}(z)$ 
becomes holomorphic because 
\begin{equation}
Q_2(z) - S^{(\pm)}_{0}(z)^2 - 
\frac{d S^{(\pm)}_{0}(z)}{dz} = O(z^{-1})
\end{equation}
holds by \eqref{eq:assumption-Q2}  
and \eqref{eq:S0-and-Sodd0}. Therefore, we also 
have \eqref{eq:integrability-estimate} for $n=1$ 
in the case $m=2$.
The estimate \eqref{eq:integrability-estimate} 
for $n \ge 2$ can be shown by the induction  
from the recursion relation \eqref{eq:recursion} 
and Assumption \ref{assumption:summability}. 
Furthermore, since $\sqrt{Q_0(z)}$ is single-valued 
around $p$ when it is an even order pole of $\phi$, 
the recursion relation \eqref{eq:recursion} also implies 
that $S^{(\pm)}_n(z)$ and $S_{{\rm odd},n}(z)$ are 
single-valued around $p$ for all $n \ge 0$. Thus,
$S_{{\rm odd},n}(z)$ becomes holomorphic at $p$ 
for all $n \ge 0$ due to \eqref{eq:integrability-estimate}. 
\end{proof}

We call $S_{\rm odd}^{\rm reg}(z,\eta)$ in \eqref{eq:Soddreg} 
{\em the regular part of $S_{\rm odd}(z,\eta)$}. 
$S_{\rm odd}^{\rm reg}(z,\eta)$ 
is a formal {\em power} series in $\eta^{-1}$ since 
the principal term of $S_{\rm odd}(z,\eta)$ is eliminated. 
Integrals of $S_{\rm odd}(z,\eta)dz$ and 
$S_{\rm odd}^{\rm reg}(z,\eta)dz$ on $\Sigma$ are 
important in  the exact WKB analysis.

\subsection{WKB solutions}
\label{subsec:WKB}

Using the relation \eqref{eq:WKBsol1} between the solutions of 
\eqref{eq:Sch} and \eqref{eq:Riccati}, and the property 
(a) in Proposition \ref{prop:Sodd}, we obtain the following 
two formal solutions of \eqref{eq:Sch}:
\begin{equation} \label{eq:WKBsol2}
\psi_{\pm}(z,\eta) = \frac{1}{\sqrt{S_{\rm odd}(z,\eta)}}
\exp\left(\pm\int^{z}S_{\rm odd}(z,\eta)dz\right).
\end{equation}
\begin{defn}
The formal solutions \eqref{eq:WKBsol2} 
are called the {\em WKB solutions} of \eqref{eq:Sch}. 
\end{defn}

The integral of $S_{\rm odd}(z,\eta)dz$ is defined as a 
term-wise integral for the coefficient 
of each power of $\eta$. 
The lower end-point of the integral \eqref{eq:WKBsol2} 
will be discussed later. Since the coefficients of 
$S_{\rm odd}(z,\eta)dz$ are 
multi-valued on $\Sigma\setminus{P}$, 
the path of integral in \eqref{eq:WKBsol2} should be 
considered in the Riemann surface $\hat{\Sigma}$ of 
the multi-valued 1-form $\sqrt{Q_0(z)}dz$. 
To be more explicit, $\hat{\Sigma}$ is given 
by a section of the cotangent bundle of $\Sigma$ as
$\hat{\Sigma} = \{(z,\nu)~|~\nu^2=\phi \} 
\subset \omega_{\Sigma}$. Then the coefficients of the 
1-form $S_{\rm odd}(z,\eta)dz$ are single-valued 
on $\hat{\Sigma}$. 
The projection $\pi : \hat{\Sigma} \rightarrow \Sigma$ 
is a double cover branching at the simple zeros and the odd order 
poles of $\phi$.

To visualize $\hat{\Sigma}$, and to determine the 
branch of the square root in \eqref{eq:S-plus-minus}, 
we usually take {\em branch cuts} on $\Sigma$. 
A branch cut must connect 
two branch points of the covering map $\pi$, and each branch 
point must be an end-point of a branch cut. Such a collection 
of branch cuts together with a choice of a point 
$\hat{z} \in \hat{\Sigma}$ give an embedding 
$\iota:\Sigma\rightarrow\hat{\Sigma}$, which is a piecewise 
continuous and has a discontinuity on the branch cut, and 
contains $\hat{z}$ in its image. We call the image of $\Sigma$ 
by $\iota$ the {\em first sheet}, while the complement of the 
first sheet in $\hat{\Sigma}$ the {\em second sheet}. 
We may regard a point on $\Sigma$ as a point on $\hat{\Sigma}$ 
by such an embedding $\iota$ for a fixed appropriate branch cut, 
and use the same symbol $z$ for a coordinate of the first sheet, 
and use $z^{*} = \tau(z)$ for that of the second sheet. 
Here $\tau : \hat{\Sigma} \rightarrow \hat{\Sigma}$ is the 
covering involution which exchanges the first  and the 
second sheet, and it commutes with the projection $\pi$. 
Then, the action of $\tau$ for $S_{\rm odd}(z,\eta)$ 
and $S_{\rm odd}^{\rm reg}(z,\eta)$ are given by 
\begin{equation} \label{eq:covering-involution}
S_{\rm odd}(z^{*},\eta) = - S_{\rm odd}(z,\eta), \quad
S_{\rm odd}^{\rm reg}(z^{*},\eta) = 
- S_{\rm odd}^{\rm reg}(z,\eta)
\end{equation}  
since the involution $\tau$ exchanges the sign in 
\eqref{eq:S-plus-minus}.

Here we give two well-normalized expressions 
of the WKB solutions which will be considered in this paper.
\begin{itemize}
\item {\em normalized at a turning point} $a \in P_{0}$:
\begin{equation} \label{eq:WKBsol-TP}
\psi_{\pm}(z,\eta) = \frac{1}{\sqrt{S_{\rm odd}(z,\eta)}}
\exp\left(\pm\int_{a}^{z}S_{\rm odd}(z,\eta)dz\right).
\end{equation}
Although the coefficients of $S_{\rm odd}(z,\eta)$ 
have a singularity at $a$, the integral
\eqref{eq:WKBsol-TP} can be defined 
with the aid of the anti-invariant property 
\eqref{eq:covering-involution} 
of $S_{\rm odd}(z,\eta)$. Namely, it is defined by 
the half of the contour integral 
\begin{equation}
\int_{a}^{z}S_{\rm odd}(z,\eta)dz = 
\frac{1}{2}\int_{\gamma_{z}}S_{\rm odd}(z,\eta)dz
\end{equation}
along a path $\gamma_{z}$ as in Figure 
\ref{fig:normalized-at-TP}. Here the wiggly line 
designates a branch cut, and the solid part 
(resp., the dotted part) belongs to the first sheet
(resp., the second sheet). In this paper 
integrals of $S_{\rm odd}(z,\eta)$ and 
$S_{\rm odd}^{\rm reg}(z,\eta)$ from a simple turning 
point are always defined in this manner. 

\item {\em normalized at a pole} $p \in P_{\infty}$:
\begin{equation} \label{eq:WKBsol-P}
\psi_{\pm}(z,\eta) = \frac{1}{\sqrt{S_{\rm odd}(z,\eta)}}
\exp\left\{\pm\left(\eta\int_{a}^{z}
\sqrt{Q_{0}(z)}dz+\int_{p}^{z}
S_{\rm odd}^{\rm reg}(z,\eta) dz\right)\right\}.
\end{equation}
Here $a$ is any turning point independent of $p$. 
Note that, the integral of $S_{\rm odd}^{\rm reg}(z,\eta)$ 
from a pole $p$ is well-defined by Proposition 
\ref{proposition:integrability}.
\end{itemize}

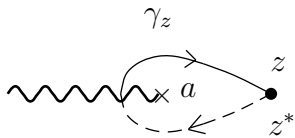
\begin{figure}
\begin{center}
\begin{pspicture}(0,0)(4,2)
%
\psset{fillstyle=solid, fillcolor=black}
\pscircle(3.5,0.5){0.08} 
\psset{fillstyle=none}
\rput[c]{0}(2.05,0.48){\small $\times$}
\rput[c]{0}(2,1.5){$\gamma_z$}
\rput[c]{0}(3.6,0.9){$z$}
\rput[c]{0}(3.65,0.1){$z^{*}$}
\rput[c]{0}(2.4,0.55){$a$}
\psset{linewidth=1pt}
\pscurve(2,0.5)(1.9,0.4)(1.8,0.5)(1.7,0.6)
(1.6,0.5)(1.5,0.4)(1.4,0.5)(1.3,0.6)(1.2,0.5)
(1.1,0.4)(1,0.5)(0.9,0.6)(0.8,0.5)(0.7,0.4)(0.6,0.5)
(0.5,0.6)(0.4,0.5)(0.3,0.4)(0.2,0.5)(0.1,0.6)(0,0.5)
%
%
%
\psset{linewidth=0.5pt}
%
\psecurve(1.45,0.2)(1.5,0.5)(1.6,0.8)(2,1)(2.6,0.9)(3.5,0.5)
\pscurve(2.6,0.9)(3,0.75)(3.5,0.5)
\psline(2.5,0.95)(2.37,1.08)
\psline(2.5,0.95)(2.35,0.85)
\psline(2.4,0.025)(2.57,0.22)
\psline(2.4,0.025)(2.63,-0.03)
\psset{linewidth=0.5pt, linestyle=dashed}
\psecurve(1.45,0.8)(1.5,0.5)(1.6,0.2)(2,0)(2.6,0.1)(3.5,0.5)
\pscurve(2.6,0.1)(3,0.25)(3.5,0.5)
\end{pspicture}
\end{center}
\caption{Normalization at a simple turning point.} 
\label{fig:normalized-at-TP}
\end{figure}

\subsection{Borel resummation method and Stokes phenomenon}
\label{section:Borel-resummation}

Let us expand \eqref{eq:WKBsol2}
in the following formal series: 
\begin{equation} \label{eq:WKBsol3}
\psi_{\pm}(z,\eta)=\exp\left(\pm\eta\int^{z}
\sqrt{Q_{0}(z)}dz\right) \eta^{-1/2}
\sum_{k=0}^{\infty} \eta^{-k} \psi_{\pm, k}(z). 
\end{equation}
It is known that, the series \eqref{eq:WKBsol3} is divergent 
in general, and its ``principal term'' (see
\eqref{eq:S0-and-Sodd0})
\[
\psi_{\pm}(z,\eta) 
= \frac{\eta^{-1/2}}{Q_{0}(z)^{1/4}}
\exp\left(\pm
\int^{z}
\left\{
\eta
\sqrt{Q_{0}(z)}
+\frac{Q_1(z)}{2\sqrt{Q_0(z)}}
\right\}dz
\right)(1+O(\eta^{-1}))
\]
is known as the Wentzel-Kramers-Brillouin 
approximation (the {\it WKB approximation}) of the solutions 
of the Schr{\"o}dinger equation \eqref{eq:Sch}. 
(Usually $Q_1(z)=0$.)
In the framework of the {\em exact} 
WKB analysis we take the {\em Borel resummation} of the WKB solutions 
to obtain analytic results. For the convenience of readers, 
we give an explanation of the Borel resummation method 
for formal series in $\eta^{-1}$. 
See \cite{Costin08} for further explanation.  
\begin{defn} 
\label{def:Borel-summability}
\begin{itemize}
\item 
A formal power series
$f(\eta) = \sum_{n=0}^{\infty}\eta^{-n}f_{n}$ 
in $\eta^{-1}$ is said to be 
{\em Borel summable} 
if the formal power series 
\begin{equation} \label{eq:Borel-transform}
f_{B}(y) = \sum_{n=1}^{\infty}f_{n}\hspace{+.1em}
\frac{y^{n-1}}{(n-1) !}
\end{equation}
converges near $y=0$ and can be analytically continued 
to a domain $\Omega$ containing the half line 
$\{y \in {\mathbb C}~|~{\rm Re}~y \ge 0,~{\rm Im}~y =0 \}$,
and satisfies 
\begin{equation} \label{eq:exponential-type}
\left|f_B(y) \right|
\le C_1 e^{C_2|y|}
\quad (y\in \Omega)
\end{equation}
with positive constants $C_1, C_2 > 0$. 
The function $f_B(y)$ is called the {\it Borel transform} 
of $f(\eta)$. %
\item 
For a Borel summable formal power series 
$f(\eta) = \sum_{n=0}^{\infty}\eta^{-n}f_{n}$,  
define the {\em Borel sum of $f(\eta)$} 
by the following Laplace integral:
\begin{equation} \label{eq:Borel sum}
{\mathcal S}[f](\eta)=f_0+\int_{0}^{\infty}
e^{-\eta \hspace{+.1em} y}f_{B}(y) dy.
\end{equation}
Here the path of the integral is taken along the 
positive real axis. Due to \eqref{eq:exponential-type}, 
the Laplace integral \eqref{eq:Borel sum} converges 
and gives an analytic function of $\eta$ on 
$\{\eta\in{\mathbb R}~|~\eta\gg1 \}$.%
\item 
Let $f(\eta) = e^{\eta \hspace{+.1em} s}\eta^{-\rho}
\sum_{n=0}^{\infty}\eta^{-n}f_{n}$ be a formal series 
with an exponential factor $e^{\eta \hspace{+.1em} s}$ 
for some  $\rho\in{\mathbb C}$ and $s \in {\mathbb C}$. 
$f(\eta)$ is said to be {\em Borel summable} 
if the formal power series 
$g(\eta)=\sum_{n=0}^{\infty}\eta^{-n}f_{n}$ 
is Borel summable. 
The {\em Borel sum of $f(\eta)$}
is defined by ${\mathcal S}[f](\eta) 
= e^{\eta \hspace{+.1em} s}
\eta^{-\rho}{\mathcal S}[g](\eta)$, 
where ${\mathcal S}[g]$ is the Borel sum of $g(\eta)$.
\end{itemize}
\end{defn}

For the simplest example, let us consider the monomial 
$f(\eta) = \eta^{-n}$ $(n \ge 1)$. Then we have 
$f_B(y) = {y^{n-1}}/{(n-1)!}$ and hence the Borel sum 
${\mathcal S}[f](\eta) = \eta^{-n}$ coincides with the 
original monomial. In general, it is known that, if the 
formal power series $f(\eta)$ converges and defines 
a holomorphic function near $\eta = \infty$, then $f(\eta)$ 
is Borel summable and the Borel sum coincides 
with the original function $f(\eta)$. 

The map ${\mathcal S}$ from a set of Borel summable formal 
series to a set of analytic functions of $\eta$ 
is called the {\em Borel resummation operator}.
The following properties are well-known 
(e.g., \cite[Section 4]{Costin08}).
\begin{prop} \label{prop:property-of-Borel-sum}
(a). The operator ${\mathcal S}$ 
commutes with addition and multiplication. 
That is, for formal power series 
$f(\eta)$ and $g(\eta)$ which are 
Borel summable, we have
${\mathcal S}[f+g] = 
{\mathcal S}[f]+{\mathcal S}[g]$, 
${\mathcal S}[f \cdot g] = 
{\mathcal S}[f]\cdot{\mathcal S}[g]$. %
\par
(b). If a formal power series 
$f(\eta)$ is Borel summable, then 
${\mathcal S}[f](\eta)$ is 
asymptotically expanded to $f(\eta)$ when 
$\eta \rightarrow +\infty$.
\par
(c). Let $A(t)=\sum_{k=0}^{\infty}A_k t^k$ 
be a convergent series defined near the origin $t=0$. 
If a formal power series 
$f(\eta) = \sum_{n=1}^{\infty}\eta^{-n}f_n$ without 
a constant term is Borel summable, 
then the formal power series 
$A(f(\eta))=\sum_{k=0}^{\infty}A_k (f(\eta))^k$ is 
also Borel summable. 
Moreover, the Borel sum 
is given by ${\mathcal S}[A(f(\eta))] = 
A({\mathcal S}[f](\eta))$.
\end{prop}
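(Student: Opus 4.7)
The plan is to prove the three parts in order, each reducing to a standard manipulation of the Laplace integral \eqref{eq:Borel sum} and the convolution structure of the Borel transform.

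For part (a), linearity of $\mathcal{S}$ is immediate from linearity of \eqref{eq:Borel-transform} and \eqref{eq:Borel sum}. For multiplicativity, I would first peel off the constant terms by writing $f=f_0+\tilde{f}$, $g=g_0+\tilde{g}$, so the only nontrivial point is $\mathcal{S}[\tilde{f}\cdot\tilde{g}]=\mathcal{S}[\tilde{f}]\cdot\mathcal{S}[\tilde{g}]$. A direct computation on coefficients shows that the Borel transform of a product (of series without constant term) equals the convolution
\[
(\tilde{f}\cdot\tilde{g})_B(y)=(\tilde{f}_B*\tilde{g}_B)(y)=\int_0^y \tilde{f}_B(y-t)\,\tilde{g}_B(t)\,dt,
\]
and this identity extends by analytic continuation to the joint domain of analyticity. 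Applying Fubini's theorem (justified by the common exponential bound \eqref{eq:exponential-type}) to
\[
\int_0^\infty e^{-\eta y}(\tilde{f}_B*\tilde{g}_B)(y)\,dy
\]
and changing variables $y=y_1+y_2$ factorizes the integral into $\mathcal{S}[\tilde{f}](\eta)\cdot\mathcal{S}[\tilde{g}](\eta)$, which proves (a).

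For part (b), I would use Watson's lemma. Given $N\ge1$, split the Borel transform as a Taylor polynomial plus remainder,
\[
f_B(y)=\sum_{n=1}^{N}f_n\frac{y^{n-1}}{(n-1)!}+R_N(y),
\]
where $R_N$ is holomorphic near $0$, extends to $\Omega$, and admits an estimate $|R_N(y)|\le C'_N\,|y|^N e^{C_2|y|}$ obtained by $N$-fold application of the residue/Taylor-remainder formula together with \eqref{eq:exponential-type}. Since $\int_0^\infty e^{-\eta y}y^{n-1}/(n-1)!\,dy=\eta^{-n}$, the Laplace integral of the remainder is $O(\eta^{-N-1})$ as $\eta\to+\infty$, which yields the asymptotic expansion.

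Part (c) is the main obstacle and requires the most care. Since $f$ has no constant term, by (b) one has $\mathcal{S}[f](\eta)\to 0$, so $A(\mathcal{S}[f](\eta))$ is a well-defined analytic function for $\eta$ large enough. By iterating (a), each truncation $\sum_{k=0}^{N}A_k f(\eta)^k$ is Borel summable with Borel sum $\sum_{k=0}^{N}A_k\,\mathcal{S}[f](\eta)^k$. The delicate step is to pass to the limit $N\to\infty$: one must show that the formal Borel transform of $A(f(\eta))$, which is the infinite sum $\sum_{k\ge1}A_k\,f_B^{*k}(y)$ of iterated convolutions, converges to a holomorphic function on $\Omega$ satisfying a bound of the form \eqref{eq:exponential-type}. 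The standard estimate is
\[
\bigl|f_B^{*k}(y)\bigr|\le C_1^{k}\,\frac{|y|^{k-1}}{(k-1)!}\,e^{C_2|y|},
\]
proved by induction from (a); inserting this into $\sum|A_k|\,|f_B^{*k}(y)|$ and using that $A$ has positive radius of convergence gives absolute convergence on $\Omega$ together with the required exponential bound. Once these estimates are in place, dominated convergence lets one interchange the sum with the Laplace integral, yielding $\mathcal{S}[A(f)](\eta)=\sum_{k\ge 0}A_k\,\mathcal{S}[f](\eta)^k=A(\mathcal{S}[f](\eta))$. The combinatorial control of the iterated convolutions is what makes (c) substantially harder than (a) and (b); the rest of the argument is bookkeeping.
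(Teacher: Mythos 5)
The paper does not prove this proposition; it simply calls it ``well-known'' and points to \cite[Section 4]{Costin08}. So there is no in-paper argument to compare against. Your sketch follows the standard line of reasoning from that literature, and it is essentially correct: linearity is trivial; multiplicativity reduces to the Beta-function identity $\int_0^y\frac{(y-t)^{n-1}}{(n-1)!}\frac{t^{m-1}}{(m-1)!}\,dt=\frac{y^{n+m-1}}{(n+m-1)!}$, which gives $(\tilde f\cdot\tilde g)_B=\tilde f_B\ast\tilde g_B$, and the Laplace integral then factorizes; (b) is Watson's lemma; and in (c) the inductive bound $|f_B^{\ast k}(y)|\le C_1^k\,\frac{|y|^{k-1}}{(k-1)!}\,e^{C_2|y|}$ is exactly what makes $\sum_k A_k f_B^{\ast k}$ converge absolutely and uniformly on compacts with an exponential bound, after which interchanging sum and Laplace integral is routine.

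Two points are worth tightening. In (b), your remainder bound $|R_N(y)|\le C'_N|y|^N e^{C_2|y|}$ is really two bounds spliced together (a Taylor-remainder bound near $y=0$, and the bound $|R_N(y)|\le |f_B(y)|+|\text{poly}|\lesssim e^{C_2|y|}\le |y|^N e^{C_2|y|}$ for $|y|$ bounded away from $0$); saying ``$N$-fold application of the Taylor-remainder formula together with \eqref{eq:exponential-type}'' slightly conflates the local and global steps, although the resulting estimate is right. In (c), one should at least remark that the convolutions $\int_0^y$ are taken along the segment $[0,y]$, so the domain $\Omega$ must contain these segments for $y$ on the positive real axis---which it does, since $\Omega$ is an open neighborhood of $\{y\ge 0\}$. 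Neither point is a gap; they are just places where a one-line justification would make the argument airtight.
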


Even if a formal power series $f(\eta)$ is divergent, its 
Borel sum ${\mathcal S}[f](\eta)$ becomes analytic and the 
original $f(\eta)$ is recovered as an asymptotic expansion 
of the Borel sum, if $f(\eta)$ is Borel summable. 
In this sense the Borel resummation method is 
a natural resummation procedure of divergent series. 

However, when the Borel transform $f_B(y)$ of $f(\eta)$ 
has a singular point $y=y_0$ on the positive real axis 
(i.e., $f(\eta)$ is {\em not} Borel summable), 
then the Laplace integral \eqref{eq:Borel sum} can not 
be defined and we can not find an analytic function 
of $\eta$ having the above asymptotic property 
by the ``usual" Borel resummation method. 

In such a case, to obtain an analytic function 
which has $f(\eta)$ as its asymptotic expansion 
when $\eta \rightarrow +\infty$, 
we regard $\eta$ as a complex large parameter 
with a certain phase $\arg\eta=\theta \in {\mathbb R}$ 
and consider the following Borel resummation 
{\em in the direction $\theta$}:
\begin{equation} \label{eq:Borel-sum-theta}
{\mathcal S}_{\theta}[f](\eta)=f_0+\int_{0}^{\infty e^{-i\theta}}
e^{-\eta \hspace{+.1em} y}f_{B}(y) dy.
\end{equation}
Here the path of integral in \eqref{eq:Borel-sum-theta} 
is taken along the half line 
$\{y = r e^{-i \theta} \in {\mathbb C}~|~ r \ge 0 \}$  
so that the singular point $y_0$ of $f_B(y)$ does not 
lie on the path. If the Laplace integral 
\eqref{eq:Borel-sum-theta} is 
well-defined in a similar sense of 
Definition \ref{def:Borel-summability}, 
then $f(\eta)$ is said to be Borel summable 
{\em in the direction $\theta$}, and 
${\mathcal S}_{\theta}$ is called the Borel resummation 
operator {\em in the direction $\theta$}. 
Then, the analytic continuation of the Borel sum 
\eqref{eq:Borel-sum-theta} becomes an analytic function 
of $\eta$ in a sector $\{\eta\in{\mathbb C}~|~
|\arg\eta-\theta|<\pi/2, |\eta|\gg1 \}$.
Especially, if $f(\eta)$ is Borel summable in the direction 
$\delta$ for a sufficiently small $\delta > 0$, 
then ${\mathcal S}_{\delta}[f](\eta)$
is analytic on $\{\eta\in{\mathbb R}~|~ \eta \gg1 \}$ 
and having $f(\eta)$ as its asymptotic expansion when  
$\eta \rightarrow+\infty$. That is, 
${\mathcal S}_{\delta}[f](\eta)$ has the desired 
asymptotic property for large $\eta > 0$.

However, there is an ambiguity in analytic functions 
which are asymptotically expanded to $f(\eta)$ 
as $\eta \rightarrow +\infty$. Suppose that 
$f(\eta)$ is Borel summable in the both directions 
$+\delta$ and $-\delta$ for a sufficiently small number 
$\delta > 0$. Then,  both of the Borel sums 
${\mathcal S}_{\pm\delta}[f](\eta)$ have the same asymptotic 
expansion $f(\eta)$ when $\eta \rightarrow +\infty$.
But these functions do {\em not} coincide in general; 
if $f_B(y)$ has a singular point $y_0$ on the 
positive real axis, the Borel sums 
${\mathcal S}_{+\delta}[f](\eta)$ and 
${\mathcal S}_{-\delta}[f](\eta)$ may be different 
since the path of Laplace integrals are not 
homotopic due to the singular point $y_0$.

This is the so-called {\em Stokes phenomenon} 
for the formal series $f(\eta)$. 
Here the Stokes phenomenon means a phenomenon that, 
the analytic function which has $f(\eta)$ with 
its asymptotic expansion when 
$|\eta| \rightarrow + \infty$ depends on the 
direction of an approach to $\eta = \infty$,
and the analytic functions may be different 
for different directions in general. 
Similarly to Proposition \ref{prop:property-of-Borel-sum} 
(b), ${\mathcal S}_{\pm\delta}[f](\eta)$ is asymptotic 
to $f(\eta)$ when $|\eta|\rightarrow+\infty$
with $\arg\eta=\pm\delta$.  
Therefore, the fact that the Borel sums 
${\mathcal S}_{+\delta}[f](\eta)$ and 
${\mathcal S}_{-\delta}[f](\eta)$ are different implies that 
the Stokes phenomenon occurs to $f(\eta)$. 
This is the formulation of the Stokes phenomenon 
in terms of Borel resummation method. 
Moreover, the difference of the Borel sums 
${\mathcal S}_{\pm\delta}[f](\eta)$ are exponentially small 
when $\eta \rightarrow +\infty$ since they have the 
same asymptotic expansion. 

If the formal power series $f(\eta)$ 
is Borel summable in any direction $\theta$ satisfying 
$-\delta \le \theta \le + \delta$ with a sufficiently 
small number $\delta > 0$, then $f(\eta)$ does not 
enjoy the Stokes phenomenon; 
that is, the Borel sums satisfies 
\begin{equation}
{\mathcal S}_{-\delta}[f](\eta) = 
{\mathcal S}_{+\delta}[f](\eta) = 
{\mathcal S}[f](\eta)
\end{equation}
as analytic functions of $\eta$ on 
$\{\eta \in {\mathbb R}~|~ \eta \gg 1 \}$. 
This is because the Borel transform $f_B(y)$ 
does not have singular points in a domain 
containing the sector 
$\{y=r e^{-i\theta}~|~ r \ge 0,~ 
-\delta \le \theta \le +\delta \}$ and 
the Laplace integrals \eqref{eq:Borel-sum-theta}
give the same analytic function.
Thus the singular points of the Borel transform $f_B(y)$
are closely related to the Stokes phenomenon 
for the formal series $f(\eta)$. 

The following lemma will be used 
in the subsequent discussions. 
\begin{lem} \label{lemma:0-sum-and-theta-sum}
{
Let $f(\eta) = \sum_{n=0}^{\infty} f_n \eta^{-n}$ 
be a formal power series and $\theta$ be a real number. 
Then, $f(\eta)$ is Borel summable in the direction $\theta$ 
if and only if the formal power series 
$f^{(\theta)}(\eta) = \sum_{n=0}^{\infty} 
f_n e^{-i n \theta} \eta^{-n} (=f(e^{i\theta}\eta))$ 
is Borel summable in the usual sense 
(i.e., Borel summable in the direction $0$).
} 
\end{lem}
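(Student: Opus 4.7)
The plan is to pass everything through an explicit relation between the two Borel transforms and then use the rotation-invariance of the defining conditions of Borel summability.

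First, I would compute $f^{(\theta)}_B(y)$ directly from the definition \eqref{eq:Borel-transform}. Since $f^{(\theta)}(\eta)=\sum_{n\ge 0} f_n e^{-in\theta}\eta^{-n}$, we have
\begin{equation}
f^{(\theta)}_B(y)=\sum_{n=1}^{\infty} f_n e^{-in\theta}\frac{y^{n-1}}{(n-1)!}
= e^{-i\theta}\sum_{n=1}^{\infty} f_n \frac{(e^{-i\theta}y)^{n-1}}{(n-1)!}
= e^{-i\theta}f_B(e^{-i\theta}y).
\end{equation}
Thus the two Borel transforms differ only by an overall constant and a rotation of the argument by $-\theta$. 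In particular, both are holomorphic in the same disk about $y=0$.

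Next I would verify that each of the three conditions entering Definition \ref{def:Borel-summability} transports faithfully under this rotation. Analytic continuability of $f^{(\theta)}_B$ to a domain $\Omega$ containing the positive real half-line is equivalent, via $w=e^{-i\theta}y$, to analytic continuability of $f_B$ to the rotated domain $e^{-i\theta}\Omega$, which contains the ray $\{re^{-i\theta}:r\ge 0\}$; and conversely. The exponential bound passes through transparently: $|f^{(\theta)}_B(y)|=|f_B(e^{-i\theta}y)|$, so an estimate $|f_B(w)|\le C_1 e^{C_2|w|}$ on the ray in direction $-\theta$ is the same as $|f^{(\theta)}_B(y)|\le C_1 e^{C_2|y|}$ on $[0,\infty)$.

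Finally, putting these pieces together yields the claimed equivalence. I expect no real obstacle here, as the lemma is essentially a rotational change of variables; the only point requiring mild care is the coherent bookkeeping of the factor $e^{-i\theta}$ in front of $f_B(e^{-i\theta}y)$, which does not affect either the analytic continuation or the exponential-type estimate. One can additionally check, as a consistency remark, the compatibility with the Borel sums themselves: the substitution $y=e^{-i\theta}w$ in the Laplace integral \eqref{eq:Borel-sum-theta} gives
\begin{equation}
\mathcal{S}_\theta[f](\eta)=f_0+\int_0^{\infty} e^{-(e^{i\theta}\eta)w} f_B(w)\,dw \cdot \text{(no extra factor after accounting for }dy=e^{-i\theta}dw\text{)},
\end{equation}
which matches $\mathcal{S}[f^{(\theta)}](e^{i\theta}\eta)$ after the analogous substitution in the definition of $\mathcal{S}[f^{(\theta)}]$, confirming that $\mathcal{S}_\theta[f](\eta)=\mathcal{S}[f^{(\theta)}](e^{-i\theta}\eta\cdot e^{i\theta})$ in the appropriate sector.
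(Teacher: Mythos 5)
Your argument is correct and is exactly the paper's proof: both reduce the lemma to the identity $f^{(\theta)}_B(y)=e^{-i\theta}f_B(e^{-i\theta}y)$ (equation \eqref{eq:fB-and-ftheta-B}), after which the two Borel-summability conditions are just the same data rotated by $\theta$, and the exponential-type bound is rotation-invariant since it only involves $|y|$. One small caveat: your closing ``consistency remark'' is not needed for the lemma and contains a bookkeeping slip. With $y=e^{-i\theta}w$ one gets
\begin{equation}
\mathcal{S}_\theta[f](\eta)=f_0+\int_0^{\infty}e^{-(e^{-i\theta}\eta)w}\,e^{-i\theta}f_B(e^{-i\theta}w)\,dw
=f_0+\int_0^{\infty}e^{-(e^{-i\theta}\eta)w}f^{(\theta)}_B(w)\,dw
=\mathcal{S}\bigl[f^{(\theta)}\bigr](e^{-i\theta}\eta),
\end{equation}
so $\mathcal{S}[f^{(\theta)}](\eta)=\mathcal{S}_\theta[f](e^{i\theta}\eta)$ (as in \eqref{eq:Borel-sum-theta-and-0-2}), \emph{not} $\mathcal{S}_\theta[f](\eta)=\mathcal{S}[f^{(\theta)}](\eta)$; in particular the Jacobian factor $e^{-i\theta}$ from $dy$ does survive and is exactly what turns $f_B(e^{-i\theta}w)$ into $f^{(\theta)}_B(w)$, so the parenthetical ``no extra factor'' is misleading.
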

Lemma \ref{lemma:0-sum-and-theta-sum} follows immediately 
from the equality
\begin{equation} \label{eq:fB-and-ftheta-B}
f^{(\theta)}_{B}(y) = 
e^{-i\theta} f_{B}(e^{-i\theta}y)
\end{equation}
that can be shown by a straightforward computation. 

When we apply the Borel resummation method to the WKB 
solutions \eqref{eq:WKBsol3}, we fix the independent variable 
$z$ and regard them as formal series in $\eta^{-1}$ with 
exponential factors $\exp(\pm\eta\int^{z}\sqrt{Q_{0}(z)}dz)$. 
Therefore, the condition that 
``the Borel sum is well-defined" gives a constraint for $z$. 
The condition can be checked by looking 
the {\em Stokes graph\/} defined in the next subsection.

\subsection{Trajectories, Stokes curves, and Stokes graphs}
\label{section:Stokes-graphs}

Let $\phi$ be the quadratic differential associated 
with ${\mathcal L}$. 
This subsection is devoted to the description
of properties of {\em trajectories} of $\phi$.
Here a trajectory of $\phi$ is a leaf 
of the foliation on $\Sigma\setminus{P}$ defined 
by the equation 
\begin{equation} \label{eq:theta-foliation}
{\rm Im} \int^{z}\sqrt{Q_{0}(z)}dz = {\rm constant}.
\end{equation}
Every point of $\Sigma\setminus P$ 
lies on a unique trajectory, and any two 
trajectories are either disjoint or coincide. 
The foliation structure by the trajectories of $\phi$ has been 
well studied in  Teichm\"uller theory \cite{Strebel84}. 
The relationship between the geometry of trajectories and 
the asymptotic property of WKB solutions is studied by 
Fedoryuk \cite{Fedoryuk93}.
The geometry of trajectories is also important in the exact 
WKB analysis since we can read off a lot of properties 
of the WKB solutions, such as the Borel summability 
(i.e., well-definedness of 
the Borel sum \eqref{eq:Borel sum}), 
from the geometry of the trajectories of $\phi$. 

\begin{defn}[{\cite[Definition 2.6]{Kawai05}}]
A {\em Stokes curve of ${\mathcal L}$} 
is a trajectory of $\phi$ 
whose one of the end-points 
is a turning point of ${\mathcal L}$. 
Namely, in a local coordinate $z$ of $\Sigma$,
the Stokes curves 
emanating from a turning point 
$a \in P_0$ are defined as
\begin{equation} \label{eq:StokesCurves}
{\rm Im}\int_{a}^{z}\sqrt{Q_{0}(z)}dz=0.
\end{equation}
\end{defn}

Note that the Stokes curves are determined from 
the principal term $Q_0(z)$ of the 
potential function $Q(z,\eta)$ of \eqref{eq:Sch}. 
Figure \ref{fig:examples-of-Stokes-graphs} depicts 
examples of the Stokes curves for several 
rational functions $Q_0(z)$  on $
\mathbb{C}\subset \Sigma = \bbP^1$. 
Here we use the symbol ${\times}$ 
for a point in $P_{0}$ (i.e., a turning point)
and $\bullet$ for a point in $P_{\infty}$ 
(i.e., a pole of $\phi$) in the figures.
The  quadratic differentials $\phi$ on $\Sigma$ in these examples have a pole also at $z=\infty$, which is omitted in the figures.

  \begin{figure}
  \begin{center}
\begin{pspicture}(0,0)(0,0)
\psset{linewidth=0.5pt}
\end{pspicture}
\end{center} \hspace{-2.em}
  \begin{minipage}{0.33\hsize}
  \begin{center} 
  \includegraphics[width=40mm]
  {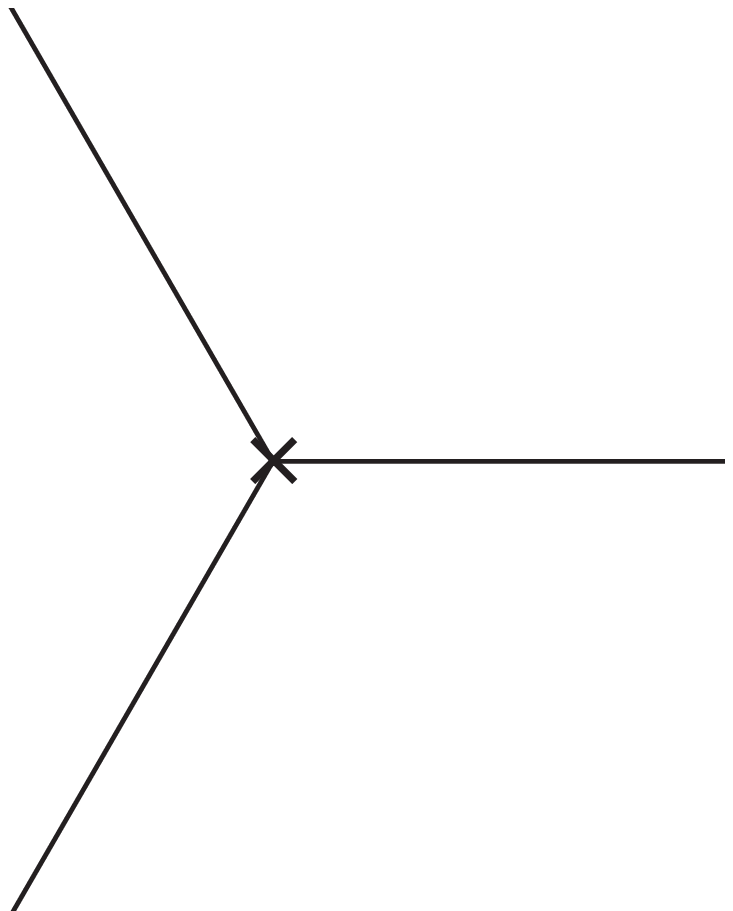} \\
  {(a) $z$.} 
  \end{center}
  \label{fig:Airy}
  \end{minipage} 
  \begin{minipage}{0.33\hsize}
  \begin{center}
  \includegraphics[width=40mm]
  {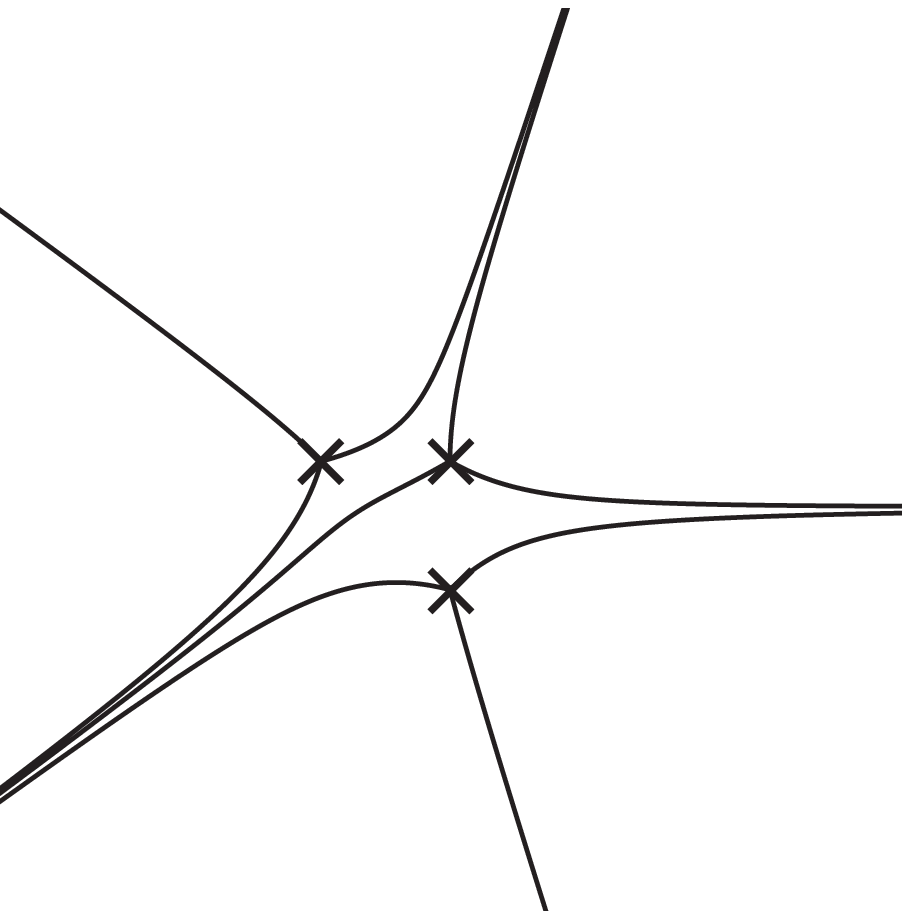} \\
  {(b) $z(z+1)(z+i)$.} 
  \end{center}
  \label{fig:cubic}
  \end{minipage}  
  \begin{minipage}{0.33\hsize}
  \begin{center}
  \includegraphics[width=40mm]
  {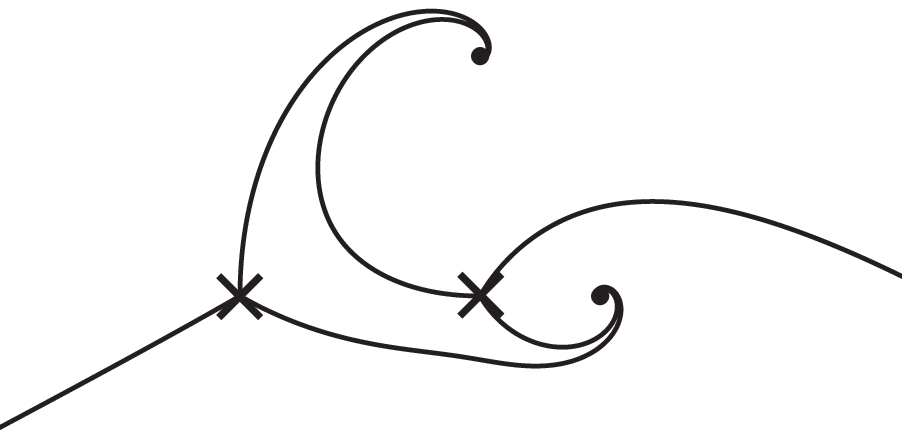} \\[+.5em]
  {(c) $\displaystyle \frac{z(z+2)}
  {(z-1)^{2}(z-2i)^{2}}$.} 
  \end{center}
  \label{fig:Weber-generic}
  \end{minipage}  
  \begin{minipage}{0.33\hsize}
  \begin{center}
  \includegraphics[width=40mm]
  {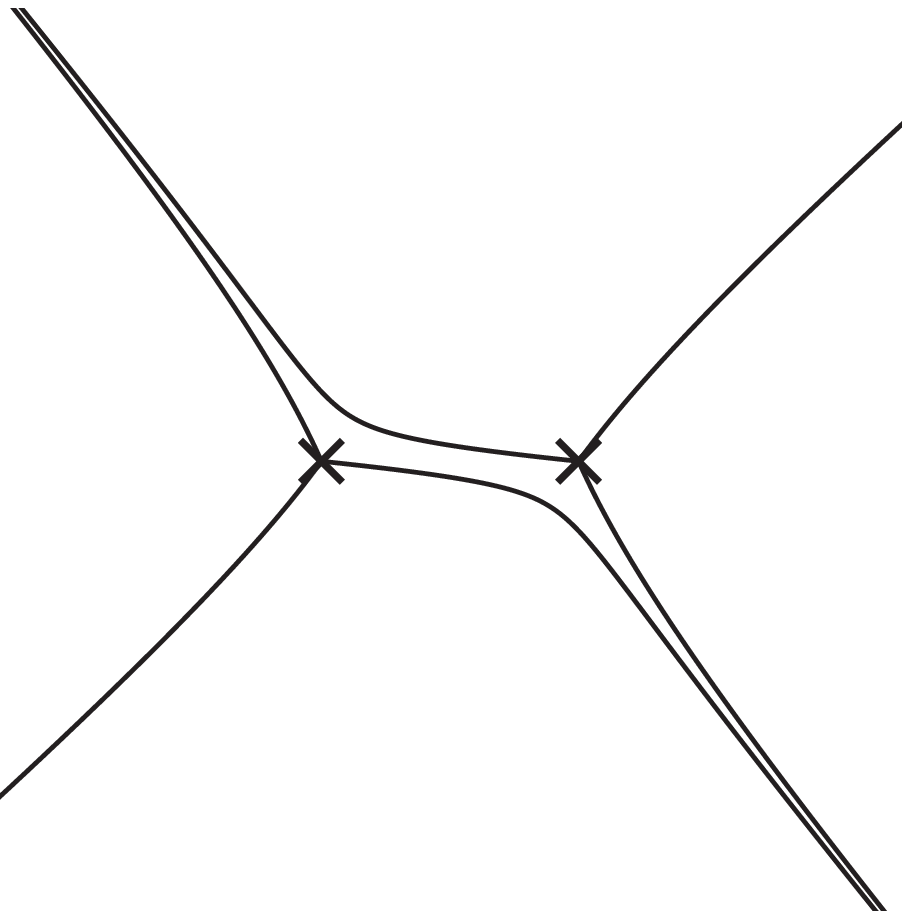} \\
  {(d) $e^{+{\pi i}/{10}}(1-z^{2})$.} 
  \end{center}
  \label{fig:Weber-minus}
  \end{minipage} 
  \begin{minipage}{0.33\hsize}
  \begin{center}
  \includegraphics[width=40mm]
  {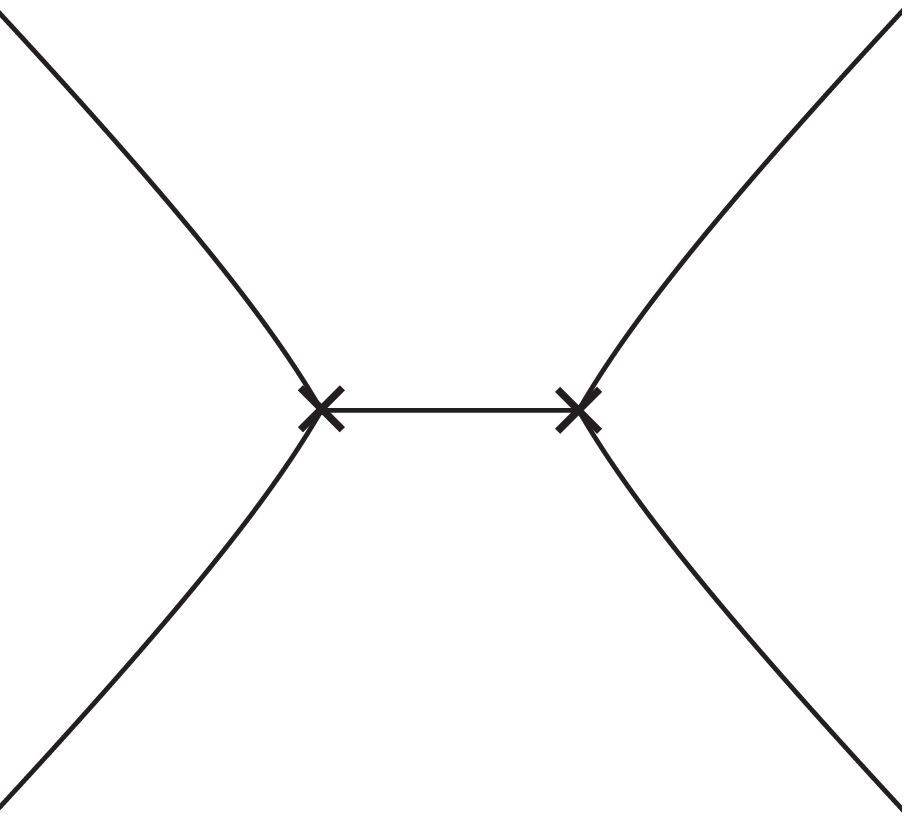} \\
  {(e) $1-z^{2}$.} 
  \end{center}
  \label{fig:Weber-0+.eps}
  \end{minipage} 
  \begin{minipage}{0.33\hsize}
  \begin{center}
  \includegraphics[width=40mm]
  {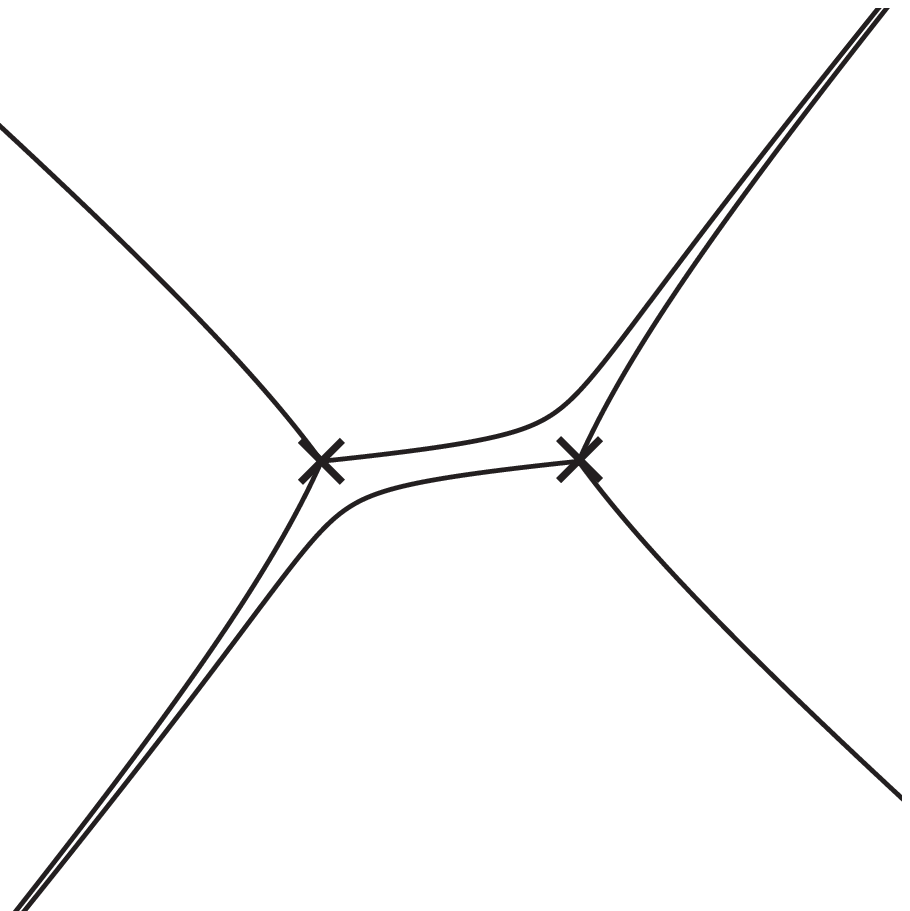} \\
  {(f) $e^{-\pi i/10}(1-z^{2})$.}
  \end{center}
  \label{fig:Weber-plus}
  \end{minipage}   
  \begin{minipage}{0.33\hsize}
  \begin{center}
  \includegraphics[width=40mm]{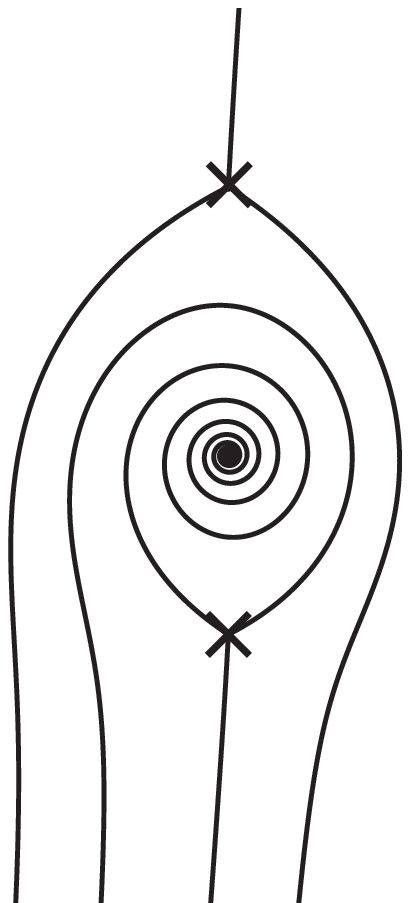} \\
  {(g) $\displaystyle -e^{+{\pi i}/{20}}
  \frac{(z+2i)(z-3i)}{z^{2}}$.}
  \end{center}
  \label{fig:loop-plus}
  \end{minipage} 
  \begin{minipage}{0.33\hsize}
  \begin{center}
  \includegraphics[width=40mm]{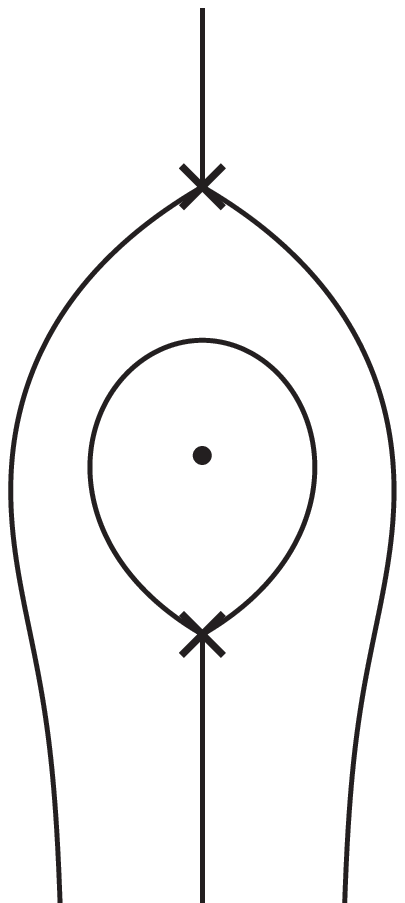} \\
  {(h)  $\displaystyle -\frac{(z+2i)(z-3i)}{z^{2}}$.} 
  \end{center}
  \label{fig:loop-0}
  \end{minipage} 
  \begin{minipage}{0.33\hsize}
  \begin{center}
  \includegraphics[width=40mm]{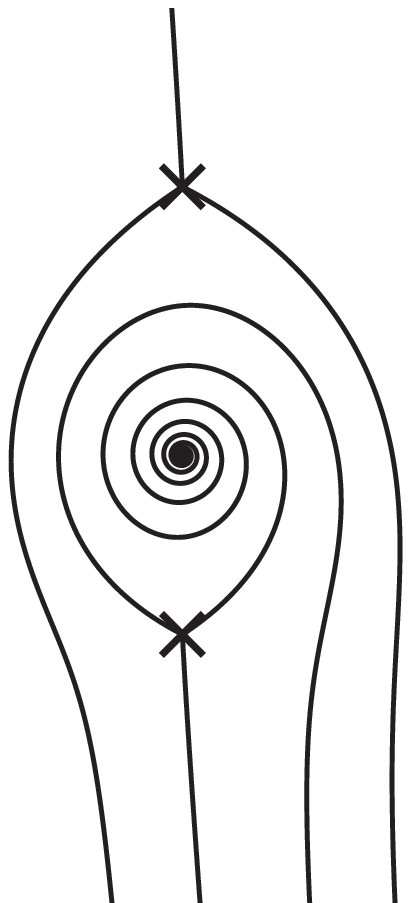} \\
  {(i)  $\displaystyle -e^{-{\pi i}/{20}}
  \frac{(z+2i)(z-3i)}{z^{2}}$.}
  \end{center}
  \label{fig:loop-minus}
  \end{minipage}   
  \caption{Examples of Stokes graphs. 
  The rational functions represent the function $Q_0(z)$.}  
  \label{fig:examples-of-Stokes-graphs}
 \end{figure}

Here we recall some basic properties of the trajectories 
of $\phi$ from \cite{Strebel84}. 
See also \cite[Section 3]{Bridgeland13} for comprehensible expositions. 
Firstly, the local foliation structure 
around simple zeros and poles of order $m\geq 2$ 
are given below and depicted in Figures 
\ref{fig:foliation1}--\ref{fig:foliation3}.
For a simple zero $a$, there are {\em exactly three\/} 
trajectories entering $a$ which are the Stokes curves 
(Figure \ref{fig:foliation1}).
For a double pole $p$, there are three cases 
depending on the residue 
$r_p = \operatornamewithlimits{Res}_{z=p} 
\sqrt{Q_0(z)}\, dz$ 
(Figure \ref{fig:foliation2}).
  \begin{itemize}
\item[(a).]
Clockwise or counterclockwise logarithmic spirals 
wrap onto $p$. This occurs when 
$r_p \notin {\mathbb R} \cup i{\mathbb R}$. 
\item[(b).] Radial arcs entering $p$. 
This occurs when $r_p \in {\mathbb R}$. 
\item[(c).] Closed trajectories surround $p$. 
This occurs when $r_p \in i{\mathbb R}$. 
  \end{itemize}
For a pole $p$ of order $m\geq 3$, there are 
{\em exactly $m-2$} asymptotic tangent directions 
for the trajectories entering $p$
  (Figure \ref{fig:foliation3}).

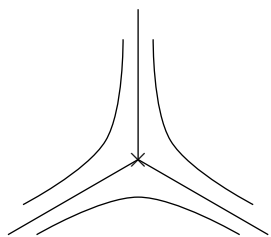
\begin{figure}
\begin{center}
\begin{pspicture}(-1.73,-1)(1.73,2)
\psset{linewidth=0.5pt}
%
\psline(0,0)(0,2)
\psline(0,0)(1.73,-1)
\psline(0,0)(-1.73,-1)
%
\pscurve(1.35,-1)(0,-0.5)(-1.35,-1)
\pscurve(0.2,1.6)(0.4325,0.25)(1.53,-0.6)
\pscurve(-0.2,1.6)(-0.4325,0.25)(-1.53,-0.6)
\rput[c]{0}(0,0){\small $\times$}
\end{pspicture}
\end{center}
\caption{Foliation around a simple zero.}
\label{fig:foliation1}
\end{figure}

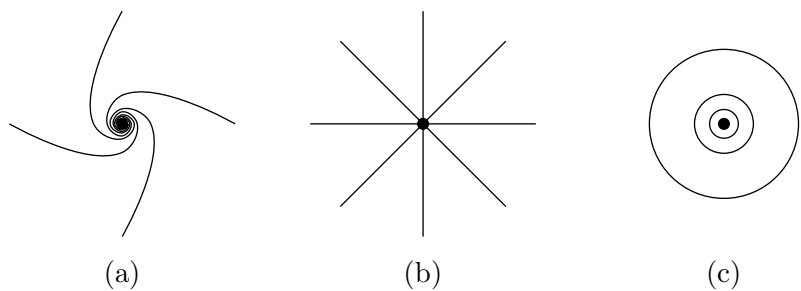
\begin{figure}
\begin{center}
\begin{pspicture}(-1.5,-2.2)(9.5,1.5)
\psset{linewidth=0.5pt}
\psset{fillstyle=solid, fillcolor=black}
\pscircle(0,0){0.08} 
\psset{fillstyle=none}
%
%
\psecurve(1.5,0)(1.5,0)(0,0.38)(-0.2,0)
(0,-0.16)(0.12,0)(0,0.1)(-0.08,0)(0,-0.06)
\psecurve(-1.5,0)(-1.5,0)(0,-0.38)(0.2,0)
(0,0.16)(-0.12,0)(0,-0.1)(0.08,0)(0,0.06)
\psecurve(0,-1.5)(0,-1.5)(0.38,0)(0,0.2)
(-0.16,0)(0,-0.12)(0.1,0)(0,0.08)(-0.06,0)
\psecurve(0,1.5)(0,1.5)(-0.38,0)(0,-0.2)
(0.16,0)(0,0.12)(-0.1,0)(0,-0.08)(0.06,0)
\rput[c]{0}(0,-2){\small (a)}

\psset{fillstyle=solid, fillcolor=black}
\pscircle(4,0){0.08} 
\psset{fillstyle=none}
\psline(5.5,0)(2.5,0)
\psline(4,1.5)(4,-1.5)
\psline(5.1,1.1)(2.9,-1.1)
\psline(5.1,-1.1)(2.9,1.1)
\rput[c]{0}(4,-2){\small (b)}
\psset{fillstyle=solid, fillcolor=black}
\pscircle(8,0){0.08} 
\psset{fillstyle=none}
\pscircle(8,0){0.2}
\pscircle(8,0){0.4}
\pscircle(8,0){1}
\rput[c]{0}(8,-2){\small (c)}
\end{pspicture}
\end{center}
\caption{Patterns of foliation around a double pole.}
\label{fig:foliation2}
\end{figure}

\begin{figure}
\begin{center}
\begin{pspicture}(-1.5,-1.5)(1.5,1.5)
\psset{linewidth=0.5pt}
\psset{fillstyle=solid, fillcolor=black}
\pscircle(0,0){0.08} 
\psset{fillstyle=none}
\psline(1.5,0)(-1.5,0)
\psline(0,1.5)(0,-1.5)
%
\pscurve(0,0)(0.08,0.64)(0.4,0.86)
(0.76,0.76)(0.86,0.4)(0.64,0.08)(0,0)
\pscurve(0,0)(0.04,0.32)(0.2,0.43)
(0.38,0.38)(0.43,0.2)(0.32,0.04)(0,0)
\pscurve(0,0)(0.08,-0.64)(0.4,-0.86)
(0.76,-0.76)(0.86,-0.4)(0.64,-0.08)(0,0)
\pscurve(0,0)(0.04,-0.32)(0.2,-0.43)
(0.38,-0.38)(0.43,-0.2)(0.32,-0.04)(0,0)
\pscurve(0,0)(-0.08,0.64)(-0.4,0.86)
(-0.76,0.76)(-0.86,0.4)(-0.64,0.08)(0,0)
\pscurve(0,0)(-0.04,0.32)(-0.2,0.43)
(-0.38,0.38)(-0.43,0.2)(-0.32,0.04)(0,0)
\pscurve(0,0)(-0.08,-0.64)(-0.4,-0.86)
(-0.76,-0.76)(-0.86,-0.4)(-0.64,-0.08)(0,0)
\pscurve(0,0)(-0.04,-0.32)(-0.2,-0.43)
(-0.38,-0.38)(-0.43,-0.2)(-0.32,-0.04)(0,0)
\end{pspicture}
\end{center}
\caption{Foliation  around a pole of order 
  $m\geq 3$. The case $m=6$ is shown.}
\label{fig:foliation3}
\end{figure}
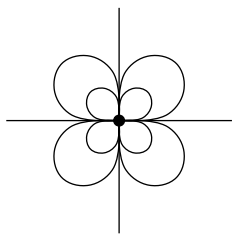

Secondly, we focus on global properties of the
trajectories of $\phi$. It is known that every 
trajectories fall into exactly 
one of the following five types 
(\cite[Section 3.4]{Bridgeland13}): 
\begin{itemize}
\item[(a).] %
A {\em saddle trajectory} flows into points 
in $P_0$ at both ends. 
\item[(b).] %
A {\em separating trajectory} flows into a point in $P_0$ 
at one end, and a point in $P_{\infty}$ at the other end.
\item[(c).] %
A {\em generic trajectory} flows into points 
in $P_{\infty}$ at both ends.
\item[(d).] %
A {\em closed trajectory} is a simple closed curve 
in $\Sigma \setminus P$.
\item[(e).] %
A {\em divergent trajectory\/} has the limit set 
consisting of more than one point
in at least one direction.
\end{itemize}
A Stokes curve is one of a saddle trajectory, a separating trajectory,
and a divergent trajectory.  

Saddle trajectories play important roles in this article. 
Typically, there are two kinds of saddle trajectories: 
\begin{itemize}
\item[(a).] %
A {\em regular saddle trajectory} connects 
two different points in $P_0$. 
An example appears in 
Figure \ref{fig:examples-of-Stokes-graphs} (e).
\item[(b).] %
A {\em degenerate saddle trajectory} 
forms a loop around a double pole $p \in P_{\infty}$. 
An example appears in 
Figure \ref{fig:examples-of-Stokes-graphs} (h).
\end{itemize} 
In addition to degenerate saddle trajectories, 
other kinds of loop-type saddle trajectories may appear. 
For example, a Stokes curve emanating 
from $a \in P_0$ may return to the same point $a$ 
after encircling  several points in $P$. 
Such an example is discussed in 
\cite[Section 10]{Gaiotto09}, 
but we will not consider these cases. 
In this paper we will concentrate on the following cases:
\begin{ass} 
\label{assumption:at-most-one-saddle}
The number of the saddle trajectories of $\phi$ is at most one.
\end{ass} 

%
%
%
%
%
%

Under Assumptions \ref{assumption:zeros and poles} 
and \ref{assumption:at-most-one-saddle}, 
a saddle trajectory must be either 
a regular or a degenerate saddle trajectory 
(see \cite[Proposition 10.4]{Bridgeland13}). 
Moreover, we can show that divergent 
trajectories never appear in this case.
\begin{lem} \label{lemma:no-divergent-trajectory}
Under Assumptions \ref{assumption:zeros and poles} and 
\ref{assumption:at-most-one-saddle}, 
$\phi$ has no divergent trajectories. 
\end{lem}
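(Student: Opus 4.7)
The plan is to argue by contradiction using the classical Strebel structure theorem for horizontal trajectories, combined with the refined picture for complete Gaiotto--Moore--Neitzke differentials obtained in \cite{Bridgeland13}. Suppose $\phi$ admits a divergent trajectory $\ell$. Then by Strebel's theorem, the $\omega$-limit set of $\ell$ in at least one time direction contains an open subset of $\Sigma \setminus P$, so $\ell$ is contained in a maximal open \emph{spiral} (recurrent) domain $D$, entirely foliated by recurrent trajectories of $\phi$. The boundary $\partial D$ is a compact subset of $\Sigma$ of measure zero which decomposes as a finite graph whose edges are critical trajectories (saddle and/or separating trajectories) and whose vertices lie in $P$.

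Next, I would rule out the possibility that any point of $P_{\infty}$ lies in $\overline{D}$. This uses Assumption \ref{assumption:zeros and poles} together with the local trajectory pictures around a pole recalled in Figures \ref{fig:foliation2} and \ref{fig:foliation3}: a neighbourhood of a double pole is filled either by logarithmic spirals winding onto the pole, by closed trajectories encircling it, or by radial arcs entering it, while a neighbourhood of a pole of order $m \geq 3$ is foliated by trajectories tangent to $m-2$ prescribed directions. None of these local models supports recurrent trajectories accumulating from outside. Hence $\partial D$ consists entirely of saddle trajectories meeting at simple turning points, and in particular contains no separating trajectory.

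Then I would invoke the combinatorial constraint that the boundary of a spiral domain of a complete GMN differential must contain at least two distinct saddle trajectories. At each simple turning point exactly three horizontal half-trajectories emerge (Figure \ref{fig:foliation1}), so any vertex of $\partial D$ has valence at most three. If $\partial D$ consisted of a single saddle trajectory $s$, then one connected boundary component of $D$ would be either the single arc $s$ (which cannot separate $\Sigma$ into a region on one side foliated by non-critical trajectories) or the closed loop obtained by gluing the two ends of $s$ at a common turning point; in the latter case the Strebel classification identifies the bounded complementary region as a half-plane domain (around a higher-order pole), a ring domain (around a double pole), or a strip domain, none of which is spiral. Hence $\partial D$ must contain at least two saddle trajectories, contradicting Assumption \ref{assumption:at-most-one-saddle} and establishing the lemma.

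The main obstacle is the last step: rigorously excluding a spiral domain bounded by a single saddle connection. The cleanest route is to invoke the decomposition theorem for complete GMN differentials developed in \cite[Section 11]{Bridgeland13}, rather than to reproduce the Strebel-style case analysis by hand; the delicate point is that under Assumption \ref{assumption:zeros and poles} every pole locally forces a ring or half-plane domain, which is incompatible with recurrence across its boundary.
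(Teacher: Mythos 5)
Your overall strategy parallels the paper's: reduce a divergent trajectory to the existence of a spiral domain, observe that the boundary of a spiral domain is made of saddle trajectories, and derive a contradiction from Assumption \ref{assumption:at-most-one-saddle}. However, the step you use to close the contradiction is different from the paper's, and your version has a gap which you yourself flag at the end of your write-up.

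Your key claim is that the boundary of a spiral domain must contain \emph{at least two distinct} saddle trajectories. The case analysis you offer for this is incomplete. When the single saddle trajectory $s$ is a loop based at one simple zero $a$, you assert that ``the Strebel classification identifies the bounded complementary region as a half-plane domain, a ring domain, or a strip domain.'' This does not address the possibilities that $D$ is the \emph{other} complementary region of the loop, or that the loop fails to separate $\Sigma$ at all; nor does it explain why the list of candidate domain types applies to a complementary region of a saddle loop rather than to a Stokes region of a saddle-free differential, which is where that trichotomy comes from in this paper. You acknowledge this as ``the main obstacle'' and propose to repair it by citing \cite[Section 11]{Bridgeland13}, but you do not carry that out, so the claim remains unproved. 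In particular, your statement that a spiral domain needs $\geq 2$ saddle connections is strictly stronger than what is needed and is not obviously a citable fact.

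The paper's actual proof avoids your problematic step entirely. It splits off the saddle-free case (handled by \cite[Lemma 3.1]{Bridgeland13}) and, in the case of exactly one saddle trajectory, uses the classification of domains from \cite[Section 3.4]{Bridgeland13} to conclude that the only domain with boundary consisting of saddle trajectories (given only one is available) is a \emph{degenerate ring domain}, and then notes that a degenerate ring domain is foliated by closed trajectories, not divergent ones. This sidesteps any need to count saddle connections on the boundary of a spiral domain. If you want to salvage your route, you would need to actually prove (or locate a precise citable statement for) the ``at least two saddle connections'' claim in the GMN setting with simple zeros; as written, the loop case is not ruled out.
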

\begin{proof}
If $\phi$ does not have any saddle trajectory, 
then the statement is proved in 
\cite[Lemma 3.1]{Bridgeland13}. 
Assume that $\phi$ has a unique saddle trajectory. 
If a divergent trajectory appears, the interior of 
the closure of the divergent trajectory gives a domain 
called a ``spiral domain". It is known that 
the boundary of such a spiral domain must consist 
of a number of saddle trajectories
(see \cite[Section 3.4]{Bridgeland13}). 
Since we have assumed that the number of saddle trajectories 
is exactly one, a domain whose boundary consists of 
saddle trajectories must be a ``degenerate ring domain" 
(see \cite[Section 3.4]{Bridgeland13} or below). 
Then we have a contradiction because any trajectory 
in a degenerate ring domain must be a closed trajectory, 
which is not a divergent trajectory.  
\end{proof}

Therefore, under Assumptions 
\ref{assumption:zeros and poles} and 
\ref{assumption:at-most-one-saddle}, 
a Stokes curve must be a saddle trajectory or 
a separating trajectory. In other words, 
a Stokes curve emanating from a 
turning point must flows into a point in $P$, 
and these objects define a graph on $\Sigma$.
\begin{defn}[{\cite[Definition 2.10]{Kawai05}}]
\begin{itemize}
\item %
The {\it Stokes graph of ${\mathcal L}$} 
is a graph in $\Sigma$ whose vertices 
are the points in $P$, and whose edges are the Stokes curves 
of ${\mathcal L}$. The Stokes graph is denoted by $G$.
\item %
The interior of each face of the Stokes graph $G$
is called a {\em Stokes region} of $G$. 
\end{itemize}
\end{defn}

We sometimes write $G = G(\phi)$ for the Stokes graph 
and call it the {\em Stokes graph of $\phi$}
when we want to emphasize the dependence on $\phi$. 
If the Stokes graph $G$ does not 
have any saddle trajectory, $G$ 
is said to be {\em saddle-free}, and then
$\phi$ is also said to be {saddle-free}, 
following \cite[Section 3.5]{Bridgeland13}.
Under Assumptions \ref{assumption:zeros and poles} 
and \ref{assumption:at-most-one-saddle},
the Stokes regions of $G$ are classified as follows
(\cite[Section 3.4]{Bridgeland13}):
\begin{itemize}
\item[(a).] %
A {\em horizontal strip} is equivalent to a region 
\[
\{ w \in {\mathbb C}~|~ a < {\rm Im}(w) < b \} 
\quad(a, b \in {\mathbb R})
\] 
equipped with the differential $dw^{\otimes2}$.
It is swept out by generic trajectories which connect 
two (not necessarily distinct) 
poles of arbitrary order $m \ge 2$.
\item[(b).] %
A {\em half plane} is equivalent to the upper half plane 
\[
\{ w \in {\mathbb C}~|~ 0 < {\rm Im}(w)  \} 
\]
equipped with the differential $dw^{\otimes2}$.
It is swept out by generic trajectories which connect 
a fixed pole of order $m \ge 3$ to itself.
\item[(c).] %
A {\em degenerate ring domain} is equivalent to a region 
\[
\{ w \in {\mathbb C}~|~ 0 < |w| < a \} 
\quad(a \in {\mathbb R})
\]
equipped with the differential 
$r dw^{\otimes2}/w^2$ 
for some $r \in {\mathbb R}_{<0}$.
It is swept out by closed trajectories, and its 
boundary consists of a degenerate saddle trajectory 
and the double pole lying inside of the 
degenerate saddle trajectory. 
\end{itemize}
For example,  all three Stokes regions in 
Figure \ref{fig:examples-of-Stokes-graphs} (a) are 
half planes. On the other hand,  all three Stokes 
regions in Figure \ref{fig:examples-of-Stokes-graphs} (c) 
are horizontal strips. 
In Figure \ref{fig:examples-of-Stokes-graphs} (b) 
there are five half planes near $z=\infty$ and 
two horizontal strips. An example of a degenerate 
ring domain can be found in Figure 
\ref{fig:examples-of-Stokes-graphs} (h).

In Section \ref{section:Borel-summability} 
we will explain the relationship between 
the geometry of the trajectories of $\phi$ and the 
Borel summability of the WKB solutions. 

In the subsequent discussions we will consider 
not only the usual Borel resummation 
but also the Borel resummation in a direction 
$\theta \in {\mathbb R}$ as explained 
in Section \ref{section:Borel-resummation}. 
{ 
Lemma \ref{lemma:0-sum-and-theta-sum} shows that 
the Borel summability of the formal power 
series $S^{\rm reg}_{\rm odd}(z,\eta)$ in the direction 
$\theta$ is equivalent to the Borel summability of 
$S^{\rm reg}_{\rm odd}(z,e^{i\theta}\eta)$.
Actually, $S^{\rm reg}_{\rm odd}(z,e^{i\theta}\eta)$ 
coincides with the formal power series 
\eqref{eq:Soddreg} defined from 
the Schr{\"o}dinger equation 
\begin{equation} 
\left( \frac{d^2}{dz^2} - \eta^2 
e^{2 i \theta}Q(z,e^{i\theta}\eta) \right) \psi = 0.
\end{equation}
(See Lemma \ref{lemma:Sodd-theta-and-Sodd} below.)
Therefore, the Borel summability of 
$S^{\rm reg}_{\rm odd}(z,\eta)$
(and of the WKB solutions) 
in the direction $\theta$ is relevant to 
the geometry of trajectories of 
the quadratic differential 
\begin{equation} \label{eq:phi-theta}
\phi_\theta = e^{2 i \theta} \phi.
\end{equation} 
}
Here $\phi$ is the original 
quadratic differential associated with ${\mathcal L}$.
Since the quadratic differential 
$\phi_{\theta}$ also satisfies 
Assumption \ref{assumption:zeros and poles}, 
trajectories of $\phi_{\theta}$ have the same 
properties explained in this subsection. 
Define the {\em Stokes curves in the direction $\theta$} 
emanating from a turning point $a \in P_0$ by 
\begin{equation}
{\rm Im}\left(e^{i\theta} 
\int_a^z \sqrt{Q_0(z)}dz \right) = 0,
\end{equation}
and also define the
{\em Stokes graph in the direction $\theta$} 
by the graph consists of the Stokes curves in 
the direction $\theta$ and the points in $P$. 
The Stokes graph in the direction $\theta$ is 
denoted by $G_{\theta}$ $(= G(\phi_{\theta}))$.

If we vary the direction $\theta$ continuously, 
the topology of the Stokes graph $G_{\theta}$ 
changes
when a saddle trajectory appears. 
Let us explain the phenomenon for an example 
$\phi_{\theta} = e^{2i\theta}(1-z^2)dz^{\otimes2}$ 
defined on ${\mathbb P}^1$
(see Figure \ref{fig:examples-of-Stokes-graphs} (d)--(f)). 
If $\theta \ne 0$ and $|\theta|$ is sufficiently small, 
there are five Stokes regions; one is a horizontal strip 
and the other four are half planes 
(see in Figure \ref{fig:examples-of-Stokes-graphs} (d), (f)). 
As we vary $\theta$ continuously, the Stokes graph 
deforms continuously  as long as $\theta \ne 0$. 
However, when $\theta = 0$, the horizontal strip 
disappears from the Stokes graph and the number of 
Stokes regions becomes four; all  Stokes regions 
are half planes as shown in Figure 
\ref{fig:examples-of-Stokes-graphs} (e).  
Moreover, the topologies of the Stokes graphs $G_{\theta}$ 
for $\theta > 0$ and $\theta < 0$ are different. 
A similar change of the topology  is also 
observed when a degenerate saddle trajectory appears 
(see Figure \ref{fig:examples-of-Stokes-graphs} (g)--(i)).    
These are typical examples of
the phenomenon which we call the {\em mutation} of Stokes graphs. 
The mutation of Stokes graphs is the theme of the paper.

\subsection{Orientation of trajectories}
\label{section:orientation}


The inverse image of 
the foliation \eqref{eq:theta-foliation} 
in $\Sigma\setminus{P}$ by the projection $\pi$ 
defines a foliation on 
$\hat{\Sigma}\setminus\pi^{-1}{P}$.
For a trajectory $\beta$ in 
$\Sigma$,
we  call each lift of  $\beta$ in $\hat{\Sigma}$
by $\pi$
a {\em trajectory in $\hat{\Sigma}$}.
Since the 1-form defined by $\sqrt{Q_0(z)}dz$ 
is single-valued on $\hat{\Sigma}$, trajectories in 
$\hat{\Sigma}\setminus\pi^{-1}{P}$ has the 
{\em orientation} defined by the following rule; 
the real part of the function 
$\int^{z} \sqrt{Q_{0}(z)}dz$ increases 
along the trajectory in the positive direction. 
Since the covering involution $\tau$ reverses 
the sign of $\sqrt{Q_0(z)}$, the orientation 
of a trajectories in $\hat{\Sigma}$ is also 
reversed by $\tau$. Figure \ref{fig:orientation} 
depicts examples of the orientation in the first sheet, 
projected to $\Sigma$ by $\pi$. The orientation 
is well-defined on $\hat{\Sigma}$, but its 
projection has a discontinuity on the branch cut. 
When we discuss the orientation, we assign the symbols 
$\oplus$ and $\ominus$ to the asymptotic directions 
of trajectories entering points of $P_{\infty}$ so that 
the trajectories with positive directions flows from 
$\ominus$ to $\oplus$. These signs depend on the choice 
of the branch cuts and embedding $\iota$, and the covering 
involution $\tau$ exchanges all signs simultaneously. 

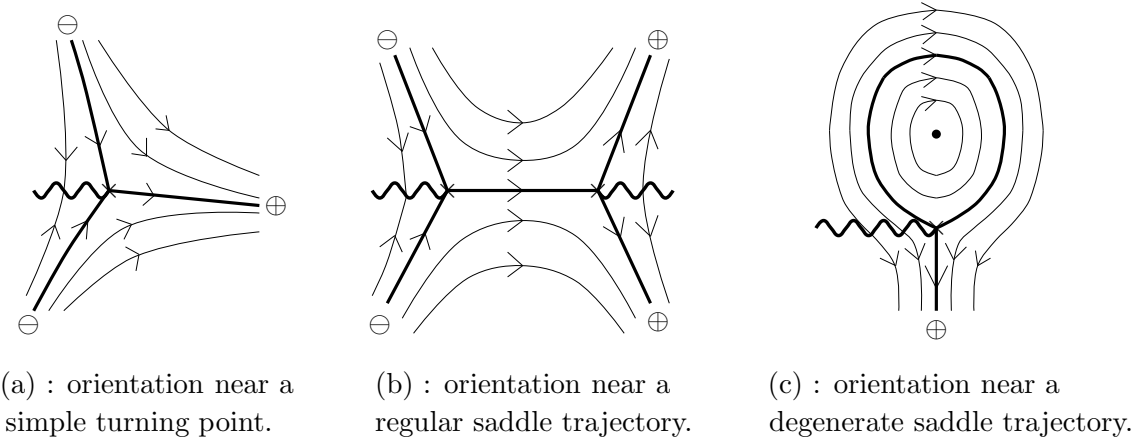
\begin{figure}
\begin{center}
\begin{pspicture}(0.5,-3)(16,3)
%
%
\psset{fillstyle=solid, fillcolor=black}
\pscircle(13,0.75){0.06}
\psset{fillstyle=none}
\rput[c]{0}(2,0){\small $\times$}
\rput[c]{0}(4.22,-0.2){\small $\oplus$}
\rput[c]{0}(1.45,2.2){\small $\ominus$}
\rput[c]{0}(0.93,-1.78){\small $\ominus$}
\rput[c]{0}(2.5,-2.6){\small (a) : orientation near a} 
\rput[c]{0}(2.4,-3.1){\small simple turning point.}
\rput[c]{0}(6.5,0){\small $\times$}
\rput[c]{0}(8.5,0){\small $\times$}
\rput[c]{0}(5.7,2.0){\small $\ominus$}
\rput[c]{0}(5.6,-1.78){\small $\ominus$}
\rput[c]{0}(9.3,-1.75){\small $\oplus$}
\rput[c]{0}(9.3,+2.0){\small $\oplus$}
\rput[c]{0}(7.5,-2.6){\small (b) : orientation near a} 
\rput[c]{0}(7.65,-3.1){\small regular saddle trajectory.}
\rput[c]{0}(13,-0.5){\small $\times$}
\rput[c]{0}(13,-1.85){\small $\oplus$}
\rput[c]{0}(12.7,-2.6){\small (c) : orientation near a} 
\rput[c]{0}(13.2,-3.1){\small degenerate saddle trajectory.}
\psset{linewidth=1.3pt}
\pscurve(2,0)(1.9,-0.1)(1.8,0)(1.7,0.1)
(1.6,0)(1.5,-0.1)(1.4,0)(1.3,0.1)(1.2,0)
(1.1,-0.1)(1,0)
\pscurve(6.5,0)(6.4,-0.1)(6.3,0)(6.2,0.1)
(6.1,0)(6.0,-0.1)(5.9,0)(5.8,0.1)(5.7,0)
(5.6,-0.1)(5.5,0)
\pscurve(8.5,0)(8.6,-0.1)(8.7,0)(8.8,0.1)
(8.9,0)(9.0,-0.1)(9.1,0)(9.2,0.1)(9.3,0)
(9.4,-0.1)(9.5,0)
\pscurve(13,-0.5)(12.9,-0.6)(12.8,-0.5)
(12.7,-0.4)(12.6,-0.5)(12.5,-0.6)(12.4,-0.5)
(12.3,-0.4)(12.2,-0.5)(12.1,-0.6)(12,-0.5)
(11.9,-0.4)(11.8,-0.5)(11.7,-0.6)(11.6,-0.5)
(11.5,-0.4)(11.4,-0.5)
\psset{linewidth=1.2pt}
\pscurve(2,0)(3,-0.10)(4,-0.2)
\pscurve(2,0)(1.65,1.5)(1.5,2)
\pscurve(2,0)(1.45,-0.8)(1,-1.6)
\psline(6.5,0)(8.5,0)
\psline(6.5,0)(5.8,1.8)
\psline(6.5,0)(5.7,-1.5)
\psline(8.5,0)(9.2,1.8)
\psline(8.5,0)(9.2,-1.5)
\psline(13,-0.5)(13,-1.6)
\pscurve(13,-0.5)(13.2,-0.4)(13.5,-0.2)(13.6,-0.1)
(13.8,0.2)(13.9,0.7)(13.9,0.9)(13.8,1.3)(13.7,1.5)
\pscurve(13.7,1.5)(13.6,1.6)(13.5,1.66)(13.4,1.71)
(13.3,1.75)(13,1.8)
\pscurve(13,-0.5)(12.8,-0.4)(12.5,-0.2)(12.4,-0.1)
(12.2,0.2)(12.1,0.7)(12.1,0.9)(12.2,1.3)(12.3,1.5)
\pscurve(12.3,1.5)(12.4,1.6)(12.5,1.66)(12.6,1.71)
(12.7,1.75)(13,1.8)
\psset{linewidth=0.2pt}
\pscurve(1.65,2)(2.4,0.55)(4,-0.1)
\pscurve(1.86,2)(2.8,0.8)(4,0.2)
\pscurve(1.3,2)(1.4,0)(0.9,-1.4)
\pscurve(1.2,-1.6)(2.2,-0.5)(4,-0.3)
\pscurve(1.4,-1.6)(2.3,-0.9)(4,-0.55)
\psline(2.6,-0.07)(2.5,0.1)\psline(2.6,-0.07)(2.45,-0.2)
\psline(1.88,0.6)(1.97,0.8)\psline(1.88,0.6)(1.68,0.72)
\psline(1.72,-0.4)(1.7,-0.65)\psline(1.72,-0.4)(1.5,-0.5)
\psline(2.5,0.45)(2.5,0.7)\psline(2.5,0.46)(2.3,0.49)
\psline(2.8,0.8)(2.76,1)\psline(2.8,0.8)(2.62,0.82)
\psline(2.3,-0.45)(2.1,-0.38)\psline(2.3,-0.45)(2.2,-0.65)
\psline(2.4,-0.85)(2.22,-0.75)\psline(2.4,-0.85)(2.36,-1.07)
\psline(1.28,-0.46)(1.1,-0.6)\psline(1.28,-0.46)(1.35,-0.65)
\psline(1.43,0.4)(1.58,0.58)\psline(1.43,0.4)(1.28,0.57)
\pscurve(5.5,-1.4)(5.68,-1)(5.95,-0.1)
\pscurve(5.6,1.7)(5.85,0.8)(5.955,-0.1)
\pscurve(6.0,1.9)(6.7,0.65)(7.5,0.4)(8.2,0.6)(9,1.9)
\pscurve(6.3,1.9)(6.9,1.1)(7.5,0.9)(8.0,1.1)(8.7,1.9)
\pscurve(5.9,-1.7)(6.7,-0.65)(7.5,-0.4)(8.2,-0.6)(9,-1.7)
\pscurve(6.05,-1.9)(6.8,-1.15)(7.5,-1)(8.2,-1.2)(8.85,-1.9)
\pscurve(9.45,-1.5)(9.10,0)(9.45,1.75)
\psline(6.2,0.75)(6.27,1.0)\psline(6.2,0.78)(6.,0.85)
\psline(6.2,-0.56)(6.27,-0.79)\psline(6.2,-0.57)(6.,-0.65)
\psline(5.9,0.52)(5.7,0.7)\psline(5.9,0.52)(5.98,0.77)
\psline(5.8,-0.6)(5.6,-0.75)\psline(5.8,-0.6)(5.9,-0.82)
\psline(9.2,0.8)(9.0,0.6)\psline(9.2,0.8)(9.3,0.55)
\psline(9.2,-0.61)(9.0,-0.42)\psline(9.2,-0.61)(9.3,-0.33)
\psline(7.5,0)(7.3,0.15)\psline(7.5,0)(7.3,-0.15)
\psline(7.5,0.4)(7.3,0.55)\psline(7.5,0.4)(7.3,0.25)
\psline(7.5,-0.4)(7.3,-0.55)\psline(7.5,-0.4)(7.3,-0.25)
\psline(7.5,0.9)(7.3,1.07)\psline(7.5,0.9)(7.3,0.75)
\psline(7.5,-1)(7.3,-1.15)\psline(7.5,-1)(7.3,-0.82)
\psline(8.8,0.8)(8.85,0.50)\psline(8.8,0.8)(8.57,0.6)
\psline(8.8,-0.62)(8.85,-0.35)\psline(8.8,-0.62)(8.57,-0.48)
\pscurve(13,-0.1)(13.2,-0.05)(13.48,0.2)(13.52,0.3)
(13.6,0.75)(13.5,1.2)(13.4,1.38)(13,1.5)
\pscurve(13,-0.1)(12.8,-0.05)(12.52,0.2)(12.48,0.3)
(12.4,0.75)(12.5,1.2)(12.6,1.38)(13,1.5)
\pscurve(13,0.2)(13.2,0.28)(13.28,0.4)(13.35,0.8)
(13.3,1)(13.2,1.15)(13,1.2)
\pscurve(13,0.2)(12.8,0.28)(12.72,0.4)(12.65,0.8)
(12.7,1)(12.8,1.15)(13,1.2)
\pscurve(13.2,-1.6)(13.3,-0.7)(13.5,-0.5)(13.7,-0.33)
(13.9,-0.13)(14,0)(14.1,0.3)(14.15,0.8)
\pscurve(14.15,0.8)(14.12,1.2)(14.0,1.5)(13.85,1.7)
(13.65,1.9)(13.3,2.05)(13,2.1)
\pscurve(12.8,-1.6)(12.7,-0.7)(12.5,-0.5)(12.3,-0.33)
(12.1,-0.13)(12,0)(11.9,0.3)(11.85,0.8)
\pscurve(11.85,0.8)(11.88,1.2)(12.0,1.5)(12.15,1.7)
(12.35,1.9)(12.7,2.05)(13,2.1)
\pscurve(13.5,-1.6)(13.55,-1)(13.8,-0.65)(13.9,-0.55)
(14,-0.45)(14.2,-0.2)(14.25,-0.1)(14.4,0.5)(14.42,0.8)
\pscurve(14.42,0.8)(14.27,1.6)(13.8,2.21)(13,2.4)
\pscurve(12.5,-1.6)(12.45,-1)(12.2,-0.65)(12.1,-0.55)
(12,-0.45)(11.8,-0.2)(11.75,-0.1)(11.6,0.5)(11.58,0.8)
\pscurve(11.58,0.8)(11.73,1.6)(12.2,2.21)(13,2.4)
\psline(13,-1.3)(13.15,-1)\psline(13,-1.3)(12.85,-1)
\psline(12.46,-1)(12.25,-0.9)\psline(12.46,-1)(12.50,-0.8)
\psline(12.76,-0.92)(12.58,-0.77)\psline(12.76,-0.92)(12.85,-0.7)
\psline(13.23,-0.92)(13.12,-0.7)\psline(13.23,-0.92)(13.4,-0.75)
\psline(13.55,-1)(13.52,-0.8)\psline(13.55,-1)(13.72,-0.9)
\psline(13,1.8)(12.83,1.67)\psline(13,1.8)(12.8,1.9)
\psline(13,2.1)(12.82,1.95)\psline(13,2.1)(12.8,2.2)
\psline(13,2.4)(12.82,2.27)\psline(13,2.4)(12.8,2.5)
\psline(13,1.5)(12.83,1.37)\psline(13,1.5)(12.8,1.57)
\psline(13,1.2)(12.85,1.06)\psline(13,1.2)(12.8,1.25)
\end{pspicture}
\end{center}
\caption{Examples of orientation of trajectories.} 
\label{fig:orientation}
\end{figure}

\subsection{Borel summability of WKB solutions}
\label{section:Borel-summability} 

Now we claim an important result concerning with the 
Borel summability of the WKB solutions for a  
{\em fixed\/} direction $\theta \in {\mathbb R}$. 
Note that, setting $\theta = 0$ in the following claims, 
we obtain results for the ``usual" Borel summability 
(Definition \ref{def:Borel-summability}).

Let $\phi$ be the quadratic differential associated with 
$\mathcal{L}$, and assume that 
$\phi_{\theta}=e^{2i\theta}\phi$ has at most one
saddle trajectory.
Let $G_{\theta}$ be the Stokes graph 
in the direction $\theta$ 
in Section \ref{section:Stokes-graphs}.
Take any Stokes region $D$ of $G_{\theta}$.  
Recall that $D$ must be one of a horizontal strip, 
a half plane or a degenerate ring domain. 
Fix a local coordinate $z$ of $\Sigma$ 
whose domain contains the Stokes region $D$. 
Recently, Koike and Sch\"afke proved the following 
statement which ensures the Borel summability 
of the formal power series 
$S_{\rm odd}^{\rm reg}(z,\eta)$ on each Stokes region
when $\Sigma$ is the Riemann sphere ${\mathbb P}^1$.
\begin{thm}[\cite{Koike13}]
\label{thm:summability-P1}
Assume that $\Sigma$ is the Riemann sphere $\bbP^1$, 
and the coefficients $\{ Q_{i}(z) \}_{i=0}^n$ 
of the potential function of \eqref{eq:Sch}
are meromorphic functions satisfying 
Assumption \ref{assumption:summability}. 
Let $G_{\theta}$ and $D$ be as above.
\begin{itemize}
\item[(a).] 
For any fixed $z \in D$, the formal power series 
$S_{\rm odd}^{\rm reg}(z,\eta)$ is Borel summable 
in the direction $\theta$ as a formal power series 
in $\eta^{-1}$. The Borel sum of 
$S_{\rm odd}^{\rm reg}(z,\eta)$ becomes holomorphic 
function of $z$ around the point in question 
(and also analytic in $\eta$ on 
$\{\eta \in {\mathbb C}~|~ |\arg\eta-\theta| < \pi/2,~
|\eta| \gg 1 \}$).
\item[(b).] 
Let $p \in P_{\infty}$ be any pole lying on 
the boundary of $D$. Then, for any fixed $z \in D$, 
the formal power series defined by the integral
\begin{equation} \label{eq:integral-from-p-to-z}
\int_{p}^z S_{\rm odd}^{\rm reg}(z,\eta)dz
\end{equation}
is Borel summable in the direction $\theta$ 
as a formal power series 
in $\eta^{-1}$ if 
the path of the integral \eqref{eq:integral-from-p-to-z} 
is contained in $D \cup \{ p \}$. 
The Borel sum becomes holomorphic function of $z$ 
around the point in question (and also analytic in $\eta$ on 
$\{\eta \in {\mathbb C}~|~ |\arg\eta-\theta| < \pi/2,~
|\eta| \gg 1 \}$). 
\end{itemize}
\end{thm}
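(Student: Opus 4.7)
The plan is to reduce to the case $\theta = 0$ using Lemma \ref{lemma:0-sum-and-theta-sum} (and the identification $S_{\rm odd}^{\rm reg}(z,e^{i\theta}\eta)$ as the odd part associated with the rescaled Schrödinger equation whose quadratic differential is $\phi_\theta$), so that it suffices to prove Borel summability in the usual direction on a horizontal strip, half plane, or degenerate ring domain of the Stokes graph $G$. This reduces the problem to an assertion about the Borel transform $S_{{\rm odd},B}^{\rm reg}(z,y)$ viewed as a function of the Borel variable $y$ for each fixed $z \in D$: I must show it extends holomorphically to a neighborhood of the positive real axis and satisfies exponential bounds there.

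First I would substitute the formal series $S(z,\eta) = \sum_{n \geq -1} \eta^{-n} S_n(z)$ into the Riccati equation \eqref{eq:Riccati} and take a Borel transform; the recursion \eqref{eq:recursion} then turns into a convolution-type equation for $S_B(z,y)$, which can be recast as a linear integral equation for the regular part. From Proposition \ref{proposition:integrability} together with Assumption \ref{assumption:summability} one obtains Gevrey-1 estimates for $\{ S_{{\rm odd},n}^{\rm reg}(z) \}$ on compact subsets of $D$, yielding convergence of the Borel transform near $y = 0$ and holomorphy in both variables in a small joint polydisc.

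The heart of the argument — and the expected main obstacle — is the global analytic continuation of $S_{{\rm odd},B}^{\rm reg}(z,y)$ along the positive real axis in $y$. Here one exploits the fact that the convolution equation, under the change of variables $w = \int^z \sqrt{Q_0}\, dz$, becomes a transport equation whose characteristics are precisely the trajectories of $\phi$ projected to horizontal lines in the $w$-plane. Since $D$ is a Stokes region of $G$ (horizontal strip, half plane, or degenerate ring domain, with the saddle trajectory if any lying \emph{outside} $D$), the characteristics starting in $D$ never collide with turning points for $\mathrm{Im}(w) = \mathrm{const}$, so the Borel transform can be continued indefinitely along ${\rm Re}(y) > 0$. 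Combining this with Grönwall-type estimates derived from the transport equation produces the required exponential bound $|S_{{\rm odd},B}^{\rm reg}(z,y)| \le C_1 e^{C_2 |y|}$ in a neighborhood of $\mathbb{R}_{\geq 0}$, and joint holomorphy in $z$ follows because the estimates are uniform on compact subsets of $D$. Taking the Laplace integral then gives a Borel sum that is holomorphic in $z$ on $D$ and analytic in $\eta$ in the sector $|\arg\eta - \theta| < \pi/2$, $|\eta| \gg 1$.

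For part (b), once (a) is established it suffices to integrate $S_{\rm odd}^{\rm reg}(z,\eta)$ along a path in $D \cup \{p\}$ from $p$ to $z$; the only non-routine point is the behavior at the pole $p$. Using Proposition \ref{proposition:integrability}, the integrand is integrable at $p$ term-by-term, and since the coefficients $S_{{\rm odd},n}^{\rm reg}(z)$ satisfy the asymptotic bound \eqref{eq:integrability-estimate} uniformly in $n$ (with the same $\ell$, obtained by inspection of the recursion), one can interchange the Borel transform and the integration over a small neighborhood of $p$. The resulting Borel transform of \eqref{eq:integral-from-p-to-z} is the path integral of $S_{{\rm odd},B}^{\rm reg}(z,y)$, which inherits holomorphy and the exponential bound from part (a) provided the path stays in $D \cup \{p\}$ so that no characteristic is crossed; this yields Borel summability in direction $\theta$ and the stated regularity of the Borel sum.
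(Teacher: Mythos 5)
The paper does not actually prove this theorem; it is quoted directly from Koike--Sch\"afke \cite{Koike13}, and the surrounding text only explains that (i) the case $\theta=0$ is the content of \cite{Koike13}, (ii) the general $\theta$ reduces to $\theta=0$ by Lemma~\ref{lemma:0-sum-and-theta-sum}, and (iii) the integrability of $S_{\rm odd}^{\rm reg}$ at poles needed for part~(b) is supplied by Proposition~\ref{proposition:integrability}. Your opening reduction to $\theta=0$ via Lemma~\ref{lemma:0-sum-and-theta-sum} and the $S^1$-rescaled potential matches the paper exactly, and your use of Proposition~\ref{proposition:integrability} for part~(b) also matches. What you do differently is attempt to replace the citation to \cite{Koike13} with an internal sketch of the underlying analytic argument; the paper deliberately does \emph{not} do this.

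Your sketch of the cited content captures the correct shape of the Koike--Sch\"afke method (Borel transform of the Riccati equation, transport along trajectories in the $w$-plane, Gr\"onwall-type control), but at the level of detail given it has genuine gaps. The step ``the characteristics starting in $D$ never collide with turning points \dots\ so the Borel transform can be continued indefinitely along $\operatorname{Re}(y)>0$'' is precisely the nontrivial assertion one has to prove; one must also control the contribution from the nonlocal convolution term, and the exponential bound near $y=\infty$ does not follow from a bare Gr\"onwall estimate without first establishing a priori control of the kernel uniformly up to the boundary of the strip or half plane. In part~(b) you assert that the bound $S_{{\rm odd},n}(z)=O(z^{\ell})$ in Proposition~\ref{proposition:integrability} holds ``with the same $\ell$, obtained by inspection of the recursion'' uniformly in $n$; the proposition only claims existence of such an $\ell$ for each $n$, and a uniformity statement would need to be extracted from the recursion and justified, since it is essential for interchanging the Borel transform with the integral near $p$. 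None of these gaps appear in the paper because the paper does not reprove \cite{Koike13}; they would have to be filled if one wanted a self-contained argument, which is not what the paper is doing here.
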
  

Actually, the above claim follows from the results 
of \cite{Koike13} and the fact that 
$S_{\rm odd}^{\rm reg}(z,\eta)$ is integrable at 
each pole (see Proposition \ref{proposition:integrability}). 
{ 
In \cite{Koike13} the above claim is proved 
in the case $\theta = 0$. The statement for general 
$\theta$ follows from the result of \cite{Koike13} 
together with Lemma \ref{lemma:0-sum-and-theta-sum}
(see Section \ref{section:Stokes-graphs}).
}
Although Theorem \ref{thm:summability-P1} is proved 
when $\Sigma=\bbP^1$ in \cite{Koike13}, 
their proof is also applicable to 
the case when $\Sigma$ is a compact Riemann 
surface $\Sigma$ since their proof uses only local properties of 
$\{ Q_{i}(z) \}_{i=0}^n$ in each Stokes region and the 
orders of poles lying on the boundary of $D$. 
Therefore, we can extend it to the following theorem.
\begin{thm} \label{thm:summability}
Theorem \ref{thm:summability-P1} also holds for any 
compact Riemann surface $\Sigma$.  
\end{thm}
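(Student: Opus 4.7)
The plan is to reduce the statement on a general compact Riemann surface $\Sigma$ to the statement on $\bbP^1$ by exploiting the fact that the proof of Theorem \ref{thm:summability-P1} in \cite{Koike13} is entirely local with respect to a Stokes region. For part (a), pick any $z_0 \in D$; for part (b), fix a path $\gamma$ from $p$ to $z_0$ inside $D \cup \{p\}$. Since $D$ is biholomorphic to either a horizontal strip, a half plane, or a degenerate ring domain (all planar open subsets of $\bbC$), and since the path $\gamma$ together with the pole $p$ is a compact subset of $D \cup \{p\}$, one can find a single local holomorphic coordinate chart $(U,z)$ on $\Sigma$ whose domain $U$ contains the point (or the path) of interest along with the relevant boundary pole.

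In this chart, the equation $\mathcal L$ takes exactly the form \eqref{eq:Sch} studied in \cite{Koike13} on $\bbP^1$, and Assumptions \ref{assumption:zeros and poles} and \ref{assumption:summability} transfer verbatim because they are coordinate-invariant local conditions on the $Q_n(z)$. The formal power series $S_{\rm odd}^{\rm reg}(z,\eta)$ of Proposition \ref{proposition:integrability} is constructed by the recursion \eqref{eq:recursion}, which is also a local construction. Consequently, the formal objects whose Borel summability is in question on $\Sigma$ are, when restricted to the chart $U$, literally the same formal objects constructed from a Schr\"odinger equation on the affine chart of $\bbP^1$ obtained by treating $z$ as a global coordinate.

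One then inspects the proof in \cite{Koike13} and checks that each of its ingredients — the inductive bounds on $S_{{\rm odd},n}^{\rm reg}(z)$ from the recursion \eqref{eq:recursion}, the integrability at a boundary pole from Assumption \ref{assumption:summability} (cf.\ Proposition \ref{proposition:integrability}), the geometric control of the path of integration by the trajectories of $\phi$ inside $D$, and the analytic continuation and exponential-type estimate on the Borel transform in the $y$-plane — appeals only to local data in $D$ and to the order of poles on $\partial D$. None of these steps invokes any global feature of $\bbP^1$ (for example, its genus, its projective structure, or its global coordinate at infinity). Therefore the Koike--Sch\"afke proof applies verbatim in the chart $U$ and yields the desired Borel summability at $z_0$, both the holomorphy of the Borel sum in $z$ near $z_0$ and its analyticity in $\eta$ on the sector $\{|\arg\eta-\theta|<\pi/2,\ |\eta|\gg 1\}$.

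The passage from $\theta = 0$ to general $\theta$ is the same reduction used in Theorem \ref{thm:summability-P1}: by Lemma \ref{lemma:0-sum-and-theta-sum}, Borel summability of $S_{\rm odd}^{\rm reg}(z,\eta)$ in the direction $\theta$ is equivalent to Borel summability in the usual sense of the series attached to the rescaled equation with potential $e^{2i\theta}Q(z,e^{i\theta}\eta)$, whose associated quadratic differential is $\phi_\theta = e^{2i\theta}\phi$. The main anticipated obstacle is purely bookkeeping — namely verifying that every estimate in \cite{Koike13} is genuinely intrinsic to the chart $U$ and does not implicitly rely on a global coordinate on $\bbP^1$ — but since their argument is presented locally in a coordinate $z$ on a Stokes region, this check should be routine.
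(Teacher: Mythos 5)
Your proposal is correct and follows essentially the same strategy as the paper: the paper's justification of this theorem is the single observation, stated just before the theorem, that the Koike--Sch\"afke argument uses only local properties of the $Q_i(z)$ in each Stokes region and the orders of the poles on its boundary, and hence applies verbatim on any compact Riemann surface. Your write-up spells out the same point in more detail (the reduction to a coordinate chart containing the point or path in question, the coordinate-invariance of Assumptions \ref{assumption:zeros and poles} and \ref{assumption:summability}, the locality of the recursion \eqref{eq:recursion}, and the reduction from general $\theta$ to $\theta = 0$ via Lemma \ref{lemma:0-sum-and-theta-sum}), but introduces no ideas beyond those the paper implicitly relies on.
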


Since Stokes regions are independent of the choice 
of the local coordinate, the notion of Borel summability 
is also independent of the choice. 
If $z$ lies on a Stokes curve in the direction $\theta$, 
the trajectory of $\phi_{\theta}$ passing 
through $z$ flows into a turning point at one end. 
The proof of Theorem \ref{thm:summability-P1} by 
\cite{Koike13} is not applicable to such a situation.

Next we discuss the Borel summability of the WKB solutions 
in a fixed direction $\theta \in {\mathbb R}$. 
Since the WKB solutions are defined 
by integrating $S_{\rm odd}(z,\eta)dz$ along a path 
on the Riemann surface $\hat{\Sigma}$, 
the Borel summability of the WKB solutions 
is more delicate than that of 
$S_{\rm odd}^{\rm reg}(z,\eta)$ explained above. 
To state the criterion of the Borel summability 
of the WKB solutions proposed by Koike and Sch\"afke, 
we introduce the notion of an admissible path. 
Set $\hat{P}_{0} = \pi^{-1}(P_{0})$, 
$\hat{P}_{\infty} = \pi^{-1}(P_{\infty})$ and 
$\hat{P} = \hat{P}_0 \cup \hat{P}_{\infty}$.
\begin{defn}
A path $\beta$ on $\hat{\Sigma}\setminus{\hat{P}_{0}}$ 
is said to be {\em admissible in the direction $\theta$}
if the projection of $\beta$ to $\Sigma$ by $\pi$ 
either never intersects with the Stokes graph 
$G_{\theta}$, or intersects with $G_{\theta}$ 
only at points in ${P}_{\infty}$.
\end{defn}
Especially, any generic trajectory and any 
closed trajectory of $\phi_{\theta}$ are admissible 
in the direction $\theta$. 
For a given path on $\hat{\Sigma}$ which is not 
admissible, we may find a decomposition of the path
into a finite number of admissible paths as follows.
\begin{lem}
\label{lemma:admissible-decomposition}
Let $\beta$ be a path on 
$\hat{\Sigma}\setminus{\hat{P}_0}$ with end-points
$\hat{z}_1, \hat{z}_2 \in \hat{\Sigma}\setminus{\hat{P}_0}$
satisfying the following conditions: 
\begin{itemize}
\item %
The end-point $\hat{z}_{1}$ either does not lie on the 
Stokes graph $G_{\theta}$ or a point in $\hat{P}_{\infty}$.
The other end-point $\hat{z}_{2}$ also satisfies 
the same condition.
\item %
$\beta$ never intersects with a saddle trajectory
of $\phi_{\theta}$.
\end{itemize}
Then, $\beta$ has a decomposition 
into a finite number of paths 
$\beta = \beta_{1} + \cdots + \beta_{N}$ 
in the relative homology group 
$H_{1}(\hat{\Sigma}\setminus{\hat{P}_0},\hat{P}_{\infty}\cup
\{\hat{z}_{1},\hat{z}_{2}\};\bbZ)$ and each summand 
$\beta_{i}$ ($1 \le i \le N$) is admissible 
in the direction $\theta$. 
\end{lem}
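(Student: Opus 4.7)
The plan is to first put $\beta$ into general position so that its projection to $\Sigma$ meets the Stokes graph $G_\theta$ transversally at finitely many interior points of separating trajectories, and then, at each such crossing, locally reroute $\beta$ through the associated pole so as to cut $\beta$ at a point of $\hat{P}_\infty$. The decomposition is then just reading off the resulting sub-arcs.

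First, by a small isotopy of $\beta$ rel endpoints inside $\hat{\Sigma}\setminus \hat{P}_0$, I may assume that $\pi(\beta)$ meets $G_\theta$ transversally at finitely many interior points $q_1,\ldots,q_N$ of edges of $G_\theta$, none of which lies in $P$. Such an isotopy exists because the endpoints $\hat z_1,\hat z_2$ are already in the allowed set (either off $G_\theta$ or in $\hat P_\infty$), $\beta$ avoids $\hat P_0$, and $\beta$ does not meet any saddle trajectory by hypothesis. Under Assumptions~\ref{assumption:zeros and poles} and~\ref{assumption:at-most-one-saddle}, combined with Lemma~\ref{lemma:no-divergent-trajectory}, every edge of $G_\theta$ is either a saddle trajectory or a separating trajectory; hence each $q_i$ must lie on a separating trajectory $\gamma_i$ that flows from a turning point $a_i\in P_0$ into a pole $P_i\in P_\infty$.

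Next I perform the rerouting. Fix a disk $U_i\subset\Sigma$ around $P_i$, small enough that the $U_i$'s are pairwise disjoint and each contains $q_i$ (taking the disk along the separating trajectory towards $P_i$). From the local foliation pictures around poles (Figures~\ref{fig:foliation2}(a,b) and~\ref{fig:foliation3}), the pole $P_i$ is accessible from both sides of $\gamma_i$ within $U_i$; the only case in which this would fail is a double pole with residue in $i\mathbb{R}$ (Figure~\ref{fig:foliation2}(c)), but such a pole is not reached by any separating trajectory, so it does not arise among the $P_i$. Lift the picture to $\hat{\Sigma}$ and pick a preimage $\hat P_i\in\hat{P}_\infty$ of $P_i$; inside the lifted disk, homotope the short arc of $\beta$ crossing $\pi^{-1}(\gamma_i)$ at the lift of $q_i$ into a pair of arcs entering and leaving $\hat P_i$ from opposite sides of $\pi^{-1}(\gamma_i)$. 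Because the homotopy is carried out in a disk in $\hat{\Sigma}\setminus\hat{P}_0$ avoiding all other $q_j$, it preserves the class of $\beta$ in $H_1(\hat{\Sigma}\setminus\hat{P}_0,\,\hat{P}_\infty\cup\{\hat z_1,\hat z_2\};\mathbb{Z})$, while breaking the path at the interior point $\hat P_i$.

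Performing this rerouting at all $N$ crossings simultaneously expresses $\beta$ as a concatenation $\beta_1+\cdots+\beta_{N+1}$. Each $\beta_j$ has endpoints in $\hat{P}_\infty\cup\{\hat z_1,\hat z_2\}$, and between its endpoints it is contained in the preimage of a single Stokes region of $G_\theta$, so its projection meets $G_\theta$ only at endpoints lying in $P_\infty$. Hence each $\beta_j$ is admissible in direction $\theta$, proving the claim.

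The delicate step is the local rerouting in step two: one must check, for each allowed type of pole behavior (double pole with $r_{P_i}\in\mathbb{R}$ and higher-order poles), that a separating trajectory emanates from $P_i$ in a way compatible with the existence of the disk $U_i$ and of an arc to $\hat P_i$ on each side of $\pi^{-1}(\gamma_i)$. The remaining double-pole case with $r_{P_i}\in i\mathbb{R}$ must be ruled out, which is straightforward since its foliation consists only of closed trajectories encircling $P_i$ and admits no separating trajectories ending at $P_i$. With these cases settled, the general-position argument and the homotopy rerouting are both routine.
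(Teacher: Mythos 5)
Your proposal is correct and takes essentially the same approach as the paper: both reduce to the observation that each crossing point of $\beta$ with $G_\theta$ lies on a separating trajectory ending at a pole $p$, and cut $\beta$ at $p$ as in Figure~\ref{fig:example-decomposition}. The paper phrases this as ``decompose $\beta$ into pieces crossing one Stokes curve, then split each through the pole,'' while you phrase it as ``general position plus local rerouting through the pole,'' but the underlying decomposition is identical, and your remark that degenerate-ring-domain poles (double poles with residue in $i\mathbb{R}$) cannot host separating trajectories corresponds to the paper's aside that an endpoint in a degenerate ring domain would contradict the hypotheses.
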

\begin{proof}
In the proof we regard a Stokes region as 
one of its lift in $\hat{\Sigma}$ by the projection $\pi$. 
Although two Stokes regions in $\hat{\Sigma}$
have the same projection, we distinguish them if they lie 
on different sheets of $\hat{\Sigma}$. 
Moreover, since we only consider the Stokes graph 
for a fixed $\theta$, we omit 
``in the direction $\theta$" for simplicity.

Since any point in a Stokes region and a point 
in $\hat{P}_{\infty}$ which lies on the boundary 
of the Stokes region can be connected 
by an admissible path, in the proof we may assume that 
$\beta$ never passes through a point in $\hat{P}_{\infty}$
(i.e., $\beta$ is contained 
in $\hat{\Sigma}\setminus{\hat{P}}$) 
without loss of generality.  
Especially, we may assume that
$\hat{z}_1, \hat{z}_2 \notin \hat{P}_{\infty}$.
If $\hat{z}_1$, $\hat{z}_2$ and the path $\beta$ 
are contained in the same Stokes region, 
$\beta$ is admissible by definition. 
Therefore, it suffices to consider the case that 
$\hat{z}_1$ and $\hat{z}_2$ are contained in different 
Stokes regions and the path $\beta$ connects them 
crossing finitely many Stokes curves 
which are not saddle trajectories.  
We may also assume that the Stokes regions 
containing $\hat{z}_1$ or $\hat{z}_2$
are not degenerate ring domains
(otherwise a path $\beta$ satisfying 
the assumption never exists). 

Let us consider the case that $\beta$ intersect with 
a Stokes curve just once. Since the Stokes curve is not 
a saddle trajectory, it must be a separating trajectory 
by Lemma \ref{lemma:no-divergent-trajectory}.
That is, the Stokes curve connects a point $a \in \hat{P}_0$ 
and a point $p \in \hat{P}_{\infty}$. 
Therefore, we can decompose $\beta$ into a sum of two paths 
$\beta_1 + \beta_2$ in the relative homology group, 
where $\beta_1$ (resp., $\beta_2$) connects 
$\hat{z}_1$ (resp., $\hat{z}_2$) and $p$ as 
indicated in Figure \ref{fig:example-decomposition}. 
Here we can take the path $\beta_1$ (resp., $\beta_2$)
to be admissible since the point $p$ lies on the boundary 
of the Stokes region containing $\hat{z}_1$ 
(resp., $\hat{z}_2$). 

Any path $\beta$ in $\hat{\Sigma}\setminus\hat{P}$ 
can be written by the sum of a finite number of paths 
whose each summand intersect with the Stokes curves
just once. Therefore, applying the decomposition 
as in Figure \ref{fig:example-decomposition} 
to each summand, we can find a desired decomposition
of $\beta$ by admissible paths.
\end{proof}

\begin{figure}
\begin{center}
\begin{pspicture}(-2.2,-1.5)(6,1.5)
\psset{linewidth=0.5pt}
\psset{fillstyle=none}
\psline(0,-1.2)(0,+0.6)
%
\pscurve(-1.0,-0.3)(-0.5,-0.4)(0.5,-0.2)(1.0,-0.3)
\psline(0.5,-0.2)(0.35,-0.05)
\psline(0.5,-0.2)(0.35,-0.35)
\rput[c]{0}(0,-1.2){\small $\times$}
\rput[c]{0}(0,+0.6){$\bullet$}
\rput[c]{0}(-1.0,-0.3){\tiny $\bullet$}
\rput[c]{0}(+1.0,-0.3){\tiny $\bullet$}
\rput[c]{0}(0.4,+0.7){\small $p$}
\rput[c]{0}(0.4,-1.3){\small $a$}
\rput[c]{0}(-0.5,-0.7){\small $\beta$}
\rput[c]{0}(-1.2,+0){\small $\hat{z}_1$}
\rput[c]{0}(1.2,0){\small $\hat{z}_2$}
\rput[c]{0}(2,-0.4){$=$}
\psline(4,-1.2)(4,+0.6)
%
\pscurve(3.0,-0.3)(3.5,-0.1)(4,0.6)
\pscurve(5.0,-0.3)(4.5,-0.1)(4,0.6)
\psline(3.6,0)(3.55,-0.25)
\psline(3.6,0)(3.35,-0.02)
\psline(4.6,-0.17)(4.55,0.06)
\psline(4.6,-0.17)(4.4,-0.17)
\rput[c]{0}(4,-1.2){\small $\times$}
\rput[c]{0}(4,+0.6){$\bullet$}
\rput[c]{0}(3.0,-0.3){\tiny $\bullet$}
\rput[c]{0}(+5.0,-0.3){\tiny $\bullet$}
\rput[c]{0}(4.4,+0.7){\small $p$}
\rput[c]{0}(4.4,-1.3){\small $a$}
\rput[c]{0}(3.5,-0.5){\small $\beta_1$}
\rput[c]{0}(4.5,-0.5){\small $\beta_2$}
\rput[c]{0}(2.8,+0){\small $\hat{z}_1$} 
\rput[c]{0}(5.2,0){\small $\hat{z}_2$} 
\end{pspicture}
\end{center}
\caption{An example of decomposition of a path 
which intersects with the Stokes curves just once.} 
\label{fig:example-decomposition}
\end{figure}
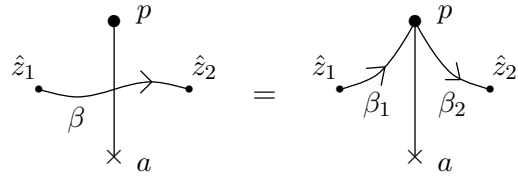

Then, a criterion of the Borel summability of 
the WKB solutions proposed by Koike and Sch\"afke
is stated as follows. 
\begin{cor} [\cite{Koike13}]
\label{cor:summability} 
\begin{itemize}
\item[(a).] %
Let $\beta$ be a path on $\hat{\Sigma}\setminus{\hat{P}_{0}}$ 
with end-points $\hat{z}_{1},\hat{z}_{2} 
\in\hat{\Sigma}\setminus{\hat{P}_{0}}$ satisfying the 
same assumption in Lemma \ref{lemma:admissible-decomposition}.
Then, the formal power series 
$\int_{\beta}S^{\rm reg}_{\rm odd}(z,\eta)~dz$ 
is Borel summable in the direction $\theta$. %
\item[(b).] %
If the Stokes graph $G_{\theta}$ is saddle-free, 
then the WKB solutions which are normalized 
as \eqref{eq:WKBsol-TP} and \eqref{eq:WKBsol-P} 
are Borel summable in the direction $\theta$ 
at any point $z$ in each Stokes region. 
The Borel sums of the WKB solutions give 
analytic solutions of \eqref{eq:Sch} on 
each Stokes region (which is also analytic in $\eta$ 
on a domain 
$\{\eta \in {\mathbb C}~|~ |\arg\eta-\theta| < \pi/2,~
|\eta| \gg 1 \}$).  
\end{itemize}
\end{cor}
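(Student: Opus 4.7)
The plan is to derive both parts from Theorem \ref{thm:summability}, Lemma \ref{lemma:admissible-decomposition}, and the algebraic closure properties of the Borel resummation operator recorded in Proposition \ref{prop:property-of-Borel-sum}.

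For (a), I would first apply Lemma \ref{lemma:admissible-decomposition} to decompose $\beta = \beta_1 + \cdots + \beta_N$ in $H_1(\hat{\Sigma}\setminus\hat{P}_0, \hat{P}_\infty\cup\{\hat{z}_1,\hat{z}_2\};\bbZ)$ into paths admissible in direction $\theta$. By definition each projected $\pi(\beta_i)$ sits inside a single Stokes region $D_i$ (touching $\partial D_i$ only at poles). Under Assumption \ref{assumption:zeros and poles} every such $D_i$ carries at least one pole on its boundary, so I pick such a pole $p_i$ for each $i$ and, joining it to the two endpoints of $\beta_i$ by paths internal to $\overline{D_i}$, write
\[
\int_{\beta_i} S_{\text{odd}}^{\text{reg}}(z,\eta)\,dz = \int_{p_i}^{\beta_i(1)} S_{\text{odd}}^{\text{reg}}\,dz - \int_{p_i}^{\beta_i(0)} S_{\text{odd}}^{\text{reg}}\,dz,
\]
the right-hand side being well-defined by Proposition \ref{proposition:integrability} and Borel summable in direction $\theta$ by Theorem \ref{thm:summability}(b) (terms for endpoints in $\hat{P}_\infty$ simply vanish). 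Summing the $N$ contributions via Proposition \ref{prop:property-of-Borel-sum}(a) gives the Borel summability of $\int_\beta S_{\text{odd}}^{\text{reg}}\,dz$.

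For (b), fix $z$ in a Stokes region $D$ of the saddle-free graph $G_\theta$. The saddle-free assumption makes the second hypothesis of Lemma \ref{lemma:admissible-decomposition} automatic for every path used below. For the normalization \eqref{eq:WKBsol-P} at a pole $p$, I split the exponent into the classical factor $\pm\eta\int_a^z\sqrt{Q_0}\,dz$, covered by the exponential clause of Definition \ref{def:Borel-summability}, and the correction $\pm\int_p^z S_{\text{odd}}^{\text{reg}}\,dz$, which is Borel summable by (a); exponentiation is legitimate via Proposition \ref{prop:property-of-Borel-sum}(c) applied to $A(t)=e^t$ after absorbing the constant term. The amplitude is factored as
\[
\frac{1}{\sqrt{S_{\text{odd}}(z,\eta)}} = \frac{\eta^{-1/2}}{Q_0(z)^{1/4}}\bigl(1+\eta^{-1}\widetilde R(z,\eta)\bigr)^{-1/2},
\]
where $\widetilde R$ is Borel summable by Theorem \ref{thm:summability}(a), and Proposition \ref{prop:property-of-Borel-sum}(c) with $A(t)=(1+t)^{-1/2}$ then applies. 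Multiplying the three Borel summable ingredients through Proposition \ref{prop:property-of-Borel-sum}(a) yields Borel summability of $\psi_\pm(z,\eta)$. The normalization \eqref{eq:WKBsol-TP} at a turning point $a$ is reduced to (a) by rewriting $\int_a^z S_{\text{odd}}\,dz = \tfrac{1}{2}\int_{\gamma_z}S_{\text{odd}}\,dz$ along the contour of Figure \ref{fig:normalized-at-TP}, whose endpoints $z,z^*$ lie off $G_\theta$ and off $\hat{P}_0$; the amplitude is handled exactly as before. That the Borel sums solve \eqref{eq:Sch} follows because $d/dz$ commutes with $\mathcal{S}_\theta$ on the joint analyticity domain supplied by Theorem \ref{thm:summability}, so the formal identity $L\psi_\pm = 0$ transports to the resummations.

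The main obstacle will be the turning-point normalization \eqref{eq:WKBsol-TP}. Even after rewriting the integral through $\gamma_z$, one must verify that the admissible decomposition produced by Lemma \ref{lemma:admissible-decomposition} can be chosen as the contour tightens around $a$, and that the pieces thereby produced meet the hypotheses of Theorem \ref{thm:summability}(b) with respect to the poles on the boundaries of the Stokes regions adjacent to $a$. This delicate uniformity near the turning point is the analytic core of the Koike--Sch\"afke result imported through Theorem \ref{thm:summability}; our plan merely arranges the combinatorics so that that input can be applied piece by piece, and then Proposition \ref{prop:property-of-Borel-sum} performs the assembly into the WKB solutions.
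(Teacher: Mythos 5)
Your proposal is correct and takes essentially the same route as the paper: decompose $\beta$ (resp.\ the contour $\gamma_z$) into admissible pieces via Lemma \ref{lemma:admissible-decomposition}, apply Theorem \ref{thm:summability} (b) to each piece, and assemble the WKB solutions with Proposition \ref{prop:property-of-Borel-sum} (a) and (c). Your closing worry about uniformity as the contour ``tightens around $a$'' is moot: $\gamma_z$ is a single fixed class in $H_1(\hat{\Sigma}\setminus\hat{P}_0,\hat{P}_{\infty}\cup\{z,z^*\})$, not a limit of contours, and Lemma \ref{lemma:admissible-decomposition} applies to it directly, with the decomposition of Figure \ref{fig:example-summability} already routing every piece through poles and away from the turning point.
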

\begin{proof}
Theorem \ref{thm:summability-P1} (b) ensures that 
a formal power series defined by integrating 
$S_{\rm odd}^{\rm reg}(z,\eta)$ along an admissible path 
is Borel summable in the direction $\theta$.
Therefore, the first claim (a) follows 
from Lemma \ref{lemma:admissible-decomposition}. 

Let us show the claim (b). When the Stokes graph is 
saddle-free, any path $\beta$ on 
$\hat{\Sigma}\setminus\hat{P}_0$ 
can be decomposed into admissible paths by 
Lemma \ref{lemma:admissible-decomposition}. 
For example, if $z$ lies on a Stokes region, then
the path $\gamma_z$ (see Figure \ref{fig:normalized-at-TP})
which determines the WKB solutions \eqref{eq:WKBsol-TP} 
is decomposed into admissible paths as depicted in 
Figure \ref{fig:example-summability}. 
Therefore the integral 
$\int_{\gamma_z}S^{\rm reg}_{\rm odd}(z,\eta)dz$ 
is Borel summable in the direction $\theta$
by Theorem \ref{thm:summability-P1} (b).
The Borel summability of the WKB 
solutions \eqref{eq:WKBsol-TP}
follows from (c) in Proposition 
\ref{prop:property-of-Borel-sum}. 
The Borel summability of the WKB solutions 
\eqref{eq:WKBsol-P} can be shown similarly. 
\end{proof}

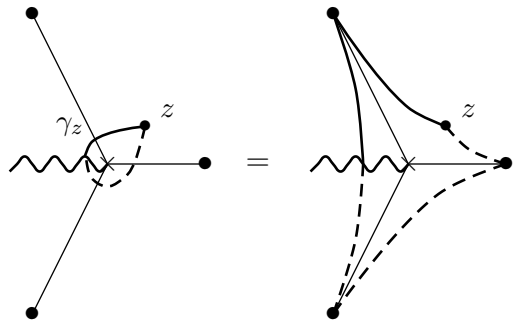
\begin{figure}
\begin{center}
\begin{pspicture}(-0.5,-2)(8,2)
%
%
\rput[c]{0}(2,0){\small $\times$}
\rput[c]{0}(3.3,0){$\bullet$}
\rput[c]{0}(1.0,2){$\bullet$}
\rput[c]{0}(1.0,-2){$\bullet$}
\rput[c]{0}(2.5,0.5){\footnotesize $\bullet$}
\rput[c]{0}(2.8,0.7){\small $z$}
\rput[c]{0}(1.5,0.5){\small $\gamma_z$}
\rput[c]{0}(4.0,0){$=$}
\rput[c]{0}(6,0){\small $\times$}
\rput[c]{0}(7.3,0){$\bullet$}
\rput[c]{0}(5.0,2){$\bullet$}
\rput[c]{0}(5.0,-2){$\bullet$}
\rput[c]{0}(6.5,0.5){\footnotesize $\bullet$}
\rput[c]{0}(6.8,0.7){\small $z$}
\psset{linewidth=1pt}
\pscurve(2,0)(1.9,-0.1)(1.8,0)(1.7,0.1)
(1.6,0)(1.5,-0.1)(1.4,0)(1.3,0.1)(1.2,0)
(1.1,-0.1)(1,0)(0.9,0.1)(0.8,0)(0.7,-0.1)
\pscurve(6,0)(5.9,-0.1)(5.8,0)(5.7,0.1)
(5.6,0)(5.5,-0.1)(5.4,0)(5.3,0.1)(5.2,0)
(5.1,-0.1)(5,0)(4.9,0.1)(4.8,0)(4.7,-0.1)
\psset{linewidth=0.5pt}
\psline(2,0)(3.3,0)
\psline(2,0)(1,2)\psline(2,0)(1,-2)
\psline(6,0)(7.3,0)
\psline(6,0)(5,2)\psline(6,0)(5,-2)
\psset{linewidth=1pt}
\pscurve(2.5,0.5)(1.8,0.3)(1.7,0.1)
\pscurve(6.5,0.5)(6,0.75)(5,2)
\pscurve(5,2)(5.3,1)(5.4,0)
\psset{linewidth=1pt, linestyle=dashed}
\pscurve(2.5,0.5)(2.45,0.3)(2.3,-0.1)(2.2,-0.2)(2.0,-0.3)
(1.8,-0.2)(1.75,-0.1)(1.72,0.05)
\pscurve(5,-2)(5.3,-1)(5.4,0)
\pscurve(5,-2)(6.4,-0.4)(7.3,0)
\pscurve(7.3,0)(6.8,0.2)(6.5,0.5)
\end{pspicture}
\end{center}
\caption{Decomposition of $\gamma_z$ 
into admissible paths.} 
\label{fig:example-summability}
\end{figure}

\begin{rem}
Suppose that the Stokes graph has a saddle trajectory. 
Even if the point $z$ does not lie on the Stokes graph, 
the path $\gamma_z$ in Figure \ref{fig:normalized-at-TP} 
can not be decomposed into admissible paths when 
$\gamma_z$ intersects with the saddle trajectory.
Therefore, we can not expect the Borel summability for 
the WKB solutions in general when a saddle trajectory appears 
in the Stokes graph. 
\end{rem}

The above statements guarantee the Borel summability 
of $S_{\rm odd}^{\rm reg}(z,\eta)$ and the WKB solutions 
in a fixed direction $\theta$. As is explained in 
Section \ref{section:Borel-resummation}, 
the rotation of the direction $\theta$ 
may break the Borel summability of the WKB solutions. 
The following claim gives an criterion for 
the invariance of the Borel sum under a 
rotation of $\theta$.

\begin{prop} \label{prop:S1-action-and-summability} 
Let $\beta$ be a path on 
$\hat{\Sigma}\setminus{\hat{P}_{0}}$ 
with end-points $\hat{z}_{1},\hat{z}_{2} 
\in\hat{\Sigma}\setminus{\hat{P}_{0}}$.
Suppose that there exist real numbers 
$\theta_1, \theta_2$ with $\theta_1 < \theta_2$ 
such that the following conditions hold.
\begin{itemize}
\item
The quadratic differential  $\phi_{\theta}$ has at most
one saddle trajectory for any $\theta_1\leq \theta \leq \theta_2$.
\item %
The end-point $\hat{z}_{1}$ either does not 
lie on the Stokes graphs $G_{\theta}$ for any
 $\theta_1 \le \theta \le \theta_2$ 
or is a point in $\hat{P}_{\infty}$. The other end-point 
$\hat{z}_{2}$ also satisfies the same condition.
\item %
The path $\beta$ never touches with a saddle trajectory
of $\phi_{\theta}$
for any $\theta_1 \le \theta \le \theta_2$.
\end{itemize}
Then, the Borel sums of the formal power series 
$f(\eta)=\int_{\beta}S_{\rm odd}^{\rm reg}(z,\eta)dz$ 
in the direction $\theta_1$ and $\theta_2$ coincide. 
That is, the following equality holds 
as analytic functions of $\eta$ 
defined on a domain containing 
$\{\eta \in {\mathbb C}~|~ 
\theta_1-\pi/2 < \arg\eta < \theta_2 + \pi/2,~|\eta| \gg 1 \}$:
\begin{equation}\label{eq:no-PSP-formula}
{\mathcal S}_{\theta_1}[f](\eta) = {\mathcal S}_{\theta_2}[f](\eta).
\end{equation}
\end{prop}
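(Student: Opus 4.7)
My plan is to combine Corollary \ref{cor:summability}(a) applied direction-by-direction with a contour-deformation argument on the Borel plane. The idea is that the three hypotheses of the Proposition are designed precisely so that, for \emph{every} intermediate $\theta \in [\theta_1,\theta_2]$, the path $\beta$ falls under the scope of Lemma \ref{lemma:admissible-decomposition}, hence of Corollary \ref{cor:summability}(a), in the direction $\theta$. Once Borel summability of $f(\eta)$ is established in every direction of the closed interval, the absence of singularities of the Borel transform $f_B(y)$ in the corresponding sector allows one to rotate the Laplace contour from the ray $\arg y = -\theta_1$ to the ray $\arg y = -\theta_2$, and the invariance \eqref{eq:no-PSP-formula} follows.

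The first step is therefore to fix an arbitrary $\theta \in [\theta_1,\theta_2]$ and verify the hypotheses of Lemma \ref{lemma:admissible-decomposition} for $\beta$ relative to $G_{\theta}$: the endpoints $\hat{z}_1,\hat{z}_2$ either miss $G_\theta$ or lie in $\hat{P}_\infty$ (by the second bullet of the hypothesis), and $\beta$ does not meet any saddle trajectory of $\phi_\theta$ (by the third bullet, together with the fact that under Assumptions \ref{assumption:zeros and poles} and \ref{assumption:at-most-one-saddle} the only Stokes curves that could obstruct an admissible decomposition are saddle trajectories, thanks to Lemma \ref{lemma:no-divergent-trajectory}). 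Consequently $\beta$ decomposes in the relevant relative homology group into a finite sum of paths admissible in the direction $\theta$, and Corollary \ref{cor:summability}(a) gives that $f(\eta)$ is Borel summable in the direction $\theta$ with Borel transform $f_B$ extending analytically along the ray $\{y = re^{-i\theta} : r\ge 0\}$ with exponential bounds.

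The second step upgrades this pointwise (in $\theta$) summability to analyticity and uniform exponential bounds on a full sector. Here I would invoke a compactness argument: for each $\theta \in [\theta_1,\theta_2]$ the summability of Theorem \ref{thm:summability} supplies a small open sub-sector around the ray $\arg y = -\theta$ together with constants $C_1(\theta), C_2(\theta)$ validating \eqref{eq:exponential-type}; covering the compact arc $\{-\theta : \theta_1 \le \theta \le \theta_2\}$ by finitely many such sub-sectors and taking the maximum of the constants, one obtains analytic continuation of $f_B$ to an open sector $\Sigma^*$ containing the closed arc, with a single exponential bound. With this in hand, Cauchy's theorem applied to a pie-slice contour in $\Sigma^*$, bounded by the two rays and an arc at radius $R\to\infty$, yields
\begin{equation}
\int_0^{\infty e^{-i\theta_1}} e^{-\eta y} f_B(y)\,dy \;=\; \int_0^{\infty e^{-i\theta_2}} e^{-\eta y} f_B(y)\,dy
\end{equation}
for $\eta$ in the common sector $\{\theta_1 - \pi/2 < \arg\eta < \theta_2 + \pi/2,\ |\eta|\gg 1\}$, since the exponential bound on $f_B$ combined with $\operatorname{Re}(\eta y) \gg |\eta||y|$ on the arc forces the arc integral to vanish in the limit. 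Adding the common constant term $f_0$ to both sides gives \eqref{eq:no-PSP-formula}, and analyticity of both sides in the sector justifies interpreting the equality as one of analytic functions.

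The principal difficulty, in my view, is the second step: converting a family of pointwise-in-$\theta$ Borel summability statements into a genuine sector-wise analyticity and a uniform exponential bound on $f_B$. Corollary \ref{cor:summability}(a) was formulated for a single direction, and a careful reading of its proof (via Lemma \ref{lemma:admissible-decomposition}, Theorem \ref{thm:summability}, and the Koike--Sch\"afke results of \cite{Koike13}) is needed to confirm that the sub-sectors and constants vary continuously with $\theta$ under the hypothesis that $\phi_\theta$ has at most one saddle trajectory throughout, so that the compactness packaging is valid. Once this uniformity is in hand, the contour rotation is routine.
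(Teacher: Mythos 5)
Your proof is essentially the same as the paper's: both apply Corollary \ref{cor:summability}(a) direction-by-direction to get Borel summability for every $\theta\in[\theta_1,\theta_2]$, conclude that $f_B$ has no singular points in the relevant sector and grows at most exponentially, and then rotate the Laplace contour. The paper states the passage from pointwise-in-$\theta$ summability to the sectorial conclusion without the compactness bookkeeping you flag, but that is the only difference.
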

\begin{proof}
Since $\beta$ satisfies the assumption of Corollary 
\ref{cor:summability} (a) for any $\theta$ satisfying 
$\theta_1 \le \theta \le \theta_2$, 
the formal power series $f(\eta)$ is Borel summable 
in all directions $\theta_1 \le \theta \le \theta_2$. 
That means that the Borel transform $f_B(y)$ of $f(\eta)$
does not have singular points in a domain 
containing the sector 
$\{y=re^{-i \theta}~|~r\ge0,\theta_1\le\theta\le\theta_2\}$,
and has an exponential growth near $y=\infty$ 
(see Definition \ref{def:Borel-summability}).
Hence, the Laplace integrals \eqref{eq:Borel sum} 
give the same analytic function of $\eta$ 
for all $\theta_1 \le \theta \le \theta_2$. 
Thus we obtain \eqref{eq:no-PSP-formula}.
\end{proof}


\begin{rem}
The ``{\em resurgence property}" 
(i.e., the endlessly continuability of the Borel transform),
which is stronger than the Borel summability,
was claimed by Ecalle in \cite{Ecalle84}, 
but not all details are clear 
(see \cite[Comment in Section 1.2]{Delabaere99}).
Still currently there are many contributions to warrant 
the resurgence property of the WKB solutions; 
e.g.,  \cite{Getmanenko09, Getmanenko11,Getmanenko11b}. 
The case where the potential function is entire on 
${\mathbb C}$ is discussed in these works. 
\end{rem}

\subsection{Connection formula for WKB solutions}
\label{section:connection-formula}

Corollary \ref{cor:summability} (b) ensures that, 
if the Stokes graph $G_{\theta}$ in 
a fixed direction $\theta$ is saddle-free, 
then the WKB solutions are Borel summable in the direction 
$\theta$ on each Stokes region of $G_{\theta}$. 
Here we show an explicit and simple connection 
formula between the Borel sums of the WKB solutions defined 
on adjacent Stokes regions found by Voros \cite{Voros83}. 
(In this subsection we do not consider the rotation of 
the direction $\theta$. The following statements 
hold for any fixed $\theta$, if the Stokes graph 
$G_{\theta}$ is saddle-free.)  

Here we specify the situation to state the connection 
formula. Assume that the Stokes graph $G_{\theta}$ 
is saddle-free. 
Let $a \in P_{0}$ be a simple turning point, and suppose 
that two Stokes regions $D_{1}$ and $D_{2}$ have a common 
boundary $C$ which is a Stokes curve emanating from $a$, 
and $D_{2}$ comes next to $D_{1}$ in the counter-clockwise 
direction with the reference point $a$. Take appropriate 
branch cuts so that $C$ does not cross any branch cut. 
Then we have two possibilities (a) and (b) shown in 
Figure \ref{fig:signed-Stokes-curves} for the sign of 
the other end-point of $C$ than $a$. For each case, 
the connection formula is formulated as follows. 
\begin{figure}
\begin{center}
\begin{pspicture}(0,-3)(12,3)
%
%
\psset{fillstyle=solid, fillcolor=black}
\psset{fillstyle=none}
\rput[c]{0}(2,0){\small $\times$}
\rput[c]{0}(3.5,0.15){\small $C$}
\rput[c]{0}(1.5,0.5){$a$}
\rput[c]{0}(3.7,-1.4){\small $D_{1}$}
\rput[c]{0}(3.7,1.5){\small $D_{2}$}
\rput[c]{0}(4.9,0.05){$\oplus$}
\rput[c]{0}(1.45,2.2){$\ominus$}
\rput[c]{0}(0.93,-1.78){$\ominus$}
\rput[c]{0}(2.4,-2.5)
{$(a) : \mathrm{Re}(e^{i\theta}
\int_{a}^{z}\sqrt{Q_{0}(z)}dz) \ge 0$ on $C$}
\rput[c]{0}(8.5,0){\small $\times$}
\rput[c]{0}(10,0.15){\small $C$}
\rput[c]{0}(8,0.5){$a$}
\rput[c]{0}(10.2,-1.4){\small $D_{1}$}
\rput[c]{0}(10.2,1.5){\small $D_{2}$}
\rput[c]{0}(11.4,0.05){$\ominus$}
\rput[c]{0}(7.95,2.2){$\oplus$}
\rput[c]{0}(7.43,-1.78){$\oplus$}
\rput[c]{0}(9.6,-2.5)
{$(b) : \mathrm{Re}(e^{i\theta}
\int_{a}^{z}\sqrt{Q_{0}(z)}dz) \le 0$ on $C$}
\psset{linewidth=1pt}
\pscurve(2,0)(1.9,-0.1)(1.8,0)(1.7,0.1)(1.6,0)(1.5,-0.1)(1.4,0)
(1.3,0.1)(1.2,0)(1.1,-0.1)(1,0)(0.9,0.1)(0.8,0)(0.7,-0.1)(0.6,0)
(0.5,0.1)(0.4,0)(0.3,-0.1)(0.2,0)(0.1,0.1)(0,0)
\pscurve(8.5,0)(8.4,-0.1)(8.3,0)(8.2,0.1)(8.1,0)(8,-0.1)(7.9,0)
(7.8,0.1)(7.7,0)(7.6,-0.1)(7.5,0)(7.4,0.1)(7.3,0)(7.2,-0.1)(7.1,0)
(7,0.1)(6.9,0)(6.8,-0.1)(6.7,0)(6.6,0.1)(6.5,0)
\psset{linewidth=0.5pt}
\pscurve(2,0)(3.5,-0.15)(4.7,0)
\pscurve(2,0)(1.7,1.5)(1.5,2)
\pscurve(2,0)(1.4,-0.8)(1,-1.6)
\pscurve(8.5,0)(10,-0.15)(11.2,0)
\pscurve(8.5,0)(8.2,1.5)(8,2)
\pscurve(8.5,0)(7.9,-0.8)(7.5,-1.6)
\end{pspicture}
\end{center}
\caption{Two possibilities of assignment of sign.} 
\label{fig:signed-Stokes-curves}
\end{figure}
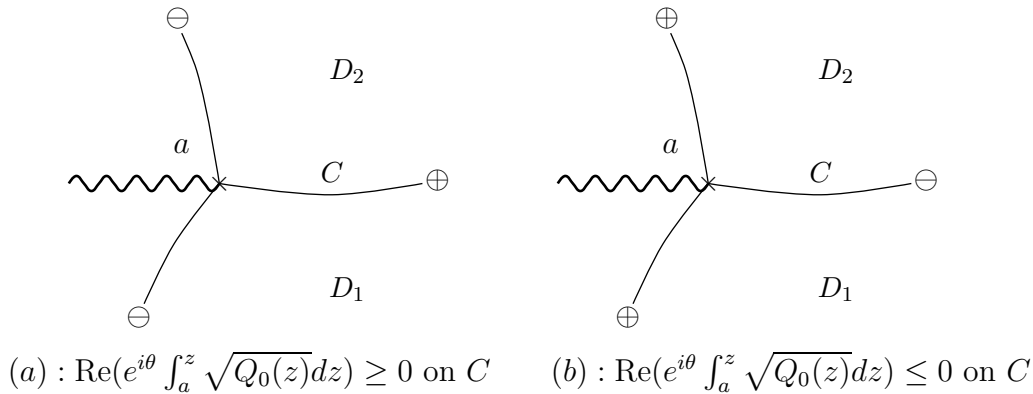

\begin{thm}
[
{\cite[Section 6]{Voros83}, \cite[Section 2]{Aoki91}}]
\label{thm:Voros-formula}
Suppose that the Stokes graph $G_{\theta}$ is saddle-free. 
Let $a \in P_0$, $C$, $D_1$ and $D_2$ be as above and 
\[
\psi_{\pm}(z,\eta) = \frac{1}{\sqrt{S_{\rm odd}(z,\eta)}}
\exp\left(\pm\int_{a}^{z} S_{\rm odd}(z,\eta)dz\right)
\] 
be the WKB solutions normalized at the turning point $a$ 
as defined in \eqref{eq:WKBsol-TP}. Denote by 
$\Psi^{D_j}_{\pm}$ ($j=1,2$) the Borel sum of $\psi_{\pm}$ 
on the Stokes region $D_j$ ($j=1,2$). 
Then, the analytic continuation 
of $\Psi^{D_1}_{\pm}$ to $D_{2}$ across the Stokes curve 
$C$ satisfy the following equalities:
\begin{eqnarray}
\begin{cases} \label{eq:Voros-formula-1}
\Psi^{D_1}_{+} = \Psi^{D_2}_{+} + i \Psi^{D_2}_{-}, \\ 
\Psi^{D_1}_{-} = \Psi^{D_2}_{-},
\end{cases}
~\text{for Figure \ref{fig:signed-Stokes-curves} (a).}
\\
\begin{cases} \label{eq:Voros-formula-2}
\Psi^{D_1}_{+} = \Psi^{D_2}_{+},  \\ 
\Psi^{D_1}_{-} = \Psi^{D_2}_{-} + i \Psi^{D_2}_{+},
\end{cases}
~\text{for Figure \ref{fig:signed-Stokes-curves} (b).}
\end{eqnarray}
Here $i$ appearing in the formula is 
the imaginary unit $\sqrt{-1}$.
\end{thm}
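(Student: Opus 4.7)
The plan is to reduce the connection formula to the Stokes phenomenon of the Airy equation via a local transformation near the simple turning point $a$. First, using the transformation laws \eqref{eq:transformation-of-potentail} and \eqref{eq:Sodd-is-coordinate-free}, one can pick an analytic coordinate $\tilde{z}$ centered at $a$ in which $\tilde{Q}_0(\tilde{z})=\tilde{z}$. Moreover, by the WKB-theoretic transformation to canonical form (due to Aoki--Kawai--Takei, cf.\ \cite{Kawai05}), there is a formal series of gauge and coordinate transformations, convergent in a suitable Borel-compatible sense, that brings the full potential $Q(z,\eta)$ to the canonical form $\tilde{Q}(\tilde{z},\eta)=\tilde{z}$. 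Thus near $a$ the Schr\"odinger equation is formally equivalent to the Airy equation $(d^2/d\tilde{z}^2-\eta^2\tilde{z})\tilde{\psi}=0$.

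Second, I would check that under this transformation the WKB solutions normalized at $a$, as in \eqref{eq:WKBsol-TP}, are sent to the corresponding WKB solutions of the Airy equation normalized at $\tilde{z}=0$; this requires tracking the prefactor $1/\sqrt{S_{\mathrm{odd}}}$ together with the anti-invariance \eqref{eq:covering-involution} so that the half-contour normalization is preserved. Granting this, the Borel sums $\Psi^{D_j}_\pm$ correspond to the Borel sums of the Airy-equation WKB solutions on the images $\tilde{D}_j$ of $D_j$, which in turn are known to coincide (up to normalization constants) with the classical Airy functions $\mathrm{Ai}$ and $\mathrm{Bi}$ on each Stokes sector.

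Third, the theorem then reduces to the classical Stokes phenomenon for the Airy function: across each of the three Stokes rays emanating from $\tilde{z}=0$ the dominant asymptotic expansion acquires a subdominant correction with coefficient exactly $i$. The two cases in Figure \ref{fig:signed-Stokes-curves} correspond to the two choices of which of $\psi_+$, $\psi_-$ is dominant on $C$, which in turn is determined by the sign assignment $\oplus/\ominus$ at the far end of $C$: when $\mathrm{Re}(e^{i\theta}\int_a^z\sqrt{Q_0}\,dz)\ge 0$ along $C$ the solution $\psi_+$ is dominant, so the correction appears in the expression of $\Psi^{D_1}_+$ in terms of $\Psi^{D_2}_\pm$, giving \eqref{eq:Voros-formula-1}; the opposite sign choice gives \eqref{eq:Voros-formula-2}. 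The counter-clockwise placement of $D_2$ relative to $D_1$ fixes the sign of the Stokes multiplier to be $+i$ rather than $-i$.

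The main obstacle is justifying that the formal WKB-theoretic transformation to the Airy canonical form is compatible with Borel resummation, i.e., that applying it termwise to a Borel-summable series in $\eta^{-1}$ yields a Borel-summable series whose Borel sum agrees with the analytic object obtained by the corresponding geometric change of variables. Once this Borel-compatibility is granted, together with the Borel summability from Corollary \ref{cor:summability}, the rest of the argument is just a careful identification of normalizations and a transcription of the classical Stokes multipliers of Airy to the orientation conventions of Section \ref{section:orientation}.
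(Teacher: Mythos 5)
Your proposal follows essentially the same route that the paper relies on, namely the Aoki--Kawai--Takei reduction to the Airy canonical form near a simple turning point, as documented in \cite{Aoki91} and \cite{Kawai05}; the paper itself gives no proof of Theorem~\ref{thm:Voros-formula} but cites these references and merely remarks that the argument is local and hence carries over from $\bbP^1$ to a general compact Riemann surface (with Theorem~\ref{thm:summability} supplying the needed Borel summability). You correctly flag the one genuinely delicate point --- Borel-compatibility of the formal transformation to Airy form --- which is precisely what is established in the cited references and is also the justification for the paper's remark that the argument extends.
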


\begin{rem}
Theorem \ref{thm:Voros-formula} is proved by \cite{Voros83}
and \cite{Aoki91} in the case that $\Sigma = \bbP^1$. 
Since the proof of \cite[Appendix A.2]{Aoki91} is based only on 
local properties of the WKB solutions near a simple turning point, 
the same discussion is applicable to
 a general compact Riemann surface $\Sigma$.
  Therefore, together 
with Theorem \ref{thm:summability}, Theorem 
\ref{thm:Voros-formula} is valid when $\Sigma$ is 
a general compact  Riemann surface.
\end{rem}

\begin{rem}\label{remark:Voros-formula-is-Stokes}
Theorem \ref{thm:Voros-formula} gives connection formulas 
which describe the analytic continuation of the Borel sums 
of WKB solutions on the $z$-plane. 
On the other hand, the formulas indeed describe the Stokes phenomenon 
(for a large $\eta$) occurring to WKB solutions since they are 
derived through the analysis of the singularity of the Borel transforms
of WKB solutions (see \cite{Kawai05}). 
That is, we can reformulate Theorem \ref{thm:Voros-formula}
in an alternative manner as follows: 
Take any point $z \in \hat{\Sigma}\setminus \hat{P}$, and
suppose that there exists a direction 
$\theta_0$ and a sufficiently small 
number $\varepsilon > 0$ satisfying 
\begin{itemize}
\item Stokes graphs in any direction $\theta$ 
satisfying $\theta_0-\varepsilon \le \theta \le 
\theta_0+\varepsilon$ is saddle-free, 
\item the point $z$ lies on a Stokes curve $C$ 
in the direction $\theta_0$, and does not lie on 
the Stokes graph in any direction $\theta$ satisfying 
$\theta_0-\varepsilon \le \theta < \theta_0$ or 
$\theta_0 < \theta \le \theta_0+\varepsilon$.
\end{itemize}
Let $\psi_{\pm}$ be the WKB solution normalized at the 
turning point from where the Stokes curve $C$ emanates, 
and denote by 
$\Psi^{(\theta_0-\varepsilon)}_{\pm}$
(resp., $\Psi^{(\theta_0+\varepsilon)}_{\pm}$) 
the Borel sum of $\psi_{\pm}$ in the direction 
$\theta_0-\varepsilon$ (resp., $\theta_0+\varepsilon$) 
Then, the following relation holds in a neighborhood of $z$.
\begin{eqnarray}
\hspace{-2.0em}
\begin{cases} \label{eq:Voros-formula-Stokes-1}
\Psi^{(\theta_0-\varepsilon)}_{+} = 
\Psi^{(\theta_0+\varepsilon)}_{+} + 
i \Psi^{(\theta_0+\varepsilon)}_{-}, \\ 
\Psi^{(\theta_0-\varepsilon)}_{-} = 
\Psi^{(\theta_0+\varepsilon)}_{-},
\end{cases}
\text{if the sign of the end point of $C$ is $\oplus$,}
\\
\hspace{-2.0em}
\begin{cases} \label{eq:Voros-formula-Stokes-2}
\Psi^{(\theta_0-\varepsilon)}_{+} = 
\Psi^{(\theta_0+\varepsilon)}_{+},  \\ 
\Psi^{(\theta_0-\varepsilon)}_{-} = 
\Psi^{(\theta_0+\varepsilon)}_{-} + 
i \Psi^{(\theta_0+\varepsilon)}_{+},
\end{cases}
\text{if the sign of the end point of $C$ is $\ominus$.}
\end{eqnarray}
Thus, the Stokes phenomenon occurring to $\psi_{\pm}$
is described by the completely same formulas 
in Theorem \ref{thm:Voros-formula}. We use this point of view 
in Appendix \ref{section:proof-of-AIT}. 
\end{rem}

The connection formula in Theorem \ref{thm:Voros-formula} 
is quite effective for the global problems of differential 
equations.
For example, if $\Sigma = \bbP^{1}$ and 
the equation ${\mathcal L}$ is Fuchsian 
(i.e., all poles of $\phi$ are order 2) 
with a saddle-free Stokes graph, then 
the monodromy group of ${\mathcal L}$ can 
be expressed by the following  quantities 
(\cite[Theorem 3.5]{Kawai05}):
\begin{itemize}
\item[(i)] characteristic exponents 
at regular singular points, 
\item[(ii)] the Borel sum of contour integrals of 
$S_{\rm odd}(z,\eta)dz$ along 
cycles in $\hat{\Sigma}\setminus\hat{P}$.
\end{itemize} 
In \cite[Section 3.1]{Kawai05} a recipe to obtain an explicit 
expression of the monodromy group is given. 
Contour integrals of $S_{\rm odd}(z,\eta)dz$ appear
when we use the connection formulas 
\eqref{eq:Voros-formula-1} and 
\eqref{eq:Voros-formula-2} iteratively. 

\section{Voros symbols and Stokes automorphisms}
\label{section:Voros-coefficients-and-DDP-formula}

{
In this section we introduce an important notion 
in  the exact WKB analysis, 
called the {\em Voros symbols}. We discuss the jump property 
of the Voros symbols caused by the Stokes phenomenon 
relevant to the appearance of saddle trajectories 
in the Stokes graph.
}

\subsection{Homology groups and Voros symbols}
\label{example:connection-multiplier}

Let us consider the homology group 
$H_1(\hat{\Sigma}\setminus\hat{P})=
H_1(\hat{\Sigma}\setminus\hat{P};\bbZ)$ 
and the relative homology group
$H_1(\hat{\Sigma}\setminus\hat{P}_0,\hat{P}_{\infty})=
H_1(\hat{\Sigma}\setminus\hat{P}_0,\hat{P}_{\infty};\bbZ)$.
In what follows we call elements of 
$H_1(\hat{\Sigma}\setminus\hat{P})$ and 
$H_1(\hat{\Sigma}\setminus{P}_{0},\hat{P}_{\infty})$ 
as {\em cycles} and {\em paths}, respectively, 
to distinguish them. By the {\em Lefschetz duality} 
there exists a bilinear form
\begin{equation}
\label{eq:braket1}
\langle~,~\rangle: H_1(\hat{\Sigma}\setminus\hat{P}) \times 
H_1(\hat{\Sigma}\setminus\hat{P}_0,\hat{P}_{\infty}) 
\rightarrow \bbZ
\end{equation}
on these homology groups given by the intersection number of 
cycles and paths. The intersection number depends on 
the orientations of cycles and paths, and we normalize the bilinear 
form as $\langle${$x$-axis}, {$y$-axis}$\rangle = +1$. 
It also induces a bilinear form 
\begin{equation}
\label{eq:intersection1}
(~,~): H_1(\hat{\Sigma}\setminus\hat{P}) \times 
H_1(\hat{\Sigma}\setminus\hat{P}) \rightarrow \bbZ.
\end{equation}
We call both these bilinear forms  
{\em intersection forms}. 

Here we introduce the notion of the {\em Voros symbols},
which are the main objects in this paper.
\begin{defn} [{\cite[Section 1.2]{Delabaere93}}]
\begin{itemize}
\item %
Let $\beta \in H_1(\hat{\Sigma}\setminus\hat{P}_0,
\hat{P}_{\infty})$ be a path. The formal power series 
$e^{W_{\beta}}$ is called 
the {\em Voros symbol for the path $\beta$}.
Here $W_{\beta} = W_{\beta}(\eta)$ is the formal power
series defined by the integral 
\begin{equation} \label{eq:Voros-coeff-W}
{W}_{\beta}(\eta)=\int_{\beta}
S_{\rm odd}^{\rm reg}(z,\eta)dz.
\end{equation} 
\item %
Let $\gamma \in H_1(\hat{\Sigma}\setminus\hat{P})$ 
be a cycle. The formal series $e^{V_{\gamma}}$ 
is called the {\em Voros symbol for the cycle $\gamma$}.
Here $V_{\gamma} = V_{\gamma}(\eta)$ is the formal series
defined by the integral 
\begin{equation} \label{eq:Voros-coeff-V}
{V}_{\gamma}(\eta) = \oint_{\gamma}S_{\rm odd}(z,\eta)dz.
\end{equation}
\end{itemize}
\end{defn}

\begin{rem}
The formal series $W_{\beta}$ in 
\eqref{eq:Voros-coeff-W} (resp., $V_{\gamma}$ in 
\eqref{eq:Voros-coeff-V}) is called 
the {\em Voros coefficient} for the path $\beta$ 
(resp., for the cycle $\gamma$). 
The Voros coefficients for paths in 
$H_1(\hat{\Sigma}\setminus\hat{P}_0,\hat{P}_{\infty})$ 
attract attention recently 
(e.g., \cite{Takei08}, \cite{Aoki12}). 
\end{rem}

The Voros symbols $e^{{W}_{\beta}}$ for 
$\beta \in H_1(\hat{\Sigma}\setminus\hat{P}_0,
\hat{P}_{\infty})$ are formal power series without a
exponential factor since $S_{\rm odd}^{\rm reg}(z,\eta)$ 
is a formal power series. On the other hand, 
the Voros symbols $e^{{V}_{\gamma}}$ for 
$\gamma \in H_1(\hat{\Sigma}\setminus\hat{P})$ 
are formal series with the exponential factors
$\exp(\eta \hspace{+.1em} v_{\gamma})$, where
\begin{equation} \label{eq:central-charge}
v_{\gamma} = \oint_{\gamma} \sqrt{Q_0(z)}dz.
\end{equation}

As mentioned in Section \ref{section:connection-formula},
the Voros symbols 
appear in the expression of monodromy 
group of the equation \eqref{eq:Sch} 
(see Section \ref{section:connection-formula}). 
They are Borel summable 
(in the direction $\theta=0$) 
if the paths of 
the integrals in \eqref{eq:Voros-coeff-W} and 
\eqref{eq:Voros-coeff-V} do not intersect 
with a saddle trajectory of $\phi$ 
by Corollary \ref{cor:summability}. 
The appearance of a saddle trajectory breaks 
the Borel summability, and cause the Stokes phenomenon
as explained in Section \ref{section:Borel-resummation}. 
That is, if a saddle trajectory appears, 
the Borel sums of a Voros symbol in the directions 
$\pm \delta$ are different in general for a
sufficiently small $\delta>0$.  
As noted in 
 Section \ref{section:Stokes-graphs},
the Stokes graph mutates when a saddle 
trajectory appears. The rest of this section is 
devoted to analyze the Stokes phenomenon occurring to 
the Voros symbols under the mutation of Stokes graphs.

\subsection{Saddle class associated with saddle trajectory}
\label{subsec:saddleclass}

Suppose that the Stokes graph $G_0 = G(\phi)$
has a regular or degenerate saddle trajectory $\ell_0$. 
Recall that a regular saddle 
trajectory connects two different zeros of 
$\phi$, while a degenerate saddle trajectory 
forms a closed loop around a double pole of $\phi$ 
(see Section \ref{section:Stokes-graphs}). 
Then, there exists a cycle 
$\gamma_{0} \in H_1(\hat{\Sigma}\setminus\hat{P})$
whose projection on $\Sigma$ by $\pi$ surrounds $\ell_0$ 
as in Figure \ref{fig:saddle-class}, 
and its orientation is given so that 
\begin{equation} \label{eq:saddle-orientation}
v_{\gamma_{0}} = 
\oint_{\gamma_{0}}\sqrt{Q_0(z)}dz < 0.
\end{equation}
(See Section \ref{section:orientation} 
for the rule of the assignment of signs.)
Note that, if a cycle $\gamma_0$ satisfies 
the above conditions, then the cycle $-\gamma_0^{\ast}$ 
also satisfies the same conditions. 
(Here $\gamma_0^{\ast}$ is the image of $\gamma_0$
by the covering involution $\tau$.)
We choose any of the two cycles, 
and call the resulting homology class 
$\gamma_{0} \in H_1(\hat{\Sigma}\setminus\hat{P})$ 
the {\em saddle class} associated with the 
saddle trajectory $\ell_0$, 
following \cite[Section 3.6]{Bridgeland13}.
Note that ``the Voros symbol for the saddle class" 
is well-defined because 
\begin{equation}
\oint_{\gamma}S_{\rm odd}(z,\eta)dz = 
\oint_{-\gamma^{\ast}}S_{\rm odd}(z,\eta)dz
\end{equation}
holds for any cycle $\gamma$ due to the anti-invariant 
property \eqref{eq:covering-involution} 
of $S_{\rm odd}(z,\eta)$.

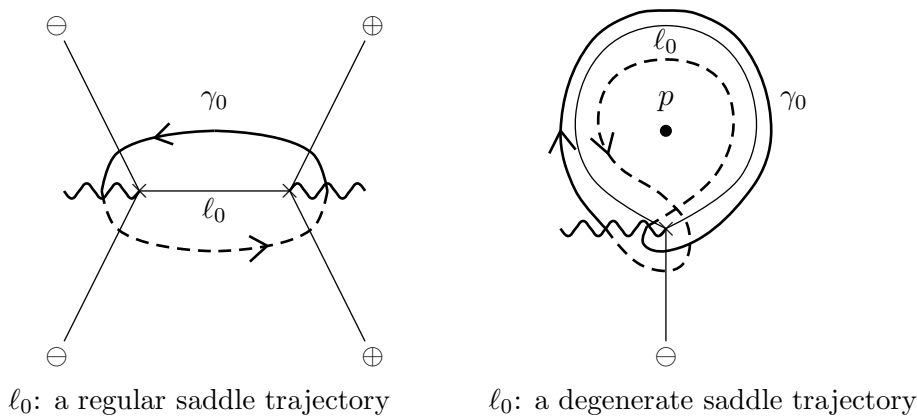
\begin{figure}[h]
\begin{center}
\begin{pspicture}(5.5,-2.5)(16.5,2.5)
%
%
\psset{fillstyle=solid, fillcolor=black}
\pscircle(14,0.8){0.08} 
\psset{fillstyle=none}
\psset{linewidth=1pt}
\rput[c]{0}(10.1,2.2){\small $\oplus$}
\rput[c]{0}(10.1,-2.2){\small $\oplus$}
\rput[c]{0}(5.9,2.2){\small $\ominus$}
\rput[c]{0}(5.9,-2.2){\small $\ominus$}
\rput[c]{0}(7,0){\small $\times$}
\rput[c]{0}(9,0){\small $\times$}
\rput[c]{0}(8,-0.25){\small$\ell_0$} 
\rput[c]{0}(7.8,-2.8)
{\small $\ell_0$: a regular saddle trajectory} 
\rput[c]{0}(14.0,-2.2){\small $\ominus$}
\rput[c]{0}(14,-0.5){\small $\times$}
\rput[c]{0}(14,2.0){\small $\ell_0$} 
\rput[c]{0}(14,1.2){$p$} 
\rput[c]{0}(14.5,-2.8)
{\small $\ell_0$: a degenerate saddle trajectory} 
\rput[c]{0}(8,1.2){\small $\gamma_{0}$} 
\psset{linewidth=1pt}
\pscurve(6.5,0)(6.7,0.5)(8,0.8)
\pscurve(9.5,0)(9.3,0.5)(8,0.8)
\psline(7.2,0.7)(7.4,0.9) \psline(7.2,0.71)(7.45,0.62)
\psline(8.7,-0.73)(8.47,-0.94) 
\psline(8.7,-0.73)(8.46,-0.60)
\psset{linewidth=1pt, linestyle=dashed}
\pscurve(6.5,0)(6.7,-0.5)(7.95,-0.8)
\pscurve(9.5,0)(9.3,-0.5)(8.05,-0.8)
\rput[c]{0}(15.7,1.2){\small ${\gamma}_{0}$} 
\psset{linewidth=1pt,linestyle=solid}
\pscurve(13.75,-0.47)(13.7,-0.7)(14,-0.8)(15.15,0.05)(15.4,0.8)
(15,1.98)(14.4,2.36)(14,2.4)
(13.6,2.36)(13,1.98)(12.6,0.8)(12.8,0)(13.2,-0.5)
\psline(12.6,0.8)(12.45,0.50)
\psline(12.6,0.8)(12.8,0.55)
\psline(13.0,0.70)(13.23,0.50)
\psline(13.23,0.50)(13.30,0.78)
\psset{linewidth=1pt, linestyle=dashed}
\pscurve(13.75,-0.47)(14.65,0.2)(14.9,0.9)(14.7,1.5)(14,1.75)
(13.3,1.5)(13.1,0.9)
(13.3,0.4)(14,-0.1)(14.3,-0.5)(14.2,-1)(13.7,-1)(13.2,-0.5)
\psset{linewidth=1pt,linestyle=solid}
\pscurve(7,0)(6.9,-0.1)(6.8,0)(6.7,0.1)(6.6,0)
(6.5,-0.1)(6.4,0)(6.3,0.1)(6.2,0)(6.1,-0.1)(6,0)
\pscurve(9,0)(9.1,0.1)(9.2,0)(9.3,-0.1)(9.4,0)
(9.5,0.1)(9.6,0)(9.7,-0.1)(9.8,0)(9.9,0.1)(10,0)
\pscurve(14,-0.5)(13.9,-0.6)(13.8,-0.5)(13.7,-0.4)
(13.6,-0.5)(13.5,-0.6)(13.4,-0.5)(13.3,-0.4)
(13.2,-0.5)(13.1,-0.6)(13,-0.5)(12.9,-0.4)(12.8,-0.5)
(12.7,-0.6)(12.6,-0.5)
\psset{linewidth=0.5pt}
%
\pscurve(7,0)(8,0)(9,0)
\pscurve(7,0)(6.5,1)(6,2)
\pscurve(7,0)(6.5,-1)(6,-2)
\pscurve(9,0)(9.5,1)(10,2)
\pscurve(9,0)(9.5,-1)(10,-2)
\psline(14,-0.5)(14,-2)
\pscurve(14,-0.5)(15,0.2)(15.2,1)(14.8,1.9)(14,2.2)
(13.2,1.9)(12.8,1)(13,0.2)(14,-0.5)
\end{pspicture}
\end{center}
\caption{Saddle trajectories and the associated saddle classes.} 
\label{fig:saddle-class}
\end{figure}

\subsection{Saddle reduction}
\label{section:saddle-reduction}
 
\begin{figure}[t]
\begin{center}
\begin{pspicture}(0.5,-9)(16,2.3)
%
%
\psset{fillstyle=solid, fillcolor=black}
\psset{linewidth=0.5pt}
\pscircle(5,2){0.08} \pscircle(5,-2){0.08} 
\pscircle(1,2){0.08} \pscircle(1,-2){0.08} 
\pscircle(3,-5.5){0.08} \pscircle(3,-8.5){0.08}
\pscircle(6,2){0.08} \pscircle(6,-2){0.08} 
\pscircle(10,2){0.08} \pscircle(10,-2){0.08} 
\pscircle(8,-5.5){0.08} \pscircle(8,-8.5){0.08}
\pscircle(11,2){0.08} \pscircle(11,-2){0.08} 
\pscircle(15,2){0.08} \pscircle(15,-2){0.08} 
\pscircle(13,-5.5){0.08} \pscircle(13,-8.5){0.08}
\psset{fillstyle=none}
\rput[c]{0}(2,0){\small $\times$}
\rput[c]{0}(4,0){\small $\times$}
\rput[c]{0}(3.2,1.8){$G_{+\delta}$} 
\rput[c]{0}(3,-6.98){\small $\times$}
\rput[c]{0}(3.1,-3.8){$G_{+\delta}$} 
\rput[c]{0}(7,0){\small $\times$}
\rput[c]{0}(9,0){\small $\times$}
\rput[c]{0}(8.1,1.8){$G_0$} 
\rput[c]{0}(8,-0.5){$\ell_{0}$} 
\rput[c]{0}(8,-2.8){\small 
(a) Saddle reduction of 
regular saddle trajectory and flip.} 
\rput[c]{0}(8,-6.98){\small $\times$}
\rput[c]{0}(8.1,-3.8){$G_0$} 
\rput[c]{0}(8,-4.75){$\ell_{0}$} 
\rput[c]{0}(8,-9.1){\small 
(b) Saddle reduction of 
degenerate saddle trajectory and pop.} 
\rput[c]{0}(12,0){\small $\times$}
\rput[c]{0}(14,0){\small $\times$}
\rput[c]{0}(13.2,1.8){$G_{-\delta}$} 
\rput[c]{0}(13,-6.98){\small $\times$}
\rput[c]{0}(13.1,-3.8){$G_{-\delta}$} 
\psset{linewidth=0.5pt}
\pscurve(2,0)(4,-1)(5,-2)
\pscurve(2,0)(1.5,1)(1,2)
\pscurve(2,0)(1.5,-1)(1,-2)
\pscurve(4,0)(2,1)(1,2)
\pscurve(4,0)(4.5,1)(5,2)
\pscurve(4,0)(4.5,-1)(5,-2)
\psline(3,-7.0)(3,-8.5)
\pscurve(3,-7.0)(2.4,-6.1)(2.6,-5.0)
(3.0,-4.85)(3.3,-5.0)(3.3,-5.6)(3.0,-5.7)(2.87,-5.5)(3.0,-5.38)(3.1,-5.5)(3.0,-5.55)
\pscurve(3.0,-7.0)(3.9,-6.2)(4.0,-5.3)(3.4,-4.5)(2.7,-4.4)
(2.0,-4.9)(1.8,-6.4)(2.0,-7.0)(3.0,-8.5)
\pscurve(7,0)(8,0)(9,0)
\pscurve(7,0)(6.5,1)(6,2)
\pscurve(7,0)(6.5,-1)(6,-2)
\pscurve(9,0)(9.5,1)(10,2)
\pscurve(9,0)(9.5,-1)(10,-2)
\psline(8,-7.0)(8,-8.5)
\pscurve(8,-7.0)(9,-6.3)
(9.2,-5.5)(8.9,-4.65)(8,-4.3)(7.1,-4.65)(6.8,-5.5)(7,-6.3)(8,-7.0)
\pscurve(12,0)(14,1)(15,2)
\pscurve(12,0)(11.5,1)(11,2)
\pscurve(12,0)(11.5,-1)(11,-2)
\pscurve(14,0)(12,-1)(11,-2)
\pscurve(14,0)(14.5,1)(15,2)
\pscurve(14,0)(14.5,-1)(15,-2)
\psline(13,-7.0)(13,-8.5)
\pscurve(13,-7.0)(13.6,-6.1)(13.4,-5.0)
(13.0,-4.85)(12.7,-5.0)(12.7,-5.6)(13.0,-5.7)(13.13,-5.5)(13,-5.38)(12.9,-5.5)(13.0,-5.55)
\pscurve(13.0,-7.0)(12.1,-6.2)(12.0,-5.3)
(12.6,-4.5)(13.3,-4.4)(14.0,-4.9)(14.2,-6.4)(14.0,-7.0)(13.0,-8.5)
\end{pspicture}
\end{center}
\caption{Reduction of saddle trajectories. 
Figures describe a part of Stokes graphs.} 
\label{fig:Stokes-auto}
\end{figure}

Suppose that the Stokes graph $G_0 = G(\phi)$ has a unique 
regular or degenerate saddle trajectory $\ell_0$. 
Then, as in \cite[Section 5 and Section 10.3]{Bridgeland13}, 
there exists $r>0$ such that for all $0 < \delta \le r$ 
the quadratic differentials 
$\phi_{\pm \delta} = e^{\pm 2 i \delta}\phi$ 
are saddle-free. We call $G_{\pm\delta}$ 
{\em saddle reductions} of $G_0$. 
The topology of the Stokes 
graph $G_{+ \delta} = G(\phi_{+\delta})$ 
(resp.,  $G_{- \delta} = G(\phi_{-\delta})$)
does not change as long as $0 < \delta \le r$
since $\phi_{+ \delta}$ (resp., $\phi_{- \delta}$) 
is saddle-free for all $0 < \delta \le r$. 
However, varying $\delta$ across $0$, 
the topology of the Stokes graph changes
as explained in Section \ref{section:Stokes-graphs}.
We say that $G_{-\delta}$ and $G_{+\delta}$
are related by a {\em flip} (resp., {\em pop}) 
if they give saddle reductions of a regular 
(resp., degenerate) saddle trajectory 
(see Figure \ref{fig:Stokes-auto}).

Since the Stokes graphs $G_{\pm\delta}$ 
are saddle-free, the Voros symbols 
are Borel summable in any direction 
$\pm\delta$ with $0 < \delta \le r$ 
by Corollary \ref{cor:summability}. 
Furthermore, we can show the following.
\begin{lem}\label{lemma:saddle-reduction-of-Voros}
Suppose that the Stokes graph $G_0$
has a unique saddle trajectory. 
Then, there exists a sufficiently small $r > 0$
such that the following equalities hold 
as analytic functions of $\eta$ 
for any $0<\delta_1,\delta_2 \le r$:
\begin{eqnarray} \label{eq:saddle-reduction-of-Voros}
{\mathcal S}_{+\delta_1}[e^{W_{\beta}}](\eta) & = &
{\mathcal S}_{+\delta_2}[e^{W_{\beta}}](\eta),~~
{\mathcal S}_{+\delta_1}[e^{V_{\gamma}}](\eta) =
{\mathcal S}_{+\delta_2}[e^{V_{\gamma}}](\eta), \\
{\mathcal S}_{-\delta_1}[e^{W_{\beta}}](\eta) & = &
{\mathcal S}_{-\delta_2}[e^{W_{\beta}}](\eta),~~
{\mathcal S}_{-\delta_1}[e^{V_{\gamma}}](\eta) =
{\mathcal S}_{-\delta_2}[e^{V_{\gamma}}](\eta).
\label{eq:saddle-reduction-of-Voros-minus}
\end{eqnarray}
Here $\beta \in H_1(\hat{\Sigma}\setminus\hat{P}_0,
\hat{P}_{\infty})$ and 
$\gamma \in H_1(\hat{\Sigma}\setminus\hat{P})$
are any path and cycle, respectively.
\end{lem}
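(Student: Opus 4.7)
My plan is to reduce both equalities to a direct application of Proposition \ref{prop:S1-action-and-summability}, using that over the relevant range of directions $\phi_\theta$ is saddle-free so that the hypotheses become essentially trivial. First I will invoke the Bridgeland--Smith analysis of saddle reductions (\cite[Section~5, Section~10.3]{Bridgeland13}) to produce $r>0$ such that $\phi_{\pm\delta}=e^{\pm 2i\delta}\phi$ is saddle-free for every $0<\delta\le r$. Consequently, for any $0<\delta_1<\delta_2\le r$ and any $\theta$ in the closed interval $[\delta_1,\delta_2]$ (or in $[-\delta_2,-\delta_1]$), the quadratic differential $\phi_\theta$ has no saddle trajectory at all. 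This is the only structural input needed: the ``at most one saddle trajectory'' and ``$\beta$ avoids saddle trajectories'' hypotheses of Proposition \ref{prop:S1-action-and-summability} then hold vacuously throughout the interval.

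Next I will treat the path case. For $\beta\in H_1(\hat\Sigma\setminus\hat P_0,\hat P_\infty)$ the endpoints lie in $\hat P_\infty$, so the endpoint hypothesis of Proposition \ref{prop:S1-action-and-summability} is automatic. That proposition then gives $\mathcal{S}_{+\delta_1}[W_\beta](\eta)=\mathcal{S}_{+\delta_2}[W_\beta](\eta)$ as analytic functions of $\eta$. To pass from $W_\beta$ to the Voros symbol $e^{W_\beta}$, I will split $W_\beta=W_{\beta,0}+W_\beta'$ where $W_{\beta,0}$ is the constant term (so $W_\beta'$ has no constant term) and apply Proposition \ref{prop:property-of-Borel-sum}(c) to the convergent series $A(t)=e^t$: this yields $\mathcal{S}_{+\delta_i}[e^{W_\beta'}]=\exp\bigl(\mathcal{S}_{+\delta_i}[W_\beta']\bigr)$, and multiplying by $e^{W_{\beta,0}}$ gives the first equation of \eqref{eq:saddle-reduction-of-Voros}.

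For the cycle case I will write $V_\gamma=\eta v_\gamma+\tilde V_\gamma$ with $\tilde V_\gamma=\oint_\gamma S_{\rm odd}^{\rm reg}(z,\eta)\,dz$, so that $e^{V_\gamma}=e^{\eta v_\gamma}\cdot e^{\tilde V_\gamma}$ with the prefactor $e^{\eta v_\gamma}$ unaffected by Borel resummation in the sense of Definition \ref{def:Borel-summability}. Since $S_{\rm odd}^{\rm reg}(z,\eta)\,dz$ is integrable at every pole by Proposition \ref{proposition:integrability}, and since $\hat P_\infty$ is nonempty with $\hat\Sigma\setminus\hat P_0$ connected, I will homotope the representative of $\gamma$ so that it passes through at least one pole and cut it there, obtaining a decomposition $\gamma=\beta_1+\cdots+\beta_N$ in $H_1(\hat\Sigma\setminus\hat P_0,\hat P_\infty)$ with $\tilde V_\gamma=\sum_i W_{\beta_i}$. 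Applying the conclusion of the previous paragraph to each $\beta_i$ and summing gives $\mathcal{S}_{+\delta_1}[\tilde V_\gamma]=\mathcal{S}_{+\delta_2}[\tilde V_\gamma]$; exponentiating as before and restoring the factor $e^{\eta v_\gamma}$ gives the second equation of \eqref{eq:saddle-reduction-of-Voros}.

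The identity \eqref{eq:saddle-reduction-of-Voros-minus} in the negative direction follows by the same argument applied to the interval $[-\delta_2,-\delta_1]\subseteq[-r,0)$, on which $\phi_\theta$ is again saddle-free. The main point where care is needed is the chain-level decomposition of the cycle $\gamma$ into paths with endpoints in $\hat P_\infty$: one must check that the homotopy used to push $\gamma$ through a pole does not change $\oint_\gamma S_{\rm odd}^{\rm reg}\,dz$, which is guaranteed because $S_{\rm odd}^{\rm reg}\,dz$ is a closed 1-form on $\hat\Sigma\setminus\hat P_0$ that is integrable at $\hat P_\infty$ by Proposition \ref{proposition:integrability}. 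Everything else is a bookkeeping exercise combining Proposition \ref{prop:S1-action-and-summability} with the algebraic properties of Borel resummation in Proposition \ref{prop:property-of-Borel-sum}.
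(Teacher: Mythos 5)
Your proof is correct and takes essentially the same route as the paper: decompose $\gamma$ (and $\beta$) into paths with both endpoints in $\hat P_\infty$, note that such paths never meet a saddle trajectory of $\phi_\theta$ once $r$ is chosen so that $\phi_{\pm\delta}$ is saddle-free for $0<\delta\le r$, and invoke Proposition~\ref{prop:S1-action-and-summability}. You spell out two steps the paper leaves implicit — the passage from $W_\beta$ to $e^{W_\beta}$ via Proposition~\ref{prop:property-of-Borel-sum}(c), and the separation of the exponential prefactor $e^{\eta v_\gamma}$ for cycles — but the underlying argument is the same.
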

\begin{proof}
Note that any path $\beta \in H_1(\hat{\Sigma}\setminus
\hat{P}_0,\hat{P}_{\infty})$ or any cycle 
$\gamma \in H_1(\hat{\Sigma}\setminus\hat{P})$ 
is decomposed into the sum of a finite number of paths 
whose end-points are contained in $\hat{P}_{\infty}$
in the relative homology group 
$H_1(\hat{\Sigma}\setminus\hat{P}_0,\hat{P}_{\infty})$ 
(see Figure \ref{fig:gamma-is-writtwn-by-beta}). 
Therefore, it suffices to show the equalities 
\eqref{eq:saddle-reduction-of-Voros} and 
\eqref{eq:saddle-reduction-of-Voros-minus} 
for any $\beta \in H_1(\hat{\Sigma}\setminus\hat{P}_0,
\hat{P}_{\infty})$ whose end-points are contained in 
$\hat{P}_{\infty}$. Take any such a path $\beta$, 
and fix a sufficiently small $r > 0$ so that 
the Stokes graphs $G_{\pm\delta}$ 
are saddle-free for all $0 < \delta \le r$. 
Then, the path $\beta \in H_1(\hat{\Sigma}
\setminus\hat{P}_0,\hat{P}_{\infty})$ never touches with 
saddle trajectory of $\phi_{\pm\delta}$ 
for all $0 < \delta \le r$. 
Therefore, since $\beta$ satisfies the 
assumption of Proposition 
\ref{prop:S1-action-and-summability}, 
the equalities \eqref{eq:saddle-reduction-of-Voros} and 
\eqref{eq:saddle-reduction-of-Voros-minus} 
follows form \eqref{eq:no-PSP-formula}. 
\end{proof}

\begin{figure}[h]
\begin{center}
\begin{pspicture}(5,-2.3)(18,2.3)
%
%
\psset{fillstyle=solid, fillcolor=black}
\psset{fillstyle=none}
\rput[c]{0}(5.93,2.1){$\bullet$} 
\rput[c]{0}(5.93,-2.1){$\bullet$} 
\rput[c]{0}(10.07,2.1){$\bullet$} 
\rput[c]{0}(10.03,-2.1){$\bullet$} 
\rput[c]{0}(7,0){\small $\times$}
\rput[c]{0}(9,0){\small $\times$}
\rput[c]{0}(11.5,0){$=$}
\psset{linewidth=0.5pt}
\pscurve(7,0)(9,-1)(10,-2.1)
\pscurve(7,0)(6.5,1)(5.9,2.1)
\pscurve(7,0)(6.5,-1)(5.9,-2.1)
\pscurve(9,0)(7,1)(6,2.1)
\pscurve(9,0)(9.5,1)(10.05,2.1)
\pscurve(9,0)(9.5,-1)(10,-2)
\psset{linewidth=1pt}
\pscurve(6.5,0)(6.7,0.3)(8,0.5)
\pscurve(9.5,0)(9.3,0.3)(8,0.5)
\psset{linewidth=1.2pt, linestyle=dashed}
\pscurve(6.5,0)(6.7,-0.3)(7.95,-0.5)
\pscurve(9.5,0)(9.3,-0.3)(8.05,-0.5)
\psset{linewidth=1pt, linestyle=solid}
\psline(7.6,0.5)(7.8,0.7)
\psline(7.6,0.5)(7.8,0.3)
\psline(7.8,-0.5)(7.6,-0.7)
\psline(7.8,-0.5)(7.6,-0.3)
\rput[c]{0}(12.93,2.1){$\bullet$} 
\rput[c]{0}(12.93,-2.1){$\bullet$} 
\rput[c]{0}(17.07,2.1){$\bullet$}
\rput[c]{0}(17.03,-2.1){$\bullet$} 
\rput[c]{0}(14,0){\small $\times$}
\rput[c]{0}(16,0){\small $\times$}
\psset{linewidth=0.5pt}
\pscurve(14,0)(16,-1)(17,-2.1)
\pscurve(14,0)(13.5,1)(12.95,2.1)
\pscurve(14,0)(13.5,-1)(12.95,-2.1)
\pscurve(16,0)(14,1)(12.98,2.1)
\pscurve(16,0)(16.5,1)(17.05,2.1)
\pscurve(16,0)(16.5,-1)(17,-2)
\psset{linewidth=1pt}
\pscurve(13.2,0)(13.15,1.0)(12.9,2.1)
\pscurve(16.8,0)(16.85,1.0)(17.1,2.1)
\pscurve(13.0,2.1)(15.7,1.0)(17,2.1)
\psset{linewidth=1.2pt,linestyle=dashed}
\pscurve(13.2,0)(13.15,-1.0)(12.9,-2.1)
\pscurve(16.8,0)(16.85,-1.0)(17.1,-2.1)
\pscurve(13.0,-2.1)(14.3,-1.0)(17,-2.1)
\psset{linewidth=1pt, linestyle=solid}
\psline(15.3,1.0)(15.5,1.2)
\psline(15.3,1.0)(15.5,0.8)
\psline(14.7,-1.0)(14.5,-1.2)
\psline(14.7,-1.0)(14.5,-0.8)
\psset{linewidth=1pt,linestyle=solid}
\pscurve(7,0)(6.9,-0.1)(6.8,0)(6.7,0.1)
(6.6,0)(6.5,-0.1)(6.4,0)
(6.3,0.1)(6.2,0)(6.1,-0.1)(6,0)
(5.9,0.1)(5.8,0)(5.7,-0.1)(5.6,0)
\pscurve(9,0)(9.1,0.1)(9.2,0)(9.3,-0.1)(9.4,0)
(9.5,0.1)(9.6,0)(9.7,-0.1)(9.8,0)(9.9,0.1)(10,0)
(10.1,-0.1)(10.2,0)(10.3,0.1)(10.4,0)
\pscurve(14,0)(13.9,-0.1)(13.8,0)(13.7,0.1)
(13.6,0)(13.5,-0.1)(13.4,0)
(13.3,0.1)(13.2,0)(13.1,-0.1)(13,0)
(12.9,0.1)(12.8,0)(12.7,-0.1)(12.6,0)
\pscurve(16,0)(16.1,0.1)(16.2,0)(16.3,-0.1)(16.4,0)
(16.5,0.1)(16.6,0)(16.7,-0.1)(16.8,0)(16.9,0.1)(17,0)
(17.1,-0.1)(17.2,0)(17.3,0.1)(17.4,0)
\end{pspicture}
\end{center}
\caption{An example of decomposition of a cycles 
into the sum of paths whose end-points are contained 
in $\hat{P}_{\infty}$.} 
\label{fig:gamma-is-writtwn-by-beta}
\end{figure}
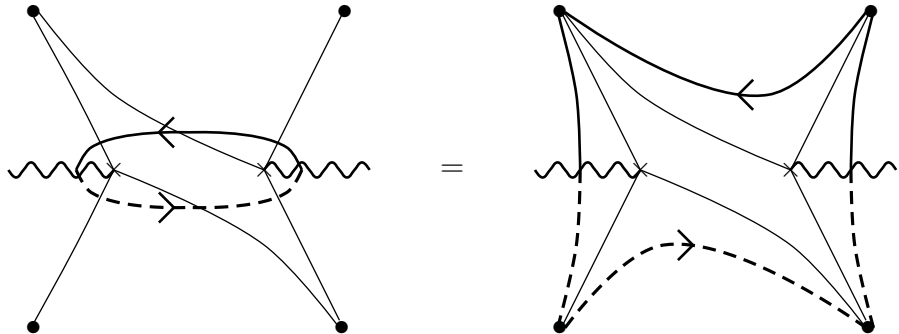

Define ${\mathcal S}_{\pm}[e^{W_\beta}] = 
{\mathcal S}_{\pm}[e^{W_\beta}](\eta)$ 
(resp., ${\mathcal S}_{\pm}[e^{V_\gamma}] = 
{\mathcal S}_{\pm}[e^{V_\gamma}](\eta)$) 
by the Borel sum ${\mathcal S}_{\pm\delta}[e^{W_\beta}](\eta)$
(resp., ${\mathcal S}_{\pm\delta}[e^{V_\gamma}](\eta)$) 
of the Voros symbol for a path $\beta \in H_1(\hat{\Sigma}
\setminus\hat{P}_0,\hat{P}_{\infty})$ 
(resp., for a cycle 
$\gamma  \in H_1(\hat{\Sigma}\setminus\hat{P})$) 
for a sufficiently small $\delta > 0$.
Due to Lemma \ref{lemma:saddle-reduction-of-Voros}, 
${\mathcal S}_{\pm}[e^{W_\beta}]$ and 
${\mathcal S}_{\pm}[e^{V_\gamma}]$ are well-defined. 
As explained in Section \ref{section:Borel-resummation}, 
the Borel sums ${\mathcal S}_{\pm}[e^{W_\beta}]$ and 
${\mathcal S}_{\pm}[e^{V_\gamma}]$ are analytic in $\eta$ 
on a domain containing $\{\eta\in{\mathbb R}~|~\eta\gg1 \}$.
In the rest of this section we will describe 
the relationship between ${\mathcal S}_{+}[e^{W_\beta}]$ 
(resp., ${\mathcal S}_{+}[e^{V_\gamma}]$)
and ${\mathcal S}_{-}[e^{W_\beta}]$ 
(resp., ${\mathcal S}_{-}[e^{V_\gamma}]$); 
that is, the formulas describing 
the Stokes phenomenon occurring to the Voros symbols.

\subsection{Jump formula and Stokes automorphism for regular saddle 
trajectory}

Here we specify the situation to state Theorem 
\ref{thm:DDP-analytic} below. Suppose that the Stokes graph 
$G_0 = G(\phi)$ has a unique {\em regular} saddle trajectory 
$\ell_0$ with the associated saddle class 
$\gamma_0 \in H_1(\hat{\Sigma}\setminus\hat{P})$. 
Let $G_{\pm\delta} = G(\phi_{\pm\delta})$ be 
saddle reductions of $G_0$ for a sufficiently small 
$\delta>0$ as in (a) of Figure \ref{fig:Stokes-auto} 
(i.e., $G_{-\delta}$ and $G_{+\delta}$ is 
related by a flip). 
Then, the Stokes phenomenon occurring to the 
Voros symbols are described explicitly by
the following ``jump formula''.
\begin{thm}[{\cite[Section 3]{Delabaere93}}] \label{thm:DDP-analytic}
The Borel sums ${\mathcal S}_{\pm}[e^{{W}_{\beta}}]$ and 
${\mathcal S}_{\pm}[e^{{V}_{\gamma}}]$ for any 
$\beta \in H_1(\hat{\Sigma}\setminus\hat{P}_0,
\hat{P}_{\infty})$ and any $\gamma \in H_1(\hat{\Sigma}
\setminus\hat{P})$ satisfy the following equalities 
as analytic functions of $\eta$ on a domain 
containing $\{\eta\in{\mathbb R}~|~\eta\gg1 \}$:
\begin{align} \label{eq:DDP-analytic}
\begin{split}
{\mathcal S}_{-}[e^{{W}_{\beta}}] &= 
{\mathcal S}_{+}[e^{{W}_{\beta}}](1 + {\mathcal S}_{+}
[e^{{V}_{\gamma_0}}])^{-\langle\gamma_0,\beta\rangle},\\
{\mathcal S}_{-}[e^{{V}_{\gamma}}] &= {\mathcal S}_{+}
[e^{{V}_{\gamma}}](1 + {\mathcal S}_{+}
[e^{{V}_{\gamma_0}}])^{-(\gamma_0,\gamma)}.
\end{split}
\end{align}
\end{thm}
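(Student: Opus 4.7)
The plan is to reduce the statement to a single ``elementary'' jump across the saddle trajectory $\ell_0$, and then extract the jump from Voros's connection formula (Theorem \ref{thm:Voros-formula}, in the form of Remark \ref{remark:Voros-formula-is-Stokes}).

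First I would deform $\beta$ and $\gamma$ within their homology classes in $H_1(\hat{\Sigma}\setminus\hat{P}_0,\hat{P}_\infty)$ and $H_1(\hat{\Sigma}\setminus\hat{P})$ respectively, so that they meet the lifts of $\ell_0$ to $\hat{\Sigma}$ transversally and in a minimal number of points. By Proposition \ref{prop:S1-action-and-summability}, any subarc that stays away from $\ell_0$ remains admissible in every direction between $-\delta$ and $+\delta$, so its Borel sum is unchanged by the rotation; only the transverse intersections with $\ell_0$ can contribute to the jump between $\mathcal{S}_{+}$ and $\mathcal{S}_{-}$. Using the additivity of $W_\bullet$ and $V_\bullet$ in the homology classes, this reduces the proof to the case of a single transverse crossing (with intersection number $\pm 1$). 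For the second formula, the cycle $\gamma$ is first decomposed as a sum of paths joining points of $\hat{P}_\infty$ as in Figure \ref{fig:gamma-is-writtwn-by-beta}, which converts $(\gamma_0,\gamma)$ into a sum of intersection numbers $\langle\gamma_0,\beta_i\rangle$ of path types.

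For an elementary path $\beta$ crossing $\ell_0$ once, let $a_1$ and $a_2$ be the turning points at the endpoints of $\ell_0$, and let $\psi^{(i)}_\pm$ be the WKB solutions normalized at $a_i$ as in \eqref{eq:WKBsol-TP}. Up to the $S_{\rm odd}^{-1/2}$ prefactors, which drop out of ratios, $e^{\pm W_\beta}$ is the ratio of the analytic continuation of $\psi^{(1)}_\pm$ to $\psi^{(2)}_\pm$ along $\beta$, and by \eqref{eq:covering-involution} together with the orientation convention \eqref{eq:saddle-orientation} the cycle symbol $e^{V_{\gamma_0}}$ is identified with the monodromy factor of $\psi^{(1)}_+\psi^{(1)}_-$ along $\gamma_0$. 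I then rotate $\theta$ from $+\delta$ to $-\delta$ through $0$. On either side the Stokes graph is saddle-free, and passing through $0$ the Stokes curves emanating from $a_1$ and $a_2$ reconfigure. Applying Remark \ref{remark:Voros-formula-is-Stokes} at each Stokes curve swept past during the rotation, the Borel-summed $\psi^{(i)}_\pm$ acquire explicit multipliers of the form $1+i\,\Psi^{(i)}_\mp/\Psi^{(i)}_\pm$. Composing all such multipliers, the contributions from Stokes curves \emph{not} incident to $\ell_0$ telescope out of ratios (their $\oplus$ endpoint for one solution matches the $\ominus$ endpoint for the other), and what survives is precisely the factor $(1+\mathcal{S}_{+}[e^{V_{\gamma_0}}])^{\mp 1}$, matching $\mp\langle\gamma_0,\beta\rangle$ in the elementary case.

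The main difficulty I foresee lies in that last bookkeeping step. One must consistently track the $\oplus/\ominus$ signs of Figure \ref{fig:signed-Stokes-curves} at both ends of every Stokes curve that sweeps past, choose branch cuts on $\hat{\Sigma}$ so that $\psi^{(1)}_\pm$ and $\psi^{(2)}_\pm$ live on the expected sheets, and reconcile everything with the orientation convention \eqref{eq:saddle-orientation}. Getting the overall exponent to come out as $-\langle\gamma_0,\beta\rangle$ for paths and $-(\gamma_0,\gamma)$ for cycles, with no spurious sign or extra factor, is the combinatorial heart of the DDP formula, and it is precisely the reason the framework is set up on $\hat{\Sigma}$ with its covering involution rather than on $\Sigma$ directly.
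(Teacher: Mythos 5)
Your overall strategy --- reduce to a single transverse crossing of $\ell_0$, and extract the jump from Voros's connection formula --- is the same as the paper's, and your use of Proposition \ref{prop:S1-action-and-summability} to dispose of arcs away from $\ell_0$ matches Lemma \ref{lem:no-jump-formula} in the paper's proof. But there is a genuine gap in the middle step: you propose to ``rotate $\theta$ from $+\delta$ to $-\delta$ through $0$'' and apply Remark \ref{remark:Voros-formula-is-Stokes} at each Stokes curve swept past. That remark only describes the Stokes phenomenon at a direction $\theta_0$ under the hypothesis that $G_\theta$ is saddle-free throughout a neighborhood $[\theta_0-\varepsilon,\theta_0+\varepsilon]$, so that the curve being crossed is a single separating trajectory. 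Precisely at $\theta=0$ this hypothesis fails: the two separating trajectories from $a_1$ and $a_2$ merge into the saddle $\ell_0$, the turning-point-normalized WKB solutions $\psi^{(i)}_\pm$ are not even Borel summable there (their normalization paths cross $\ell_0$), and the resulting discontinuity is exactly the DDP jump you are trying to prove. Applying the elementary Voros jump curve-by-curve through $\theta=0$ is therefore circular; the ``telescoping'' cannot absorb the contribution at $\theta=0$ because there is no elementary formula to apply there.

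The paper circumvents this by not passing through $\theta=0$ continuously with turning-point-normalized solutions. Instead it introduces WKB solutions $\psi_{\pm,p_1}$ normalized at a pole $p_1\in\hat{P}_\infty$ whose normalization paths can be chosen to avoid $\ell_0$; by Proposition \ref{prop:S1-action-and-summability} their Borel sums on the two relevant Stokes regions do \emph{not} change between $\theta=+\delta$ and $\theta=-\delta$. One then works out, in each of the two \emph{saddle-free} Stokes graphs $G_{+\delta}$ and $G_{-\delta}$ separately, the connection problem from $D_1^\pm$ to $D_2^\pm$ via Theorem \ref{thm:Voros-formula}: in $G_{+\delta}$ one Stokes curve is crossed, in $G_{-\delta}$ two Stokes curves (one from each turning point) are crossed, and the change of normalization from $a_1$ to $a_2$ across the second one is what produces the factor $e^{V_{\gamma_0}}$. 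Comparing the two connection matrices, using the stability of $\psi_{\pm,p_1}$, yields \eqref{eq:pre-DDP-formula} for the reference path $\beta_0$ with $\langle\gamma_0,\beta_0\rangle=-1$; a homological decomposition of $\beta_0$ then isolates a single ``elementary'' segment crossing $\ell_0$ once, and the general formula follows as you indicated. If you replace your continuous sweep by this discrete two-graph comparison bridged by the pole-normalized solutions, the rest of your plan goes through.
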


\begin{rem}
Originally, Theorem \ref{thm:DDP-analytic} is proved 
in \cite[Section 3]{Delabaere93} for the case that 
the potential $Q(z,\eta) = Q_0(z)$ is independent of $\eta$ 
and is a polynomial in $z$. 
Since the Borel summability of the WKB solutions 
are established in \cite{Koike13}
(see Theorem \ref{thm:summability-P1} and 
\ref{thm:summability}), 
the proof of \cite{Delabaere93} is also valid 
for general cases. For a convenience of readers, 
we briefly recall the sketch of the proof of 
Theorem \ref{thm:DDP-analytic} in
Appendix \ref{section:proof-of-DDP}.
\end{rem}

The formula \eqref{eq:DDP-analytic} 
in fact describes the Stokes phenomenon for the 
Voros symbols relevant to the flip of the Stokes graph. 
The exponentially small difference between the Borel 
sums of Voros symbols are explicitly given in 
\eqref{eq:DDP-analytic}. Note that the Borel sum 
${\mathcal S}_{\pm}[e^{{V}_{\gamma_0}}]$ is exponentially 
small for a sufficiently large $\eta \gg 1$ 
because it is asymptotically expanded to the formal series
$e^{{V}_{\gamma_0}}$ as $\eta\rightarrow+\infty$ 
whose exponential factor $e^{\eta v_{\gamma_0}}$ 
is exponentially small due to the orientation 
\eqref{eq:saddle-orientation} of the saddle class $\gamma_0$.

In \cite{Delabaere93} the formula 
\eqref{eq:DDP-analytic} 
is stated in a different manner.
Let ${\mathbb V}={\mathbb V}(Q(z,\eta))$
 be the field of the rational functions  
generated by the Voros symbols $e^{{W}_{\beta}}$ 
and $e^{{V}_{\gamma}}$, which we call the {\em Voros field
for a potential $Q(z,\eta)$}. 
Define a field automorphism ${\mathfrak S}_{\gamma_{0}}:
{\mathbb V} \rightarrow {\mathbb V}$ by 
\begin{eqnarray} \label{eq:DDP-formal}
{\mathfrak S}_{\gamma_{0}} : 
\begin{cases}
\displaystyle e^{{W}_{\beta}} \mapsto 
e^{{W}_{\beta}} 
(1 + e^{{V}_{\gamma_{0}}})^{-\langle\gamma_{0},
\beta\rangle} & 
(\beta \in H_1(\hat{\Sigma}\setminus\hat{P}_0,
\hat{P}_{\infty})), \\[+.3em]%
\displaystyle e^{{V}_{\gamma}} ~ \mapsto  
e^{{V}_{\gamma}} \hspace{+.3em}
(1 + e^{{V}_{\gamma_{0}}})^{-(\gamma_0,\gamma)} & 
(\gamma \in 
H_1(\hat{\Sigma}\setminus\hat{P})).
\end{cases}
\end{eqnarray}
The equalities \eqref{eq:DDP-analytic} implies that 
${\mathfrak S}_{\gamma_{0}}$ satisfies 
\begin{equation} \label{eq:Stokes-automorphism-def}
{\mathcal S}_{-} = {\mathcal S}_{+}\circ {\mathfrak S}_{\gamma_{0}}.
\end{equation}
Here ${\mathcal S}_{\pm}$ is the Borel summation operator 
${\mathcal S}_{\pm\delta}$ for a sufficiently small $\delta>0$.
To be precise, the map ${\mathcal S}_{\pm}$
in Definition \ref{def:Borel-summability}
 is not defined for 
sums of Voros symbols with different exponential factors.
Here 
we extend it to the map from ${\mathbb V}$ to 
a space of analytic functions of $\eta$
so that ${\mathcal S}_{\pm}$ commutes with the operations 
addition, multiplication, and division. 
In view of \eqref{eq:Stokes-automorphism-def},
the map ${\mathfrak S}_{\gamma_0}$ measures the 
difference between the Borel sums of Voros symbols
for different directions. 
The map ${\mathfrak S}_{\gamma_0}$
is called the {\em Stokes automorphism} 
for the saddle class $\gamma_0$ associated with 
a regular saddle trajectory $\ell_0$ 
(see \cite[Section 0.4]{Delabaere99}). 

We call the formulas \eqref{eq:DDP-analytic} 
and \eqref{eq:DDP-formal}
the {\em DDP (Delabaere-Dillinger-Pham) formula}. 
Later in Section \ref{sec:mut_voros} we will reformulate the DDP formula 
in view of cluster algebras theory. 
Furthermore, we apply this formulation to 
study identities of Stokes automorphisms. 

\begin{rem}
The DDP formula  resembles to the 
{\em Kontsevich-Soibelman transformation}
in \cite{Gaiotto09}, where the counterpart of 
the Voros symbols are  the {\em Fock-Goncharov coordinates} 
of the moduli space of the flat connections associated with 
a Hitchin system of rank 2. In their context, a quadratic 
differential appears as the image of the Hitchin fibration, 
and its saddle trajectories capture {\em BPS states} 
in a four dimensional field theory.
\end{rem}

\subsection{Jump formula and Stokes automorphism
for degenerate saddle trajectory}

Similarly to regular saddle trajectories, degenerate saddle 
trajectories also cause the Stokes phenomenon for 
the Voros symbols. 
This subsection is devoted to the description of the 
formula for the Voros symbols describing the Stokes phenomenon.
Suppose that the Stokes graph $G_0 = G(\phi)$ has a unique 
{\em degenerate} saddle trajectory $\ell_0$ with 
the associated saddle class 
${\gamma}_0 \in H_1(\hat{\Sigma}\setminus\hat{P})$. 
Let $G_{\pm\delta} = G(\phi_{\pm\delta})$ 
be saddle reductions 
of $G_0$ for a sufficiently small $\delta>0$ 
as in (b) of Figure \ref{fig:Stokes-auto} 
(i.e., $G_{-\delta}$ and $G_{+\delta}$ is related by a pop). 
Then, the Stokes phenomenon occurring to the Voros symbols 
are described explicitly the following jump formula.
\begin{thm}[\cite{Aoki14}] 
\label{thm:loop-type-degeneration-analytic}
The Borel sums ${\mathcal S}_{\pm}[e^{{W}_{\beta}}]$ and 
${\mathcal S}_{\pm}[e^{{V}_{\gamma}}]$ for any 
$\beta \in H_1(\hat{\Sigma}\setminus\hat{P}_0,
\hat{P}_{\infty})$ and any $\gamma \in H_1(\hat{\Sigma}
\setminus\hat{P})$ satisfy the following equalities 
as analytic functions of $\eta$ on a domain 
containing $\{\eta\in{\mathbb R}~|~\eta\gg1 \}$:
\begin{align} \label{eq:loop-type-degeneration-analytic}
\begin{split}
{\mathcal S}_{-}[e^{{W}_{\beta}}] &= 
{\mathcal S}_{+}[e^{{W}_{\beta}}]
(1 - {\mathcal S}_{+}[e^{{V}_{{\gamma}_0}}]
)^{\langle{\gamma}_0,\beta\rangle},\\
{\mathcal S}_{-}[e^{{V}_{\gamma}}] &= 
{\mathcal S}_{+}[e^{{V}_{\gamma}}].
\end{split}
\end{align}
\end{thm}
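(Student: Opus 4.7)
The strategy is to adapt the proof of the DDP formula (Theorem \ref{thm:DDP-analytic}) to the degenerate case, reflecting the different topology that $\ell_0$ is a loop returning to a single turning point $a$ and encircling a double pole $p$. First I would note that any path $\beta \in H_1(\hat{\Sigma}\setminus\hat{P}_0, \hat{P}_\infty)$ and any cycle $\gamma \in H_1(\hat{\Sigma}\setminus\hat{P})$ can be decomposed into pieces that either can be isotoped to avoid $\ell_0$ throughout the rotation $\theta \in [-\delta, +\delta]$, or cross $\ell_0$ in a standard local model. For pieces of the first kind, Proposition \ref{prop:S1-action-and-summability} immediately gives ${\mathcal S}_{-} = {\mathcal S}_{+}$, so the entire content of the theorem is in the contribution of crossings.

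For the second equation, I would argue geometrically that $(\gamma_0, \gamma) = 0$ for every $\gamma \in H_1(\hat{\Sigma}\setminus\hat{P})$: the lifted saddle class $\gamma_0$ is (up to sign-choice $\gamma_0 \mapsto -\gamma_0^*$) a small loop around the lifted degenerate saddle trajectory, which in $\hat{\Sigma}$ separates into an inside region containing a preimage of the double pole $p$ and an outside region. Any cycle $\gamma$ in $H_1(\hat{\Sigma}\setminus\hat{P})$ can be pushed off so as not to cross $\gamma_0$ transversally (it either encircles $\pi^{-1}(p)$ from the outside, or it can be contracted inside around $\pi^{-1}(p)$), so the intersection pairing vanishes. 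Combined with Proposition \ref{prop:S1-action-and-summability} applied to the deformed representative, this gives ${\mathcal S}_{-}[e^{V_\gamma}] = {\mathcal S}_{+}[e^{V_\gamma}]$.

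For the first equation, I would reduce to the case where $\beta$ crosses $\ell_0$ transversally once. Near the turning point $a$ from which $\ell_0$ emanates and to which it returns, I would apply the Voros connection formulas \eqref{eq:Voros-formula-1}--\eqref{eq:Voros-formula-2} iteratively along the two Stokes curves forming the loop, expressing the analytic continuation of the Borel-summed WKB solution $\Psi_\pm$ across $\ell_0$ in terms of Borel sums defined on the saddle-reduced graphs $G_{\pm\delta}$. In contrast to the regular-saddle case of \cite{Delabaere93}, here both ends of $\ell_0$ attach to the \emph{same} turning point, so iterating the connection formula around the loop (in the presence of the double-pole monodromy of the WKB solution at $p$) produces a geometric series of a different sign structure than in the DDP case. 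The resulting factor contains $(1 - {\mathcal S}_{+}[e^{V_{\gamma_0}}])^{\langle\gamma_0,\beta\rangle}$ precisely because the two traversals of $\ell_0$ carry the same orientation under $\tau$, rather than opposite ones as in the regular case. The Voros symbols can then be related by translating this jump from the analytic continuation on $z$-space into the Stokes phenomenon on $\eta$-space, using Remark \ref{remark:Voros-formula-is-Stokes} together with the Borel summability in Theorem \ref{thm:summability} and Corollary \ref{cor:summability}.

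The main obstacle will be the careful local analysis at the turning point $a$ when both Stokes curves of the degenerate loop collide onto $\ell_0$ as $\theta \to 0$: one must show that the formal monodromy of the WKB solution around the double pole $p$ (contributing the integral $\int_p^z S_{\rm odd}^{\rm reg}\, dz$ in \eqref{eq:WKBsol-P}) combines with the two successive applications of the connection formula to yield exactly the factor $1 - e^{V_{\gamma_0}}$, with the correct sign and the correct exponent $\langle\gamma_0,\beta\rangle$. Equivalently, one must verify that the singularity of the Borel transform of $W_\beta$ along the positive real axis, caused by the degenerate saddle, has residue structure consistent with the alternating geometric expansion of $(1 - e^{V_{\gamma_0}})^{\langle\gamma_0,\beta\rangle}$ rather than the binomial expansion appearing in \eqref{eq:DDP-analytic}. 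This is where one would invoke the detailed analysis of \cite{Aoki14}.
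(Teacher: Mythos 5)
Your overall scaffolding matches the paper's proof: you correctly observe that the second identity reduces to $(\gamma_0,\gamma)=0$ plus the no-jump result (Proposition \ref{prop:S1-action-and-summability}), and you correctly propose to reduce the first identity to a local model near a crossing of $\ell_0$ and translate the $z$-plane connection formula into the $\eta$-plane Stokes phenomenon via Remark \ref{remark:Voros-formula-is-Stokes}. But you have misidentified the main technical mechanism, and this is the crux of the theorem.

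You attribute the factor $1-e^{V_{\gamma_0}}$ to ``iterating the connection formula around the loop'' at the turning point $a$, i.e.\ to a finite number (two) of successive applications of Theorem \ref{thm:Voros-formula}, combined with the monodromy of $\psi_{\pm,p}$ around $p$. That is not what happens. As $\theta$ tends to $0$, the Stokes curve emanating from $a$ forms a logarithmic spiral into the double pole $p$ (Figure \ref{fig:spirals}), and a \emph{fixed} reference point $z_0$ in the degenerate ring domain is crossed by this Stokes curve \emph{infinitely many times}, at a sequence of directions $\theta_n \to 0$. Each crossing causes a separate Stokes phenomenon for $\psi_{+,p}$, and the $n$-th jump carries the exponentially small factor $e^{nV_{\gamma_0}}$ because the spiral has wrapped $n$ times around $p$ before reaching $z_0$. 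The paper's Lemma \ref{lemma:no-jump-appB}(ii) sums these infinitely many contributions as a geometric series
\begin{align*}
\sum_{n=0}^{\infty} e^{nV_{\gamma_0}} = (1-e^{V_{\gamma_0}})^{-1},
\end{align*}
which is convergent because $\mathrm{Re}\, v_{\gamma_0} < 0$, and this is where the factor $(1-e^{V_{\gamma_0}})^{\pm 1}$ genuinely originates. Without this infinite accumulation the formula cannot be derived; two applications of the connection formula at $a$ give only a finite polynomial in $e^{V_{\gamma_0}}$, not the inverse of $(1-e^{V_{\gamma_0}})$. Relatedly, your ``main obstacle'' is placed in the wrong location: the delicate analysis is not at the turning point $a$ but in the interior of the degenerate ring domain, and it also relies on the scalar nature of $V_{\gamma_0}$ (equation \eqref{eq:Vint1}, $V_{\gamma_0}=\oint_{\gamma_0}\eta\sqrt{Q_0}\,dz$) to make the infinite sum well-defined and $\theta$-independent.

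A smaller point: you describe $(1-e^{V_{\gamma_0}})^{\langle\gamma_0,\beta\rangle}$ as an ``alternating geometric expansion.'' For the relevant negative values of $\langle\gamma_0,\beta\rangle$ this expands with all positive coefficients, $(1-x)^{-1}=1+x+x^2+\cdots$; it is the DDP factor $(1+e^{V_{\gamma_0}})^{-\langle\gamma_0,\beta\rangle}$ that produces an alternating expansion. You have the two cases swapped, and getting this sign pattern from the geometry of the infinite Stokes jumps (each with coefficient $+1$ rather than alternating signs, because the sign at $p$ stays $\oplus$ throughout $-\delta_0\le\theta<0$) is precisely the content that distinguishes the pop formula from the flip formula.
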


The formula \eqref{eq:loop-type-degeneration-analytic} 
is derived as a corollary of the main result of 
the forthcoming paper \cite{Aoki14}. 
To make the paper self-contained,
we will give an alternative proof of 
\eqref{eq:loop-type-degeneration-analytic} 
in Appendix \ref{section:proof-of-AIT}. 
Note that the Borel sum of the Voros symbol $e^{{V}_{\gamma}}$ do not jump for any
$\gamma \in H_1(\hat{\Sigma}\setminus\hat{P})$.
This is a consequence of the first equality of 
\eqref{eq:loop-type-degeneration-analytic} and the fact
$({\gamma}_0,\gamma)=0$ for any 
$\gamma \in H_1(\hat{\Sigma}
\setminus\hat{P})$.

Moreover, we have 
\begin{align}
\label{eq:Vint1}
\begin{split}
V_{{\gamma}_0}(\eta) &= 
\oint_{{\gamma}_0}\left( \eta \sqrt{Q_0(z)} 
+ S_{\rm odd}^{\rm reg}(z,\eta) \right)dz\\
&=\oint_{{\gamma}_0}\left( \eta \sqrt{Q_0(z)} 
 \right)dz
\end{split}
\end{align}
since 
$S_{\rm odd}^{\rm reg}(z,\eta)dz$ is
holomorphic at the double pole $p$ 
by Proposition \ref{proposition:integrability}. 
This implies that the Voros symbol 
$e^{V_{{\gamma}_0}}$ 
for the saddle class ${\gamma}_0$ associated with 
a degenerate saddle trajectory is not a formal 
series but a scalar. 

Similarly to \eqref{eq:DDP-formal},
we also define a field automorphism 
${\mathfrak K}_{{\gamma}_0} : 
{\mathbb V} \rightarrow {\mathbb V}$ by 
\begin{eqnarray} \label{eq:loop-type-degeneration-formal}
{\mathfrak K}_{{\gamma}_{0}} : 
\begin{cases}
\displaystyle e^{{W}_{\beta}} \mapsto e^{{W}_{\beta}} 
(1 - e^{{V}_{{\gamma}_{0}}})^{
\langle{\gamma}_{0},\beta\rangle} 
& (\beta \in H_1(\hat{\Sigma}\setminus\hat{P}_0,
\hat{P}_{\infty})), \\[+.3em]%
\displaystyle e^{{V}_{\gamma}} ~ \mapsto  
e^{{V}_{\gamma}} & (\gamma \in 
H_1(\hat{\Sigma}\setminus\hat{P})). 
\end{cases}
\end{eqnarray}
Then the map ${\mathfrak K}_{{\gamma_0}}$ satisfies
\begin{equation}
{\mathcal S}_{-} = {\mathcal S}_{+}
\circ {\mathfrak K}_{{\gamma}_{0}}.
\end{equation}
The map ${\mathfrak K}_{{\gamma}_0}$ is called 
the {\em Stokes automorphism} for the saddle class 
${\gamma}_0$ associated with 
a degenerate saddle trajectory $\ell_0$.
 
\subsection{$S^1$-action on potential and jump formulas}
\label{subsec:S1action}
Let us give an alternative interpretation 
of the jump formulas \eqref{eq:DDP-analytic} and 
\eqref{eq:loop-type-degeneration-analytic} in view of 
the deformation of the potential $Q(z,\eta)$. 
We consider a particular deformation realized 
by an action of the unit circle 
$S^1 = \{e^{i\theta} 
~|~\theta \in {\mathbb R} \}$,
which we call the
{\em $S^1$-action on the potential $Q(z,\eta)$}.

Suppose that the Stokes graph $G_0 = G(\phi)$ has a 
unique regular or degenerate saddle trajectory $\ell_0$. 
Take a number $r > 0$ and consider the family 
of Schr{\"o}dinger equations
\begin{equation} \label{eq:Sch-theta}
\left( \frac{d^{2}}{dz^{2}} - \eta^{2} 
Q^{(\theta)}(z,\eta) \right) 
\psi^{(\theta)}(z,\eta)= 0 
\quad(-r\le\theta\le+r),
\end{equation}
\[
Q^{(\theta)}(z,\eta)=
Q^{(\theta)}_{0}(z)+\eta^{-1}Q^{(\theta)}_{1}(z) 
+ \eta^{-2} Q^{(\theta)}_{2}(z) 
+ \cdots.
\]
Here the family of potentials 
$\{Q^{(\theta)}(z,\eta)~|~-r\le\theta\le+r \}$
is defined by 
\begin{equation} \label{eq:Qtheta}
Q^{(\theta)}(z,\eta) = e^{2i\theta}Q(z,e^{i\theta}\eta),
\end{equation}
where $Q(z,\eta)$ is the original potential of \eqref{eq:Sch}.
{ 
We call this family the 
{\em $S^1$-family for the potential $Q(z,\eta)$}.
} 
Note that \eqref{eq:Qtheta} satisfies 
Assumptions \ref{assumption:zeros and poles} 
and \ref{assumption:summability} 
for all $\theta \in [-r,+r]$. 
Taking $r>0$ sufficiently small, we may  
assume that the Stokes graph defined from 
$Q^{(\theta)}(z,\eta)$ is saddle-free if 
$\theta \ne 0$, $\theta \in [-r,+r]$.
Since the principal terms of potentials satisfy
\begin{equation} \label{eq:Qtheta0}
Q^{(\theta)}_0(z) = e^{2i\theta} Q_0(z),
\end{equation} 
the quadratic differential
associated with \eqref{eq:Sch-theta} 
is noting but $\phi_{\theta}$ 
defined in \eqref{eq:phi-theta}. 
The Stokes graph for $Q^{(0)}(z,\eta) = Q(z,\eta)$ 
coincides with the original Stokes graph $G_0$ 
containing the saddle trajectory $\ell_0$. 

For any fixed $\theta$, 
let $S_{\rm odd}^{(\theta)}(z,\eta)$ (resp.,  
$S^{\rm reg (\theta)}_{\rm odd}(z,\eta)$) be the 
formal power series defined in the same manner as 
\eqref{eq:Sodd-and-Seven} (resp., \eqref{eq:Soddreg})
from the Schr{\"o}dinger equation \eqref{eq:Sch-theta}. 
The following statement immediately follows from 
the uniqueness of formal solutions of the 
Riccati equation associated with \eqref{eq:Sch-theta}
(see Section \ref{secton:Riccati-equation}).
\begin{lem} \label{lemma:Sodd-theta-and-Sodd}
The following identities holds:
\begin{equation} \label{eq:Sodd-theta}
S_{\rm odd}^{(\theta)}(z,\eta) = 
S_{\rm odd}(z,e^{i\theta}\eta), \quad 
S^{\rm reg (\theta)}_{\rm odd}(z,\eta) =
S^{\rm reg}_{\rm odd}(z,e^{i\theta}\eta).
\end{equation}
\end{lem}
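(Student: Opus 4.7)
The proof will hinge on the uniqueness of the formal solutions of the Riccati equation once the branch of the leading square root is fixed.

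First I would write down the Riccati equation associated with the deformed Schr\"odinger equation \eqref{eq:Sch-theta}. Using the definition $Q^{(\theta)}(z,\eta) = e^{2i\theta}Q(z,e^{i\theta}\eta)$, the right-hand side $\eta^2 Q^{(\theta)}(z,\eta)$ can be rewritten as $(e^{i\theta}\eta)^2 Q(z,e^{i\theta}\eta)$. In other words, the Riccati equation for $Q^{(\theta)}$ in the variable $\eta$ is obtained from the Riccati equation for $Q$ by the formal substitution $\eta \mapsto e^{i\theta}\eta$.

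Next I would verify that if $S^{(\pm)}(z,\eta)$ is the formal series solution of the original Riccati equation \eqref{eq:Riccati} with leading term $\pm\eta\sqrt{Q_0(z)}$, then the formal series $S^{(\pm)}(z,e^{i\theta}\eta)$ satisfies the Riccati equation for $Q^{(\theta)}$ and has leading term $\pm(e^{i\theta}\eta)\sqrt{Q_0(z)} = \pm\eta\sqrt{Q_0^{(\theta)}(z)}$ (using \eqref{eq:Qtheta0} with the branch choice \eqref{eq:relation-of-S-1}). By the uniqueness coming from the recursion relation \eqref{eq:recursion} with the fixed sign choice of $S_{-1}$, we conclude
\begin{equation*}
S^{(\pm)(\theta)}(z,\eta) = S^{(\pm)}(z,e^{i\theta}\eta).
\end{equation*}

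The first identity of the lemma then follows immediately by taking the half-difference \eqref{eq:Sodd-and-Seven}:
\begin{equation*}
S_{\rm odd}^{(\theta)}(z,\eta) = \tfrac{1}{2}\bigl(S^{(+)(\theta)} - S^{(-)(\theta)}\bigr)(z,\eta) = S_{\rm odd}(z,e^{i\theta}\eta).
\end{equation*}
For the regular part, I would subtract the principal term using \eqref{eq:Soddreg} and \eqref{eq:Qtheta0}:
\begin{equation*}
S^{\rm reg(\theta)}_{\rm odd}(z,\eta) = S_{\rm odd}(z,e^{i\theta}\eta) - \eta\sqrt{Q_0^{(\theta)}(z)} = S_{\rm odd}(z,e^{i\theta}\eta) - (e^{i\theta}\eta)\sqrt{Q_0(z)} = S^{\rm reg}_{\rm odd}(z,e^{i\theta}\eta),
\end{equation*}
which gives the second identity.

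There is no real obstacle here; the whole argument is a one-line check on the Riccati equation plus the uniqueness of the recursively defined coefficients. The only point that deserves a word of care is keeping the sign conventions consistent under $\eta\mapsto e^{i\theta}\eta$, which amounts to checking that the branch-choice condition \eqref{eq:relation-of-S-1} is respected by the multiplication of $\sqrt{Q_0}$ by $e^{i\theta}$.
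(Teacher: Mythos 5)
Your proof is correct and matches the paper's argument, which simply invokes the uniqueness of formal solutions of the Riccati equation \eqref{eq:Riccati} (and of the recursion \eqref{eq:recursion}) after noting that $\eta^2 Q^{(\theta)}(z,\eta)=(e^{i\theta}\eta)^2 Q(z,e^{i\theta}\eta)$. You have just spelled out the details that the paper leaves as an immediate consequence.
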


We define the Voros symbols 
$e^{W_{\beta}^{(\theta)}}$ and 
$e^{V_{\gamma}^{(\theta)}}$ 
($\beta \in H_1(\hat{\Sigma}\setminus\hat{P}_0;
\hat{P}_{\infty})$, 
$\gamma \in H_1(\hat{\Sigma}\setminus\hat{P}_0)$) 
of the Schr{\"o}dinger equation \eqref{eq:Sch-theta} by
\begin{equation}
\label{eq:WVtheta1}
W_{\beta}^{(\theta)}(\eta) = \int_{\beta} 
S_{\rm odd}^{\rm reg (\theta)}(z,\eta)dz, \quad
V_{\gamma}^{(\theta)}(\eta) = \oint_{\gamma} 
S_{\rm odd}^{(\theta)}(z,\eta)dz.
\end{equation}
Note that,
by \eqref{eq:Qtheta0},
the Riemann surface $\hat{\Sigma}$
defined from \eqref{eq:Sch-theta} does not depend on $\theta$.
Thus, 
the homology groups $H_1(\hat{\Sigma}\setminus\hat{P}_0)$ 
and $H_1(\hat{\Sigma}\setminus\hat{P}_0;\hat{P}_{\infty})$
for the Schr{\"o}dinger equations \eqref{eq:Sch-theta} 
also do not depend on $\theta$.
Thus, the equality \eqref{eq:Sodd-theta} implies that 
\begin{equation} \label{eq:Voros-theta}
W_{\beta}^{(\theta)}(\eta) = 
W_{\beta}(e^{i\theta}\eta),\quad
V_{\gamma}^{(\theta)}(\eta) = 
V_{\gamma}(e^{i\theta}\eta)
\end{equation}
hold as formal series.
\begin{lem}
For any $\theta \ne 0$ satisfying 
$-r \le \theta \le + r$, 
the formal series $e^{W_{\beta}^{(\theta)}}$ and 
$e^{V_{\gamma}^{(\theta)}}$ are Borel summable 
(in the direction $0$), and the equalities
\begin{equation} \label{eq:Borel-sum-theta-and-0}
{\mathcal S}[e^{W_{\beta}^{(\theta)}}](\eta) = 
{\mathcal S}_{\theta}[e^{W_{\beta}}](e^{i\theta}\eta), \quad
{\mathcal S}[e^{V_{\gamma}^{(\theta)}}](\eta) = 
{\mathcal S}_{\theta}[e^{V_{\gamma}}](e^{i\theta}\eta)
\end{equation}
hold as analytic functions of $\eta$ on  
$\{\eta\in{\mathbb R}~|~\eta\gg1 \}$.
\end{lem}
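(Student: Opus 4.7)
The plan is to reduce the statement to the Borel summability of the formal power series $W_\beta(\eta)$ and the regular part of $V_\gamma(\eta)$ in the direction $\theta$, which is already guaranteed by Corollary \ref{cor:summability}(a). The key bridge is the identity \eqref{eq:Voros-theta}, which says $W_\beta^{(\theta)}(\eta) = W_\beta(e^{i\theta}\eta)$ and $V_\gamma^{(\theta)}(\eta) = V_\gamma(e^{i\theta}\eta)$, combined with Lemma \ref{lemma:0-sum-and-theta-sum} relating Borel summability in direction $\theta$ to direction $0$.

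First, I would separate the exponential factor from $V_\gamma$ by writing $V_\gamma(\eta) = \eta\, v_\gamma + V_\gamma^{\rm reg}(\eta)$, where $V_\gamma^{\rm reg}(\eta) = \oint_\gamma S_{\rm odd}^{\rm reg}(z,\eta)\,dz$ is a formal power series in $\eta^{-1}$. Then $e^{V_\gamma^{(\theta)}}(\eta) = e^{e^{i\theta}\eta\, v_\gamma} \cdot e^{V_\gamma^{\rm reg}(e^{i\theta}\eta)}$, and by Definition \ref{def:Borel-summability} the Borel summability of $e^{V_\gamma^{(\theta)}}$ reduces to that of the formal power series $e^{V_\gamma^{\rm reg}(e^{i\theta}\eta)}$; similarly $e^{W_\beta^{(\theta)}}(\eta) = e^{W_\beta(e^{i\theta}\eta)}$ has no exponential factor, and reduces (after pulling out the scalar $e^{W_{\beta,0}}$ where $W_{\beta,0}$ is the constant term) to the exponential of a formal power series without constant term.

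Next, for $\theta\neq 0$ in $[-r,+r]$ the Stokes graph $G_\theta$ is saddle-free by construction, so by Lemma \ref{lemma:admissible-decomposition} any representative of $\beta$ or $\gamma$ admits a decomposition into admissible paths in the direction $\theta$. Hence Corollary \ref{cor:summability}(a) yields the Borel summability of $W_\beta(\eta)$ and of $V_\gamma^{\rm reg}(\eta)$ in the direction $\theta$. Applying Lemma \ref{lemma:0-sum-and-theta-sum} transfers this into Borel summability of $W_\beta(e^{i\theta}\eta)$ and $V_\gamma^{\rm reg}(e^{i\theta}\eta)$ in the direction $0$; then Proposition \ref{prop:property-of-Borel-sum}(c) applied to the entire function $A(t)=e^t$ upgrades this to Borel summability of the exponentials, proving the first assertion.

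Finally, to obtain \eqref{eq:Borel-sum-theta-and-0} I would use \eqref{eq:fB-and-ftheta-B} together with the change of variables $u=e^{-i\theta}y$ in the Laplace integral \eqref{eq:Borel-sum-theta}: for any formal power series $g(\eta)$ Borel summable in direction $\theta$, with $g^{(\theta)}(\eta)=g(e^{i\theta}\eta)$, one obtains
\begin{equation}
\mathcal{S}[g^{(\theta)}](\eta) = \mathcal{S}_\theta[g](e^{i\theta}\eta).
\end{equation}
Taking $g = e^{W_\beta}$ yields the first equality of \eqref{eq:Borel-sum-theta-and-0}. Taking $g=e^{V_\gamma^{\rm reg}}$ and reinstating the scalar exponential factor, noting that $\left.e^{\eta v_\gamma}\right|_{\eta\mapsto e^{i\theta}\eta}=e^{e^{i\theta}\eta\, v_\gamma}$ supplies the missing exponential factor on both sides simultaneously, yields the second equality. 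The only point requiring care is the bookkeeping of the exponential prefactor in the Borel-sum definition for series with nontrivial exponential weight, but this is routine and poses no genuine obstacle; the substantive analytic input is Corollary \ref{cor:summability}(a), which the argument uses as a black box.
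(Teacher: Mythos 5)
Your proof is correct and follows essentially the same route as the paper's: Borel summability of the Voros coefficient in direction $\theta$ from Corollary \ref{cor:summability}(a), transfer to direction $0$ via Lemma \ref{lemma:0-sum-and-theta-sum}, upgrade to the exponential via Proposition \ref{prop:property-of-Borel-sum}(c), and then the change-of-variables identity based on \eqref{eq:fB-and-ftheta-B}. You fill in several details the paper leaves implicit (the separation of the scalar exponential factor in $e^{V_\gamma^{(\theta)}}$, the explicit appeal to Proposition \ref{prop:property-of-Borel-sum}(c), and the rotation of the Laplace contour), but the substance and the key lemmas cited are identical.
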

\begin{proof}
Since the argument is the same,
let us concentrate on the case of $W_{\beta}^{(\theta)}$.
{ 
It follows from \eqref{eq:fB-and-ftheta-B}
that the equality  
\[
W^{(\theta)}_{\beta, B}(y) = 
e^{-i\theta} W_{\beta, B}(e^{-i\theta}y)
\]
holds near $y=0$. 
} 
Here $W_{\beta, B}(y)$ and 
$W^{(\theta)}_{\beta, B}(y)$ are the Borel transform 
of $W_{\beta}(\eta)$ and $W^{(\theta)}_{\beta}(\eta)$, 
respectively. Since the quadratic differential 
$\phi_{\theta}$ is saddle-free, $W_{\beta}(\eta)$ is 
Borel summable in the direction $\theta$ 
by Corollary \ref{cor:summability}. 
{ 
Then, Lemma \ref{lemma:0-sum-and-theta-sum} implies 
that $W^{(\theta)}_{\beta}(\eta)$ is Borel summable 
in the direction $0$ and we have the equality
\begin{equation} \label{eq:Borel-sum-theta-and-0-2}
{\mathcal S}[W^{(\theta)}_{\beta}(\eta)] = 
{\mathcal S}_{\theta}[W_{\beta}(e^{i\theta}\eta)] 
\end{equation}
by the definition \eqref{eq:Borel-sum-theta} 
of the Borel sum in the direction $\theta$.
} 
Then the desired equality
\eqref{eq:Borel-sum-theta-and-0} 
follow from \eqref{eq:Borel-sum-theta-and-0-2}. 
\end{proof}

The equality \eqref{eq:Borel-sum-theta-and-0} 
and Lemma \ref{lemma:saddle-reduction-of-Voros} 
imply that the limit $\delta \rightarrow +0$ of 
the function ${\mathcal S}[e^{W_{\beta}^{(\pm\delta)}}](\eta)$ 
exists and coincides with 
${\mathcal S}_{\pm}[e^{W_{\beta}}](\eta)$ defined in 
Section \ref{section:saddle-reduction}. That is, 
\begin{equation}
\lim_{\delta \rightarrow +0}
{\mathcal S}[e^{W_{\beta}^{(\pm\delta)}}](\eta) = 
{\mathcal S}_{\pm}[e^{W_{\beta}}](\eta)
\end{equation}
holds 
on $\{\eta \in {\mathbb R}~|~\eta\gg1 \}$. 
Similarly, we also have 
\begin{equation}
\lim_{\delta \rightarrow +0}
{\mathcal S}[e^{V_{\gamma}^{(\pm\delta)}}](\eta) = 
{\mathcal S}_{\pm}[e^{V_{\gamma}}](\eta).
\end{equation}
Therefore, we obtain the following jump formulas  
for the $S^1$-action
on the potential from 
Theorem \ref{thm:DDP-analytic} and 
\ref{thm:loop-type-degeneration-analytic}.

\begin{thm}
\label{thm:jump1}
\begin{itemize}
\item[(a).] 
Suppose that $\ell_0$ is a regular saddle trajectory
with the associated saddle class $\gamma_0$. 
Then we have
\begin{eqnarray}\hspace{-3.em}
\label{eq:jump1}
\begin{cases}
\displaystyle%
\lim_{\delta \rightarrow +0} 
{\mathcal S}[e^{W_{\beta}^{(-\delta)}}](\eta) = 
\lim_{\delta \rightarrow +0} \left(
{{\mathcal S}[e^{W_{\beta}^{(+\delta)}}](\eta) 
\bigl(1+ {\mathcal S}[e^{V_{\gamma_0}^{(+\delta)}}](\eta) 
\bigr)^{-\langle \gamma_0,\beta \rangle}} \right), \\[+.5em]
\displaystyle%
\lim_{\delta \rightarrow +0} 
{\mathcal S}[e^{V_{\gamma}^{(-\delta)}}](\eta) = 
\lim_{\delta \rightarrow +0} \left(
{{\mathcal S}[e^{V_{\gamma}^{(+\delta)}}](\eta) 
\bigl(1+ {\mathcal S}[e^{V_{\gamma_0}^{(+\delta)}}](\eta) 
\bigr)^{-(\gamma_0,\gamma)}} \right),
\end{cases}
\end{eqnarray}
for any $\beta \in H_1(\hat{\Sigma}\setminus\hat{P}_0,
\hat{P}_{\infty})$ and any $\gamma \in H_1(\hat{\Sigma}
\setminus\hat{P})$.
\item[(b).]
Suppose that $\ell_0$ is a degenerate saddle trajectory
with the associated saddle class ${\gamma}_0$. 
Then we have
\begin{eqnarray}\hspace{-3.em}
\begin{cases}
\displaystyle%
\lim_{\delta \rightarrow +0} 
{\mathcal S}[e^{W_{\beta}^{(-\delta)}}](\eta) = 
\lim_{\delta \rightarrow +0} \left(
{{\mathcal S}[e^{W_{\beta}^{(+\delta)}}](\eta) 
\bigl(1 - {\mathcal S}[e^{V_{{\gamma}_0}^{(+\delta)}}](\eta) 
\bigr)^{\langle {\gamma}_0, \beta \rangle}} \right), \\
\displaystyle%
\lim_{\delta \rightarrow +0} 
{\mathcal S}[e^{V_{\gamma}^{(-\delta)}}](\eta) =
\lim_{\delta \rightarrow +0} 
{{\mathcal S}[e^{V_{\gamma}^{(+\delta)}}](\eta)},
\end{cases}
\end{eqnarray}%
for any $\beta \in H_1(\hat{\Sigma}\setminus\hat{P}_0,
\hat{P}_{\infty})$ and any $\gamma \in H_1(\hat{\Sigma}
\setminus\hat{P})$.
\end{itemize}
\end{thm}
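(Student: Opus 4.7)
The plan is to deduce Theorem \ref{thm:jump1} directly from the jump formulas already established in the fixed-direction framework (Theorems \ref{thm:DDP-analytic} and \ref{thm:loop-type-degeneration-analytic}), by recognizing each side of the claimed equalities as the $\delta \to +0$ limit of a quantity that can be rewritten in terms of $\mathcal{S}_{\pm\delta}$. The key bridge is the identity \eqref{eq:Borel-sum-theta-and-0}, which for each fixed small $\delta > 0$ reads
\[
\mathcal{S}[e^{W_\beta^{(\pm\delta)}}](\eta) = \mathcal{S}_{\pm\delta}[e^{W_\beta}](e^{\pm i\delta}\eta),
\]
together with the analogous relation for $V_\gamma$.

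First, I will verify that both sides of each jump formula have well-defined limits and compute them. By Lemma \ref{lemma:saddle-reduction-of-Voros}, the Borel sums $\mathcal{S}_{\pm\delta}[e^{W_\beta}]$ and $\mathcal{S}_{\pm\delta}[e^{V_\gamma}]$ are independent of $\delta$ for all sufficiently small $\delta > 0$, equal to $\mathcal{S}_\pm[e^{W_\beta}]$ and $\mathcal{S}_\pm[e^{V_\gamma}]$, respectively. By Corollary \ref{cor:summability}(b), these functions are analytic in $\eta$ on an open sector about $\arg\eta = \pm\delta$ of half-opening angle $\pi/2$, so the rotated base point $e^{\pm i\delta}\eta$ stays in the common domain of analyticity as $\delta \to +0$; invoking continuity in $\eta$ and passing to the limit in the bridge identity yields
\[
\lim_{\delta \to +0} \mathcal{S}[e^{W_\beta^{(\pm\delta)}}](\eta) = \mathcal{S}_\pm[e^{W_\beta}](\eta), \qquad \lim_{\delta \to +0} \mathcal{S}[e^{V_\gamma^{(\pm\delta)}}](\eta) = \mathcal{S}_\pm[e^{V_\gamma}](\eta),
\]
for $\eta \gg 1$ on the positive real axis.

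With these identifications in hand, part (a) reduces directly to \eqref{eq:DDP-analytic}. The left-hand side of \eqref{eq:jump1} becomes $\mathcal{S}_-[e^{W_\beta}](\eta)$; the right-hand side, using continuity of multiplication and of integer powers on the space of analytic functions, becomes $\mathcal{S}_+[e^{W_\beta}](\eta)\bigl(1 + \mathcal{S}_+[e^{V_{\gamma_0}}](\eta)\bigr)^{-\langle \gamma_0, \beta \rangle}$; and these are equal by Theorem \ref{thm:DDP-analytic}. The argument for the $V_\gamma$ component of (a) and for both components of (b) is structurally identical, substituting Theorem \ref{thm:loop-type-degeneration-analytic} for Theorem \ref{thm:DDP-analytic} in (b). There is no substantive obstacle; the proof is essentially a translation between the two formulations of the Stokes phenomenon (rotation of the resummation direction versus $S^1$-rotation of the potential) via Lemma \ref{lemma:Sodd-theta-and-Sodd} and its consequence \eqref{eq:Borel-sum-theta-and-0}, with the uniformity-in-$\delta$ of the sectorial domain in Corollary \ref{cor:summability}(b) being the only item that warrants explicit verification.
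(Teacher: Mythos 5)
Your proof is correct and follows essentially the same route as the paper: both convert the $S^1$-family statement to the fixed-direction statement via the bridge identity \eqref{eq:Borel-sum-theta-and-0}, use Lemma \ref{lemma:saddle-reduction-of-Voros} to identify the $\delta\to+0$ limits with $\mathcal{S}_{\pm}[\cdot]$, and then invoke Theorems \ref{thm:DDP-analytic} and \ref{thm:loop-type-degeneration-analytic}. Your remark about the sectorial analyticity of the Borel sums (from Corollary \ref{cor:summability}) merely makes explicit a step the paper glosses over.
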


This concludes the exposition of the materials from the exact WKB analysis.
We revisit the jump formulas and Stokes automorphisms in this section
in view of cluster algebra theory later in Sections \ref{sec:mut_voros} and 
\ref{sec:identities}.

\section{Cluster algebras with coefficients}

In this section we summarize the basic notions and properties in cluster algebras
which we will use in this paper.
We also introduce
the notion of signed mutations of seeds in Section \ref{subsec:monomial}
 to accommodate the forthcoming results of this paper.
We ask the readers
to consult \cite{Fomin07,Nakanishi11c}, for example, for further explanations
if necessary,
though it is our hope that the readers will not be bogged down in the 
cluster algebras machinery presented in this and the next sections,
 and smoothly proceed to Section 
\ref{sec:mutationofStokes} where our real work starts.

\subsection{Semifields} 
Let us start from the notion of semifields, where ``coefficients'' of
cluster algebras live.

\begin{defn}
A {\em semifield\/} $\bbP$ is a  multiplicative abelian group endowed with an addition
denoted by $\oplus$,
which is commutative, associative, and distributive with respect to the multiplication.
\end{defn}

To say it plainly, a semifield is almost a field, but without zero and subtraction.
In this paper we mainly use the following  examples.

\begin{ex}
\label{ex:semifield1}
Let  $u=(u_i)_{i=1}^n$ be an $n$-tuple of formal variables.
\par
 (a) {\em The universal semifield $\mathbb{Q}_+(u)$ of $u$}.
This is the semifield of all nonzero rational functions of $u$
which have subtraction-free expressions,
where the multiplication $\cdot$ and the addition $\oplus$ 
are defined by the usual  $\times$ and $+$ in 
the rational function field $\mathbb{Q}(u)$ of $u$.
As a standard example, the polynomial $u_1^2-u_1u_2 +u_2^2$ does not seem to belong to
$\mathbb{Q}_+(u)$; however, it actually does, since
\begin{align}
u_1^2-u_1u_2 + u_2^2
=
\frac{
u_1^3+u_2^3
}
{
u_1 + u_2
}
=
\frac{
u_1^3\oplus u_2^3
}
{
u_1 \oplus  u_2
}
\in
\mathbb{Q}_+(u).
\end{align}
\par
(b) {\em The tropical semifield\/ $\mathrm{Trop}(u)$ of $u$}.
This is the  multiplicative free abelian group
generated by $u$,
endowed with the {\em tropical sum} $\oplus$ defined by
\begin{align}
\label{eq:pi1}
\prod_{i=1}^n u_i^{a_i}
\oplus
\prod_{i=1}^n u_i^{b_i}
:=
\prod_{i=1}^n u_i^{\min(a_i,b_i)}.
\end{align}
It is called so because it is essentially the exponential and multivariable version of
the {\em tropical semiring\/} (also known as
the {\em min-plus algebra\/}) $a\oplus b:=\min(a,b)$, $a\otimes b := a+b$.
The tropical semiring
  is the central object of the {\em tropical mathematics\/}
well studied since 90's.
See \cite{Speyer04} for the subject and the explanation for this peculiar terminology.
\par
(c) {\em The tropicalization map}. There is the natural semifield homomorphism
\begin{align}
\label{eq:trop1}
\begin{matrix}
\pi_{\mathrm{trop}}: & \mathbb{Q}_+(u) &\rightarrow & \mathrm{Trop}(u)\\
& u_i & \mapsto & u_i\\
& c & \mapsto & 1 & (c\in \mathbb{Q}_+).
\end{matrix}
\end{align}
For example,
\begin{align}
\label{eq:pi2}
\pi_{\mathrm{trop}}:
\frac{2u_1^2\oplus u_1u_2}
{u_1^2u_2\oplus 2u_1u_2^2}
=
\frac{u_1 (2u_1\oplus u_2)}
{u_1u_2(u_1\oplus 2 u_2)}
\mapsto
\frac{u_1}
{u_1u_2}=
u_2^{-1},
\end{align}
where we used \eqref{eq:pi1}.
So, plainly speaking, the tropicalization map
$\pi_{\mathrm{trop}}$ is the operation of
taking the ``principal term" (in the  sense of \eqref{eq:pi2}) of a rational
function in $\mathbb{Q}_+(u) $.
\end{ex}

For a given semifield $\bbP$,
let $\bbZ\bbP$ denote the group ring of $\bbP$ over $\bbZ$.
Namely, $\bbZ\bbP$ is the commutative ring of all formal finite sums
$\sum_{r=1}^m n_r p_r$ ($n_r\in \mathbb{Z}, p_r\in \bbP$).
Note that we have two kinds of additions, $\oplus$ for $\bbP$ and the addition $+$
for the group ring.
For $\oplus$, there is no subtraction, but for $+$, there is the usual subtraction $-$
in $\bbZ\bbP$.
It is known that $\bbZ\bbP$ is a domain \cite[Section 1.2]{Fomin02};
namely, it has no zero divisors.
Thus, the field of fractions  $\bbQ\bbP$ of  the ring $\bbZ\bbP$ is well-defined.

\subsection{Mutation of seeds and cluster algebra with coefficients}

Let us recall the notions of mutations of seeds
and cluster algebras,
following \cite{Fomin03a,Fomin07}.

To introduce a cluster algebra (with coefficients),
let us first fix a positive integer $n$ called the {\em rank},
 and a semifield $\bbP$
called the {\em coefficient semifield}.
We choose an
$n$-tuple of formal variables, say,
 $w=(w_1,\dots,w_n)$,
 and consider
 the field of the rational functions in $w$
 with coefficients in $\bbQ\bbP$,
 which denoted by $\bbQ\bbP(w)$.

A {\em (labeled) seed $(B,x,y)$ with coefficients in $\bbP$} is a triplet with the following data:
\begin{itemize}
\item
an {\em exchange matrix\/} $B=(b_{ij})_{i,j=1}^n$,
which is a  skew-symmetric integer matrix,
\item a {\em cluster\/} $x=(x_i)_{i=1}^n$,
which is
 an $n$-tuple  of algebraically independent
elements in $\bbQ\bbP(w)$ over $\bbQ\bbP$,
\item  a {\em coefficient tuple\/} $y=(y_i)_{i=1}^n$,
which is an $n$-tuple  of elements in $\bbP$.
\end{itemize}
Each $x_i$ and $y_i$ are called
 a {\em cluster variable} and  {\em coefficient},
 respectively.
In this paper we call them,  a little casually, an {\em $x$-variable\/} and a {\em $y$-variable},
respectively.
(They correspond to an {\em $\mathcal{A}$-coordinate\/} and an {\em $\mathcal{X}$-coordinate\/}
in \cite{Fock03}, respectively.)

For any seed $(B,x,y)$ and any $k=1,\dots,n$,
we define
another seed 
$(B',x',y')$,
called the {\em mutation of $(B,x,y)$ at $k$}
and denoted by $\mu_k(B,x,y)$,
by the following relations:
\begin{align}
\label{eq:bmut}
b'_{ij}&=
\begin{cases}
-b_{ij}& \mbox{$i=k$ or $j=k$}\\
b_{ij}+[- b_{ik}]_+ b_{kj} + b_{ik}[ b_{kj}]_+
& \mbox{$i,j\neq k$,}\\
\end{cases}
\\
\label{eq:ymut}
y'_i&=
\begin{cases}
y_k{}^{-1} & i= k\\
\displaystyle
y_i \frac{(1\oplus y_k)^{[-b_{ki}]_+}}
{(1\oplus y_k{}^{-1})^{[b_{ki}]_+}}
& i\neq k,\\
\end{cases}
\\
\label{eq:xmut}
x'_i&=
\begin{cases}
\displaystyle
{x_k}^{-1}\left(
\frac{1}{1\oplus y_k^{-1}}
\prod_{j=1}^n x_j{}^{[b_{jk}]_+}
+ 
\frac{1}{1\oplus y_k}
\prod_{j=1}^n x_j{}^{[-b_{jk}]_+}
\right)
& i=k\\
x_i & i\neq k.\\
\end{cases}
\end{align}
Here, for any integer $a$, we set $[a]_+:=\max(a,0)$.
The above relations
are called the {\em exchange relations}.
The involution property $\mu_k^2=\mathrm{id}$ holds.

\begin{defn}
Let us fix an arbitrary seed $(B^0,x^0,y^0)$ with coefficients in $\bbP$,
and call it the {\em initial seed}.
Then, repeat mutations from the initial seed to all directions.
Let $\mathrm{Seed}(B^0,x^0,y^0;\bbP)$ denote the set of all  so obtained seeds,
including the initial one;
namely,
\begin{align}
\mathrm{Seed}(B^0,x^0,y^0;\bbP)
=\left\{\mu_{k_N}\cdots \mu_{k_1}(B^0,x^0,y^0)
\mid N \geq 0;\ k_1,\dots,k_N\in \{1,\dots,n\}
\right\}.
\end{align}
The {\em cluster algebra 
$\mathcal{A}(B^0,x^0,y^0;\bbP)$ with coefficients in $\bbP$} is
the ${\bbZ\bbP}$-subalgebra of $\bbQ\bbP(w)$
generated by all $x$-variables
belonging to  seeds in $\mathrm{Seed}(B^0,x^0,y^0;\bbP)$.
A seed in $\mathrm{Seed}(B^0,x^0,y^0;\bbP)$ is called
a {\em seed of $\mathcal{A}(B^0,x^0,y^0;\bbP)$}.
\end{defn}

What is important in our application is not
the algebra $\mathcal{A}(B^0,x^0,y^0;\bbP)$ itself  but the exchange relations
 \eqref{eq:bmut}--\eqref{eq:xmut}.
They are the abstraction of  relations occurring in Lie theory due to
Fomin and Zelevinsky. For example, the relation \eqref{eq:xmut} typically
appears as relations in the coordinate rings of certain algebraic varieties related to Lie group,
e.g., Grassmannians, $SL(n)$, etc.
The relation \eqref{eq:ymut} is a sort of the ``dual'' of
the relation \eqref{eq:xmut} as we see below.
As explained at the beginning of Section \ref{sec:introduction},
it is nowadays known that the cluster algebra structure,
(i.e., the exchange relations
 \eqref{eq:bmut}--\eqref{eq:xmut})
serves
a common
 underlying algebraic/combinatorial structure
 in several branches of mathematics
 --- from algebra, geometry, analysis to combinatorics.
 Such  ubiquity as a common structure reminds us of {\em root systems}.
 Indeed, cluster algebra theory may be regarded as an extended theory of root systems in several aspects.
An explicit example of the exchange relations
 \eqref{eq:bmut}--\eqref{eq:xmut}
will be exhibited in Example \ref{ex:pentagon1} later.

For each seed $(B,x,y)$ with coefficients in $\bbP$, we define  {\em $\hat{y}$-variables\/}
$\hat{y_1},\dots, \hat{y}_n \in \bbQ\bbP(w)$  by
\begin{align}
\label{eq:yhat}
\hat{y}_i=
y_i \prod_{j=1}^n x_j{}^{b_{ji}}.
\end{align}
It is easy to verify the following property by  using \eqref{eq:bmut}--\eqref{eq:xmut}.
\begin{prop}[{\cite[Prop. 3.9]{Fomin07}}] 
\label{prop:yhat1}
Under the mutation $(B',x',y')=\mu_k(B,x,y)$ the following relation holds:
\begin{align}
\label{eq:ymut4}
\hat{y}'_i&=
\begin{cases}
\hat{y}_k{}^{-1} & i= k\\
\displaystyle
\hat{y}_i \frac{(1+ \hat{y}_k)^{[-b_{ki}]_+}}
{(1+ \hat{y}_k{}^{-1})^{[b_{ki}]_+}}
& i\neq k.\\
\end{cases}
\end{align}
\end{prop}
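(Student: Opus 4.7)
The plan is a direct verification from the definitions \eqref{eq:bmut}--\eqref{eq:xmut}, \eqref{eq:yhat}, using two key rewriting tricks.

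First, I would dispose of the case $i=k$ quickly. By \eqref{eq:bmut} we have $b'_{jk}=-b_{jk}$ for all $j$, and by \eqref{eq:xmut} we have $x'_j=x_j$ for $j\ne k$; moreover the factor involving $x'_k$ carries exponent $b'_{kk}=-b_{kk}=0$. Combined with $y'_k=y_k^{-1}$, the definition \eqref{eq:yhat} directly yields $\hat{y}'_k=y_k^{-1}\prod_j x_j^{-b_{jk}}=\hat{y}_k^{-1}$.

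For $i\ne k$, the plan is to compute $\hat{y}'_i=y'_i(x'_k)^{b'_{ki}}\prod_{j\ne k}x_j^{b'_{ji}}$ and regroup. The crucial step is to rewrite the exchange relation \eqref{eq:xmut} using the semifield identity $1\oplus y_k^{-1}=y_k^{-1}(1\oplus y_k)$ so that the two fractions share a common denominator. This gives
\[
x_k x'_k \;=\; \frac{\prod_{j} x_j^{[-b_{jk}]_+}\,(1+\hat{y}_k)}{1\oplus y_k},
\]
where I have pulled out $\prod_{j}x_j^{[-b_{jk}]_+}$ and used $y_k\prod_{j}x_j^{b_{jk}}=\hat{y}_k$ in the numerator. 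This identity is the engine of the calculation: it trades the combination $y_k$, $\prod x_j^{b_{jk}}$, and $1\oplus y_k$ for $\hat{y}_k$ and $1+\hat{y}_k$, which are precisely the ingredients appearing on the right-hand side of \eqref{eq:ymut4}.

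Next I substitute this expression for $(x'_k)^{-b_{ki}}$ (and the formula for $b'_{ji}$ with $j\ne k$) into $\hat{y}'_i$. The powers of $x_j$ for $j\ne k$ reorganize to $(\hat{y}_k/y_k)^{[b_{ki}]_+}\prod_j x_j^{b_{ji}}$, and combining the remaining $x_k^{b_{ki}}$ with $\prod_{j\ne k}x_j^{b_{ji}}$ recovers the factor $\hat{y}_i/y_i$. Using $y_k^{[b_{ki}]_+}(1\oplus y_k^{-1})^{[b_{ki}]_+}=(1\oplus y_k)^{[b_{ki}]_+}$ makes all factors of $(1\oplus y_k)$ cancel, because the exponents total $[-b_{ki}]_+-[b_{ki}]_++b_{ki}=0$. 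One is then left with
\[
\hat{y}'_i \;=\; \hat{y}_i\,\hat{y}_k^{[b_{ki}]_+}(1+\hat{y}_k)^{-b_{ki}},
\]
and a final application of the identity $1+\hat{y}_k^{-1}=\hat{y}_k^{-1}(1+\hat{y}_k)$ (used this time in $\mathbb{Q}\mathbb{P}(w)$, not in the semifield) rewrites this as $\hat{y}_i(1+\hat{y}_k)^{[-b_{ki}]_+}/(1+\hat{y}_k^{-1})^{[b_{ki}]_+}$, which is precisely \eqref{eq:ymut4}.

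The main obstacle is simply bookkeeping: one has to carry several multiplicative factors through the substitution and observe the three algebraic cancellations (the $\prod_j x_j^{-[-b_{jk}]_+ b_{ki}}$ factor from $(x'_k)^{-b_{ki}}$ meeting the $[-b_{jk}]_+b_{ki}$ term of $b'_{ji}$; the cancellation of all $(1\oplus y_k)$-factors via the identity $[-b_{ki}]_+-[b_{ki}]_++b_{ki}=0$; and the semifield-to-field identity that converts $\oplus$-expressions in $y_k$ into $+$-expressions in $\hat{y}_k$). No deep ideas beyond Fomin--Zelevinsky's definitions are needed; the cleanness of the result is essentially a reflection of the fact that the $\hat{y}$-variables are the natural ``combined'' objects on which the mutation takes the same form as for $y$-variables, but with the semifield sum replaced by the ordinary sum.
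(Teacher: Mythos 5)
Your verification is correct and is exactly the direct computation that the paper leaves implicit (the paper simply states it is ``easy to verify'' from \eqref{eq:bmut}--\eqref{eq:xmut} and cites \cite[Prop.~3.9]{Fomin07}, which is proved the same way). The two rewriting steps you isolate — common-denominator rewriting of \eqref{eq:xmut} via $1\oplus y_k^{-1}=y_k^{-1}(1\oplus y_k)$ to produce the clean form $x_k x'_k = (1\oplus y_k)^{-1}\prod_j x_j^{[-b_{jk}]_+}(1+\hat y_k)$, and the exponent identity $[-b_{ki}]_+-[b_{ki}]_++b_{ki}=0$ to cancel all $(1\oplus y_k)$ factors — are precisely the cancellations that make the bookkeeping go through, and you correctly distinguish the semifield identity in $\bbP$ from the analogous identity in $\bbQ\bbP(w)$ at the final step. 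Nothing further is needed.
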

In other words, $\hat{y}$-variables mutate
just in the same way as $y$-variables.

\subsection{Quivers}

It is often convenient to represent a skew-symmetric matrix $B=(b_{ij})_{i,j=1}^n$
by a (labeled) quiver $Q$ whose vertices are labeled by $1,\dots,n$.
In our convention, we write $b_{ij}$ arrows from  vertex $i$ to  vertex $j$ if and only if
$b_{ij}>0$. This gives a one-to-one correspondence between skew-symmetric matrices
and quivers without any loops (1-cycles) and  oriented 2-cycles.
Here is an example:
\begin{align}
\label{eq:BQ1}
B=
\begin{pmatrix}
0 & -1 & 2\\
1 & 0 & -1\\
-2 & 1 & 0
\end{pmatrix}
\quad
\longleftrightarrow
\quad
Q = 
\raisebox{-14pt}{
\begin{xy}
(0,0)*\cir<2pt>{},
(5,8.7)*\cir<2pt>{},
(10,0)*\cir<2pt>{},
(-4,0)*{1},
(14,0)*{3},
(5,12)*{2},
\ar (4,7);(1,1.7)
\ar (9,1.7);(6,7)
\ar (2,0.7);(8,0.7)
\ar (2,-0.7);(8,-0.7)
\end{xy}
}.
\end{align}

In terms of quivers,
the exchange relation \eqref{eq:bmut} for the mutation at $k$ 
is translated as follows.
\begin{itemize}
\item[] Step 1. For each pair of an  arrow from $i$ to $k$ and an 
arrow from $k$ to $j$, add an arrow from $i$ to $j$.

\item[]
Step 2. Reverse all  arrows incident with $k$.

\item[]
Step 3. Remove the arrows in a maximal set of pairwise disjoint 2-cycles.
\end{itemize}

For example, for the quiver $Q$ in \eqref{eq:BQ1}, the mutation at $1$ is
done in the following manner.
\begin{align}
\raisebox{-14pt}{
\begin{xy}
(0,0)*\cir<2pt>{},
(5,8.7)*\cir<2pt>{},
(10,0)*\cir<2pt>{},
(-4,0)*{1},
(14,0)*{3},
(5,12)*{2},
\ar (4,7);(1,1.7)
\ar (9,1.7);(6,7)
\ar (2,0.7);(8,0.7)
\ar (2,-0.7);(8,-0.7)
\end{xy}
}
\buildrel
{\text{Step 1}}
\over
{
\Longrightarrow
}
\raisebox{-14pt}{
\begin{xy}
(0,0)*\cir<2pt>{},
(5,8.7)*\cir<2pt>{},
(10,0)*\cir<2pt>{},
(-4,0)*{1},
(14,0)*{3},
(5,12)*{2},
\ar (4,7);(1,1.7)
\ar (9,1.7);(6,7)
\ar (7,7.5);(10,2.2)
\ar (8,8);(11,2.7)
\ar (2,0.7);(8,0.7)
\ar (2,-0.7);(8,-0.7)
\end{xy}
}
\buildrel
{\text{Step 2}}
\over
{
\Longrightarrow
}
\raisebox{-14pt}{
\begin{xy}
(0,0)*\cir<2pt>{},
(5,8.7)*\cir<2pt>{},
(10,0)*\cir<2pt>{},
(-4,0)*{1},
(14,0)*{3},
(5,12)*{2},
\ar (1,1.7);(4,7)
\ar (9,1.7);(6,7)
\ar (7,7.5);(10,2.2)
\ar (8,8);(11,2.7)
\ar (8,0.7);(2,0.7)
\ar (8,-0.7);(2,-0.7)
\end{xy}
}
\buildrel
{\text{Step 3}}
\over
{
\Longrightarrow
}
\raisebox{-14pt}{
\begin{xy}
(0,0)*\cir<2pt>{},
(5,8.7)*\cir<2pt>{},
(10,0)*\cir<2pt>{},
(-4,0)*{1},
(14,0)*{3},
(5,12)*{2},
\ar (1,1.7);(4,7)
\ar (6,7);(9,1.7)
\ar (8,0.7);(2,0.7)
\ar (8,-0.7);(2,-0.7)
\end{xy}
}
\end{align}

\subsection{Tropicalization of $y$-variables and tropical sign}

In this paper we mainly use the following  two choices of  $y$-variables.
(See Example \ref{ex:semifield1}.)
\begin{itemize}
\item[(a).]
We set 
the coefficient semifield 
as $\bbQ_+(y^0)$ with $y^0=(y^0_1,\dots,y^0_n)$;
furthermore, we set  the initial $y$-variables
as $y^0$.
We call the $y$-variables of $\mathcal{A}(B^0,x^0,y^0; \bbQ_+(y^0))$
the  {\em universal $y$-variables}.
\item[(b).]
We set 
the coefficient semifield
as $\mathrm{Trop}(y^0)$ with $y^0=(y^0_1,\dots,y^0_n)$;
furthermore, we set  the initial $y$-variables
as $y^0$.
We call  the $y$-variables of $\mathcal{A}(B^0,x^0,y^0; \mathrm{Trop}(y^0))$
the {\em tropical $y$-variables}.
(It is more standard to call them  the {\em principal coefficients\/} \cite{Fomin07},
but here we emphasize their tropical nature.)
\end{itemize}

The tropical $y$-variables are obtained from the universal $y$-variables
by applying  the tropicalization map
 in Example \ref{ex:semifield1},
\begin{align}
\label{eq:trop2}
\pi_{\mathrm{trop}}:  \mathbb{Q}_+(y^0) \rightarrow  \mathrm{Trop}(y^0).
\end{align}
Namely, for any universal $y$-variable $y_i\in \bbQ_+(y^0)$,
let $[{y}_i]:=
\pi_{\mathrm{trop}}(y_i)\in \mathrm{Trop}(y^0)$.
Since $\pi_{\mathrm{trop}}$ is a semifield homomorphism,
it preserves the exchange relation \eqref{eq:ymut};
therefore, it commutes with mutations.
Thus, $[{y}_i]$ is  a tropical $y$-variable.
{}From now on we conveniently use this expression for  tropical $y$-variables.

By definition,  a tropical $y$-variable $[{y}_i]$  is a Laurent monomial of the initial tropical $y$-variables $y^0$;
namely, it is written in the form
\begin{align}
\label{eq:cvec}
[{y}_i]=
\prod_{j=1}^n (y_j^0) ^{c_j},
\end{align}
where $c=c(y_i)=(c_j)_{j=1}^n$ is an integer vector depending on ${y}_i$.
The vector $c(y_i)$ is introduced in  \cite{Fomin07} and 
  called the {\em $c$-vector of $y_i$}.
 
We say that an integer vector is
{\em positive\/} (resp., {\em negative}) if it
 is a nonzero vector and its components are all
nonnegative (resp., nonpositive).
We have the following important property of $c$-vectors.
\begin{thm} [{Sign coherence of $c$-vectors (\cite[Prop. 5.7]{Fomin07},
\cite[Theorem 1.7]{Derksen10})}]
Any $c$-vector  is either a positive vector or a negative vector.
\end{thm}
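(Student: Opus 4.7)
The plan is to use the categorification of cluster algebras via quivers with potentials developed by Derksen, Weyman, and Zelevinsky. Since $B^0$ is skew-symmetric, it encodes a quiver $Q^0$ (as in the translation described after \eqref{eq:BQ1}), and I would fix a generic (nondegenerate) potential $W^0$ on $Q^0$. To each seed obtained from $(B^0, x^0, y^0)$ by a mutation sequence $\mu_{k_N}\cdots\mu_{k_1}$, one attaches a decorated representation $\mathcal{M} = (M, V)$ of the Jacobian algebra $J(Q^0, W^0)$, where $M$ is a module and $V$ is a ``negative'' decoration space sitting at the vertices. The mutation $\mu_k$ of seeds is lifted to an explicit mutation of decorated representations defined by a linear-algebraic recipe on the arrow spaces and on the components $(M_k, V_k)$.

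The key identity to establish in this framework is that the $c$-vector $c(y_i)$ of a mutated $y$-variable, as defined by \eqref{eq:cvec}, has the form
\begin{equation*}
c(y_i) = \underline{\dim}\, V - \underline{\dim}\, M',
\end{equation*}
where $M'$ is an appropriate piece (a simple top/socle summand) of the decorated representation $\mathcal{M}$ attached to the relevant seed. Sign coherence then reduces to the dichotomy: for each individual $y$-variable one has either $V = 0$ at every vertex or $M' = 0$ at every vertex. This dichotomy follows from the behavior of the $E$-invariant of Derksen--Weyman--Zelevinsky under QP-mutation: a nonzero decoration at a vertex $j$ is incompatible with a nonzero module part of $M'$ at $j$ for generic $W^0$, and an induction on $N$ propagates this alternative through each single mutation step.

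The hard part is justifying that the representation-theoretic lift is well defined along \emph{every} mutation sequence, i.e.\ that the mutation of decorated representations remains a representation of the same (suitably transformed) Jacobian algebra. This hinges on the nondegeneracy of $W^0$, which guarantees that the sequence of mutated potentials $W^0, \mu_{k_1}(W^0), \mu_{k_2}\mu_{k_1}(W^0), \dots$ never collapses into an ill-defined algebra. Once this input is granted, the identification of $c$-vectors with $\underline{\dim}\, V - \underline{\dim}\, M'$ and the vanishing alternative are consequences of the machinery of \cite{Derksen10}, so the present theorem emerges as a corollary rather than requiring a self-contained combinatorial argument. An alternative route, which I would mention but not pursue here, is via scattering diagrams or via 2-Calabi--Yau categorifications; in the skew-symmetric case relevant to this paper the QP-route is the most economical.
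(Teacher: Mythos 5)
The paper itself does not prove this theorem; it quotes it as a known result with two citations (Fomin--Zelevinsky IV and Derksen--Weyman--Zelevinsky II), so there is no ``paper's proof'' to compare against line by line. Your proposal points at the correct literature and the correct machinery for the skew-symmetric case treated in the paper, but the argument as sketched does not faithfully reproduce the DWZ route, and the central claim on which everything hinges is asserted rather than justified.

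The specific issues are these. First, the identity you put at the heart of the argument, $c(y_i) = \underline{\dim}\,V - \underline{\dim}\,M'$ for a suitable piece $M'$ of the decorated representation, is not how Derksen, Weyman, and Zelevinsky actually deduce sign coherence. Their proof establishes the representation-theoretic formula for $F$-polynomials (sums of Euler characteristics of quiver Grassmannians of the mutated module $M$), from which the constant term of every $F$-polynomial is $\chi(\mathrm{Gr}_0(M)) = 1$; sign coherence of $c$-vectors then follows because Fomin--Zelevinsky IV proves it \emph{equivalent} to that constant-term property. The direct identification of $c$-vectors (not $g$-vectors) with signed dimension vectors of modules over the Jacobian algebra is true, but it is a slightly later refinement, appearing in work of Nagao, Plamondon, and N\'ajera Ch\'avez, and taking it as the key lemma changes the structure of the proof. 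Second, even granting your formula, the dichotomy ``$V = 0$ everywhere or $M' = 0$ everywhere'' is precisely the content of sign coherence, so you cannot invoke it as a consequence of a loosely stated property of the $E$-invariant without a concrete argument; the $E$-invariant's vanishing along mutation-reachable decorated representations is indeed the key technical input in DWZ, but what it directly controls is the invertibility of the mutation of decorated representations and the quiver-Grassmannian formula for $F$-polynomials, not the vanishing alternative in the form you state. What you have is a plausible high-level map of the representation-theoretic landscape, not a proof; to make it rigorous you would either have to switch to the $F$-polynomial/constant-term route of DWZ, or commit to the $c$-vectors-as-dimension-vectors route and carry out the mutation induction with the precise DWZ recipe for $(M,V) \mapsto (\tilde{M},\tilde{V})$, tracking where the decoration sits relative to the top and socle of the module at each step.
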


Thanks to the theorem,
we have the notion of  the tropical sign.

\begin{defn}  Let $\mathcal{A}(B^0,x^0,y^0;\bbQ_+(y^0))$ be
a cluster algebra with  universal $y$-variables,
and let $(B,x,y)$ be its seed.
Then, to each  component $y_i$ of $y$ we
assign the {\em tropical sign\/} $\ve(y_i)$
as $\ve(y_i)=+$ (resp., $\ve(y_i)=-$)
 if the $c$-vector $c(y_i)$ is positive (resp., negative).
\end{defn}

Throughout the paper we conveniently identify 
the signs $\ve=\pm$
and 
the numbers $\ve=\pm 1$.

It is important that the tropical sign is a relative concept
depending on $(B,x,y)$ {\em and\/} the initial seed $(B^0,x^0,y^0)$.
By the definition of the tropical sign, we have
 \begin{align}
\label{eq:tropsign1}
 1\oplus [y_i]^{\varepsilon(y_i)}=1
 \end{align}
in $\mathrm{Trop}(y^0)$.

\subsection{$\ve$-expression of exchange relations}
Let us focus on some fine property of the exchange relations
 \eqref{eq:ymut} and  \eqref{eq:xmut}.
It is easy to check that \eqref{eq:ymut} and \eqref{eq:xmut} can be expressed alternatively as
\cite{Keller11, Nakanishi11c}
\begin{align}
\label{eq:ymut3}
y'_i&=
\begin{cases}
y_k{}^{-1} & i= k\\
\displaystyle
y_i y_{k}{}^{[\varepsilon b_{ki}]_+}
(1\oplus y_k{}^{\varepsilon})^{-b_{ki}}
& i\neq k,\\
\end{cases}
\allowdisplaybreaks
\\
\label{eq:xmut3}
x'_i&=
\begin{cases}
\displaystyle
x_k{}^{-1} 
\left(
\prod_{j=1}^n x_{j}{}^{[-\varepsilon b_{jk}]_+}
\right)
\frac{
1+ \hat{y}_k{}^{\varepsilon}}
{
1\oplus y_k{}^{\varepsilon}
}
& i= k\\
\displaystyle
x_i 
& i\neq k,\\
\end{cases}
\end{align}
where $\varepsilon \in \{+,-\}=\{1,-1\}$,
and the right hand sides  are {\em independent of the choice of $\varepsilon$}.
We call them the {\em $\varepsilon$-expression\/} of the exchange relations.

Let us specialize the $y$-variables $y_i$ in
\eqref{eq:ymut3} and \eqref{eq:xmut3}
to  tropical $y$-variables $[y_i]$;
furthermore, let us specialize $\varepsilon$ therein
 to the {\em tropical sign\/} $\varepsilon(y_k)$.
Then, by \eqref{eq:tropsign1},
the relations \eqref{eq:ymut3} and \eqref{eq:xmut3} reduce to the following
ones:
 \begin{align}
\label{eq:ymut5}
[y'_i]&=
\begin{cases}
[y_k]^{-1} & i= k\\
\displaystyle
[y_i][ y_{k}]^{[\varepsilon (y_k)b_{ki}]_+}
& i\neq k,\\
\end{cases}
\\
\label{eq:xmut5}
x'_i&=
\begin{cases}
\displaystyle
x_k{}^{-1} 
\left(
\prod_{j=1}^n x_{j}{}^{[-\varepsilon(y_k) b_{jk}]_+}
\right)
(1+ \hat{y}_k{}^{\varepsilon(y_k)})
& i= k\\
\displaystyle
x_i 
& i\neq k,\\
\end{cases}
\end{align}
where $\hat{y}_k = [y_k] \prod_{j=1}^n x_j^{b_{ji}}$.
These are an alternative expressions of
the exchange relations for  tropical $y$-variables
and  $x$-variables with  tropical $y$-variables as coefficients.

\subsection{Signed mutations}
\label{subsec:monomial}

We introduce some new notions in cluster algebras,
motivated by the forthcoming results in this paper.

For any seed $(B,x,y)$ with coefficients in  $\mathrm{Trop}(y^0)$,
any $k=1,\dots,n$, and any sign $\varepsilon\in \{+,-\}$,
we introduce the {\em signed monomial mutation\/} $(B',x',y')=m^{(\ve)}_k(B,x,y)$
by the following exchange relation,
where $B'$ is defined as usual:
\begin{align}
\label{eq:ymut6}
y'_i&=
\begin{cases}
y_k^{-1} & i= k\\
\displaystyle
y_i  y_{k}{}^{[\varepsilon b_{ki}]_+}
& i\neq k,\\
\end{cases}
\\
\label{eq:xmut6}
x'_i&=
\begin{cases}
\displaystyle
x_k{}^{-1} 
\prod_{j=1}^n x_{j}{}^{[-\varepsilon b_{jk}]_+}
& i= k\\
\displaystyle
x_i 
& i\neq k.\\
\end{cases}
\end{align}
Unlike \eqref{eq:ymut5} and \eqref{eq:xmut5},
they {\em depend on $\ve$}.
If we  set $\ve$
to the tropical sign $\ve(y_k)$ for the universal $y$-variables,
the relation \eqref{eq:ymut6} reduces to the exchange relation \eqref{eq:ymut5}
of the tropical $y$-variables.
Starting from a given initial seed $(B^0,x^0,y^0)$,
we obtain a family of seeds by repeating the above mutations
to any direction $k$ and any sign $\varepsilon$.
We call  so obtained $x_i$'s and $y_i$'s 
the {\em monomial $x$-variables\/} and the
{\em monomial $y$-variables}, respectively.


In the same setting of seeds, we   also consider another kind of mutation,
the {\em signed mutation\/} $(B',x',y')=\mu^{(\ve)}_k(B,x,y)$,
by keeping \eqref{eq:ymut6} and
replacing  \eqref{eq:xmut6} by the following:
\begin{align}
\label{eq:xmut7}
x'_i&=
\begin{cases}
\displaystyle
x_k{}^{-1} 
\left(
\prod_{j=1}^n x_{j}{}^{[-\varepsilon b_{jk}]_+}
\right)
(1+ \hat{y}_k{}^{\varepsilon})
& i= k\\
\displaystyle
x_i 
& i\neq k,\\
\end{cases}
\end{align}
where
$\hat{y}_k = y_k \prod_{j=1}^n x_j^{b_{jk}}$.
Again, it {\em depends on $\ve$}.
If we set $\ve$
to the tropical sign $\ve(y_k)$ of the universal $y$-variables,
then the relation \eqref{eq:xmut7} reduces to the exchange relation \eqref{eq:xmut5}
of $x$-variables with tropical $y$-variables as coefficients.

We have a natural extension of Proposition \ref{prop:yhat1}.
\begin{prop}
\label{prop:yhat2}
Under the signed mutation $(B',x',y')=\mu^{(\varepsilon)}_k(B,x,y)$
with the exchange relations
\eqref{eq:ymut6} and \eqref{eq:xmut7},
 $\hat{y}$-variables
 $\hat{y}_i = y_i \prod_{j=1}^n x_j^{b_{ji}}$
satisfy the  exchange relation
\begin{align}
\label{eq:ymut7}
\hat{y}'_i&=
\begin{cases}
\hat{y}_k{}^{-1} & i= k\\
\displaystyle
\hat{y}_i \hat{y}_{k}{}^{[\varepsilon b_{ki}]_+}
(1+ \hat{y}_k{}^{\varepsilon})^{-b_{ki}}
& i\neq k,\\
\end{cases}
\end{align}
which is equivalent to \eqref{eq:ymut4}.
In particular, the mutation of $\hat{y}$-variables does not depend on the sign
$\ve$.
\end{prop}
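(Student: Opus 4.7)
The plan is a direct verification by substituting the signed mutation formulas \eqref{eq:ymut6} and \eqref{eq:xmut7} into the definition $\hat{y}'_i = y'_i \prod_{j=1}^n (x'_j)^{b'_{ji}}$ and simplifying. The argument splits into the cases $i=k$ and $i\neq k$, after which equivalence with \eqref{eq:ymut4} is established by a standard manipulation of $[\cdot]_+$ expressions.

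First I would handle the case $i=k$. Here $b'_{kk}=0$, so the exponent of the ``new'' variable $x'_k$ in $\hat{y}'_k$ vanishes and we may ignore the complicated term \eqref{eq:xmut7}. Since $x'_j=x_j$ for $j\neq k$, $b'_{jk}=-b_{jk}$, and $y'_k=y_k^{-1}$, we immediately get $\hat{y}'_k=y_k^{-1}\prod_j x_j^{-b_{jk}}=\hat{y}_k^{-1}$, matching \eqref{eq:ymut7}.

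Next I would treat the case $i\neq k$. Substituting $y'_i=y_iy_k^{[\varepsilon b_{ki}]_+}$, $x'_j=x_j$ for $j\neq k$, and $x'_k$ from \eqref{eq:xmut7}, one collects the contribution of the $(1+\hat{y}_k^\varepsilon)$-factor raised to the power $b'_{ki}=-b_{ki}$, the contribution of $\prod_j x_j^{[-\varepsilon b_{jk}]_+}$ also raised to $-b_{ki}$, and the terms $\prod_j x_j^{b'_{ji}}$ coming from the $j\neq k$ slots. Using the exchange relation \eqref{eq:bmut} for $b'_{ji}$ and the elementary identity $[a]_+-a=[-a]_+$ applied to $a=\varepsilon b_{jk}\cdot b_{ki}$ (equivalently, $[\varepsilon b_{jk}]_+\,(-b_{ki})+b_{jk}[b_{ki}]_+=[-b_{jk}]_+b_{ki}+b_{jk}[b_{ki}]_+ -\varepsilon b_{jk}b_{ki}\cdot\mathbf{1}_{\varepsilon=-}$ patterns), the powers of $x_j$ reorganize into $\prod_j x_j^{b_{ji}}$ times $\hat{y}_k^{[\varepsilon b_{ki}]_+}$. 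Combining with the $y$-part and regrouping gives
\begin{equation*}
\hat{y}'_i=\hat{y}_i\,\hat{y}_k^{[\varepsilon b_{ki}]_+}(1+\hat{y}_k^{\varepsilon})^{-b_{ki}},
\end{equation*}
which is exactly \eqref{eq:ymut7}.

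Finally I would check independence of $\varepsilon$ and equivalence with \eqref{eq:ymut4}. Using $[\varepsilon b_{ki}]_+-\varepsilon\cdot\tfrac{1-\varepsilon}{2}b_{ki}=[b_{ki}]_+$ when $\varepsilon=+$ and $=[-b_{ki}]_+$ when $\varepsilon=-$, together with the elementary identity $\hat{y}_k^{[\varepsilon b_{ki}]_+}(1+\hat{y}_k^\varepsilon)^{-b_{ki}}=(1+\hat{y}_k)^{[-b_{ki}]_+}(1+\hat{y}_k^{-1})^{-[b_{ki}]_+}$ valid for either choice $\varepsilon=\pm$, the right-hand side of \eqref{eq:ymut7} is seen to be the same expression as \eqref{eq:ymut4} and manifestly independent of $\varepsilon$. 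The main (and only) obstacle is purely bookkeeping: keeping the $[\cdot]_+$ identities and the sign $\varepsilon$ straight through the computation; no conceptual difficulty is expected, and the argument parallels the derivation of the $\varepsilon$-expression \eqref{eq:ymut3}--\eqref{eq:xmut3} from \eqref{eq:ymut}--\eqref{eq:xmut}.
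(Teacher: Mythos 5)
Your proof is correct in outline and in the key computation, and it takes exactly the approach the paper intends; the paper's printed ``proof'' is only the remark that the calculation is similar to (and a little easier than) the one for Proposition \ref{prop:yhat1}, so your direct substitution into $\hat{y}'_i=y'_i\prod_j(x'_j)^{b'_{ji}}$, treating $i=k$ and $i\neq k$ separately, is precisely the calculation being referred to. Two minor slips in the write-up are worth fixing. First, the parenthetical ``equivalently, \dots'' formula in the $i\neq k$ case is garbled and does not parse; the clean bookkeeping is that $(x'_k)^{-b_{ki}}$ contributes $-b_{ki}[-\varepsilon b_{jk}]_+$ to each exponent of $x_j$, which cancels the $[-\varepsilon b_{jk}]_+b_{ki}$ term in the $\varepsilon$-form of $b'_{ji}$ and leaves the exponent $b_{ji}+b_{jk}[\varepsilon b_{ki}]_+$, so that together with $y_iy_k^{[\varepsilon b_{ki}]_+}$ one regroups into $\hat{y}_i\hat{y}_k^{[\varepsilon b_{ki}]_+}(1+\hat{y}_k^{\varepsilon})^{-b_{ki}}$. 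Second, the auxiliary identity $[\varepsilon b_{ki}]_+-\varepsilon\cdot\tfrac{1-\varepsilon}{2}b_{ki}$ in fact equals $[b_{ki}]_+$ for \emph{both} signs of $\varepsilon$ (for $\varepsilon=-$ use $[-a]_++a=[a]_+$), not $[-b_{ki}]_+$ when $\varepsilon=-$ as claimed; this slip is harmless, however, because the separate identity $\hat{y}_k^{[\varepsilon b_{ki}]_+}(1+\hat{y}_k^{\varepsilon})^{-b_{ki}}=(1+\hat{y}_k)^{[-b_{ki}]_+}(1+\hat{y}_k^{-1})^{-[b_{ki}]_+}$, which you also invoke and which does hold for either choice of $\varepsilon$, by itself establishes both the equivalence with \eqref{eq:ymut4} and the $\varepsilon$-independence.
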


The proof can be done in a similar (and a little easier) calculation as for Proposition \ref{prop:yhat1}.
However, the result is new in the literature.

\subsection{Periodicity in cluster algebras}
Let us introduce the notion of periodicity  in a cluster algebra.
We call a sequence $\vec{k}=(k_t)_{t=1}^N$ 
with $k_t\in \{1,\dots,n\}$
a {\em mutation sequence},
and we naturally identify it with
the sequence (composition) 
of mutations $\mu_{\vec{k}}:=\mu_{k_N}\circ \cdots \circ \mu_1$.

\begin{thm}
\label{thm:period1}
 Let  $\mathcal{A}(B^0,x^0,y^0;\bbQ_+(y^0))$ be a cluster algebra
with  universal $y$-variables,
and let $\vec{k}=(k_t)_{t=1}^N$ be a mutation sequence.
Let $(B,x,y)$ and $(B',x',y')$ be seeds of
$\mathcal{A}(B^0,x^0,y^0;\bbQ_+(y^0))$ such that
$(B',x',u')= \mu_{\vec{k}}(B,x,y)$,
and let $\nu$ be a permutation of $\{1,\dots, n\}$.
Then, the following conditions are equivalent.
\par (a). $b'_{\nu(i)\nu(j)}=b_{ij}$, $x'_{\nu(i)}=x_i$, and $y'_{\nu(i)}=y_i$ hold for any $i$
and $j$.
\par (b). $y'_{\nu(i)}=y_i$ holds for any $i$.
\par (c). $x'_{\nu(i)}=x_i$ holds for any $i$.
\par (d). $[y'_{\nu(i)}]=[y_i]$ holds for any $i$.
\par (e). $[x'_{\nu(i)}]=[x_i]$ holds for any $i$,
where $[x_i]\in \bbQ(\mathrm{Trop}(y^0))(w)$ is the one obtained from $x_i$
by the tropicalization of  $y$-variables.
\end{thm}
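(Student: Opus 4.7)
The implications from (a) to each of (b)--(e) are immediate from the definitions. Moreover, since the tropicalization homomorphism $\pi_{\mathrm{trop}}\colon\mathbb{Q}_+(y^0)\to\mathrm{Trop}(y^0)$ is a semifield map that commutes with the exchange relations \eqref{eq:ymut}--\eqref{eq:xmut}, we also have (b)$\,\Rightarrow\,$(d) and (c)$\,\Rightarrow\,$(e). The substantive content is therefore to establish (d)$\,\Rightarrow\,$(a) and (e)$\,\Rightarrow\,$(a).

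The main tool is the Fomin--Zelevinsky separation formula (\cite[Cor.~6.3]{Fomin07}), which for any seed $(B,x,y)$ reached from $(B^0,x^0,y^0)$ expresses
\[
y_i = \Bigl(\prod_{j=1}^n (y^0_j)^{c_{ji}}\Bigr)\prod_{k=1}^n F_k(y^0)^{b_{ki}},
\qquad
x_i = \frac{F_i(\hat y^0)}{F_i|_{\mathrm{Trop}}(y^0)}\prod_{j=1}^n (x^0_j)^{g_{ji}},
\]
where $c_i=(c_{ji})_j$ is the $c$-vector, $g_i=(g_{ji})_j$ the $g$-vector, and $F_i$ the $F$-polynomial attached to position~$i$; the tropicalized versions give $[y_i]$ and $[x_i]$. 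The plan for (d)$\,\Rightarrow\,$(a) is as follows. The hypothesis $[y'_{\nu(i)}]=[y_i]$ translates into the equality of $c$-vectors $c(y'_{\nu(i)})=c(y_i)$, and in particular, by sign-coherence, into the equality of tropical signs $\ve(y'_{\nu(i)})=\ve(y_i)$. Applying the $\ve$-expression \eqref{eq:ymut5} of the $y$-mutation rule inductively along $\vec k$---equivalently, invoking the $C$-matrix mutation rule together with the sign-coherence theorem \cite{Derksen10}---one recovers the entries $b'_{\nu(i)\nu(j)}=b_{ij}$ step by step, since the tropical $y$-variables and their signs at each intermediate seed determine the exchange-matrix entries required to advance. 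With $B$-periodicity and $c$-vector periodicity in hand, the separation formula above immediately yields $y'_{\nu(i)}=y_i$, which is (b), and iterating \eqref{eq:xmut} along $\vec k$ then produces $x'_{\nu(i)}=x_i$, completing (a).

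For (e)$\,\Rightarrow\,$(a), the argument is dual. The equality $[x'_{\nu(i)}]=[x_i]$ simultaneously encodes agreement of $g$-vectors $g(x'_{\nu(i)})=g(x_i)$ and of $F$-polynomials $F'_{\nu(i)}=F_i$, as these can be separately extracted from the monomial and polynomial parts of the tropicalized formula. By the Nakanishi--Zelevinsky duality between the $G$- and $C$-matrices along a mutation sequence (\cite[\S4]{Nakanishi11c}), equality of $g$-vectors forces equality of $c$-vectors, so (d) holds and the argument reduces to the previous case. The main obstacle in either direction is the reconstruction of the exchange matrix $B$ from either $c$- or $g$-vector data; this is the genuinely nontrivial step and relies crucially on sign-coherence. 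Once it is secured, the separation formulas transparently propagate periodicity from the tropical to the universal setting.
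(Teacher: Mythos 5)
Your reduction of (d)$\Rightarrow$(a) to a ``step-by-step'' recovery of the exchange-matrix entries from the tropical $y$-mutation rule does not work as stated. The $\varepsilon$-expression \eqref{eq:ymut5} describes how the tropical $y$-variables evolve \emph{given} $B$, not the other way around; from it one can at best recover the quantities $[\varepsilon b_{ki}]_+$, not the individual $b_{ki}$, and in any case the hypothesis (d) only constrains the endpoints of the mutation sequence, not the intermediate tropical data. What is actually needed is a closed relation expressing $B_t$ in terms of $B_0$ and the $C$- or $G$-matrix (such a relation exists, as part of the Nakanishi--Zelevinsky tropical duality, which itself depends on sign coherence), and this is a theorem in its own right, not an inductive byproduct of the mutation recursion. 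A second and more serious gap: even granting $B$- and $c$-vector periodicity, the claim that the $y$-separation formula ``immediately yields $y'_{\nu(i)}=y_i$'' is false as argued, since that formula involves \emph{all} the $F$-polynomials $F_k$ through the factor $\prod_k F_k(y^0)^{b_{ki}}$, so one further needs the $F$-polynomial periodicity $F'_{\nu(k)}=F_k$. Deriving $F$-polynomial periodicity from $c$-vector periodicity is precisely the nontrivial ``synchronicity'' content of \cite{Inoue10a,Plamondon10b}, and your sketch silently assumes it.

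The paper's own proof is a one-liner: it cites the results of \cite{Inoue10a,Plamondon10b} for (d)$\Rightarrow$(a) as a black box, and then reduces (e)$\Rightarrow$(a) to $g$-vector periodicity and invokes \cite{Plamondon10b} again. Your overall architecture --- in particular the reduction of (e) to (d) via the $C$--$G$ duality --- follows the same route, but the detailed ``elementary'' filling-in of (d)$\Rightarrow$(a) is where the real content lies, and that filling-in is missing. The additional assertion that $[x'_{\nu(i)}]=[x_i]$ lets you ``separately extract'' the $g$-vector and the $F$-polynomial is plausible but is left unargued; and once the argument reduces to (d)$\Rightarrow$(a), it inherits the two gaps above.
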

\begin{proof}
The implications (a) $\Rightarrow$ (b) $\Rightarrow$ (d)
and  (a) $\Rightarrow$ (c) $\Rightarrow$ (e) are obvious,
while (d) $\Rightarrow$ (a) is the result of \cite{Inoue10a,Plamondon10b}.
Let us show $(e) \Rightarrow (a)$.
It follows from the assumption (e) that the corresponding $g$-vectors
in  \cite{Fomin07}
have the same periodicity.
Then, the claim (a) follows again from the result of \cite{Plamondon10b}.
\end{proof}

\begin{defn} 
\label{defn:period1}
A mutation sequence $\vec{k}=(k_t)_{t=1}^N$ is called a {\em $\nu$-period
of $(B,x,y)$} if one of the conditions (a)--(e) in Theorem \ref{thm:period1} holds for
the seed $(B',x',y')=\mu_{\vec{k}} (B,x,y)$.
\end{defn}

Many interesting examples of  periodicities of seeds
are known \cite{Fomin07,Keller08, Inoue10a,Inoue10b,Nakanishi12c}.
Here, we give the simplest example,
which will be used as the running example throughout the paper.

\begin{ex}[Pentagon relation (1)]
\label{ex:pentagon1}
Consider the cluster algebra $\mathcal{A}(B^0,x^0,y^0;\bbQ_+(y^0))$
whose initial exchange matrix $B^0$ and
the corresponding quiver $Q^0$
are given by
\begin{align}
B^0=
\begin{pmatrix}
0 & 1 \\
-1 & 0\\
\end{pmatrix},
\quad
Q^0 =\ 
\raisebox{-10pt}{
\begin{picture}(40,5)(0,-15)
%
%
\put(0,0){\circle{6}}
\put(40,0){\circle{6}}
\put(3,0){\vector(1,0){34}}
\put(-3,-15){$1$}
\put(37,-15){$2$}
\end{picture}
}
\, .
\end{align}
This is the cluster algebra of type $A_2$ in
the classification of \cite{Fomin02}.
In particular, it is of finite type,
namely, there are only finitely many seeds.
Set $(B(1),x(1),y(1))$ to be the initial seed $(B^0,x^0,y^0)$,
and consider the  mutation sequence
$\vec{k}=(1,2,1,2,1)$, i.e.,
\begin{align}
\label{eq:seedmutseq3}
&(B(1),x(1),y(1))
\
\mathop{\rightarrow}^{\mu_{1}}
\
(B(2),x(2),y(2))
\
\mathop{\rightarrow}^{\mu_{2}}
\
\cdots
\
\mathop{\rightarrow}^{\mu_{1}}
\
(B(6),x(6),y(6))
.
\end{align}
For simplicity, we write
the initial variables $x_i=x^0_i$ and 
$y_i=y^0_i$.
According to \eqref{eq:yhat}, we set
the initial $\hat{y}$-variables as 
\begin{align}
\hat{y}_1 = y_1 x_2^{-1},
\quad \hat{y}_2=y_2 x_1.
\end{align}
Then, using the exchange relations \eqref{eq:ymut}
and \eqref{eq:xmut3},
we obtain the following explicit form of seeds:
\begin{alignat*}{3}
Q(1)\quad
&
\begin{picture}(40,15)(0,-5)
%
%
\put(0,0){\circle{12}}
\put(0,0){\circle{6}}
\put(40,0){\circle{6}}
\put(3,0){\vector(1,0){34}}
\put(-3,-15){$1$}
\put(37,-15){$2$}
\end{picture}
&\quad&
\begin{cases}
x_1(1)=x_1\\
x_2(1)=x_2,\\
\end{cases}
&&
\begin{cases}
y_1(1)=y_1\\
y_2(1)=y_2,\\
\end{cases}
\notag
\\
Q(2)\quad
&
\begin{picture}(40,15)(0,-5)
%
%
\put(40,0){\circle{12}}
\put(0,0){\circle{6}}
\put(40,0){\circle{6}}
\put(37,0){\vector(-1,0){34}}
\put(-3,-15){$1$}
\put(37,-15){$2$}
\end{picture}
&&
\begin{cases}
\displaystyle
x_1(2)=x_1^{-1}x_2\frac{1+\hat{y}_1}{1\oplus y_1}\\
x_2(2)=x_2,\\
\end{cases}
&&
\begin{cases}
y_1(2)=y_1^{-1}\\
y_2(2)=y_1y_2(1\oplus y_1)^{-1},\\
\end{cases}
\\
Q(3)\quad
&
\begin{picture}(40,15)(0,-5)
%
%
\put(0,0){\circle{12}}
\put(0,0){\circle{6}}
\put(40,0){\circle{6}}
\put(3,0){\vector(1,0){34}}
\put(-3,-15){$1$}
\put(37,-15){$2$}
\end{picture}
&&
\begin{cases}
\displaystyle
x_1(3)=x_1^{-1}x_2\frac{1+ \hat{y}_1}{1\oplus y_1}\\
\displaystyle
x_2(3)=x_1^{-1}
\frac{1 + \hat{y}_1 +\hat{y}_1\hat{y}_2}{1\oplus y_1\oplus y_1y_2},\\
\end{cases}
&&
\begin{cases}
y_1(3)=y_2(1\oplus y_1 \oplus y_1y_2)^{-1}\\
y_2(3)=y_1^{-1}y_2^{-1}(1\oplus y_1),\\
\end{cases}
\\
Q(4)\quad
&
\begin{picture}(40,15)(0,-5)
%
%
\put(40,0){\circle{12}}
\put(0,0){\circle{6}}
\put(40,0){\circle{6}}
\put(37,0){\vector(-1,0){34}}
\put(-3,-15){$1$}
\put(37,-15){$2$}
\end{picture}
&&
\begin{cases}
\displaystyle
x_1(4)=x_2^{-1}\frac{1+\hat{y}_2}{1\oplus y_2}
\\
\displaystyle
x_2(4)=
x_1^{-1}
\frac{1 + \hat{y}_1 +\hat{y}_1\hat{y}_2}{1\oplus y_1\oplus y_1y_2},\\
\end{cases}
&&
\begin{cases}
y_1(4)=y_2^{-1}(1\oplus y_1 \oplus y_1y_2)\\
y_2(4)=y_1^{-1}(1\oplus y_2)^{-1},\\
\end{cases}
\\
Q(5)\quad
&
\begin{picture}(40,15)(0,-5)
%
%
\put(0,0){\circle{12}}
\put(0,0){\circle{6}}
\put(40,0){\circle{6}}
\put(3,0){\vector(1,0){34}}
\put(-3,-15){$1$}
\put(37,-15){$2$}
\end{picture}
&&
\begin{cases}
\displaystyle
x_1(5)=x_2^{-1}\frac{1+\hat{y}_2}{1\oplus y_2}\\
x_2(5)=x_1,\\
\end{cases}
&&
\begin{cases}
y_1(5)=y_2^{-1}\\
y_2(5)=y_1(1\oplus y_2),\\
\end{cases}
\\
Q(6)\quad
&
\begin{picture}(40,15)(0,-5)
%
%
\put(40,0){\circle{12}}
\put(0,0){\circle{6}}
\put(40,0){\circle{6}}
\put(37,0){\vector(-1,0){34}}
\put(-3,-15){$1$}
\put(37,-15){$2$}
\end{picture}
&&
\begin{cases}
x_1(6)=x_2\\
x_2(6)=x_1.\\
\end{cases}
&&
\begin{cases}
y_1(6)=y_2\\
y_2(6)=y_1.\\
\end{cases}
\end{alignat*}
Here, the encircled vertices in quivers
are the {\em mutation points\/}
in the sequence \eqref{eq:seedmutseq3}.
We see that the mutation sequence $\vec{k}$ is a $\nu$-period
of  $(B(1),x(1),y(1))$,
where $\nu=(12)$ is the permutation of $1$ and $2$.
This periodicity is known as the {\em pentagon relation}.
The tropical $y$-variables at the mutation points
are
\begin{align}
\label{eq:tropyp1}
[y_1(1)]=y_1,
\quad
[y_2(2)]=y_1y_2,
\quad
[y_1(3)]=y_2,
\quad
[y_2(4)]=y_1^{-1},
\quad
[y_1(5)]=y_2^{-1},
\quad
\end{align}
and the  corresponding tropical signs $\ve_t = \ve(y_{k_t}(t))$  are
\begin{align}
\label{eq:tropsign2}
\ve_1 = +,
\quad
\ve_2 = +,
\quad
\ve_3 = +,
\quad
\ve_4 = -,
\quad
\ve_5 = -.
\end{align}
\end{ex}

\section{Surface realization of cluster algebras}

There is a class of cluster algebras which can be realized (in various sense) by triangulations
of  surfaces \cite{Gekhtman05,Fock03b,Fomin08,Fomin08b}.
This construction is often referred to as the {\em surface realization\/} of cluster algebras.
 Since careful treatment of mutations involving a self-folded triangle
 is crucial throughout the paper, we explain in detail how there are related
 to tagged triangulations and signed triangulations.
We mostly follow  \cite{Fomin08,Fomin08b}, but
we do some reformulation to work with {\em labeled\/} triangulations.
The extended seeds and their signed mutations and pops
are also defined.

\subsection{Ideal triangulations of bordered surface with marked points}

To start, we choose a compact connected oriented  surface possibly 
with boundary $\bfS$,
and a finite  set $\bfM$ of {\em marked points\/} on $\bfS$ that
is nonempty and  includes at least one marked point
on each boundary component and  possibly some interior points.
If a marked point is an interior point of $\bfS$, then it is called a puncture.
We impose the following assumption by a technical reason \cite{Fomin08}:
\begin{ass}
\label{ass:bs1}
The following cases of $(\bfS,\bfM)$ are excluded:
\begin{itemize}
\item a sphere with less than four punctures,
\item an unpunctured or once-punctured monogon,
\item an unpunctured digon,
\item an unpunctured triangle.
\end{itemize}
\end{ass}
In the above a surface homeomorphic to a disk with
$n$ marked points on the boundary is called a {\em polygon\/},
and, in particular, 
{\em monogon}, {\em digon}, {\em triangle\/}
for $n=1,2,3$, respectively.

A pair $(\bfS,\bfM)$ satisfying Assumption \ref{ass:bs1}
is called a {\em bordered surface with marked points},
or a {\em bordered surface}, for simplicity.

\begin{rem} In \cite{Fomin08,Fomin08b}
$\bfS$ is assumed to be a {\em Riemann surface}.
In our application we  need neither a complex structure  nor a metric.
\end{rem}

First we consider triangulations of $(\bfS,\bfM)$ by ordinary arcs,
where ``ordinary'' means ``not tagged'' which will be introduced later.

\begin{defn}
\label{defn:arc1}
 An {\em arc\/} $\alpha$ in a bordered surface
$(\bfS,\bfM)$ is a curve in $\bfS$ such that
\begin{itemize}
\item the endpoints of $\alpha$ are marked points,
\item $\alpha$ does not intersect itself except for the endpoints,
\item $\alpha$ is away from punctures and boundaries except for the endpoints,
\item $\alpha$ is not contractible into a marked point or onto a boundary of $\bfS$.
\end{itemize}
Furthermore, each arc $\alpha$ is considered {\em up to isotopy\/} in the class of such curves. For example, ``distinct (resp. identical) arcs'' means 
distinct (resp. identical) isotopy classes of curves unless otherwise mentioned.
\end{defn}

Two arcs are said to be {\em compatible\/} if there are representatives in their respective isotopy
classes such that they do not intersect each other in the interior of $\bfS$.

\begin{defn} An {\em ideal triangulation\/} $T=\{\alpha_i\}_{i\in I}$ of $(\bfS,\bfM)$ is
a maximal set of distinct pairwise compatible  arcs in $(\bfS,\bfM)$.
\end{defn}

See  Figure \ref{fig:triangulation-AD} for examples of ideal triangulations.
We also put labels 1, 2, \dots to the arcs   for the later use.
As in the second example, 
some {\em degenerate triangles\/},
such as {\em self-folded triangles\/} (e.g., 4-5) and  {\em triangles with identified vertices} (e.g., 2-3-4), may appear
around a puncture.
From now on we also call them triangles (of an ideal triangulation).
\begin{figure}
\begin{center}
\begin{pspicture}(-1,-2)(1,1.6)
\psset{unit=16mm}
\psset{linewidth=0.5pt}
\psset{fillstyle=solid, fillcolor=black}
\pscircle(1,0){0.05} 
\pscircle(0.7,0.7){0.05} 
\pscircle(0,1){0.05} 
\pscircle(-0.7,0.7){0.05} 
\pscircle(-1,0){0.05} 
\pscircle(-0.7,-0.7){0.05} 
\pscircle(0,-1){0.05} 
\pscircle(0.7,-0.7){0.05} 
\psline(1,0)(0.7,0.7)
\psline(0.7,0.7)(0,1)
\psline(0,1)(-0.7,0.7)
\psline(-0.7,0.7)(-1,0)
\psline(-1,0)(-0.7,-0.7)
\psline(-0.7,-0.7)(0,-1)
\psline(0,-1)(0.7,-0.7)
\psline(0.7,-0.7)(1,0)
\psset{fillstyle=none}
\psline(0,1)(-1,0) 
\psline(0,1)(0.7,-0.7) 
\psline(-1,0)(0.7,-0.7) 
\psline(-0.7,-0.7)(0.7,-0.7) 
\psline(0,1)(1,0) 
\rput[c]{0}(-0.57,0.57){\small 1}
\rput[c]{0}(0.23,0.2){\small 2}
\rput[c]{0}(-0.1,-0.25){\small 3}
\rput[c]{0}(0,-0.59){\small 4}
\rput[c]{0}(0.57,0.57){\small 5}
\rput[c]{0}(0,-1.5){(a)}
\end{pspicture}
\hskip80pt
\begin{pspicture}(-1,-2)(1,1.2)
\psset{unit=16mm}
\psset{linewidth=0.5pt}
\psset{fillstyle=solid, fillcolor=black}
\pscircle(1,0){0.05} 
\pscircle(0.7,0.7){0.05} 
\pscircle(0,1){0.05} 
\pscircle(-0.7,0.7){0.05} 
\pscircle(-1,0){0.05} 
\pscircle(-0.7,-0.7){0.05} 
\pscircle(0,-1){0.05} 
\pscircle(0.7,-0.7){0.05} 
\pscircle(0,0){0.05} 
\psline(1,0)(0.7,0.7)
\psline(0.7,0.7)(0,1)
\psline(0,1)(-0.7,0.7)
\psline(-0.7,0.7)(-1,0)
\psline(-1,0)(-0.7,-0.7)
\psline(-0.7,-0.7)(0,-1)
\psline(0,-1)(0.7,-0.7)
\psline(0.7,-0.7)(1,0)
\psset{fillstyle=none}
\psline(0,1)(-1,0) 
\psline(-0.7,-0.7)(0,0) 
\pscurve(-0.7,-0.7)(-0.2,0.2)(0.2,0.2)(0.2,-0.2)(-0.7,-0.7) 
\pscurve(0,1)(-0.42,0.4)(-0.58,0)(-0.7,-0.7) 
\pscurve(0,1)(0.4,0.2)(0.3,-0.4)(-0.7,-0.7) 
\pscurve(0,1)(0.55,0.2)(0.68,-0.2)(0.7,-0.7) 
\psline(-0.7,-0.7)(0.7,-0.7) 
\psline(0,1)(1,0) 
\rput[c]{0}(-0.57,0.57){\small 1}
\rput[c]{0}(-0.6,0.2){\small 2}
\rput[c]{0}(0.41,-0.4){\small 3}
\rput[c]{0}(-0.2,0.33){\small 4}
\rput[c]{0}(-0.2,-0.06){\small 5}
\rput[c]{0}(0,-0.8){\small 6}
\rput[c]{0}(0.7,0){\small 7}
\rput[c]{0}(0.57,0.57){\small 8}
\rput[c]{0}(0,-1.5){(b)}
\end{pspicture}
\end{center}
\caption{Examples of labeled ideal triangulations of a polygon without puncture (a)
and a polygon with one puncture (b).}
\label{fig:triangulation-AD}
\end{figure}
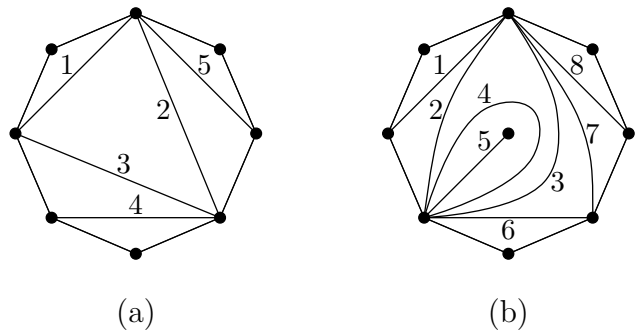
\begin{defn} 
For an ideal triangulation $T$ of $(\bfS,\bfM)$ and an arc $\alpha\in T$,
if there is another arc $\alpha'$ such that
that $T'=(T-\{\alpha\} )\cup \{\alpha'\}$ is an ideal triangulation,
then $\alpha'$ is called a {\em flip of $\alpha$},
 and
$T'$ is called a {\em flip of $T$ at $\alpha$}.
(As wee see soon, such $\alpha'$ is unique for each $\alpha$ if it exists.)
\end{defn}

Now we encounter a problem that
{\em not all arcs are flippable}.
For example, in the triangulation in Figure \ref{fig:triangulation-AD} (b),
the arc with label 5 is not flippable.
Luckily this is the only situation where an arc is not flippable.
Indeed, 
 if an arc $\alpha$  is the {\em inner side of a self-folded triangle}\/
 (an {\em inner arc}, for short),
 it is not flippable.
Otherwise,
$\alpha$ is  the common side of two  (possibly degenerate) triangles.
These triangles make
 a quadrilateral
 such that $\alpha$ is one of its diagonal.
Then, $\alpha$ is {\em uniquely\/} flipped to another diagonal $\alpha'$ of the same quadrilateral,
and {\em vice versa}.
See Figure \ref{fig:flip1}.
In particular, the uniqueness of the flip was also shown.

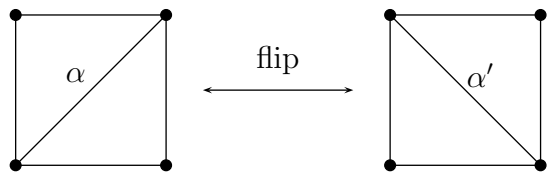
\begin{figure}
\begin{center}
\begin{pspicture}(-1,-1.2)(1,1.2)
\psset{linewidth=0.5pt}
\psset{fillstyle=solid, fillcolor=black}
\pscircle(1,1){0.08} 
\pscircle(-1,1){0.08} 
\pscircle(-1,-1){0.08} 
\pscircle(1,-1){0.08} 
\psset{fillstyle=none}
\psline(1,1)(-1,1)
\psline(-1,1)(-1,-1)
\psline(-1,-1)(1,-1)
\psline(1,-1)(1,1)
\psline(1,1)(-1,-1) 
\rput[c]{0}(-0.2,0.2){$\alpha$}
\end{pspicture}
\hskip10pt
\begin{pspicture}(-1,-1.2)(1,1.2)
%
\psset{linewidth=0.5pt}
\psline[arrows=<->](-1,0)(1,0) 
\rput[c]{0}(0,0.4){flip}
\end{pspicture}
\hskip10pt
\begin{pspicture}(-1,-1.2)(1,1.2)
\psset{linewidth=0.5pt}
\psset{fillstyle=solid, fillcolor=black}
\pscircle(1,1){0.08} 
\pscircle(-1,1){0.08} 
\pscircle(-1,-1){0.08} 
\pscircle(1,-1){0.08} 
\psset{fillstyle=none}
\psline(1,1)(-1,1)
\psline(-1,1)(-1,-1)
\psline(-1,-1)(1,-1)
\psline(1,-1)(1,1)
\psline(-1,1)(1,-1) 
\rput[c]{0}(0.2,0.2){$\alpha'$}
\end{pspicture}
\end{center}
\caption{Flip of a diagonal arc.}
\label{fig:flip1}
\end{figure}

For a given bordered surface $(\bfS,\bfM)$  it is known that all ideal triangulations are connected by
a sequence of flips \cite{Hatcher91}.
 In particular, they share the same cardinality of arcs.
For an ideal triangulation $T$ with $|T|=n$, one can label
the arcs in $T$ by the set $\{1, \dots, n\}$ as
in Figure \ref{fig:triangulation-AD}.
We call it  a {\em labeled\/} ideal triangulation, and we still write it as $T$.
In other words, a labeled ideal triangulation $T$ is not simply a set of $n$ arcs;
rather, it is an {\em $n$-tuple of arcs\/} $(\alpha_i)_{i=1}^n$,
where $\alpha_i$ is the arc with label $i$.
A flip of an (unlabeled) ideal triangulation induces a flip of a labeled ideal triangulation by preserving the labels of the unflipped arcs.
Suppose that  $T=(\alpha_i)_{i=1}^n$ and $T'=(\alpha'_i)_{i=1}^n$ are labeled ideal triangulations.
Then, if $T'$ is a flip of $T$ at $\alpha_k$, 
then $T$ is  also a flip of $T'$ at $\alpha'_k$.
and $\alpha'_i=\alpha_i$ for $i\neq k$.
So, we  can write them as $T'=\mu_k(T)$ and $T=\mu_k(T')$,
like the mutation in cluster algebras,
and call them the {\em flip at $k$}.

\begin{defn}
To each labeled ideal triangulation $T=(\alpha_i)_{i=1}^n$ of $(\bfS,\bfM)$ with $|T|=n$,
we assign a skew-symmetric matrix $B=B(T)=(b_{ij})_{i,j=1}^n$,
called the {\em (signed) adjacency matrix of $T$}, as follows.
\par
(a). {\em The case when neither $\alpha_i$ nor $\alpha_j$ are  inner arcs in $T$.}
First, for any triangle $\Delta$ in $T$ which is not self-folded,
 we define
\begin{align}
\label{eq:b1}
b^{\Delta}_{ij}
=
\begin{cases}
1& \text{$\alpha_i$ and $\alpha_j$ are different sides of $\Delta$,}\\
&\text{and the direction of the angle from $\alpha_i$ to $\alpha_j$ is  counter-clockwise;} \\
-1& \text{$\alpha_i$ and $\alpha_j$ are different sides of $\Delta$,}\\
&\text{and the direction of the angle from $\alpha_i$ to $\alpha_j$ is  clockwise;} \\
0& \text{otherwise.}\\
\end{cases}
\end{align}
Then, we define
\begin{align}
b_{ij}=\sum_{\Delta} b^{\Delta}_{ij},
\end{align}
where the sum runs over all triangles $\Delta$ in $T$ which are not self-folded.
\par
(b). {\em The rest of the case}. For an inner arc $\alpha_i$ in $T$,
let $\alpha_{\overline{i}}$ be the outer side of the self-folded triangle which
$\alpha_i$ belongs to.
Then, we define
\begin{align}
b_{ij}=
\begin{cases}
b_{\overline{i}j} & \text{$\alpha_i$ is an inner arc, and $\alpha_j$ is not;}\\
b_{i \overline{j}} & \text{$\alpha_j$ is an inner arc, and $\alpha_i$ is not;}\\
b_{\overline{i}\,\overline{j}} & \text{both $\alpha_i$ and $\alpha_j$ are inner arcs,}\\
\end{cases}
\end{align}
where the right hand side is defined in \eqref{eq:b1}.
\end{defn}

\begin{ex}
\label{ex:BQ1}
 For the ideal triangulations in Figure \ref{fig:triangulation-AD},
the corresponding skew-symmetric matrices and quivers are given as follows.
\begin{align*}
\begin{matrix}
\begin{pmatrix}
0& 1 & -1 &0 &0\\
-1&0&1&0&1\\
1&-1&0&1&0\\
0&0&-1&0&0\\
0&-1&0&0&0\\
\end{pmatrix},
&
\begin{pmatrix}
0&1&0&0&0&0&0&0\\
-1&0&1&-1&-1&0&0&0\\
0&-1&0&1&1&-1&1&0\\
0&1&-1&0&0&0&0&0\\
0&1&-1&0&0&0&0&0\\
0&0&1&0&0&0&-1&0\\
0&0&-1&0&0&1&0&1\\
0&0&0&0&0&0&-1&0\\
\end{pmatrix}.\\
\raisebox{12.5pt}{
\begin{pspicture}(0,-0.5)(1.5,1.4)
\psset{linewidth=0.5pt}
\pscircle(0,0){0.1} 
\pscircle(0.5,0.7){0.1} 
\pscircle(1,0){0.1} 
\pscircle(2,0){0.1} 
\pscircle(1.5,0.7){0.1} 
\psset{fillstyle=none}
\psline[arrows=->](0.8,0)(0.2,0)
\psline[arrows=->](1.2,0)(1.8,0)
\psline[arrows=->](0.7,0.7)(1.3,0.7)
\psline[arrows=->](0.1,0.14)(0.4,0.56)
\psline[arrows=->](0.6,0.56)(0.9,0.14)
\rput[c]{0}(0,-0.4){\small 1}
\rput[c]{0}(0.5,1.1){\small 2}
\rput[c]{0}(1,-0.4){\small 3}
\rput[c]{0}(2,-0.4){\small 4}
\rput[c]{0}(1.5,1.1){\small 5}
\end{pspicture}
}
&
\begin{pspicture}(0,-1.4)(1.5,1.4)
\psset{linewidth=0.5pt}
\pscircle(0,0){0.1} 
\pscircle(1,0){0.1} 
\pscircle(2,0){0.1} 
\pscircle(1.5,0.7){0.1} 
\pscircle(1.5,-0.7){0.1} 
\pscircle(2.5,0.7){0.1} 
\pscircle(2.5,-0.7){0.1} 
\pscircle(3.5,-0.7){0.1} 
\psset{fillstyle=none}
\psline[arrows=->](0.2,0)(0.8,0)
\psline[arrows=->](1.2,0)(1.8,0)
\psline[arrows=->](1.4,0.56)(1.1,0.14)
\psline[arrows=->](1.9,0.14)(1.6,0.56)
\psline[arrows=->](1.4,-0.56)(1.1,-0.14)
\psline[arrows=->](1.9,-0.14)(1.6,-0.56)
\psline[arrows=->](2.4,0.56)(2.1,0.14)
\psline[arrows=->](2.1,-0.14)(2.4,-0.56)
\psline[arrows=->](2.5,-0.5)(2.5,0.5)
\psline[arrows=->](2.7,-0.7)(3.3,-0.7)
\rput[c]{0}(0,-0.4){\small 1}
\rput[c]{0}(1,-0.4){\small 2}
\rput[c]{0}(2,-0.4){\small 3}
\rput[c]{0}(1.5,1.1){\small 4}
\rput[c]{0}(1.5,-1.1){\small 5}
\rput[c]{0}(2.5,1.1){\small 6}
\rput[c]{0}(2.5,-1.1){\small 7}
\rput[c]{0}(3.5,-1.1){\small 8}
\end{pspicture}
\hskip50pt
\end{matrix}
\end{align*}
See \cite[Section 4]{Fomin08} for more exotic examples.
\end{ex}

The following fact is a key to connect  triangulations and cluster algebras.

\begin{thm}[\cite{Fock05,Gekhtman05}] Let $T$ be a labeled ideal triangulation
of $(\bfS,\bfM)$,
and let $\alpha_k$ not be  an inner arc in $T$.
Then, $B(\mu_k (T))=\mu_k (B(T))$.
\end{thm}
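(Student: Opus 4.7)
My plan is to prove $B(\mu_k(T)) = \mu_k(B(T))$ by a local case analysis around the flipped arc $\alpha_k$, exploiting the fact that both sides of the equation are determined by the combinatorics of a small neighborhood of $\alpha_k$. On the left, the adjacency matrix is defined as a sum of contributions $b^{\Delta}_{ij}$, one from each non-self-folded triangle $\Delta$, so only the triangles containing $\alpha_k$ as a side can change under the flip. On the right, the matrix mutation $\mu_k$ modifies $B$ only in the $k$-th row and column, and in entries $b_{ij}$ for which both $b_{ik}$ and $b_{kj}$ are nonzero — which forces $\alpha_i$ and $\alpha_j$ to share a triangle with $\alpha_k$ (up to the outer/inner identification $i \leftrightarrow \bar{i}$). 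Thus for indices $i,j$ far from $\alpha_k$ the identity is trivial, and only the behavior inside the quadrilateral $Q$ across the diagonal $\alpha_k$ must be checked.

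The strategy is to enumerate the possible local pictures for the two (possibly degenerate) triangles $\Delta_1, \Delta_2$ of $T$ sharing $\alpha_k$, and for each one draw the resulting quiver before and after the flip, comparing it to the three-step quiver mutation recipe (pair-composition, arrow reversal at $k$, 2-cycle cancellation). The cases to distinguish are: (i) $\Delta_1$ and $\Delta_2$ are both ordinary triangles, possibly with vertex identifications but no self-folding; (ii) one of them, say $\Delta_1$, is a self-folded triangle with $\alpha_k$ as its outer loop (this is permitted since $\alpha_k$ is assumed not to be an inner arc); and various sub-cases depending on how vertices of $Q$ are identified. In case (i) one simply verifies that the sign of each $b^{\Delta}_{ij}$, which is determined by the orientation of the angle from $\alpha_i$ to $\alpha_j$, transforms exactly as dictated by $\mu_k$. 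This is the most classical part and requires only a careful accounting of clockwise versus counter-clockwise angles in the flipped quadrilateral.

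The hard part will be case (ii), where $\alpha_k$ is the outer loop of a self-folded triangle surrounding the inner arc $\alpha_{\bar{k}}$. Here the flip destroys the self-folded structure: after flipping, $\alpha_{\bar{k}}$ survives in $\mu_k(T)$ but is no longer an inner arc, so its row and column in $B(\mu_k(T))$ must be computed directly from the new triangles rather than by the unfolding rule $b_{\bar{k}j} = b_{kj}$. I would treat this case by drawing the quadrilateral formed by the self-folded triangle together with the triangle on the other side of $\alpha_k$, carrying out the flip explicitly, and comparing entry-by-entry with the matrix mutation formula applied to $B(T)$. The subtlety is that contributions to $b_{ij}(T)$ coming through $\alpha_{\bar{k}}$ are, by definition, the same as those through $\alpha_k$; one must verify that after mutation the new adjacencies of $\alpha_{\bar{k}}$ in $\mu_k(T)$ coincide with $\mu_k(B(T))_{\bar{k}j}$, which uses the identity $b_{\bar{k}k} = b_{kk} = 0$ to avoid double-counting.

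Finally, once these cases are handled, I would invoke Assumption \ref{ass:bs1} to rule out exceptional local configurations (such as those arising from unpunctured digons or triangles that are forbidden globally), which ensures that the listed cases exhaust all possibilities for the local structure at a flippable non-inner arc. Combining the local verifications then yields the global identity $B(\mu_k(T)) = \mu_k(B(T))$. I do not anticipate any conceptual obstacle beyond the bookkeeping of case (ii); the entire argument is essentially a translation between the planar geometry of the flip and the algebraic recipe for quiver mutation.
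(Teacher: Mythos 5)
Your plan is sound and is essentially the standard argument from the cited references \cite{Fock05,Gekhtman05} (and from Fomin--Shapiro--Thurston, where the degenerate cases are treated systematically); the paper itself states this theorem without proof, so there is no in-paper argument to compare against. The two core observations you rely on --- that both sides of $B(\mu_k(T)) = \mu_k(B(T))$ are local to the quadrilateral across $\alpha_k$, and that inner arcs satisfy $b_{\bar{k}k}=b_{kk}=0$ so that the $\bar k$-row is fixed by $\mu_k$ --- are exactly the right levers, and the case split into (i) ordinary triangles with possible vertex/side identifications and (ii) a self-folded triangle with $\alpha_k$ as its outer loop exhausts the possibilities (a self-folded triangle on both sides of $\alpha_k$ is ruled out by Assumption 5.1, as you note).

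One point worth making explicit, which you only gesture at: your case (i) as stated describes the two triangles of $T$ \emph{before} the flip, but flipping an arc of $T$ that does not border a self-folded triangle can nonetheless \emph{create} one in $\mu_k(T)$ (compare the top-left to the middle-right of Figure~\ref{fig:digon1}). In that situation the direct computation of $B(\mu_k(T))$ must apply the inner-arc unfolding rule to a new inner arc that did not exist in $T$, so it is not purely an angle-sign bookkeeping as in the generic quadrilateral. Fortunately this is exactly case (ii) read in reverse: since both $\mu_k$ on triangulations and $\mu_k$ on matrices are involutions, verifying $B(\mu_k(T))=\mu_k(B(T))$ when $T$ has the self-folded triangle is equivalent to verifying it when $\mu_k(T)$ does. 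You should state this symmetry reduction rather than leave it implicit under ``various sub-cases,'' since otherwise a reader may wonder whether case (i) silently assumes no self-folding is produced. With that clarification added, the proof goes through.
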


The theorem says that  the flip of labeled ideal triangulations and the mutation of
the corresponding 
skew-symmetric matrices (equivalently, quivers) in \eqref{eq:bmut} are compatible,
{\em if the targeted arc is flippable}.
However, recall that the inner arcs are not flippable,
while skew-symmetric matrices can be mutated to {\em any\/} direction.
See Figure \ref{fig:digon1} for an illuminating example of 
a digon with a puncture in some ideal triangulation.
Fomin, Shapiro, and Thurston \cite{Fomin08}
remedied   this discrepancy
by introducing   the {\em tagged\/} triangulations.

\begin{figure}
\begin{center}
\begin{pspicture}(-1,-1.2)(3.3,1.2)
\psset{linewidth=0.5pt}
\psset{fillstyle=solid, fillcolor=black}
\pscircle(0,1){0.08} 
\pscircle(0,-1){0.08} 
\pscircle(0,0){0.08} 
\psset{fillstyle=none}
\psline(0,1)(0,0)
\psline(0,0)(0,-1)
\pscurve(0,1)(0.6,0.45)(0.6,-0.45)(0,-1)
\pscurve(0,1)(-0.6,0.45)(-0.6,-0.45)(0,-1)
\rput[c]{0}(-0.9,0){\small 1}
\rput[c]{0}(0.9,0){\small 4}
\rput[c]{0}(0.2,0.5){\small 2}
\rput[c]{0}(0.2,-0.5){\small 3}
%
\pscircle(2,0){0.1} 
\pscircle(3,0){0.1} 
\pscircle(2.5,0.7){0.1} 
\pscircle(2.5,-0.7){0.1} 
%
\psline[arrows=->](2.4,-0.56)(2.1,-0.14)
\psline[arrows=->](2.1,0.14)(2.4,0.56)
\psline[arrows=->](2.6,0.56)(2.9,0.14)
\psline[arrows=->](2.9,-0.14)(2.6,-0.56)
\rput[c]{0}(1.7,0){\small 1}
\rput[c]{0}(2.5,1.1){\small 2}
\rput[c]{0}(3.3,0){\small 4}
\rput[c]{0}(2.5,-1.1){\small 3}
\end{pspicture}
\break
\begin{pspicture}(0,0)(1,1)
\psset{linewidth=0.5pt}
\psline[arrows=<->](0,0)(1,1)
\rput[c]{0}(0.3,0.7){$\mu_2$}
\end{pspicture}
\hskip100pt
\begin{pspicture}(0,0)(1,1)
\psset{linewidth=0.5pt}
\psline[arrows=<->](0,1)(1,0)
\rput[c]{0}(0.7,0.7){$\mu_3$}
\end{pspicture}
\break
\begin{pspicture}(-1,-1.2)(3.3,1.2)
\psset{linewidth=0.5pt}
\psset{fillstyle=solid, fillcolor=black}
\pscircle(0,1){0.08} 
\pscircle(0,-1){0.08} 
\pscircle(0,0){0.08} 
\psset{fillstyle=none}
\pscurve(0,-1)(-0.4,0)(0,0.4)(0.4,0)(0,-1)
\psline(0,0)(0,-1)
\pscurve(0,1)(0.6,0.45)(0.6,-0.45)(0,-1)
\pscurve(0,1)(-0.6,0.45)(-0.6,-0.45)(0,-1)
\rput[c]{0}(-0.9,0){\small 1}
\rput[c]{0}(0.9,0){\small 4}
\rput[c]{0}(0,0.6){\small 2}
\rput[c]{0}(0.15,-0.4){\small 3}
%
\pscircle(2,0){0.1} 
\pscircle(3,0){0.1} 
\pscircle(2.5,0.7){0.1} 
\pscircle(2.5,-0.7){0.1} 
%
\psline[arrows=->](2.4,-0.56)(2.1,-0.14)
\psline[arrows=->](2.4,0.56)(2.1,0.14)
\psline[arrows=->](2.9,0.14)(2.6,0.56)
\psline[arrows=->](2.9,-0.14)(2.6,-0.56)
\psline[arrows=->](2.2,0)(2.8,0)
\rput[c]{0}(1.7,0){\small 1}
\rput[c]{0}(2.5,1.1){\small 2}
\rput[c]{0}(3.3,0){\small 4}
\rput[c]{0}(2.5,-1.1){\small 3}
\end{pspicture}
\hskip50pt
\begin{pspicture}(-1,-1.2)(3.3,1.2)
\psset{linewidth=0.5pt}
\psset{fillstyle=solid, fillcolor=black}
\pscircle(0,1){0.08} 
\pscircle(0,-1){0.08} 
\pscircle(0,0){0.08} 
\psset{fillstyle=none}
\pscurve(0,1)(-0.4,0)(0,-0.4)(0.4,0)(0,1)
\psline(0,0)(0,1)
\pscurve(0,1)(0.6,0.45)(0.6,-0.45)(0,-1)
\pscurve(0,1)(-0.6,0.45)(-0.6,-0.45)(0,-1)
\rput[c]{0}(-0.9,0){\small 1}
\rput[c]{0}(0.9,0){\small 4}
\rput[c]{0}(0,-0.6){\small 3}
\rput[c]{0}(0.15,0.4){\small 2}
%
\pscircle(2,0){0.1} 
\pscircle(3,0){0.1} 
\pscircle(2.5,0.7){0.1} 
\pscircle(2.5,-0.7){0.1} 
%
\psline[arrows=->](2.1,-0.14)(2.4,-0.56)
\psline[arrows=->](2.1,0.14)(2.4,0.56)
\psline[arrows=->](2.6,0.56)(2.9,0.14)
\psline[arrows=->](2.6,-0.56)(2.9,-0.14)
\psline[arrows=->](2.8,0)(2.2,0)
\rput[c]{0}(1.7,0){\small 1}
\rput[c]{0}(2.5,1.1){\small 2}
\rput[c]{0}(3.3,0){\small 4}
\rput[c]{0}(2.5,-1.1){\small 3}
\end{pspicture}
\break
\begin{pspicture}(0,0)(1,1)
\psset{linewidth=0.5pt}
\psline[arrows=<->](0,1)(1,0)
\rput[c]{0}(0.7,0.7){$\mu_3$}
\end{pspicture}
\hskip100pt
\begin{pspicture}(0,0)(1,1)
\psset{linewidth=0.5pt}
\psline[arrows=<->](0,0)(1,1)
\rput[c]{0}(0.3,0.7){$\mu_2$}
\end{pspicture}
\break
\begin{pspicture}(-1,-1.2)(3.3,1.2)
\psset{linewidth=0.5pt}
\rput[c]{0}(0,0){missing}
\pscircle(2,0){0.1} 
\pscircle(3,0){0.1} 
\pscircle(2.5,0.7){0.1} 
\pscircle(2.5,-0.7){0.1} 
%
\psline[arrows=->](2.1,-0.14)(2.4,-0.56)
\psline[arrows=->](2.4,0.56)(2.1,0.14)
\psline[arrows=->](2.9,0.14)(2.6,0.56)
\psline[arrows=->](2.6,-0.56)(2.9,-0.14)
\rput[c]{0}(1.7,0){\small 1}
\rput[c]{0}(2.5,1.1){\small 2}
\rput[c]{0}(3.3,0){\small 4}
\rput[c]{0}(2.5,-1.1){\small 3}
\end{pspicture}
\end{center}
\caption{Flips of arcs in a digon with a puncture  and  
mutations of the corresponding quivers.}
\label{fig:digon1}
\end{figure}
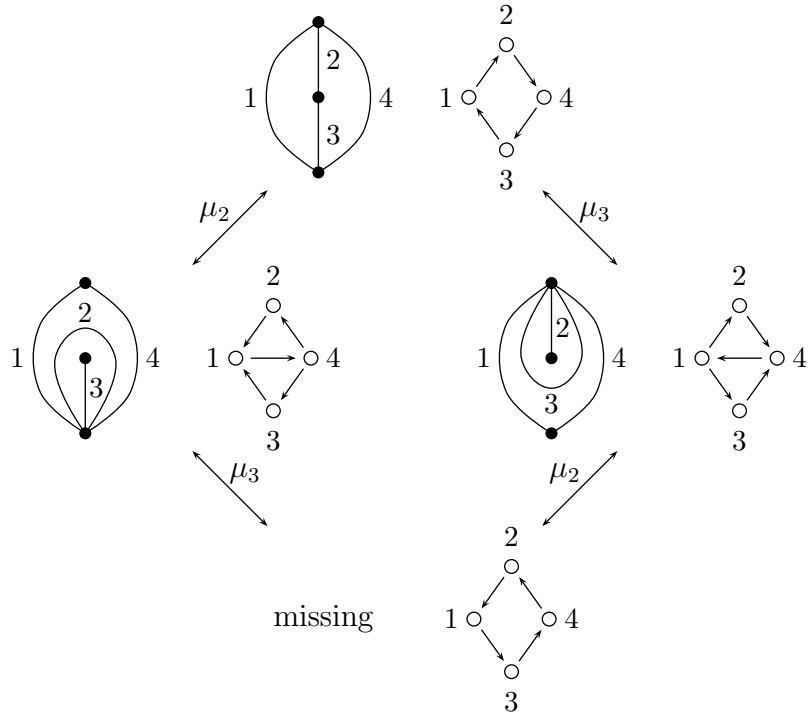

\subsection{Tagged triangulations}
\label{subsubsec:tagged}

For each arc $\alpha$ in a bordered surface $(\bfS,\bfM)$,
cut $\alpha$ into three pieces and throw out the middle part.
The remaining two parts are called the {\em ends of $\alpha$}.

\begin{defn}
An arc $\alpha$ in $(\bfS,\bfM)$ is called a {\em tagged arc\/}
if the following conditions are satisfied:
\begin{itemize}
\item[(a).] $\alpha$ is not a loop inside which there is exactly one puncture.
\item[(b).]
Each end of $\alpha$ is {\em tagged\/} in one of two ways,
{\em plain} or {\em notched} such that 
\begin{itemize}
\item any end with the endpoint on the boundary is tagged plain,
\item both ends of a loop are tagged in the same way.
\end{itemize}
\end{itemize}
In figures, the plain tags are omitted, while the notched tags are shown by
the symbol $\notch$, following \cite{Fomin08}.
\end{defn}

For example, the loop with label 4 in Figure \ref{fig:triangulation-AD} (b)
is not an tagged arc anymore due to the condition (a).
If $(\bfS,\bfM)$ does not have any puncture, then
arcs and tagged arcs are the same thing.

\begin{defn}
Two tagged arcs $\alpha$ are $\beta$ are said to be {\em compatible\/}
if the following conditions are satisfied:
\begin{itemize}
\item
the untagged versions of $\alpha$ and $\beta$ are compatible,
\item
if the untagged versions of $\alpha$ and $\beta$ are distinct,
and they share an endpoint $p$,
then the ends of $\alpha$ and $\beta$ with endpoint $p$ have the same tag,
\item
if the untagged versions of $\alpha$ and $\beta$ are identical,
then at least one end of $\alpha$ and the corresponding end of $\beta$
have the same tag.
\end{itemize}
\end{defn}

\begin{ex}
\label{ex:tag1}
Suppose that $\alpha$, $\beta$, and $\gamma$ are three {\em distinct\/}
pairwise compatible tagged arcs.
Then, {\em their untagged versions are not all identical}.
To show it, let $p$ and $q$ be their common endpoints,
and suppose that the untagged versions
of $\alpha$ and $\beta$ are identical.
Then,  $\alpha$ and $\beta$ have the same tag at one of the ends
with endpoint, say, $p$; furthermore, they have the different tags
at the end with $q$, since they are  distinct tagged arcs.
Now suppose further that  the untagged versions
of $\beta$ and $\gamma$ are identical.
If $\beta$ and $\gamma$ have the same tag at the end with  $p$,
then they have the different tags at the end with $q$.
Therefore, $\alpha$ and $\gamma$ are identical as tagged arcs,
which is a contradiction.
So, $\beta$ and $\gamma$ should have the same tag at the end with  $q$,
then they have the different tags at the end with $p$.
Then, $\alpha$ and $\gamma$ do not have the same tag at  both ends.
Therefore, $\alpha$ and $\gamma$ is not compatible,
which is again a contradiction.
\end{ex}

\begin{defn} An (unlabeled) {\em tagged triangulation\/} $T=\{\alpha_i\}_{i\in I}$ of $(\bfS,\bfM)$ is
a maximal set of distinct pairwise compatible  tagged arcs in $(\bfS,\bfM)$.
\end{defn}

A {\em labeled\/} tagged triangulation is defined in the same way as in the ordinary case.
Examples of labeled tagged triangulations are given
in Figure \ref{fig:tagged-triangulation-D}.
Observe that 
the untagged versions of tagged arcs $\alpha_4$ and $\alpha_5$ in (b) are identical.

\begin{defn} 
For an unlabeled tagged triangulation $T$ of $(\bfS,\bfM)$
 and a tagged arc $\alpha\in T$,
if there is another tagged arc $\alpha'$ such that
that $T'=(T-\{\alpha\} )\cup \{\alpha'\}$ is a tagged triangulation,
then $\alpha'$ is called a {\em flip of $\alpha$},
 and
$T'$ is called a {\em flip of $T$ at $\alpha$}.
Accordingly, a flip of a {\em labeled\/} tagged triangulation at $k$ is also defined  in the same
 way as in the ordinary case.
\end{defn}

\begin{figure}
\begin{center}
\begin{pspicture}(-1,-2)(1,1.6)
\psset{unit=16mm}
\psset{linewidth=0.5pt}
\psset{fillstyle=solid, fillcolor=black}
\pscircle(1,0){0.05} 
\pscircle(0.7,0.7){0.05} 
\pscircle(0,1){0.05} 
\pscircle(-0.7,0.7){0.05} 
\pscircle(-1,0){0.05} 
\pscircle(-0.7,-0.7){0.05} 
\pscircle(0,-1){0.05} 
\pscircle(0.7,-0.7){0.05} 
\pscircle(0,0){0.05} 
\psline(1,0)(0.7,0.7)
\psline(0.7,0.7)(0,1)
\psline(0,1)(-0.7,0.7)
\psline(-0.7,0.7)(-1,0)
\psline(-1,0)(-0.7,-0.7)
\psline(-0.7,-0.7)(0,-1)
\psline(0,-1)(0.7,-0.7)
\psline(0.7,-0.7)(1,0)
\psset{fillstyle=none}
\psline(0,1)(-1,0) 
\psline(-0.7,-0.7)(0,0) 
\psline(0,1)(0,0) 
\psline(0.7,-0.7)(0,0) 
\pscurve(0,1)(-0.42,0.4)(-0.58,0)(-0.7,-0.7) 
\pscurve(0,1)(0.55,0.2)(0.68,-0.2)(0.7,-0.7) 
\psline(-0.7,-0.7)(0.7,-0.7) 
\psline(0,1)(1,0) 
\rput[c]{0}(-0.57,0.57){\small 1}
\rput[c]{0}(-0.6,0.2){\small 2}
\rput[c]{0}(0.08,0.5){\small 5}
\rput[c]{0}(-0.3,-0.4){\small 4}
\rput[c]{0}(0.27,-0.4){\small 3}
\rput[c]{0}(0,-0.8){\small 6}
\rput[c]{0}(0.7,0){\small 7}
\rput[c]{0}(0.57,0.57){\small 8}
\rput[c]{0}(0,0.2){$\notch$}
\rput[c]{-45}(-0.2,-0.2){$\notch$}
\rput[c]{45}(0.2,-0.2){$\notch$}
\rput[c]{0}(0,-1.5){(a)}
\end{pspicture}
\hskip80pt
\begin{pspicture}(-1,-2)(1,1.2)
\psset{unit=16mm}
\psset{linewidth=0.5pt}
\psset{fillstyle=solid, fillcolor=black}
\pscircle(1,0){0.05} 
\pscircle(0.7,0.7){0.05} 
\pscircle(0,1){0.05} 
\pscircle(-0.7,0.7){0.05} 
\pscircle(-1,0){0.05} 
\pscircle(-0.7,-0.7){0.05} 
\pscircle(0,-1){0.05} 
\pscircle(0.7,-0.7){0.05} 
\pscircle(0,0){0.05} 
\psline(1,0)(0.7,0.7)
\psline(0.7,0.7)(0,1)
\psline(0,1)(-0.7,0.7)
\psline(-0.7,0.7)(-1,0)
\psline(-1,0)(-0.7,-0.7)
\psline(-0.7,-0.7)(0,-1)
\psline(0,-1)(0.7,-0.7)
\psline(0.7,-0.7)(1,0)
\psset{fillstyle=none}
\psline(0,1)(-1,0) 
\pscurve(-0.7,-0.7)(-0.5,-0.2)(0,0)
\pscurve(-0.7,-0.7)(-0.2,-0.5)(0,0)
\pscurve(0,1)(-0.42,0.4)(-0.58,0)(-0.7,-0.7) 
\pscurve(0,1)(0.4,0.2)(0.3,-0.4)(-0.7,-0.7) 
\pscurve(0,1)(0.55,0.2)(0.68,-0.2)(0.7,-0.7) 
\psline(-0.7,-0.7)(0.7,-0.7) 
\psline(0,1)(1,0) 
\rput[c]{0}(-0.57,0.57){\small 1}
\rput[c]{0}(-0.6,0.2){\small 2}
\rput[c]{0}(0.41,-0.4){\small 3}
\rput[c]{0}(-0.45,-0.27){\small 5}
\rput[c]{0}(-0.25,-0.42){\small 4}
\rput[c]{0}(0,-0.8){\small 6}
\rput[c]{0}(0.7,0){\small 7}
\rput[c]{0}(0.57,0.57){\small 8}
\rput[c]{-30}(-0.05,-0.2){$\notch$}
\rput[c]{0}(0,-1.5){(b)}
\end{pspicture}
\end{center}
\caption{Examples of labeled tagged triangulations of a polygon with one puncture. }
\label{fig:tagged-triangulation-D}
\end{figure}
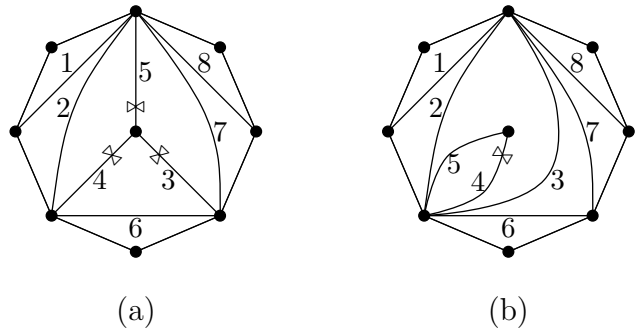

The following theorem is the first step
 to remedy the aforementioned
discrepancy.

\begin{thm}[{\cite[Theorem 7.9]{Fomin08}}]
Any tagged arc of a tagged triangulation is uniquely flippable.
\end{thm}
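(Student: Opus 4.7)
The plan is to reduce the claim to the ordinary (untagged) setting via a bijection between tagged triangulations and \emph{signed ideal triangulations}, where a signed ideal triangulation is an ideal triangulation equipped with a sign $\sigma(p)\in\{+,-\}$ at each puncture $p$. This bijection is the standard device for repairing the obstruction illustrated in Figure~\ref{fig:digon1}: inner arcs of self-folded triangles, which are not flippable as ordinary arcs, become flippable once we allow tags/signs to change.

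First I would construct the bijection explicitly. Given a tagged triangulation $T$, at each puncture $p$ define $\sigma(p)$ as the common tag at $p$ carried by those tagged arcs of $T$ incident to $p$ whose untagged versions are pairwise distinct; the compatibility conditions in Section~\ref{subsubsec:tagged} make $\sigma(p)$ well-defined. A pair of tagged arcs of $T$ sharing an untagged version (by Example~\ref{ex:tag1} there can be at most one such partner for a given $\alpha$) is encoded in the associated ideal triangulation $T^{\circ}$ by a self-folded triangle surrounding the relevant puncture. Conversely, from $(T^{\circ},\sigma)$ one recovers a tagged triangulation by tagging each end at $p$ according to $\sigma(p)$ and expanding each self-folded triangle at a puncture $p$ into a matched pair of tagged arcs differing only in the tag at $p$. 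These assignments are inverse to each other.

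Given $\alpha\in T$, I would then split into two cases. \textbf{Case A}: the untagged version of $\alpha$ has no partner in $T$. Then in $(T^{\circ},\sigma)$, $\alpha$ is either an ordinary flippable arc, in which case the unique other diagonal of the surrounding quadrilateral (Figure~\ref{fig:flip1}) furnishes the flip after inheriting tags from the local configuration, or $\alpha$ is the outer side of a self-folded triangle at a puncture $p$, in which case the flip is obtained by simultaneously replacing the outer arc with the unique other diagonal and switching $\sigma(p)$; both operations produce a uniquely determined tagged arc. \textbf{Case B}: $\alpha$ has a partner $\beta\in T$ with identical untagged version. Then $\alpha$ and $\beta$ agree in tag at one end and disagree at the other (a puncture $p$), and the only tagged arc that restores compatibility after removing $\alpha$ is the one obtained from $\alpha$ by flipping its tag at $p$. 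This corresponds in the signed picture to toggling $\sigma(p)$ while fixing $T^{\circ}$. In either case a unique candidate $\alpha'$ is produced; maximality of $T':=(T\setminus\{\alpha\})\cup\{\alpha'\}$ is forced because all tagged triangulations of $(\bfS,\bfM)$ share the same cardinality, so $T'$ is indeed a tagged triangulation.

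The main obstacle will be the bookkeeping in Case A when $\alpha$ is the outer arc of a self-folded triangle: one must verify that the tagged flip, which combines an ordinary flip with a tag change at the enclosed puncture, produces a tagged arc that is simultaneously compatible with every remaining element of $T$ and is the \emph{only} such arc. A subsidiary technicality is that the constructions must respect Assumption~\ref{ass:bs1}; the exceptional low-genus/few-puncture cases excluded there are precisely the ones in which the bijection with signed triangulations could otherwise fail, so invoking Assumption~\ref{ass:bs1} at the right moment is essential.
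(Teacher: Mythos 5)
Your Case B is wrong, and the error is concrete: if $\alpha$ and $\beta$ are compatible tagged arcs with the same untagged version, they agree at one end $q$ and disagree at the puncture $p$, so ``flipping the tag of $\alpha$ at $p$'' produces exactly $\beta$, which is already in $T$. That is not a flip. Check the once-punctured digon with vertices $A,B$ and puncture $P$: from $\{\text{plain }A\text{--}P,\ \text{notched }A\text{--}P\}$, removing the plain one and toggling its tag at $P$ gives back the notched $A$--$P$ arc, producing a set of size one, not a triangulation. The actual flip of plain $A$--$P$ is notched $B$--$P$, which differs from $\alpha$ both in underlying curve and in tag. Correspondingly, your translation ``toggle $\sigma(p)$ while fixing $T^{\circ}$'' describes a \emph{pop}, not a flip; by Proposition~\ref{prop:tagged1} a pop produces a pop-equivalent signed triangulation and hence the \emph{same} tagged triangulation. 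The flip of the arc corresponding to the inner arc of a self-folded triangle changes both $T^{\circ}$ and $\sigma$ (equivalently, pop first so $\alpha$ becomes the outer arc, then perform the ordinary flip).

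A secondary issue: in Case~A you allow ``$\alpha$ is the outer side of a self-folded triangle at $p$'' while assuming $\alpha$ has no partner in $T$. That sub-case is empty. Since tagged arcs are forbidden from being loops enclosing a single puncture, the only self-folded triangles in $T^{\circ}$ are those manufactured in Step~2 at type-0 punctures, and both the inner and outer arcs of such a triangle come from a partnered pair in $T$. So outer arcs always fall under your Case~B, where the bookkeeping is exactly what you have not gotten right. Finally, your closing uniqueness argument (``maximality is forced because all tagged triangulations share the same cardinality'') shows only that $T'$ is a triangulation once you have a valid $\alpha'$; it does not show there is at most one such $\alpha'$. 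The uniqueness half of the theorem requires an actual argument (e.g.\ that removing $\alpha$ leaves a configuration admitting exactly two completions, or, as in Fomin--Shapiro--Thurston's own treatment, that the tagged arc complex is a pseudomanifold), and that is precisely the part that is skipped.
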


\begin{ex} (a).
Some examples of tagged triangulations
are given in Figure \ref{fig:flip1a}.
Note that the tag of the flipped	arc is uniquely
determined by the compatibility condition.
\par
(b).
In Figure \ref{fig:tagged-triangulation-D},
starting from the labeled tagged triangulation in (a),
flipping at $3$, then at $5$,
one obtains the labeled tagged triangulation in (b).
\end{ex}

\begin{figure}
\begin{center}
\begin{pspicture}(-1,-1)(1,1.2)
\psset{linewidth=0.5pt}
\psset{fillstyle=solid, fillcolor=black}
\pscircle(1,1){0.08} 
\pscircle(-1,1){0.08} 
\pscircle(-1,-1){0.08} 
\pscircle(1,-1){0.08} 
\psset{fillstyle=none}
\psline(1,1)(-1,1)
\psline(-1,1)(-1,-1)
\psline(-1,-1)(1,-1)
\psline(1,-1)(1,1)
\psline(1,1)(-1,-1) 
\rput[c]{0}(-0.2,0.2){$k$}
\rput[c]{0}(-1,0.6){$\notch$}
\rput[c]{90}(-0.6,1){$\notch$}
\rput[c]{0}(-1,-0.6){$\notch$}
\rput[c]{-45}(-0.7,-0.7){$\notch$}
\rput[c]{90}(-0.6,-1){$\notch$}
\end{pspicture}
\hskip0pt
\begin{pspicture}(-1,-1)(1,1.2)
%
\psset{linewidth=0.5pt}
\psline[arrows=<->](-0.5,0)(0.5,0) 
\rput[c]{0}(0,0.4){$\mu_k$}
\end{pspicture}
\hskip0pt
\begin{pspicture}(-1,-1)(1,1.2)
\psset{linewidth=0.5pt}
\psset{fillstyle=solid, fillcolor=black}
\pscircle(1,1){0.08} 
\pscircle(-1,1){0.08} 
\pscircle(-1,-1){0.08} 
\pscircle(1,-1){0.08} 
\psset{fillstyle=none}
\psline(1,1)(-1,1)
\psline(-1,1)(-1,-1)
\psline(-1,-1)(1,-1)
\psline(1,-1)(1,1)
\psline(-1,1)(1,-1) 
\rput[c]{0}(0.2,0.2){$k$}
\rput[c]{0}(-1,0.6){$\notch$}
\rput[c]{90}(-0.6,1){$\notch$}
\rput[c]{0}(-1,-0.6){$\notch$}
\rput[c]{45}(-0.7,0.7){$\notch$}
\rput[c]{90}(-0.6,-1){$\notch$}
\end{pspicture}
\hskip40pt
\begin{pspicture}(-0.6,-1)(0.6,1.2)
\psset{linewidth=0.5pt}
\psset{fillstyle=solid, fillcolor=black}
\pscircle(0,1){0.08} 
\pscircle(0,-1){0.08} 
\pscircle(0,0){0.08} 
\psset{fillstyle=none}
%
\pscurve(0,-1)(0.25,-0.5)(0,0)
\pscurve(0,-1)(-0.25,-0.5)(0,0)
\pscurve(0,1)(0.6,0.45)(0.6,-0.45)(0,-1)
\pscurve(0,1)(-0.6,0.45)(-0.6,-0.45)(0,-1)
%
\rput[c]{0}(0.4,-0.4){\small $k$}
\rput[c]{40}(0.17,-0.23){$\notch$}
%
%
\end{pspicture}
\hskip10pt
\begin{pspicture}(-0.6,-1)(0.6,1.2)
%
\psset{linewidth=0.5pt}
\psline[arrows=<->](-0.5,0)(0.5,0) 
\rput[c]{0}(0,0.4){$\mu_k$}
\end{pspicture}
\hskip10pt
\begin{pspicture}(-0.6,-1)(0.6,1.2)
\psset{linewidth=0.5pt}
\psset{fillstyle=solid, fillcolor=black}
\pscircle(0,1){0.08} 
\pscircle(0,-1){0.08} 
\pscircle(0,0){0.08} 
\psset{fillstyle=none}
%
\psline(0,-1)(0,0)
\psline(0,1)(0,0)
\pscurve(0,1)(0.6,0.45)(0.6,-0.45)(0,-1)
\pscurve(0,1)(-0.6,0.45)(-0.6,-0.45)(0,-1)
%
\rput[c]{0}(0.2,0.5){\small $k$}
%
%
\end{pspicture}
\end{center}
\caption{Examples of flips of tagged triangulations}
\label{fig:flip1a}
\end{figure}
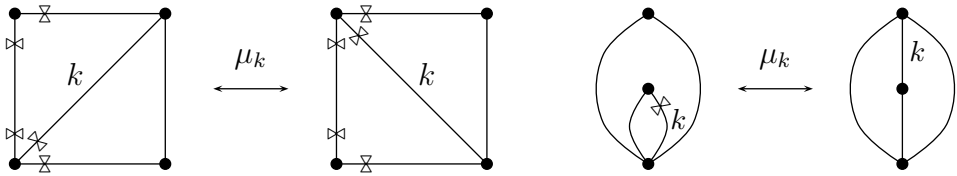

Next, we assign the adjacency matrix to
each labeled tagged triangulation.
To do that, we note that for any labeled tagged triangulation $T$,
every puncture $p$ of $T$ can be classified in  one of the following three types
{\cite{Fomin08}}.
\begin{itemize}
\item {\em Type  $1$}. 
All  tags of ends with $p$ are {\em plain}.
For example, consider the one
where all notched tags in Figure \ref{fig:tagged-triangulation-D} (a)
are replaced with plain.
\item {\em Type $-1$}. 
All  tags of ends with $p$ are {\em notched}.
See  Figure \ref{fig:tagged-triangulation-D} (a), for example.
\item {\em Type $0$}. 
There are both notched and plain tags of ends with $p$.
See  Figure \ref{fig:tagged-triangulation-D} (b), for example.
In fact, according to
Example \ref{ex:tag1},
there exists exactly a pair of tagged arcs $\alpha$ and $\beta$ which end at $p$
 such that their untagged versions are identical.
\end{itemize}

Having this classification in mind,
to each labeled tagged triangulation $T=(\alpha_i)_{i=1}^n$,
we assign a labeled ideal triangulation $T^{\circ}$ as follows:
\begin{itemize}
\item {\em Step 1}.
For each puncture $p$ of type $-1$,
replace all notched tags of ends with $p$ to plain. For example,
in Figure \ref{fig:tagged-triangulation-D} (a), replace the tags of arcs with labels 3,
4, and 5 to plain.
\item {\em Step 2}.
For each puncture $p$ of type $0$,
do the following replacement.
\begin{center}
\begin{pspicture}(-1,-1)(6,1.2)
\psset{linewidth=0.5pt}
\psset{fillstyle=solid, fillcolor=black}
\pscircle(0,1){0.08} 
\pscircle(0,-1){0.08} 
\pscircle(0,0){0.08} 
\psset{fillstyle=none}
%
\pscurve(0,-1)(0.25,-0.5)(0,0)
\pscurve(0,-1)(-0.25,-0.5)(0,0)
\pscurve(0,1)(0.6,0.45)(0.6,-0.45)(0,-1)
\pscurve(0,1)(-0.6,0.45)(-0.6,-0.45)(0,-1)
%
\rput[c]{0}(-0.4,-0.4){\small $i$}
\rput[c]{0}(0.4,-0.4){\small $j$}
\rput[c]{40}(0.17,-0.23){$\notch$}
\psline[arrows=->](2,0)(3,0)
\psset{fillstyle=solid, fillcolor=black}
\pscircle(5,1){0.08} 
\pscircle(5,-1){0.08} 
\pscircle(5,0){0.08} 
\psset{fillstyle=none}
\pscurve(5,-1)(4.6,0)(5,0.4)(5.4,0)(5,-1)
\psline(5,0)(5,-1)
\pscurve(5,1)(5.6,0.45)(5.6,-0.45)(5,-1)
\pscurve(5,1)(4.4,0.45)(4.4,-0.45)(5,-1)
\rput[c]{0}(4.9,-0.35){\small $i$}
\rput[c]{0}(5,0.6){\small $j$}

\end{pspicture}
\end{center}
For example,
if $T$ is the one in Figure \ref{fig:tagged-triangulation-D} (b), 
$T^{\circ}$ is given by the one in Figure \ref{fig:triangulation-AD} (b).

\end{itemize}

Now we extend the definition of the adjacency matrix to the tagged triangulations. 
\begin{defn}
To any labeled tagged triangulation $T$, we assign a skew-symmetric matrix
$B(T):=B(T^{\circ})$, where $T^{\circ}$ is the labeled  ideal triangulation associated with $T$
defined above,
and we call it the {\em adjacency matrix of $T$}.
\end{defn}

For example, for $T$
being the one in Figure \ref{fig:tagged-triangulation-D} (b),
$B(T)$ is given by the second matrix in Example
\ref{ex:BQ1}.

Finally we have the resolution of the discrepancy.
\begin{thm}[{\cite[Lemma 9.7]{Fomin08}}] For any
  labeled tagged triangulation $T=(\alpha_i)_{i=1}^n$
   of $(\bfS,\bfM)$ and for any $k=1,\dots,n$,
we have $B(\mu_k (T))=\mu_k (B(T))$.
\end{thm}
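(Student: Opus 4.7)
The plan is to reduce the statement to the already-established compatibility $B(\mu_k T) = \mu_k B(T)$ for ordinary labeled ideal triangulations (stated just before the theorem), applied to $T^\circ$. Since $B(T) := B(T^\circ)$ by definition, it suffices to show that $B((\mu_k T)^\circ) = \mu_k B(T^\circ)$, and the core of the argument consists in analyzing how the assignment $T \mapsto T^\circ$ interacts with a tagged flip at $k$.

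First I would establish a tag-reversal symmetry at a puncture $p$: if $T^{(p)}$ denotes the tagged triangulation obtained from $T$ by simultaneously swapping plain and notched on every end incident to $p$, then $B(T^{(p)}) = B(T)$ and $\mu_k(T^{(p)}) = (\mu_k T)^{(p)}$ for every $k$. Indeed, $T^\circ$ and $(T^{(p)})^\circ$ coincide when $p$ is of type $\pm 1$ in $T$ (Step 1 of the $T \mapsto T^\circ$ construction erases the distinction), and when $p$ is of type $0$ the two tagged arcs with the same underlying curve play symmetric roles in Step 2: the inner/outer arcs of the resulting self-folded triangle are interchanged, which leaves $B$ unaffected thanks to the defining rule $b_{ij} = b_{\bar i j}$ for an inner arc. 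Using this symmetry I may assume that no endpoint of $\alpha_k$ lies at a puncture of type $-1$, so that only type $1$ and type $0$ punctures appear near $\alpha_k$.

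Next I perform a local case analysis on $\alpha_k$. \emph{Case 1}: no puncture incident to $\alpha_k$ is of type $0$ and $\alpha_k$ corresponds under $T \mapsto T^\circ$ to a non-inner arc of $T^\circ$. Then $(\mu_k T)^\circ = \mu_k(T^\circ)$ as a labeled ideal triangulation and the conclusion is immediate from the ideal-triangulation case. \emph{Case 2}: $\alpha_k$ corresponds to the outer arc of a self-folded triangle in $T^\circ$, so that the enclosed puncture $p$ is of type $1$ in $T$ and becomes of type $0$ in $\mu_k T$ (or vice versa). This is precisely the digon configuration of Figure \ref{fig:digon1}, and I would verify by inspecting the quivers displayed there that $B((\mu_k T)^\circ)$ and $\mu_k B(T^\circ)$ agree. \emph{Case 3}: $\alpha_k$ is one of a type-$0$ pair at a puncture $p$, corresponding under $T \mapsto T^\circ$ to the inner or outer arc of a self-folded triangle; by the same digon analysis, the tagged flip of $\alpha_k$ either swaps the inner and outer roles within the self-folded triangle or reconfigures the two tagged arcs at $p$, and in both sub-cases the identifications $b_{ij} = b_{\bar i j}$ reconcile $B((\mu_k T)^\circ)$ with $\mu_k B(T^\circ)$.

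The main obstacle is the bookkeeping in Cases 2 and 3: $(\mu_k T)^\circ$ and $\mu_k(T^\circ)$ need not agree as labeled ideal triangulations, yet their adjacency matrices must coincide, which relies crucially on the rule $b_{ij} = b_{\bar i j}$ built into the definition of $B$ for inner arcs. Since all such reconfigurations take place inside a single once-punctured digon, the analysis reduces to finitely many local pictures that can be enumerated (as in Figure \ref{fig:digon1}) and checked by direct comparison of the corresponding quivers, completing the proof.
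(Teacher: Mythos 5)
The paper does not actually prove this statement; it cites it directly as \cite[Lemma 9.7]{Fomin08} and immediately moves on to the example in Figure \ref{fig:digon2}. So there is no in-paper proof to compare against, and your argument has to stand on its own. Your overall strategy — reducing to the established identity $B(\mu_k T) = \mu_k B(T)$ for ordinary ideal triangulations by passing through the construction $T \mapsto T^{\circ}$, introducing a tag-reversal symmetry $T \mapsto T^{(p)}$, and then doing a local case analysis — is sensible and in the spirit of the original source. The tag-reversal step is essentially correct: $\rho_p$ is a compatibility-preserving involution on tagged arcs, so it commutes with flips by uniqueness, and $B(T^{(p)}) = B(T)$ follows from the rule $b_{ij}=b_{\overline{i}j}$ which makes the inner and outer rows of the self-folded triangle agree.

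However, there is a factual error in your Case 2. You write that $\alpha_k$ corresponds to the outer arc of a self-folded triangle in $T^{\circ}$ \emph{so that} the enclosed puncture $p$ is of type $1$ in $T$ and becomes type $0$ in $\mu_k T$. This is backwards. A self-folded triangle in $T^{\circ}$ around a puncture $p$ can only arise from Step 2 of the $T \mapsto T^{\circ}$ construction, because a tagged arc is forbidden from being a loop cutting out a once-punctured monogon (Definition of tagged arc, condition (a)); hence the outer loop is never already present in $T$. It follows that $p$ must be of type $0$ in $T$, with $\alpha_k$ being the notched member of the conjugate pair, and the flip sends $p$ from type $0$ to type $1$, not the other way around. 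The situation where $p$ is type $1$ in $T$ and becomes type $0$ after the flip is precisely the one where $\alpha_k$ is a non-inner, non-self-folded arc in $T^{\circ}$ (and the flip \emph{creates} the self-folded triangle) — but that is your Case 1, not Case 2. This confusion also makes the reference to Figure \ref{fig:digon1} (which depicts ordinary, untagged flips, with a "missing" box for the non-flippable inner arc) the wrong picture; the relevant digon picture for tagged flips is Figure \ref{fig:digon2}.

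Beyond the mislabelled case, several claims are asserted rather than verified: that $(\mu_k T)^{\circ} = \mu_k(T^{\circ})$ in Case 1 (this needs a check that the tag of the newly flipped arc is correctly induced, and that the flip of the ideal triangulation does not secretly create a self-folded configuration that $T^{\circ}$ would have labelled differently); that the matrix identities in Cases 2 and 3 hold (you defer to "inspecting the quivers"); and that your three cases are exhaustive — as phrased, an arc $\alpha_k$ with an endpoint at a type-$0$ puncture but not belonging to the conjugate pair at that puncture falls outside all three cases. The plan is a reasonable blueprint, but in its current form it contains a wrong statement and leaves the substantive local verifications undone.
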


See Figure \ref{fig:digon2} for an example and compare it with
Figure \ref{fig:digon1}.
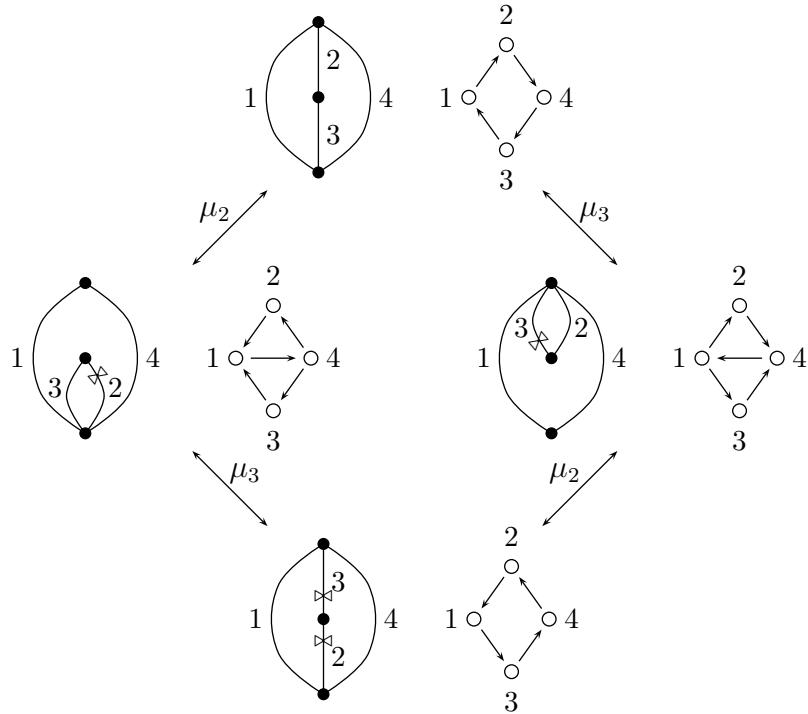
\begin{figure}
\begin{center}
\begin{pspicture}(-1,-1.2)(3.3,1.2)
\psset{linewidth=0.5pt}
\psset{fillstyle=solid, fillcolor=black}
\pscircle(0,1){0.08} 
\pscircle(0,-1){0.08} 
\pscircle(0,0){0.08} 
\psset{fillstyle=none}
\psline(0,1)(0,0)
\psline(0,0)(0,-1)
\pscurve(0,1)(0.6,0.45)(0.6,-0.45)(0,-1)
\pscurve(0,1)(-0.6,0.45)(-0.6,-0.45)(0,-1)
\rput[c]{0}(-0.9,0){\small 1}
\rput[c]{0}(0.9,0){\small 4}
\rput[c]{0}(0.2,0.5){\small 2}
\rput[c]{0}(0.2,-0.5){\small 3}
%
\pscircle(2,0){0.1} 
\pscircle(3,0){0.1} 
\pscircle(2.5,0.7){0.1} 
\pscircle(2.5,-0.7){0.1} 
%
\psline[arrows=->](2.4,-0.56)(2.1,-0.14)
\psline[arrows=->](2.1,0.14)(2.4,0.56)
\psline[arrows=->](2.6,0.56)(2.9,0.14)
\psline[arrows=->](2.9,-0.14)(2.6,-0.56)
\rput[c]{0}(1.7,0){\small 1}
\rput[c]{0}(2.5,1.1){\small 2}
\rput[c]{0}(3.3,0){\small 4}
\rput[c]{0}(2.5,-1.1){\small 3}
\end{pspicture}
\break
\begin{pspicture}(0,0)(1,1)
\psset{linewidth=0.5pt}
\psline[arrows=<->](0,0)(1,1)
\rput[c]{0}(0.3,0.7){$\mu_2$}
\end{pspicture}
\hskip100pt
\begin{pspicture}(0,0)(1,1)
\psset{linewidth=0.5pt}
\psline[arrows=<->](0,1)(1,0)
\rput[c]{0}(0.7,0.7){$\mu_3$}
\end{pspicture}
\break
\begin{pspicture}(-1,-1.2)(3.3,1.2)
\psset{linewidth=0.5pt}
\psset{fillstyle=solid, fillcolor=black}
\pscircle(0,1){0.08} 
\pscircle(0,-1){0.08} 
\pscircle(0,0){0.08} 
\psset{fillstyle=none}
%
\pscurve(0,-1)(0.25,-0.5)(0,0)
\pscurve(0,-1)(-0.25,-0.5)(0,0)
\pscurve(0,1)(0.6,0.45)(0.6,-0.45)(0,-1)
\pscurve(0,1)(-0.6,0.45)(-0.6,-0.45)(0,-1)
\rput[c]{0}(-0.9,0){\small 1}
\rput[c]{0}(0.9,0){\small 4}
\rput[c]{0}(-0.4,-0.4){\small $3$}
\rput[c]{0}(0.4,-0.4){\small $2$}
\rput[c]{40}(0.17,-0.23){$\notch$}
%
\pscircle(2,0){0.1} 
\pscircle(3,0){0.1} 
\pscircle(2.5,0.7){0.1} 
\pscircle(2.5,-0.7){0.1} 
%
\psline[arrows=->](2.4,-0.56)(2.1,-0.14)
\psline[arrows=->](2.4,0.56)(2.1,0.14)
\psline[arrows=->](2.9,0.14)(2.6,0.56)
\psline[arrows=->](2.9,-0.14)(2.6,-0.56)
\psline[arrows=->](2.2,0)(2.8,0)
\rput[c]{0}(1.7,0){\small 1}
\rput[c]{0}(2.5,1.1){\small 2}
\rput[c]{0}(3.3,0){\small 4}
\rput[c]{0}(2.5,-1.1){\small 3}
\end{pspicture}
\hskip50pt
\begin{pspicture}(-1,-1.2)(3.3,1.2)
\psset{linewidth=0.5pt}
\psset{fillstyle=solid, fillcolor=black}
\pscircle(0,1){0.08} 
\pscircle(0,-1){0.08} 
\pscircle(0,0){0.08} 
\psset{fillstyle=none}
%
\pscurve(0,1)(0.25,0.5)(0,0)
\pscurve(0,1)(-0.25,0.5)(0,0)
\pscurve(0,1)(0.6,0.45)(0.6,-0.45)(0,-1)
\pscurve(0,1)(-0.6,0.45)(-0.6,-0.45)(0,-1)
\rput[c]{0}(-0.9,0){\small 1}
\rput[c]{0}(0.9,0){\small 4}
\rput[c]{0}(-0.4,0.4){\small $3$}
\rput[c]{0}(0.4,0.4){\small $2$}
\rput[c]{40}(-0.17,0.23){$\notch$}
%
\pscircle(2,0){0.1} 
\pscircle(3,0){0.1} 
\pscircle(2.5,0.7){0.1} 
\pscircle(2.5,-0.7){0.1} 
%
\psline[arrows=->](2.1,-0.14)(2.4,-0.56)
\psline[arrows=->](2.1,0.14)(2.4,0.56)
\psline[arrows=->](2.6,0.56)(2.9,0.14)
\psline[arrows=->](2.6,-0.56)(2.9,-0.14)
\psline[arrows=->](2.8,0)(2.2,0)
\rput[c]{0}(1.7,0){\small 1}
\rput[c]{0}(2.5,1.1){\small 2}
\rput[c]{0}(3.3,0){\small 4}
\rput[c]{0}(2.5,-1.1){\small 3}
\end{pspicture}
\break
\begin{pspicture}(0,0)(1,1)
\psset{linewidth=0.5pt}
\psline[arrows=<->](0,1)(1,0)
\rput[c]{0}(0.7,0.7){$\mu_3$}
\end{pspicture}
\hskip100pt
\begin{pspicture}(0,0)(1,1)
\psset{linewidth=0.5pt}
\psline[arrows=<->](0,0)(1,1)
\rput[c]{0}(0.3,0.7){$\mu_2$}
\end{pspicture}
\break
\begin{pspicture}(-1,-1.2)(3.3,1.2)
\psset{linewidth=0.5pt}
\psset{fillstyle=solid, fillcolor=black}
\pscircle(0,1){0.08} 
\pscircle(0,-1){0.08} 
\pscircle(0,0){0.08} 
\psset{fillstyle=none}
\psline(0,1)(0,0)
\psline(0,0)(0,-1)
\pscurve(0,1)(0.6,0.45)(0.6,-0.45)(0,-1)
\pscurve(0,1)(-0.6,0.45)(-0.6,-0.45)(0,-1)
\rput[c]{0}(-0.9,0){\small 1}
\rput[c]{0}(0.9,0){\small 4}
\rput[c]{0}(0.2,0.5){\small 3}
\rput[c]{0}(0.2,-0.5){\small 2}
\rput[c]{0}(0,0.3){$\notch$}
\rput[c]{0}(0,-0.3){$\notch$}
\pscircle(2,0){0.1} 
\pscircle(3,0){0.1} 
\pscircle(2.5,0.7){0.1} 
\pscircle(2.5,-0.7){0.1} 
%
\psline[arrows=->](2.1,-0.14)(2.4,-0.56)
\psline[arrows=->](2.4,0.56)(2.1,0.14)
\psline[arrows=->](2.9,0.14)(2.6,0.56)
\psline[arrows=->](2.6,-0.56)(2.9,-0.14)
\rput[c]{0}(1.7,0){\small 1}
\rput[c]{0}(2.5,1.1){\small 2}
\rput[c]{0}(3.3,0){\small 4}
\rput[c]{0}(2.5,-1.1){\small 3}
\end{pspicture}
\end{center}
\caption{Flips of tagged arcs in  a digon with a puncture
and mutations of the corresponding quivers.}
\label{fig:digon2}
\end{figure}

\subsection{Realization of exchange graph of labeled seeds}

So far, we have concentrated on realizing the exchange matrix part of seeds.
We now turn to the realization of the {\em exchange graph\/} of the labeled seeds.
\begin{defn}
The {\em exchange graph of the labeled seeds of
a cluster algebra $\mathcal{A}=\mathcal{A}(B^0,x^0,y^0;\bbP)$}
is a graph
whose  vertices are the labeled seeds of $\mathcal{A}$ 
and a edge are drawn between two vertices if they  are related by
a mutation.
\end{defn}

The following definition is parallel to Definition \ref{defn:period1}.

\begin{defn} 
\label{defn:period2}
Let $T=(\alpha_i)_{i=1}^n$ be a labeled tagged triangulation
of a bordered surface $(\bfS,\bfM)$,
and 
let
$\nu$ be a  permutation 
of $\{1,\dots, n\}$.
A mutation sequence $\vec{k}=(k_t)_{t=1}^N$ is called a {\em $\nu$-period
of $T$} if,
for $T'=(\alpha'_i)_{i=1}^n:=\mu_{\vec{k}} (T)$,
 $\alpha'_{\nu(i)}=\alpha_i$ holds for any $i$.
\end{defn}

Let us fix the {\em initial labeled tagged triangulation} $T^0=(\alpha^0_i)_{i=1}^n$ of
  $(\bfS,\bfM)$,
  which is any labeled tagged triangulation.
 Let $B^0=B(T^0)$ be the adjacency matrix of $T^0$.
Then, we have the associated cluster algebra $\mathcal{A}(B^0,x^0,y^0;\bbQ_+(y^0))$,
 where the choice of $x^0$ is not essential.

 \begin{thm}[{cf.  \cite[Theorem 7.11]{Fomin08},
\label{thm:period2}
 \cite[Theorem 6.1]{Fomin08b}}] 
 Let $(B,x,y)$ and $T$ the ones obtained from
 $(B^0,x^0,y^0)$ and $T^0$ by the {\em same
 sequence\/} $\vec{k}$ of mutations/flips.
Then,
 a mutation sequence $\vec{k}$ is a $\nu$-period of $(B,x,y)$ if and only if 
 it is a $\nu$-period of $T$.
\end{thm}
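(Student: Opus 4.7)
The theorem bridges a purely cluster-algebraic periodicity criterion (Theorem \ref{thm:period1}) and a geometric one (Definition \ref{defn:period2}), so the natural strategy is to transport both conditions to a common intermediate statement and invoke Theorem \ref{thm:period1}. Concretely, I would reduce the claim to the equivalence
\[
x'_{\nu(i)}=x_i\ \text{for all }i \quad\Longleftrightarrow\quad \alpha'_{\nu(i)}=\alpha_i\ \text{for all }i,
\]
where $(B',x',y')=\mu_{\vec k}(B,x,y)$ and $T'=(\alpha'_i)_{i=1}^n=\mu_{\vec k}T$. The left-hand condition is (c) of Theorem \ref{thm:period1}, hence equivalent to a $\nu$-period of $(B,x,y)$ via (c) $\Rightarrow$ (a), while the right-hand condition is precisely Definition \ref{defn:period2}.

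\textbf{Key ingredient.} The equivalence displayed above is the content of the Fomin--Shapiro--Thurston correspondence: once the initial labeled tagged triangulation $T^0=(\alpha^0_i)$ and the initial seed $(B^0,x^0,y^0)$ have been fixed in matching labels, there is a well-defined assignment $\alpha\mapsto x_{\alpha}$ from tagged arcs of $(\mathbf S,\mathbf M)$ to cluster variables of $\mathcal A(B^0,x^0,y^0;\mathbb Q_+(y^0))$ such that (i) $\alpha^0_i\mapsto x^0_i$, (ii) flips intertwine with mutations, compatibly with labels, and (iii) distinct tagged arcs are sent to distinct cluster variables. Items (i) and (ii) together already imply that after performing the \emph{same} mutation sequence $\vec k$ on both sides one has $x'_i=x_{\alpha'_i}$ for every $i$; combined with injectivity (iii), this gives the required equivalence.

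\textbf{Carrying out the two directions.} Assuming $\vec k$ is a $\nu$-period of $T$, i.e.\ $\alpha'_{\nu(i)}=\alpha_i$, the correspondence yields $x'_{\nu(i)}=x_{\alpha'_{\nu(i)}}=x_{\alpha_i}=x_i$, so Theorem \ref{thm:period1}(c) holds and hence (a) does as well; thus $\vec k$ is a $\nu$-period of the seed. Conversely, if $\vec k$ is a $\nu$-period of $(B,x,y)$, then in particular $x'_{\nu(i)}=x_i$, and since $x'_{\nu(i)}=x_{\alpha'_{\nu(i)}}$ and $x_i=x_{\alpha_i}$, the injectivity of $\alpha\mapsto x_{\alpha}$ forces $\alpha'_{\nu(i)}=\alpha_i$, giving a $\nu$-period of $T$.

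\textbf{Main obstacle.} The cluster-algebraic reduction is essentially immediate once Theorem \ref{thm:period1} is available; the real substance lies in the Fomin--Shapiro--Thurston bijection, and in particular in the injectivity of $\alpha\mapsto x_{\alpha}$. The flip-mutation compatibility is already encoded in $B(\mu_kT)=\mu_k B(T)$ for tagged triangulations and the exchange relation \eqref{eq:xmut} applied along matching flips, but injectivity requires careful analysis around punctures, where self-folded triangles and punctures of mixed type necessitate the detour through $T\leadsto T^\circ$ developed in Section \ref{subsubsec:tagged}. These are exactly the points addressed in \cite[Theorem 7.11]{Fomin08} and \cite[Theorem 6.1]{Fomin08b}, and I would simply quote those results. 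Subject to invoking them, the proof collapses to the one-line argument sketched above.
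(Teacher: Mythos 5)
Your proposal is correct and takes essentially the same route as the paper: the paper's proof is precisely to cite Theorem 6.1 of Fomin--Shapiro--Thurston for the equivalence $x'_{\nu(i)}=x_i \Leftrightarrow \alpha'_{\nu(i)}=\alpha_i$ and then use Theorem \ref{thm:period1} to pass between condition (c) and the full $\nu$-periodicity of the labeled seed. Your expanded discussion of the tagged-arc-to-cluster-variable bijection and its injectivity is a faithful unpacking of what that cited theorem provides, but the logical skeleton is identical.
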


\begin{proof} Let us set $(B',x',y')=\mu_{\vec{k}}(B,x,y)$
and $T'=\mu_{\vec{k}}(T)$.
Then,
Theorem 6.1 of
  \cite{Fomin08b}  tells
that $x'_{\nu(i)}=x_i$ if and only if
$\alpha'_{\nu(i)}=\alpha_i$.
 \end{proof}

\begin{rem}  Theorem 6.1 of  \cite{Fomin08b} is
the unlabeled version of Theorem \ref{thm:period2}
and here we reformulated (a part of)  it
for the labeled one
 with the help of
Theorem
\ref{thm:period1}.
\end{rem}

\begin{ex}[Pentagon relation (2)]
\label{ex:pentagon2}
The counterpart of the pentagon relation of the seeds in
Example \ref{ex:pentagon1}
is given by the mutation sequence of labeled triangulations
of a pentagon without a puncture
 in Figure \ref{fig:pentagon1}.
\end{ex}

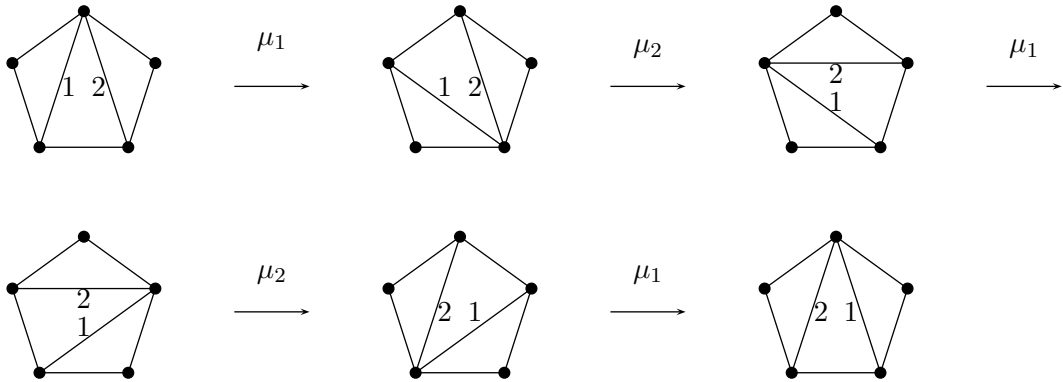
\begin{figure}
\begin{center}
\begin{pspicture}(-1,-1)(13,4.2)
%
\psset{linewidth=0.5pt}
\psset{fillstyle=solid, fillcolor=black}
\pscircle(0,4){0.08} 
\pscircle(0.95,3.31){0.08} 
\pscircle(0.59,2.19){0.08} 
\pscircle(-0.59,2.19){0.08} 
\pscircle(-0.95,3.31){0.08} 
\psset{fillstyle=none}
\psline(0,4)(0.95,3.31)
\psline(0.95,3.31)(0.59,2.19)
\psline(0.59,2.19)(-0.59,2.19)
\psline(-0.59,2.19)(-0.95,3.31)
\psline(-0.95,3.31)(0,4)
\psline(0,4)(0.59,2.19)
\psline(0,4)(-0.59,2.19)
\rput[c]{0}(2.5,3.6){\small $\mu_1$}
\psline[arrows=->](2,3)(3,3)
\rput[c]{0}(-0.2,3){\small 1}
\rput[c]{0}(0.2,3){\small 2}
\rput[c]{0}(-0.32,3){\small }
%
\psset{linewidth=0.5pt}
\psset{fillstyle=solid, fillcolor=black}
\pscircle(5,4){0.08} 
\pscircle(5.95,3.31){0.08} 
\pscircle(5.59,2.19){0.08} 
\pscircle(4.41,2.19){0.08} 
\pscircle(4.05,3.31){0.08} 
\psset{fillstyle=none}
\psline(5,4)(5.95,3.31)
\psline(5.95,3.31)(5.59,2.19)
\psline(5.59,2.19)(4.41,2.19)
\psline(4.41,2.19)(4.05,3.31)
\psline(4.05,3.31)(5,4)
\psline(5,4)(5.59,2.19)
\psline(4.05,3.31)(5.59,2.19)
\rput[c]{0}(7.5,3.5){\small $\mu_2$}
\psline[arrows=->](7,3)(8,3)
\rput[c]{0}(4.8,3){\small 1}
\rput[c]{0}(5.2,3){\small 2}
%
\psset{linewidth=0.5pt}
\psset{fillstyle=solid, fillcolor=black}
\pscircle(10,4){0.08} 
\pscircle(10.95,3.31){0.08} 
\pscircle(10.59,2.19){0.08} 
\pscircle(9.41,2.19){0.08} 
\pscircle(9.05,3.31){0.08} 
\psset{fillstyle=none}
\psline(10,4)(10.95,3.31)
\psline(10.95,3.31)(10.59,2.19)
\psline(10.59,2.19)(9.41,2.19)
\psline(9.41,2.19)(9.05,3.31)
\psline(9.05,3.31)(10,4)
\psline(9.05,3.31)(10.95,3.31)
\psline(9.05,3.31)(10.59,2.19)
\rput[c]{0}(12.5,3.5){\small $\mu_1$}
\psline[arrows=->](12,3)(13,3)
\rput[c]{0}(10,2.8){\small 1}
\rput[c]{0}(10,3.17){\small 2}
\psset{linewidth=0.5pt}
\psset{fillstyle=solid, fillcolor=black}
\pscircle(0,1){0.08} 
\pscircle(0.95,0.31){0.08} 
\pscircle(0.59,-0.81){0.08} 
\pscircle(-0.59,-0.81){0.08} 
\pscircle(-0.95,0.31){0.08} 
\psset{fillstyle=none}
\psline(0,1)(0.95,0.31)
\psline(0.95,0.31)(0.59,-0.81)
\psline(0.59,-0.81)(-0.59,-0.81)
\psline(-0.59,-0.81)(-0.95,0.31)
\psline(-0.95,0.31)(0,1)
\psline(-0.95,0.31)(0.95,0.31)
\psline(0.95,0.31)(-0.59,-0.81)
\rput[c]{0}(2.5,0.5){\small $\mu_2$}
\psline[arrows=->](2,0)(3,0)
\rput[c]{0}(0,-0.2){\small 1}
\rput[c]{0}(0,0.17){\small 2}
\psset{linewidth=0.5pt}
\psset{fillstyle=solid, fillcolor=black}
\pscircle(5,1){0.08} 
\pscircle(5.95,0.31){0.08} 
\pscircle(5.59,-0.81){0.08} 
\pscircle(4.41,-0.81){0.08} 
\pscircle(4.05,0.31){0.08} 
\psset{fillstyle=none}
\psline(5,1)(5.95,0.31)
\psline(5.95,0.31)(5.59,-0.81)
\psline(5.59,-0.81)(4.41,-0.81)
\psline(4.41,-0.81)(4.05,0.31)
\psline(4.05,0.31)(5,1)
\psline(5,1)(4.41,-0.81)
\psline(5.95,0.31)(4.41,-0.81)
\rput[c]{0}(7.5,0.5){\small $\mu_1$}
\psline[arrows=->](7,0)(8,0)
\rput[c]{0}(4.8,0){\small 2}
\rput[c]{0}(5.2,0){\small 1}
\psset{linewidth=0.5pt}
\psset{fillstyle=solid, fillcolor=black}
\pscircle(10,1){0.08} 
\pscircle(10.95,0.31){0.08} 
\pscircle(10.59,-0.81){0.08} 
\pscircle(9.41,-0.81){0.08} 
\pscircle(9.05,0.31){0.08} 
\psset{fillstyle=none}
\psline(10,1)(10.95,0.31)
\psline(10.95,0.31)(10.59,-0.81)
\psline(10.59,-0.81)(9.41,-0.81)
\psline(9.41,-0.81)(9.05,0.31)
\psline(9.05,0.31)(10,1)
\psline(10,1)(9.41,-0.81)
\psline(10,1)(10.59,-0.81)
\rput[c]{0}(9.8,0){\small 2}
\rput[c]{0}(10.2,0){\small 1}
\end{pspicture}
\end{center}
\caption{Pentagon relation of labeled triangulations.}
\label{fig:pentagon1}
\end{figure}

Let $\mathrm{LTT}(T^0)$ be the set of all labeled tagged triangulations obtained from the initial
labeled tagged triangulation $T^0$ by  sequences
of flips.

By setting $\nu=\mathrm{id}$ in Theorem \ref{thm:period2},
we have the following corollary.
\begin{cor} 
\label{cor:surface1}
There is a bijection $\Psi:
\mathrm{Seed}(B^0,x^0,y^0;
\bbQ_+(y^0))
\rightarrow \mathrm{LTT}(T^0)$
such that $\Psi(T^0)=(B^0,x^0,y^0)$
and $\Psi$ commutes with flips/mutations.
\end{cor}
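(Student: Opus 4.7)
The plan is to define $\Psi$ tautologically via mutation sequences from the base point and then use Theorem \ref{thm:period2} (with $\nu = \mathrm{id}$) to verify that this definition is both well-defined and injective. By construction, $\Psi$ will commute with flips/mutations and satisfy $\Psi(T^0) = (B^0,x^0,y^0)$, so the real content is the identification of when two sequences produce the same object on each side.

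First I would define $\Psi$ as follows. Any $T \in \mathrm{LTT}(T^0)$ is, by definition, of the form $T = \mu_{\vec{k}}(T^0)$ for some mutation sequence $\vec{k} = (k_1,\dots,k_N)$; set $\Psi(T) := \mu_{\vec{k}}(B^0,x^0,y^0)$. Well-definedness amounts to showing that if $\mu_{\vec{k}_1}(T^0) = \mu_{\vec{k}_2}(T^0)$ then $\mu_{\vec{k}_1}(B^0,x^0,y^0) = \mu_{\vec{k}_2}(B^0,x^0,y^0)$. Given such a pair of sequences, let $\vec{k}$ be the concatenation of $\vec{k}_1$ with the reversal of $\vec{k}_2$; since flips and mutations are involutive in each direction, $\mu_{\vec{k}}$ returns $T^0$ to itself labelwise, i.e.\ $\vec{k}$ is an $\mathrm{id}$-period of $T^0$ in the sense of Definition \ref{defn:period2}. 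Applying Theorem \ref{thm:period2} with $\nu = \mathrm{id}$ and with both base points being the initial ones (so the preliminary sequence in the theorem is empty), we conclude that $\vec{k}$ is also an $\mathrm{id}$-period of $(B^0,x^0,y^0)$, and hence $\mu_{\vec{k}_1}$ and $\mu_{\vec{k}_2}$ produce identical labeled seeds from $(B^0,x^0,y^0)$.

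Next, surjectivity is immediate: every element of $\mathrm{Seed}(B^0,x^0,y^0;\bbQ_+(y^0))$ is by definition of the form $\mu_{\vec{k}}(B^0,x^0,y^0)$, and this is exactly the image of $\mu_{\vec{k}}(T^0) \in \mathrm{LTT}(T^0)$. For injectivity, suppose $\Psi(\mu_{\vec{k}_1}(T^0)) = \Psi(\mu_{\vec{k}_2}(T^0))$; then with $\vec{k}$ the concatenation of $\vec{k}_1$ with the reversal of $\vec{k}_2$ as above, $\vec{k}$ is an $\mathrm{id}$-period of $(B^0,x^0,y^0)$, so by the reverse implication in Theorem \ref{thm:period2} it is an $\mathrm{id}$-period of $T^0$, which forces $\mu_{\vec{k}_1}(T^0) = \mu_{\vec{k}_2}(T^0)$. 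Finally, $\Psi$ commutes with a single flip/mutation at $k$ tautologically: if $T = \mu_{\vec{k}}(T^0)$, then $\mu_k(T) = \mu_{\vec{k}'}(T^0)$ with $\vec{k}' = (\vec{k}, k)$, so $\Psi(\mu_k(T)) = \mu_{\vec{k}'}(B^0,x^0,y^0) = \mu_k(\mu_{\vec{k}}(B^0,x^0,y^0)) = \mu_k(\Psi(T))$, and iterating gives the statement for arbitrary flip/mutation sequences.

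The hard part is not in this deduction at all: essentially the entire statement is a formal corollary once Theorem \ref{thm:period2} is in hand. The only point requiring brief attention is the use of involutivity of flips (to make the "reverse sequence" argument rigorous) together with the observation that the correspondence between flips on triangulations and mutations on exchange matrices preserves this involutivity; both facts are already built into the foundational results of \cite{Fomin08,Fomin08b} recalled in the excerpt, so no additional work is required.
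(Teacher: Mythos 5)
Your proof is correct and matches the paper's approach: the corollary is exactly the $\nu=\mathrm{id}$ case of Theorem~\ref{thm:period2}, and you spell out the standard reduction (concatenate one sequence with the reversal of the other, apply involutivity of flips/mutations, and read the well-definedness and injectivity of $\Psi$ off the resulting periodicity equivalence) that the paper leaves implicit.
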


In other words, the exchange graph of 
$\mathcal{A}(B^0,x^0,y^0;\bbQ_+(y^0))$
is identified with the exchange graph of
the labeled tagged triangulations 
in $\mathrm{LTT}(T^0)$  by flips.

To present a general statement on 
$\mathrm{LTT}(T^0)$, we need to exclude
some ``exceptional'' bordered surfaces.

\begin{defn}
A bordered surface $(\bfS,\bfM)$
is said to be {\em generic\/} if it is not one of the following:
\begin{itemize}
\item an once-punctured digon
\item an unpunctured annulus with  one marked point on each boundary component
\item an  once-punctured  torus
\end{itemize} 
\end{defn}

This definition is motivated by the following property
which  holds only for generic bordered surfaces.

\begin{lem}
\label{lem:generic1}
Let $(\bfS,\bfM)$ be a generic bordered surface,
let $T=(\alpha_i)_{i=1}^n$ be 
 any labeled tagged triangulation of $(\bfS,\bfM)$,
 and let $B=B(T)$.
Then, for any pair of distinct indices $i,j\in \{1,\dots,n\}$,
 there is a sequence of indices $i_0=i,i_1,\dots,i_r=j$ such that
 $|b_{i_s i_{s+1}}|=1$ for any $s=0,\dots,r-1$.
\end{lem}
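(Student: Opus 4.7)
The plan is to reduce the lemma to an analogous statement for the ordinary ideal triangulation $T^{\circ}$ associated to $T$, via the identity $B(T) = B(T^{\circ})$. Introducing the graph $\Gamma$ on the label set $\{1, \dots, n\}$ whose edges are the pairs $\{i,j\}$ with $|b_{ij}| = 1$, the lemma amounts to asserting that $\Gamma$ is connected. Inner arcs are easy to dispose of: if $\alpha^{\circ}_i$ is the inner arc of a self-folded triangle with outer arc $\alpha^{\circ}_{\bar i}$, then the defining identity $b_{ij} = b_{\bar i\, j}$ shows that $i$ and $\bar i$ have the same neighborhood in $\Gamma$, so connectedness of $\Gamma$ reduces to connectedness of the subgraph induced by the non-inner labels.

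Next I would introduce a coarser auxiliary graph $\Gamma_0$ on the non-inner labels, where $i \sim_0 j$ iff $\alpha^{\circ}_i$ and $\alpha^{\circ}_j$ occur as two distinct sides of a common non-self-folded triangle of $T^{\circ}$. The nerve graph of non-self-folded triangles (vertices $=$ triangles, edges $=$ shared arcs) is connected because $\bfS$ is connected and each self-folded triangle appears as a leaf that can be pruned without disconnecting the nerve. Each non-self-folded triangle contributes a $3$-clique to $\Gamma_0$, and adjacent triangles share a common label; hence $\Gamma_0$ is connected. Since $\Gamma \subset \Gamma_0$, the task reduces to showing that every edge of $\Gamma_0$ missing from $\Gamma$ can be rerouted via a path within $\Gamma$.

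A missing edge $\{i,j\}$ occurs exactly when $\alpha^{\circ}_i$ and $\alpha^{\circ}_j$ are jointly distinct sides of two non-self-folded triangles $\Delta_1, \Delta_2$ whose contributions $b^{\Delta_1}_{ij}, b^{\Delta_2}_{ij} \in \{\pm 1\}$ cancel to $0$ or reinforce to $\pm 2$. Let $\alpha^{\circ}_k$ and $\alpha^{\circ}_{k'}$ be the respective third sides of $\Delta_1$ and $\Delta_2$. Since each arc lies in exactly two triangles, one checks immediately that each of the four pairs $\{i,k\}, \{i,k'\}, \{j,k\}, \{j,k'\}$ shares exactly one non-self-folded triangle, so all four supply edges of $\Gamma$. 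This yields a detour $i - k - \dots - k' - j$ in $\Gamma$ provided that $k$ and $k'$ can be joined by a $\Gamma$-path through the triangles lying outside $\Delta_1 \cup \Delta_2$.

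The main obstacle will be to establish the existence of this $k$-to-$k'$ path under the genericity hypothesis. A topological case analysis of the ``double-sharing'' configurations $\Delta_1 \cup \Delta_2$, organized by how the vertex slots of the two triangles are identified in $\bfS$, yields a small finite list of sub-surface types, each with a prescribed complement in $(\bfS, \bfM)$. In each case one would exhibit the requisite path by applying the same detour argument recursively to any further missing edges in the complement, using that the complement is a proper bordered surface of strictly smaller complexity whose nerve graph is itself connected. The configurations for which no such complement exists --- equivalently, for which the rerouting argument breaks down --- coincide precisely with the three excluded surfaces: the once-punctured digon, the annulus with one marked point on each boundary, and the once-punctured torus, in which the double-sharing is global and exhausts the arc set. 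Granting this case analysis, $\Gamma$ is connected for every generic $(\bfS, \bfM)$, and any path from $i$ to $j$ in $\Gamma$ furnishes the desired sequence $i_0, i_1, \dots, i_r$.
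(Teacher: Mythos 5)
Your proposal takes a genuinely different route from the paper, which simply cites the ``puzzle pieces'' and ``blocks'' construction of Fomin--Shapiro--Thurston (Theorem~13.3 of~\cite{Fomin08}): in that construction every block already contributes a connected set of $|b_{ij}|=1$ edges, adjacent blocks share an arc, and genericity excludes the surfaces whose block decomposition admits only $|b_{ij}|=2$ connections, so the claim follows immediately. Your alternative --- reduce to $T^{\circ}$, handle inner arcs, show the nerve-based graph $\Gamma_0$ is connected, then reroute missing $\Gamma_0$-edges --- is structurally sensible and could in principle yield a self-contained proof, but as written it has a real gap.

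The gap is the final paragraph: you explicitly defer the ``topological case analysis'' with the phrase ``granting this case analysis,'' so the heart of the argument is missing. Moreover, the part you do write contains a confusion that suggests the analysis was not fully thought through. You assert (correctly, when $k$ and $k'$ exist as labels and $k\neq k'$) that all four of $\{i,k\},\{i,k'\},\{j,k\},\{j,k'\}$ are $\Gamma$-edges; but then $i\text{--}k\text{--}j$ is already a two-step $\Gamma$-path, and the subsequent appeal to a ``$k$-to-$k'$ path through the triangles lying outside $\Delta_1\cup\Delta_2$'' is unnecessary. What you describe as ``the main obstacle'' is not an obstacle at all. The genuine cases that need to be excluded are instead: (i) one or both third sides of $\Delta_1,\Delta_2$ may be boundary segments rather than arcs (so $k$ or $k'$ has no label), which forces $n=2$ and lands in the once-punctured digon or the annulus with one marked point on each boundary; and (ii) the two third sides may coincide ($k=k'$), which forces $\bfS=\Delta_1\cup\Delta_2$ to be a closed surface and lands in the sphere with three punctures (excluded by Assumption~\ref{ass:bs1}) or the once-punctured torus (excluded by genericity). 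Pinning these down and verifying they exhaust the failure modes is exactly the case analysis you skip, and it is where the genericity hypothesis does its work; either carry it out, or fall back on the paper's citation of~\cite{Fomin08}, whose block structure packages precisely this analysis.
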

\begin{proof}
This is an immediate consequence of the construction of ideal triangulations
and the associated adjacency matrices
by ``puzzle pieces" and ``blocks" in \cite[Theorem 13.3]{Fomin08}.
\end{proof}

\begin{prop}[{cf.~\cite[Proposition 7.10]{Fomin08b}}]
\label{prop:bij1}
Let $(\bfS,\bfM)$ be a generic bordered surface.
\par
(a).
 If $(\bfS,\bfM)$ is not a closed surface with exactly one puncture,
then $\mathrm{LTT}(T^0)$ consists of all
 labeled tagged triangulations of  $(\bfS,\bfM)$.
\par
(b).
If $(\bfS,\bfM)$ is  a closed surface with exactly one puncture,
then $\mathrm{LTT}(T^0)$ consists of all
 labeled tagged triangulations of  $(\bfS,\bfM)$ whose
 arcs are tagged in the same way as the arcs
of $T^0$.
\end{prop}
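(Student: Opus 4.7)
The plan is to deduce both parts of the proposition from the unlabeled connectivity result of Fomin, Shapiro, and Thurston by adding a monodromy argument for the labels. The proof proceeds in two stages.

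In the first stage I would appeal to the unlabeled analogue, \cite[Proposition 7.10]{Fomin08b}. Stripping the labeling off $T^0$, that result tells us that the flip graph of \emph{unlabeled} tagged triangulations of $(\bfS,\bfM)$ is connected when $(\bfS,\bfM)$ is generic and not a once-punctured closed surface, while in the remaining case of a once-punctured closed surface it has exactly two components distinguished by whether the tags at the unique puncture are all plain or all notched (see also \cite[Remark 9.18]{Fomin08}). Hence the set of unlabeled tagged triangulations that can underlie members of $\mathrm{LTT}(T^0)$ is precisely the set described in part (a) or (b) of the statement.

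In the second stage I would upgrade this from unlabeled to labeled triangulations by a label-monodromy argument. Fix any labeled tagged triangulation $T=(\alpha_i)_{i=1}^n$ whose underlying unlabeled triangulation lies in the class determined by the first stage, and choose any flip sequence $\vec k$ with $\mu_{\vec k}(T^0)$ sharing the unlabeled triangulation of $T$. Then $T$ and $\mu_{\vec k}(T^0)$ differ by some permutation $\sigma\in S_n$ of labels, so it suffices to realize every $\sigma\in S_n$ as the label permutation induced by some flip cycle $T^0\to T^0$. I plan to achieve this by producing, for every pair $\{i,j\}$ with $|b_{ij}^0|=1$, a flip cycle from $T^0$ to itself whose induced permutation is exactly the transposition $(i\,j)$. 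The input is the pentagon periodicity of Example \ref{ex:pentagon2} applied locally to the rank-two sub-configuration on the arcs $\alpha_i^0,\alpha_j^0$, with the modifications from the rewriting of Section \ref{subsubsec:tagged} when one of these arcs is an inner arc of a self-folded triangle or touches a puncture with mixed tagging. Since by Lemma \ref{lem:generic1} the graph of pairs $\{i,j\}$ with $|b_{ij}^0|=1$ is connected on $\{1,\dots,n\}$, the transpositions produced in this way generate all of $S_n$, and the claim follows.

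The hard part will be verifying that the local pentagon cycle, when executed inside the global bordered surface, really returns every arc of $T^0$ to itself rather than merely the two arcs $\alpha_i^0$ and $\alpha_j^0$; this requires a careful case analysis of the two (possibly degenerate) triangles of $T^0$ meeting along $\alpha_i^0$ and $\alpha_j^0$, together with the effect of the pentagon sequence on their remaining sides, and the insertion of auxiliary flips to detach and then reattach neighboring arcs when necessary. In the one-punctured closed case (b) one must additionally check that none of these cycles can toggle the global plain/notched parity at the unique puncture, which is precisely the reason why $\mathrm{LTT}(T^0)$ is restricted as stated.
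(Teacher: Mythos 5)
Your proposal matches the paper's proof: reduce to the unlabeled connectivity result \cite[Proposition 7.10]{Fomin08b}, then realize transpositions via the pentagon flip sequence (Figure~\ref{fig:pentagon1}) and the punctured-digon variant (Figure~\ref{fig:digon2}) for self-folded triangles, using Lemma~\ref{lem:generic1} for connectivity of the transposable pairs. The ``hard part'' you flag is not actually an obstruction---since the pentagon cycle $\mu_i\mu_j\mu_i\mu_j\mu_i$ flips only arcs $i$ and $j$, every other arc is automatically untouched; what genuinely needs checking is the periodicity of the local flip sequence in each configuration, which is exactly what Figures~\ref{fig:pentagon1} and~\ref{fig:digon2} establish.
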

\begin{proof} 
The unlabeled version of the statement is true 
by \cite[Proposition 7.10]{Fomin08b}
(including the nongeneric case).
On the other hand,
thanks to Lemma \ref{lem:generic1},
for any  labeled tagged triangulation $T$ of $(\bfS,\bfM)$,
one can exchange the labels of any pair of arcs of $T$
by repeatedly applying  the sequence of flips for a pentagon in 
Figure \ref{fig:pentagon1} (and flips in 
Figure \ref{fig:digon2} if necessary).
Thus, the statement is also true for the labeled one.
\end{proof}

\subsection{Reformulation by signed triangulations}
\label{subsec:reformulation}

Let us explain the notion of {\em signed triangulations\/}
recently introduced by \cite{Labardini12,Bridgeland13}.
It is nothing but an
alternative way of expressing
tagged triangulations,
but it involves the operation called {\em pop}.

\begin{defn}  A {\em labeled signed triangulation\/} of
a bordered surface $(\bfS,\bfM)$ is a pair $T_{\sigma}=(T,\sigma)$ such that  
$T$ is a labeled ideal triangulation  of $(\bfS,\bfM)$ and $\sigma$ is a {\em sign function\/} from
the set of the punctures in $(\bfS,\bfM)$ to the sign set $\{+,-\}=\{1,-1\}$.
The sign $\sigma(p)$ at $p$ is denoted by $\sigma_p$.
\end{defn}

\begin{figure}
\begin{center}
\begin{pspicture}(4,-1)(9,1.2)
%
\psset{linewidth=0.5pt}
\psset{fillstyle=solid, fillcolor=black}
\pscircle(5,1){0.08} 
\pscircle(5,-1){0.08} 
\pscircle(5,0){0.08} 
\psset{fillstyle=none}
\pscurve(5,-1)(4.6,0)(5,0.4)(5.4,0)(5,-1)
\psline(5,0)(5,-1)
\pscurve(5,1)(5.6,0.45)(5.6,-0.45)(5,-1)
\pscurve(5,1)(4.4,0.45)(4.4,-0.45)(5,-1)
\rput[c]{0}(5.1,-0.4){\small $i$}
\rput[c]{0}(5,0.6){\small $j$}
\rput[c]{0}(5.25,0){\small $+$}
\rput[c]{0}(4.75,0){\small $p$}
\rput[c]{0}(6.5,0.3){\small $\kappa_p$}
\psline[arrows=<->](6,0)(7,0)
\psset{fillstyle=solid, fillcolor=black}
\pscircle(8,1){0.08} 
\pscircle(8,-1){0.08} 
\pscircle(8,0){0.08} 
\psset{fillstyle=none}
\pscurve(8,-1)(7.6,0)(8,0.4)(8.4,0)(8,-1)
\psline(8,0)(8,-1)
\pscurve(8,1)(8.6,0.45)(8.6,-0.45)(8,-1)
\pscurve(8,1)(7.4,0.45)(7.4,-0.45)(8,-1)
\rput[c]{0}(8.1,-0.4){\small $j$}
\rput[c]{0}(8,0.6){\small $i$}
\rput[c]{0}(8.25,0){\small $-$}
\rput[c]{0}(7.75,0){\small $p$}
\end{pspicture}
\end{center}
\caption{Pop at a puncture $p$ inside a self-folded triangle.}
\label{fig:pop1}
\end{figure}
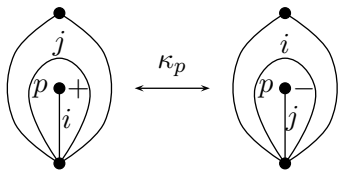

Let $T_{\sigma}$ be a labeled signed triangulation.
For a puncture $p$ inside a self-folded triangle of $T_{\sigma}$,
we define the operation $\kappa_p$ 
 as illustrated in Figure \ref{fig:pop1},
 and we call it {\em the pop at $p$}.
It is important that {\em the labels $i$ and $j$ are  interchanged\/} by a pop.
(It was first introduced by \cite{Gaiotto09} without sign.)
Let us introduce an equivalence relation, called  the {\em pop-equivalence\/},
among the labeled signed triangulations
of $(\bfS,\bfM)$ such that $T_{\sigma} \sim T'_{\sigma'}$
if they are related by a finite sequence of pops, including the empty sequence.
The equivalence class of ${T_{\sigma}}$ is denoted by $[{T_{\sigma}}] $
and called the {\em pop-equivalence class\/} of ${T_{\sigma}}$.

\begin{prop}[\cite{Labardini12,Bridgeland13}]
\label{prop:tagged1}
There is a natural one-to-one correspondence
between
the labeled tagged triangulations
 of $(\bfS,\bfM)$
 and the pop-equivalence classes
of the labeled signed triangulations
 of $(\bfS,\bfM)$.
\end{prop}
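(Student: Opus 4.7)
The plan is to construct an explicit map $\widetilde{\Phi}$ from labeled signed triangulations to labeled tagged triangulations and show that its fibers are exactly the pop-equivalence classes. I would first define $\widetilde{\Phi}(T,\sigma)$ by locally modifying $T$ near each puncture: at each puncture $p$ that is \emph{not} inside a self-folded triangle of $T$, I tag every end of an arc of $T$ meeting $p$ as plain if $\sigma_p=+$ and as notched if $\sigma_p=-$; at each self-folded triangle of $T$ enclosing a puncture $p$, with inner arc labeled $i$ and outer arc labeled $j$, I replace this pair by the two tagged arcs that share the common untagged curve from the base vertex to $p$ (one tagged plain at $p$ and one tagged notched at $p$), and I distribute the labels $i,j$ on these two tagged arcs according to $\sigma_p$, using the convention dictated by Figure~\ref{fig:pop1} (so that a pop at $p$ exactly reverses the assignment).

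I would then verify that $\widetilde{\Phi}$ is constant on pop-equivalence classes. By construction, applying $\kappa_p$ simultaneously swaps the two labels $i,j$ around the self-folded triangle at $p$ and flips $\sigma_p$, and these two changes cancel out in the label distribution rule used to define $\widetilde{\Phi}$. Hence $\widetilde{\Phi}(\kappa_p(T,\sigma))=\widetilde{\Phi}(T,\sigma)$ for every admissible pop, and $\widetilde{\Phi}$ descends to a map $\Phi$ on the pop-equivalence classes.

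To exhibit the inverse, I would reverse the construction: given a labeled tagged triangulation $T_{\text{tag}}$, set $T=T_{\text{tag}}^{\circ}$ (the labeled ideal triangulation associated to $T_{\text{tag}}$ by the two-step procedure in Section~\ref{subsubsec:tagged}) and define $\sigma_p$ to be $+$ (resp.\ $-$) at every puncture of type $+1$ (resp.\ $-1$) in $T_{\text{tag}}$. At every puncture $p$ of type $0$, the procedure creates a self-folded triangle, and there are precisely two legitimate choices of $\sigma_p$: both yield valid labeled signed triangulations mapping to $T_{\text{tag}}$ under $\widetilde{\Phi}$, and they differ by a single pop at $p$. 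Because the type $\pm 1$ punctures and the endpoints at non-punctures impose no further ambiguity, the pop-equivalence class of $(T,\sigma)$ is uniquely determined by $T_{\text{tag}}$, and this class-valued assignment is inverse to $\Phi$.

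The main technical obstacle, and the only place where care is genuinely required, is the local consistency around a self-folded triangle: the label assignment in the definition of $\widetilde{\Phi}$ must be set up in perfect synchrony with the label swap built into the pop $\kappa_p$, so that the pop-invariance of $\widetilde{\Phi}$ and the well-definedness of the inverse hold simultaneously. Once the local $\widetilde{\Phi}$-picture at a single self-folded triangle is checked against Figure~\ref{fig:pop1}, the global statement follows by assembling the local constructions over all punctures of $(\bfS,\bfM)$.
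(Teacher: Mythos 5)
Your proposal is correct and follows essentially the same approach the paper takes in the discussion immediately following the proposition: the paper describes the map from a labeled tagged triangulation $T$ to the pop-equivalence class $[T'_\sigma]$ via the type-$0/\pm1$ classification of punctures (noting that the two representatives at a type-$0$ puncture differ by a single pop), while you additionally spell out the forward map $\widetilde\Phi$ from signed to tagged triangulations and verify pop-invariance explicitly. One minor slip: the tagging/label convention around a self-folded triangle is dictated by Figure~\ref{fig:pop2}, not Figure~\ref{fig:pop1}, but this does not affect the correctness of the argument.
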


The correspondence is given as follows.
A labeled tagged triangulation 
$T$ is identified with
 the pop-equivalence class $[{T'_{\sigma}}] $,
 where its representative $T'_{\sigma}=(T',\sigma)$
 is  obtained from $T$ by
doing firstly the following operation for each puncture $p$ in $T$,
and then removing the the tags of all tagged arcs:
\begin{itemize}
\item
if  $p$ is of type 1 (as defined in Section \ref{subsubsec:tagged}), 
assign the sign $\sigma_p=+$,
\item
if $p$ is of type $-1$, 
assign the sign $\sigma_p=-$,
\item
if $p$ is of type 0,
we may do one of two ways
  (see Figure \ref{fig:pop2}):
\begin{itemize}
\item
(i)  replace the notched tagged arc
ending at $p$ with the loop surrounding $p$,
and assign the sign $\sigma_p=+$, or
\item
(ii) replace the plain tagged arc
ending at $p$ with the loop surrounding $p$,
and assign the sign $\sigma_p=-$.
\end{itemize}
Two choices are  exactly connected by the pop $\kappa_p$,
thus they define the same pop-equivalence class.
\end{itemize}

\begin{figure}
\begin{center}
\begin{pspicture}(-1,-1)(9,1.2)
\psset{linewidth=0.5pt}
\psset{fillstyle=solid, fillcolor=black}
\pscircle(0,1){0.08} 
\pscircle(0,-1){0.08} 
\pscircle(0,0){0.08} 
\psset{fillstyle=none}
%
\pscurve(0,-1)(0.25,-0.5)(0,0)
\pscurve(0,-1)(-0.25,-0.5)(0,0)
\pscurve(0,1)(0.6,0.45)(0.6,-0.45)(0,-1)
\pscurve(0,1)(-0.6,0.45)(-0.6,-0.45)(0,-1)
%
\rput[c]{0}(-0.25,0){\small $p$}
\rput[c]{0}(-0.4,-0.4){\small $i$}
\rput[c]{0}(0.4,-0.4){\small $j$}
\rput[c]{40}(0.17,-0.23){$\notch$}
\psline[arrows=<->](2,0)(3,0)
\psset{fillstyle=solid, fillcolor=black}
\pscircle(5,1){0.08} 
\pscircle(5,-1){0.08} 
\pscircle(5,0){0.08} 
\psset{fillstyle=none}
\pscurve(5,-1)(4.6,0)(5,0.4)(5.4,0)(5,-1)
\psline(5,0)(5,-1)
\pscurve(5,1)(5.6,0.45)(5.6,-0.45)(5,-1)
\pscurve(5,1)(4.4,0.45)(4.4,-0.45)(5,-1)
\rput[c]{0}(4.75,0){\small $p$}
\rput[c]{0}(5.1,-0.4){\small $i$}
\rput[c]{0}(5,0.6){\small $j$}
\rput[c]{0}(5.25,0){\small $+$}
\rput[c]{0}(6.5,0.4){\small $\kappa_p$}
\psline[arrows=<->](6.2,0)(6.8,0)
\psset{fillstyle=solid, fillcolor=black}
\pscircle(8,1){0.08} 
\pscircle(8,-1){0.08} 
\pscircle(8,0){0.08} 
\psset{fillstyle=none}
\pscurve(8,-1)(7.6,0)(8,0.4)(8.4,0)(8,-1)
\psline(8,0)(8,-1)
\pscurve(8,1)(8.6,0.45)(8.6,-0.45)(8,-1)
\pscurve(8,1)(7.4,0.45)(7.4,-0.45)(8,-1)
\rput[c]{0}(7.75,0){\small $p$}
\rput[c]{0}(8.1,-0.4){\small $j$}
\rput[c]{0}(8,0.6){\small $i$}
\rput[c]{0}(8.25,0){\small $-$}
\end{pspicture}
\end{center}
\caption{Two representatives 
 of a labeled tagged triangulation inside a digon with a puncture
 by labeled signed triangulations.} 
\label{fig:pop2}
\end{figure}
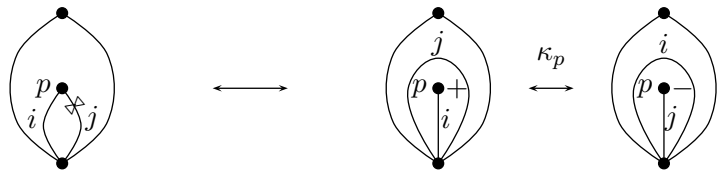

Using this new presentation, our familiar example of flips inside a digon with a puncture
looks as in Figure \ref{fig:digon3}.

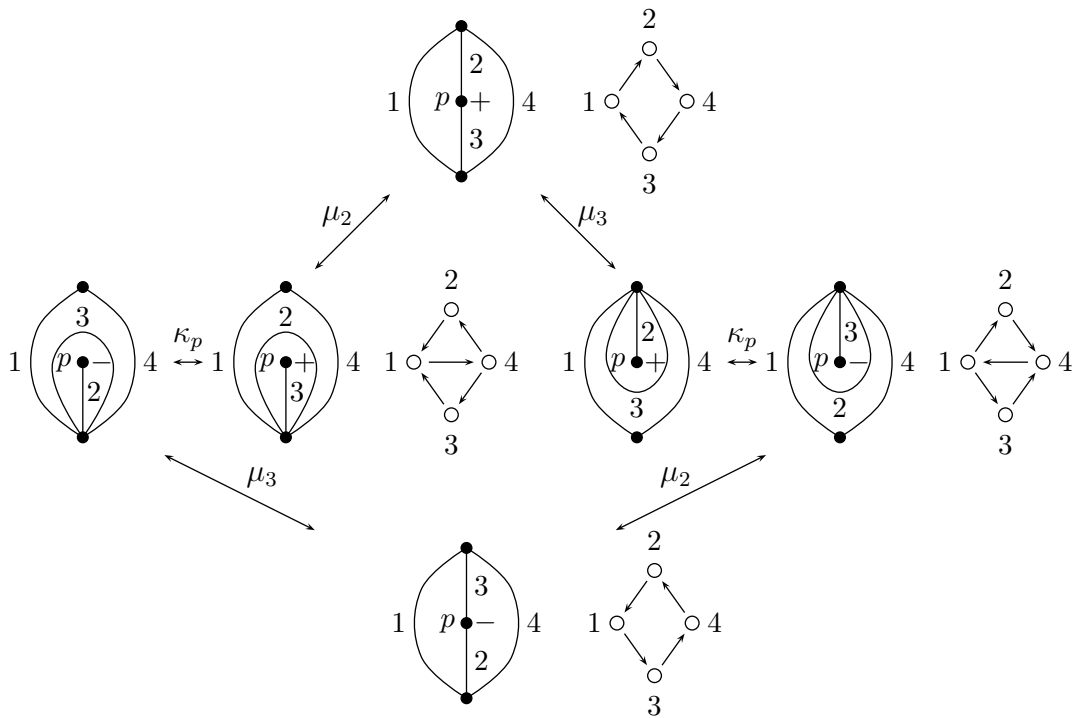
\begin{figure}
\begin{center}
\begin{pspicture}(-1,-1.2)(3.3,1.2)
\psset{linewidth=0.5pt}
\psset{fillstyle=solid, fillcolor=black}
\pscircle(0,1){0.08} 
\pscircle(0,-1){0.08} 
\pscircle(0,0){0.08} 
\psset{fillstyle=none}
\psline(0,1)(0,0)
\psline(0,0)(0,-1)
\pscurve(0,1)(0.6,0.45)(0.6,-0.45)(0,-1)
\pscurve(0,1)(-0.6,0.45)(-0.6,-0.45)(0,-1)
\rput[c]{0}(-0.9,0){\small 1}
\rput[c]{0}(0.9,0){\small 4}
\rput[c]{0}(0.2,0.5){\small 2}
\rput[c]{0}(0.2,-0.5){\small 3}
\rput[c]{0}(0.25,0){\small $+$}
\rput[c]{0}(-0.25,0){\small $p$}
%
\pscircle(2,0){0.1} 
\pscircle(3,0){0.1} 
\pscircle(2.5,0.7){0.1} 
\pscircle(2.5,-0.7){0.1} 
%
\psline[arrows=->](2.4,-0.56)(2.1,-0.14)
\psline[arrows=->](2.1,0.14)(2.4,0.56)
\psline[arrows=->](2.6,0.56)(2.9,0.14)
\psline[arrows=->](2.9,-0.14)(2.6,-0.56)
\rput[c]{0}(1.7,0){\small 1}
\rput[c]{0}(2.5,1.1){\small 2}
\rput[c]{0}(3.3,0){\small 4}
\rput[c]{0}(2.5,-1.1){\small 3}
\end{pspicture}
\break
\begin{pspicture}(0,0)(12.2,1)
\psset{linewidth=0.5pt}
\psline[arrows=<->](3,0)(4,1)
\rput[c]{0}(3.3,0.7){$\mu_2$}
\psline[arrows=<->](6,1)(7,0)
\rput[c]{0}(6.7,0.7){$\mu_3$}
\end{pspicture}
\break
\begin{pspicture}(-3.4,-1.2)(3.3,1.2)
\psset{linewidth=0.5pt}
\psset{fillstyle=solid, fillcolor=black}
\pscircle(-2.4,1){0.08} 
\pscircle(-2.4,-1){0.08} 
\pscircle(-2.4,0){0.08} 
\psset{fillstyle=none}
\pscurve(-2.4,-1)(-2.8,0)(-2.4,0.4)(-2,0)(-2.4,-1)
\psline(-2.4,0)(-2.4,-1)
\pscurve(-2.4,1)(-1.8,0.45)(-1.8,-0.45)(-2.4,-1)
\pscurve(-2.4,1)(-3,0.45)(-3,-0.45)(-2.4,-1)
\rput[c]{0}(-3.3,0){\small 1}
\rput[c]{0}(-1.5,0){\small 4}
\rput[c]{0}(-2.25,-0.4){\small $2$}
\rput[c]{0}(-2.4,0.6){\small $3$}
\rput[c]{0}(-2.15,0){\small $-$}
\rput[c]{0}(-2.65,0){\small $p$}
%
\rput[c]{0}(-1,0.3){$\kappa_p$}
\psline[arrows=<->](-1.2,0)(-0.8,0)
\psset{fillstyle=solid, fillcolor=black}
\pscircle(0.3,1){0.08} 
\pscircle(0.3,-1){0.08} 
\pscircle(0.3,0){0.08} 
\psset{fillstyle=none}
\pscurve(0.3,-1)(-0.1,0)(0.3,0.4)(0.7,0)(0.3,-1)
\psline(0.3,0)(0.3,-1)
\pscurve(0.3,1)(0.9,0.45)(0.9,-0.45)(0.3,-1)
\pscurve(0.3,1)(-0.3,0.45)(-0.3,-0.45)(0.3,-1)
\rput[c]{0}(-0.6,0){\small 1}
\rput[c]{0}(1.2,0){\small 4}
\rput[c]{0}(0.3,0.6){\small 2}
\rput[c]{0}(0.45,-0.4){\small 3}
\rput[c]{0}(0.55,0){\small $+$}
\rput[c]{0}(0.05,0){\small $p$}
%
\pscircle(2,0){0.1} 
\pscircle(3,0){0.1} 
\pscircle(2.5,0.7){0.1} 
\pscircle(2.5,-0.7){0.1} 
%
\psline[arrows=->](2.4,-0.56)(2.1,-0.14)
\psline[arrows=->](2.4,0.56)(2.1,0.14)
\psline[arrows=->](2.9,0.14)(2.6,0.56)
\psline[arrows=->](2.9,-0.14)(2.6,-0.56)
\psline[arrows=->](2.2,0)(2.8,0)
\rput[c]{0}(1.7,0){\small 1}
\rput[c]{0}(2.5,1.1){\small 2}
\rput[c]{0}(3.3,0){\small 4}
\rput[c]{0}(2.5,-1.1){\small 3}
\end{pspicture}
\hskip15pt
\begin{pspicture}(-3.4,-1.2)(3.6,1.2)
\psset{linewidth=0.5pt}
\psset{fillstyle=solid, fillcolor=black}
\pscircle(-2.4,1){0.08} 
\pscircle(-2.4,-1){0.08} 
\pscircle(-2.4,0){0.08} 
\psset{fillstyle=none}
\pscurve(-2.4,1)(-2.8,0)(-2.4,-0.4)(-2,0)(-2.4,1)
\psline(-2.4,0)(-2.4,1)
\pscurve(-2.4,1)(-1.8,0.45)(-1.8,-0.45)(-2.4,-1)
\pscurve(-2.4,1)(-3,0.45)(-3,-0.45)(-2.4,-1)
\rput[c]{0}(-3.3,0){\small 1}
\rput[c]{0}(-1.5,0){\small 4}
\rput[c]{0}(-2.4,-0.6){\small 3}
\rput[c]{0}(-2.25,0.4){\small 2}
\rput[c]{0}(-2.15,0){\small $+$}
\rput[c]{0}(-2.65,0){\small $p$}
%
\rput[c]{0}(-1,0.3){$\kappa_p$}
\psline[arrows=<->](-1.2,0)(-0.8,0)
\psset{fillstyle=solid, fillcolor=black}
\pscircle(0.3,1){0.08} 
\pscircle(0.3,-1){0.08} 
\pscircle(0.3,0){0.08} 
\psset{fillstyle=none}
\pscurve(0.3,1)(-0.1,0)(0.3,-0.4)(0.7,0)(0.3,1)
\psline(0.3,0)(0.3,1)
\pscurve(0.3,1)(0.9,0.45)(0.9,-0.45)(0.3,-1)
\pscurve(0.3,1)(-0.3,0.45)(-0.3,-0.45)(0.3,-1)
\rput[c]{0}(-0.6,0){\small 1}
\rput[c]{0}(1.2,0){\small 4}
\rput[c]{0}(0.3,-0.6){\small 2}
\rput[c]{0}(0.45,0.4){\small 3}
\rput[c]{0}(0.55,0){\small $-$}
\rput[c]{0}(0.05,0){\small $p$}
%
\pscircle(2,0){0.1} 
\pscircle(3,0){0.1} 
\pscircle(2.5,0.7){0.1} 
\pscircle(2.5,-0.7){0.1} 
%
\psline[arrows=->](2.1,-0.14)(2.4,-0.56)
\psline[arrows=->](2.1,0.14)(2.4,0.56)
\psline[arrows=->](2.6,0.56)(2.9,0.14)
\psline[arrows=->](2.6,-0.56)(2.9,-0.14)
\psline[arrows=->](2.8,0)(2.2,0)
\rput[c]{0}(1.7,0){\small 1}
\rput[c]{0}(2.5,1.1){\small 2}
\rput[c]{0}(3.3,0){\small 4}
\rput[c]{0}(2.5,-1.1){\small 3}
\end{pspicture}
\break
\begin{pspicture}(0,0)(12.2,1)
\psset{linewidth=0.5pt}
\psline[arrows=<->](1,1)(3,0)
\rput[c]{0}(2.3,0.7){$\mu_3$}
\psline[arrows=<->](9,1)(7,0)
\rput[c]{0}(7.8,0.7){$\mu_2$}
\end{pspicture}
\break
\begin{pspicture}(-1,-1.2)(3.3,1.2)
\psset{linewidth=0.5pt}
\psset{fillstyle=solid, fillcolor=black}
\pscircle(0,1){0.08} 
\pscircle(0,-1){0.08} 
\pscircle(0,0){0.08} 
\psset{fillstyle=none}
\psline(0,1)(0,0)
\psline(0,0)(0,-1)
\pscurve(0,1)(0.6,0.45)(0.6,-0.45)(0,-1)
\pscurve(0,1)(-0.6,0.45)(-0.6,-0.45)(0,-1)
\rput[c]{0}(-0.9,0){\small 1}
\rput[c]{0}(0.9,0){\small 4}
\rput[c]{0}(0.2,0.5){\small 3}
\rput[c]{0}(0.2,-0.5){\small 2}
\rput[c]{0}(0.25,0){\small $-$}
\rput[c]{0}(-0.25,0){\small $p$}
\pscircle(2,0){0.1} 
\pscircle(3,0){0.1} 
\pscircle(2.5,0.7){0.1} 
\pscircle(2.5,-0.7){0.1} 
%
\psline[arrows=->](2.1,-0.14)(2.4,-0.56)
\psline[arrows=->](2.4,0.56)(2.1,0.14)
\psline[arrows=->](2.9,0.14)(2.6,0.56)
\psline[arrows=->](2.6,-0.56)(2.9,-0.14)
\rput[c]{0}(1.7,0){\small 1}
\rput[c]{0}(2.5,1.1){\small 2}
\rput[c]{0}(3.3,0){\small 4}
\rput[c]{0}(2.5,-1.1){\small 3}
\end{pspicture}
\caption{Flips and pops of labeled signed triangulations of a digon with a puncture.}
\label{fig:digon3}
\end{center}
\end{figure}

\subsection{Local rescaling  and signed pops of extended seeds}
\label{subsec:local}

Here we point out a hidden symmetry of the exchange relation
\eqref{eq:xmut7} called the {\em local rescaling}.
This symmetry presents when the seeds admit  surface realization.
Using it, we define the {\em signed pops\/} for {\em extended seeds}.

Let  $T$ be a labeled ideal triangulation
of a bordered surface $(\bfS,\bfM)$,
and let
$B$ be the adjacency matrix of $T$.
We concentrate on a seed $(B,x,y)$ 
with coefficients in  $\mathrm{Trop}(y^0)$,
though the argument can be applicable to a more general situation.
Let $\mathcalP$ be the set of the punctures of
$(\bfS,\bfM)$.

\begin{defn}
For any puncture $p\in\mathcalP$
and 
any nonzero rational number $c$,
we call  the following operation
for each $x$-variable $x_i$ of $x$
 the {\em local rescaling
 at  $p$ by the constant $c$}:
\begin{itemize}
\item If the corresponding arc $\alpha_i$ ends at the
puncture $p$, then multiply $c$ for $x_i$.
\item If the corresponding arc $\alpha_i$ is the outer edge
of a self-folded triangle with $p$ inside it,
then multiply $c^{-1}$ for $x_i$.
\item Otherwise, leave $x_i$ as it is.
\end{itemize}
\end{defn}

\begin{lem}
\label{lem:yinv1}
For any $k$, the factor $\hat{y}_k$ in  \eqref{eq:xmut7} is invariant under the local rescaling.
\end{lem}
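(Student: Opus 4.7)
The invariance of $\hat y_k = y_k \prod_{j} x_j^{b_{jk}}$ under the local rescaling at a puncture $p$ by a constant $c$ is equivalent to the identity
\begin{equation}
\label{eq:target-yhat}
\sum_{j=1}^n \epsilon_j(p)\, b_{jk} = 0 \qquad (k = 1, \dots, n),
\end{equation}
where $\epsilon_j(p)$ is the exponent of $c$ by which $x_j$ is rescaled (so $\epsilon_j(p)$ counts ends of $\alpha_j$ at $p$, corrected by $-1$ for an outer loop of a self-folded triangle enclosing $p$). I would verify \eqref{eq:target-yhat} by a case analysis on the relative position of $p$ and the self-folded triangles of $T$.

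If $p$ lies inside a self-folded triangle with inner arc $\alpha_i$ and outer loop $\alpha_{\bar i}$, then only $\epsilon_i(p) = +1$ and $\epsilon_{\bar i}(p) = -1$ are nonzero, and the sum reduces to $b_{ik} - b_{\bar i k}$; this vanishes by the defining convention $b_{ik} = b_{\bar i k}$ for inner arcs. If $p$ is not inside any self-folded triangle, then $\epsilon_j(p)$ is simply the number of ends of $\alpha_j$ at $p$, so \eqref{eq:target-yhat} becomes $\sum_e b_{j(e), k} = 0$ summed over ends $e$ of arcs at $p$. After replacing $\alpha_k$ with $\alpha_{\bar k}$ if $\alpha_k$ is an inner arc (using the same convention), I would expand $b_{jk} = \sum_v (b_{jk})_v$ by corners at each vertex $v$. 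The local contribution at $v = p$ vanishes by cyclic telescoping around $p$: as one sweeps through the cyclic sequence of arc-ends at $p$, each arc occurs once as the CCW-neighbor of $\alpha_k$ and once as its CW-neighbor, so the corresponding $\pm 1$ terms cancel. The contributions at vertices $v \neq p$ vanish in pairs coming from the two triangles sharing $\alpha_k$: because $\Sigma$ is oriented and each such triangle sits on a definite side of $\alpha_k$, its ``other'' side at one endpoint of $\alpha_k$ is a CW-neighbor of $\alpha_k$ there while its ``other'' side at the opposite endpoint is a CCW-neighbor, so these two contributions enter with opposite signs and cancel.

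The main obstacle is the careful bookkeeping in the degenerate configurations that can appear near $p$: triangles with identified vertices, self-folded triangles of $T$ whose outer loop is based at $p$ itself (contributing a loop with two ends at $p$), and boundary vertices at which the cyclic ordering of arcs degenerates to a linear one. The cancellation mechanism remains valid in each of these cases, but it must be matched carefully against the precise combinatorial conventions for the adjacency matrix $B(T)$ recorded in Section~5, in particular the rule for inner arcs and the convention assigning $b^{\Delta}_{ij}$ to each oriented corner of a (possibly degenerate) non-self-folded triangle.
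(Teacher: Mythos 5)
Your reduction of the lemma to the combinatorial identity $\sum_{j} \epsilon_j(p)\, b_{jk} = 0$ is correct, and Case~1 ($p$ inside a self-folded triangle) is handled correctly via the inner/outer convention $b_{ik}=b_{\overline{i}k}$. This is indeed the shape of the paper's terse ``case-check,'' so the overall route matches.

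Two things in Case~2 need more than you say, and they are exactly where the ``careful bookkeeping'' you flag lives. First, the pairing at $v\neq p$ cancels $b^{\Delta}_{\beta k}=-1$ against $b^{\Delta}_{\gamma k}=+1$ for the two non-$\alpha_k$ sides $\beta,\gamma$ of a triangle $\Delta$ on $\alpha_k$, but in $\sum_e b_{j(e),k}$ these terms are weighted by $\epsilon_\beta(p)$ and $\epsilon_\gamma(p)$; you still need to observe that both equal $\delta_{w,p}$, $w$ being the vertex of $\Delta$ opposite $\alpha_k$, which is automatic only when $\alpha_k$ itself does not end at $p$. Second, when $\alpha_k$ \emph{does} end at $p$ and a triangle on $\alpha_k$ has a second vertex at $p$, the $v=p$ telescoping and the $v\neq p$ pairing each leave a residual and only their sum vanishes, so the split-by-vertex organization does not separate cleanly. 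A formulation that absorbs both at once is to group by the (at most two) non-self-folded triangles $\Delta$ containing $\alpha_k$: with CCW side order $\alpha_k,\beta,\gamma$ and endpoints $u,v$ of $\alpha_k$, the contribution of $\Delta$ to $\sum_j\epsilon_j(p)b_{jk}$ collapses to $\epsilon_\gamma(p)-\epsilon_\beta(p)=\delta_{u,p}-\delta_{v,p}$ (the opposite vertex drops out), the triangle on the other side of $\alpha_k$ contributes $\delta_{v,p}-\delta_{u,p}$, and if $\alpha_k$ is an outer loop there is a single triangle but $u=v$. Inner arcs among the $\alpha_j$ are then folded in by noting that an inner arc and its outer loop share a row of $B$, so their $\epsilon$-weights simply add on the outer loop. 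Finally, one reading issue is worth pinning down before you trust the computation: your $\epsilon_j(p)$ counts ends of $\alpha_j$ at $p$, whereas the paper's rescaling rule as literally written multiplies $x_j$ by a \emph{single} $c$ when ``$\alpha_j$ ends at $p$''; the two readings disagree when some $\alpha_j$ is a loop based at $p$ (in particular when an outer loop of a self-folded triangle is based at $p$ but encloses a different puncture), and you should verify the identity under whichever exponent convention the paper actually intends, since that is precisely the configuration where the inner-arc bookkeeping is most delicate.
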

\begin{proof}
This can be verified by case-check of configurations involving the puncture 
$p$ and the arc $\alpha_k$.
\end{proof}

Suppose that the arc $\alpha_k$ of $T$ is flippable.
We apply the  flip $T'=\mu_k(T)$
 and 
the signed mutation $(B',x',y')=\mu^{(\ve)}_k(B,x,y)$
in Section \ref{subsec:monomial},
respectively.
(Note that $B'=B(T')$ holds for any $\ve$.)
Then, the local rescaling is defined also for $x'$ by $T'$.

\begin{prop}
\label{prop:local1}
The signed mutation $\mu^{(\ve)}_k$ and
the local rescaling at $p$ by $c$ commute with each other.
\end{prop}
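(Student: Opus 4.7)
\textbf{Proof proposal for Proposition \ref{prop:local1}.} The plan is to reduce the commutativity to a single combinatorial identity about the exponents of $c$ appearing in the rescaling, and then verify that identity by inspecting the local geometry around the flipped arc. For $i\neq k$ the argument is essentially trivial: the arc $\alpha_i$ is untouched by the flip $T'=\mu_k(T)$, so its local rescaling exponent with respect to $T$ equals that with respect to $T'$, and also $x'_i=x_i$; hence both orders of operations produce the same value.

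The nontrivial case is $i=k$. Write $\sigma_j\in\{-1,0,+1\}$ for the local rescaling exponent of $x_j$ in $T$ (so the rescaling sends $x_j\mapsto c^{\sigma_j}x_j$), and write $\sigma'_k$ for the corresponding exponent of $x'_k$ in $T'$. By Lemma~\ref{lem:yinv1}, $\hat{y}_k$ is invariant under the local rescaling, which is the statement
\[
\sum_j b_{jk}\sigma_j=0. \qquad (\star)
\]
Applying the rescaling first and then the signed mutation \eqref{eq:xmut7}, and using $(\star)$ to keep $\hat{y}_k^{\varepsilon}$ unchanged, yields
\[
c^{-\sigma_k+\sum_j [-\varepsilon b_{jk}]_+\sigma_j}\, x'_k,
\]
while applying the signed mutation first and then the rescaling on $T'$ yields $c^{\sigma'_k}x'_k$. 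Commutativity is therefore equivalent to
\[
\sigma_k+\sigma'_k=\sum_j [-\varepsilon b_{jk}]_+\,\sigma_j,
\]
and by $(\star)$ this identity is independent of the choice of sign $\varepsilon$.

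The remaining task is to verify this sign-free identity geometrically. The flippable arc $\alpha_k$ is the common diagonal of a (possibly degenerate) quadrilateral $Q$ in $T$; the indices $j$ with $b_{jk}\neq 0$ correspond precisely to the sides of $Q$. The left-hand side $\sigma_k+\sigma'_k$ counts, with signs $\pm 1$, the incidences of the two diagonals $\alpha_k$ and $\alpha'_k$ with the puncture $p$, while the right-hand side records the corresponding incidences coming from the sides of $Q$ weighted by $[-\varepsilon b_{jk}]_+$. In the generic configuration where $Q$ is a non-degenerate quadrilateral, the identity is immediate by inspection: either $p$ is not a vertex of $Q$, in which case both sides vanish, or $p$ is one of the vertices, in which case a direct count confirms the equality.

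The main obstacle will be the degenerate configurations where one of the two triangles bordering $\alpha_k$ is self-folded with $p$ as its enclosed puncture. In such a configuration $\alpha_k$ may be the outer edge of the self-folded triangle (forcing $\sigma_k=-1$), or a side of the triangle containing the self-folded triangle, and some sides of $Q$ become identified. Here one must use the convention of Definition of $B(T)$ by which the entries $b_{jk}$ for inner arcs $\alpha_j$ are inherited from the outer arc $\alpha_{\overline{\jmath}}$, and carefully track the multiplicities with which each geometric arc contributes. Once these degenerate cases are exhausted the identity holds in all configurations, completing the proof.
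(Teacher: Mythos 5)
Your proposal is correct and follows essentially the same route as the paper: invoke Lemma~\ref{lem:yinv1} to strip the factor $(1+\hat{y}_k^{\varepsilon})$ from \eqref{eq:xmut7}, then reduce commutativity to the assertion that the monomial $x_k^{-1}\prod_j x_j^{[-\varepsilon b_{jk}]_+}$ (computed in $T$) and the new variable $x'_k$ (computed in $T'$) rescale by the same power of $c$, verified by case-check on the configuration around the flipped arc. The additional remark that the exponent identity $\sigma_k + \sigma'_k = \sum_j [-\varepsilon b_{jk}]_+\sigma_j$ is $\varepsilon$-independent, which you deduce from $\sum_j b_{jk}\sigma_j = 0$ via $[a]_+ - [-a]_+ = a$, is a nice sanity check but not a different method; like the paper, you defer the remaining degenerate configurations to inspection rather than exhausting them.
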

\begin{proof}
By  Lemma \ref{lem:yinv1},
it is enough to show that 
$x'_k$ and $x_k^{-1}\prod_{j=1}^n
x_j^{[-\ve b_{jk}]_+}$ in \eqref{eq:xmut7}
 rescale by the same factor.
 This can be verified by case-check.
\end{proof}

Having the above property in mind,
we introduce the notion of extended seeds and their signed pops.
  Recall that our cluster algebra is a $\bbZ\bbP$-subalgebra
 of the ambient field $\bbQ\bbP(w)$ 
 for some variables $w=(w_i)$ with
$\bbP=\mathrm{Trop}(y^0)$.
 We introduce a $\mathcalP$-tuple of new algebraically independent variables
$\tilde{y}^0=(\tilde{y}^0_p)_{p\in \mathcalP}$.
Let $\tilde{\bbQ}:=\bbQ(\tilde{y}^0)$ be the rational function filed of 
 $\tilde{y}^0$ over $\bbQ$.
In particular,
\begin{align}
1-\tilde{y}^0_p,
1-(\tilde{y}^0_p)^{-1},
(1-\tilde{y}^0_p)^{-1},
(1-(\tilde{y}^0_p)^{-1})^{-1}
\in \tilde{\bbQ}.
\end{align}
We extend the ambient field  $\bbQ\bbP(w)$
to $\tilde{\bbQ}\bbP(w)$.

Let $B=B(T)$ be the adjacency matrix of a labeled ideal triangulation $T$ of
a bordered surface $(\bfS,\bfM)$.
We extend a labeled seed $(B,x,y)$
 to
an {\em labeled extended  seed\/} $(B,x,y,\tilde{y})$, where $\tilde{y}=(\tilde{y}_p)_{p\in \mathcalP}$,
  $\tilde{y}_p\in \{\tilde{y}_p^0, (\tilde{y}_p^{0})^{-1}\}$.
We call $\tilde{y}_p$ the {\em coefficient
at $p$},
or simply  a {\em $\tilde{y}$-variable}.
In particular, we extend the initial seed $(B^0,x^0,y^0)$ to
the initial extended seed $(B^0,x^0,y^0,\tilde{y}^0)$, where $\tilde{y}^0=(\tilde{y}^0_p)_{p\in \mathcalP}$
are the ones as above.
Then, we extend the signed mutation of $(B',x',y')=\mu_k^{(\ve)}(B,x,y)$
in \eqref{eq:ymut6} and \eqref{eq:xmut7} to
the {\em signed mutation\/}
 $(B',x',y',\tilde{y}')
=\mu_k^{(\ve)}(B,x,y,\tilde{y})$ (for labeled extended  seeds)
in a trivial way
by keeping \eqref{eq:ymut6} and \eqref{eq:xmut7}  and setting $\tilde{y}'=
\tilde{y}$.

Finally,
for a puncture $p$ inside a self-folded triangle in $T$,
we define the {\em signed pop  $(B',x',y',\tilde{y}')=\kappa_{p}^{(\ve)}(B,x,y,\tilde{y})$
at $p$ with sign $\ve$}
(for labeled extended  seeds)
 by setting
$B'=B$, $y'=y$, 
and
\begin{align}
\label{eq:vmut1}
\tilde{y}'_q&=
\begin{cases}
\tilde{y}_p^{-1} & q=p\\
\tilde{y}_q & q \neq p,
\end{cases}
\\
\label{eq:xpop1}
x'_i&=
\begin{cases}
(1-\tilde{y}_{p}{}^{\ve}) x_{i_p} & i=i_{p}\\
(1-\tilde{y}_{p}{}^{\ve})^{-1} x_{j_p} & i=j_{p}\\
x_i & i \neq i_p,j_p,
\end{cases}
\end{align}
where $i_{p}$ and $j_{p}$ are the labels of the inner and outer arcs
 of the self-folded triangle around $p$ in $T_{s,a}$.
The signed pop
 $\kappa^{(\ve)}_{p}$ acts on $x$ as the local rescaling
 at $p$ by the constant $1-\tilde{y}_{p}{}^{\ve}$
 (in the extended  field $\tilde{\bbQ}$).
It is easy to see that $\kappa_{p}^{(\ve)}$ 
is not an involution, but $\kappa_{p}^{(+)}$
and $\kappa_{p}^{(-)}$ are inverse to each other.

\section{Mutation of Stokes graphs}
\label{sec:mutationofStokes}
In this section 
we study the mutation of Stokes graphs,
which is purely geometric.
We  introduce Stokes triangulations, and their signed flips and pops.
They effectively control
the mutation of Stokes graphs;
moreover, they give a bridge between the exact WKB analysis and cluster algebra theory.
We also introduce the simple paths and the simple cycles of a Stokes graph,
and give their mutation formulas.

\subsection{Stokes triangulations, signed flips, and signed pops}

To work with the mutation of
Stokes graphs, it is natural to extend the notions of 
bordered surfaces and their ideal triangulations.

\begin{defn}
For a bordered surface $(\bfS,\bfM)$, let $m$ be the total number of 
triangles in any ideal triangulation $T$ of $(\bfS,\bfM)$,
where $m$ does not depend on $T$. Accordingly,
we introduce a set $\bfA$ consisting of $m$  points  of $\bfS$
such that $\bfA\cap \partial \bfS=
 \bfA \cap \bfM=\emptyset$.
We call $a\in \bfA$ a {\em midpoint (of a triangle)},
and in figures it will be shown by a cross.
For brevity, we still call $(\bfS,\bfM,\bfA)$ a bordered surface.
\end{defn}

\begin{defn} 
An {\em arc $\alpha$ in a bordered surface $(\bfS,\bfM,\bfA)$}
is a curve in $\bfS\setminus\bfA$ 
satisfying the four conditions in Definition \ref{defn:arc1}.
Each arc $\alpha$ is considered up to isotopy in the class of such curves.
\end{defn}

When we consider an arc $\alpha$ in $(\bfS,\bfM,\bfA)$,
it is sometimes convenient to regard it 
 as an  arc in $(\bfS,\bfM)$ by forgetting the midpoints.
 In that case we write the latter arc  as  $\tilde{\alpha}$
to avoid confusion.

\begin{defn}
An $n$-tuple $T=(\alpha_i)_{i=1}^n$ of arcs
in $(\bfS,\bfM,\bfA)$
is called a {\em labeled Stokes triangulation of $(\bfS,\bfM,\bfA)$\/} 
if the following conditions are satisfied:
\begin{itemize}
\item
The arcs $\alpha_1$, \dots, $\alpha_n$
in $(\bfS,\bfM,\bfA)$
 are pairwise compatible
 (in the same sense as before but considered in the isotopy classes
 for arcs in $(\bfS,\bfM,\bfA)$).
\item
The $n$-tuple $\tilde{T}=(\tilde{\alpha}_i)_{i=1}^n$
of arcs in  $(\bfS,\bfM)$ yields
a labeled ideal triangulation of $(\bfS,\bfM)$.
\item
Every triangle of $T$ contains exactly one midpoint.
\end{itemize}
\end{defn}

Some examples of labeled Stokes triangulations of a pentagon are given 
in Figure \ref{fig:pent1}.
Three triangulations therein are distinct as
labeled Stokes triangulations, but
they are identical as labeled ideal triangulations by
forgetting the midpoints.

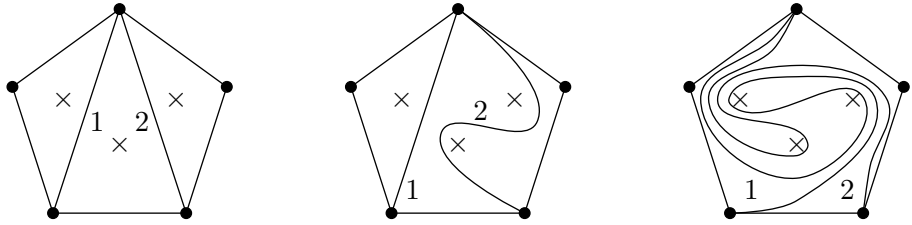
\begin{figure}
\begin{center}
\begin{pspicture}(2,3.6)(7,6.2)
\psset{unit=15mm}
%
\psset{linewidth=0.5pt}
\psset{fillstyle=solid, fillcolor=black}
\pscircle(0,4){0.053} 
\pscircle(0.95,3.31){0.053} 
\pscircle(0.59,2.19){0.053} 
\pscircle(-0.59,2.19){0.053} 
\pscircle(-0.95,3.31){0.053} 
\psset{fillstyle=none}
\psline(0,4)(0.95,3.31)
\psline(0.95,3.31)(0.59,2.19)
\psline(-0.59,2.19)(-0.95,3.31)
\psline(-0.95,3.31)(0,4)
%
\psline(0.59,2.19)(-0.59,2.19)
\psline(-0.59,2.19)(0,4)
\psline(0,4)(0.59,2.19)
%

%
\rput[c]{0}(-0.2,3){\small 1}
\rput[c]{0}(0.2,3){\small 2}
\rput[c]{0}(0,2.8){\small $\times$}
\rput[c]{0}(-0.5,3.2){\small $\times$}
\rput[c]{0}(0.5,3.2){\small $\times$}
\rput[c]{0}(3,0)
{

\psset{linewidth=0.5pt}
\psset{fillstyle=solid, fillcolor=black}
\pscircle(0,4){0.053} 
\pscircle(0.95,3.31){0.053} 
\pscircle(0.59,2.19){0.053} 
\pscircle(-0.59,2.19){0.053} 
\pscircle(-0.95,3.31){0.053} 
\psset{fillstyle=none}
\psline(0,4)(0.95,3.31)
\psline(0.95,3.31)(0.59,2.19)
\psline(0.59,2.19)(-0.59,2.19)
\psline(-0.59,2.19)(-0.95,3.31)
\psline(-0.95,3.31)(0,4)
\pscurve(0,4)(0.7,3)(-0.15,2.9)(0.59,2.19)
\psline(0,4)(-0.59,2.19)

\rput[c]{0}(-0.4,2.4){\small 1}
\rput[c]{0}(0.2,3.1){\small 2}
\rput[c]{0}(0,2.8){\small $\times$}
\rput[c]{0}(-0.5,3.2){\small $\times$}
\rput[c]{0}(0.5,3.2){\small $\times$}
}
\rput[c]{0}(6,0)
{
\psset{linewidth=0.5pt}
\psset{fillstyle=solid, fillcolor=black}
\pscircle(0,4){0.053} 
\pscircle(0.95,3.31){0.053} 
\pscircle(0.59,2.19){0.053} 
\pscircle(-0.59,2.19){0.053} 
\pscircle(-0.95,3.31){0.053} 
\psset{fillstyle=none}
\psline(0,4)(0.95,3.31)
\psline(0.95,3.31)(0.59,2.19)
\psline(0.59,2.19)(-0.59,2.19)
\psline(-0.59,2.19)(-0.95,3.31)
\psline(-0.95,3.31)(0,4)
%
\pscurve(0,4)(-0.2,3.75)(-0.85,3.2)(0,2.5)(0.6,3.2)
(-0.5,3.1)(-0.6,3.2)(0,3.4)(0.7,3.2)(0,2.3)
(-0.59,2.19)
\pscurve(0,4)(-0.2,3.65)(-0.78,3.2)(0,2.7)(0.1,2.8)
(-0.7,3.2)(0,3.5)(0.8,3.2)(0.65,2.6)
(0.59,2.19)
\rput[c]{0}(-0.4,2.4){\small 1}
\rput[c]{0}(0.45,2.4){\small 2}
\rput[c]{0}(0,2.8){\small $\times$}
\rput[c]{0}(-0.5,3.2){\small $\times$}
\rput[c]{0}(0.5,3.2){\small $\times$}
}
\end{pspicture}
\end{center}
\caption{Examples of labeled Stokes triangulations of a pentagon.}
\label{fig:pent1}
\end{figure}

For a  labeled Stokes triangulation $T=(\alpha_i)_{i=1}^n$ of $(\bfS,\bfM,\bfA)$,
an arc $\alpha_k$ is said to be {\em flippable\/} if it is not
an inner arc of $T$.
However,
unlike the case of labeled ideal triangulations of $(\bfS,\bfM)$,
the ``flip''  of a flippable arc $\alpha_k$ is not unique.
In fact, there are infinitely many choices of doing ``flip'' of a flippable arc $\alpha_k$
to obtain a new Stokes triangulation.
It is not difficult
 to see that they are generated by the following two elementary flips.

\begin{defn}
For a labeled Stokes triangulation 
 $T=(\alpha_i)_{i=1}^n$ 
of $(\bfS,\bfM,\bfA)$,
a flippable arc $\alpha_k$ of $T$, and a sign $\ve\in \{+,-\}$,
the {\em signed flip 
$T'=\mu_k^{(\ve)}(T)$ at $k$ with sign $\ve$} is a labeled Stokes triangulation
of  $(\bfS,\bfM,\bfA)$ obtained from $T$ by replacing the arc $\alpha_k$ with
the one in Figure \ref{fig:flip2}.
Namely, the new arc $\alpha'_k$ is obtained from
 $\alpha_k$ by sliding
each end point of $\alpha_k$ along an edge
of the surrounding quadrilateral of  $\alpha_k$
clockwise for $\ve=+$ and   anticlockwise for  $\ve=-$.
\end{defn}

Clearly, $\mu_{k}^{(\ve)}$ is not an involution any more,
but $\mu_{k}^{(+)}$ and $\mu_{k}^{(-)}$ are inverse to each other.
Figure \ref{fig:flip22} demonstrates how a sequence of signed flips act
on a quadrilateral.

We also introduce the {\em signed\/} pops
for labeled Stokes triangulations.

\begin{defn}
For a labeled Stokes triangulation $T$ of  $(\bfS,\bfM,\bfA)$,
a puncture $p$ inside a self-folded triangle in $T$, and a sign $\ve\in \{+,-\}$,
the {\em signed pop 
$T'=\kappa_p^{(\ve)}(T)$ at $p$ with sign $\ve$}
is a labeled Stokes triangulation of  $(\bfS,\bfM,\bfA)$
obtained from $T$ by replacing the inner  arc
of the self-folded triangle around $p$
with the one in Figure \ref{fig:pop3},
then exchanging the labels of the inner and outer arcs thereof.
Namely, if $\alpha_i$ and $\alpha_j$ is the inner and outer arcs of $T$,
then $\alpha'_i=\alpha_j$, and $\alpha'_j$ is obtained from
 $\alpha_i$
 by sliding the outside end point of $\alpha_i$ along
 the surrounding monogon
clockwise for $\ve=+$ and   anticlockwise for  $\ve=-$.
\end{defn}

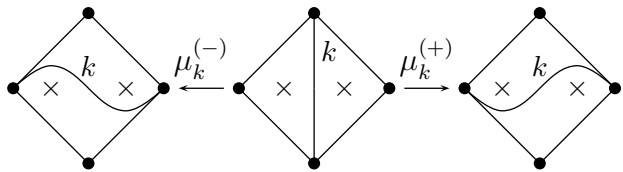
\begin{figure}
\begin{center}
\begin{pspicture}(-4,-1)(4,1.2)
\psset{linewidth=0.5pt}
\psset{fillstyle=solid, fillcolor=black}
\pscircle(0,1){0.08} 
\pscircle(0,-1){0.08} 
\pscircle(-1,0){0.08} 
\pscircle(1,0){0.08} 
\psset{fillstyle=none}
\psline(0,1)(0,0)
\psline(0,0)(0,-1)
\psline(0,1)(1,0)
\psline(0,1)(-1,0)
\psline(0,-1)(1,0)
\psline(0,-1)(-1,0)
%
\rput[c]{0}(0.2,0.5){\small $k$}
\rput[c]{0}(-0.4,0){\small $\times$}
\rput[c]{0}(0.4,0){\small $\times$}
%
\psline[arrows=->](1.2,0)(1.8,0)
\psline[arrows=<-](-1.8,0)(-1.2,-0)
\rput[c]{0}(1.5,0.4){$\mu_{k}^{(+)}$}
\rput[c]{0}(-1.5,0.4){$\mu_{k}^{(-)}$}
%
\psset{linewidth=0.5pt}
\psset{fillstyle=solid, fillcolor=black}
\pscircle(3,1){0.08} 
\pscircle(3,-1){0.08} 
\pscircle(2,0){0.08} 
\pscircle(4,0){0.08} 
\psset{fillstyle=none}
\psline(3,1)(2,0)
\psline(3,1)(4,0)
\psline(3,-1)(4,0)
\psline(3,-1)(2,0)
\pscurve(2,0)(2.5,-0.3)(3.5,0.3)(4,0)
%
\rput[c]{0}(3,0.3){\small $k$}
\rput[c]{0}(2.5,0){\small $\times$}
\rput[c]{0}(3.5,-0){\small $\times$}

\psset{linewidth=0.5pt}
\psset{fillstyle=solid, fillcolor=black}
\pscircle(-3,1){0.08} 
\pscircle(-3,-1){0.08} 
\pscircle(-2,0){0.08} 
\pscircle(-4,0){0.08} 
\psset{fillstyle=none}
\psline(-3,1)(-2,0)
\psline(-3,1)(-4,0)
\psline(-3,-1)(-4,0)
\psline(-3,-1)(-2,0)
\pscurve(-2,0)(-2.5,-0.3)(-3.5,0.3)(-4,0)
%
\rput[c]{0}(-3,0.3){\small $k$}
\rput[c]{0}(-2.5,0){\small $\times$}
\rput[c]{0}(-3.5,-0){\small $\times$}
%
%
\end{pspicture}
\caption{Signed flips of labeled Stokes triangulations.}
\label{fig:flip2}
\end{center}
\end{figure}

Again, $\kappa_{p}^{(\ve)}$ is not an involution any more,
but $\kappa_{p}^{(+)}$ and $\kappa_{p}^{(-)}$ are inverse to each other.
Figure \ref{fig:pop33} demonstrates how a sequence of signed pops act.

The signed flips and the signed pops of Stokes triangulations
are supposed to be the counterparts
of the signed mutations and the signed pops of
extended seeds
in Section \ref{subsec:local}.
To be more precise,
we have the following
 conjecture,
  which naturally extends
  Corollary \ref{cor:surface1} and
Proposition
\ref{prop:bij1}.
(We thank Yuuki Hirako for the discussion.)

\begin{conj}
\label{conj:bij1}
Let $T^0$ be any labeled Stokes triangulation of $(\bfS,\bfM,\bfA)$,
and let $B^0=B(T^0)$. 
\par
(i)
Let $\mathrm{LST}(T^0)$ be the set of all labeled Stokes triangulations obtained from the initial
labeled Stokes triangulation $T^0$ of $(\bfS,\bfM,\bfA)$ by  sequences
of signed flips and signed pops,
and let $\mathrm{Seed}(B^0,x^0,y^0,\tilde{y}^0)$ be
the set of all labeled extended seeds which
are mutation-equivalent to the initial one 
$(B^0,x^0,y^0,\tilde{y}^0)$ by
signed mutations and signed pops.
Then, there is a bijection $\Psi:\mathrm{LST}(T^0)
\rightarrow \mathrm{Seed}(B^0,x^0,y^0,\tilde{y}^0)$ such that
$\Psi(T^0)=(B^0,x^0,y^0,\tilde{y}^0)$ and 
$\Psi$ commutes with 
signed flips/mutations and signed pops. 
\par
(ii)
Assume that $(\bfS,\bfM)$ is generic and not a closed surface with exactly one puncture.
Then, $\mathrm{LST}(T^0)$
consists of
all labeled Stokes triangulations of $(\bfS,\bfM,\bfA)$.
\end{conj}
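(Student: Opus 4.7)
The plan is to construct the bijection $\Psi$ in (i) by induction on sequences of moves, setting $\Psi(T^0) = (B^0, x^0, y^0, \tilde y^0)$ and declaring $\Psi(\mu_k^{(\ve)} T) = \mu_k^{(\ve)} \Psi(T)$ and $\Psi(\kappa_p^{(\ve)} T) = \kappa_p^{(\ve)} \Psi(T)$. Forgetting the midpoints on the left and the $\tilde y$-variables on the right reduces this construction to the unsigned bijection between $\mathrm{LTT}(T^0)$ and $\mathrm{Seed}(B^0,x^0,y^0;\bbQ_+(y^0))$ of Corollary~\ref{cor:surface1}. The substantive work is then (a) well-definedness of $\Psi$ and (b) verifying that the extra sign and $\tilde y$-data are correctly matched.

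For well-definedness in (i), I would argue groupoidally. Let $\mathcal{G}_{\mathrm{top}}$ be the groupoid whose objects are the labeled Stokes triangulations reachable from $T^0$ and whose morphisms are sequences of signed flips and signed pops modulo geometric identity, and let $\mathcal{G}_{\mathrm{alg}}$ be the analogous groupoid on the extended-seed side. The required statement is that the generator-to-generator correspondence $\mu_k^{(\ve)} \leftrightarrow \mu_k^{(\ve)}$, $\kappa_p^{(\ve)} \leftrightarrow \kappa_p^{(\ve)}$ respects all relations of $\mathcal{G}_{\mathrm{top}}$. These fall into three families: the inverse relations $\mu_k^{(+)}\mu_k^{(-)} = \mathrm{id}$ and $\kappa_p^{(+)}\kappa_p^{(-)} = \mathrm{id}$, which hold on both sides by construction; disjoint-support commutation relations, valid algebraically because \eqref{eq:ymut6}, \eqref{eq:xmut7}, \eqref{eq:vmut1}, \eqref{eq:xpop1} touch only variables indexed at the mutated label or popped puncture; and local relations supported inside a single disk, annulus, or neighborhood of a self-folded triangle of $(\bfS,\bfM,\bfA)$. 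The last family reduces to finitely many model configurations, each handled by direct computation mirroring the signed refinement of Example~\ref{ex:pentagon2} and using the invariance in Lemma~\ref{lem:yinv1} and the compatibility in Proposition~\ref{prop:local1}.

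For (ii) the strategy is two-stage. Given a target labeled Stokes triangulation $T'$, I would first use Proposition~\ref{prop:bij1}(a) to reach its underlying labeled tagged triangulation from $T^0$ by a finite sequence of flips and pops; lifting each move to a signed flip or signed pop with arbitrary chosen signs yields a Stokes triangulation $T''$ whose underlying labeled tagged triangulation coincides with that of $T'$. Second, one must connect $T''$ to $T'$ by purely ``winding'' signed moves that fix the underlying labeled tagged triangulation. Such moves ought to be generated by iterated signed flips whose composition returns the underlying ideal triangulation to itself while introducing a full twist around some point of $\bfA$, together with iterated signed pops at punctures inside self-folded triangles. The genericity hypothesis, together with the exclusion of closed once-punctured surfaces, let us invoke Lemma~\ref{lem:generic1} to propagate these local winding moves throughout the triangulation.

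The principal obstacle is making both the local-relation classification of (i) and the sufficiency of winding moves in (ii) rigorous; together they amount to producing an explicit presentation of the ``Stokes mapping class groupoid'' of $(\bfS\setminus\bfA,\bfM)$ as an extension of the tagged mapping class groupoid of $(\bfS,\bfM)$ by twists around points of $\bfA$, and then matching it on the algebraic side with the local rescaling of Proposition~\ref{prop:local1} and the tropical-sign balance \eqref{eq:tropsign1}. The behavior near self-folded triangles, where signed pops and signed flips interact, is the most delicate point and is where both the genericity assumption and the dichotomy of part (ii) between closed once-punctured surfaces and the rest enter most critically.
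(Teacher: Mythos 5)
The statement you are trying to prove is explicitly marked a \emph{conjecture} in the paper, and the authors write immediately after it: ``In the rest of paper we do not rely on this conjecture, but having it in mind will be useful.'' There is no proof in the paper to compare against; the only partial result the authors establish is Proposition~\ref{prop:alpha5}, which confirms periodicity~\eqref{eq:per2} for arcs with both endpoints on $\partial\bfS$, a weaker statement than (i)--(ii). So you are not reproducing an argument that the paper contains; you are proposing one the authors chose not to attempt.

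That said, your strategy is a reasonable sketch but has two genuine gaps, which you in fact acknowledge. First, for well-definedness in (i) your argument requires a \emph{presentation} of the groupoid $\mathcal{G}_{\mathrm{top}}$ generated by signed flips and signed pops — you assert its relations fall into inverses, disjoint-support commutations, and local relations in a disk / annulus / self-folded neighborhood, but this classification is precisely what is missing. In the unsigned tagged setting the analogous fact is nontrivial (Theorem 6.1 of \cite{Fomin08b}); here the midpoints $\bfA$ enrich the isotopy classes of arcs, so the groupoid is a nontrivial extension of the tagged one by twists around points of $\bfA$, and no generating set of relations for it is established. In particular, the interaction between a signed pop at $p$ and signed flips at the outer arc of the self-folded triangle around $p$ (the situation of \eqref{eq:mu1} and Figure~\ref{fig:pop5}) produces relations that are not of disjoint-support type and whose algebraic counterpart must be verified against Lemma~\ref{lem:yinv1} and Proposition~\ref{prop:local1}; you do not write these down. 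Second, for (ii) the claim that two Stokes triangulations with the same underlying labeled tagged triangulation differ by a composition of ``full twists around points of $\bfA$'' realized by signed flips, plus signed pops, is plausible (cf.\ Figures~\ref{fig:flip22} and~\ref{fig:pop33}) but unproved; it amounts to computing the kernel of the forgetful map $\mathrm{LST}(T^0)\to\mathrm{LTT}(T^0)$, which again requires the unknown presentation. Also note a technical subtlety you gloss over: when you lift a flip at an inner arc from the tagged side, you cannot simply choose ``arbitrary signs'' — you must insert a signed pop first, as in~\eqref{eq:mu1}, and the choice of that pop's sign feeds into the $\tilde y$-bookkeeping through~\eqref{eq:kappa1}. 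Your outline, as written, does not close these gaps, and without them it remains a plan rather than a proof — consistent with the authors' decision to leave it as a conjecture.
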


In the rest of paper we do not rely on this conjecture,
but having it in mind will be useful.

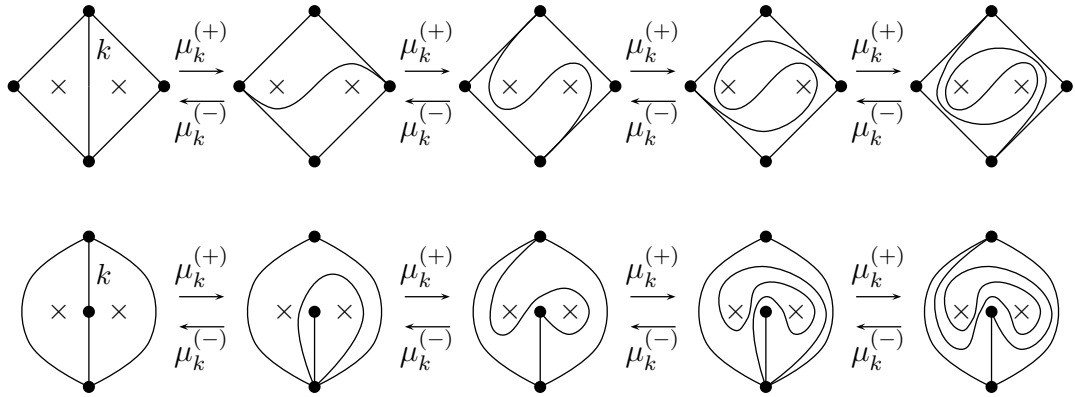
\begin{figure}
\begin{center}
\begin{pspicture}(-1,-4)(13,1.2)
\psset{linewidth=0.5pt}
\psset{fillstyle=solid, fillcolor=black}
\pscircle(0,1){0.08} 
\pscircle(0,-1){0.08} 
\pscircle(-1,0){0.08} 
\pscircle(1,0){0.08} 
\psset{fillstyle=none}
\psline(0,1)(0,0)
\psline(0,0)(0,-1)
\psline(0,1)(1,0)
\psline(0,1)(-1,0)
\psline(0,-1)(1,0)
\psline(0,-1)(-1,0)
\rput[c]{0}(0.2,0.5){\small $k$}
\rput[c]{0}(-0.4,0){\small $\times$}
\rput[c]{0}(0.4,0){\small $\times$}
\psline[arrows=->](1.2,0.2)(1.8,0.2)
\psline[arrows=<-](1.2,-0.2)(1.8,-0.2)
\rput[c]{0}(1.5,0.6){$\mu_{k}^{(+)}$}
\rput[c]{0}(1.5,-0.5){$\mu_{k}^{(-)}$}
\psset{linewidth=0.5pt}
\psset{fillstyle=solid, fillcolor=black}
\pscircle(3,1){0.08} 
\pscircle(3,-1){0.08} 
\pscircle(2,0){0.08} 
\pscircle(4,0){0.08} 
\psset{fillstyle=none}
\psline(3,1)(2,0)
\psline(3,1)(4,0)
\psline(3,-1)(4,0)
\psline(3,-1)(2,0)
\pscurve(2,0)(2.5,-0.3)(3.5,0.3)(4,0)
\rput[c]{0}(2.5,0){\small $\times$}
\rput[c]{0}(3.5,-0){\small $\times$}
\psline[arrows=->](4.2,0.2)(4.8,0.2)
\psline[arrows=<-](4.2,-0.2)(4.8,-0.2)
\rput[c]{0}(4.5,0.6){$\mu_{k}^{(+)}$}
\rput[c]{0}(4.5,-0.5){$\mu_{k}^{(-)}$}
\rput[c]{0}(6,0)
{
\psset{linewidth=0.5pt}
\psset{fillstyle=solid, fillcolor=black}
\pscircle(0,1){0.08} 
\pscircle(0,-1){0.08} 
\pscircle(-1,0){0.08} 
\pscircle(1,0){0.08} 
\psset{fillstyle=none}
%
\psline(0,1)(1,0)
\psline(0,1)(-1,0)
\psline(0,-1)(1,0)
\psline(0,-1)(-1,0)
\pscurve(0,1)(-0.68,0)(-0.5,-0.3)(0.5,0.3)(0.68,0)(0,-1)
%
\rput[c]{0}(-0.4,0){\small $\times$}
\rput[c]{0}(0.4,0){\small $\times$}
\psline[arrows=->](1.2,0.2)(1.8,0.2)
\psline[arrows=<-](1.2,-0.2)(1.8,-0.2)
\rput[c]{0}(1.5,0.6){$\mu_{k}^{(+)}$}
\rput[c]{0}(1.5,-0.5){$\mu_{k}^{(-)}$}
\psset{linewidth=0.5pt}
\psset{fillstyle=solid, fillcolor=black}
\pscircle(3,1){0.08} 
\pscircle(3,-1){0.08} 
\pscircle(2,0){0.08} 
\pscircle(4,0){0.08} 
\psset{fillstyle=none}
\psline(3,1)(2,0)
\psline(3,1)(4,0)
\psline(3,-1)(4,0)
\psline(3,-1)(2,0)
\pscurve(4,0)(3,0.6)(2.32,0)(2.5,-0.3)(3.5,0.3)(3.68,0)(3,-0.6)(2,0)
\rput[c]{0}(2.5,0){\small $\times$}
\rput[c]{0}(3.5,-0){\small $\times$}
\psline[arrows=->](4.2,0.2)(4.8,0.2)
\psline[arrows=<-](4.2,-0.2)(4.8,-0.2)
\rput[c]{0}(4.5,0.6){$\mu_{k}^{(+)}$}
\rput[c]{0}(4.5,-0.5){$\mu_{k}^{(-)}$}
}
\rput[c]{0}(12,0)
{
\psset{linewidth=0.5pt}
\psset{fillstyle=solid, fillcolor=black}
\pscircle(0,1){0.08} 
\pscircle(0,-1){0.08} 
\pscircle(-1,0){0.08} 
\pscircle(1,0){0.08} 
\psset{fillstyle=none}
%
\psline(0,1)(1,0)
\psline(0,1)(-1,0)
\psline(0,-1)(1,0)
\psline(0,-1)(-1,0)
\pscurve(0,-1)(0.5,-0.43)(0.6,-0.28)(0.7,0.2)(0,0.5)(-0.6,0)
(-0.5,-0.3)(0.5,0.3)(0.6,0)(-0,-0.5)(-0.7,-0.2)(-0.6,0.28)(-0.5,0.43)(0,1)
\rput[c]{0}(-0.4,0){\small $\times$}
\rput[c]{0}(0.4,0){\small $\times$}
}
%
\rput[c]{0}(0,-3)
{
%
\psset{linewidth=0.5pt}
\psset{fillstyle=solid, fillcolor=black}
\pscircle(0,1){0.08} 
\pscircle(0,-1){0.08} 
\pscircle(0,0){0.08} 
\psset{fillstyle=none}
\psline(0,1)(0,0)
\psline(0,0)(0,-1)
\pscurve(0,1)(0.8,0.4)(0.8,-0.4)(0,-1)
\pscurve(0,1)(-0.8,0.4)(-0.8,-0.4)(0,-1)
%
\rput[c]{0}(0.2,0.5){\small $k$}
\rput[c]{0}(-0.4,0){\small $\times$}
\rput[c]{0}(0.4,0){\small $\times$}
%
%
\psline[arrows=->](1.2,0.2)(1.8,0.2)
\psline[arrows=<-](1.2,-0.2)(1.8,-0.2)
\rput[c]{0}(1.5,0.6){$\mu_{k}^{(+)}$}
\rput[c]{0}(1.5,-0.5){$\mu_{k}^{(-)}$}
\psset{linewidth=0.5pt}
\psset{fillstyle=solid, fillcolor=black}
\pscircle(3,1){0.08} 
\pscircle(3,-1){0.08} 
\pscircle(3,0){0.08} 
\psset{fillstyle=none}
\psline(3,0)(3,-1)
\pscurve(3,1)(3.8,0.4)(3.8,-0.4)(3,-1)
\pscurve(3,1)(2.2,0.4)(2.2,-0.4)(3,-1)
\pscurve(3,-1)(2.8,0)(3.25,0.5)(3.65,0)(3,-1)
%
\rput[c]{0}(2.6,0){\small $\times$}
\rput[c]{0}(3.4,0){\small $\times$}
\psline[arrows=->](4.2,0.2)(4.8,0.2)
\psline[arrows=<-](4.2,-0.2)(4.8,-0.2)
\rput[c]{0}(4.5,0.6){$\mu_{k}^{(+)}$}
\rput[c]{0}(4.5,-0.5){$\mu_{k}^{(-)}$}
\rput[c]{0}(6,0)
{
\psset{linewidth=0.5pt}
\psset{fillstyle=solid, fillcolor=black}
\pscircle(0,1){0.08} 
\pscircle(0,-1){0.08} 
\pscircle(0,0){0.08} 
\psset{fillstyle=none}
%
\psline(0,0)(0,-1)
\pscurve(0,1)(0.8,0.4)(0.8,-0.4)(0,-1)
\pscurve(0,1)(-0.8,0.4)(-0.8,-0.4)(0,-1)
\pscurve(0,1)(-0.6,0.3)(-0.55,-0.3)(0,0.2)(0.4,0.3)(0.65,0)(0.4,-0.3)(0,0)
%
\rput[c]{0}(-0.4,0){\small $\times$}
\rput[c]{0}(0.4,0){\small $\times$}
%
%
\psline[arrows=->](1.2,0.2)(1.8,0.2)
\psline[arrows=<-](1.2,-0.2)(1.8,-0.2)
\rput[c]{0}(1.5,0.6){$\mu_{k}^{(+)}$}
\rput[c]{0}(1.5,-0.5){$\mu_{k}^{(-)}$}
\psset{linewidth=0.5pt}
\psset{fillstyle=solid, fillcolor=black}
\pscircle(3,1){0.08} 
\pscircle(3,-1){0.08} 
\pscircle(3,0){0.08} 
\psset{fillstyle=none}
\psline(3,0)(3,-1)
\pscurve(3,1)(3.8,0.4)(3.8,-0.4)(3,-1)
\pscurve(3,1)(2.2,0.4)(2.2,-0.4)(3,-1)
\pscurve(3,-1)(3.8,0)(3,0.6)(2.4,0.3)(2.6,-0.25)(2.75,-0.1)(2.9,0.35)
(3.4,0.3)(3.65,0)(3.4,-0.3)(3.2,0.05)(3,0.2)(2.85,0)(3,-1)
%
\rput[c]{0}(2.6,0){\small $\times$}
\rput[c]{0}(3.4,0){\small $\times$}
\psline[arrows=->](4.2,0.2)(4.8,0.2)
\psline[arrows=<-](4.2,-0.2)(4.8,-0.2)
\rput[c]{0}(4.5,0.6){$\mu_{k}^{(+)}$}
\rput[c]{0}(4.5,-0.5){$\mu_{k}^{(-)}$}
}
\rput[c]{0}(12,0)
{
\psset{linewidth=0.5pt}
\psset{fillstyle=solid, fillcolor=black}
\pscircle(0,1){0.08} 
\pscircle(0,-1){0.08} 
\pscircle(0,0){0.08} 
\psset{fillstyle=none}
%
\psline(0,0)(0,-1)
\pscurve(0,1)(0.8,0.4)(0.8,-0.4)(0,-1)
\pscurve(0,1)(-0.8,0.4)(-0.8,-0.4)(0,-1)
\pscurve(0,0)(0.4,-0.5)(0.8,0)(0,0.6)(-0.6,0.3)(-0.4,-0.25)(-0.25,-0.1)(-0.1,0.35)
(0.4,0.3)(0.65,0)(0.4,-0.3)(0.2,0.05)(0,0.2)(-0.15,0)(-0.4,-0.5)(-0.75,0.2)(0,1)
%
\rput[c]{0}(-0.4,0){\small $\times$}
\rput[c]{0}(0.4,0){\small $\times$}
}
}
\end{pspicture}
\caption{Examples of sequences of signed flips.}
\label{fig:flip22}
\end{center}
\end{figure}

\begin{figure}
\begin{center}
\begin{pspicture}(-2,-1)(2,1.2)
%
\psset{linewidth=0.5pt}
\psset{fillstyle=solid, fillcolor=black}
\pscircle(0,-1){0.08} 
\pscircle(0,0){0.08} 
\psset{fillstyle=none}
\pscurve(0,-1)(-0.7,0.4)(0,1)(0.7,0.4)(0,-1)
\psline(0,0)(0,-1)
\rput[c]{0}(-0.25,-0.1){\small $p$}
\rput[c]{0}(0,0.4){\small $\times$}
\rput[c]{0}(0.8,0.9){\small $j$}
\rput[c]{0}(0.12,-0.4){\small $i$}
\psline[arrows=->](1.2,0)(1.8,0)
\psline[arrows=->](-1.2,-0)(-1.8,-0)
\rput[c]{0}(1.5,0.4){$\kappa_{p}^{(+)}$}
\rput[c]{0}(-1.5,0.4){$\kappa_{p}^{(-)}$}
%
\rput[c]{0}(3,0)
{
\psset{linewidth=0.5pt}
\psset{fillstyle=solid, fillcolor=black}
\pscircle(0,-1){0.08} 
\pscircle(0,0){0.08} 
\psset{fillstyle=none}
\pscurve(0,-1)(-0.7,0.4)(0,1)(0.7,0.4)(0,-1)
\pscurve(0,-1)(0.3,0.35)(0,0.7)(-0.3,0.35)(0,0)
\rput[c]{0}(-0.25,-0.1){\small $p$}
\rput[c]{0}(0,0.4){\small $\times$}
\rput[c]{0}(0.8,0.9){\small $i$}
\rput[c]{0}(0,-0.4){\small $j$}
}
\rput[c]{0}(-3,0)
{
\psset{linewidth=0.5pt}
\psset{fillstyle=solid, fillcolor=black}
\pscircle(0,-1){0.08} 
\pscircle(0,0){0.08} 
\psset{fillstyle=none}
\pscurve(0,-1)(-0.7,0.4)(0,1)(0.7,0.4)(0,-1)
\pscurve(0,-1)(-0.3,0.35)(0,0.7)(0.3,0.35)(0,0)
\rput[c]{0}(-0.25,-0.1){\small $p$}
\rput[c]{0}(0,0.4){\small $\times$}
\rput[c]{0}(0.8,0.9){\small $i$}
\rput[c]{0}(0,-0.4){\small $j$}
}
\end{pspicture}
\end{center}
\caption{Signed pops of labeled Stokes triangulations.}
\label{fig:pop3}
\end{figure}
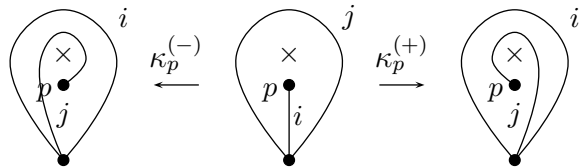

\begin{figure}
\begin{center}
\begin{pspicture}(0,-1)(10,1.2)
\psset{linewidth=0.5pt}
\psset{fillstyle=solid, fillcolor=black}
\pscircle(0,-1){0.08} 
\pscircle(0,0){0.08} 
\psset{fillstyle=none}
\pscurve(0,-1)(-0.7,0.4)(0,1)(0.7,0.4)(0,-1)
\psline(0,0)(0,-1)
\rput[c]{0}(-0.25,-0.1){\small $p$}
\rput[c]{0}(0,0.4){\small $\times$}
\rput[c]{0}(0.8,0.9){\small $j$}
\rput[c]{0}(0.12,-0.4){\small $i$}
\psline[arrows=->](1.2,0.2)(1.8,0.2)
\psline[arrows=<-](1.2,-0.2)(1.8,-0.2)
\rput[c]{0}(1.5,0.6){$\kappa_{p}^{(+)}$}
\rput[c]{0}(1.5,-0.5){$\kappa_{p}^{(-)}$}
%
%
\rput[c]{0}(3,0)
{
\psset{linewidth=0.5pt}
\psset{fillstyle=solid, fillcolor=black}
\pscircle(0,-1){0.08} 
\pscircle(0,0){0.08} 
\psset{fillstyle=none}
\pscurve(0,-1)(-0.7,0.4)(0,1)(0.7,0.4)(0,-1)
\pscurve(0,-1)(0.3,0.35)(0,0.7)(-0.3,0.35)(0,0)
%
\rput[c]{0}(0,0.4){\small $\times$}
\rput[c]{0}(0.8,0.9){\small $i$}
\rput[c]{0}(0,-0.4){\small $j$}
\psline[arrows=->](1.2,0.2)(1.8,0.2)
\psline[arrows=<-](1.2,-0.2)(1.8,-0.2)
\rput[c]{0}(1.5,0.6){$\kappa_{p}^{(+)}$}
\rput[c]{0}(1.5,-0.5){$\kappa_{p}^{(-)}$}
}
%
%
\rput[c]{0}(6,0)
{
\psset{linewidth=0.5pt}
\psset{fillstyle=solid, fillcolor=black}
\pscircle(0,-1){0.08} 
\pscircle(0,0){0.08} 
\psset{fillstyle=none}
\pscurve(0,-1)(-0.7,0.4)(0,1)(0.7,0.4)(0,-1)
\pscurve(0,-1)(0.4,0.35)(0,0.8)(-0.4,0.35)(0,-0.2)(0.2,0.2)(0,0.6)(-0.2,0.2)(0,0)
%
\rput[c]{0}(0,0.4){\small $\times$}
\rput[c]{0}(0.8,0.9){\small $j$}
\rput[c]{0}(0,-0.4){\small $i$}
\psline[arrows=->](1.2,0.2)(1.8,0.2)
\psline[arrows=<-](1.2,-0.2)(1.8,-0.2)
\rput[c]{0}(1.5,0.6){$\kappa_{p}^{(+)}$}
\rput[c]{0}(1.5,-0.5){$\kappa_{p}^{(-)}$}
}
%
%
\rput[c]{0}(9,0)
{
\psset{linewidth=0.5pt}
\psset{fillstyle=solid, fillcolor=black}
\pscircle(0,-1){0.08} 
\pscircle(0,0){0.08} 
\psset{fillstyle=none}
\pscurve(0,-1)(-0.7,0.4)(0,1)(0.7,0.4)(0,-1)
\pscurve(0,-1)(0.5,0.35)(0,0.85)(-0.5,0.35)(0,-0.3)
(0.35,0.2)(0,0.7)(-0.35,0.2)(0,-0.15)(0.2,0.2)(0,0.55)(-0.2,0.2)(0,0)
%
\rput[c]{0}(0,0.4){\small $\times$}
\rput[c]{0}(0.8,0.9){\small $i$}
\rput[c]{0}(0,-0.5){\small $j$}
}
\end{pspicture}
\end{center}
\caption{Examples of sequences of signed pops.}
\label{fig:pop33}
\end{figure}
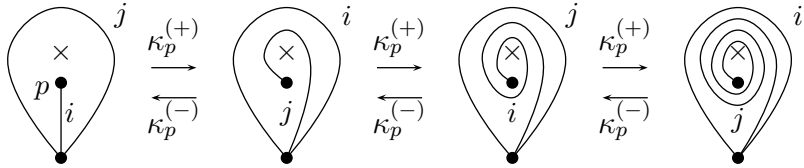

\begin{rem}
The notions of Stokes triangulations and signed flips (for surfaces without
punctures) also appear recently in \cite{Qiu14}
in the study of the spherical twists on 3-Calabi-Yau categories.
\end{rem}

\subsection{Construction of Stokes triangulation from Stokes graph}

Let $G=G(\phi)$ be the
 Stokes graph of a quadratic differential $\phi$ on a compact
 Riemann surface $\Sigma$.
 The classification of the Stokes regions of $G$ was 
 given  in Section \ref{section:Stokes-graphs}
 under Assumptions \ref{assumption:zeros and poles} 
and \ref{assumption:at-most-one-saddle}.
To be more specific,
 if $G$ is {\em saddle-free},
 a degenerate ring domain does not appear.
Thus, any Stokes region of $\phi$ falls into  one of the following three patterns,
which are depicted in
Figure \ref{fig:Stokes1} \cite[Section 3.4]{Bridgeland13},
where we split horizontal strips in two patterns:
  \begin{itemize}
   \item[(a).] {\em Regular horizontal strip.} This is a generic case.
   The Stokes region is inside a quadrilateral with two simple zeros $q_1$, $q_2$ and
   two poles $p_1$, $p_2$ of orders $m_1$, $m_2\geq 2$.
   The poles $p_1$, $p_2$ may coincide.
   \item[(b).] {\em Degenerate horizontal strip.} This may be regarded as
   the folding of the two edges $q_1p_2$ and $q_2p_2$ in the case (a).
   The orders of poles $p_1$ and  $p_2$ are $m_1\geq 2$ and $m_2=2$,
   respectively.
    \item[(c).] {\em Half plane.} This occurs only for a pole $p_1$ with order $m_1\geq 3$.
  \end{itemize}
Note that the pictures 
in
Figure \ref{fig:Stokes1}
are schematic ones,
and actual  trajectories entering in a pole should obey the local property in Section
\ref{section:Stokes-graphs},
   depending on the order of the pole.
  The dashed arc is a representative of the isotopy class of trajectories 
   inside the Stokes region.

\begin{figure}
\begin{center}
\begin{pspicture}(-2.2,-2.2)(12.2,1.5)
\psset{linewidth=0.5pt}
\psset{fillstyle=solid, fillcolor=black}
\pscircle(-1.5,0){0.08} 
\pscircle(1.5,0){0.08} 
\psset{fillstyle=none}
\psline(0,1.2)(0,0.6)
\psline(0,-1.2)(0,-0.6)
\psline[linestyle=dashed](1.5,0)(-1.5,0)
%
\pscurve(1.5,0)(1.4,0)(0.6,0.25)(0,0.6)
\pscurve(-1.5,0)(-1.4,0)(-0.6,0.25)(0,0.6)
\pscurve(1.5,0)(1.4,0)(0.6,-0.25)(0,-0.6)
\pscurve(-1.5,0)(-1.4,0)(-0.6,-0.25)(0,-0.6)
\rput[c]{0}(0,-0.6){\small $\times$}
\rput[c]{0}(0,0.6){\small $\times$}
\rput[c]{0}(-1.5,-0.4){\small $p_1$}
\rput[c]{0}(1.5,-0.4){\small $p_2$}
\rput[c]{0}(0.4,0.7){\small $q_1$}
\rput[c]{0}(0.4,-0.7){\small $q_2$}
\rput[c]{0}(0,-2){\footnotesize (a) regular horizontal strip}
\psset{fillstyle=solid, fillcolor=black}
\pscircle(5,-1){0.08} 
\pscircle(5,0.6){0.08} 
\psset{fillstyle=none}
\pscurve(5,-1)(5.3,-0.6)(5.5,-0.2)(5.6,0.8)(5,1.2)(4.4,0.8)(4.4,0.2)(4.6,-0.2)
\pscurve(5,-1)(4.8,-0.8)(4.68,-0.6)(4.6,-0.2)
\pscurve(4.6,-0.2)(5,0)(5.25,0.6)(5,0.78)(4.85,0.6)(5,0.48)(5.1,0.6)(5,0.67)
\pscurve[linestyle=dashed](5,-1)(5.4,0)(5.4,0.6)(5,0.85)(4.78,0.6)(5,0.42)%
(5.15,0.6)(5,0.73)(4.9,0.6)(5,0.52)
\rput[c]{0}(4.6,-0.2){\small $\times$}
\rput[c]{0}(5,-1.4){\small $p_1$}
\rput[c]{0}(4.6,0.6){\small $p_2$}
\rput[c]{0}(4.3,-0.2){\small $q_1$}
\rput[c]{0}(5,-2){\footnotesize (b) degenerate horizontal strip}
\psset{fillstyle=solid, fillcolor=black}
\pscircle(10,-1){0.08} 
\psset{fillstyle=none}
\psline(10,1.2)(10,0.6)
\pscurve(10,-1)(10.4,-0.6)(10.6,0.2)(10,0.6)
\pscurve(10,-1)(9.8,-0.8)(9.4,0.2)(10,0.6)
\pscurve[linestyle=dashed](10,-1)(10.2,-0.78)(10.3,-0.25)(10,-0.05)(9.7,-0.25)(9.8,-0.78)(10,-1)
\rput[c]{0}(10,0.6){\small $\times$}
\rput[c]{0}(10,-1.4){\small $p_1$}
\rput[c]{0}(10.4,0.7){\small $q_1$}
\rput[c]{0}(10,-2){\footnotesize {(c) half plane}}
\end{pspicture}
\end{center}
\caption{Patterns of Stokes regions for saddle-free Stokes graph.
The dashed arc is a representative of the isotopy class of trajectories
inside the Stokes region.}
\label{fig:Stokes1}
\end{figure}
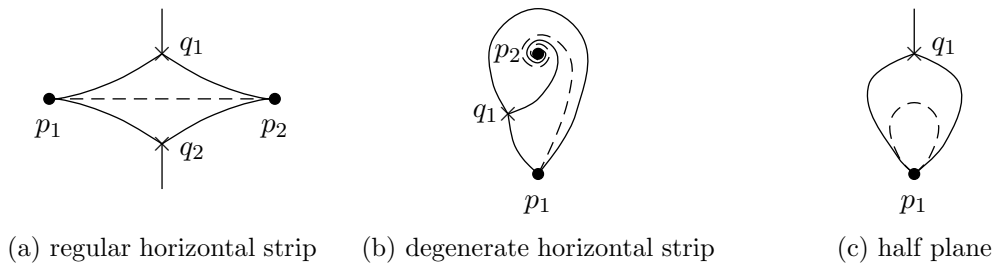

Let us introduce labeling of a saddle-free Stokes graph.
\begin{defn} 
Let $G=G(\phi)$ be the  Stokes graph of a
saddle-free quadratic differential $\phi$ on $\Sigma$.
We put labels, say, $D_1$, \dots, $D_n$ for the Stokes regions of $G$
which are (regular or degenerate) horizontal strips.
It is called a {\em labeled Stokes graph},
and denoted by the same symbol $G$.
(We do not put labels for the Stokes regions which are {\em half planes}.)
\par
\end{defn}

To each labeled  Stokes graph $G=G(\phi)$,
we will assign a  labeled Stokes  triangulation $T$ 
of a certain bordered  surface $(\bfS,\bfM,\bfA)$.
We  follow the construction of
an ideal triangulation by \cite{Kawai05,Gaiotto09,Bridgeland13}, 
which is automatically upgraded to a labeled  Stokes triangulation.

{\em Step 1. Construction of  the bordered surface $(\bfS,\bfM,\bfA)$.}
Let $p_1,\dots,p_r$ be the double poles of $\phi$,
and $p'_1,\dots,p'_s$ be the poles of orders $m_1,\dots, m_s \geq 3$  of $\phi$.
Then, the bordered surface $\bfS=\bfS(\phi)$ is obtained from $\Sigma$ by cutting out a small hole
around each $p'_i$ such that
no other poles and zeros of $\phi$ are removed.
Let $B_i$ denote the resulting boundary component for $p'_i$.
To each $B_i$ we put $m_i-2$ marked points.
Then, the set of the marked points $\bfM=\bfM(\phi)$ consists of
these marked points at boundaries {\em and\/} the double poles
$p_1$,\dots, $p_r$. (Thus, $p_1$,\dots, $p_r$ are  the punctures of $(\bfS,\bfM)$.)
Also,  the set  of  the midpoints $\bfA=\bfA(\phi)$ 
 are given by the  zeros of $\phi$.
 
{\em Step 2. Construction of  the labeled Stokes triangulation $T$ of $(\bfS,\bfM,\bfA)$.}
For each Stokes region $D$
which is a (regular or degenerate) {\em horizontal strip},
choose any representative $\beta$ 
of trajectories in $D$ up to isotopy.
We identify  $\beta$ with an {\em arc\/ $\alpha$ 
of $(\bfS,\bfM,\bfA)$}
 in the following way.
If the poles
$p_1$ and $p_2$ in Figure \ref{fig:Stokes1} (a) or (b) are double poles,
then the arc $\alpha$ is the one connecting $p_1$ and $p_2$ therein.
If $p_i$ is a pole of order $m\geq 3$, we do the following modification:
We identify the $m-2$ marked points at the boundary component for $p_i$
with the $m-2$ tangent directions of trajectories around $p_i$,
 keeping the clockwise order.
Then the arc $\alpha$ ends at
the marked point at the boundary component for $p_i$ corresponding to the
tangent direction of $\beta'$ as in Figure \ref{fig:boundary1}.
\par

Let us collect the resulting arcs $\alpha_1,\dots,\alpha_n$ corresponding to the
Stokes regions $D_1$, \dots, $D_n$ which are horizontal strips.
Let us show that $T=T(\phi)=(\alpha_i)_{i=1}^n$
is a labeled Stokes triangulation of $(\bfS,\bfM,\bfA)$.

\begin{prop}[{\cite[Lemma 10.1]{Bridgeland13}}]
\label{prop:triang1}
The $n$-tuple $\tilde{T}=(\tilde{\alpha}_i)_{i=1}^n$
of arcs in  $(\bfS,\bfM)$ is a labeled ideal triangulation of $(\bfS,\bfM)$.
\end{prop}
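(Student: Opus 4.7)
The plan is to verify the two defining properties of a labeled ideal triangulation for $\tilde{T}=(\tilde{\alpha}_i)_{i=1}^n$: pairwise compatibility of the underlying arcs and maximality. Both are local checks that reduce to the classification of Stokes regions in Section~\ref{section:Stokes-graphs} together with the local foliation picture around a simple zero (Figure~\ref{fig:foliation1}).

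First I would check pairwise compatibility. By construction each $\tilde{\alpha}_i$ is a representative of the isotopy class of generic trajectories inside a horizontal strip $D_i$, and distinct Stokes regions are pairwise disjoint open subsets of $\Sigma\setminus P$. Hence representatives $\tilde{\alpha}_i$ and $\tilde{\alpha}_j$ ($i\ne j$) can be chosen so that they do not intersect in the interior of $\bfS$. It remains to confirm that their endpoints, which are poles of $\phi$, go to legitimate marked points of $(\bfS,\bfM)$: a double pole is itself a marked point (puncture), and for a pole of order $m\ge 3$ the boundary component $B_i$ carries $m-2$ marked points, one for each asymptotic tangent direction of trajectories (Figure~\ref{fig:foliation3}), so each end of $\tilde{\alpha}_i$ is unambiguously assigned to one marked point via Figure~\ref{fig:boundary1}. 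This also confirms that $\tilde\alpha_i$ is not contractible onto a boundary or a marked point, so it is a bona fide arc in the sense of Definition~\ref{defn:arc1}.

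Next I would verify maximality by identifying the complementary regions. The essential local input is that at every simple zero $a\in P_0$ exactly three Stokes curves emanate, partitioning a small disc around $a$ into three sectors; each sector lies in a unique Stokes region of $G$. Since $G$ is saddle-free, every Stokes region is either a (regular or degenerate) horizontal strip or a half plane (Figure~\ref{fig:Stokes1}). Tracing the boundary of the region containing $a$ in the complement $\bfS\setminus\bigcup_i\tilde\alpha_i$, one obtains a triangle whose three sides are either arcs $\tilde\alpha_i$ (when the adjacent sector lies in a horizontal strip) or, when the adjacent sector lies in a half plane, pieces of Stokes curves that collapse together at the unique pole on the boundary of the half plane, contributing a boundary segment of $\bfS$ rather than an interior arc. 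The degenerate horizontal strip case contributes a self-folded triangle around the relevant double pole, exactly as in the intended picture. Globally, the triangles indexed by the zeros of $\phi$ tile $\bfS$, which both gives maximality and establishes that $\tilde T$ is an ideal triangulation.

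The main obstacle is the careful case-analysis when half planes and degenerate horizontal strips meet around a given zero, so that the ``edges'' of the triangle around $a$ are consistently identified with arcs $\tilde\alpha_i$ or with boundary segments, and the labels on the $m-2$ marked points of each $B_i$ are used coherently. I would also  cross-check the combinatorial count: comparing the number of zeros of $\phi$ (equal to the number of triangles) with the total number of horizontal strips $n$ via an Euler characteristic argument on $(\bfS,\bfM)$ provides a convenient sanity check that nothing is overlooked, and matches the cardinality of an ideal triangulation of $(\bfS,\bfM)$ under Assumption~\ref{assumption:zeros and poles}.
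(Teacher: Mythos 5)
The paper gives no proof of Proposition~\ref{prop:triang1}; it simply cites \cite[Lemma 10.1]{Bridgeland13}. Your sketch (compatibility from the disjointness of Stokes regions, maximality by showing the complementary regions are triangles in bijection with the simple zeros of $\phi$, with the half-plane and degenerate horizontal strip patterns contributing boundary edges and self-folded triangles respectively) is precisely the argument used in that reference, so the proposal is correct and takes essentially the same approach.
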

Note that the arcs corresponding to the {\em degenerate\/} horizontal strips
are the {\em inner arcs\/} in $T$.
See Figure \ref{fig:degenerate1}.

\begin{rem}
For each Stokes region $D$ which is a {\em half plane},
we can naturally identify a  representative $\beta'$ 
of trajectories in $D$ with the {\em edge\/} $\delta$ connecting
the two marked points at the boundary component for $p_i$ corresponding to the
tangent directions of the both ends of $\beta'$ as in Figure \ref{fig:boundary1}.
\end{rem}


\begin{figure}
\begin{center}
\begin{pspicture}(-1.5,-1.5)(7,1.5)
\psset{linewidth=0.5pt}
\psset{fillstyle=solid, fillcolor=black}
\pscircle(0,0){0.08} 
\psset{fillstyle=none}
\psline[linestyle=dashed](1.5,0)(-1.5,0)
\psline[linestyle=dashed](0,1.5)(0,-1.5)
%
\pscurve(0,0)(0.08,0.64)(0.4,0.86)(0.76,0.76)(0.86,0.4)(0.64,0.08)(0,0)
\pscurve(0,0)(-0.3,-0.01)(-1,-0.23)(-1.5,-0.7)
%
%
%
%
\rput[c](-0.6,-0.4){\small $\beta$}
\rput[c](1.2,0.4){\small $\beta'$}
\psset{linewidth=0.5pt}
\psset{fillstyle=solid, fillcolor=lightgray,linestyle=solid}
\pscircle(5,0){0.4} 
\psset{fillstyle=none,linestyle=solid}
\psset{linewidth=0.5pt}
\psset{fillstyle=solid, fillcolor=black}
\pscircle(5.4,0){0.08} 
\pscircle(5,0.4){0.08} 
\pscircle(5,-0.4){0.08} 
\pscircle(4.6,0){0.08} 
\psset{fillstyle=none}
\psset{linewidth=1.5pt}
\psarc(5,0){0.4}{0}{90} 
\psset{linewidth=0.5pt}
\pscurve(4.6,0)(4,-0.27)(3.5,-0.7)
\rput[c](4.4,-0.4){\small $\alpha$}
\rput[c](5.6,0.4){\small $\delta$}
\end{pspicture}
\end{center}
\caption{Identification of trajectories
  around a pole of order $m\geq 3$ with arcs and edges. The case $m=6$ is shown.}
\label{fig:boundary1}
\end{figure}
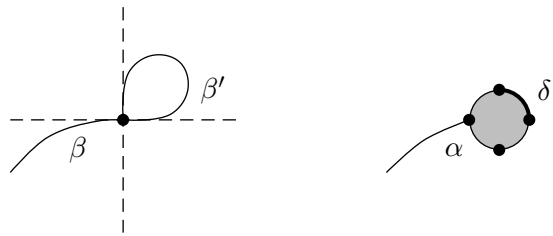

\begin{figure}
\begin{center}
\begin{pspicture}(-1,-1)(7,1.2)
%
\psset{linewidth=0.5pt}
\psset{fillstyle=solid, fillcolor=black}
\pscircle(6,1){0.08} 
\pscircle(6,-1){0.08} 
\pscircle(6,0){0.08} 
\psset{fillstyle=none}
\pscurve(6,-1)(5.6,0)(6,0.4)(6.4,0)(6,-1)
\pscurve(6,0)(5.8,-0.5)(6,-1)
\pscurve(6,1)(6.6,0.45)(6.6,-0.45)(6,-1)
\pscurve(6,1)(5.4,0.45)(5.4,-0.45)(6,-1)
\psset{fillstyle=solid, fillcolor=black}
\pscircle(0,1){0.08} 
\pscircle(0,-1){0.08} 
\pscircle(0,0){0.08} 
\psset{fillstyle=none}
\psline(0,1)(0,0.6)
\pscurve(0,-1)(0.5,-0.4)(0.45,0.35)(0,0.6)
\pscurve(0,-1)(-0.5,-0.4)(-0.45,0.35)(0,0.6)
\pscurve(0,-1)(0,-0.4)
\psline(0,-1)(0,-0.4)
\pscurve(0,-0.4)(-0.2,-0.2)(-0.2,0.1)(0,0.2)(0.15,0)(0,-0.1)(-0.05,-0.05)(0,0)
\pscurve(0,-0.4)(0.32,-0.2)(0.35,0.2)(0,0.4)(-0.35,0.2)(-0.35,-0.3)(0,-1)
\rput[c]{0}(0,0.6){\small $\times$}
\rput[c]{0}(0,-0.4){\small $\times$}
%
%
\rput[c]{0}(6,0.6){\small $\times$}
\rput[c]{0}(6,-0.4){\small $\times$}
\end{pspicture}
\end{center}
\caption{Degenerate horizontal strip (left) and inner arc (right)} 
\label{fig:degenerate1}
\end{figure}
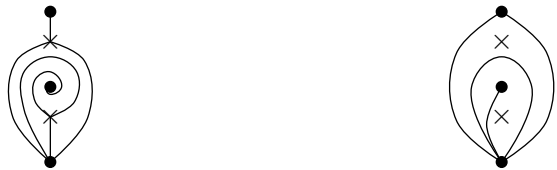

\begin{lem}
Every triangle of $T$ contains exactly one midpoint.
\end{lem}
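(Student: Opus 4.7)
The plan is to argue locally around each simple zero of $\phi$ and then use a counting argument to promote the local assignment to a bijection between zeros and triangles.

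The starting point is the local foliation picture around a simple zero $a$ (Figure \ref{fig:foliation1}): exactly three trajectories---which are Stokes curves---enter $a$, dividing a small disk around $a$ into three open sectors. Saddle-freeness of $G$ forces each of the three Stokes curves to terminate at a marked point of $(\bfS,\bfM)$, so each of the three sectors lies entirely inside a single Stokes region; by the classification recalled at the beginning of this section this region is either a (possibly degenerate) horizontal strip or a half plane. From each of the three Stokes regions adjacent to $a$ I will extract the associated arc of $T$ (when the region is a horizontal strip) or the associated boundary edge of $\bfS$ (when the region is a half plane), choosing a representative within its isotopy class that passes close to $a$ through the given sector. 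The three resulting arcs/edges share endpoints pairwise at the termini of the three Stokes curves from $a$, and hence bound a triangle $\Delta_a$ of $\tilde{T}$ which contains $a$ in its interior.

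It then remains to show that $a\mapsto \Delta_a$ is a bijection between $\bfA$ and the triangles of $T$. Injectivity is immediate from the locality of the construction: distinct zeros have disjoint neighborhoods, each of which sits in the interior of its corresponding triangle. For surjectivity I will appeal to the counting identity $|\bfA|=m$, where $m$ denotes the number of triangles in any ideal triangulation of $(\bfS,\bfM)$. On the one hand, Riemann--Roch for $K_\Sigma^{\otimes 2}$ gives
\[
|\bfA| \;=\; \sum_{\text{poles of }\phi}(\text{pole order}) \,+\, 4g - 4 \;=\; 2r + \sum_{i=1}^{s} m_i + 4g - 4,
\]
with $r$ the number of double poles and $m_1,\dots,m_s$ the orders of the poles of order $\geq 3$. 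On the other hand, the standard Euler-characteristic computation using $V-E+F=\chi(\bfS)=2-2g-s$ together with $3F = 2n + B$ (where $V$, $E$, $F$, $n$, $B$ denote the numbers of marked points, edges, triangles, arcs, and boundary edges of a fixed ideal triangulation of $(\bfS,\bfM)$) yields $m = F = 2V - 2\chi(\bfS) - B$, which simplifies to exactly the same value. Hence $|\bfA|=m$ and the injective assignment $a\mapsto\Delta_a$ is automatically surjective.

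The delicate point I expect to handle is the case analysis in the degenerate configurations covered by Proposition \ref{prop:triang1}: self-folded triangles arising from a degenerate horizontal strip (Figure \ref{fig:degenerate1}) and triangles with two or three identified vertices arising when multiple Stokes curves from $a$ terminate at the same marked point. In each such case I need to confirm by direct local inspection of the three sectors around $a$ that the three arcs/edges really do assemble into a well-defined (possibly degenerate) triangle of $\tilde{T}$ containing $a$, consistently with the ideal-triangulation statement of Proposition \ref{prop:triang1}.
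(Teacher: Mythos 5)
Your proposal takes a genuinely different route from the paper's. The paper's proof is a one-liner: it splits the lemma into ``at most one zero per triangle'' and ``at least one zero per triangle'' and asserts that both are visible from the classification of Stokes regions in Figure~\ref{fig:Stokes1} together with the construction of $T$. You also start from the local picture around a zero (three Stokes curves $\Rightarrow$ three sectors $\Rightarrow$ a triangle $\Delta_a$), but instead of arguing the ``at least one'' direction directly, you replace it with a global count: verify via the degree formula $\deg K_\Sigma^{\otimes 2}=4g-4$ and the Euler-characteristic relations $V-E+F=\chi(\bfS)$, $3F=2n+B$ that the number of zeros of $\phi$ equals the number $m$ of triangles, and then upgrade an injection to a bijection. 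I have checked both sides of your identity ($Z=2r+\sum m_i+4g-4=F$) and they are correct, including in the presence of self-folded triangles (the side count $3F=2n+B$ survives because the inner arc contributes $2$). This is a longer argument than the paper's, but it has a real payoff: it explicitly certifies the consistency condition $|\bfA(\phi)|=m$ that the definition of a Stokes triangulation requires and which the paper's construction never verifies.

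The one step in your plan that needs repair is the injectivity claim. You write ``distinct zeros have disjoint neighborhoods, each of which sits in the interior of its corresponding triangle,'' but disjointness of small neighborhoods of $a$ and $b$ is compatible with $\Delta_a=\Delta_b$; it does not establish injectivity. What you actually need is that $\Delta_a$ contains no zero other than $a$ --- i.e.\ precisely the paper's ``at most one'' claim. The correct local argument is: the interior of $\Delta_a$ is the union of $a$ itself, the three Stokes curves emanating from $a$ (truncated inside $\Delta_a$), and the portions of the three adjacent Stokes regions lying on the $a$-side of their arcs. A saddle-free Stokes region contains no zeros in its interior, and separating trajectories contain no zeros other than at their turning-point endpoint, so $a$ is the only zero in $\Delta_a$. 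This is still local and short, but it is not a consequence of disjoint neighborhoods; you should replace that sentence with the sweeping-out argument. Once that is fixed, your counting approach closes cleanly.
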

\begin{proof}
The claim is divided into the following two claims.
\begin{itemize}
\item
Every triangle in $T$ contains at most one zero.
\item
Every triangle in $T$ contains at least one zero.
\end{itemize}
Both claims follow from the classification of Stokes regions
in Figure \ref{fig:Stokes1} and the construction of the triangulation $T$.
\end{proof}

In summary, we have the desired extension of Proposition \ref{prop:triang1}.

\begin{prop}
The $n$-tuple $T$
 is a labeled Stokes triangulation of $(\bfS,\bfM,\bfA)$.
\end{prop}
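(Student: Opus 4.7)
The plan is to verify the three defining conditions of a labeled Stokes triangulation in turn. Condition (2), that $\tilde{T}=(\tilde{\alpha}_i)_{i=1}^n$ is a labeled ideal triangulation of $(\bfS,\bfM)$, is exactly Proposition \ref{prop:triang1}. Condition (3), that every triangle of $T$ contains exactly one midpoint, is the Lemma immediately preceding the statement. So the only content remaining is to upgrade the pairwise compatibility of the $\tilde{\alpha}_i$ (as arcs in $(\bfS,\bfM)$) to pairwise compatibility of the $\alpha_i$ (as arcs in $(\bfS,\bfM,\bfA)$).

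To establish this, I would unwind the construction of the $\alpha_i$ from Step 2. By definition each $\alpha_i$ is the isotopy class of a representative trajectory $\beta_i$ of $\phi$ in the horizontal strip $D_i$, modified near each pole $p$ of order $m\geq 3$ by terminating at the marked point on the boundary component $B_p$ corresponding to the tangent direction of $\beta_i$. Since $\beta_i$ is a leaf of the foliation on $\Sigma\setminus P$, it does not meet any zero of $\phi$, hence avoids $\bfA$. Thus each $\alpha_i$ admits a representative in $\bfS\setminus\bfA$ in the prescribed isotopy class.

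For pairwise disjointness, the crucial point is that distinct Stokes regions $D_i$ and $D_j$ are, by definition, open connected components of $\Sigma\setminus G$, so their interiors are disjoint. Hence representative trajectories $\beta_i\subset D_i$ and $\beta_j\subset D_j$ may be chosen pairwise disjoint (except possibly at shared marked-point endpoints), and the boundary modification near higher-order poles does not create any new crossings because the $m-2$ tangent directions at such a pole are distinct and clockwise-ordered, matching the boundary cyclic order of the marked points. Therefore the $\alpha_i$ are pairwise compatible in $(\bfS,\bfM,\bfA)$.

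Since both hard inputs---Proposition \ref{prop:triang1} and the midpoint Lemma---are already in hand, no genuine obstacle is expected. The only mildly delicate bookkeeping is the simultaneous check that (a) the interiors of the chosen representatives do not sweep through any midpoint, which holds because midpoints lie at zeros of $\phi$ and trajectories do not, and (b) the identification of arc endpoints with boundary marked points via tangent directions respects the cyclic boundary order so that no accidental intersection is introduced near a pole of order $m\geq 3$. Both are automatic from the local foliation pictures recorded in Figure \ref{fig:foliation3} and the construction in Figure \ref{fig:boundary1}.
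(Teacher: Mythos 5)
Your proof is correct and follows the same strategy as the paper, which concludes the proposition from Proposition \ref{prop:triang1} together with the midpoint Lemma with no further argument (``In summary, we have the desired extension of Proposition \ref{prop:triang1}''). You additionally spell out the verification of condition (1) --- that the $\alpha_i$ remain pairwise compatible once midpoints are disallowed, using that trajectories avoid the zeros of $\phi$ and distinct Stokes regions have disjoint interiors --- which the paper leaves implicit; this is a genuine (if modest) point and worth recording.
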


We call   $T=T(\phi)$ a {\em labeled Stokes triangulation
of $(\bfS(\phi),\bfM(\phi),\bfA(\phi))$ associated
with $G=G(\phi)$}.

\begin{rem}
 Assumption  \ref{assumption:zeros and poles} for $\phi$ automatically guarantees Assumption \ref{ass:bs1} for the corresponding
 bordered surface
  $(\bfS(\phi), \bfM(\phi))$,
except for the following cases:
\begin{itemize}
\item a once punctured monogon,
\item an unpunctured triangle.
\end{itemize}
These exceptional cases are trivial from the cluster algebraic point of view,
and we do not mind this discrepancy seriously.
(However, they are basic and important examples in the exact WKB analysis.)
\end{rem}

\begin{ex}[Pentagon relation (3)]
\label{ex:pentagon3}
Let us take $\Sigma=\bbP^1$ and  the quadratic differential
$\phi=Q(z)dz$ with $Q_0(z)=z(z+1)(z+i)$ 
in Figure \ref{fig:examples-of-Stokes-graphs} (b).
The quadratic differential $\phi$ has zeros at $-i$, $0$, and
$-1$.
It has also a pole $p_1=\infty$ with order $7$.
Thus, there are five tangent directions at $p$.
The labeled Stokes graph of $\phi$ in $\bbC$ is drawn schematically 
(i.e., up to isotopy and rotation)  in Figure \ref{fig:pentagon2} (a).
Then, we have the associated
labeled Stokes triangulation as 
in Figure \ref{fig:pentagon2} (b),
where
 the boundary of the pentagon is
 identified with
the boundary of the hole cut out around the pole $p=\infty$. 

\begin{figure}
\begin{center}
\begin{pspicture}(-0.4,-0.9)(7,2.6)
\psset{linewidth=0.5pt}
\psset{fillstyle=solid, fillcolor=black}
\psset{fillstyle=none}
\psline(1,2.4)(1,0.5)
\psline(1.6,1.4)(2.4,1.4)
\psline(0.4,1.4)(-0.4,1.4)
\pscurve(1.8,-0.2)(1.4,0.2)(1,0.5)
\pscurve(0.2,-0.2)(0.6,0.2)(1,0.5)
\pscurve(1.1,2.4)(1.3,1.8)(1.6,1.4)
\pscurve(0.9,2.4)(0.7,1.8)(0.4,1.4)
\pscurve(1.6,1.4)(1.7,0.5)(1.9,-0.2)
\pscurve(0.4,1.4)(0.3,0.5)(0.1,-0.2)
\pscurve[linestyle=dashed](1.05,2.4)(1.3,1)(1.85,-0.2)
\pscurve[linestyle=dashed](0.95,2.4)(0.7,1)(0.15,-0.2)
\pscurve[linestyle=dashed](2.5,1.1)(2.1,0.6)(2,0)
\pscurve[linestyle=dashed](-0.5,1.1)(-0.1,0.6)(0,0)
\pscurve[linestyle=dashed](0.45,-0.4)(1,-0.25)(1.55,-0.4)
\pscurve[linestyle=dashed](1.3,2.4)(1.75,1.95)(2.4,1.7)
\pscurve[linestyle=dashed](0.7,2.4)(0.25,1.95)(-0.4,1.7)
%
\rput[c]{0}(0.4,1.4){\small $\times$}
\rput[c]{0}(1,0.5){\small $\times$}
\rput[c]{0}(1.6,1.4){\small $\times$}
%
%
\rput[c]{0}(0.75,0.95){\small $D_1$}
\rput[c]{0}(1.28,0.95){\small $D_2$}
\rput[c]{0}(1,-0.8){\small (a)}
%
\psset{linewidth=0.5pt}
\psset{fillstyle=solid, fillcolor=black}
\pscircle(6,2){0.08} 
\pscircle(6.95,1.31){0.08} 
\pscircle(6.59,0.19){0.08} 
\pscircle(5.41,0.19){0.08} 
\pscircle(5.05,1.31){0.08} 
\psset{fillstyle=none}
\psline(6,2)(6.95,1.31)
\psline(6.95,1.31)(6.59,0.19)
\psline(6.59,0.19)(5.41,0.19)
\psline(5.41,0.19)(5.05,1.31)
\psline(5.05,1.31)(6,2)
\psline(6,2)(6.59,0.19)
\psline(6,2)(5.41,0.19)
\rput[c]{0}(5.4,1.2){\small $\times$}
\rput[c]{0}(6,0.8){\small $\times$}
\rput[c]{0}(6.6,1.2){\small $\times$}
\rput[c]{0}(5.8,1){\small $1$}
\rput[c]{0}(6.2,1){\small $2$}
\rput[c]{0}(6,-0.8){\small (b)}
\end{pspicture}
\end{center}
\caption{Example of labeled Stokes graph (drawn schematically) and associated
labeled Stokes triangulation.} 
\label{fig:pentagon2}
\end{figure}
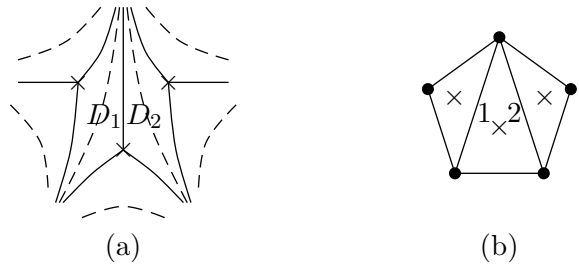

\end{ex}

\subsection{Signed flips and signed pops of Stokes graphs}

In Section   \ref{section:Voros-coefficients-and-DDP-formula}
we  already treated  the flips and the pops of Stokes graphs.
Here, we refine them as the {\em signed flips\/} and the {\em signed pops\/} to
incorporate them with the cluster algebra formulation.

To start, let us formulate the mutation of Stokes graphs in a more general situation
than before.
Suppose that there is 
 a  continuous 1-parameter family of quadratic differentials
 $\{ \phi_t \mid 0\leq t\leq 1\}$ on $\Sigma$
satisfying
the following conditions:
\begin{cond}
\label{cond:regular1}
\begin{itemize}
\item[(i).] The positions of zeros and poles of $\phi_t$ may change, but their orders
remain the same through the deformation.
\item[(ii).] The Stokes graph $G_t=G(\phi_t)$  is saddle-free for any $0\leq t \leq1$.
\end{itemize}
\end{cond}

By condition (i),
there is  a homeomorphism $\tau=\tau(\phi_0,\phi_1)$ of $\Sigma$,
by which
one can naturally identify the zeros and the poles of $G_1$
with those of $G_0$.
Furthermore, by condition (ii)
and \cite[Proposition 4.9]{Bridgeland13},
  $G_1$ is isotopic to $G_0$,
namely,
$G_t$  deforms continuously
without changing its topology form $t=0$ to $1$.
Thus, one can  identify the Stokes regions of $G_1$
with those of $G_0$ by $\tau$.
In particular, a given label of $G_0$ induces a label of $G_1$.
We call this labeled Stokes graph $G_1$  a {\em regular deformation\/} of
a labeled Stokes graph $G_0$.

\begin{rem}
\label{rem:pullback1}
The homeomorphism $\tau=\tau(\phi_0,\phi_1)$ also induces the pull-back $\tau^*(T)$ of
any labeled Stokes triangulation $T$ of 
$(\bfS(\phi_1),\bfM(\phi_1),\bfA(\phi_1))$
to
a labeled Stokes triangulation of 
$(\bfS(\phi_0),\bfM(\phi_0),\bfA(\phi_0))$.
Let $T_0=T(\phi_0)$ and $T_1=T(\phi_1)$
be  the labeled Stokes triangulations
associated with $G_0=G(\phi_0)$ and $G_1=G(\phi_1)$,
respectively.
Then, by construction, $\tau^*(T_1)$ coincides with $T_0$.
\end{rem}
 
 On the other hand,
when the saddle-free condition (ii) is violated at some $t=t_0$,
the Stokes graph $G_t$  changes its topology at $t_0$.
We call this phenomenon the {\em mutation\/} of Stokes graphs.
In this paper, we concentrate on the simplest situation
where such $G_{t_0}$ has a unique saddle trajectory
under Assumption \ref{assumption:at-most-one-saddle}.
Then, by
\cite[Proposition 5.5]{Bridgeland13},
the mutation of Stokes graphs locally reduces to the saddle reduction of
the saddle trajectory of $G_{t_0}$ studied in Section \ref{section:saddle-reduction}.
Thus, it is described by a flip and a pop, depending on whether
the saddle trajectory is regular or degenerate.

\begin{rem} When the Stokes graph $G_{t_0}$ have two saddle trajectories,
besides the combinations of flips and pops,
another type of mutation called a {\em juggle} may occur \cite{Gaiotto09}.
This is also an important subject, but we do not treat it in this paper.
\end{rem}

As mentioned, we refine the flips and the pops as the 
{\em signed flips\/} and the {\em signed pops\/}
in parallel with Stokes triangulations.

First, let us consider the signed flips.
Suppose that a  Stokes graph $G_0=G(\phi)$ of a quadratic differential $\phi$ 
has the unique saddle trajectory, which is
{\em regular}.
For a sufficiently small $\delta>0$,
let $G_{\pm \delta}=G(e^{\pm2i\delta}\phi)$ be the
saddle reductions of $G_0$ in Section \ref{section:saddle-reduction}.
See Figure \ref{fig:flip3}.
We assign the same label to the pair of the Stokes regions of $G_{+\delta}$ and
 $G_{-\delta}$ which
 degenerate into the saddle
 trajectory of $G_0$.
 We also assign the same label
 to each pair of the Stokes regions  of $G_{+\delta}$ and
 $G_{-\delta}$  which are naturally identified by an isotopy.
 Let $k$ be the label of the Stokes regions
 of $G_{+\delta}$ and
 $G_{-\delta}$ which degenerate to saddle 
 trajectory.
 Then, we write $G_{-\delta}=\mu^{(+)}_k (G_{+\delta})$
 and $G_{+\delta}=\mu^{(-)}_k (G_{-\delta})$
 as labeled Stokes graphs.

\begin{defn}
\label{defn:flipStokes1}
 For a  pair of labeled Stokes graphs $G=G(\phi)$ and $G'=G(\phi')$,
 suppose that there is a pair of labeled Stokes graphs $G_{+\delta}$ and
 $G_{-\delta}$ which are the saddle reductions
  of a Stokes graph $G_0$ with a unique regular saddle trajectory
 such that 
 \begin{itemize}
 \item
 $G_{+\delta}$ and $G_{-\delta}$ are regular
 deformations of $G$ and $G'$, respectively,
\item
$G_{-\delta}=\mu^{(+)}_k (G_{+\delta})$ in the above sense.
\end{itemize}
Then, we write $G'=\mu^{(+)}_k (G)$
and  $G=\mu^{(-)}_k (G')$,
and call $G'$ a {\em signed flip of $G$ at $k$ with sign $+$ \/} and {\em vice versa}.
\end{defn}

\begin{rem}
It follows from \cite[Proposition 4.9]{Bridgeland13} that
if $G'=\mu^{(\ve)}_k (G)$ and $G''=\mu^{(\ve)}_k (G)$,
then $G''$ is a regular deformation of $G''$.
Namely, a signed flip $\mu^{(\ve)}_k(G)$ is unique up to a regular deformation.
\end{rem}

As expected, the signed flips of labeled Stokes graphs and labeled Stokes triangulations
are compatible.

\begin{prop} 
Suppose that $G'=\mu^{(\ve)}_k (G)$.
Let $T$ and $T'$ be  labeled Stokes triangulations associated with $G$
and $G'$, respectively,
and let $\tau=\tau(\phi,\phi')$ be a  homeomorphism of $\Sigma$
naturally identifying the zeros and poles of $G$ and $G'$.
Then, $\mu^{(\ve)}_k(T)=\tau^*(T')$ holds,
where $\tau^*(T')$ is the pullback of $T'$ induced by $\tau$.
\end{prop}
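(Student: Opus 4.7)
The plan is to reduce the statement to a purely local analysis around the saddle trajectory. By Remark \ref{rem:pullback1}, the associated labeled Stokes triangulation is invariant under regular deformations in the sense that $T(\phi_1) = \tau(\phi_0,\phi_1)^*(T(\phi_0))$ whenever $\phi_0$ and $\phi_1$ lie in a regular deformation family. Therefore it suffices to prove the equality when $G = G_{+\delta}$ and $G' = G_{-\delta}$ are the two saddle reductions of a Stokes graph $G_0 = G(\phi_0)$ possessing a unique regular saddle trajectory $\ell_0$, with $\tau$ taken to be the identity; by exchanging the roles of $G$ and $G'$ and invoking the fact that $\mu_k^{(+)}$ and $\mu_k^{(-)}$ are inverses, it is enough to treat the case $\ve = +$.

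In this local setting I would analyze the geometry near $\ell_0$ as follows. The saddle trajectory connects two distinct simple zeros $q_1, q_2 \in P_0$. At each $q_i$, exactly three Stokes curves of $G_0$ emanate (Figure \ref{fig:foliation1}); one of these is $\ell_0$, and the remaining four Stokes curves flow into (not necessarily distinct) poles $p_1, p_2, p_3, p_4 \in P_\infty$. Together with parts of these four curves, the trajectory $\ell_0$ bounds a neighborhood which, under the saddle reduction $G_{\pm\delta}$ (with $\delta > 0$ sufficiently small), is replaced by a single (possibly degenerate) horizontal strip $D_k$ as in (a) of Figure \ref{fig:Stokes-auto}. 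The representative trajectory $\alpha_k$ of $D_k$ in either reduction $G_{\pm\delta}$ is a diagonal of the quadrilateral with corners $p_1, p_2, p_3, p_4$, and the two choices correspond to the two diagonals. Thus $T(G_{-\delta})$ differs from $T(G_{+\delta})$ only by the replacement of one diagonal of this quadrilateral by the other, while all other arcs coincide; this matches the flip appearance in Figure \ref{fig:flip2} and in particular is a signed flip at $k$.

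It remains to match the sign. To see that the sign is $+$, I would compare the direction of rotation of the foliation \eqref{eq:theta-foliation} of $\phi_\theta = e^{2i\theta}\phi_0$ as $\theta$ decreases from $+\delta$ to $-\delta$, with the direction of sliding of the endpoints of $\alpha_k$ along the boundary of the surrounding quadrilateral. Using the standard local normal form $Q_0(z) = z\, dz^{\otimes 2}$ near each turning point $q_i$, one sees that as $\theta$ decreases the Stokes curves rotate in a definite sense around $q_i$, causing the pole-side endpoints of $\alpha_k$ to slide clockwise, which by the conventions of Figure \ref{fig:flip2} is the sign $+$ case of $\mu_k^{(+)}$.

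The main obstacle will be the careful tracking of orientations in this last step, together with the case in which some of the poles $p_i$ coincide so that the quadrilateral is degenerate (corresponding to a degenerate horizontal strip of the type in Figure \ref{fig:degenerate1}). In the degenerate case the arc $\alpha_k$ in one of the reductions becomes an inner arc of a self-folded triangle, and both sides of the identity $\mu_k^{(+)}(T) = T'$ must be re-examined, but the signed flip in Figure \ref{fig:flip2} has been defined precisely to accommodate such configurations, so the verification should reduce to an enumeration of local models provided by the classification of Stokes regions in Figure \ref{fig:Stokes1}.
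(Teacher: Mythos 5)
Your approach is the same as the paper's: both reduce to the case where $G = G_{+\delta}$ and $G' = G_{-\delta}$ are saddle reductions of $G_0$ via Remark \ref{rem:pullback1}, observe that $\tau$ may then be taken to be the identity, and conclude by examining the local picture near the saddle trajectory; the paper simply cites Figure \ref{fig:flip3} for this last step, whereas you spell out the sliding of endpoints and the sign determination (and you add the minor but legitimate simplification of using $\mu_k^{(+)}\circ\mu_k^{(-)}=\mathrm{id}$ to restrict to $\ve=+$).

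One small remark on your closing caveat: since a \emph{regular} saddle trajectory joins two \emph{distinct} simple zeros, the Stokes region in $G_{\pm\delta}$ that degenerates to $\ell_0$ is always a regular horizontal strip; hence $\alpha_k$ is never the inner arc of a self-folded triangle in either $T(G_{+\delta})$ or $T(G_{-\delta})$, and the ``degenerate quadrilateral'' worry you raise is automatically excluded. What can happen is that the four poles $p_1,\dots,p_4$ are not all distinct (for instance when $D_k$ is adjacent to a degenerate horizontal strip as in Figure \ref{fig:degenerate1}), but the signed-flip definition in Figure \ref{fig:flip2} is stated for the quadrilateral of arcs around $\alpha_k$, which is unaffected by such coincidences of marked points; so this part of your verification can be shortened rather than treated as a separate hard case.
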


\begin{proof}
Thanks to  Remark \ref{rem:pullback1},
we can assume that $G=G_{\delta}$ and $G'=G_{-\delta}$.
Since the zeros and poles do not move for $G_{\theta}$ ($-\delta \leq
\theta \leq \delta$),
$\tau$ is taken to be the identity map.
Then, this is clear from  Figure \ref{fig:flip3}.
\end{proof}

Next, let us consider the signed pops.
Let 
$G$ be
 a saddle-free labeled Stokes graph,
 $T$ be  a labeled Stokes triangulation
associated with $G$,
and $p$ be a puncture inside a self-folded triangle in $T$.
Then, a {\em signed pop $G'=\kappa^{(\ve)}_p(G)$
at $p$\/}  ($\ve=\pm$)  is defined in a  parallel way
by replacing a {\em regular\/} saddle trajectory of $G_0$ in the above
with a {\em degenerate\/} one surrounding the double pole of $G_0$ corresponding to $p$.
See Figure \ref{fig:pop4}.
The only speciality is that
the labels of the inner and outer Stokes regions surrounding 
the double pole should be interchanged
by the signed pops.
The rest is the same as the signed flip and we do not repeat it.
Again, we have the following compatibility.

\begin{prop} 
Suppose that $G'=\kappa^{(\ve)}_p (G)$.
Let $T$ and $T'$ be  labeled Stokes triangulations associated with $G$
and $G'$, respectively,
and let $\tau=\tau(\phi,\phi')$ be a  homeomorphism of $\Sigma$
naturally identifying the zeros and poles of $G$ and $G'$.
Then, $\kappa^{(\ve)}_p(T)=\tau^*(T')$ holds,
where $\tau^*(T')$ is the pullback of $T'$ induced by $\tau$.
\end{prop}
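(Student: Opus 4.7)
The plan is to mirror the strategy used in the preceding proposition for signed flips, but with the local model replaced by a degenerate saddle trajectory encircling the double pole $p$. First, by the same invocation of Remark \ref{rem:pullback1}, one may replace $G$ and $G'$ by regular deformations so that $G=G_{+\delta}$ and $G'=G_{-\delta}$ are the two saddle reductions of a common Stokes graph $G_0$ possessing a unique degenerate saddle trajectory $\ell_0$ that forms a loop around the double pole at (the point of $\Sigma$ corresponding to) $p$. Because the zeros and poles of $\phi_\theta=e^{2i\theta}\phi$ are independent of $\theta$ on the interval $[-\delta,+\delta]$, the identifying homeomorphism $\tau$ can be taken to be the identity, so the claim collapses to the purely local statement $T(\phi_{-\delta})=\kappa_p^{(+)}(T(\phi_{+\delta}))$ inside the one-punctured disk cut out by $\ell_0$.

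Next, I would describe the geometric picture explicitly. Near the double pole $p$, the Stokes graph $G_0$ consists of a degenerate saddle trajectory $\ell_0$ (a closed loop through a zero $q$), together with a single separating trajectory issuing from $q$ and flowing outward; inside the loop $\ell_0$ lies the degenerate ring domain containing $p$, which is not a horizontal strip and thus receives no label. Under the saddle reduction $\theta\mapsto \pm\delta$, this ring domain breaks open into a horizontal strip that wraps once around $p$, while a second horizontal strip forms the ``inside'' of the self-folded configuration; these are precisely the two Stokes regions that acquire the labels $i_p$ (inner) and $j_p$ (outer) of the self-folded triangle. The direction of wrapping — clockwise for $\theta=+\delta$ and anticlockwise for $\theta=-\delta$, as dictated by the orientation convention of Section \ref{section:orientation} — interchanges which of the two regions contains the zero $q$ inside its loop, and consequently interchanges the roles of $i_p$ and $j_p$ in the associated Stokes triangulation.

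Matching this with the definition of the signed pop of a Stokes triangulation (Figure \ref{fig:pop3}), I would then verify the two defining features of $\kappa_p^{(+)}$: the outer arc of the self-folded triangle in $T(\phi_{+\delta})$ becomes the new inner arc in $T(\phi_{-\delta})$, and the old inner arc is replaced by a loop obtained by sliding its outer endpoint clockwise once around the monogon surrounding $p$, with the labels $i$ and $j$ swapped accordingly. All arcs associated with Stokes regions disjoint from a neighborhood of $\ell_0$ are left untouched, since those regions deform isotopically from $\theta=+\delta$ to $\theta=-\delta$. Combining these local observations gives $\kappa_p^{(+)}(T)=T'$, and the case $\ve=-$ follows by reversing the roles of $\pm\delta$ (equivalently, by applying the inverse relation $\kappa_p^{(+)}\kappa_p^{(-)}=\mathrm{id}$).

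The main obstacle I anticipate is bookkeeping: one must verify carefully that the label swap of inner and outer arcs prescribed in the definition of $\kappa_p^{(\ve)}$ for triangulations matches the labeling convention for $G_{\pm\delta}$ that assigns a common label to Stokes regions degenerating to the same locus of $G_0$. Concretely, the degenerate ring domain and the horizontal strip adjacent to $\ell_0$ from outside are the two regions that collapse together at $t=0$, and one must confirm that on the $+\delta$ side it is the inner region that inherits the label $i_p$ while on the $-\delta$ side the roles are swapped. Once this bookkeeping is pinned down from a schematic picture analogous to Figure \ref{fig:flip3} (or indeed from Figure \ref{fig:pop4}), the proof is complete.
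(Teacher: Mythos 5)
Your proposal is correct and takes essentially the same approach as the paper, which likewise invokes Remark~\ref{rem:pullback1} to reduce to the case $G=G_{+\delta}$, $G'=G_{-\delta}$ with $\tau=\mathrm{id}$ and then reads off the compatibility by inspecting Figure~\ref{fig:pop4}. One descriptive slip worth correcting: under the pop the outer loop of the self-folded triangle remains the outer loop as a curve (only the labels $i_p$ and $j_p$ are interchanged, while the inner arc is replaced by its slid version — which is an arc from $p$ to the monogon vertex, not a loop); this does not affect the validity of your argument.
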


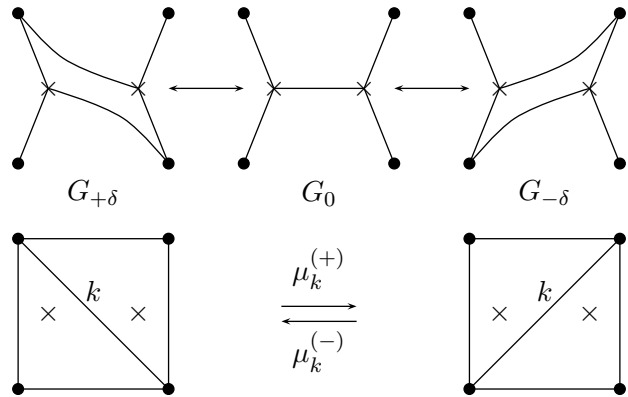
\begin{figure}
\begin{center}
\begin{pspicture}(-1,-1)(7,4.2)
\psset{linewidth=0.5pt}
\psset{fillstyle=solid, fillcolor=black}
\pscircle(1,1){0.08} 
\pscircle(1,-1){0.08} 
\pscircle(-1,1){0.08} 
\pscircle(-1,-1){0.08} 
\psset{fillstyle=none}
\psline(1,1)(1,-1)
\psline(1,-1)(-1,-1)
\psline(-1,-1)(-1,1)
\psline(-1,1)(1,1)
\psline(-1,1)(1,-1)
%
\rput[c]{0}(-0.6,0){\small $\times$}
\rput[c]{0}(0.6,0){\small $\times$}
%
%
\psline[arrows=->](2.5,0.1)(3.5,0.1)
\psline[arrows=<-](2.5,-0.1)(3.5,-0.1)
\psline[arrows=<->](1,3)(2,3)
\psline[arrows=<->](4,3)(5,3)
\rput[c]{0}(0,0.3){\small $k$}
\rput[c]{0}(3,0.6){\small $\mu_{k}^{(+)}$}
\rput[c]{0}(3,-0.5){\small $\mu_{k}^{(-)}$}
\psset{fillstyle=solid, fillcolor=black}
\pscircle(7,1){0.08} 
\pscircle(7,-1){0.08} 
\pscircle(5,1){0.08} 
\pscircle(5,-1){0.08} 
\psset{fillstyle=none}
\psline(7,1)(7,-1)
\psline(7,-1)(5,-1)
\psline(5,-1)(5,1)
\psline(5,1)(7,1)
\psline(7,1)(5,-1)

\rput[c]{0}(5.4,0){\small $\times$}
\rput[c]{0}(6.6,-0){\small $\times$}
\rput[c]{0}(6,0.3){\small $k$}
%
%
\psset{fillstyle=solid, fillcolor=black}
\pscircle(1,4){0.08} 
\pscircle(1,2){0.08} 
\pscircle(-1,4){0.08} 
\pscircle(-1,2){0.08} 
\psset{fillstyle=none}
\psline(-1,2)(-0.6,3)
\psline(-1,4)(-0.6,3)
\psline(1,2)(0.6,3)
\psline(1,4)(0.6,3)
\pscurve(0.6,3)(-0.4,3.4)(-1,4)
\pscurve(-0.6,3)(0.4,2.6)(1,2)
\rput[c]{0}(-0.6,3){\small $\times$}
\rput[c]{0}(0.6,3){\small $\times$}
\rput[c]{0}(0,1.6){\small $G_{+\delta}$}
\rput[c]{0}(3,1.6){\small $G_0$}
\rput[c]{0}(6,1.6){\small $G_{-\delta}$}
\psset{fillstyle=solid, fillcolor=black}
\pscircle(4,4){0.08} 
\pscircle(4,2){0.08} 
\pscircle(2,4){0.08} 
\pscircle(2,2){0.08} 
\psset{fillstyle=none}
\psline(2,4)(2.4,3)
\psline(2,2)(2.4,3)
\psline(4,4)(3.6,3)
\psline(4,2)(3.6,3)
\psline(3.6,3)(2.4,3)
\rput[c]{0}(2.4,3){\small $\times$}
\rput[c]{0}(3.6,3){\small $\times$}
%
%
\psset{fillstyle=solid, fillcolor=black}
\pscircle(7,4){0.08} 
\pscircle(7,2){0.08} 
\pscircle(5,4){0.08} 
\pscircle(5,2){0.08} 
\psset{fillstyle=none}
\psline(5,4)(5.4,3)
\psline(5,2)(5.4,3)
\psline(7,4)(6.6,3)
\psline(7,2)(6.6,3)
\pscurve(6.6,3)(5.6,2.6)(5,2)
\pscurve(5.4,3)(6.4,3.4)(7,4)
\rput[c]{0}(5.4,3){\small $\times$}
\rput[c]{0}(6.6,3){\small $\times$}
\end{pspicture}
\end{center}
\caption{Signed flips of Stokes graphs (upper row) and Stokes triangulations
(lower row).} 
\label{fig:flip3}
\end{figure}


\begin{figure}
\begin{center}
\begin{pspicture}(-1,-1)(7,4.2)
\psset{linewidth=0.5pt}
\psset{fillstyle=solid, fillcolor=black}
\pscircle(0,1){0.08} 
\pscircle(0,-1){0.08} 
\pscircle(0,0){0.08} 
\psset{fillstyle=none}
%
\pscurve(0,-1)(-0.4,0)(0,0.4)(0.4,0)(0,-1)
\pscurve(0,0)(-0.2,-0.5)(0,-1)
\pscurve(0,1)(0.6,0.45)(0.6,-0.45)(0,-1)
\pscurve(0,1)(-0.6,0.45)(-0.6,-0.45)(0,-1)
\rput[c]{0}(0,0.6){\small $\times$}
\rput[c]{0}(0,-0.4){\small $\times$}
%
%
\rput[c]{0}(0.32,0.5){\small $j$}
\rput[c]{0}(-0.2,-0){\small $i$}
%
%
\psline[arrows=->](2.5,0.1)(3.5,0.1)
\psline[arrows=<-](2.5,-0.1)(3.5,-0.1)
\psline[arrows=<->](1,3)(2,3)
\psline[arrows=<->](4,3)(5,3)
\rput[c]{0}(3,0.6){\small $\kappa_{p}^{(+)}$}
\rput[c]{0}(3,-0.5){\small $\kappa_{p}^{(-)}$}
%
\psset{fillstyle=solid, fillcolor=black}
\pscircle(6,1){0.08} 
\pscircle(6,-1){0.08} 
\pscircle(6,0){0.08} 
\psset{fillstyle=none}
\pscurve(6,-1)(5.6,0)(6,0.4)(6.4,0)(6,-1)
\pscurve(6,0)(6.2,-0.5)(6,-1)
\pscurve(6,1)(6.6,0.45)(6.6,-0.45)(6,-1)
\pscurve(6,1)(5.4,0.45)(5.4,-0.45)(6,-1)
\psset{fillstyle=solid, fillcolor=black}
\pscircle(0,4){0.08} 
\pscircle(0,2){0.08} 
\pscircle(0,3){0.08} 
\psset{fillstyle=none}
\psline(0,4)(0,3.6)
\pscurve(0,2)(0.5,2.6)(0.45,3.35)(0,3.6)
\pscurve(0,2)(-0.5,2.6)(-0.45,3.35)(0,3.6)
\psline(0,2)(0,2.6)
\pscurve(0,2.6)(-0.2,2.8)(-0.2,3.1)(0,3.2)(0.15,3)(0,2.9)(-0.05,2.95)(0,3)
\pscurve(0,2.6)(0.32,2.8)(0.35,3.2)(0,3.4)(-0.35,3.2)(-0.35,2.7)(0,2)
\rput[c]{0}(0,3.6){\small $\times$}
\rput[c]{0}(0,2.6){\small $\times$}
%
\psset{fillstyle=solid, fillcolor=black}
\pscircle(3,3){0.08} 
\pscircle(3,2){0.08} 
\pscircle(3,4){0.08} 
\psset{fillstyle=none}
\pscurve(3,2)(3.5,2.6)(3.45,3.35)(3,3.6)
\pscurve(3,2)(2.5,2.6)(2.55,3.35)(3,3.6)
\psline(3,2)(3,2.6)
\psline(3,4)(3,3.6)
\pscircle(3,3){0.41}
\rput[c]{0}(3,3.6){\small $\times$}
\rput[c]{0}(3,2.6){\small $\times$}
%
%
\psset{fillstyle=solid, fillcolor=black}
\pscircle(6,4){0.08} 
\pscircle(6,2){0.08} 
\pscircle(6,3){0.08} 
\psset{fillstyle=none}
\psline(6,4)(6,3.6)
\pscurve(6,2)(6.5,2.6)(6.45,3.35)(6,3.6)
\pscurve(6,2)(5.5,2.6)(5.55,3.35)(6,3.6)
\psline(6,2)(6,2.6)
\pscurve(6,2.6)(6.2,2.8)(6.2,3.1)(6,3.2)(5.85,3)(6,2.9)(6.05,2.95)(6,3)
\pscurve(6,2.6)(5.68,2.8)(5.65,3.2)(6,3.4)(6.35,3.2)(6.35,2.7)(6,2)
\rput[c]{0}(6,3.6){\small $\times$}
\rput[c]{0}(6,2.6){\small $\times$}
\rput[c]{0}(0,1.6){\small $G_{+\delta}$}
\rput[c]{0}(3,1.6){\small $G_0$}
\rput[c]{0}(6,1.6){\small $G_{-\delta}$}
%
\rput[c]{0}(6,0.6){\small $\times$}
\rput[c]{0}(6,-0.4){\small $\times$}
\rput[c]{0}(6.32,0.5){\small $i$}
\rput[c]{0}(6.2,0){\small $j$}
\end{pspicture}
\end{center}
\caption{Signed pops of Stokes graphs (upper row) and  Stokes triangulations
(lower row).} 
\label{fig:pop4}
\end{figure}

\subsection{Simple paths and simple cycles}
\label{subsec:simple1}

Let $\phi$ be a quadratic differential on $\Sigma$.
Recall that
we call elements of $H_1(\hat{\Sigma}\setminus \hat{P}_0,\hat{P}_{\infty})$
and
$H_1(\hat{\Sigma}\setminus \hat{P})$
  {\em paths\/} and {\em cycles\/},
respectively, in Section \ref{section:Voros-coefficients-and-DDP-formula}.

Let $\beta^*$ (resp., $\gamma^*$)
 be the image of a path $\beta$ (resp.,  a cycle  $\gamma$) by the  covering involution $\tau$ while keeping the direction.
Let
\begin{align}
\label{eq:sym1}
\mathrm{Sym}(H_1(\hat{\Sigma}\setminus \hat{P}_0,\hat{P}_{\infty}))
&=
\{ \beta \in H_1(\hat{\Sigma}\setminus \hat{P}_0,\hat{P}_{\infty})
\mid
\beta^*=\beta
\},\\
\mathrm{Sym}(H_1(\hat{\Sigma}\setminus \hat{P}))
&=
\{ \gamma \in H_1(\hat{\Sigma}\setminus \hat{P})
\mid
\gamma^*=\gamma
\}.
\end{align}
We introduce the {\em $*$-equivalence} of paths $\beta\equiv \beta'$
(resp., cycles $\gamma\equiv\gamma'$)
by the condition $\beta-\beta'\in 
\mathrm{Sym}(H_1(\hat{\Sigma}\setminus \hat{P}_0,\hat{P}_{\infty}))$
(resp., 
$\gamma-\gamma'\in 
\mathrm{Sym}(H_1(\hat{\Sigma}\setminus \hat{P}))$.
For example, $\beta\equiv -\beta^*$ and $\gamma\equiv -\gamma^*$
hold.

\begin{ex} Typical deformations of paths and cycles
modulo the $*$-equivalence are given in
 Figures  \ref{fig:path1}  and  \ref{fig:cycle1}.
Observe that these deformations are not quite obvious (like puzzle rings!).
\end{ex}

We introduce the quotients by the $*$-equivalence,
namely,
\begin{align}
\tilde{\Gamma}^{\vee}&=
H_1(\hat{\Sigma}\setminus \hat{P}_0,\hat{P}_{\infty})/
\mathrm{Sym}(H_1(\hat{\Sigma}\setminus \hat{P}_0,\hat{P}_{\infty})),\\
\tilde{\Gamma}&=
H_1(\hat{\Sigma}\setminus \hat{P})/
\mathrm{Sym}(H_1(\hat{\Sigma}\setminus \hat{P})).
\end{align}


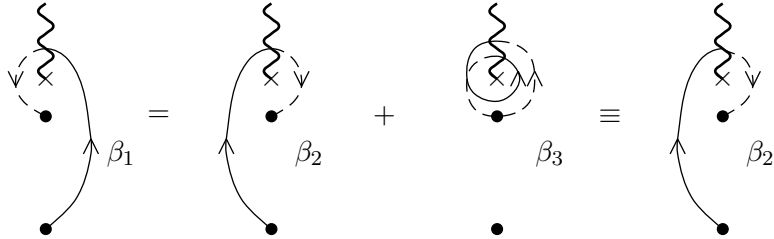
\begin{figure}
\begin{center}
\begin{pspicture}(-0.6,0)(9.7,3.2)
\psset{fillstyle=solid, fillcolor=black}
\pscircle(0,0){0.08} 
\pscircle(0,1.5){0.08} 
\psset{fillstyle=none}
\psset{linewidth=1pt}
\pscurve(0,2)(-0.1,2.1)(0,2.2)(0.1,2.3)(0,2.4)%
(-0.1,2.5)(0,2.6)(0.1,2.7)(0,2.8)(-0.1,2.9)(0,3)%
(0,3)
\psset{linewidth=0.5pt}
\psecurve(0,0)(0,0)(0.4,0.4)(0.6,1)(0.6,1.2)(0.3,2.25)(0,2.4)(-0.3,2.25)
\psecurve[linestyle=dashed](0,1.5)(0,1.5)(-0.3,1.7)(-0.4,1.9)(-0.3,2.25)(0,2.4)(0.3,2.25)
\psline(0.6,1.2)(0.5,1)
\psline(0.6,1.2)(0.7,1)
\psline(-0.4,1.9)(-0.3,2.1)
\psline(-0.4,1.9)(-0.5,2.1)
\rput[c]{0}(0,2){\small $\times$}
\rput[c]{0}(1,1){\small $\beta_1$}
\rput[c]{0}(1.5,1.5){\small $=$}
\psset{fillstyle=solid, fillcolor=black}
\pscircle(3,0){0.08} 
\pscircle(3,1.5){0.08} 
\psset{fillstyle=none}
\psset{linewidth=1pt}
\pscurve(3,2)(2.9,2.1)(3,2.2)(3.1,2.3)(3,2.4)%
(2.9,2.5)(3,2.6)(3.1,2.7)(3,2.8)(2.9,2.9)(3,3)%
(3,3)
\psset{linewidth=0.5pt}
\psecurve(3,0)(3,0)(2.6,0.4)(2.4,1)(2.4,1.2)(2.7,2.25)(3,2.4)(3.3,2.25)
\psecurve[linestyle=dashed](3,1.5)(3,1.5)(3.3,1.7)(3.4,1.9)(3.3,2.25)(3,2.4)(2.7,2.25)
\psline(2.4,1.2)(2.5,1)
\psline(2.4,1.2)(2.3,1)
\psline(3.4,1.9)(3.3,2.1)
\psline(3.4,1.9)(3.5,2.1)
\rput[c]{0}(3,2){\small $\times$}
\rput[c]{0}(3.5,1){\small $\beta_2$}
\rput[c]{0}(4.5,1.5){\small $+$}
\psset{fillstyle=solid, fillcolor=black}
\pscircle(6,0){0.08} 
\pscircle(6,1.5){0.08} 
\psset{fillstyle=none}
\psset{linewidth=1pt}
\pscurve(6,2)(5.9,2.1)(6,2.2)(6.1,2.3)(6,2.4)%
(5.9,2.5)(6,2.6)(6.1,2.7)(6,2.8)(5.9,2.9)(6,3)%
(6,3)
\psset{linewidth=0.5pt,linestyle=dashed}
\psecurve(6.2,2.2)(6,2.3)(5.7,2.2)(5.6,2)(5.7,1.6)(6,1.5)%
(6.4,1.65)(6.5,2)(6.4,2.35)(6,2.5)(5.7,2.4)
\psecurve[linestyle=solid](6.4,2.35)(6,2.5)(5.7,2.4)(5.6,2)%
(5.7,1.8)(6,1.7)(6.2,1.8)(6.3,2)(6.2,2.2)(6,2.3)(5.7,2.2)
\psset{linestyle=solid}
\psline(6.5,2.1)(6.4,1.9)
\psline(6.5,2.1)(6.6,1.9)
\psline(6.28,2.1)(6.38,1.9)
\psline(6.28,2.1)(6.18,1.9)
\rput[c]{0}(6,2){\small $\times$}
\rput[c]{0}(6.7,1){\small $\beta_3$}
\rput[c]{0}(7.5,1.5){\small $\equiv$}
\psset{fillstyle=solid, fillcolor=black}
\pscircle(9,0){0.08} 
\pscircle(9,1.5){0.08} 
\psset{fillstyle=none}
\psset{linewidth=1pt}
\pscurve(9,2)(8.9,2.1)(9,2.2)(9.1,2.3)(9,2.4)%
(8.9,2.5)(9,2.6)(9.1,2.7)(9,2.8)(8.9,2.9)(9,3)%
(9,3)
\psset{linewidth=0.5pt}
\psecurve(9,0)(9,0)(8.6,0.4)(8.4,1)(8.4,1.2)(8.7,2.25)(9,2.4)(9.3,2.25)
\psecurve[linestyle=dashed](9,1.5)(9,1.5)(9.3,1.7)(9.4,1.9)(9.3,2.25)(9,2.4)(2.7,9.25)
\psline(8.4,1.2)(8.5,1)
\psline(8.4,1.2)(8.3,1)
\psline(9.4,1.9)(9.3,2.1)
\psline(9.4,1.9)(9.5,2.1)
\rput[c]{0}(9,2){\small $\times$}
\rput[c]{0}(9.5,1){\small $\beta_2$}
%
%
%
\end{pspicture}
\end{center}
\caption{Typical deformations of paths
modulo the $*$-equivalence.}
\label{fig:path1}
\end{figure}

\begin{figure}
\begin{center}
\begin{pspicture}(-0.6,-1)(10.3,4.6)
\psset{linewidth=1pt}
\pscurve(0,2)(-0.1,2.1)(0,2.2)(0.1,2.3)(0,2.4)%
(-0.1,2.5)(0,2.6)(0.1,2.7)(0,2.8)(-0.1,2.9)(0,3)%
(0,3)(0.1,3.1)(0,3.2)(-0.1,3.3)(0,3.4)%
(0.1,3.5)(0,3.6)(-0.1,3.7)(0,3.8)(0.1,3.9)(0,4)%
\pscurve(0,-1)(0.1,-0.9)(0,-0.8)(-0.1,-0.7)(0,-0.6)%
(0.1,-0.5)(0,-0.4)(-0.1,-0.3)(0,-0.2)(0.1,-0.1)(0,0)%
\psset{linewidth=0.5pt}
\psecurve(-0.3,1.75)(0,1.6)(0.3,1.75)(0.6,3)(0.3,4.25)(0,4.4)(-0.3,4.25)
\psecurve(0.3,1.75)(0,1.6)(-0.3,1.75)(-0.6,3)(-0.3,4.25)(0,4.4)(0.3,4.25)
\psecurve(-0.3,-0.25)(0,-0.4)(0.3,-0.25)(0.6,1)(0.3,2.25)(0,2.4)(-0.3,2.25)
\psecurve[linestyle=dashed](0.3,-0.25)(0,-0.4)(-0.3,-0.25)(-0.6,1)(-0.3,2.25)(0,2.4)(0.3,2.25)
\psline(0.6,3.1)(0.5,2.9)
\psline(0.6,3.1)(0.7,2.9)
\psline(-0.6,2.9)(-0.5,3.1)
\psline(-0.6,2.9)(-0.7,3.1)
\psline(0.6,1.1)(0.5,0.9)
\psline(0.6,1.1)(0.7,0.9)
\psline(-0.6,0.9)(-0.5,1.1)
\psline(-0.6,0.9)(-0.7,1.1)
\rput[c]{0}(0,0){\small $\times$}
\rput[c]{0}(0,2){\small $\times$}
\rput[c]{0}(0,4){\small $\times$}
\rput[c]{0}(1,3){\small $\gamma_1$}
\rput[c]{0}(1,1){\small $\gamma_2$}
\rput[c]{0}(1.5,2){\small $=$}
\psset{linewidth=1pt}
\pscurve(3,2)(2.9,2.1)(3,2.2)(3.1,2.3)(3,2.4)%
(2.9,2.5)(3,2.6)(3.1,2.7)(3,2.8)(2.9,2.9)(3,3)%
(3,3)(3.1,3.1)(3,3.2)(2.9,3.3)(3,3.4)%
(3.1,3.5)(3,3.6)(2.9,3.7)(3,3.8)(3.1,3.9)(3,4)%
\pscurve(3,-1)(3.1,-0.9)(3,-0.8)(2.9,-0.7)(3,-0.6)%
(3.1,-0.5)(3,-0.4)(2.9,-0.3)(3,-0.2)(3.1,-0.1)(3,0)%
\psset{linewidth=0.5pt}
\psecurve(2.7,-0.25)(3,-0.4)(3.3,-0.25)(3.85,2)(3.3,4.25)(3,4.4)(2.7,4.25)%
(2.6,4)(2.75,3.7)(3,3.6)(3.25,3.7)
\psecurve[linestyle=dashed](3.3,-0.25)(3,-0.4)(2.7,-0.25)(2.6,0)(2.75,0.3)(2.95,0.39)(3,0.4)(3.2,0.55)(3.3,0.75)%
(3.45,2)(3.3,3.25)(3.2,3.45)(3,3.6)(2.7,3.35)
\psline(3.85,2.1)(3.75,1.9)
\psline(3.85,2.1)(3.95,1.9)
\psline(3.45,1.9)(3.35,2.1)
\psline(3.45,1.9)(3.55,2.1)
\rput[c]{0}(3,0){\small $\times$}
\rput[c]{0}(3,2){\small $\times$}
\rput[c]{0}(3,4){\small $\times$}
\rput[c]{0}(4.2,2.5){\small $\gamma_3$}
\rput[c]{0}(4.5,2){\small $+$}
\psset{linewidth=1pt}
\pscurve(6,2)(5.9,2.1)(6,2.2)(6.1,2.3)(6,2.4)%
(5.9,2.5)(6,2.6)(6.1,2.7)(6,2.8)(5.9,2.9)(6,3)%
(6,3)(6.1,3.1)(6,3.2)(5.9,3.3)(6,3.4)%
(6.1,3.5)(6,3.6)(5.9,3.7)(6,3.8)(6.1,3.9)(6,4)%
\pscurve(6,-1)(6.1,-0.9)(6,-0.8)(5.9,-0.7)(6,-0.6)%
(6.1,-0.5)(6,-0.4)(5.9,-0.3)(6,-0.2)(6.1,-0.1)(6,0)%
\psset{linewidth=0.5pt}
\psecurve(6.2,2.2)(6,2.3)(5.7,2.2)(5.6,2)(5.7,1.6)(6,1.5)%
(6.4,1.65)(6.5,2)(6.4,2.35)(6,2.5)(5.7,2.4)
\psecurve[linestyle=dashed](6.4,2.35)(6,2.5)(5.7,2.4)(5.6,2)%
(5.7,1.8)(6,1.7)(6.2,1.8)(6.3,2)(6.2,2.2)(6,2.3)(5.7,2.2)
\psline(6.5,2.1)(6.4,1.9)
\psline(6.5,2.1)(6.6,1.9)
\psline(6.28,2.1)(6.38,1.9)
\psline(6.28,2.1)(6.18,1.9)
\rput[c]{0}(6,0){\small $\times$}
\rput[c]{0}(6,2){\small $\times$}
\rput[c]{0}(6,4){\small $\times$}
\rput[c]{0}(6.7,2.5){\small $\gamma_4$}
\rput[c]{0}(7.5,2){\small $\equiv$}
\psset{linewidth=1pt}
\pscurve(9,2)(8.9,2.1)(9,2.2)(9.1,2.3)(9,2.4)%
(8.9,2.5)(9,2.6)(9.1,2.7)(9,2.8)(8.9,2.9)(9,3)%
(9,3)(9.1,3.1)(9,3.2)(8.9,3.3)(9,3.4)%
(9.1,3.5)(9,3.6)(8.9,3.7)(9,3.8)(9.1,3.9)(9,4)%
\pscurve(9,-1)(9.1,-0.9)(9,-0.8)(8.9,-0.7)(9,-0.6)%
(9.1,-0.5)(9,-0.4)(8.9,-0.3)(9,-0.2)(9.1,-0.1)(9,0)%
\psset{linewidth=0.5pt}
\psecurve(8.7,-0.25)(9,-0.4)(9.3,-0.25)(9.85,2)(9.3,4.25)(9,4.4)(8.7,4.25)%
(8.6,4)(8.75,3.7)(9,3.6)(9.25,3.7)
\psecurve[linestyle=dashed](9.3,-0.25)(9,-0.4)(8.7,-0.25)(8.6,0)(8.75,0.3)(8.95,0.39)(9,0.4)(9.2,0.55)(9.3,0.75)%
(9.45,2)(9.3,3.25)(9.2,3.45)(9,3.6)(8.7,3.35)
\psline(9.85,2.1)(9.75,1.9)
\psline(9.85,2.1)(9.95,1.9)
\psline(9.45,1.9)(9.35,2.1)
\psline(9.45,1.9)(9.55,2.1)
\rput[c]{0}(9,0){\small $\times$}
\rput[c]{0}(9,2){\small $\times$}
\rput[c]{0}(9,4){\small $\times$}
\rput[c]{0}(10.2,2.5){\small $\gamma_3$}
%
%
%
\end{pspicture}
\end{center}
\vskip0.5cm
\begin{center}
\begin{pspicture}(-1,-0.2)(9.6,4.6)
\psset{linewidth=1pt}
\psset{fillstyle=solid, fillcolor=black}
\pscircle(0,1){0.08} 
\psset{fillstyle=none}
\psset{linewidth=1pt}
\pscurve(0,2)(-0.1,2.1)(0,2.2)(0.1,2.3)(0,2.4)%
(-0.1,2.5)(0,2.6)(0.1,2.7)(0,2.8)(-0.1,2.9)(0,3)%
(0,3)(0.1,3.1)(0,3.2)(-0.1,3.3)(0,3.4)%
(0.1,3.5)(0,3.6)(-0.1,3.7)(0,3.8)(0.1,3.9)(0,4)%
\psset{linewidth=0.5pt}
\psecurve (-0.2,2)(0,2.2)(0.6,2)(1,1)(0.6,0)(0,-0.2)%
(-0.6,0)(-1,2)(-0.3,4.25)(0,4.4)(0.3,4.25)%
(0.4,4)(0.25,3.7)(0,3.6)(-0.25,3.7)
\psecurve[linestyle=dashed](0.6,2)(0,2.2)(-0.2,2)(0,1.8)%
(0.4,1.6)(0.6,1)(0.4,0.4)(0,0.2)(-0.4,0.4)(-0.6,1)%
(-0.6,2)(-0.3,3.25)(-0.2,3.45)(0,3.6)(0.3,3.35)
\psset{linewidth=0.5pt}
%
%
\psline(-1,0.9)(-1.1,1.1)
\psline(-1,0.9)(-0.9,1.1)
\psline(-0.6,1.1)(-0.5,0.9)
\psline(-0.6,1.1)(-0.7,0.9)
\psline(1,1.1)(0.9,0.9)
\psline(1,1.1)(1.1,0.9)
\psline(0.6,0.9)(0.5,1.1)
\psline(0.6,0.9)(0.7,1.1)
\rput[c]{0}(0,2){\small $\times$}
\rput[c]{0}(0,4){\small $\times$}
\rput[c]{0}(1.4,1){\small $\gamma_1$}
\rput[c]{0}(2,2){\small $\equiv$}
\psset{fillstyle=solid, fillcolor=black}
\pscircle(4,1){0.08} 
\psset{fillstyle=none}
\psset{linewidth=1pt}
\pscurve(4,2)(3.9,2.1)(4,2.2)(4.1,2.3)(4,2.4)%
(3.9,2.5)(4,2.6)(4.1,2.7)(4,2.8)(3.9,2.9)(4,3)%
(4,3)(4.1,3.1)(4,3.2)(3.9,3.3)(4,3.4)%
(4.1,3.5)(4,3.6)(3.9,3.7)(4,3.8)(4.1,3.9)(4,4)%
\psset{linewidth=0.5pt}
\psecurve(4.2,2)(4,2.2)(3.4,2)(3,1)(3.4,0)(4,-0.2)%
(4.6,0)(5,1)(4.6,2)(4,2.2)(3.8,2)
\psecurve[linestyle=dashed](4.6,2)(4,2.2)(3.8,2)(4,1.8)%
(4.4,1.6)(4.6,1)(4.4,0.4)(4,0.2)(3.6,0.4)(3.4,1)(3.6,1.6)(4,1.8)(4.2,2)(4,2.2)(3.4,2)
\psecurve(3.7,1.75)(4,1.6)(4.3,1.75)(4.6,3)(4.3,4.25)(4,4.4)(3.7,4.25)
\psecurve(4.3,1.75)(4,1.6)(3.7,1.75)(3.4,3)(3.7,4.25)(4,4.4)(4.3,4.25)
\psline(4.6,3.1)(4.5,2.9)
\psline(4.6,3.1)(4.7,2.9)
\psline(3.4,2.9)(3.3,3.1)
\psline(3.4,2.9)(3.5,3.1)
\psline(5,1.1)(4.9,0.9)
\psline(5,1.1)(5.1,0.9)
\psline(4.6,0.9)(4.5,1.1)
\psline(4.6,0.9)(4.7,1.1)
\psline(3.4,1.1)(3.3,0.9)
\psline(3.4,1.1)(3.5,0.9)
\psline(3,0.9)(2.9,1.1)
\psline(3,0.9)(3.1,1.1)
\rput[c]{0}(4,2){\small $\times$}
\rput[c]{0}(4,4){\small $\times$}
\rput[c]{0}(5,3){\small $\gamma_2$}
\rput[c]{0}(5.4,1){\small $\gamma_3$}
\rput[c]{0}(6,2){\small $\equiv$}
\psset{fillstyle=solid, fillcolor=black}
\pscircle(8,1){0.08} 
\psset{fillstyle=none}
\psset{linewidth=1pt}
\pscurve(8,2)(7.9,2.1)(8,2.2)(8.1,2.3)(8,2.4)%
(7.9,2.5)(8,2.6)(8.1,2.7)(8,2.8)(7.9,2.9)(8,3)%
(8,3)(8.1,3.1)(8,3.2)(7.9,3.3)(8,3.4)%
(8.1,3.5)(8,3.6)(7.9,3.7)(8,3.8)(8.1,3.9)(8,4)%
\psset{linewidth=0.5pt}
\psecurve (8.2,2)(8,2.2)(7.4,2)(7,1)(7.4,0)(8,-0.2)%
(8.6,0)(9,2)(8.3,4.25)(8,4.4)(7.7,4.25)%
(7.6,4)(7.75,3.7)(8,3.6)(8.25,3.7)
\psecurve[linestyle=dashed](7.4,2)(8,2.2)(8.2,2)(8,1.8)%
(7.6,1.6)(7.4,1)(7.6,0.4)(8,0.2)(8.4,0.4)(8.6,1)%
(8.6,2)(8.3,3.25)(8.2,3.45)(8,3.6)(7.7,3.35)
%
%
\psline(7,0.9)(6.9,1.1)
\psline(7,0.9)(7.1,1.1)
\psline(7.4,1.1)(7.5,0.9)
\psline(7.4,1.1)(7.3,0.9)
\psline(9,1.1)(8.9,0.9)
\psline(9,1.1)(9.1,0.9)
\psline(8.6,0.9)(8.5,1.1)
\psline(8.6,0.9)(8.7,1.1)
\rput[c]{0}(8,2){\small $\times$}
\rput[c]{0}(8,4){\small $\times$}
\rput[c]{0}(9.4,2,2){\small $\gamma_4$}
%
%
%
\end{pspicture}
\end{center}
\caption{Typical deformations of cycles
modulo the $*$-equivalence.} 
\label{fig:cycle1}
\end{figure}
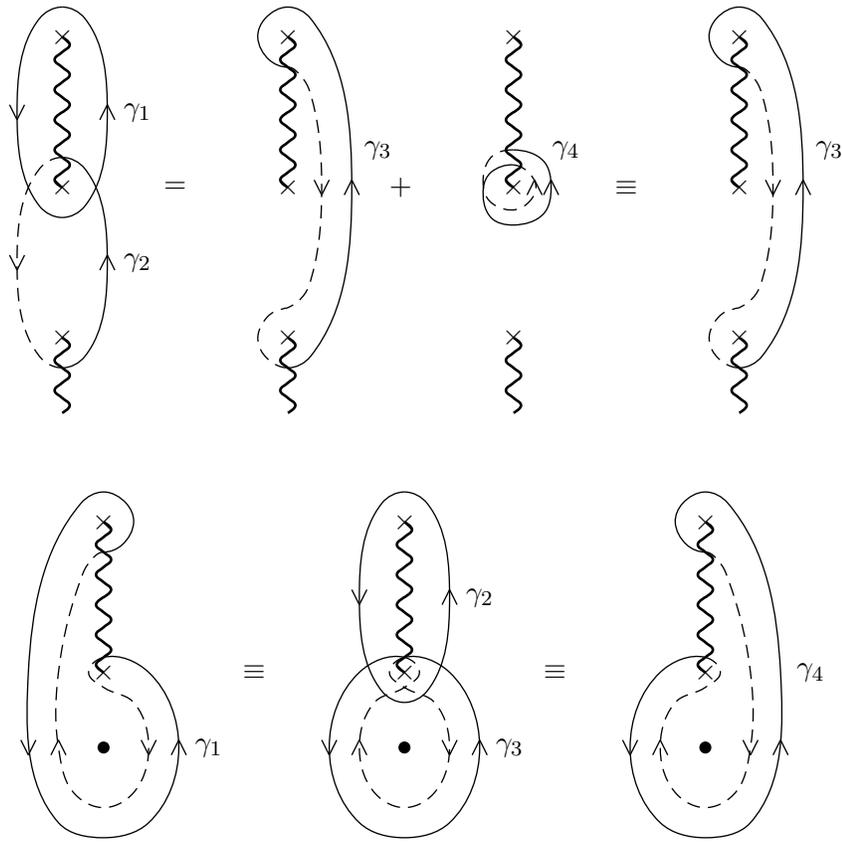

From now on, we  identify
paths $\beta$ and cycles $\gamma$ {\em modulo the $*$-equivalence}.
In other words,
we consider classes
$[\beta] \in \tilde{\Gamma}^{\vee}$
and
$[\gamma] \in \tilde{\Gamma}$
and denote them
by their representatives $\beta$ and $\gamma$,
for the notational simplicity.
{ Any path $\beta \in \tilde{\Gamma}^{\vee}$
and any cycle $\gamma \in \tilde{\Gamma}$
are specified by presenting their representatives
in a particular choice of branch.
In Figure \ref{fig:path-cycle1}
some examples of representatives of a path
$\beta \in \tilde{\Gamma}^{\vee}$ and a cycle
$\gamma \in \tilde{\Gamma}$
in different branches are given.
}

\begin{figure}
\begin{center}
\begin{pspicture}(-5,0)(-1,4.4)
\psset{unit=10mm}
%
\psset{fillstyle=solid, fillcolor=black}

\rput[c]{0}(-3,0){\small $\oplus$}
\rput[c]{0}(-3,4){\small $\oplus$}
\rput[c]{0}(-5,2){\small $\oplus$}
\rput[c]{0}(-1,2){\small $\ominus$}
\psset{fillstyle=none}
\psset{linewidth=1pt}
\pscurve(-3,1)(-3.1,0.9)(-3.2,1)(-3.3,1.1)(-3.4,1)(-3.5,0.9)
(-3.6,1)(-3.7,1.1)(-3.8,1)(-3.9,0.9)(-4.0,1)
\pscurve(-3,3)(-3.1,3.1)(-3.2,3)(-3.3,2.9)(-3.4,3)(-3.5,3.1)
(-3.6,3)(-3.7,2.9)(-3.8,3)(-3.9,3.1)(-4.0,3)
\psset{linewidth=0.5pt}

\psline(-3,3.9)(-3,3)
\psline(-3,0.1)(-3,1)
\pscurve(-3,3)(-2.2,2.5)(-1.1,2.05)
\pscurve(-3,1)(-2.2,1.5)(-1.1,1.95)
\pscurve(-3,3)(-3.8,2.5)(-4.9,2.05)
\pscurve(-3,1)(-3.8,1.5)(-4.9,1.95)
\psset{linewidth=1pt}
\psline(-4.9,2)(-1.1,2)
\psecurve(-3.6,2)(-3.45,1)(-3.3,0.75)(-3,0.6)(-2.7,0.75)(-2.4,2)(-2.7,3.25)(-3,3.4)
(-3.45,3)(-3.6,2)
\psecurve[linestyle=dashed]
(-3.3,0.75)(-3.45,1)(-3.6,2)(-3.45,3)(-3.3,3.25)
\psline(-2.4,2.3)(-2.5,2.1)
\psline(-2.4,2.3)(-2.3,2.1)
\psline(-3.6,2.1)(-3.7,2.3)
\psline(-3.6,2.1)(-3.5,2.3)
\psline(-2.1,2)(-1.9,2.1)
\psline(-2.1,2)(-1.9,1.9)
%
%
%
\rput[c]{0}(-3,1){\small $\times$}
\rput[c]{0}(-3,3){\small $\times$}
\rput[c]{0}(-2.3,3){\small $\gamma$}
\rput[c]{0}(-3,2.3){\small $\beta$}
\end{pspicture}
\hskip30pt
\begin{pspicture}(-5,0)(-1,4.4)
\psset{unit=10mm}
%
\psset{fillstyle=solid, fillcolor=black}

\rput[c]{0}(-3,0){\small $\ominus$}
\rput[c]{0}(-3,4){\small $\ominus$}
\rput[c]{0}(-5,2){\small $\ominus$}
\rput[c]{0}(-1,2){\small $\oplus$}
\psset{fillstyle=none}
\psset{linewidth=1pt}
\pscurve(-3,1)(-3.1,0.9)(-3.2,1)(-3.3,1.1)(-3.4,1)(-3.5,0.9)
(-3.6,1)(-3.7,1.1)(-3.8,1)(-3.9,0.9)(-4.0,1)
\pscurve(-3,3)(-3.1,3.1)(-3.2,3)(-3.3,2.9)(-3.4,3)(-3.5,3.1)
(-3.6,3)(-3.7,2.9)(-3.8,3)(-3.9,3.1)(-4.0,3)
\psset{linewidth=0.5pt}

\psline(-3,3.9)(-3,3)
\psline(-3,0.1)(-3,1)
\pscurve(-3,3)(-2.2,2.5)(-1.1,2.05)
\pscurve(-3,1)(-2.2,1.5)(-1.1,1.95)
\pscurve(-3,3)(-3.8,2.5)(-4.9,2.05)
\pscurve(-3,1)(-3.8,1.5)(-4.9,1.95)
\psset{linewidth=1pt}
\psline(-4.9,2)(-1.1,2)
\psecurve(-3.6,2)(-3.45,1)(-3.3,0.75)(-3,0.6)(-2.7,0.75)(-2.4,2)(-2.7,3.25)(-3,3.4)
(-3.45,3)(-3.6,2)
\psecurve[linestyle=dashed]
(-3.3,0.75)(-3.45,1)(-3.6,2)(-3.45,3)(-3.3,3.25)
%
\psline(-2.4,2.1)(-2.5,2.3)
\psline(-2.4,2.1)(-2.3,2.3)
\psline(-3.6,2.3)(-3.7,2.1)
\psline(-3.6,2.3)(-3.5,2.1)
\psline(-2.1,2.1)(-1.9,2)
\psline(-2.1,1.9)(-1.9,2)
%
%
%
\rput[c]{0}(-3,1){\small $\times$}
\rput[c]{0}(-3,3){\small $\times$}
\rput[c]{0}(-2.3,3){\small $\gamma$}
\rput[c]{0}(-3,2.3){\small $\beta$}
\end{pspicture}
\hskip30pt
\begin{pspicture}(-5,0)(-1,4.4)
\psset{unit=10mm}
%
\psset{fillstyle=solid, fillcolor=black}

\rput[c]{0}(-3,0){\small $\oplus$}
\rput[c]{0}(-3,4){\small $\oplus$}
\rput[c]{0}(-5,2){\small $\ominus$}
\rput[c]{0}(-1,2){\small $\ominus$}
\psset{fillstyle=none}
\psset{linewidth=1pt}
\pscurve(-3,1)(-3.1,1.1)(-3,1.2)(-2.9,1.3)(-3,1.4)%
(-3.1,1.5)(-3,1.6)(-2.9,1.7)(-3,1.8)(-3.1,1.9)(-3,2)%
(-2.9,2.1)(-3,2.2)(-3.1,2.3)(-3,2.4)%
(-2.9,2.5)(-3,2.6)(-3.1,2.7)(-3,2.8)(-2.9,2.9)(-3,3)%
\psset{linewidth=0.5pt}

\psline(-3,3.9)(-3,3)
\psline(-3,0.1)(-3,1)
\pscurve(-3,3)(-2.2,2.5)(-1.1,2.05)
\pscurve(-3,1)(-2.2,1.5)(-1.1,1.95)
\pscurve(-3,3)(-3.8,2.5)(-4.9,2.05)
\pscurve(-3,1)(-3.8,1.5)(-4.9,1.95)
\psset{linewidth=1pt}
\psline[linestyle=dashed](-4.9,2)(-3,2)
\psline(-3,2)(-1.1,2)
\psecurve(-3.3,0.75)(-3,0.6)(-2.7,0.75)(-2.4,2)(-2.7,3.25)(-3,3.4)(-4.3,3.25)
\psecurve(-2.7,0.75)(-3,0.6)(-3.3,0.75)(-3.6,2)(-3.3,3.25)(-3,3.4)(-2.7,3.25)
\psline(-2.4,2.3)(-2.5,2.1)
\psline(-2.4,2.3)(-2.3,2.1)
\psline(-3.6,2.1)(-3.7,2.3)
\psline(-3.6,2.1)(-3.5,2.3)
\psline(-2.1,2)(-1.9,2.1)
\psline(-2.1,2)(-1.9,1.9)
\psline(-4.1,2)(-3.9,2.1)
\psline(-4.1,2)(-3.9,1.9)
\rput[c]{0}(-3,1){\small $\times$}
\rput[c]{0}(-3,3){\small $\times$}
\rput[c]{0}(-2.3,3){\small $\gamma$}
\rput[c]{0}(-2.7,2.3){\small $\beta$}
\end{pspicture}
\end{center}
\caption{Examples of representatives of a path $\beta\in \tilde{\Gamma}^{\vee}$ 
and a cycle $\gamma\in \tilde{\Gamma}$
in different branches.}
\label{fig:path-cycle1}
\end{figure}
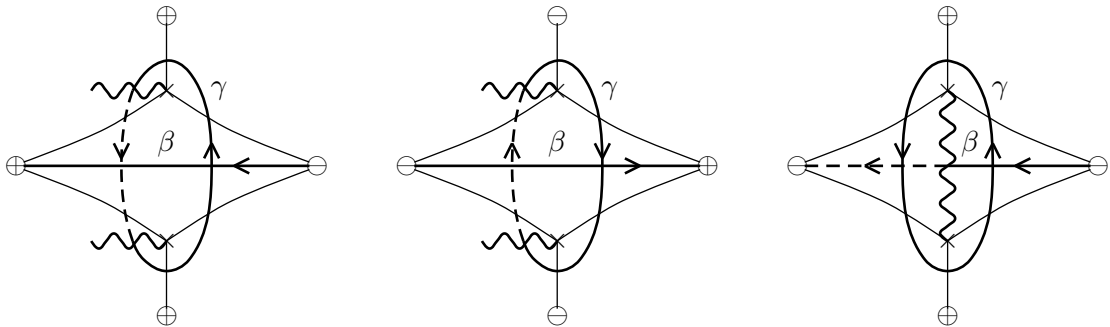

Now we introduce one of key objects in our work.
\begin{defn}
Let $\phi$ be a saddle-free quadratic differential on $\Sigma$.
Let $G=G(\phi)$ be a  labeled Stokes graph of $\phi$,
and let $D_1$, \dots, $D_n$ be the Stokes regions of $G$
which are regular or degenerate horizontal strips.
To each  region $D_i$, we assign
a path ${\beta}_i \in \tilde{\Gamma}^{\vee}$
and a cycle ${\gamma}_i \in \tilde{\Gamma}^{\vee}$
as follows:
\begin{itemize}
\item
If $D_i$ is  a regular horizontal strip not surrounding a degenerate horizontal strip,
${\beta}_i$ and $\gamma_i$ are given  in
Figure \ref{fig:bg1}.
(The presented therein are their representatives in a particular choice of branch.)

\item
If $D_i$ is a degenerate horizontal strip
and $D_j$ is  the regular horizontal strip surrounding $D_i$,
then ${\beta}_i, {\beta}_j, \gamma_i,\gamma_j$ are given  in
 Figure \ref{fig:bg2}.
(Both figures in   Figure \ref{fig:bg2} yield the same $*$-equivalence classes
as demonstrated in Figures 
\ref{fig:path1} and \ref{fig:cycle1}.)
\end{itemize}
We call ${\beta}^{}_1, \dots, \beta^{}_n\in \Gamma^{\vee}$
and $\gamma^{}_1,\dots, \gamma^{}_n\in \Gamma$ the  {\em simple paths\/} and
the {\em simple cycles\/} of $G$,
respectively.

\end{defn}

\begin{figure}
\begin{center}
\begin{pspicture}(-6,0)(-1,5)
\psset{unit=12mm}
%
\psset{fillstyle=solid, fillcolor=black}

\rput[c]{0}(-3,0){\small $\oplus$}
\rput[c]{0}(-3,4){\small $\oplus$}
\rput[c]{0}(-5,2){\small $\ominus$}
\rput[c]{0}(-1,2){\small $\ominus$}
\psset{fillstyle=none}
\psset{linewidth=1pt}
\pscurve(-3,1)(-3.1,1.1)(-3,1.2)(-2.9,1.3)(-3,1.4)%
(-3.1,1.5)(-3,1.6)(-2.9,1.7)(-3,1.8)(-3.1,1.9)(-3,2)%
(-2.9,2.1)(-3,2.2)(-3.1,2.3)(-3,2.4)%
(-2.9,2.5)(-3,2.6)(-3.1,2.7)(-3,2.8)(-2.9,2.9)(-3,3)%
\psset{linewidth=0.5pt}

\psline(-3,3.9)(-3,3)
\psline(-3,0.1)(-3,1)
\pscurve(-3,3)(-2.2,2.5)(-1.1,2.05)
\pscurve(-3,1)(-2.2,1.5)(-1.1,1.95)
\pscurve(-3,3)(-3.8,2.5)(-4.9,2.05)
\pscurve(-3,1)(-3.8,1.5)(-4.9,1.95)
\psset{linewidth=1pt}
\psline[linestyle=dashed](-4.9,2)(-3,2)
\psline(-3,2)(-1.1,2)
\psecurve(-3.3,0.75)(-3,0.6)(-2.7,0.75)(-2.4,2)(-2.7,3.25)(-3,3.4)(-4.3,3.25)
\psecurve(-2.7,0.75)(-3,0.6)(-3.3,0.75)(-3.6,2)(-3.3,3.25)(-3,3.4)(-2.7,3.25)
\psline(-2.4,2.3)(-2.5,2.1)
\psline(-2.4,2.3)(-2.3,2.1)
\psline(-3.6,2.1)(-3.7,2.3)
\psline(-3.6,2.1)(-3.5,2.3)
\psline(-2.1,2)(-1.9,2.1)
\psline(-2.1,2)(-1.9,1.9)
\psline(-4.1,2)(-3.9,2.1)
\psline(-4.1,2)(-3.9,1.9)
\rput[c]{0}(-3,1){\small $\times$}
\rput[c]{0}(-3,3){\small $\times$}
\rput[c]{0}(-2.3,3){\small $\gamma_i$}
\rput[c]{0}(-2.7,2.3){\small $\beta_i$}
\end{pspicture}
\end{center}
\caption{Simple path and simple cycle
for a regular horizontal strip $D_i$ not surrounding a degenerate horizontal strip.
}
\label{fig:bg1}
\end{figure}
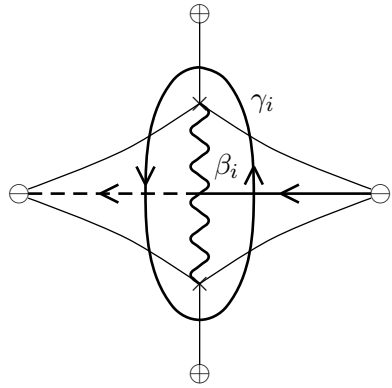
%
%
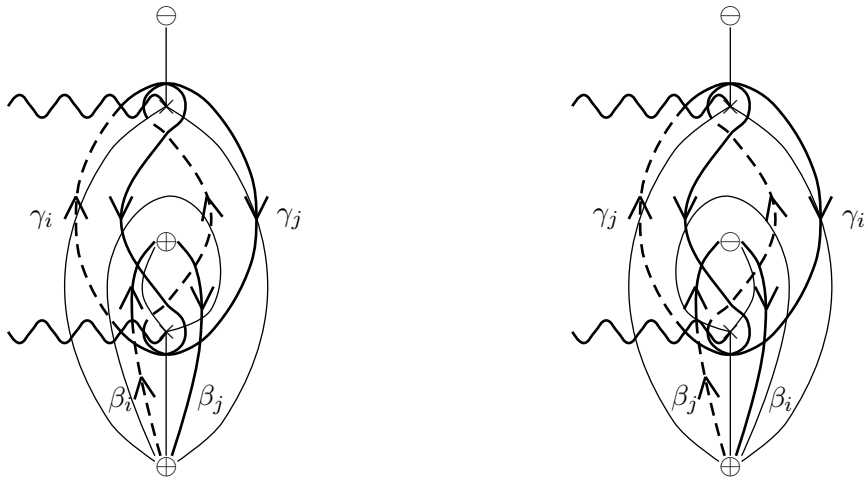
\begin{figure}
\begin{center}
\begin{pspicture}(-1.4,0)(9,6.2)
\psset{unit=15mm}
\psset{linewidth=0.5pt}
\rput[c]{0}(0,0){\small $\oplus$}
\rput[c]{0}(0,2){\small $\oplus$}
\rput[c]{0}(0,4){\small $\ominus$}
\rput[c]{0}(0,3.2){\small $\times$}
\rput[c]{0}(0,1.2){\small $\times$}
\rput[c]{0}(-1.1,2.2){\small $\gamma_i{}$}
\rput[c]{0}(1.1,2.2){\small $\gamma_j{}$}
\rput[c]{0}(-0.4,0.6){\small $\beta_i{}$}
\rput[c]{0}(0.4,0.6){\small $\beta_j{}$}
\psset{linewidth=0.5pt,linestyle=solid}
\pscurve(0.1,0.05)(0.5,0.4)(0.9,1.6)(0.5,2.8)(0,3.2)
\pscurve(-0.1,0.05)(-0.5,0.4)(-0.9,1.6)(-0.5,2.8)(-0,3.2)
\pscurve(0,1.2)(0.4,1.4)(0.35,2.2)(0,2.4)(-0.5,1.8)(-0.1,0.1)

\pscurve(0,1.2)(-0.2,1.5)(-0.2,1.7)(-0.1,1.95)
\psline(0,0.1)(0,1.2)
\psline(0,3.2)(0,3.9)
\psset{linewidth=1pt,linestyle=solid}
\pscurve(0,1.2)(-0.1,1.1)(-0.2,1.2)(-0.3,1.3)(-0.4,1.2)
(-0.5,1.1)(-0.6,1.2)(-0.7,1.3)(-0.8,1.2)(-0.9,1.1)(-1.0,1.2)
(-1.1,1.3)(-1.2,1.2)(-1.3,1.1)(-1.4,1.2)
\pscurve(0,3.2)(-0.1,3.3)(-0.2,3.2)(-0.3,3.1)(-0.4,3.2)
(-0.5,3.3)(-0.6,3.2)(-0.7,3.1)(-0.8,3.2)(-0.9,3.3)(-1,3.2)
(-1.1,3.1)(-1.2,3.2)(-1.3,3.3)(-1.4,3.2)
\psecurve[linewidth=1pt,linestyle=solid]
(-0.75,2.6)(-0.4,3.2)(-0.2,3.35)(0,3.4)(0.15,3.3)(0.15,3.1)(0,2.95)(-0.4,2.2)(0,1.45)
(0.15,1.3)(0.15,1.1)(0,1)(-0.2,1.05)(-0.4,1.2)(-0.75,1.8)
\psecurve[linewidth=1pt,linestyle=dashed]
(-0.2,1.05)(-0.4,1.2)(-0.75,1.8)(-0.8,2.2)(-0.75,2.6)(-0.4,3.2)(-0.2,3.35)
\psecurve[linewidth=1pt,linestyle=solid]
(-0.17,1.3)(-0.2,1.2)(-0.17,1.1)(0,1)(0.2,1.05)(0.4,1.2)(0.75,1.8)(0.8,2.2)(0.75,2.6)(0.4,3.2)(0.2,3.35)(0,3.4)(-0.17,3.3)(-0.2,3.2)(-0.17,3.1)
\psecurve[linewidth=1pt,linestyle=dashed]
(-0.17,3.3)(-0.2,3.2)(-0.17,3.1)(0,2.95)(0.4,2.2)(0,1.45)
(-0.17,1.3)(-0.2,1.2)(-0.17,1.1)
\psset{linewidth=1pt,linestyle=solid}
\psline(-0.4,2.2)(-0.3,2.4)
\psline(-0.4,2.2)(-0.5,2.4)
\psline(0.8,2.2)(0.7,2.4)
\psline(0.8,2.2)(0.9,2.4)
\psline(-0.8,2.4)(-0.7,2.2)
\psline(-0.8,2.4)(-0.9,2.2)
\psline(0.36,2.4)(0.5,2.22)
\psline(0.36,2.4)(0.32,2.18)

\pscurve[linewidth=1pt,linestyle=solid]
(0.1,2)(0.25,1.8)(0.3,1.1)(0.05,0.1)
\psecurve[linewidth=1pt,linestyle=solid]
(-0.1,2)(-0.1,2)(-0.25,1.8)(-0.3,1.3)(-0.05,0.1)
\psecurve[linewidth=1pt,linestyle=dashed]
(-0.25,1.8)(-0.3,1.3)(-0.05,0.1)(-0.05,0.1)
\psset{linewidth=1pt,linestyle=solid}
\psline(-0.31,1.6)(-0.21,1.4)
\psline(-0.31,1.6)(-0.41,1.4)
\psline(0.32,1.4)(0.42,1.6)
\psline(0.32,1.4)(0.22,1.6)
\psline(-0.22,0.8)(-0.28,0.6)
\psline(-0.22,0.8)(-0.1,0.62)

%
%
\psset{linewidth=0.5pt}
\rput[c]{0}(5,0){\small $\oplus$}
\rput[c]{0}(5,2){\small $\ominus$}
\rput[c]{0}(5,4){\small $\ominus$}
\rput[c]{0}(5,3.2){\small $\times$}
\rput[c]{0}(5,1.2){\small $\times$}
\rput[c]{0}(3.9,2.2){\small $\gamma_j$}
\rput[c]{0}(6.1,2.2){\small $\gamma_i$}
\rput[c]{0}(4.6,0.6){\small $\beta_j{}$}
\rput[c]{0}(5.45,0.6){\small $\beta_i{}$}

\psset{linewidth=0.5pt,linestyle=solid}
\pscurve(5.1,0.05)(5.5,0.4)(5.9,1.6)(5.5,2.8)(5,3.2)
\pscurve(4.9,0.05)(4.5,0.4)(4.1,1.6)(4.5,2.8)(5,3.2)
\pscurve(5,1.2)(4.6,1.4)(4.65,2.2)(5,2.4)(5.5,1.8)(5.1,0.1)
\pscurve(5,1.2)(5.2,1.5)(5.2,1.7)(5.1,1.95)
\psline(5,0.1)(5,1.2)
\psline(5,3.2)(5,3.9)
\psset{linewidth=1pt,linestyle=solid}
\pscurve(5,1.2)(4.9,1.1)(4.8,1.2)(4.7,1.3)(4.6,1.2)
(4.5,1.1)(4.4,1.2)(4.3,1.3)(4.2,1.2)(4.1,1.1)(4,1.2)
(3.9,1.3)(3.8,1.2)(3.7,1.1)(3.6,1.2)
\pscurve(5,3.2)(4.9,3.3)(4.8,3.2)(4.7,3.1)(4.6,3.2)
(4.5,3.3)(4.4,3.2)(4.3,3.1)(4.2,3.2)(4.1,3.3)(4,3.2)
(3.9,3.1)(3.8,3.2)(3.7,3.3)(3.6,3.2)
\psecurve[linewidth=1pt,linestyle=solid]
(4.25,2.6)(4.6,3.2)(4.8,3.35)(5,3.4)(5.15,3.3)(5.15,3.1)(5,2.95)(4.6,2.2)(5,1.45)
(5.15,1.3)(5.15,1.1)(5,1)(4.8,1.05)(4.6,1.2)(4.25,1.8)
\psecurve[linewidth=1pt,linestyle=dashed]
(4.8,1.05)(4.6,1.2)(4.25,1.8)(4.2,2.2)(4.25,2.6)(4.6,3.2)(4.8,3.35)
\psecurve[linewidth=1pt,linestyle=solid]
(4.83,1.3)(4.8,1.2)(4.83,1.1)(5,1)(5.2,1.05)(5.4,1.2)(5.75,1.8)(5.8,2.2)(5.75,2.6)(5.4,3.2)(5.2,3.35)(5,3.4)(4.83,3.3)(4.8,3.2)(4.83,3.1)
\psecurve[linewidth=1pt,linestyle=dashed]
(4.83,3.3)(4.8,3.2)(4.83,3.1)(5,2.95)(5.4,2.2)(5,1.45)
(4.83,1.3)(4.8,1.2)(4.83,1.1)

\psset{linewidth=1pt,linestyle=solid}
\psline(4.6,2.2)(4.7,2.4)
\psline(4.6,2.2)(4.5,2.4)
\psline(5.8,2.2)(5.7,2.4)
\psline(5.8,2.2)(5.9,2.4)
\psline(4.2,2.4)(4.3,2.2)
\psline(4.2,2.4)(4.1,2.2)
\psline(5.36,2.4)(5.5,2.22)
\psline(5.36,2.4)(5.32,2.18)
\pscurve[linewidth=1pt,linestyle=solid]
(5.1,2)(5.25,1.8)(5.3,1.1)(5.05,0.1)
\psecurve[linewidth=1pt,linestyle=solid]
(4.9,2)(4.9,2)(4.75,1.8)(4.7,1.3)(4.95,0.1)
\psecurve[linewidth=1pt,linestyle=dashed]
(4.75,1.8)(4.7,1.3)(4.95,0.1)(4.95,0.1)
\psset{linewidth=1pt,linestyle=solid}
\psline(4.69,1.6)(4.79,1.4)
\psline(4.69,1.6)(4.59,1.4)
\psline(5.32,1.4)(5.42,1.6)
\psline(5.32,1.4)(5.22,1.6)
\psline(4.78,0.8)(4.72,0.6)
\psline(4.78,0.8)(4.9,0.62)
\end{pspicture}
\end{center}
\caption{Simple paths and simple cycles
for a degenerate horizontal strip $D_i$
and for the regular horizontal strip $D_j$ surrounding $D_i$.}
\label{fig:bg2}
\end{figure}

\begin{rem} 
The simple cycles  correspond to
the  {\em modified basis\/} in
\cite{Bridgeland13} in their convention of the homology group.
\end{rem}

We define $\Gamma^{\vee}$ to be
the subgroup of 
$\tilde{\Gamma}^{\vee}
$ generated by $
\beta_1, \dots, 
\beta_n$.
Similarly,
we define $\Gamma$ to be
the subgroup of 
$\tilde{\Gamma}
$ generated by $
\gamma_1, \dots, 
\gamma_n$.
It can be easily checked that the intersection forms in \eqref{eq:braket1} and \eqref{eq:intersection1}
induce the intersection forms $\langle ~,~\rangle: \Gamma \times \Gamma^{\vee}
\rightarrow \bbZ$
and 
$(~,~): \Gamma \times \Gamma
\rightarrow \bbZ$.

\begin{prop}
\label{prop:dual1}
We have
\begin{align}
\label{eq:gb3}
\langle \gamma_i, \beta_j  \rangle =\delta_{ij},
\end{align}
so that the simple paths $
\beta_1, \dots, 
\beta_n \in \Gamma^{\vee}$
and the simple cycles $
\gamma_1, \dots, 
\gamma_n \in \Gamma$
are the dual bases to each other.
\end{prop}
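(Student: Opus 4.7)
The plan is to verify \eqref{eq:gb3} by direct inspection of the representatives of the simple paths and simple cycles drawn in Figures \ref{fig:bg1} and \ref{fig:bg2}. The first observation is that the intersection pairing $\langle\cdot,\cdot\rangle$ on $\Gamma \times \Gamma^\vee$ is well-defined: if $\beta \equiv \beta'$ in the sense that $\beta - \beta' = \beta^* - \beta'{}^*$, then for any $\tau$-invariant-free cycle $\gamma$ one has $\langle \gamma, \beta^{*}\rangle = -\langle \gamma, \beta\rangle$ (by the anti-invariance of orientations under the covering involution $\tau$), and this ensures that the pairing descends from the full (relative) homology groups to the quotients. Consequently, one may evaluate $\langle \gamma_i,\beta_j\rangle$ using any chosen representatives in $\hat\Sigma$.

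First I would establish $\langle \gamma_i,\beta_i\rangle = 1$. In the configuration of Figure \ref{fig:bg1}, where $D_i$ is a regular horizontal strip that does not surround a degenerate strip, the representative of $\beta_i$ is an arc lying on a single sheet of $\hat\Sigma$ and crossing $D_i$ between its two boundary poles, while the representative of $\gamma_i$ encircles the two boundary turning points and closes up on $\hat\Sigma$ after passing through $D_i$ once (the sheet switch being absorbed at the two branch points). A local inspection shows that these lifts meet transversally at exactly one point, with sign $+1$ under the normalization $\langle x\text{-axis},y\text{-axis}\rangle = +1$ and the prescribed orientations of $\beta_i$ and $\gamma_i$ (consistent with the orientation rule of Section \ref{section:orientation}). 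The degenerate configuration of Figure \ref{fig:bg2} is handled identically: one checks separately that $\gamma_i$ meets $\beta_i$ at exactly one transverse point in $D_i$ and that $\gamma_j$ meets $\beta_j$ at exactly one transverse point in $D_j$.

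Next I would show $\langle \gamma_i,\beta_j\rangle = 0$ for $i \ne j$ by arranging disjoint representatives. In the generic situation, the Stokes regions $D_i$ and $D_j$ are disjoint open subsets of $\Sigma$ whose closures meet only along Stokes curves, turning points, and poles. Since $\beta_j$ may be taken to lie in the closed region $\overline{D_j}$ with endpoints at poles, and $\gamma_i$ is supported near $\overline{D_i}$ and avoids turning points and poles, one may perturb $\gamma_i$ slightly off the shared boundary so that the projections $\pi(\gamma_i)$ and $\pi(\beta_j)$ are disjoint; hence their lifts in $\hat\Sigma$ are disjoint, and the intersection number vanishes.

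The hard part will be the adjacency configuration of Figure \ref{fig:bg2}, where $D_i$ (a degenerate strip) and $D_j$ (the surrounding regular strip) are genuinely adjacent, so that pure separation of projections is impossible. Here the intersection count must be carried out directly on $\hat\Sigma$, making essential use of the $*$-equivalence: the two representatives of each $\gamma$ and $\beta$ shown in Figure \ref{fig:bg2} differ by such an equivalence, and switching between them exchanges various crossings between the two sheets. The verification amounts to checking that the small loop $\gamma_i$ around the inner pole does not link $\beta_j$, and that the apparent crossings between $\gamma_j$ and $\beta_i$ occur in pairs which lie in opposite sheets and therefore cancel after passing to the quotient. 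Once this picture is confirmed by explicit inspection, \eqref{eq:gb3} follows for all pairs $(i,j)$; in particular the simple paths and simple cycles form dual bases of $\Gamma^\vee$ and $\Gamma$, so that both groups are free of rank $n$.
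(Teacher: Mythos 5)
Your proposal takes essentially the same approach as the paper, whose proof of this proposition is the single sentence ``This can be checked by inspecting Figures \ref{fig:bg1} and \ref{fig:bg2}.'' Your write-up is exactly such an inspection, organized into the well-definedness of the descended pairing, the diagonal crossing count, and the off-diagonal vanishing including the degenerate configuration of Figure \ref{fig:bg2}.
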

\begin{proof}
This can be checked by  inspecting
Figures \ref{fig:bg1} and \ref{fig:bg2}.
\end{proof}

Let us observe that the simple paths and the simple cycles are naturally integrated into
 cluster algebra theory.
Let $T$ be the labeled Stokes triangulation
associated with $G$.
The following formula is the first justification of the definition
of the simple cycles.

\begin{prop}
\label{prop:bmat1}
 Let $B=(b_{ij})_{i,j=1}^n$ be the adjacency matrix of $T$.
Then,
we have
\begin{align}
\label{eq:gamma1}
( \gamma_i, \gamma_j ) &= b_{ij}.
\end{align}
\end{prop}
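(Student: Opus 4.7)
\medskip
\noindent\textbf{Proof plan.} Both sides of \eqref{eq:gamma1} are antisymmetric bilinear forms in the indices $(i,j)$, so I will compute them by summing local contributions. The key geometric observation is that, by Figures \ref{fig:bg1} and \ref{fig:bg2}, a representative of the simple cycle $\gamma_i$ can be chosen to be supported in an arbitrarily small neighborhood of the closure of the Stokes region $D_i$; more precisely, in (the preimage by $\pi$ of) a small neighborhood of the two turning points lying on $\partial D_i$ together with a thin tube joining them (these two turning points coincide when $D_i$ is a degenerate horizontal strip, in which case the tube becomes a small loop through the self-folded triangle). Therefore the intersection number $(\gamma_i,\gamma_j)$ receives contributions only from turning points $a$ which lie on both $\partial D_i$ and $\partial D_j$. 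By construction of the associated Stokes triangulation $T$, such a turning point $a$ corresponds to a triangle $\Delta$ of $T$ of which both arcs $\alpha_i$ and $\alpha_j$ are sides. In particular, $(\gamma_i,\gamma_j)=0$ whenever the arcs $\alpha_i$ and $\alpha_j$ share no triangle, matching $b_{ij}=0$ in that case.

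\medskip
\noindent The plan is then to reduce to a local calculation triangle-by-triangle and show
\begin{equation}
(\gamma_i,\gamma_j)_{a}=b^{\Delta}_{ij},
\end{equation}
where the left-hand side denotes the local contribution at the turning point $a$ at the midpoint of $\Delta$, and the right-hand side is the elementary summand \eqref{eq:b1}. Summing over all triangles then gives \eqref{eq:gamma1}. For the local computation, I use the standard fact (Section \ref{section:Stokes-graphs}) that the three Stokes curves emanating from a simple zero $a$ divide a punctured neighborhood of $a$ into three sectors, each of which belongs to exactly one Stokes region incident to $a$. Near $a$, one trivialises the double cover $\hat{\Sigma}\to\Sigma$ and fixes the orientation of trajectories as in Section \ref{section:orientation}; a direct inspection of Figures \ref{fig:bg1} and \ref{fig:foliation1} shows that the two lifts of a simple cycle $\gamma_i$ cross transversely near (each preimage of) $a$ in a controlled way, and the local intersection of $\gamma_i$ with $\gamma_j$ equals $+1$ when the angle from $\alpha_i$ to $\alpha_j$ in $\Delta$ is counter-clockwise, $-1$ when it is clockwise, and $0$ when $\alpha_i=\alpha_j$ as sides of $\Delta$. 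This matches \eqref{eq:b1}.

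\medskip
\noindent\textbf{Main obstacle.} The main difficulty is the presence of degenerate triangles, where the naive local analysis above needs to be re-examined. There are two subcases. First, when $\Delta$ is a triangle with two (or three) identified vertices but is not self-folded, one must verify that the local intersection count is unaffected by the identification of marked points on $\partial\bfS$; this is essentially automatic because the simple cycles are defined on $\hat{\Sigma}$ away from $\hat{P}_{\infty}$. Second, and more seriously, when $\Delta$ is a self-folded triangle enclosing a double pole $p$, the inner arc $\alpha_i$ and the outer arc $\alpha_{\overline{i}}$ should give the same adjacency data by the definition $b_{ij}=b_{\overline{i}j}$, and the corresponding Stokes region $D_i$ is a degenerate horizontal strip. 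Here the simple cycles $\gamma_i$ and $\gamma_{\overline{i}}$ must be read off from Figure \ref{fig:bg2} (the figure-eight picture), and I will have to check that the two cycles are $*$-equivalent to cycles that intersect any third $\gamma_j$ ($j\neq i,\overline{i}$) in the same way. This follows from the typical deformations depicted in Figure \ref{fig:cycle1}, which are precisely designed to reshuffle a cycle around a double pole so that its intersection behaviour near the self-folded triangle is identical to that of the outer cycle; together with Proposition \ref{prop:dual1} (dual bases), this pins down the self-intersection entries $(\gamma_i,\gamma_{\overline{i}})$ as well. Once this self-folded case is carefully dispatched, the proposition follows by summing local contributions.
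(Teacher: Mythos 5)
Your approach is essentially the same as the paper's (a geometric case-check by inspection of Figures \ref{fig:bg1}, \ref{fig:bg2}, \ref{fig:gamma2}), but you organize it more systematically as a sum of local per-turning-point contributions, which mirrors the definition of $b_{ij}$ as a sum $\sum_\Delta b^{\Delta}_{ij}$ and also handles the case $b_{ij}=2$ (two shared triangles) cleanly as two $+1$ contributions. The paper instead checks pairs of Stokes regions directly: both regular (with $b_{ij}=1$ or $2$), and degenerate-versus-surrounding-regular. The two organizations are close enough that I would not call them genuinely different routes.

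There is, however, a real gap in your treatment of the self-folded case. You claim that Proposition \ref{prop:dual1} ``pins down the self-intersection entries $(\gamma_i,\gamma_{\overline{i}})$'', but $\langle\gamma_i,\beta_j\rangle=\delta_{ij}$ by itself does not determine $(\gamma_i,\gamma_{\overline{i}})$: to convert $\langle\,,\,\rangle$-information into $(\,,\,)$-information you also need to know how $\gamma_{\overline{i}}$ decomposes in the $\beta$-basis, which is precisely Proposition \ref{prop:bg3}, namely $\gamma_{\overline{i}}=\sum_j b_{j\overline{i}}\beta_j$. With that in hand you get $(\gamma_i,\gamma_{\overline{i}})=b_{i\overline{i}}=b_{\overline{i}\,\overline{i}}=0$, and since Proposition \ref{prop:bg3} is itself proved by independent figure inspection, citing it here is not circular --- but you do need to cite it, or else (as the paper does) just read $(\gamma_i,\gamma_{\overline{i}})=0$ off directly from Figure \ref{fig:bg2}. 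As written, the appeal to Proposition \ref{prop:dual1} alone leaves the key entry of the self-folded case unjustified. The rest of the argument, including the reduction to shared turning points and the use of the deformations in Figure \ref{fig:cycle1} to match $(\gamma_i,\gamma_k)$ with $(\gamma_{\overline{i}},\gamma_k)$ for $k\neq i,\overline{i}$, is sound.
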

\begin{proof}
We prove it by case-check.
There are essentially two cases to consider.

Case 1. Suppose that the regions  $D_i$ and $D_j$ are regular
horizontal strips.
Then, $b_{ij}=1$ or $2$.
The case $b_{ij}=1$ is depicted in Figure \ref{fig:gamma2},
and we see that 
$( \gamma_i, \gamma_j ) = 1= b_{ij}$.
In the case $b_{ij}=2$,
identify the paths $\beta_j$ and $-\beta_k$
in Figure \ref{fig:gamma2}.
Then, we have
$( \gamma_i, \gamma_j ) = 2= b_{ij}$.

Case 2. Suppose that the region $D_i$ is a degenerate horizontal strip
and the region $D_j$ is the regular horizontal strip
surrounding $D_i$.
Then, 
by Figure \ref{fig:bg2} we have
$( \gamma_i, \gamma_{j} ) =0
 = b_{ij}$,
 while $( \gamma_i, \gamma_{k} ) 
 =( \gamma_{j}, \gamma_{k} )
 =b_{ik}=b_{jk}$ for
 any $k\neq i,j$. 
\end{proof}

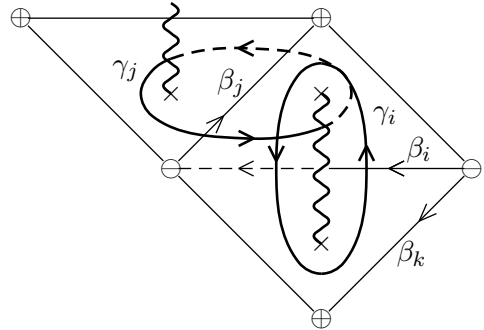
\begin{figure}
\begin{center}
\begin{pspicture}(-7,0)(-1,4.4)
%
\psset{fillstyle=solid, fillcolor=black}
%
\rput[c]{0}(-3,0){\small $\oplus$}
\rput[c]{0}(-3,4){\small $\oplus$}
\rput[c]{0}(-5,2){\small $\ominus$}
\rput[c]{0}(-1,2){\small $\ominus$}
\rput[c]{0}(-7,4){\small $\oplus$}

\psset{fillstyle=none}
\psset{linewidth=1pt}
\pscurve(-3,1)(-3.1,1.1)(-3,1.2)(-2.9,1.3)(-3,1.4)%
(-3.1,1.5)(-3,1.6)(-2.9,1.7)(-3,1.8)(-3.1,1.9)(-3,2)%
(-2.9,2.1)(-3,2.2)(-3.1,2.3)(-3,2.4)%
(-2.9,2.5)(-3,2.6)(-3.1,2.7)(-3,2.8)(-2.9,2.9)(-3,3)%
\pscurve(-5,3)(-5.1,3.1)(-5,3.2)(-4.9,3.3)(-5,3.4)%
(-5.1,3.5)(-5,3.6)(-4.9,3.7)(-5,3.8)(-5.1,3.9)(-5,4)%
(-4.9,4.1)(-5,4.2)%
%
\psset{linewidth=0.5pt}
\psline(-3.1,3.9)(-4.9,2.1)
\psline(-4.9,1.9)(-3.1,0.1)
\psline(-2.9,0.1)(-1.1,1.9)
\psline(-1.1,2.1)(-2.9,3.9)
\psline[linestyle=dashed](-4.9,2)(-3.1,2)
\psline(-2.9,2)(-1.1,2)
\psline(-5.1,2.1)(-6.9,3.9)
\psline(-6.9,4)(-3.1,4)
\psset{linewidth=1pt}
\psecurve(-3.3,0.75)(-3,0.6)(-2.7,0.75)(-2.4,2)(-2.7,3.25)(-3,3.4)(-4.3,3.25)
\psecurve(-2.7,0.75)(-3,0.6)(-3.3,0.75)(-3.6,2)(-3.3,3.25)(-3,3.4)(-2.7,3.25)
\psecurve(-2.75,2.7)(-3,2.54)(-4,2.4)(-5.25,2.7)(-5.4,3)(-5.25,3.3)(-5,3.46)(-4,3.6)%
\psecurve[linestyle=dashed](-5.25,3.3)(-5,3.46)(-4,3.6)(-2.75,3.3)(-2.6,3)(-2.75,2.7)(-3,2.54)(-4,2.4)
\psset{linewidth=0.5pt}
\psline(-2.1,2)(-1.9,2.1)
\psline(-2.1,2)(-1.9,1.9)

\psline(-2.1,2)(-1.9,2.1)
\psline(-2.1,2)(-1.9,1.9)
\psline(-4.1,2)(-3.9,2.1)
\psline(-4.1,2)(-3.9,1.9)
\psline(-4.3,2.7)(-4.53,2.6)
\psline(-4.3,2.7)(-4.4,2.47)
\psline(-1.7,1.3)(-1.47,1.4)
\psline(-1.7,1.3)(-1.6,1.53)
\psset{linewidth=1pt}
\psline(-3.6,2.1)(-3.7,2.3)
\psline(-3.6,2.1)(-3.5,2.3)
\psline(-2.4,2.3)(-2.5,2.1)
\psline(-2.4,2.3)(-2.3,2.1)
\psline(-3.9,2.4)(-4.1,2.3)
\psline(-3.9,2.4)(-4.1,2.5)
\psline(-4.1,3.6)(-3.9,3.7)
\psline(-4.1,3.6)(-3.9,3.5)
\rput[c]{0}(-5,3){\small $\times$}
\rput[c]{0}(-3,1){\small $\times$}
\rput[c]{0}(-3,3){\small $\times$}
\rput[c]{0}(-2.15,2.7){\small $\gamma_i$}
\rput[c]{0}(-5.6,3.3){\small $\gamma_j$}
\rput[c]{0}(-1.7,2.25){\small $\beta_i$}
\rput[c]{0}(-4.2,3.14){\small $\beta_j$}
\rput[c]{0}(-1.8,0.86){\small $\beta_k$}
%
\end{pspicture}
\end{center}
\caption{Calculation of the intersection number
 $( \gamma_i, \gamma_j)=1$.
} 
\label{fig:gamma2}
\end{figure}

\begin{prop}
\label{prop:bg3}
As an element of $\Gamma^{\vee}$,
$\gamma_i$ decomposes as follows:
\begin{align}
\label{eq:bg5}
\gamma_i
=
\sum_{j=1}^n
b_{ji} 
\beta_j.
\end{align}
\end{prop}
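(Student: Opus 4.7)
The plan is to establish the formula in two steps: first show that $\gamma_i$, regarded as an element of $\tilde{\Gamma}^{\vee}$ via the natural inclusion $H_1(\hat{\Sigma}\setminus\hat{P})\hookrightarrow H_1(\hat{\Sigma}\setminus\hat{P}_0,\hat{P}_{\infty})$, actually lies in the subgroup $\Gamma^{\vee}$ generated by the simple paths; then determine the expansion coefficients algebraically by pairing with the simple cycles.

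For the first step, I would work geometrically by a case-check analogous to that in the proof of Proposition \ref{prop:bmat1}. Fix a Stokes region $D_i$ and consider its simple cycle $\gamma_i$ drawn in Figure \ref{fig:bg1} (if $D_i$ is a regular horizontal strip not surrounding a degenerate one) or Figure \ref{fig:bg2} (degenerate case). In $\hat{\Sigma}$, the closure of a lift of $D_i$ is bounded by pieces of Stokes curves; one can deform a representative of $\gamma_i$ outward so that it runs close to this boundary, crossing into the Stokes regions adjacent to $D_i$. Each excursion into a neighboring Stokes region $D_j$ contributes (up to $*$-equivalence and with a definite sign) a copy of the simple path $\beta_j$, precisely because $\beta_j$ is the natural path-representative of a trajectory in $D_j$ connecting two points of $\hat{P}_{\infty}$. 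Reading off the signed count of such contributions via the same local picture that was used to read off the entries of $B(T)$ in Proposition \ref{prop:bmat1} realizes $\gamma_i$ as an element of $\Gamma^{\vee}$; the two genuinely distinct local configurations (regular strip adjacent to another regular strip, and the pair of a degenerate strip together with the regular strip surrounding it) exhaust the cases to check.

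For the second step, assume $\gamma_i=\sum_j c_{ji}\beta_j$ in $\Gamma^{\vee}$. Pair both sides on the left with $\gamma_k$ using $\langle\cdot,\cdot\rangle$: by Proposition \ref{prop:dual1},
\[
\langle \gamma_k,\gamma_i\rangle \;=\; \sum_{j=1}^n c_{ji}\,\langle \gamma_k,\beta_j\rangle \;=\; \sum_{j=1}^n c_{ji}\,\delta_{kj} \;=\; c_{ki}.
\]
On the other hand, the bilinear forms in \eqref{eq:braket1} and \eqref{eq:intersection1} are both geometric intersection numbers and agree on $\Gamma\times\Gamma$ under the inclusion $\tilde{\Gamma}\hookrightarrow\tilde{\Gamma}^{\vee}$ (a standard compatibility of Poincar\'e--Lefschetz duality, since a cycle is a relative cycle with empty boundary). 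Hence $\langle\gamma_k,\gamma_i\rangle=(\gamma_k,\gamma_i)$, and Proposition \ref{prop:bmat1} gives $(\gamma_k,\gamma_i)=b_{ki}$. Combining yields $c_{ki}=b_{ki}$, proving \eqref{eq:bg5}.

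The main obstacle is the geometric first step: showing cleanly that $\gamma_i$ admits a representative that is literally a sum of simple paths modulo $*$-equivalence. The subtlety is that $\gamma_i$ is a closed loop while the $\beta_j$ are paths with endpoints at $\hat{P}_{\infty}$, so the decomposition only makes sense after one uses the $*$-equivalence to close up fragmentary contributions (compare the deformations illustrated in Figures \ref{fig:path1} and \ref{fig:cycle1}, and the alternative representations in Figure \ref{fig:bg2}). Careful tracking of sheets of $\hat{\Sigma}$ and of the signs $\oplus,\ominus$ assigned at points of $P_{\infty}$ is required; once this bookkeeping is in place, the algebraic second step above finishes the proof immediately.
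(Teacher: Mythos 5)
Your overall strategy is correct, and the second (algebraic) step is a nice refinement of what the paper does. Once you know $\gamma_i$ lies in the subgroup $\Gamma^{\vee}$ generated by the simple paths, Proposition~\ref{prop:dual1} makes $\langle\cdot,\cdot\rangle$ a perfect pairing on $\Gamma\times\Gamma^{\vee}$, so pairing with each $\gamma_k$ forces $c_{ki}=\langle\gamma_k,\gamma_i\rangle$; and since the paper declares $(\cdot,\cdot)$ to be the form on $H_1(\hat\Sigma\setminus\hat P)$ induced by $\langle\cdot,\cdot\rangle$ under the natural map to the relative group, indeed $\langle\gamma_k,\gamma_i\rangle=(\gamma_k,\gamma_i)=b_{ki}$ by Proposition~\ref{prop:bmat1}. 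This gives the coefficients by pure linear algebra rather than reading them off pictures, which is cleaner and more robust against sign mistakes. The paper's proof, by contrast, simply says ``case-check using Figures \ref{fig:bg2} and \ref{fig:gamma2},'' i.e.\ it reads off the coefficients directly from the drawn representatives.

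The caveat is that this algebraic shortcut does not eliminate the geometric work: your first step still has to establish that the image of $\gamma_i$ in $\tilde\Gamma^{\vee}$ lies in the span $\Gamma^{\vee}$ of $\beta_1,\dots,\beta_n$, and the only way to do that is to exhibit such a decomposition by tracking $\gamma_i$ through the Stokes regions and sheets of $\hat\Sigma$, modulo $*$-equivalence. That is essentially the same case-check the paper does; once you have done it, you have the coefficients in hand anyway. So the benefit is a consistency check rather than a shortcut. One small inaccuracy in your write-up: the natural map $\tilde\Gamma\to\tilde\Gamma^{\vee}$ induced by $H_1(\hat\Sigma\setminus\hat P)\to H_1(\hat\Sigma\setminus\hat P_0,\hat P_{\infty})$ is what you need, but calling it an ``inclusion'' is unjustified without further argument; fortunately your duality computation only uses that the map exists and is compatible with the two intersection pairings, not that it is injective, so the argument survives.
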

\begin{proof}
This can be verified by case-check
using Figures \ref{fig:bg2} and \ref{fig:gamma2}.
\end{proof}

\subsection{Mutation of simple paths and simple cycles}

Let us examine how the simple paths and the simple cycles
transform (= mutate) under the mutation of
Stokes graphs.
Suppose that there are two labeled  Stokes graphs
$G=G(\phi)$ and $G'=G(\phi')$
which are related
by a signed flip or a signed pop.
Let us distinguish the corresponding homology groups $\Gamma^{\vee}$
 and $\Gamma$
for $\phi$ and $\phi'$
as
 $\Gamma^{\vee}_G$, $\Gamma_G$
and
 $\Gamma^{\vee}_{G'}$, $\Gamma_{G'}$,
 respectively.
By assumption,
the zeros and poles
of $\phi$ continuously move to those of $\phi'$.
This  induces the canonical isomorphisms 
of the homology groups
\begin{align}
\label{eq:iso1}
 \tau_{G,G'}^{\vee}:
  \Gamma^{\vee}_{G'}\overset{\sim}{\rightarrow}\Gamma^{\vee}_G,
  \quad
\tau_{G,G'}:\Gamma_{G'}\overset{\sim}{\rightarrow}
 \Gamma_{G}.
\end{align}
Let $\beta_1,\dots,\beta_n\in\Gamma^{\vee}_G $ and 
$\gamma_1,\dots,\gamma_n\in \Gamma_G$
be the simple paths and cycles of $G$,
and 
$\beta_1',\dots,\beta_n'\in \Gamma^{\vee}_{G'}$ and 
$\gamma_1',\dots,\gamma_n'\in \Gamma_{G'}$
be the ones of $G'$.
Then, the isomorphisms $ \tau_{G,G'}^{\vee}$ and 
$ \tau_{G,G'}$ are explicitly given as follows.

\begin{prop} (a).
({cf. \cite[Lemma 9.11]{Bridgeland13}})
\label{prop:cycle1}
If $G$ and $G'$ are related by
the signed flip $G'=\mu_k^{(\ve)}(G)$, 
then the isomorphisms  $ \tau_{G,G'}^{\vee}$ and 
$ \tau_{G,G'}$ are given by
\begin{align}
\label{eq:bemut1}
\tau_{G,G'}^{\vee}(\beta_i')&=
\begin{cases}
-\beta_k
+\sum_{j=1}^n [-\varepsilon b_{jk}]_+ \beta_j
 & i=k\\
\beta_i 
& i \neq k,
\end{cases}\\
\label{eq:gamut1}
\tau_{G,G'}(\gamma_i')&=
\begin{cases}
-\gamma_k & i=k\\
\gamma_i + [\varepsilon b_{ki}]_+ \gamma_k
& i \neq k.
\end{cases}
\end{align}
\par
(b).
If $G$ and $G'$ are related by
the signed pop $G'=\kappa_p^{(\ve)}(G)$, 
then the isomorphisms  $ \tau_{G,G'}^{\vee}$ and 
$ \tau_{G,G'}$ are trivial,
i.e.,
\begin{align}
\label{eq:bgmut1}
\tau_{G,G'}^{\vee}(\beta_i')= \beta_i,
\quad
\tau_{G,G'}( \gamma_i')=\gamma_i.
\end{align}
\end{prop}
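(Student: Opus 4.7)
The plan is to proceed by a purely local geometric analysis. A signed flip $\mu_k^{(\varepsilon)}$ modifies $G$ only inside the topological quadrilateral surrounding the arc labeled $k$, and a signed pop $\kappa_p^{(\varepsilon)}$ modifies $G$ only inside a digon around $p$; hence any simple path or simple cycle whose defining local picture (Figures \ref{fig:bg1} and \ref{fig:bg2}) is disjoint from this region is transported identically by $\tau$. This already gives $\tau^{\vee}_{G,G'}(\beta'_i)=\beta_i$ for $i\neq k$ in part (a), since $\beta'_i$ is a path \emph{through} the unchanged region $D_i$, and reduces the remaining content of both parts to a finite case-check inside the region of modification.

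For part (a) I first treat $\gamma'_k$ and $\beta'_k$. Regarding $G$ and $G'$ as the saddle reductions $G_{\pm\delta}$ of an intermediate Stokes graph $G_0$ with a unique regular saddle trajectory $\ell_0$, the saddle class $\gamma_0$ of Section \ref{subsec:saddleclass} (with the orientation convention \eqref{eq:saddle-orientation}) coincides with $\gamma_k$ on one side and with $-\gamma'_k$ on the other, which yields $\tau_{G,G'}(\gamma'_k)=-\gamma_k$. For $\beta'_k$ I homotope the new diagonal of the flipped quadrilateral into the old diagonal traversed backward (contributing $-\beta_k$) together with the two edges of the quadrilateral lying on the side selected by $\varepsilon$; each such edge is a portion of a simple path $\beta_j$ associated to a neighboring strip, with multiplicity $[-\varepsilon b_{jk}]_+$ by direct inspection of the local model of Figure \ref{fig:flip3}. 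The remaining formula for $\tau(\gamma'_i)$ with $i\neq k$ is then pinned down by one of two equivalent routes: either by combining the decomposition $\gamma'_i=\sum_j b'_{ji}\beta'_j$ from Proposition \ref{prop:bg3} with the already computed $\tau^{\vee}_{G,G'}(\beta'_k)$ and the mutation rule \eqref{eq:bmut} for $B$, or by imposing the duality $\langle\gamma'_i,\beta'_j\rangle=\delta_{ij}$ of Proposition \ref{prop:dual1}; both routes uniquely force the coefficient of $\gamma_k$ to be $[\varepsilon b_{ki}]_+$, with the sign flip $b_{ik}=-b_{ki}$ reconciling the two expressions.

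Part (b) becomes a tautology once the local pictures of Figure \ref{fig:bg2} are understood: the two panels depict the same bordered configuration with the inner and outer labels interchanged, which is exactly the action of $\kappa_p^{(\varepsilon)}$. A deformation argument of the type illustrated in Figures \ref{fig:path1} and \ref{fig:cycle1} shows that after the label swap the two pairs of representatives define the same classes in $\tilde{\Gamma}^{\vee}$ and $\tilde{\Gamma}$ modulo the $*$-equivalence, so $\tau^{\vee}_{G,G'}(\beta'_i)=\beta_i$ and $\tau_{G,G'}(\gamma'_i)=\gamma_i$. The hard part of the whole argument is the orientation and sign bookkeeping in part (a): to obtain the precise coefficients $[-\varepsilon b_{jk}]_+$ and $[\varepsilon b_{ki}]_+$ (rather than their opposites) one must correlate the geometric direction of the signed flip with the orientation of trajectories on $\hat{\Sigma}$ and with the $\oplus/\ominus$ assignments near poles, then verify consistency across all sign combinations of $\varepsilon$, $b_{jk}$, and $b_{ki}$.
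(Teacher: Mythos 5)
Your proposal is correct and follows the same basic strategy as the paper's proof, namely a local geometric case-check reduced in half by the duality of Proposition~\ref{prop:dual1}, but you organize the reduction differently. The paper observes that the two transformations \eqref{eq:bemut1} and \eqref{eq:gamut1} preserve $\langle\gamma_i,\beta_j\rangle=\delta_{ij}$, so it suffices to verify one of them, and then carries out the case-check directly on the \emph{cycles} $\gamma'_i$ (Figure~\ref{fig:hom1}), quoting the deformations of Figure~\ref{fig:cycle1}; the path formula is the one inferred. You instead verify the path formula for $\beta'_k$ directly, establish $\tau_{G,G'}(\gamma'_k)=-\gamma_k$ via the saddle-class identification $\gamma_0=\varepsilon\gamma_k$ (which the paper records later as Lemma~\ref{lemma:sign-of-simple-cycles}, with the same proof by inspection of Figure~\ref{fig:cycle2}), and then recover $\tau_{G,G'}(\gamma'_i)$ for $i\neq k$ either by duality or by pushing the decomposition $\gamma'_i=\sum_j b'_{ji}\beta'_j$ of Proposition~\ref{prop:bg3} through $\tau^{\vee}$ together with the mutation rule \eqref{eq:bmut}. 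Both routes bottom out in the same picture and the same orientation and sign bookkeeping, which you correctly identify as the substantive content; your route has the mild advantage of making the $i\neq k$ locality observation and the saddle-class identification explicit, while the paper folds both into the figure-based case-check. Part~(b) is treated the same way in both arguments, by reading Figure~\ref{fig:bg2} (equivalently Figure~\ref{fig:hom2}) modulo the $*$-equivalence, so there is no real difference there.
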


\begin{proof}
(a).
Two transformations \eqref{eq:bemut1}
and \eqref{eq:gamut1} preserve the duality
\eqref{eq:gb3} (see \cite[Section 3.3]{Nakanishi11c}).
Therefore, it is enough to prove \eqref{eq:bemut1}.
This can be done by case-check with respect to the
configuration of $\gamma_i$ and $\gamma_k$.
We only present the most typical case in Figure \ref{fig:hom1},
where $b_{ki}=1$.
Then, for $\ve=+$, $\gamma_i'=\gamma_i+\gamma_k$
by the deformation of cycles
in Figure \ref{fig:cycle1},
for $\ve=-$, $\gamma_i'=\gamma_i$,
and in either case, $\gamma_k'=-\gamma_k$.
This agrees with \eqref{eq:gamut1}.

\par
(b).
Again, it is enough to prove it for
the  simple paths.
The essential case is given in Figure \ref{fig:hom2},
where the label of the arcs is the same one
in Figure \ref{fig:pop4}.
\end{proof}

\begin{figure}
\begin{center}
\begin{pspicture}(-1.6,-0.5)(13,3.4)
\psset{unit=11.5mm}
\psset{linewidth=0.5pt}
\rput[c]{0}(0,3){\small $\oplus$}
\rput[c]{0}(1.6,1.8){\small $\oplus$}
\rput[c]{0}(1,0){\small $\oplus$}
\rput[c]{0}(-1,0){\small $\ominus$}
\rput[c]{0}(-1.6,1.8){\small $\ominus$}
\psset{fillstyle=none}
\psline[arrows=<-](2,1.6)(3,1.6)
\psline[arrows=->](2,1)(3,1)
\rput[c]{0}(2.5,1.9){\small $\mu_k^{(-)}$}
\rput[c]{0}(2.5,0.7){\small $\mu_k^{(+)}$}
\psline(-0.9,0)(0.9,0)
\psline(1.05,0.1)(1.55,1.7)
\psline(-1.05,0.1)(-1.55,1.7)
\psline(0.1,2.9)(1.5,1.85)
\psline(-0.1,2.9)(-1.5,1.85)
\psline(-1.5,1.8)(1.5,1.8)
\psline(-0.9,0.05)(1.5,1.75)
%
%
\rput[c]{0}(0,2.2){\small $\times$}
\rput[c]{0}(-0.5,1.1){\small $\times$}
\rput[c]{0}(0.6,0.6){\small $\times$}
\rput[c]{0}(-0.2,2){\small $a$}
\rput[c]{0}(-0.5,1.5){\small $b$}
\rput[c]{0}(0.4,0.8){\small $c$}
\rput[c]{0}(-0.8,2.1){\small $\gamma_i'$}
\rput[c]{0}(-1,1){\small $\gamma_k'$}
\psset{linewidth=1pt,linestyle=solid}
\pscurve(-0.5,1.1)(-0.4,1.2)(-0.3,1.1)(-0.2,1)(-0.1,1.1)(0.0,1.2)(0.1,1.1)
(0.2,1)(0.3,1.1)(0.4,1.2)(0.5,1.1)(0.6,1)(0.7,1.1)(0.8,1.2)(0.9,1.1)
(1.0,1)(1.1,1.1)(1.2,1.2)(1.3,1.1)(1.4,1)(1.5,1.1)(1.6,1.2)(1.7,1.1)
\pscurve(0.6,0.6)(0.5,0.5)(0.4,0.6)(0.3,0.7)(0.2,0.6)
(0.1,0.5)(0,0.6)(-0.1,0.7)(-0.2,0.6)(-0.3,0.5)(-0.4,0.6)
(-0.5,0.7)(-0.6,0.6)(-0.7,0.5)(-0.8,0.6)(-0.9,0.7)(-1.0,0.6)
(-1.1,0.5)(-1.2,0.6)(-1.3,0.7)(-1.4,0.6)(-1.5,0.5)(-1.6,0.6)
\pscurve(0,2.2)(0.1,2.3)(0.2,2.2)(0.3,2.1)(0.4,2.2)
(0.5,2.3)(0.6,2.2)(0.7,2.1)(0.8,2.2)(0.9,2.3)(1,2.2)
(1.1,2.1)(1.2,2.2)(1.3,2.3)(1.4,2.2)
\psset{linewidth=1pt,linestyle=solid}
\pscurve(0.2,2.2)(0.05,1.5)(-0.25,1.05)
\psset{linewidth=1pt,linestyle=dashed}
\psecurve(0.05,1.5)(0.2,2.2)(0,2.4)(-0.2,2.4)(-0.7,1.1)(-0.45,0.95)(-0.25,1.05)(0.05,1.5)
\psset{linewidth=1pt,linestyle=solid}
\psecurve(0,1.25)(0.32,1.12)(0.6,0.9)(0.8,0.65)(0.75,0.4)(-0.23,0.4)(-0.6,0.7)(-0.7,0.9)
\psset{linewidth=1pt,linestyle=dashed}
\psecurve(-0.23,0.4)(-0.6,0.7)(-0.7,0.9)(-0.55,1.3)(0,1.27)(0.32,1.12)(0.6,0.9)%
\psline(0.1,1.6)(0.07,1.8)
\psline(0.1,1.6)(0.21,1.78)
\psline(0.2,0.3)(0,0.23)
\psline(0.2,0.3)(0.03,0.42)
\psset{linewidth=0.5pt}
\psset{fillstyle=none}
\rput[c]{0}(5,3){\small $\oplus$}
\rput[c]{0}(6.6,1.8){\small $\oplus$}
\rput[c]{0}(6,0){\small $\oplus$}
\rput[c]{0}(4,0){\small $\ominus$}
\rput[c]{0}(3.4,1.8){\small $\ominus$}
\psset{linewidth=0.5pt,linestyle=solid}
\psline(4.1,0)(5.9,0)
\psline(6.05,0.1)(6.55,1.7)
\psline(3.95,0.1)(3.45,1.7)
\psline(5.1,2.9)(6.5,1.85)
\psline(4.9,2.9)(3.5,1.85)
\psline(3.5,1.8)(6.5,1.8)
\psline(5.9,0.05)(3.5,1.75)
%
%
\rput[c]{0}(7.5,1.9){\small $\mu_k^{(+)}$}
\psline[arrows=->](7,1.6)(8,1.6)
\rput[c]{0}(7.5,0.7){\small $\mu_k^{(-)}$}
\psline[arrows=<-](7,1)(8,1)
%
%
\rput[c]{0}(5,2.2){\small $\times$}
\rput[c]{0}(4.5,0.6){\small $\times$}
\rput[c]{0}(5.6,1.1){\small $\times$}
\rput[c]{0}(5.2,2){\small $a$}
\rput[c]{0}(5.5,1.5){\small $b$}
\rput[c]{0}(4.6,0.8){\small $c$}
\rput[c]{0}(4.2,2.1){\small $\gamma_i$}
\rput[c]{0}(4,1){\small $\gamma_k$}
\rput[c]{0}(0,-0.5){\small $G'$}
\rput[c]{0}(5,-0.5){\small $G$}
\rput[c]{0}(10,-0.5){\small $G'$}
\psset{linewidth=1pt,linestyle=solid}
\pscurve(4.5,0.6)(4.4,0.5)(4.3,0.6)(4.2,0.7)(4.1,0.6)
(4.0,0.5)(3.9,0.6)(3.8,0.7)(3.7,0.6)(3.6,0.5)(3.5,0.6)(3.4,0.7)(3.3,0.6)
\pscurve(5.6,1.1)(5.7,1.2)(5.8,1.1)(5.9,1.0)(6,1.1)
(6.1,1.2)(6.2,1.1)(6.3,1.0)(6.4,1.1)(6.5,1.2)(6.6,1.1)
\pscurve(5,2.2)(5.1,2.3)(5.2,2.2)(5.3,2.1)(5.4,2.2)
(5.5,2.3)(5.6,2.2)(5.7,2.1)(5.8,2.2)(5.9,2.3)(6,2.2)
(6.1,2.1)(6.2,2.2)(6.3,2.3)(6.4,2.2)
\psset{linewidth=1pt,linestyle=solid}
\psecurve(5,2.4)(5.4,2.1)(5.6,1.8)(5.8,1.1)(5.7,0.9)
\psset{linewidth=1pt,linestyle=dashed}
\psecurve(5.6,1.8)(5.4,2.1)(5,2.4)(4.75,2.35)(4.95,1.4)(5.35,1)(5.7,0.9)(5.8,1.1)(5.6,1.8)
\psset{linewidth=1pt,linestyle=solid}
\psecurve(5.6,1.3)(5.8,1.1)(5.65,0.75)(4.8,0.3)(4.3,0.35)(4.2,0.7)(4.8,1.2)
\psset{linewidth=1pt,linestyle=dashed}
\psecurve(4.3,0.35)(4.2,0.7)(4.8,1.2)(5.6,1.3)(5.8,1.1)(5.65,0.75)
\psset{linewidth=1pt,linestyle=solid}
\psline(5.7,1.6)(5.7,1.82)
\psline(5.7,1.6)(5.53,1.75)
\psline(5,0.35)(5.15,0.52)
\psline(5,0.35)(5.2,0.35)
\psset{linewidth=0.5pt,linestyle=solid}
\rput[c]{0}(10,3){\small $\oplus$}
\rput[c]{0}(11.6,1.8){\small $\oplus$}
\rput[c]{0}(11,0){\small $\oplus$}
\rput[c]{0}(9,0){\small $\ominus$}
\rput[c]{0}(8.4,1.8){\small $\ominus$}
\psset{fillstyle=none}
\psline(9.1,0)(10.9,0)
\psline(11.05,0.1)(11.55,1.7)
\psline(8.95,0.1)(8.45,1.7)
\psline(10.1,2.9)(11.5,1.85)
\psline(9.9,2.9)(8.5,1.85)
\psline(8.5,1.8)(11.5,1.8)
\psline(9.1,0.05)(11.5,1.75)
%
%
\rput[c]{0}(10,2.2){\small $\times$}
\rput[c]{0}(9.5,1.1){\small $\times$}
\rput[c]{0}(10.6,0.6){\small $\times$}
\rput[c]{0}(9.8,2){\small $a$}
\rput[c]{0}(9.5,1.5){\small $c$}
\rput[c]{0}(10.4,0.8){\small $b$}
\rput[c]{0}(9.2,2.1){\small $\gamma_i'$}
\rput[c]{0}(9,0.8){\small $\gamma_k'$}
\psset{linewidth=1pt,linestyle=solid}
\pscurve(9.5,1.1)(9.4,1)(9.3,1.1)(9.2,1.2)(9.1,1.1)
(9.0,1)(8.9,1.1)(8.8,1.2)(8.7,1.1)(8.6,1)(8.5,1.1)(8.4,1.2)(8.3,1.1)
\pscurve(10.6,0.6)(10.7,0.7)(10.8,0.6)(10.9,0.5)(11,0.6)
(11.1,0.7)(11.2,0.6)(11.3,0.5)(11.4,0.6)(11.5,0.7)(11.6,0.6)
\pscurve(10,2.2)(10.1,2.3)(10.2,2.2)(10.3,2.1)(10.4,2.2)
(10.5,2.3)(10.6,2.2)(10.7,2.1)(10.8,2.2)(10.9,2.3)(11,2.2)
(11.1,2.1)(11.2,2.2)(11.3,2.3)(11.4,2.2)
\psset{linewidth=1pt,linestyle=solid}
\pscurve(10.2,2.2)(10.05,1.5)(9.75,1.05)(9.55,0.95)(9.3,1.1)
\psset{linewidth=1pt,linestyle=dashed}
\psecurve(10.05,1.5)(10.2,2.2)(10,2.4)(9.8,2.4)(9.3,1.1)(9.55,0.95)
\psset{linewidth=1pt,linestyle=solid}
\psecurve(9.4,0.8)(9.3,1.1)(9.5,1.3)(10,1.25)(10.6,0.9)(10.8,0.6)(10.7,0.4)
\psset{linewidth=1pt,linestyle=dashed}
\psecurve(10.6,0.9)(10.8,0.6)(10.7,0.4)(9.4,0.8)(9.3,1.1)(9.5,1.3)
\psset{linewidth=1pt,linestyle=solid}
\psline(10.1,1.6)(10.07,1.8)
\psline(10.1,1.6)(10.21,1.78)
\psline(10.2,1.18)(10,1.15)
\psline(10.2,1.18)(10.03,1.33)
\end{pspicture}
\end{center}
\caption{Mutation of simple cycles for signed flips.}
\label{fig:hom1}
\end{figure}
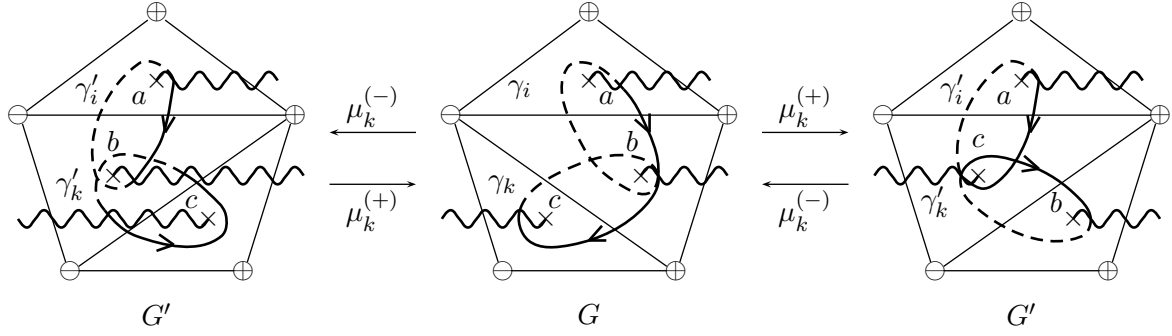

\begin{figure}
\begin{center}
\begin{pspicture}(-1.4,-0.5)(7,4.7)
\psset{unit=11.5mm}
\psset{linewidth=0.5pt}
\rput[c]{0}(0,0){\small $\oplus$}
\rput[c]{0}(0,2){\small $\oplus$}
\rput[c]{0}(0,4){\small $\ominus$}
\rput[c]{0}(0,3.2){\small $\times$}
\rput[c]{0}(0,1.2){\small $\times$}
\rput[c]{0}(-1.2,2.2){\small $\gamma_i{}$}
\rput[c]{0}(1.2,2.2){\small $\gamma_j{}$}

\psset{linewidth=0.5pt,linestyle=solid}
\pscurve(0.1,0.05)(0.5,0.4)(0.9,1.6)(0.5,2.8)(0,3.2)
\pscurve(-0.1,0.05)(-0.5,0.4)(-0.9,1.6)(-0.5,2.8)(-0,3.2)

\pscurve(0,1.2)(0.4,1.4)(0.35,2.2)(0,2.4)(-0.5,1.8)(-0.1,0.1)

\pscurve(0,1.2)(-0.2,1.5)(-0.2,1.7)(-0.1,1.95)
\psline(0,0.1)(0,1.2)
\psline(0,3.2)(0,3.9)
\psset{linewidth=1pt,linestyle=solid}
\pscurve(0,1.2)(-0.1,1.1)(-0.2,1.2)(-0.3,1.3)(-0.4,1.2)
(-0.5,1.1)(-0.6,1.2)(-0.7,1.3)(-0.8,1.2)(-0.9,1.1)(-1.0,1.2)
(-1.1,1.3)(-1.2,1.2)(-1.3,1.1)(-1.4,1.2)
\pscurve(0,3.2)(-0.1,3.3)(-0.2,3.2)(-0.3,3.1)(-0.4,3.2)
(-0.5,3.3)(-0.6,3.2)(-0.7,3.1)(-0.8,3.2)(-0.9,3.3)(-1,3.2)
(-1.1,3.1)(-1.2,3.2)(-1.3,3.3)(-1.4,3.2)
\psecurve[linewidth=1pt,linestyle=solid]
(-0.75,2.6)(-0.4,3.2)(-0.2,3.35)(0,3.4)(0.15,3.3)(0.15,3.1)(0,2.95)(-0.4,2.2)(0,1.45)
(0.15,1.3)(0.15,1.1)(0,1)(-0.2,1.05)(-0.4,1.2)(-0.75,1.8)
\psecurve[linewidth=1pt,linestyle=dashed]
(-0.2,1.05)(-0.4,1.2)(-0.75,1.8)(-0.8,2.2)(-0.75,2.6)(-0.4,3.2)(-0.2,3.35)
\psecurve[linewidth=1pt,linestyle=solid]
(-0.17,1.3)(-0.2,1.2)(-0.17,1.1)(0,1)(0.2,1.05)(0.4,1.2)(0.75,1.8)(0.8,2.2)(0.75,2.6)(0.4,3.2)(0.2,3.35)(0,3.4)(-0.17,3.3)(-0.2,3.2)(-0.17,3.1)
\psecurve[linewidth=1pt,linestyle=dashed]
(-0.17,3.3)(-0.2,3.2)(-0.17,3.1)(0,2.95)(0.4,2.2)(0,1.45)
(-0.17,1.3)(-0.2,1.2)(-0.17,1.1)

\psset{linewidth=1pt,linestyle=solid}
\psline(-0.4,2.2)(-0.3,2.4)
\psline(-0.4,2.2)(-0.5,2.4)
\psline(0.8,2.2)(0.7,2.4)
\psline(0.8,2.2)(0.9,2.4)
\rput[c]{0}(0,-0.5){\small $G$}
%
\psset{fillstyle=none}
\psline[arrows=->](2,2.3)(3,2.3)
\psline[arrows=<-](2,1.7)(3,1.7)
\rput[c]{0}(2.5,2.6){\small $\kappa_p^{(+)}$}
\rput[c]{0}(2.5,1.4){\small $\kappa_p^{(-)}$}
%
\psset{linewidth=0.5pt}
\rput[c]{0}(5,0){\small $\oplus$}
\rput[c]{0}(5,2){\small $\ominus$}
\rput[c]{0}(5,4){\small $\ominus$}
\rput[c]{0}(5,3.2){\small $\times$}
\rput[c]{0}(5,1.2){\small $\times$}
\rput[c]{0}(3.8,2.2){\small $\gamma_i'$}
\rput[c]{0}(6.2,2.2){\small $\gamma_j'$}

\psset{linewidth=0.5pt,linestyle=solid}
\pscurve(5.1,0.05)(5.5,0.4)(5.9,1.6)(5.5,2.8)(5,3.2)
\pscurve(4.9,0.05)(4.5,0.4)(4.1,1.6)(4.5,2.8)(5,3.2)
\pscurve(5,1.2)(4.6,1.4)(4.65,2.2)(5,2.4)(5.5,1.8)(5.1,0.1)
\pscurve(5,1.2)(5.2,1.5)(5.2,1.7)(5.1,1.95)
\psline(5,0.1)(5,1.2)
\psline(5,3.2)(5,3.9)
\psset{linewidth=1pt,linestyle=solid}
\pscurve(5,1.2)(4.9,1.1)(4.8,1.2)(4.7,1.3)(4.6,1.2)
(4.5,1.1)(4.4,1.2)(4.3,1.3)(4.2,1.2)(4.1,1.1)(4,1.2)
(3.9,1.3)(3.8,1.2)(3.7,1.1)(3.6,1.2)
\pscurve(5,3.2)(4.9,3.3)(4.8,3.2)(4.7,3.1)(4.6,3.2)
(4.5,3.3)(4.4,3.2)(4.3,3.1)(4.2,3.2)(4.1,3.3)(4,3.2)
(3.9,3.1)(3.8,3.2)(3.7,3.3)(3.6,3.2)
\psecurve[linewidth=1pt,linestyle=solid]
(4.25,2.6)(4.6,3.2)(4.8,3.35)(5,3.4)(5.15,3.3)(5.15,3.1)(5,2.95)(4.6,2.2)(5,1.45)
(5.15,1.3)(5.15,1.1)(5,1)(4.8,1.05)(4.6,1.2)(4.25,1.8)
\psecurve[linewidth=1pt,linestyle=dashed]
(4.8,1.05)(4.6,1.2)(4.25,1.8)(4.2,2.2)(4.25,2.6)(4.6,3.2)(4.8,3.35)
\psecurve[linewidth=1pt,linestyle=solid]
(4.83,1.3)(4.8,1.2)(4.83,1.1)(5,1)(5.2,1.05)(5.4,1.2)(5.75,1.8)(5.8,2.2)(5.75,2.6)(5.4,3.2)(5.2,3.35)(5,3.4)(4.83,3.3)(4.8,3.2)(4.83,3.1)
\psecurve[linewidth=1pt,linestyle=dashed]
(4.83,3.3)(4.8,3.2)(4.83,3.1)(5,2.95)(5.4,2.2)(5,1.45)
(4.83,1.3)(4.8,1.2)(4.83,1.1)

\psset{linewidth=1pt,linestyle=solid}
\psline(4.6,2.2)(4.7,2.4)
\psline(4.6,2.2)(4.5,2.4)
\psline(5.8,2.2)(5.7,2.4)
\psline(5.8,2.2)(5.9,2.4)
\rput[c]{0}(5,-0.5){\small $G'$}
\end{pspicture}
\end{center}
\caption{Mutation of simple cycles for signed pops.}
\label{fig:hom2}
\end{figure}
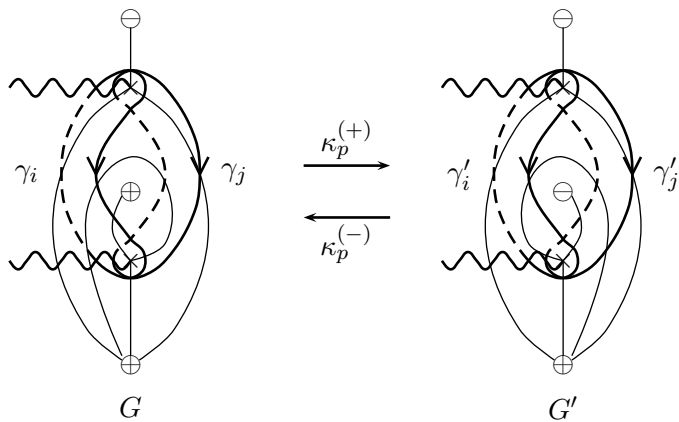

Proposition \ref{prop:cycle1}  tells that
the simple paths and the simple cycles mutate
as the (logarithm of) 
monomial $x$-variables and monomial $y$-variables
in Section \ref{subsec:monomial},
respectively.
The formula for the simple cycles already appeared 
in \cite[Lemma 9.11]{Bridgeland13} as the mutation of the modified basis therein.

\subsection{Periodicity of signed mutations and signed flips}


Let $G^0=G(\phi)$ be a labeled  Stokes graph taken as the ``initial" one,
let $T^0$ be the associated labeled Stokes triangulation
of $(\bfS(\phi),
\bfM(\phi),\bfA(\phi))$,
and let $B^0=B(T^0)$ be the adjacency matrix of $T^0$.
For 
the cluster algebra $\mathcal{A}(B^0,x^0,y^0;\bbQ_+(y^0))$,
suppose that  $\vec{k}=(k_1,\dots,k_N)$
is a $\nu$-period of  the initial seed $(B^0,x^0,y^0)$;
namely,
we have a  mutation sequence
\begin{align}
\label{eq:seedseq0}
\begin{split}
(B(1),x(1),y(1))=(B^0,x^0,y^0) \buildrel{\mu_{k_1}} \over{\rightarrow}
&(B(2),x(2),y(2)) \buildrel{\mu_{k_2}} \over{\rightarrow}
\cdots\\
&\cdots
\buildrel{\mu_{k_N}} \over{\rightarrow}
(B(N+1),x(N+1),y(N+1))
\end{split}
\end{align}
such that $y_{\nu(i)}(N+1)=y^0_i$.
Let $\varepsilon_t = \varepsilon (y_{k_t}(t))$ ($t=1,\dots,N$)
be the tropical sign of $y_{k_t}(t)$ with respect to the initial
$y$-variables $y^0=y(1)$.

To make use of such a sequence in our application,
 we need to reexpress it by
signed mutations and signed pops.
For that purpose,
we  introduce 
a mutation sequence 
of    labeled Stokes triangulations of $(\bfS(\phi),
\bfM(\phi),\bfA(\phi))$,
\begin{align}
\label{eq:Stokesseq2}
\begin{split}
&T(1)=T^0
\buildrel{\tilde{\mu}^{(\ve_1)}_{k_1}} \over{\rightarrow}
 T(2)\buildrel{\tilde{\mu}^{(\ve_2)}_{k_2}} \over{\rightarrow}
\cdots
\buildrel{\tilde{\mu}^{(\ve_N)}_{k_N}} \over{\rightarrow}
T(N+1)
 \buildrel{\tilde{\kappa}} \over{\rightarrow}
T(N+2).
\end{split}
\end{align}
Here, for $T(t)=(\alpha_i(t) )_{i=1}^n$,
we set
\begin{align}
\label{eq:mu1}
 \tilde{\mu}^{(\ve_t)}_{k_t}=
 \begin{cases}
 \mu^{(\ve_t)}_{k_t} &
 \text{if 
$\alpha_{k_t}(t)$ in $T(t)$ is  flippable}
 \\
\mu^{(\ve_t)}_{k_t}\circ
\kappa_{p_t}^{(s_t)}
& \text{otherwise},
\end{cases}
\end{align}
where $p_t$ is the puncture inside the self-folded triangle in $T(t)$
which $\alpha_{k_t}(t)$ belongs to,
and the signs $s_t\in \{+,-\}$ are determined by the following rule:
\begin{itemize}
\item  For $t=1,\dots,N$, let
\begin{align}
n(t):=|\{ t'   \mid 1\leq t'<t, \ p_{t'}=p_t\}|,
\end{align}
and we set $s_t=(-1)^{n(t)}$.
Namely, if a puncture $p=p_t$ appears at the first time
in the sequence
\eqref{eq:Stokesseq2} at $t$,
then $s_t=+$, and the sign for $p$ alternates after that.
\end{itemize}
Also, we set
\begin{align}
\label{eq:kappa1}
\tilde{\kappa}=\prod_{p}{}'
\kappa_p^{(-)},
\end{align}
where the product runs over the punctures $p$ such that
$|\{ 1\leq t\leq N | p_t=p \}|$, i.e., the number of the appearance of $p$
in the sequence \eqref{eq:Stokesseq2}, is odd.

\begin{rem}
If we project the sequence  \eqref{eq:Stokesseq2} to the
sequence of labeled signed triangulations in Section
\ref{subsec:reformulation}
by forgetting the signs of flips and pops,
the periodicity of
the sequence \eqref{eq:seedseq0}
and Theorem \ref{thm:period2}
imply that the labeled signed triangulation at $t=N+1$, say, $T'(N+1)$  is
{\em pop-equivalent\/} to the initial one $T'(1)$.
Meanwhile, if $|\{ 1\leq t\leq N | p_t=p \}|$ is odd for a puncture $p$,
then the sign $\sigma_p$ at $p$
of $T'(N+1)$ and $T'(1)$ is opposite.
These together mean that $p$ is inside the self-folded triangle
in $T'(N+1)$ so that it should be popped;
 therefore, it is also so in $T(N+1)$.
This guarantees that the signed pops of $\tilde{\kappa}$ in \eqref{eq:kappa1}
is well-defined.
\end{rem}

First, let us consider a mutations sequence of labeled extended  seeds  which is parallel to 
\eqref{eq:Stokesseq2},
\begin{align}
\label{eq:seedseq1}
\begin{split}
(B(1),&x(1),y(1),\tilde{y}(1))=(B^0,x^0,y^0,\tilde{y}^0)
\buildrel{\tilde{\mu}^{(\ve_1)}_{k_1}} \over{\rightarrow}
 (B(2),x(2),y(2),\tilde{y}(2)) \buildrel{\tilde{\mu}^{(\ve_2)}_{k_2}} \over{\rightarrow}
\cdots\\
&\cdots
\buildrel{\tilde{\mu}^{(\ve_N)}_{k_N}} \over{\rightarrow}
 (B(N+1),\dots,\tilde{y}(N+1))
 \buildrel{\tilde{\kappa}} \over{\rightarrow}
 (B(N+2),\dots,\tilde{y}(N+2)),
\end{split}
\end{align}
where  $\tilde{\mu}^{(\ve_t)}_{k_t}$ and $\tilde{\kappa}$
are the ones in \eqref{eq:mu1} and \eqref{eq:kappa1}
but acting on extended seeds.

\begin{prop}
\label{prop:period3}
The mutation sequence \eqref{eq:seedseq1} of extended seeds
is $\nu$-periodic in the following sense:
\begin{align}
\begin{split}
b_{\nu(i)\nu(j)}(N+2)&=b_{ij}(1),\quad
x_{\nu(i)}(N+2)=x_i(1),\\
y_{\nu(i)}(N+2)&=y_i(1),
\quad
\tilde{y}_{i}(N+2)=\tilde{y}_i(1).
\end{split}
\end{align}
\end{prop}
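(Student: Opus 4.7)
The plan is to reduce the $\nu$-periodicity of \eqref{eq:seedseq1} to the known $\nu$-periodicity of the underlying cluster algebra sequence \eqref{eq:seedseq0} provided by Theorem \ref{thm:period1}, by showing that the signed pops inserted in \eqref{eq:seedseq1} collectively cancel, leaving only the signed mutation part, which with the choice of tropical signs reduces to the ordinary cluster algebra mutation. The $B$- and $y$-parts are easy to dispatch first: signed pops leave both $B$ and $y$ unchanged, while a signed mutation $\mu_k^{(\ve)}$ acts on $B$ by the standard matrix mutation rule independent of $\ve$, and, taking $\ve=\ve(y_{k}(t))$ the tropical sign, acts on tropical $y$-variables by the formula \eqref{eq:ymut5} since $1\oplus [y_k]^{\ve(y_k)}=1$. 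Hence the $B$- and $y$-sequences in \eqref{eq:seedseq1} coincide with the tropicalization of those in \eqref{eq:seedseq0}, and parts (a), (d) of Theorem \ref{thm:period1} yield $B_{\nu(i)\nu(j)}(N+2)=B_{ij}(1)$ and $y_{\nu(i)}(N+2)=y_i(1)$.

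The heart of the argument is the cancellation of the pops. Each signed pop $\kappa_p^{(\ve)}$ factorizes as the composition of the inversion $\tilde{y}_p\mapsto \tilde{y}_p^{-1}$ and the local rescaling at $p$ by the constant $1-\tilde{y}_p^{\ve}$ (evaluated with the current value of $\tilde{y}_p$). Both factors commute with any signed mutation $\mu_k^{(\ve')}$: the local rescaling by Proposition \ref{prop:local1}, and the inversion because $\mu_k^{(\ve')}$ does not touch $\tilde{y}_p$; moreover the rescaling constant itself is unaltered under such commutation since signed mutations leave $\tilde{y}_p$ fixed. Pops at distinct punctures also commute, because they affect disjoint $\tilde{y}$-components and disjoint sets of $x$-variables. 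I therefore move all pops to the right of every mutation in \eqref{eq:seedseq1} and group them by puncture. For each puncture $p$ visited $n_p$ times by the mutation sequence, the resulting string is $\kappa_p^{(+)}\kappa_p^{(-)}\kappa_p^{(+)}\cdots\kappa_p^{((-1)^{n_p-1})}$; by the involutive relation $\kappa_p^{(+)}\kappa_p^{(-)}=\mathrm{id}$ this collapses to $\mathrm{id}$ when $n_p$ is even and to $\kappa_p^{(+)}$ when $n_p$ is odd. The final $\tilde{\kappa}=\prod_{p}'\kappa_p^{(-)}$ applies $\kappa_p^{(-)}$ exactly for those $p$ with odd $n_p$, so all leftover $\kappa_p^{(+)}$'s are cancelled.

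After the pops have cancelled, \eqref{eq:seedseq1} reduces to the action of $\mu_{k_N}^{(\ve_N)}\circ\cdots\circ\mu_{k_1}^{(\ve_1)}$ on $(B^0,x^0,y^0,\tilde{y}^0)$. Since signed mutations leave $\tilde{y}$ untouched, $\tilde{y}(N+2)=\tilde{y}^0$ is immediate. Because the $\ve_t$ are the tropical signs, the exchange relation \eqref{eq:xmut7} becomes the tropical $x$-mutation \eqref{eq:xmut5}, which is the standard mutation of $x$-variables with tropical $y$-coefficients; the equality $x_{\nu(i)}(N+2)=x_i^0$ then follows from Theorem \ref{thm:period1}(e) applied to the tropicalization of \eqref{eq:seedseq0}. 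The main obstacle is the second step: one must carefully track the dependence of the rescaling constants $1-\tilde{y}_p^{\ve}$ on the current $\tilde{y}_p$ as pops are commuted past mutations and past other pops, and verify that the parity-based definition of $s_t$ and of $\tilde{\kappa}$ is exactly what is needed for the telescoping to be complete; the rest is essentially bookkeeping.
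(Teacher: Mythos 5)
Your proposal is correct and follows essentially the same route as the paper's proof: reduce the $B$- and $y$-parts to the tropical periodicity via Theorem~\ref{thm:period1}, use Proposition~\ref{prop:local1} to commute pops past signed mutations, and then invoke the pairwise cancellation $\kappa_p^{(\ve)}\kappa_p^{(-\ve)}=\mathrm{id}$ together with the parity of the sign rule $s_t=(-1)^{n(t)}$ and the definition of $\tilde{\kappa}$. The paper states this more tersely (it does not explicitly carry out the regrouping of pops that you describe), but the underlying argument is the same.
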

\begin{proof}
It is enough to show the periodicity of $\tilde{y}$- and $x$-variables.
First, let us  show the periodicity of $\tilde{y}$-variables.
By the definition of $\tilde{\kappa}$ in \eqref{eq:kappa1},
for each puncture $p$,
the numbers of $\kappa^{(+)}_{p}$ and 
 $\kappa^{(-)}_{p}$ appearing in
 the sequence \eqref{eq:seedseq1} are equal.
Then, the periodicity of $x$-variables
follows from Proposition \ref{prop:local1}
and the property $\kappa_{p}^{\pm}\kappa_{p}^{\mp}=1$.
It also implies $\tilde{y}_{p}(N+2)=\tilde{y}_p(1)$ by 
 \eqref{eq:vmut1}.
\end{proof}

Next, consider a   mutation sequence
of  labeled  Stokes graphs,
\begin{align}
\label{eq:Stokesseq0}
\begin{split}
 G(1)=G
\buildrel{\tilde{\mu}^{(\ve_1)}_{k_1}} \over{\rightarrow}
G(2)\buildrel{\tilde{\mu}^{(\ve_2)}_{k_2}} \over{\rightarrow}
\cdots
\buildrel{\tilde{\mu}^{(\ve_N)}_{k_N}} \over{\rightarrow}
G(N+1)
 \buildrel{\tilde{\kappa}} \over{\rightarrow}
G(N+2),
\end{split}
\end{align}
where  $\tilde{\mu}^{(\ve_t)}_{k_t}$ and $\tilde{\kappa}$
are the same ones in \eqref{eq:mu1} and \eqref{eq:kappa1}
but for labeled Stokes graphs.

Let $\beta_i(t)$ and $\gamma_i(t)$ be the simple paths
and the simple cycles of $G(t)$.
Let
\begin{align}
\tau_{G(t),G(t+1)}^{\vee}:\Gamma^{\vee}_{G(t+1)}
\overset{\sim}{\rightarrow} \Gamma^{\vee}_{G(t)},
\quad
\tau_{G(t),G(t+1)}:\Gamma_{G(t+1)}
\overset{\sim}{\rightarrow} \Gamma_{G(t)}
\end{align}
be the isomorphisms
given by Proposition
\ref{prop:cycle1}
for the sequence \eqref{eq:Stokesseq0},
and let
 \begin{align}
\tau_{G(1),G(N+2)}^{\vee}:\Gamma^{\vee}_{G(N+2)}
\overset{\sim}{\rightarrow} \Gamma^{\vee}_{G(1)},
\quad
\tau_{G(1),G(N+2)}:\Gamma_{G(N+2)}
\overset{\sim}{\rightarrow} \Gamma_{G(1)}
\end{align}
be their compositions.

Then, we have the following $\nu$-periodicity of the simple paths and the simple cycles
for the sequence \eqref{eq:Stokesseq0}.
\begin{prop}
\label{prop:bg1}
\begin{align}
\label{eq:beta1}
\tau_{G(1),G(N+2)}^{\vee}(\beta_{\nu(i)}(N+2))&= \beta_i(1),\\
\label{eq:gamma3}
\tau_{G(1),G(N+2)}( \gamma_{\nu(i)}(N+2))&= \gamma_i(1).
\end{align}
\end{prop}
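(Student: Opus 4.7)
The plan is to reduce \eqref{eq:gamma3} to the $\nu$-periodicity of the monomial $y$-variables of the sequence \eqref{eq:seedseq1} (which is Proposition \ref{prop:period3}), and then deduce \eqref{eq:beta1} from \eqref{eq:gamma3} by the duality in Proposition \ref{prop:dual1}. The point is that the mutation formulas \eqref{eq:gamut1} and \eqref{eq:bemut1} for simple cycles and simple paths under a signed flip are literally the additive (exponent-vector) versions of the monomial $y$- and $x$-variable rules \eqref{eq:ymut6} and \eqref{eq:xmut6}, and under a signed pop both sides are left invariant.

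Concretely, I would identify $\Gamma_{G(1)}$ with $\bbZ^n$ by declaring the initial simple cycles $\gamma_1(1),\dots,\gamma_n(1)$ to be the standard basis, and record each transported cycle $\tau_{G(1),G(t)}(\gamma_i(t))\in\Gamma_{G(1)}$ by its coordinate vector $c_i(t)\in\bbZ^n$; in particular $c_i(1)=e_i$. By Proposition \ref{prop:cycle1}(a), the signed flip rule \eqref{eq:gamut1} produces
\begin{align*}
c_i(t+1)=
\begin{cases}
-c_k(t) & i=k,\\
c_i(t)+[\varepsilon_t b_{ki}(t)]_+\, c_k(t) & i\neq k,
\end{cases}
\end{align*}
which is exactly the exponent-vector form of the monomial $y$-variable rule \eqref{eq:ymut6}. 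By Proposition \ref{prop:cycle1}(b) and the definition of the signed pop on extended seeds in Section \ref{subsec:local} (which sets $y'=y$), every signed pop appearing in \eqref{eq:Stokesseq0} and \eqref{eq:seedseq1} leaves both sides untouched. A straightforward induction on $t$ therefore shows that $c_i(t)$ coincides with the exponent vector of the monomial $y$-variable $y_i(t)$ of \eqref{eq:seedseq1} relative to $y^0$. Applying Proposition \ref{prop:period3} at $t=N+2$ gives $c_{\nu(i)}(N+2)=e_i$, which, unfolding the identification, is \eqref{eq:gamma3}.

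For the simple paths I would argue by duality rather than repeat the calculation with monomial $x$-variables (the latter would be awkward because monomial $x$-variables are \emph{not} invariant under the signed pop of an extended seed, whereas simple paths are). The key point is that each $\tau^\vee_{G(t),G(t+1)}$ and $\tau_{G(t),G(t+1)}$ comes from the same homeomorphism of $\Sigma$ identifying the zeros and poles of $\phi_t$ and $\phi_{t+1}$, and hence preserves the intersection pairing \eqref{eq:braket1}; the same is then true for their compositions $\tau^\vee_{G(1),G(N+2)}$ and $\tau_{G(1),G(N+2)}$. Combining this with Proposition \ref{prop:dual1} and with \eqref{eq:gamma3} already established, one obtains
\begin{align*}
\langle \gamma_i(1),\tau^\vee_{G(1),G(N+2)}(\beta_{\nu(j)}(N+2))\rangle
&=\langle \tau_{G(1),G(N+2)}(\gamma_{\nu(i)}(N+2)),\tau^\vee_{G(1),G(N+2)}(\beta_{\nu(j)}(N+2))\rangle\\
&=\langle \gamma_{\nu(i)}(N+2),\beta_{\nu(j)}(N+2)\rangle=\delta_{ij}.
\end{align*}
Since $\{\gamma_i(1)\}$ and $\{\beta_j(1)\}$ are dual bases of $\Gamma_{G(1)}$ and $\Gamma^\vee_{G(1)}$, this forces $\tau^\vee_{G(1),G(N+2)}(\beta_{\nu(i)}(N+2))=\beta_i(1)$, which is \eqref{eq:beta1}.

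The hard part will not be the induction (the mutation rules on the two sides agree on the nose, so it is bookkeeping) but rather checking that the somewhat delicate prescription \eqref{eq:mu1}--\eqref{eq:kappa1} for interleaving signed pops into the geometric sequence \eqref{eq:Stokesseq0} is compatible with the extended-seed sequence \eqref{eq:seedseq1}. This, however, is harmless: because signed pops act as the identity on both simple cycles and monomial $y$-variables, any placement of pops at punctures that are genuinely inside self-folded triangles at that step preserves the correspondence, so the precise bookkeeping of the sign pattern $s_t$ and the cleanup $\tilde\kappa$ is irrelevant for the present statement.
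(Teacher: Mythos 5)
Your proof is correct and takes essentially the same route as the paper: by the choice of signs $\ve_t$, the transported simple cycles mutate exactly as (exponent vectors of) tropical $y$-variables, so \eqref{eq:gamma3} is the $\nu$-periodicity of tropical $y$-variables, and \eqref{eq:beta1} then follows from Proposition \ref{prop:dual1} since the maps $\tau_{G,G'}$, $\tau^\vee_{G,G'}$ preserve the intersection pairing. The only cosmetic difference is that you route through Proposition \ref{prop:period3} and the monomial $y$-variables of the extended-seed sequence \eqref{eq:seedseq1}, whereas the paper appeals directly to the tropical $y$-variables of \eqref{eq:seedseq0} — with the tropical signs chosen, these coincide, so the logical content is identical.
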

\begin{proof}
Let us first show \eqref{eq:gamma3}.
By Proposition \ref{prop:cycle1}
and the choice of the signs $\ve_t$,
the simple cycles exactly transform as
the logarithm of the tropical $y$-variables (see \eqref{eq:ymut5})
for the sequence \eqref{eq:seedseq0}.
Therefore, we have the periodicity  \eqref{eq:gamma3}.
The periodicity \eqref{eq:beta1} follows from
\eqref{eq:gamma3} and the duality in Proposition \ref{prop:dual1}.
\end{proof}

To conclude the section, let us remark that Proposition \ref{prop:period3} and
Conjecture
\ref{conj:bij1} together imply
 the following $\nu$-periodicity of the labeled Stokes triangulations
for the sequence \eqref{eq:Stokesseq2}.
\begin{conj}
\label{conj:period1}
 As arcs in $(\bfS(\phi),
\bfM(\phi),\bfA(\phi))$,
\label{prop:Stokes1}
 \begin{align}
 \label{eq:per2}
\alpha_{\nu(i)}(N+2)&=\alpha_i(1).
\end{align}
\end{conj}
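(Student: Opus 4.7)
The plan is to deduce this conjecture as an immediate consequence of Conjecture \ref{conj:bij1}(i) together with Proposition \ref{prop:period3}, then comment on how one might attempt an unconditional proof. Assuming Conjecture \ref{conj:bij1}(i), there is a bijection $\Psi : \mathrm{LST}(T^0) \to \mathrm{Seed}(B^0,x^0,y^0,\tilde{y}^0)$ with $\Psi(T^0)=(B^0,x^0,y^0,\tilde{y}^0)$ commuting with signed flips/mutations and signed pops. Applying $\Psi$ termwise to the sequence \eqref{eq:Stokesseq2} of labeled Stokes triangulations yields precisely the extended-seed sequence \eqref{eq:seedseq1}. Proposition \ref{prop:period3} tells us that the latter is $\nu$-periodic in all of $B,x,y,\tilde{y}$, so by injectivity of $\Psi$ (after relabeling by $\nu$) we conclude $\alpha_{\nu(i)}(N+2)=\alpha_i(1)$.

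Toward an unconditional proof, my approach would be to work purely on the surface side by descending through the three layers: Stokes triangulation $\to$ signed triangulation $\to$ tagged triangulation. First I would define the forgetful map from $\mathrm{LST}(T^0)$ to labeled tagged triangulations by dropping midpoints and converting each self-folded puncture into a notched tag (via the pop-equivalence of Section \ref{subsec:reformulation}); under this map a signed flip $\mu_k^{(\varepsilon)}$ descends to the usual flip $\mu_k$ of tagged triangulations, while every signed pop $\kappa_p^{(\varepsilon)}$ descends to the identity. Projecting \eqref{eq:Stokesseq2} thus produces the flip sequence of labeled tagged triangulations associated with the period $\vec{k}$, and Theorem \ref{thm:period2} (applied via Theorem \ref{thm:period1} to the given $\nu$-period of the initial seed) gives $\nu$-periodicity at the tagged level.

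Next I would lift this periodicity back to the Stokes level. After the $N$ signed flips, $T(N+1)$ has the same underlying tagged triangulation as $T(1)$ (up to the relabeling $\nu$); the discrepancy lies solely in the midpoint placements, equivalently in the choice of pop-equivalence representatives at those punctures $p$ which actually became ``popped'' during the sequence. A puncture $p$ needs a final compensating pop precisely when an odd number of signed flips $\mu_{k_t}^{(\varepsilon_t)}$ in the sequence acted at an inner arc around $p$; by construction this is exactly the set of punctures selected by \eqref{eq:kappa1}. The alternating sign rule $s_t=(-1)^{n(t)}$ then guarantees that successive visits to the same puncture $p$ cancel in pairs, so that $\tilde{\kappa}$ restores the correct Stokes representative and yields \eqref{eq:per2}.

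The hard part will be step three: one must verify by local analysis around each puncture that the particular choice of signs $(\varepsilon_t, s_t)$ and the final composite $\tilde{\kappa}$ really match the physical motion of midpoints forced by the tropical signs $\varepsilon_t = \varepsilon(y_{k_t}(t))$. Concretely, one must show inductively on $t$ that after each local step $\tilde{\mu}_{k_t}^{(\varepsilon_t)}$ the Stokes triangulation $T(t+1)$ is the unique representative in its pop-equivalence class compatible with the orientation data encoded by $\varepsilon_t$. Establishing this amounts essentially to proving the relevant case of Conjecture \ref{conj:bij1}(i) along the trajectory of \eqref{eq:Stokesseq2}, and a complete argument would require a careful case analysis exploiting Lemma \ref{lem:generic1} and the explicit pictures in Figures \ref{fig:flip2}, \ref{fig:pop3}, and \ref{fig:digon3}.
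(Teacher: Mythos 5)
Your first paragraph reproduces exactly what the paper says: Conjecture \ref{conj:period1} is stated there precisely as a remark that Proposition \ref{prop:period3} and Conjecture \ref{conj:bij1} together would imply it, and your conditional deduction is that same observation. So far this is fine, and it matches the paper's route.

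The problem is with your proposed unconditional sketch. The claim in your second paragraph that ``the discrepancy \emph{lies solely} in the midpoint placements, equivalently in the choice of pop-equivalence representatives'' is not a deduction but exactly the content of what the conjecture asserts and what remains open. Pop-equivalence only moves between the finitely many signed representatives of a fixed labeled tagged triangulation (Proposition \ref{prop:tagged1}), whereas two Stokes arcs with the same underlying tagged arc can still differ by winding around midpoints of $\bfA$ in a way that is invisible at the tagged level and not producible by a pop. You slide past this by equating ``midpoint placements'' with ``pop-equivalence representatives,'' which is unjustified. This is precisely the obstacle the paper identifies in the Remark following Proposition \ref{prop:alpha5}: when an arc has a puncture endpoint the deformations of paths in Figure \ref{fig:path1} allow such winding, and one cannot conclude. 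You later concede that ``a complete argument would require'' essentially re-proving Conjecture \ref{conj:bij1}, which is an honest but fatal admission that the sketch is circular; and you locate the difficulty at your ``step three,'' when in fact your step two already assumed the conclusion.

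You also do not engage with the part the paper actually does prove, namely Proposition \ref{prop:alpha5}: when both endpoints of $\alpha_i(1)$ lie on $\partial\bfS(\phi)$, the equality \eqref{eq:per2} holds unconditionally. That argument is genuinely different from yours. It uses (a) tagged-level periodicity (your Fact 1 analogue), but then (b) the explicit description of $\mathrm{Sym}(H_1(\hat\Sigma\setminus\hat P_0,\hat P_\infty))$, (c) the cluster-algebraic periodicity of the \emph{simple paths} $\beta_i$ from \eqref{eq:beta1}, and (d) non-self-intersection of arcs. Together with the boundary-endpoint hypothesis, these control the ambient isotopy class of the arc modulo midpoint winding, which the pop-equivalence bookkeeping alone cannot do. Your proposal would be stronger if it either stopped after the conditional deduction or instead reproduced the homological argument of Proposition \ref{prop:alpha5} and stated honestly that it only yields the boundary-endpoint case.
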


We have a partial result on the conjecture.

\begin{prop}
\label{prop:alpha5}
 The equality \eqref{eq:per2} is true for any $i$
such that both endpoints of $\alpha_i(1)$ are on the boundary
of $\bfS(\phi)$.
In particular, Conjecture \ref{conj:period1} is true if
$(\bfS(\phi),
\bfM(\phi))$ have no punctures.
\end{prop}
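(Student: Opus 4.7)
The strategy is to deduce the desired periodicity of arcs from the $\nu$-periodicity of simple paths (Proposition \ref{prop:bg1}) via a geometric reconstruction: for an arc with both endpoints on $\partial \bfS$, the isotopy class of the arc is recovered from its associated simple path. The proposition then follows because the periodicity of simple paths and the reconstruction together yield the periodicity of boundary-to-boundary arcs.

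First, I would observe that each step in the sequence \eqref{eq:Stokesseq0} --- whether a signed flip $\mu_k^{(\varepsilon)}$ or a signed pop $\kappa_p^{(\varepsilon)}$ --- arises from a saddle reduction $\phi \mapsto e^{\pm 2i\delta}\phi$, which preserves the sets $P_0$ and $P_\infty$ of zeros and poles of $\phi$. The induced homeomorphism of $\Sigma$ is therefore isotopic to the identity rel $P_0 \cup P_\infty$, and consequently the surface $(\bfS(\phi), \bfM(\phi), \bfA(\phi))$ is fixed up to isotopy throughout the sequence. Under this canonical identification, the simple paths $\beta_i(t)$ all live in a common group $\Gamma^\vee$, and Proposition \ref{prop:bg1} reads as the equality $\beta_{\nu(i)}(N+2) = \beta_i(1)$ in $\Gamma^\vee$.

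The main step is to prove the reconstruction claim: if $\alpha$ is an arc of some labeled Stokes triangulation $T(t)$ with both endpoints on $\partial \bfS$, then the simple path $\beta \in \Gamma^\vee$ determines the isotopy class of $\alpha$ in $(\bfS, \bfM, \bfA)$. Such an $\alpha$ lies in a regular horizontal strip $D$ bounded by two poles $p_1, p_2$ of order $\geq 3$, and its endpoints are specific marked points on the boundary components around $p_1, p_2$, determined by the tangent directions of the trajectory $\alpha$ at $p_1, p_2$ (cf.\ Figure \ref{fig:boundary1}). By the construction of the simple path, these tangent directions are encoded in the endpoints of $\beta$ as a relative $1$-cycle in $\hat{\Sigma}\setminus \hat{P}_0$ rel $\hat{P}_\infty$; equivalently, $\beta$ records the (unordered) pair of boundary endpoints of $\alpha$. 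Since the horizontal strip $D$ is simply connected (it is conformally a strip $\{a < \mathrm{Im}\,w < b\}$, cf.\ Section \ref{section:Stokes-graphs}), any two paths in $D$ with the same boundary endpoints are isotopic, so $\alpha$ is recovered from $\beta$.

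The main technical subtlety, which I expect to be the principal obstacle, is the handling of the $*$-equivalence: $\beta$ is only defined modulo $\mathrm{Sym}(H_1(\hat{\Sigma}\setminus \hat{P}_0, \hat{P}_\infty))$, and one must verify that this ambiguity does not obstruct the reconstruction for boundary-to-boundary arcs. Since the covering involution $\tau$ exchanges the two sheets of $\hat{\Sigma}$ and the signs $\oplus \leftrightarrow \ominus$ on tangent directions, a $*$-symmetric relative $1$-cycle cannot alter the unordered pair of boundary endpoints; this is enough to guarantee unambiguous recovery of $\alpha$. The \emph{in particular} statement now follows immediately: if $(\bfS(\phi), \bfM(\phi))$ has no punctures, then every marked point lies on $\partial \bfS$, so every arc of every $T(t)$ has both endpoints on $\partial \bfS$; the first part of the proposition thus yields $\alpha_{\nu(i)}(N+2) = \alpha_i(1)$ for all $i$, proving Conjecture \ref{conj:period1} in this case.
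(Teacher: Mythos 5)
Your reconstruction claim---that the simple path $\beta \in \Gamma^{\vee}$ alone determines the isotopy class of a boundary-to-boundary arc in $(\bfS,\bfM,\bfA)$---is the crux of your argument, and it is not established by what you write, nor do I think it is true in the form you need. You argue that $\beta$ records the boundary endpoints of $\alpha$ and that $\alpha$ lies in a simply connected horizontal strip $D$, hence $\alpha$ is determined. But $D$ is a Stokes region of one particular Stokes graph $G(t)$; the two arcs you ultimately want to compare, $\alpha_i(1)$ and $\alpha_{\nu(i)}(N+2)$, live in horizontal strips of $G(1)$ and $G(N+2)$ respectively, and these are not a priori the same subregion of $\bfS$. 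Knowing that two embedded arcs share the same boundary endpoints, and that each is a trajectory representative in \emph{some} simply connected horizontal strip, does not make them isotopic in $(\bfS,\bfM,\bfA)$: they could still differ by windings around genus, around other marked points, or around the midpoints in $\bfA$. Your handling of the $*$-equivalence only addresses the endpoint data; it does not control these bulk windings, and a relative homology class modulo $\mathrm{Sym}$ is much coarser than an isotopy class of embedded arcs.

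The ingredient your route omits is precisely the periodicity of the corresponding sequence of labeled ideal/tagged triangulations, namely Theorem \ref{thm:period2}. The paper's proof first invokes this (as ``Fact 1''): forgetting the midpoints $\bfA$ and the signs of flips and pops, the sequence \eqref{eq:Stokesseq2} is $\nu$-periodic as labeled ideal triangulations of $(\bfS,\bfM)$, so $\alpha_{\nu(i)}(N+2)$ and $\alpha_i(1)$ are already isotopic in $(\bfS,\bfM)$. This reduces the remaining ambiguity to how the arcs interact with the finitely many midpoints, and only then is the combination of the simple-path periodicity (Proposition \ref{prop:bg1}), the explicit description of $\mathrm{Sym}(H_1(\hat{\Sigma}\setminus\hat{P}_0,\hat{P}_\infty))$, the embeddedness of arcs, and the boundary-endpoint hypothesis enough to kill it. Trying to bypass this reduction and go straight from equality of simple paths to equality of arcs is exactly the move that fails when an endpoint sits at a puncture (the deformation in Figure \ref{fig:path1}), so the proposition's hypothesis is being used in your proof in a way that is too weak to carry the argument. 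The derivation of the ``in particular'' clause from the first statement is fine; the gap is entirely in the first statement.
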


\begin{proof}
We have the following facts.

Fact 1. Let us temporarily forget the signs
of all signed flips and pops in the sequence
\eqref{eq:Stokesseq2}, and regard it as a sequence of
signed ideal triangulations in Section
\ref{subsec:reformulation}.
Then, 
by Theorem \ref{thm:period2},
we deduce that the equality \eqref{eq:per2} holds {\em as arcs in 
$(\bfS(\phi),
\bfM(\phi))$}.

Fact 2. The group
$\mathrm{Sym}(H_1(\hat{\Sigma}\setminus \hat{P}_0,\hat{P}_{\infty}))$
in \eqref{eq:sym1} is generated by two types of elements of
$H_1(\hat{\Sigma}\setminus \hat{P}_0,\hat{P}_{\infty})$;
one is $\gamma + \gamma*$, where $\gamma$ is any path
connecting two points in $\hat{P}_{\infty}$,
and the other is any cycle (= closed path) in $\hat{\Sigma}
\setminus \hat{P}_{0}$  around a point in $\hat{P}_0$.

Fact 3. By \eqref{eq:beta1}, the periodicity
$\tau_{G(1),G(N+2)}^{\vee}(\beta_{\nu(i)}(N+2))= \beta_i(1)$ holds.

Fact 4. By the definition of arcs, 
neither $\alpha_{\nu(i)}(N+2)$
nor  $\alpha_i(1)$ intersects itself.

These facts, together with the assumption that
both endpoints of $\alpha_i(1)$ are on the boundary
of $\bfS(\phi)$,
imply the equality \eqref{eq:per2}
{\em as arcs in $(\bfS(\phi),
\bfM(\phi),\bfA(\phi))$}.
\end{proof}

\begin{rem} On the other hand,
if an arc $\alpha_i$ has an endpoint at a puncture,
one cannot  deduce the equality \eqref{eq:per2} as above
due to the deformation of paths in Figure \ref{fig:path1}.
In the above proof, such a deformation does not occur
because of the presence of the boundary at each endpoint.
\end{rem}

\begin{ex} [Pentagon relation (4)]
\label{ex:pentagon4}
Let observe how the periodicity 
\eqref{eq:per2} occurs in our running 
Examples \ref{ex:pentagon1},
\ref{ex:pentagon2}, and \ref{ex:pentagon3}.
Let us take the labeled Stokes graph in
Figure \ref{fig:pentagon2} (a) as the initial labeled Stokes graph.
We apply the mutation sequence $\vec{k}=(1,2,1,2,1)$ in 
Examples \ref{ex:pentagon1} and 
\ref{ex:pentagon2}.
According to \eqref{eq:tropsign2},
we take the sequence of the tropical signs $\vec{\ve}=
(+,+,+,-,-)$.
Then, the pentagon relations of
the corresponding labeled Stokes triangulations
and labeled Stokes graphs are presented
in Figures \ref{fig:pentagon4} and 
\ref{fig:pentagon5},
respectively.
Note that the boundary trajectories 
in Figure \ref{fig:pentagon5} vanish 
in $\Gamma$ and $\Gamma^{\vee}$.
\end{ex}

\begin{figure}
\begin{center}
\begin{pspicture}(-1,-1)(13,4.2)
%
\psset{linewidth=0.5pt}
\psset{fillstyle=solid, fillcolor=black}
\pscircle(0,4){0.08} 
\pscircle(0.95,3.31){0.08} 
\pscircle(0.59,2.19){0.08} 
\pscircle(-0.59,2.19){0.08} 
\pscircle(-0.95,3.31){0.08} 
\psset{fillstyle=none}
\psline(0,4)(0.95,3.31)
\psline(0.95,3.31)(0.59,2.19)
\psline(0.59,2.19)(-0.59,2.19)
\psline(-0.59,2.19)(-0.95,3.31)
\psline(-0.95,3.31)(0,4)
\psline(0,4)(0.59,2.19)
\psline(0,4)(-0.59,2.19)
\rput[c]{0}(2.5,3.6){\small $\mu_1^{(+)}$}
\psline[arrows=->](2,3)(3,3)
\rput[c]{0}(-0.2,3){\small 1}
\rput[c]{0}(0.2,3){\small 2}
\rput[c]{0}(0,2.8){\small $\times$}
\rput[c]{0}(-0.6,3.2){\small $\times$}
\rput[c]{0}(0.6,3.2){\small $\times$}
%
\psset{linewidth=0.5pt}
\psset{fillstyle=solid, fillcolor=black}
\pscircle(5,4){0.08} 
\pscircle(5.95,3.31){0.08} 
\pscircle(5.59,2.19){0.08} 
\pscircle(4.41,2.19){0.08} 
\pscircle(4.05,3.31){0.08} 
\psset{fillstyle=none}
\psline(5,4)(5.95,3.31)
\psline(5.95,3.31)(5.59,2.19)
\psline(5.59,2.19)(4.41,2.19)
\psline(4.41,2.19)(4.05,3.31)
\psline(4.05,3.31)(5,4)
\psline(5,4)(5.59,2.19)
\pscurve(4.05,3.31)(4.3,3)(5,3)(5.59,2.19)

\rput[c]{0}(7.5,3.5){\small $\mu_2^{(+)}$}
\psline[arrows=->](7,3)(8,3)
\rput[c]{0}(4.8,2.8){\small 1}
\rput[c]{0}(5.5,3){\small 2}
 \rput[c]{0}(5,2.8){\small $\times$}
\rput[c]{0}(4.4,3.2){\small $\times$}
\rput[c]{0}(5.6,3.2){\small $\times$}
%
\psset{linewidth=0.5pt}
\psset{fillstyle=solid, fillcolor=black}
\pscircle(10,4){0.08} 
\pscircle(10.95,3.31){0.08} 
\pscircle(10.59,2.19){0.08} 
\pscircle(9.41,2.19){0.08} 
\pscircle(9.05,3.31){0.08} 
\psset{fillstyle=none}
\psline(10,4)(10.95,3.31)
\psline(10.95,3.31)(10.59,2.19)
\psline(10.59,2.19)(9.41,2.19)
\psline(9.41,2.19)(9.05,3.31)
\psline(9.05,3.31)(10,4)
%
\pscurve(9.05,3.31)(9.3,3.05)(10.5,3.45)(10.95,3.31)
\pscurve(9.05,3.31)(9.3,3)(10,3)(10.59,2.19)

\rput[c]{0}(12.5,3.5){\small $\mu_1^{(+)}$}
\psline[arrows=->](12,3)(13,3)
\rput[c]{0}(9.8,2.8){\small 1}
\rput[c]{0}(10,3.5){\small 2}
\rput[c]{0}(10,2.8){\small $\times$}
\rput[c]{0}(9.4,3.2){\small $\times$}
\rput[c]{0}(10.6,3.2){\small $\times$}
\psset{linewidth=0.5pt}
\psset{fillstyle=solid, fillcolor=black}
\pscircle(0,1){0.08} 
\pscircle(0.95,0.31){0.08} 
\pscircle(0.59,-0.81){0.08} 
\pscircle(-0.59,-0.81){0.08} 
\pscircle(-0.95,0.31){0.08} 
\psset{fillstyle=none}
\psline(0,1)(0.95,0.31)
\psline(0.95,0.31)(0.59,-0.81)
\psline(0.59,-0.81)(-0.59,-0.81)
\psline(-0.59,-0.81)(-0.95,0.31)
\psline(-0.95,0.31)(0,1)
\pscurve(-0.95,0.31)(-0.7,0.05)(0.5,0.45)(0.95,0.31)
\pscurve(-0.59,-0.81)(-0,-0.5)(0.5,0.35)(0.95,0.31)
%
\rput[c]{0}(2.5,0.5){\small $\mu_2^{(-)}$}
\psline[arrows=->](2,0)(3,0)
\rput[c]{0}(0.3,-0.4){\small 1}
\rput[c]{0}(0,0.5){\small 2}
\rput[c]{0}(0,-0.2){\small $\times$}
\rput[c]{0}(-0.6,0.2){\small $\times$}
\rput[c]{0}(0.6,0.2){\small $\times$}
\psset{linewidth=0.5pt}
\psset{fillstyle=solid, fillcolor=black}
\pscircle(5,1){0.08} 
\pscircle(5.95,0.31){0.08} 
\pscircle(5.59,-0.81){0.08} 
\pscircle(4.41,-0.81){0.08} 
\pscircle(4.05,0.31){0.08} 
\psset{fillstyle=none}
\psline(5,1)(5.95,0.31)
\psline(5.95,0.31)(5.59,-0.81)
\psline(5.59,-0.81)(4.41,-0.81)
\psline(4.41,-0.81)(4.05,0.31)
\psline(4.05,0.31)(5,1)
\pscurve(4.41,-0.81)(5,-0.5)(5.5,0.35)(5.95,0.31)
\psline(5,1)(4.41,-0.81)
%
\rput[c]{0}(7.5,0.5){\small $\mu_1^{(-)}$}
\psline[arrows=->](7,0)(8,0)
\rput[c]{0}(4.8,0){\small 2}
\rput[c]{0}(5.3,-0.4){\small 1}
\rput[c]{0}(5,-0.2){\small $\times$}
\rput[c]{0}(4.4,0.2){\small $\times$}
\rput[c]{0}(5.6,0.2){\small $\times$}
%

\psset{linewidth=0.5pt}
\psset{fillstyle=solid, fillcolor=black}
\pscircle(10,1){0.08} 
\pscircle(10.95,0.31){0.08} 
\pscircle(10.59,-0.81){0.08} 
\pscircle(9.41,-0.81){0.08} 
\pscircle(9.05,0.31){0.08} 
\psset{fillstyle=none}
\psline(10,1)(10.95,0.31)
\psline(10.95,0.31)(10.59,-0.81)
\psline(10.59,-0.81)(9.41,-0.81)
\psline(9.41,-0.81)(9.05,0.31)
\psline(9.05,0.31)(10,1)
\psline(10,1)(9.41,-0.81)
\psline(10,1)(10.59,-0.81)
\rput[c]{0}(9.8,0){\small 2}
\rput[c]{0}(10.2,0){\small 1}
\rput[c]{0}(10,-0.2){\small $\times$}
\rput[c]{0}(9.4,0.2){\small $\times$}
\rput[c]{0}(10.6,0.2){\small $\times$}
\end{pspicture}
\end{center}
\caption{Pentagon relation of labeled Stokes triangulations.}
\label{fig:pentagon4}
\end{figure}
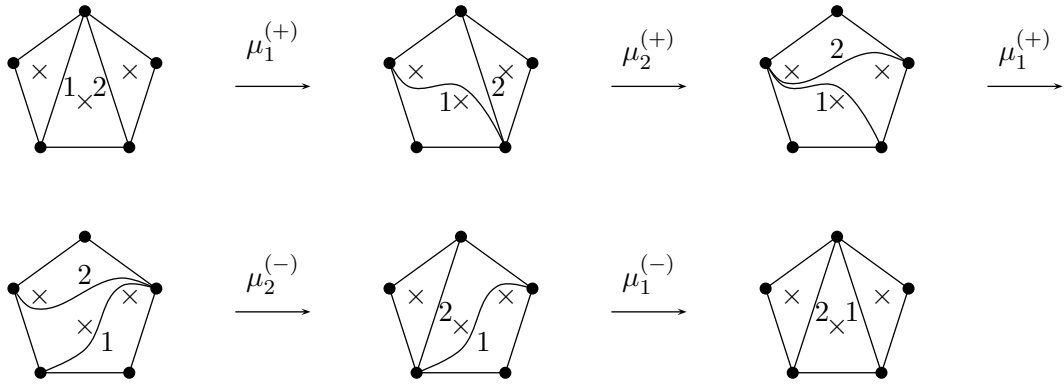


\begin{figure}
\begin{center}
\begin{pspicture}(0,-0.4)(14,6.6)
\psset{linewidth=0.5pt}
%
\psset{fillstyle=none}
\psline(1,6.4)(1,4.5)
\psline(1.6,5.4)(2.4,5.4)
\psline(0.4,5.4)(-0.4,5.4)
\pscurve(1.8,3.8)(1.4,4.2)(1,4.5)
\pscurve(0.2,3.8)(0.6,4.2)(1,4.5)
\pscurve(1.1,6.4)(1.3,5.8)(1.6,5.4)
\pscurve(0.9,6.4)(0.7,5.8)(0.4,5.4)
\pscurve(1.6,5.4)(1.7,4.5)(1.9,3.8)
\pscurve(0.4,5.4)(0.3,4.5)(0.1,3.8)
\pscurve[linestyle=dashed](1.05,6.4)(1.3,5)(1.85,3.8)
\pscurve[linestyle=dashed](0.95,6.4)(0.7,5)(0.15,3.8)
%
%
\rput[c]{0}(0.4,5.4){\small $\times$}
\rput[c]{0}(1,4.5){\small $\times$}
\rput[c]{0}(1.6,5.4){\small $\times$}
\rput[c]{0}(0.1,5.1){\small $a$}
\rput[c]{0}(1,4.1){\small $b$}
\rput[c]{0}(1.9,5.1){\small $c$}
\rput[c]{0}(0.77,4.7){\small $1$}
\rput[c]{0}(1.22,4.7){\small $2$}
\pscurve[linestyle=dashed](2.5,5.1)(2.1,4.6)(2,4)
\pscurve[linestyle=dashed](-0.5,5.1)(-0.1,4.6)(0,4)
\pscurve[linestyle=dashed](0.45,3.6)(1,3.75)(1.55,3.6)
\pscurve[linestyle=dashed](1.3,6.4)(1.75,5.95)(2.4,5.7)
\pscurve[linestyle=dashed](0.7,6.4)(0.25,5.95)(-0.4,5.7)
\rput[c]{0}(3.5,5.6){\small $\mu_1^{(+)}$}
\psline[arrows=->](3,5)(4,5)
\psset{fillstyle=none}
\psline(5.7,5.4)(6.8,3.8)
\psline(5.9,4.3)(5.3,3.75)
\psline(6.6,4.8)(7.4,5.2)
\pscurve(6,6.4)(5.9,5.75)(5.7,5.4)
\pscurve(4.6,5.4)(5.3,5.35)(5.7,5.4)
\pscurve(5.9,4.3)(5.2,4.95)(4.6,5.3)
\pscurve(6.6,4.8)(6.25,5.6)(6.1,6.4)
\pscurve(6.9,3.85)(6.7,4.3)(6.6,4.8)
\pscurve(6.7,3.75)(6.4,4)(5.9,4.3)
\pscurve[linestyle=dashed](6.85,3.8)(6.2,5.05)(6.05,6.4)
\pscurve[linestyle=dashed](6.75,3.75)(5.8,4.8)(4.6,5.35)
%
%
\rput[c]{0}(6.6,4.8){\small $\times$}
\rput[c]{0}(5.9,4.3){\small $\times$}
\rput[c]{0}(5.7,5.4){\small $\times$}
\rput[c]{0}(6.9,4.7){\small $c$}
\rput[c]{0}(5.5,5.7){\small $a$}
\rput[c]{0}(5.9,4){\small $b$}
\rput[c]{0}(6,5.4){\small $2$}
\rput[c]{0}(5.6,5.1){\small $1$}
\pscurve[linestyle=dashed](7.5,5.1)(7.1,4.6)(7,4)
\pscurve[linestyle=dashed](4.5,5.1)(4.9,4.6)(5,4)
\pscurve[linestyle=dashed](5.45,3.6)(6,3.75)(6.55,3.6)
\pscurve[linestyle=dashed](6.3,6.4)(6.75,5.95)(7.4,5.7)
\pscurve[linestyle=dashed](5.7,6.4)(5.25,5.95)(4.6,5.7)
\rput[c]{0}(8.5,5.6){\small $\mu_2^{(+)}$}
\psline[arrows=->](8,5)(9,5)
\psset{fillstyle=none}
\psline(11.4,4.8)(9.6,5.3)
\psline(10.4,4.5)(10.17,3.8)
\psline(10.7,5.6)(10.95,6.4)
\pscurve(11.4,4.8)(11.5,4.3)(11.8,3.75)
\pscurve(11.4,4.8)(11.8,5.1)(12.4,5.35)
\pscurve(10.4,4.5)(11.2,4.1)(11.7,3.65)
\pscurve(10.7,5.6)(11.6,5.45)(12.4,5.45)
\pscurve(10.4,4.5)(9.9,5)(9.55,5.2)
\pscurve(10.7,5.6)(10.05,5.42)(9.65,5.4)
\pscurve[linestyle=dashed](11.75,3.7)(10.9,4.65)(9.6,5.25)
\pscurve[linestyle=dashed](12.4,5.4)(11,5.2)(9.6,5.35)
%
%
\rput[c]{0}(11.4,4.8){\small $\times$}
\rput[c]{0}(10.4,4.5){\small $\times$}
\rput[c]{0}(10.7,5.6){\small $\times$}
\rput[c]{0}(11.2,5.9){\small $a$}
\rput[c]{0}(11.8,4.8){\small $c$}
\rput[c]{0}(10.5,4.2){\small $b$}
\rput[c]{0}(11.3,5.05){\small $2$}
\rput[c]{0}(11.2,4.6){\small $1$}
\pscurve[linestyle=dashed](12.5,5.1)(12.1,4.6)(12,4)
\pscurve[linestyle=dashed](9.5,5.1)(9.9,4.6)(10,4)
\pscurve[linestyle=dashed](10.45,3.6)(11,3.75)(11.55,3.6)
\pscurve[linestyle=dashed](11.3,6.4)(11.75,5.95)(12.4,5.7)
\pscurve[linestyle=dashed](10.7,6.4)(10.25,5.95)(9.6,5.7)
\rput[c]{0}(13.5,5.6){\small $\mu_1^{(+)}$}
\psline[arrows=->](13,5)(14,5)
\psset{fillstyle=none}
\psline(0.6,0.8)(2.4,1.3)
\psline(1.6,0.5)(1.83,-0.2)
\psline(1.3,1.6)(1.05,2.4)
\pscurve(0.6,0.8)(0.5,0.3)(0.2,-0.25)
\pscurve(0.6,0.8)(0.2,1.1)(-0.4,1.35)
\pscurve(1.6,0.5)(0.8,0.1)(0.3,-0.35)
\pscurve(1.3,1.6)(0.4,1.45)(-0.4,1.45)
\pscurve(1.6,0.5)(2.1,1)(2.45,1.2)
\pscurve(1.3,1.6)(1.95,1.42)(2.35,1.4)
\pscurve[linestyle=dashed](0.25,-0.3)(1.1,0.65)(2.4,1.25)
\pscurve[linestyle=dashed](-0.4,1.4)(1,1.2)(2.4,1.35)
%
%
\rput[c]{0}(0.6,0.8){\small $\times$}
\rput[c]{0}(1.6,0.5){\small $\times$}
\rput[c]{0}(1.3,1.6){\small $\times$}
\rput[c]{0}(1.5,1.9){\small $a$}
\rput[c]{0}(0.2,0.8){\small $b$}
\rput[c]{0}(1.5,0.2){\small $c$}
\rput[c]{0}(0.7,1.05){\small $2$}
\rput[c]{0}(0.8,0.6){\small $1$}
\pscurve[linestyle=dashed](2.5,1.1)(2.1,0.6)(2,0)
\pscurve[linestyle=dashed](-0.5,1.1)(-0.1,0.6)(0,0)
\pscurve[linestyle=dashed](0.45,-0.4)(1,-0.25)(1.55,-0.4)
\pscurve[linestyle=dashed](1.3,2.4)(1.75,1.95)(2.4,1.7)
\pscurve[linestyle=dashed](0.7,2.4)(0.25,1.95)(-0.4,1.7)
\rput[c]{0}(3.5,1.6){\small $\mu_2^{(-)}$}
\psline[arrows=->](3,1)(4,1)
\psset{fillstyle=none}
\psline(6.3,1.4)(5.2,-0.2)
\psline(6.1,0.3)(6.7,-0.25)
\psline(5.4,0.8)(4.6,1.2)
\pscurve(6,2.4)(6.1,1.75)(6.3,1.4)
\pscurve(7.4,1.4)(6.7,1.35)(6.3,1.4)
\pscurve(6.1,0.3)(6.8,0.95)(7.4,1.3)
\pscurve(5.4,0.8)(5.75,1.6)(5.9,2.4)
\pscurve(5.1,-0.15)(5.3,0.3)(5.4,0.8)
\pscurve(5.3,-0.25)(5.6,0)(6.1,0.3)
\pscurve[linestyle=dashed](5.15,-0.2)(5.8,1.05)(5.95,2.4)
\pscurve[linestyle=dashed](5.25,-0.25)(6.2,0.8)(7.4,1.35)
%
%
\rput[c]{0}(5.4,0.8){\small $\times$}
\rput[c]{0}(6.1,0.3){\small $\times$}
\rput[c]{0}(6.3,1.4){\small $\times$}
\rput[c]{0}(5.1,0.7){\small $a$}
\rput[c]{0}(6.5,1.7){\small $b$}
\rput[c]{0}(6.1,0){\small $c$}
\rput[c]{0}(6,1.4){\small $2$}
\rput[c]{0}(6.4,1.1){\small $1$}
\pscurve[linestyle=dashed](7.5,1.1)(7.1,0.6)(7,0)
\pscurve[linestyle=dashed](4.5,1.1)(4.9,0.6)(5,0)
\pscurve[linestyle=dashed](5.45,-0.4)(6,-0.25)(6.55,-0.4)
\pscurve[linestyle=dashed](6.3,2.4)(6.75,1.95)(7.4,1.7)
\pscurve[linestyle=dashed](5.7,2.4)(5.25,1.95)(4.6,1.7)
\rput[c]{0}(8.5,1.6){\small $\mu_1^{(-)}$}
\psline[arrows=->](8,1)(9,1)
\psset{fillstyle=none}
\psline(11,2.4)(11,0.5)
\psline(11.6,1.4)(12.4,1.4)
\psline(10.4,1.4)(9.6,1.4)
\pscurve(11.8,-0.2)(11.4,0.2)(11,0.5)
\pscurve(10.2,-0.2)(10.6,0.2)(11,0.5)
\pscurve(11.1,2.4)(11.3,1.8)(11.6,1.4)
\pscurve(10.9,2.4)(10.7,1.8)(10.4,1.4)
\pscurve(11.6,1.4)(11.7,0.5)(11.9,-0.2)
\pscurve(10.4,1.4)(10.3,0.5)(10.1,-0.2)
\pscurve[linestyle=dashed](11.05,2.4)(11.3,1)(11.85,-0.2)
\pscurve[linestyle=dashed](10.95,2.4)(10.7,1)(10.15,-0.2)
%
%
\rput[c]{0}(10.4,1.4){\small $\times$}
\rput[c]{0}(11,0.5){\small $\times$}
\rput[c]{0}(11.6,1.4){\small $\times$}
\rput[c]{0}(10.1,1.1){\small $a$}
\rput[c]{0}(11,0.1){\small $b$}
\rput[c]{0}(11.9,1.1){\small $c$}
\rput[c]{0}(10.77,0.7){\small $2$}
\rput[c]{0}(11.22,0.7){\small $1$}
\pscurve[linestyle=dashed](12.5,1.1)(12.1,0.6)(12,0)
\pscurve[linestyle=dashed](9.5,1.1)(9.9,0.6)(10,0)
\pscurve[linestyle=dashed](10.45,-0.4)(11,-0.25)(11.55,-0.4)
\pscurve[linestyle=dashed](11.3,2.4)(11.75,1.95)(12.4,1.7)
\pscurve[linestyle=dashed](10.7,2.4)(10.25,1.95)(9.6,1.7)
%
\end{pspicture}
\end{center}
\caption{Pentagon relation of labeled Stokes graphs (drawn schematically).} 
\label{fig:pentagon5}
\end{figure}
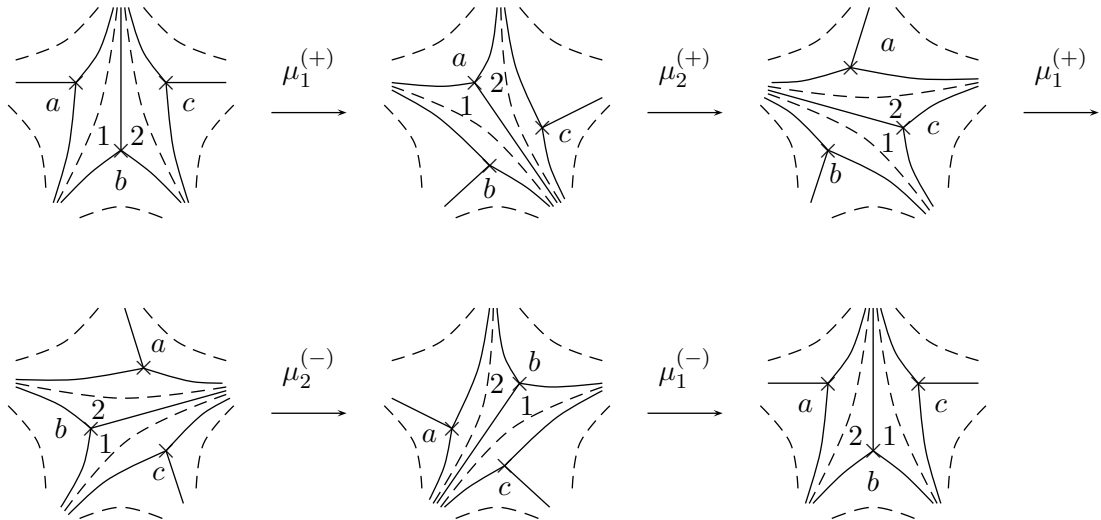

\begin{ex}
Let us illustrate the mutation sequence \eqref{eq:Stokesseq2} 
involving
signed pops.
We consider the mutation sequence 
of labeled seeds 
with period 4
represented by the labeled tagged triangulations
in Figure \ref{fig:digon2}. See also Figure \ref{fig:digon3}.
Then, 
the corresponding mutation sequence 
of the labeled Stokes triangulations
is given  by Figure \ref{fig:pop5},
where
\begin{align}
\tilde{\mu}^{(\ve_1)}_{k_1}=\mu^{(+)}_i,
\quad
\tilde{\mu}^{(\ve_2)}_{k_2}=\mu^{(+)}_j,
\quad
\tilde{\mu}^{(\ve_3)}_{k_3}=\mu^{(-)}_i\circ\kappa_p^{(+)},
\quad
\tilde{\mu}^{(\ve_4)}_{k_4}=\mu^{(-)}_j,
\quad
\kappa=\kappa_p^{(-)}.
\end{align} 
Thus, the desired periodicity \eqref{eq:per2} holds
in this example.
\end{ex}

\begin{figure}
\begin{center}
\begin{pspicture}(4,-1)(19.5,4.2)
%
\psset{linewidth=0.5pt}
%
\psset{fillstyle=solid, fillcolor=black}
\pscircle(5,4){0.08} 
\pscircle(5,2){0.08} 
\pscircle(5,3){0.08} 
\psset{fillstyle=none}
\pscurve(5,4)(4.6,3)(5,2.6)(5.4,3)(5,4)
\psline(5,3)(5,4)
\pscurve(5,4)(5.6,3.45)(5.6,2.55)(5,2)
\pscurve(5,4)(4.4,3.45)(4.4,2.55)(5,2)
%
\rput[c]{0}(5,2.4){\small $\times$}
\rput[c]{0}(5,2.8){\small $\times$}
\rput[c]{0}(5.32,2.5){\small $i$}
\rput[c]{0}(5.12,3.4){\small $j$}
\psline[arrows=->](6.7,3)(7.3,3)
\rput[c]{0}(7,3.6){$\mu_{i}^{(+)}$}
%
\psset{fillstyle=solid, fillcolor=black}
\pscircle(9,4){0.08} 
\pscircle(9,2){0.08} 
\pscircle(9,3){0.08} 
\psset{fillstyle=none}
%
\psline(9,4)(9,3)
\pscurve(9,3)(8.8,2.8)(9.2,2.4)(9,2)
\pscurve(9,4)(9.6,3.45)(9.6,2.55)(9,2)
\pscurve(9,4)(8.4,3.45)(8.4,2.55)(9,2)
%
\rput[c]{0}(9,2.4){\small $\times$}
\rput[c]{0}(9,2.8){\small $\times$}
\rput[c]{0}(9.2,2.7){\small $i$}
\rput[c]{0}(9.12,3.4){\small $j$}
\psline[arrows=->](10.7,3)(11.3,3)
\rput[c]{0}(11,3.6){$\mu_{j}^{(+)}$}
\psset{fillstyle=solid, fillcolor=black}
\pscircle(13,4){0.08} 
\pscircle(13,2){0.08} 
\pscircle(13,3){0.08} 
\psset{fillstyle=none}
\pscurve(13,2)(13.1,2.4)(12.6,2.85)(13,3.4)(13.5,3)(13,2)
\pscurve(13,3)(12.8,2.8)(13.2,2.4)(13,2)
\pscurve(13,4)(13.6,3.45)(13.6,2.55)(13,2)
\pscurve(13,4)(12.4,3.45)(12.4,2.55)(13,2)
%
\rput[c]{0}(13,2.4){\small $\times$}
\rput[c]{0}(13,2.8){\small $\times$}
%
\rput[c]{0}(13.32,3.5){\small $j$}
\rput[c]{0}(13.22,2.7){\small $i$}
\psline[arrows=->](14.7,3)(15.3,3)
\rput[c]{0}(15,3.6){$\kappa_{p}^{(+)}$}
%
\psset{fillstyle=solid, fillcolor=black}
\pscircle(17,4){0.08} 
\pscircle(17,2){0.08} 
\pscircle(17,3){0.08} 
\psset{fillstyle=none}
\pscurve(17,2)(17.1,2.4)(16.6,2.85)(17,3.4)(17.5,3)(17,2)
\pscurve(17,3)(17.2,2.5)(17,2)
\pscurve(17,4)(17.6,3.45)(17.6,2.55)(17,2)
\pscurve(17,4)(16.4,3.45)(16.4,2.55)(17,2)
%
\rput[c]{0}(17,2.4){\small $\times$}
\rput[c]{0}(17,2.8){\small $\times$}
\rput[c]{0}(17.32,3.5){\small $i$}
\rput[c]{0}(17.3,2.7){\small $j$}
\psline[arrows=->](18.7,3)(19.3,3)
\rput[c]{0}(19,3.6){$\mu_{i}^{(-)}$}
%

\psset{fillstyle=solid, fillcolor=black}
\pscircle(5,1){0.08} 
\pscircle(5,-1){0.08} 
\pscircle(5,0){0.08} 
\psset{fillstyle=none}
%
\pscurve(5,1)(4.8,-0.2)(5,-0.3)(5.1,-0.2)(5,0)
\pscurve(5,0)(5.3,-0.5)(5,-1)
\pscurve(5,1)(5.6,0.45)(5.6,-0.45)(5,-1)
\pscurve(5,1)(4.4,0.45)(4.4,-0.45)(5,-1)
%
\rput[c]{0}(5,-0.2){\small $\times$}
\rput[c]{0}(5,-0.6){\small $\times$}
\rput[c]{0}(5,0.5){\small $i$}
\rput[c]{0}(5.4,-0.3){\small $j$}
\psline[arrows=->](6.3,0)(7.3,0)
\rput[c]{0}(7,0.6){$\mu_{j}^{(-)}$}
%
\psset{fillstyle=solid, fillcolor=black}
\pscircle(9,1){0.08} 
\pscircle(9,-1){0.08} 
\pscircle(9,0){0.08} 
\psset{fillstyle=none}
\pscurve(9,1)(8.6,0)(9,-0.4)(9.4,0)(9,1)
\pscurve(9,1)(8.8,-0.2)(9,-0.3)(9.1,-0.2)(9,0)
\pscurve(9,1)(9.6,0.45)(9.6,-0.45)(9,-1)
\pscurve(9,1)(8.4,0.45)(8.4,-0.45)(9,-1)
%
\rput[c]{0}(9,-0.6){\small $\times$}
\rput[c]{0}(9,-0.2){\small $\times$}
\rput[c]{0}(9,0.5){\small $i$}
\rput[c]{0}(9.32,-0.4){\small $j$}
\psline[arrows=->](10.3,0)(11.3,0)
\rput[c]{0}(11,0.6){$\kappa_{p}^{(-)}$}
\psset{fillstyle=solid, fillcolor=black}
\pscircle(13,1){0.08} 
\pscircle(13,-1){0.08} 
\pscircle(13,0){0.08} 
\psset{fillstyle=none}
\pscurve(13,1)(12.6,0)(13,-0.4)(13.4,0)(13,1)
\psline(13,0)(13,1)
\pscurve(13,1)(13.6,0.45)(13.6,-0.45)(13,-1)
\pscurve(13,1)(12.4,0.45)(12.4,-0.45)(13,-1)
%
\rput[c]{0}(13,-0.6){\small $\times$}
\rput[c]{0}(13,-0.2){\small $\times$}
\rput[c]{0}(13.32,-0.5){\small $i$}
\rput[c]{0}(13.12,0.4){\small $j$}
%
\end{pspicture}
\end{center}
\caption{Example of periodicity of labeled Stokes triangulations involving pops.}
\label{fig:pop5}
\end{figure}
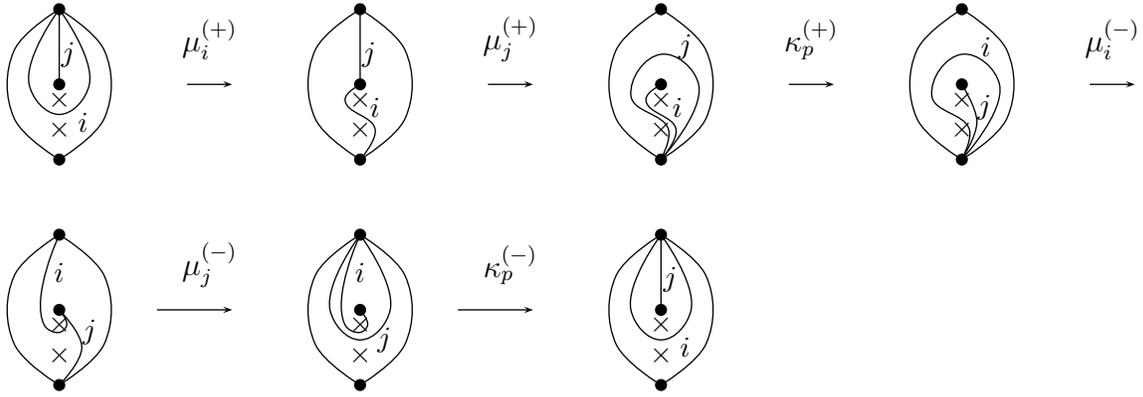

\section{Mutation of Voros symbols}
\label{sec:mut_voros}

Here we combine the analytic and geometric results
 in Sections 3 and 6
and show that the Voros symbols for the simple paths and the simple cycles
mutate exactly as $x$-variables and $\hat{y}$-variables in our extended seeds.

\subsection{Mutation formula of Voros symbols for signed flips}
\label{subsec:mutformula1}

Let us return to the situation in Section \ref{subsec:S1action}.
Let $Q(z,\eta)$ be the potential of a Schr\"odinger equation
\eqref{eq:Sch}, 
and let
$\phi=Q_0(z)dz^{\otimes 2}$ be the associated quadratic differential.
Assume that
the Stokes graph $G_0=G(\phi)$ 
has a unique {\em regular\/} saddle trajectory $\ell_0$.
Let $Q^{(\theta)}(z,\eta)$ be the $S^1$-family for  $Q(z,\eta)$
in \eqref{eq:Qtheta}.
We choose  a sufficiently small $\delta>0$ such that
$G_{+\delta}=G(\phi_{+\delta})$ and
$G_{-\delta}=G(\phi_{-\delta})$ are the saddle reductions of $G_0$.
Let us fix a sign $\ve=\pm $.
Then, we set $G=G_{\ve\delta}$
and $G'=G_{-\ve\delta}$.
We assume that $G$ and $G'$
are labeled so that they are
related by the signed flip 
$G'= \mu_k^{(\varepsilon)}(G)$.
See Figure \ref{fig:flip3}.

Let 
$\beta_1, \dots \beta_n \in \Gamma^{\vee}_G$ and 
$\gamma_1,\dots, \gamma_n \in \Gamma_G$ be 
the simple paths and the simple cycles of $G$ introduced in
Section \ref{subsec:simple1}.
We denote the Voros symbols for the potential 
$Q^{(\ve\delta)}(z,\eta)$ with respect to them
 by
\begin{align}
\label{eq:Voros1}
e^{{W}_i} = e^{{W}^{(\ve \delta)}_{\beta_i}},
\quad 
e^{{V}_i} = e^{{V}^{(\ve \delta)}_{\gamma_i}},
\end{align}
where we use the notation in \eqref{eq:WVtheta1}.
We also introduce
\begin{align}
\label{eq:vdef1}
e^{ v_i} =e^{\eta {v}^{(\ve \delta)}_{\gamma_i}},
\quad
{v}^{(\theta)}_{\gamma} 
=
{\int_{\gamma} \sqrt{Q_0^{(\theta)}(z)}\, dz}
=
e^{i\theta} \int_{\gamma} \sqrt{Q_0(z)}\, dz.
\end{align}
Thus, $e^{ v_i}$ is
the exponential factor of $e^{{V}_i}$.

Let $T$ be a labeled Stokes triangulation associated with
 $G$,
and let $B$ be the adjacency matrices of $T$.

\begin{lem} \label{lem:relation-between-x-and-y-Voros}
The following relation holds.
\begin{equation} \label{eq:relation-between-x-and-y-Voros}
e^{{V}_{i}} = e^{ v_i} \prod_{j=1}^{n} (e^{{W}_{j}})^{b_{ji}}.
\end{equation}
\end{lem}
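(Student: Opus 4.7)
The plan is to take logarithms and prove the equivalent additive identity
\[
V_i \;=\; v_i + \sum_{j=1}^n b_{ji}\,W_j,
\]
which exponentiates to \eqref{eq:relation-between-x-and-y-Voros}. Using the definition \eqref{eq:Soddreg} applied to the rotated potential, I would split
\[
S_{\rm odd}^{(\ve\delta)}(z,\eta) \;=\; \eta\sqrt{Q_0^{(\ve\delta)}(z)} \;+\; S_{\rm odd}^{\mathrm{reg}(\ve\delta)}(z,\eta),
\]
and integrate over the simple cycle $\gamma_i$. The principal part contributes exactly $v_i$ by the definition \eqref{eq:vdef1} of $v_i$, so the task reduces to proving
\[
\oint_{\gamma_i} S_{\rm odd}^{\mathrm{reg}(\ve\delta)}(z,\eta)\,dz \;=\; \sum_{j=1}^n b_{ji}\int_{\beta_j} S_{\rm odd}^{\mathrm{reg}(\ve\delta)}(z,\eta)\,dz.
\]

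The key geometric input is Proposition \ref{prop:bg3}, which asserts $\gamma_i = \sum_j b_{ji}\,\beta_j$ as elements of $\Gamma^\vee$. Here the closed cycle $\gamma_i \in H_1(\hat\Sigma\setminus\hat P)$ is viewed as a class in the relative homology $H_1(\hat\Sigma\setminus\hat P_0,\hat P_\infty)$ via the natural inclusion $\hat\Sigma\setminus\hat P \hookrightarrow \hat\Sigma\setminus\hat P_0$. The integrability result in Proposition \ref{proposition:integrability} guarantees that the integral of $S_{\rm odd}^{\mathrm{reg}(\ve\delta)}\,dz$ along any $\beta_j$ (which ends at points of $\hat P_\infty$) is well-defined, so both sides of the reduced identity are linear functionals on (a lift of) $\Gamma^\vee$. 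The formula will then follow immediately from Proposition \ref{prop:bg3} by linearity, provided I first check that integration of $S_{\rm odd}^{\mathrm{reg}(\ve\delta)}\,dz$ descends to the quotient $\tilde\Gamma^\vee$ on which $\Gamma^\vee$ is defined.

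This descent is a direct consequence of the covering involution property \eqref{eq:covering-involution}: the 1-form $S_{\rm odd}^{\mathrm{reg}}\,dz$ on $\hat\Sigma$ satisfies $\tau^*(S_{\rm odd}^{\mathrm{reg}}\,dz) = -S_{\rm odd}^{\mathrm{reg}}\,dz$. For any $\tau_*$-invariant relative cycle $\beta$, one therefore has
\[
\int_\beta S_{\rm odd}^{\mathrm{reg}}\,dz \;=\; \int_{\tau_*\beta} S_{\rm odd}^{\mathrm{reg}}\,dz \;=\; \int_\beta \tau^*(S_{\rm odd}^{\mathrm{reg}}\,dz) \;=\; -\int_\beta S_{\rm odd}^{\mathrm{reg}}\,dz,
\]
so the integral vanishes on $\mathrm{Sym}(H_1(\hat\Sigma\setminus\hat P_0,\hat P_\infty))$, confirming that it is a well-defined functional on $\tilde\Gamma^\vee$. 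Substituting the decomposition of $\gamma_i$ then gives the reduced identity, and exponentiation recovers \eqref{eq:relation-between-x-and-y-Voros}. The only point demanding some care is tracking the sign and branch conventions so that the integer coefficients $b_{ji}$ come out with the orientations fixed in Sections \ref{section:orientation} and \ref{subsec:simple1} and consistent with the intersection form \eqref{eq:gamma1}; this is the main (and only mild) obstacle, since all the analytic ingredients—integrability, anti-invariance, and the homological decomposition—have already been established.
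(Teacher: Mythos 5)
Your proof follows the same route as the paper's one-line argument: split $S_{\rm odd}^{(\ve\delta)}$ into its principal part $\eta\sqrt{Q_0^{(\ve\delta)}}$ and regular part $S_{\rm odd}^{\mathrm{reg}(\ve\delta)}$, integrate over $\gamma_i$, and invoke the homological decomposition $\gamma_i=\sum_j b_{ji}\beta_j$ of Proposition \ref{prop:bg3} for the regular part. You additionally verify, via the anti-invariance \eqref{eq:covering-involution}, that integration of $S_{\rm odd}^{\mathrm{reg}(\ve\delta)}\,dz$ descends to the quotient $\tilde\Gamma^\vee$ so that Proposition \ref{prop:bg3} can be applied — a well-definedness point the paper leaves implicit, and a worthwhile refinement.
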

\begin{proof}
We have 
\[
V_i = \oint_{\gamma_i}S_{\rm odd}^{(\ve \delta)}(z,\eta)dz = 
\oint_{\gamma_i}\left(\eta \sqrt{Q^{(\ve \delta)}_0(z)}
+S_{\rm odd}^{{\rm reg} (\ve \delta)}(z,\eta) \right)dz
=  v_i + \sum_{j=1}^{n} b_{ji} W_j,
\]
where
the last equality is due to
Proposition \ref{prop:bg3}.
\end{proof}
\par
Note that the  relation \eqref{eq:relation-between-x-and-y-Voros}
is
parallel to the one for  the 
$\hat{y}$-variables in \eqref{eq:yhat}.

Let $\gamma_0 \in \Gamma_G$ be the saddle class associated 
with $\ell_0$ defined in Section \ref{subsec:saddleclass}.

\begin{lem} \label{lemma:sign-of-simple-cycles}
The saddle class $\gamma_0$ coincides with $\varepsilon \gamma_k$.
\end{lem}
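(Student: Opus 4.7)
My plan is to identify the simple cycle $\gamma_k$ with the saddle class $\gamma_0$ up to sign by a topological degeneration argument, and then to pin down the sign by matching orientation conventions. The two cases $\varepsilon=\pm$ will be reduced to a single case via Proposition~\ref{prop:cycle1}.

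For the topological identification, I would observe that the $S^1$-action $\phi_\theta=e^{2i\theta}\phi$ of Section~\ref{subsec:S1action} leaves $\hat\Sigma$, $\hat P$, and the covering involution $\tau$ unchanged, so the class of the simple cycle $\gamma_k$ in $\Gamma\subset H_1(\hat\Sigma\setminus\hat P)/{\equiv}$ is independent of $\delta>0$. By Definition~\ref{defn:flipStokes1} together with the discussion of saddle reductions in Section~\ref{section:saddle-reduction}, the Stokes region $D_k$ is the unique horizontal strip of $G=G_{\varepsilon\delta}$ that collapses onto the regular saddle trajectory $\ell_0$ as $\delta\to 0^+$. Hence, in that limit, the loop around $D_k$ defining $\gamma_k$ in Figure~\ref{fig:bg1} degenerates to a loop encircling $\ell_0$ in $\hat\Sigma\setminus\hat P$; by the very definition of the saddle class in Section~\ref{subsec:saddleclass}, this limit is $\pm\gamma_0$ modulo $*$-equivalence. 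Thus $\gamma_k\equiv\pm\gamma_0$.

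To fix the sign, I would first treat the case $\varepsilon=+$ and then deduce the case $\varepsilon=-$ from Proposition~\ref{prop:cycle1}: under the signed flip $\mu_k^{(+)}:G_{+\delta}\to G_{-\delta}$ the simple cycle transforms as $\gamma_k\mapsto-\gamma_k$, so the equality $\gamma_0=\gamma_k$ in $G_{+\delta}$ immediately yields $\gamma_0=-\gamma_k$ in $G_{-\delta}$, and the two cases combine into $\gamma_0=\varepsilon\gamma_k$. For $\varepsilon=+$, I would compute $v_{\gamma_k}^{(+\delta)}=e^{+i\delta}\oint_{\gamma_k}\sqrt{Q_0}\,dz$ in the limit $\delta\to 0^+$ and compare with the defining inequality $v_{\gamma_0}<0$. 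By the orientation convention of Figure~\ref{fig:bg1}, the first-sheet segment of $\gamma_k$ runs across $D_k$ in the direction of increasing $\mathrm{Re}(\int^z\sqrt{Q_0^{(+\delta)}}\,dz)$. Writing $\phi$ in its canonical local form near the saddle, lifting to the double cover, and passing to the limit, this first-sheet segment parametrizes $\ell_0$ in the direction opposite to the one for which $\int_{\ell_0}\sqrt{Q_0}\,dz>0$. The contribution from the second sheet is the negative of this by \eqref{eq:covering-involution}, so $v_{\gamma_k}$ is a real negative number of the same absolute value as $v_{\gamma_0}$. Since both cycles surround $\ell_0$ and have real negative period, they must coincide, yielding $\gamma_0=\gamma_k$.

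The main obstacle is the orientation book-keeping in Step~2: I need to carefully track, in the canonical local model near the saddle and under the perturbation $\phi\mapsto\phi_{+\delta}$, on which side of $\ell_0$ the surviving horizontal strip $D_k$ of $G_{+\delta}$ lies, and match this with the first-sheet orientation prescription of Figure~\ref{fig:bg1}. Once this local picture is settled, the identification is forced by the uniqueness of the $*$-equivalence class surrounding $\ell_0$ with negative period.
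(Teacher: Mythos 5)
Your proposal is correct, but it takes a more elaborate route than the paper's proof, which is a direct pictorial check reading both cases $\ve=\pm$ off Figure~\ref{fig:cycle2}. Your argument splits the check into three pieces: (1) a degeneration/topology argument showing $\gamma_k\equiv\pm\gamma_0$; (2) a reduction of the $\ve=-$ case to $\ve=+$ via the mutation rule $\tau_{G,G'}(\gamma_k')=-\gamma_k$ from Proposition~\ref{prop:cycle1} (using that $\tau_{G,G'}=\mathrm{id}$ here, since the zeros and poles are $S^1$-invariant); and (3) a period computation fixing the sign when $\ve=+$. Step~(2) is a genuine economy and is logically clean — it halves the figure-reading and exploits a fact already established in Section~\ref{sec:mutationofStokes}. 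Step~(1), however, is essentially tautological: since $\hat\Sigma$, $\hat P$, and hence $\Gamma$ do not depend on $\delta$, the class $\gamma_k$ does not ``degenerate'' as $\delta\to 0^+$; both $\gamma_k$ and $\gamma_0$ are, by their very definitions, loops encircling the same pair of turning points bounding $D_k$ (equivalently, the branch cut joining them), so $\gamma_k=\pm\gamma_0$ in $\Gamma$ with no limit needed. Step~(3) is where the real content sits, and there your description is a bit imprecise: $\gamma_k$ is a loop around the branch cut joining the two turning points, not a traversal of $D_k$, so one must read the orientation from Figure~\ref{fig:bg1} (the first-sheet half going in one direction, the $*$-reflected second-sheet half giving the other) and check $\lim_{\delta\to 0^+}\oint_{\gamma_k}\sqrt{Q_0}\,dz<0$. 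You correctly flag this orientation book-keeping as the main obstacle; once it is settled against Figure~\ref{fig:bg1} (consistently with the $\oplus/\ominus$ assignments of Section~\ref{section:orientation}), the identification is forced, and your argument closes. The net effect is that your proof trades the paper's single look at Figure~\ref{fig:cycle2} for one look at Figure~\ref{fig:bg1} plus one application of Proposition~\ref{prop:cycle1}; this is a valid alternative organization, and the reduction in~(2) could be useful if one wanted to avoid drawing the $\ve=-$ picture at all.
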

\begin{proof}
In the case
 $\ve=+$,
where $G=G_{+\delta}$,
  $\gamma_0=
\gamma_k$ holds as in Figure \ref{fig:cycle2}.
Similarly,
in the case  $\ve=-$,
where $G=G_{-\delta}$,
$\gamma_k= -\gamma_0$ holds as in Figure \ref{fig:cycle2}.
\end{proof}

\begin{figure}[t]
\begin{center}
\begin{pspicture}(0.5,-2.5)(16,2.5)
%
%
\psset{fillstyle=solid, fillcolor=black}
\psset{linewidth=0.5pt}
\rput[c]{0}(5.1,2.2){\small $\oplus$}
\rput[c]{0}(5.1,-2.2){\small $\oplus$}
\rput[c]{0}(0.9,2.2){\small $\ominus$}
\rput[c]{0}(0.9,-2.2){\small $\ominus$}
\rput[c]{0}(10.1,2.2){\small $\oplus$}
\rput[c]{0}(10.1,-2.2){\small $\oplus$}
\rput[c]{0}(5.9,2.2){\small $\ominus$}
\rput[c]{0}(5.9,-2.2){\small $\ominus$}
\rput[c]{0}(15.1,2.2){\small $\oplus$}
\rput[c]{0}(15.1,-2.2){\small $\oplus$}
\rput[c]{0}(10.9,2.2){\small $\ominus$}
\rput[c]{0}(10.9,-2.2){\small $\ominus$}
\psset{fillstyle=none}
\rput[c]{0}(2,0){\small $\times$}
\rput[c]{0}(4,0){\small $\times$}
\rput[c]{0}(3.2,-2.5){$G_{+\delta}$} 
\rput[c]{0}(7,0){\small $\times$}
\rput[c]{0}(9,0){\small $\times$}
\rput[c]{0}(8.1,-2.5){$G_0$} 
\rput[c]{0}(8,-0.25){$\ell_{0}$} 

\rput[c]{0}(12,0){\small $\times$}
\rput[c]{0}(14,0){\small $\times$}
\rput[c]{0}(13.2,-2.5){$G_{-\delta}$} 

\psset{linewidth=0.5pt}
\pscurve(2,0)(4,-1)(5,-2)
\pscurve(2,0)(1.5,1)(1,2)
\pscurve(2,0)(1.5,-1)(1,-2)
\pscurve(4,0)(2,1)(1,2)
\pscurve(4,0)(4.5,1)(5,2)
\pscurve(4,0)(4.5,-1)(5,-2)
\pscurve(7,0)(8,0)(9,0)
\pscurve(7,0)(6.5,1)(6,2)
\pscurve(7,0)(6.5,-1)(6,-2)
\pscurve(9,0)(9.5,1)(10,2)
\pscurve(9,0)(9.5,-1)(10,-2)
\pscurve(12,0)(14,1)(15,2)
\pscurve(12,0)(11.5,1)(11,2)
\pscurve(12,0)(11.5,-1)(11,-2)
\pscurve(14,0)(12,-1)(11,-2)
\pscurve(14,0)(14.5,1)(15,2)
\pscurve(14,0)(14.5,-1)(15,-2)
\psset{linewidth=1pt}
\pscurve(6.5,0)(6.7,0.5)(8,0.8)
\pscurve(9.5,0)(9.3,0.5)(8,0.8)
\pscurve(1.5,0)(1.7,0.5)(3,0.8)
\pscurve(4.5,0)(4.3,0.5)(3,0.8)
\pscurve(11.5,0)(11.7,0.5)(13,0.8)
\pscurve(14.5,0)(14.3,0.5)(13,0.8)
\psset{linewidth=1pt, linestyle=dashed}
\pscurve(6.5,0)(6.7,-0.5)(7.95,-0.8)
\pscurve(9.5,0)(9.3,-0.5)(8.05,-0.8)
\pscurve(1.5,0)(1.7,-0.5)(2.95,-0.8)
\pscurve(4.5,0)(4.3,-0.5)(3.05,-0.8)
\pscurve(11.5,0)(11.7,-0.5)(12.95,-0.8)
\pscurve(14.5,0)(14.3,-0.5)(13.05,-0.8)
\rput[c]{0}(8,1.2){\small $\gamma_{0}$} 
\rput[c]{0}(3,1.2){\small $\gamma_{k}$} 
\rput[c]{0}(13,1.2){\small $\gamma_{k}$} 
\rput[c]{0}(2.5,0){\small $\beta_{k}$} 
\rput[c]{0}(12.5,0){\small $\beta_{k}$} 
\psset{linewidth=0.5pt, linestyle=solid}
\pscurve(1,2)(2,0.6)(3,0)(4,-0.6)(5,-2)
\pscurve(11,-2)(12,-0.6)(13,0)(14,0.6)(15,2)
\psset{linewidth=1pt, linestyle=solid}
\psline(8,0.8)(8.2,0.9)
 \psline(8,0.8)(8.2,0.7)
\psline(8,-0.8)(7.8,-0.9) 
\psline(8,-0.8)(7.8,-0.7)
\psline(3,0.8)(3.2,0.9)
 \psline(3,0.8)(3.2,0.7)
\psline(3,-0.8)(2.8,-0.9) 
\psline(3,-0.8)(2.8,-0.7)
\psline(13,0.8)(12.8,0.9)
 \psline(13,0.8)(12.8,0.7)
\psline(13,-0.8)(13.2,-0.9) 
\psline(13,-0.8)(13.2,-0.7)
\psset{linewidth=0.5pt, linestyle=solid}
\psline(3,0)(2.8,0.2)
\psline(3,0)(2.75,0.05)
\psline(13,0)(12.8,-0.2)
\psline(13,0)(12.75,-0.05)
\psset{linewidth=1pt,linestyle=solid}
\pscurve(2,0)(1.9,-0.1)(1.8,0)(1.7,0.1)(1.6,0)
(1.5,-0.1)(1.4,0)(1.3,0.1)(1.2,0)(1.1,-0.1)(1,0)
\pscurve(4,0)(4.1,0.1)(4.2,0)(4.3,-0.1)(4.4,0)
(4.5,0.1)(4.6,0)(4.7,-0.1)(4.8,0)(4.9,0.1)(5,0)
\pscurve(7,0)(6.9,-0.1)(6.8,0)(6.7,0.1)(6.6,0)
(6.5,-0.1)(6.4,0)(6.3,0.1)(6.2,0)(6.1,-0.1)(6,0)
\pscurve(9,0)(9.1,0.1)(9.2,0)(9.3,-0.1)(9.4,0)
(9.5,0.1)(9.6,0)(9.7,-0.1)(9.8,0)(9.9,0.1)(10,0)
\pscurve(12,0)(11.9,-0.1)(11.8,0)(11.7,0.1)(11.6,0)
(11.5,-0.1)(11.4,0)(11.3,0.1)(11.2,0)(11.1,-0.1)(11,0)
\pscurve(14,0)(14.1,0.1)(14.2,0)(14.3,-0.1)(14.4,0)
(14.5,0.1)(14.6,0)(14.7,-0.1)(14.8,0)(14.9,0.1)(15,0)
\end{pspicture}
\end{center}
\caption{Cycles $\gamma_0$ and $\gamma_k$.} 
\label{fig:cycle2}
\end{figure}
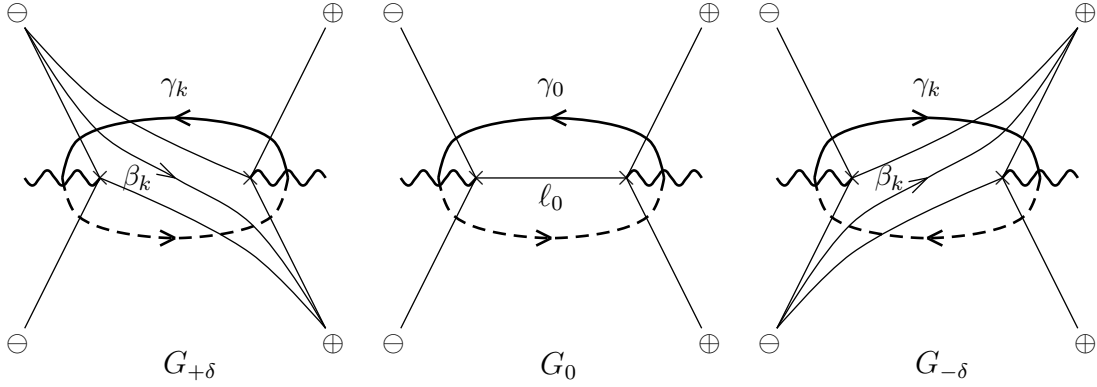

Using $\gamma_k$ instead of $\gamma_0$, the jump formula in
Theorem \ref{thm:jump1} (a)
 is restated  as follows.
\begin{prop}
 \label{prop:signed-DDP-formula}
For 
any path $\beta \in \Gamma^{\vee}_G$ and
any  cycle $\gamma \in \Gamma_G$, we have 
\begin{align} \label{eq:epsilon-expression-of-DDP-analytic}
\lim_{\delta\rightarrow +0}
{\mathcal S}[e^{{W}^{(-\ve\delta)}_{\beta}}] &=
\lim_{\delta\rightarrow +0}
 {\mathcal S}
[e^{{W}^{(\ve\delta)}_{\beta}}
\left( 1 + \Bigl(e^{{V}^{(\ve\delta)}_{\gamma_k}}\Bigr)^{\varepsilon} 
\right)^{-\langle\gamma_k,\beta\rangle}
],\\
\label{eq:epsilon-expression-of-DDP-analytic2}
\lim_{\delta\rightarrow +0}
{\mathcal S}[e^{{V}^{(-\ve\delta)}_{\gamma}}] &=
\lim_{\delta\rightarrow +0}
 {\mathcal S}[e^{{V}^{(\ve\delta)}_{\gamma}}
\left( 1 + \Bigl(e^{{V}^{(\ve\delta)}_{\gamma_k}}\Bigr)^{\varepsilon} 
\right)^{-(\gamma_k, \gamma)}].
\end{align}
\end{prop}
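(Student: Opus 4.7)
The plan is to derive both equalities of Proposition \ref{prop:signed-DDP-formula} directly from Theorem \ref{thm:jump1}(a) by substituting the relation $\gamma_0 = \varepsilon \gamma_k$ of Lemma \ref{lemma:sign-of-simple-cycles} and, when $\varepsilon = -$, inverting the resulting identity. When $\varepsilon = +$, we have $G = G_{+\delta}$ and $\gamma_0 = \gamma_k$, so the equations \eqref{eq:epsilon-expression-of-DDP-analytic} and \eqref{eq:epsilon-expression-of-DDP-analytic2} are literal restatements of Theorem \ref{thm:jump1}(a) in this labeling.

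When $\varepsilon = -$ we have $G = G_{-\delta}$ and $\gamma_0 = -\gamma_k$. Substituting into Theorem \ref{thm:jump1}(a) and using the identities $e^{V^{(+\delta)}_{-\gamma_k}} = e^{-V^{(+\delta)}_{\gamma_k}}$, $\langle -\gamma_k, \beta\rangle = -\langle \gamma_k, \beta\rangle$, and $(-\gamma_k, \gamma) = -(\gamma_k, \gamma)$, one obtains
\begin{align*}
\lim_{\delta\to +0}\mathcal{S}[e^{W^{(-\delta)}_{\beta}}] &= \lim_{\delta\to +0}\mathcal{S}[e^{W^{(+\delta)}_{\beta}}(1+e^{-V^{(+\delta)}_{\gamma_k}})^{\langle \gamma_k,\beta\rangle}],\\
\lim_{\delta\to +0}\mathcal{S}[e^{V^{(-\delta)}_{\gamma}}] &= \lim_{\delta\to +0}\mathcal{S}[e^{V^{(+\delta)}_{\gamma}}(1+e^{-V^{(+\delta)}_{\gamma_k}})^{(\gamma_k,\gamma)}].
\end{align*}
Solving these algebraically for the $+\delta$ Borel sums, which is justified by the multiplicativity of $\mathcal{S}$ (Proposition \ref{prop:property-of-Borel-sum}), gives identities of exactly the form predicted by \eqref{eq:epsilon-expression-of-DDP-analytic} and \eqref{eq:epsilon-expression-of-DDP-analytic2} for $\varepsilon = -$, except that the correction factor still involves $e^{-V^{(+\delta)}_{\gamma_k}}$ rather than $e^{-V^{(-\delta)}_{\gamma_k}}$. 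To close this last gap, apply the cycle part of Theorem \ref{thm:jump1}(a) to $\gamma = \gamma_0 = -\gamma_k$ and exploit the vanishing $(\gamma_0, \gamma_0) = 0$: this yields $\lim_{\delta\to +0}\mathcal{S}[e^{V^{(+\delta)}_{\gamma_0}}] = \lim_{\delta\to +0}\mathcal{S}[e^{V^{(-\delta)}_{\gamma_0}}]$, equivalently $\lim_{\delta\to +0}\mathcal{S}[e^{-V^{(+\delta)}_{\gamma_k}}] = \lim_{\delta\to +0}\mathcal{S}[e^{-V^{(-\delta)}_{\gamma_k}}]$, which converts each correction factor into the desired form.

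The main obstacle is purely bookkeeping: one must carefully track the sign convention relating $\gamma_0$ and $\varepsilon \gamma_k$, together with the inherent asymmetry between $+\delta$ and $-\delta$ in the original DDP formula. No additional analytic input beyond Theorem \ref{thm:jump1}(a) and the multiplicativity of Borel resummation is required, so the argument reduces to a routine algebraic manipulation inside the Voros field $\mathbb{V}$.
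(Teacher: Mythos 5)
Your proof is correct and takes essentially the same route as the paper: handle $\varepsilon=+$ as a literal restatement, and for $\varepsilon=-$ substitute $\gamma_0=-\gamma_k$ into Theorem \ref{thm:jump1}(a), invert the resulting relation, and then use $(\gamma_0,\gamma_0)=0$ to replace $e^{-V^{(+\delta)}_{\gamma_k}}$ by $e^{-V^{(-\delta)}_{\gamma_k}}$ in the correction factor. The only cosmetic difference is ordering: the paper records the nonjump identity $\lim\mathcal{S}[e^{V^{(-\delta)}_{\gamma_0}}]=\lim\mathcal{S}[e^{V^{(+\delta)}_{\gamma_0}}]$ before manipulating the $W$-formula, whereas you defer it to the end; the content is identical. (One small caution: your appeal to ``multiplicativity of $\mathcal{S}$'' for the inversion step implicitly also uses that $\mathcal{S}$ extends to division, which the paper addresses in the discussion surrounding \eqref{eq:Stokes-automorphism-def}; it is worth noting that $1+e^{-V^{(+\delta)}_{\gamma_k}}$ has Borel sum $1+(\text{exponentially small})$ since $\gamma_k=-\gamma_0$, so the inverse exists and is Borel summable.)
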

\begin{proof} 
Let us show \eqref{eq:epsilon-expression-of-DDP-analytic}.
When $\varepsilon = +$, $G = G_{+\delta}$, $G' = G_{-\delta}$
and
$\gamma_0 = \gamma_k$ by 
Lemma \ref{lemma:sign-of-simple-cycles}. 
Hence,
we have the equality \eqref{eq:epsilon-expression-of-DDP-analytic}
immediately from
\eqref{eq:jump1}.
When $\varepsilon = -$, $G' = G_{+\delta}$, $G = G_{-\delta}$
and
$\gamma_0 = -\gamma_k$ by 
Lemma \ref{lemma:sign-of-simple-cycles}. 
Note that we have the equality
\begin{align}
\lim_{\delta\rightarrow +0}{\mathcal S}[e^{{V}^{(-\delta)}_{\gamma_0}}] = 
\lim_{\delta\rightarrow +0}{\mathcal S}[e^{{V}^{(+\delta)}_{\gamma_0}}]
\end{align}
 by
\eqref{eq:jump1} and $(\gamma_0,\gamma_0)=0$.
Then, it follows from \eqref{eq:jump1}  that
\begin{align}
\begin{split}
\lim_{\delta\rightarrow +0}
{\mathcal S}[e^{{W}^{(+\delta)}_{\beta}}] 
&= 
\lim_{\delta\rightarrow +0}
{\mathcal S}[e^{{W}^{(-\delta)}_{\beta}}
\left( 1 + e^{{V}^{(+\delta)}_{\gamma_0}}
\right)^{(\gamma_0,\beta)}] \\
&= 
\lim_{\delta\rightarrow +0}
{\mathcal S}[e^{{W}^{(-\delta)}_{\beta}}
\left( 1 +e^{-{V}^{(-\delta)}_{\gamma_k}}
\right)^{-(\gamma_k,\beta)}],
\end{split}
\end{align}
which is the desired equality for $\ve=-$.
The equality \eqref{eq:epsilon-expression-of-DDP-analytic2}
is proved in the same manner. 
\end{proof}

We emphasize the following ``nonjump'' property
of the integral in \eqref{eq:vdef1}.
\begin{lem}
\label{lem:nojump1}
For 
any  cycle $\gamma \in \Gamma_G$, 
we have
\begin{align}
\label{eq:epsilon-expression-of-DDP-analytic3}
\lim_{\delta\rightarrow +0}
{\mathcal S}[e^{\eta{v}^{(-\ve\delta)}_{\gamma}}] &=
\lim_{\delta\rightarrow +0}
 {\mathcal S}[e^{{\eta v}^{(\ve\delta)}_{\gamma}}]
\end{align}
\end{lem}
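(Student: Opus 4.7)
The plan is to observe that the quantity $v^{(\theta)}_{\gamma} = e^{i\theta}\oint_{\gamma}\sqrt{Q_0(z)}\,dz$ is a complex \emph{scalar}, not a formal power series in $\eta^{-1}$, so the expression $e^{\eta v^{(\theta)}_{\gamma}}$ is a formal series with exponential factor of the type considered in the third bullet of Definition \ref{def:Borel-summability}, but with the accompanying power series part equal to the constant series $1$. Concretely, setting $s = v^{(\theta)}_{\gamma}$, $\rho=0$, and $g(\eta)=1$, the ``formal'' object $e^{\eta v^{(\theta)}_{\gamma}}$ is tautologically Borel summable in every direction, and the Borel transform of $g(\eta)=1$ is identically zero so that the Laplace integral in \eqref{eq:Borel sum} reduces to the constant term $1$.

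First I would record this explicitly: for any $\theta \in \mathbb{R}$,
\begin{equation*}
{\mathcal S}_{\theta}[e^{\eta v^{(\theta')}_{\gamma}}](\eta) = e^{\eta v^{(\theta')}_{\gamma}},
\end{equation*}
as analytic functions of $\eta$ on any sector where the expression makes sense, for any fixed $\theta'$. In particular,
\begin{equation*}
{\mathcal S}[e^{\eta v^{(\pm\varepsilon\delta)}_{\gamma}}](\eta) = e^{\eta v^{(\pm\varepsilon\delta)}_{\gamma}} = \exp\!\bigl(\eta\, e^{\pm i\varepsilon\delta}\, v_{\gamma}\bigr),
\end{equation*}
where $v_{\gamma} = \oint_{\gamma}\sqrt{Q_0(z)}\,dz$.

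Next I would take the limit $\delta \to +0$. Since $e^{\pm i\varepsilon\delta} \to 1$ and the exponential map is continuous, both sides of \eqref{eq:epsilon-expression-of-DDP-analytic3} converge to the same analytic function $e^{\eta v_{\gamma}}$ of $\eta$, proving the equality. There is no genuine obstacle here: in contrast to the Voros symbols $e^{V_{\gamma}}$ and $e^{W_{\beta}}$ treated in Theorem \ref{thm:jump1}, the purely exponential factor $e^{\eta v^{(\theta)}_{\gamma}}$ carries no divergent tail whose Borel transform could acquire singularities on the positive real axis, so the Stokes phenomenon that produces the nontrivial jump formulas \eqref{eq:epsilon-expression-of-DDP-analytic}--\eqref{eq:epsilon-expression-of-DDP-analytic2} is absent. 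The only mild point worth emphasizing is the distinction between $e^{\eta v^{(\theta)}_{\gamma}}$ (a scalar-valued exponential in $\eta$) and $e^{V^{(\theta)}_{\gamma}}$ (which has both the exponential factor and a nontrivial Borel-summable power-series factor coming from $S_{\rm odd}^{\rm reg}$); the lemma precisely isolates the trivial piece of the latter.
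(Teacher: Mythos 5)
Your proof is correct and follows essentially the same route as the paper's: both observe that $e^{\eta v^{(\theta)}_{\gamma}}$ is a pure exponential factor whose Borel sum is itself by Definition \ref{def:Borel-summability}, and then pass to the limit $\delta\to +0$ on both sides, which gives $e^{\eta v^{(0)}_{\gamma}}=e^{\eta v_{\gamma}}$. The extra discussion contrasting this with the genuine Voros symbols is accurate and harmless, but the core argument is the same.
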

\begin{proof}
By the definition of
the Borel sum of an exponential factor in Definition \ref{def:Borel-summability},
we have ${\mathcal S}[e^{\eta{v}^{(\theta)}_{\gamma}}]=e^{\eta{v}^{(\theta)}_{\gamma}}$.
Thus,  both hand sides of \eqref{eq:epsilon-expression-of-DDP-analytic3}
equal to $e^{\eta {v}^{(0)}_{\gamma}}$.
\end{proof}

Let 
$\beta_1', \dots \beta_n' \in \Gamma^{\vee}_{G'}$ and 
$\gamma_1',\dots, \gamma_n' \in \Gamma_{G'}$ be 
the simple paths and the simple cycles of $G'$.
Since the zeros and poles of $\phi_{\theta}$ are stable
by the $S^1$-action,
we have $\Gamma^{\vee}_G= \Gamma^{\vee}_{G'}$
and $\Gamma_G= \Gamma_{G'}$ with
$ \tau_{G,G'}^{\vee}=\mathrm{id}$ and $ \tau_{G,G'}=\mathrm{id}$.
Thus, \eqref{eq:bemut1} and \eqref{eq:gamut1} reduce
to the direct equalities
\begin{align}
\label{eq:bemut2}
\beta_i'&=
\begin{cases}
-\beta_k
+\sum_{j=1}^n [-\varepsilon b_{jk}]_+ \beta_j
 & i=k\\
\beta_i 
& i \neq k,
\end{cases}\\
\label{eq:gamut2}
\gamma_i'&=
\begin{cases}
-\gamma_k & i=k\\
\gamma_i + [\varepsilon b_{ki}]_+ \gamma_k
& i \neq k.
\end{cases}
\end{align}
In parallel to $Q^{(\ve\delta)}(z,\eta)$, we introduce
\begin{align}
\label{eq:vdef2}
e^{{W}'_i} = e^{{W}^{(-\ve \delta)}_{\beta_i'}},
\quad 
e^{{V}'_i} = e^{{V}^{(-\ve \delta)}_{\gamma_i'}},
\quad
e^{ v'_i} =e^{\eta{v}^{(-\ve \delta)}_{\gamma_i'}}.
\end{align}

Now we present the mutation formula of the Voros symbols for the signed flips.

\begin{thm}[Mutation formula of the Voros symbols for the signed flip $\mu^{(\ve)}_k$]
\label{thm:mutationV1}
For $i=1,\dots,n$, we have
\begin{align}
\label{eq:v1}
\lim_{\delta\rightarrow +0}
{\mathcal S}[e^{{v}'_{i}}] &=
\begin{cases}
 \displaystyle
\lim_{\delta\rightarrow +0}
 {\mathcal S}[(e^{{v}_{k}})^{-1}] &i=k\\
 \displaystyle
\lim_{\delta\rightarrow +0}
 {\mathcal S}[e^{{v}_{i}}
 (e^{{v}_{k}})^{[\ve b_{ki}]_+}]
& i\neq k,
\end{cases}
\\
\label{eq:W1}
\lim_{\delta\rightarrow +0}
{\mathcal S}[e^{{W}'_{i}}] &=
\begin{cases}
\displaystyle
\lim_{\delta\rightarrow +0}
 {\mathcal S}[(e^{{W}_{k}})^{-1}
 \left(
 \prod_{j=1}^n (e^{{W}_{j}})^{[- \ve b_{jk}]_+}
 \right)
\left( 1 + (e^{{V}_{k}})^{\varepsilon} 
\right)] & i=k\\
\displaystyle
\lim_{\delta\rightarrow +0}
 {\mathcal S}[e^{{W}_{i}}]
& i \neq k,
\end{cases}
\\
\label{eq:V1}
\lim_{\delta\rightarrow +0}
{\mathcal S}[e^{{V}'_{i}}] &=
\begin{cases}
\displaystyle
\lim_{\delta\rightarrow +0}
 {\mathcal S}[(e^{{V}_{k}})^{-1}] & i=k
 \\
\displaystyle
\lim_{\delta\rightarrow +0}
 {\mathcal S}[e^{{V}_{i}}
 (e^{{V}_{k}})^{[\ve b_{ki}]_+}
\left( 1 + (e^{{V}_{k}})^{\varepsilon} 
\right)^{-b_{ki}}] & i\neq k.
\end{cases}
\end{align}
\end{thm}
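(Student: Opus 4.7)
The plan is to reduce each of \eqref{eq:v1}--\eqref{eq:V1} to a combination of three ingredients: (i) the explicit expansion of $\beta'_i$ and $\gamma'_i$ in \eqref{eq:bemut2}--\eqref{eq:gamut2}, (ii) the linearity of the integrals $W_{\beta}$, $V_{\gamma}$, $v_{\gamma}$ in their path/cycle arguments, and (iii) the signed DDP jump formulas of Proposition \ref{prop:signed-DDP-formula} together with the duality $\langle \gamma_k,\beta_j\rangle=\delta_{kj}$ and the intersection $(\gamma_k,\gamma_i)=b_{ki}$ (Propositions \ref{prop:dual1} and \ref{prop:bmat1}). The Borel summation operator commutes with products and inverses by Proposition \ref{prop:property-of-Borel-sum} (a), so distributing $\mathcal{S}$ across each factor will be legitimate throughout.

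For \eqref{eq:v1} I would proceed first, as it is the cleanest case. Substituting \eqref{eq:gamut2} into the defining integral $v^{(-\varepsilon\delta)}_{\gamma'_i}$ and using linearity yields $v'_k=-v^{(-\varepsilon\delta)}_{\gamma_k}$ and $v'_i=v^{(-\varepsilon\delta)}_{\gamma_i}+[\varepsilon b_{ki}]_+ v^{(-\varepsilon\delta)}_{\gamma_k}$ for $i\neq k$. Since each $v^{(\theta)}_{\gamma}$ is a pure exponential with no series part, Lemma \ref{lem:nojump1} (or rather, the trivial computation $\mathcal{S}[e^{\eta v^{(\theta)}_{\gamma}}]=e^{\eta v^{(\theta)}_{\gamma}}$) gives $\lim_{\delta\to +0}\mathcal{S}[e^{v^{(\pm\varepsilon\delta)}_{\gamma}}]=e^{\eta v^{(0)}_{\gamma}}$, so the two directions agree and \eqref{eq:v1} follows.

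Next I would handle \eqref{eq:W1}. Substituting \eqref{eq:bemut2} and using linearity of $W$ in $\beta$, I get
\begin{align*}
W^{(-\varepsilon\delta)}_{\beta'_k}&=-W^{(-\varepsilon\delta)}_{\beta_k}+\sum_{j=1}^{n}[-\varepsilon b_{jk}]_+ W^{(-\varepsilon\delta)}_{\beta_j},\qquad
W^{(-\varepsilon\delta)}_{\beta'_i}=W^{(-\varepsilon\delta)}_{\beta_i}\quad(i\neq k).
\end{align*}
Now apply Proposition \ref{prop:signed-DDP-formula} to each $W^{(-\varepsilon\delta)}_{\beta_j}$. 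By duality $\langle\gamma_k,\beta_j\rangle=\delta_{kj}$, the correction factor $(1+(e^{V_k})^{\varepsilon})^{-\langle\gamma_k,\beta_j\rangle}$ is trivial for $j\neq k$ and equals $(1+(e^{V_k})^\varepsilon)^{-1}$ for $j=k$. In the expression for $\beta'_k$, only the term $-W^{(-\varepsilon\delta)}_{\beta_k}$ contributes a nontrivial correction (note $[-\varepsilon b_{kk}]_+=0$), and the inverted sign converts $(1+(e^{V_k})^\varepsilon)^{-1}$ into $(1+(e^{V_k})^\varepsilon)$, producing exactly the right-hand side of \eqref{eq:W1} for $i=k$; for $i\neq k$ the formula is immediate.

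Finally, \eqref{eq:V1} is handled in the same way using \eqref{eq:gamut2}, linearity of $V$, the jump formula \eqref{eq:epsilon-expression-of-DDP-analytic2}, and the intersection data $(\gamma_k,\gamma_k)=0$, $(\gamma_k,\gamma_i)=b_{ki}$. For $i=k$ the vanishing of $(\gamma_k,\gamma_k)$ removes the correction factor, leaving $\mathcal{S}[e^{V'_k}]=\mathcal{S}[(e^{V_k})^{-1}]$; for $i\neq k$ the contribution from $\gamma_i$ supplies $(1+(e^{V_k})^\varepsilon)^{-b_{ki}}$ while the contribution from $[\varepsilon b_{ki}]_+\gamma_k$ supplies the monomial factor $(e^{V_k})^{[\varepsilon b_{ki}]_+}$ (its own correction factor is again trivial since $(\gamma_k,\gamma_k)=0$), giving the claimed formula. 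No step presents a genuine obstacle; the main point is simply to verify that the combinatorics of the exponents matches the $\varepsilon$-expression \eqref{eq:xmut6}--\eqref{eq:xmut7} of the signed mutation, which is guaranteed by the two identities $\langle\gamma_k,\beta_j\rangle=\delta_{kj}$ and $(\gamma_k,\gamma_i)=b_{ki}$ built into the simple paths and simple cycles.
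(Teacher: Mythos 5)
Your proposal is correct and follows essentially the same route as the paper: express $v'_i$, $W'_i$, $V'_i$ in terms of the unprimed quantities via \eqref{eq:bemut2}--\eqref{eq:gamut2} and linearity, then apply Lemma \ref{lem:nojump1} and Proposition \ref{prop:signed-DDP-formula} together with $\langle\gamma_k,\beta_j\rangle=\delta_{kj}$ and $(\gamma_k,\gamma_i)=b_{ki}$. The only cosmetic difference is that you apply the jump formula factor-by-factor to each $W^{(-\ve\delta)}_{\beta_j}$, whereas the paper's (terse) argument can be read as applying it once to $\beta'_k$ directly and then expanding; both bookkeepings yield the same result.
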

\begin{proof}
Let us show \eqref{eq:v1}.
By \eqref{eq:gamut2},
\begin{align}
\label{eq:ev2}
e^{{v}'_{i}}
=
e^{\eta{v}^{(-\ve \delta)}_{\gamma_i'}}
  =
\begin{cases}
\bigl((e^{\eta{v}^{(-\ve \delta)}_{\gamma_k}}\bigr)^{-1} &i=k\\
e^{\eta{v}^{(-\ve \delta)}_{\gamma_i}}
 \bigl(e^{\eta{v}^{(-\ve \delta)}_{\gamma_k}}
 \bigr)^{[\ve b_{jk}]_+}
& i\neq k,
\end{cases}
\end{align}
Applying the Borel resummation operator $\mathcal{S}$ to \eqref{eq:ev2},
taking the limit $\delta\rightarrow +0$,
then using Lemma \ref{lem:nojump1},
we obtain \eqref{eq:v1}.
The equalities \eqref{eq:W1} and \eqref{eq:V1}
are obtained in a similar way from
\eqref{eq:bemut2}, \eqref{eq:gamut2},
Proposition \ref{prop:signed-DDP-formula},
 and the facts $\langle \gamma_k,\beta_i\rangle
= \delta_{ki}$
and 
$( \gamma_k,\gamma_i)
= b_{ki}$.
\end{proof}

The formulas \eqref{eq:v1}--\eqref{eq:V1}
coincide with the exchange relation of seeds in 
\eqref{eq:ymut6}, \eqref{eq:xmut6}, and 
\eqref{eq:ymut7} under the identification
\begin{align}
\begin{array}{lcl}
\displaystyle \lim_{\delta\rightarrow +0} e^{v_i} & \leftrightarrow & y_i,\\
\displaystyle \lim_{\delta\rightarrow +0} e^{W_i} & \leftrightarrow & x_i,\\
\displaystyle \lim_{\delta\rightarrow +0} e^{V_i} & \leftrightarrow & \hat{y}_i.
\end{array}
\end{align}
We phrase this result as
{\em ``by the signed flips the Voros symbols
$x_i=e^{W_i}$, $\hat{y}_i=e^{V_i}$,
together with $y_i=e^{v_i}$,
mutate as the variables in seeds
(in the sense of Section \ref{subsec:monomial})}''.
 Observe that the monomial parts in the right hand sides
of \eqref{eq:v1}--\eqref{eq:V1} have the geometric origin,
while the non-monomial parts have the analytic origin, i.e.,
the Stokes phenomenon.

Next, we reformulate the above result in terms of the Stokes automorphisms
as in Section 
\ref{section:Voros-coefficients-and-DDP-formula}.

Let $\bbV=\bbV(Q^{(\ve\delta)}(z,\eta))$ be the
the Voros field for the potential $Q^{(\ve\delta)}(z,\eta)$,
i.e.,
the rational function field  of
the Voros symbols
$e^{{W}_1}$, \dots, $e^{{W}_n}$,
$e^{V_1}$, \dots, $e^{V_n}$
over $\bbQ$.
By Lemma \ref{lem:relation-between-x-and-y-Voros},
$\bbV$ is also generated by
$e^{{W}_1}$, \dots, $e^{{W}_n}$,
$e^{v_1}$, \dots, $e^{v_n}$.
Similarly,
let $\bbV'=\bbV(Q^{(-\ve\delta)}(z,\eta))$ be the 
Voros field for the potential $Q^{(-\ve\delta)}(z,\eta)$,
which is the rational function field  of
 the Voros symbols
$e^{{W}'_1}$, \dots, $e^{{W}'_n}$,
$e^{V'_1}$, \dots, $e^{V'_n}$.

The isomorphisms of the homology groups
 $\tau_{G,G'}$ and  $\tau^{\vee}_{G,G'}$
 in Proposition \ref{prop:cycle1} (a)
 induce the following field isomorphism  $\tau^*_{\bbV,\bbV'}:\bbV'\rightarrow
\bbV$:
\begin{align} 
\label{eq:monomial-Voros-y}
\tau^*_{\bbV,\bbV'}(e^{{v}'_i}) & = 
\begin{cases}
(e^{{v}_k})^{-1} & i = k\\
e^{{v}_i} (e^{{v}_k})^{[\varepsilon b_{ki}]_+} & i\neq k,
\end{cases}\\
\label{eq:monomial-Voros-x}
\tau^*_{\bbV,\bbV'}(e^{{W}'_i}) & = 
\begin{cases}
\displaystyle
(e^{{W}_k})^{-1}\prod_{j=1}^{n}(e^{{W}_j})
^{[-\varepsilon b_{j k}]_+}
& i = k\\
e^{{W}_i} & i\neq k.
\end{cases}
\end{align}
Compare them with \eqref{eq:ymut6} and \eqref{eq:xmut6}.
By \eqref{eq:bmut},
\eqref{eq:monomial-Voros-y},
\eqref{eq:monomial-Voros-x},
and Lemma \ref{lem:relation-between-x-and-y-Voros},
we have
\begin{align}
\label{eq:monomial-Voros-Y}
\tau^*_{\bbV,\bbV'}(e^{{V}'_i}) & = 
\begin{cases}
(e^{{V}_k})^{-1} & i = k\\
e^{{V}_i} (e^{{V}_k})^{[\varepsilon b_{ki}]_+} & i\neq k.
\end{cases}
\end{align}

Also,
in view of Proposition \ref{prop:signed-DDP-formula}
and Lemma \ref{lem:nojump1},
we  introduce the field automorphism
 ${\mathfrak S}_{\bbV,k}^{(\varepsilon)}:\bbV \rightarrow
\bbV$  as follows.
\begin{align}
 \label{eq:v4}
{\mathfrak S}_{\bbV,k}^{(\varepsilon)}(\displaystyle e^{{v}_{i}})  &=  
e^{{v}_{i}} ,
\\
\label{eq:W4}
{\mathfrak S}_{\bbV,k}^{(\varepsilon)}( e^{{W}_{i}} ) &=  
e^{{W}_{i}} 
\left( 1 + (e^{{V}_k})^{\varepsilon} \right)
^{-\delta_{ki}}.
\end{align}
By 
\eqref{eq:v4},
\eqref{eq:W4},
and Lemma \ref{lem:relation-between-x-and-y-Voros},
we have
\begin{align}
 \label{eq:V4}
{\mathfrak S}_{\bbV,k}^{(\varepsilon)}( e^{{V}_{i}} )
&= e^{{V}_{i}}
\left( 1 + (e^{{V}_k})^{\varepsilon} \right)
^{-b_{ki}}.
\end{align}
We call ${\mathfrak S}_{\bbV,k}^{(\varepsilon)}$
the {\em Stokes automorphism
associated with the  signed flip  $\mu^{(\ve)}_k$}.

For simplicity, let us denote
\begin{alignat}{3}
y_i&= e^{v_i},\quad 
x_i&= e^{{W}_{i}} ,\quad
\hat{y}_i&=e^{V_i},\\
y'_i&= e^{v'_i},\quad 
x'_i&= e^{{W}'_i},~~ 
\hat{y}'_i&= e^{V'_i}.
\end{alignat}
Then, it is easy to check that the following formulas hold.
\begin{align}
\label{eq:st1}
({\mathfrak S}_{\bbV,k}^{(\varepsilon)}\circ
\tau^*_{\bbV,\bbV'})
(y'_i)
&=
\begin{cases}
y_k^{-1} & i= k\\
\displaystyle
y_i  y_{k}{}^{[\varepsilon b_{ki}]_+}
& i\neq k,\\
\end{cases}
\\
\label{eq:st2}
({\mathfrak S}_{\bbV,k}^{(\varepsilon)}\circ
\tau^*_{\bbV,\bbV'})
(x'_i)
&=
\begin{cases}
\displaystyle
x_k{}^{-1} 
\left(
\prod_{j=1}^n x_{j}{}^{[-\varepsilon b_{jk}]_+}
\right)
(1+ \hat{y}_k{}^{\varepsilon})
& i= k\\
\displaystyle
x_i 
& i\neq k,\\
\end{cases}
\\
\label{eq:st3}
({\mathfrak S}_{\bbV,k}^{(\varepsilon)}\circ
\tau^*_{\bbV,\bbV'})
(\hat{y}'_i)
&=
\begin{cases}
\hat{y}_k{}^{-1} & i= k\\
\displaystyle
\hat{y}_i 
\hat{y}_k{}^{[\ve b_{ki}]_+}
(1+ \hat{y}_k{}^{\ve})^{-b_{ki}}
& i\neq k.
\end{cases}
\end{align}

Theorem \ref{thm:mutationV1} is compactly expressed in the following way,
where $\circ$ is the composition of  operations.
\begin{thm}
\label{thm:Stokes1}
The following equality of operations holds:
\begin{align}
\lim_{\delta\rightarrow +0} \circ\, \mathcal{S}
=
\lim_{\delta\rightarrow +0} \circ \,\mathcal{S} \circ
{\mathfrak S}_{\bbV,k}^{(\varepsilon)}\circ
\tau^*_{\bbV,\bbV'}.
\end{align}
\end{thm}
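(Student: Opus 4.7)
The plan is to reduce the equality of operations to an equality on a set of generators of the Voros field $\bbV'$, and then to check this generator-wise identity by direct comparison with Theorem \ref{thm:mutationV1}. By Proposition \ref{prop:property-of-Borel-sum}(a), the Borel resummation operator $\mathcal{S}$ commutes with addition, multiplication, and division wherever defined (and the extension to the whole Voros field was fixed in Section \ref{section:Voros-coefficients-and-DDP-formula}); both $\tau^*_{\bbV,\bbV'}$ and ${\mathfrak S}_{\bbV,k}^{(\varepsilon)}$ are field homomorphisms by construction, and the limit $\delta\to +0$ preserves multiplicative and additive identities. Thus it suffices to verify the claimed equality when applied to each of the generators $y'_i=e^{v'_i}$, $x'_i=e^{W'_i}$, $\hat{y}'_i=e^{V'_i}$ of $\bbV'$ (and, for consistency, on the exponential factors $y'_i$ which are scalar).

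First I would evaluate $({\mathfrak S}_{\bbV,k}^{(\varepsilon)}\circ\tau^*_{\bbV,\bbV'})$ on each of these generators. For $y'_i$, the Stokes automorphism acts trivially on the exponential factor by \eqref{eq:v4}, and $\tau^*_{\bbV,\bbV'}(y'_i)$ is given by \eqref{eq:monomial-Voros-y}; this reproduces exactly the right-hand side of \eqref{eq:st1}. For $x'_i$ the computation combines the purely monomial action \eqref{eq:monomial-Voros-x} of $\tau^*_{\bbV,\bbV'}$ with the non-monomial correction \eqref{eq:W4} of the Stokes automorphism, yielding the right-hand side of \eqref{eq:st2}. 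For $\hat{y}'_i$ one uses \eqref{eq:monomial-Voros-Y} and \eqref{eq:V4}, producing \eqref{eq:st3}; alternatively one deduces it from the formula $\hat{y}_i = y_i\prod_{j}x_j^{b_{ji}}$ together with the multiplicative compatibility of both $\tau^*_{\bbV,\bbV'}$ and ${\mathfrak S}_{\bbV,k}^{(\varepsilon)}$ (cf.\ Lemma \ref{lem:relation-between-x-and-y-Voros}).

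The second step is to apply $\mathcal{S}$ to these three identities and take the limit $\delta\to +0$. The right-hand sides obtained in the previous paragraph are exactly the arguments of $\mathcal{S}$ appearing on the right of \eqref{eq:v1}, \eqref{eq:W1} and \eqref{eq:V1} respectively, while the left-hand sides are $\lim_{\delta\to +0}\mathcal{S}[y'_i]$, $\lim_{\delta\to +0}\mathcal{S}[x'_i]$, $\lim_{\delta\to +0}\mathcal{S}[\hat{y}'_i]$. Hence Theorem \ref{thm:mutationV1} asserts precisely the generator-wise equality we need, and the theorem follows.

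The only mildly delicate point, rather than a substantial obstacle, is bookkeeping about the exponential factors and the direction of Borel summation: one must remember that ${\mathcal S}$ in the compact statement stands for the prescription ${\mathcal S}_{\pm\delta}$ on the appropriate side (Voros symbols on $\bbV'$ are summed in direction $-\varepsilon\delta$, those on $\bbV$ in direction $+\varepsilon\delta$), and that the non-jump property recorded in Lemma \ref{lem:nojump1} is what guarantees that pulling the exponential factors through $\tau^*_{\bbV,\bbV'}$ survives the limit $\delta\to +0$. Once this is kept in mind, the proof is a mechanical unpacking of definitions combined with the DDP-type jump formula of Proposition \ref{prop:signed-DDP-formula} already used to establish Theorem \ref{thm:mutationV1}.
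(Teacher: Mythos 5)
Your proposal is correct and matches the approach the paper takes: the paper itself presents \eqref{eq:st1}--\eqref{eq:st3} as ``easy to check'' consequences of the definitions and then declares Theorem \ref{thm:Stokes1} to be a compact restatement of Theorem \ref{thm:mutationV1}, which is exactly the generator-wise verification you carry out. Your reduction to generators via the homomorphism properties of ${\mathcal S}$, $\tau^*_{\bbV,\bbV'}$, ${\mathfrak S}_{\bbV,k}^{(\varepsilon)}$, and $\lim_{\delta\to+0}$, and the subsequent match with \eqref{eq:v1}, \eqref{eq:W1}, \eqref{eq:V1}, simply makes explicit what the paper leaves implicit.
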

The formulation by the Stokes automorphisms 
enables us to
treat the Stokes phenomenon more algebraically,
and it will be useful
when we study its global property
in Section \ref{sec:identities}.

\subsection{Mutation formula of Voros symbols for signed pops}

Next we consider the case where
the quadratic differential $\phi$ in Section 
\ref{subsec:mutformula1}
has a unique {\em degenerate\/} saddle trajectory $\ell_0$.
Again,
we choose  a sufficiently small $\delta>0$ such that
$G_{+\delta}=G(\phi_{+\delta})$ and
$G_{-\delta}=G(\phi_{-\delta})$ are the saddle reductions of $G_0=G(\phi)$.
Let us fix a sign $\ve=\pm $,
and we set $G=G_{\ve\delta}$
and $G'=G_{-\ve\delta}$.
We assume that $G$ and $G'$
are labeled so that they are related by the signed pop
$G'=\kappa^{(\ve)}_p(G)$ at the double pole $p$ surrounded by $\ell_0$.
See Figure \ref{fig:pop4}.
The story is quite parallel to the case of the signed flips,
so we concentrate on the points which are special for this case.

Let $\gamma_0 \in \Gamma_G$
 be the saddle class associated with $\ell_0$
 defined in Section \ref{subsec:saddleclass}.
Let  $\tilde{\gamma}_p\in \Gamma_G$ be the cycle
given in  Figure \ref{fig:non-jump-formula-for-pop}.
Namely, $\tilde{\gamma}_p$ is $\gamma_0$ or
$-\gamma_0$, and its orientation is determined by
the condition $\langle \tilde{\gamma}_p, \beta \rangle=1$,
where $\beta$ is any trajectory in the degenerate horizontal strip
surrounding $p$ whose orientation is given as in Section 
\ref{section:orientation}.

\begin{figure}
\begin{center}
\begin{pspicture}(0,-9.1)(16.7,-3.8)
%
\psset{linewidth=0.5pt}
\psset{fillstyle=solid, fillcolor=black}
\pscircle(8,-5.5){0.08}
\psset{linewidth=0.5pt}
\psset{fillstyle=none}
\rput[c]{0}(3,-6.98){\small $\times$}
\rput[c]{0}(3,-5.5){\small $\ominus$}
\rput[c]{0}(3,-8.5){\small $\ominus$}
\rput[c]{0}(3.1,-9.1){\small $G_{+\delta}$}
\rput[c]{0}(3,-4.9){\small $\tilde{\gamma}_p$}
\rput[c]{0}(2.6,-5.4){\small $\beta$}
\rput[c]{0}(8,-6.98){\small $\times$}
\rput[c]{0}(8.1,-9.1){\small $G_{0}$} 
\rput[c]{0}(8,-4.3){\small $\ell_{0}$}
\rput[c]{0}(8,-8.5){\small $\ominus$}
\rput[c]{0}(8,-5.8){\small $p$} 
\rput[c]{0}(8,-4.9){\small $\gamma_0$}
\rput[c]{0}(13,-6.98){\small $\times$}
\rput[c]{0}(13,-5.5){\small $\oplus$}
\rput[c]{0}(13,-8.5){\small $\ominus$}
\rput[c]{0}(13.1,-9.1){\small $G_{-\delta}$} 
\rput[c]{0}(13,-4.9){\small $\tilde{\gamma}_p$}
\rput[c]{0}(13.4,-5.4){\small $\beta$}
%
\psline(3,-7.0)(3,-8.34)
\pscurve(3,-7)(2.7,-6.5)(2.7,-6)(2.85,-5.6)
\psset{linewidth=0.5pt,linestyle=solid}
\pscurve(3,-7.0)(4,-6.3)(4.2,-5.5)(4,-4.8)(3,-4.4)(2.0,-4.9)(1.8,-6.4)(2.0,-7.0)
\psset{linewidth=0.5pt,linestyle=solid}
\psecurve(1.8,-6.4)(2.0,-7.0)(2.9,-8.35)(2.9,-8.35)
%
\psset{linewidth=1pt,linestyle=solid}
\pscurve(3,-7)(2.9,-7.1)(2.8,-7)
(2.7,-6.9)(2.6,-7)(2.5,-7.1)(2.4,-7)
(2.3,-6.9)(2.2,-7)(2.1,-7.1)(2,-7)
(1.9,-6.9)(1.8,-7)(1.7,-7.1)(1.6,-7)
(1.5,-6.9)(1.4,-7)
\psset{linewidth=1pt}
\pscurve(8,-7)(7.9,-7.1)(7.8,-7)
(7.7,-6.9)(7.6,-7)(7.5,-7.1)(7.4,-7)
(7.3,-6.9)(7.2,-7)(7.1,-7.1)(7,-7)
(6.9,-6.9)(6.8,-7)(6.7,-7.1)(6.6,-7)
(6.5,-6.9)(6.4,-7)
\psset{linewidth=0.5pt}
\psline(8,-7.0)(8,-8.34)
\pscurve(8,-7.0)(9,-6.3)(9.2,-5.5)
(9,-4.8)(8,-4.3)(7,-4.8)
(6.8,-5.5)(7,-6.3)
(8,-7.0)
\psset{linewidth=1pt}
\pscurve(13,-7)(12.9,-7.1)(12.8,-7)
(12.7,-6.9)(12.6,-7)(12.5,-7.1)(12.4,-7)
(12.3,-6.9)(12.2,-7)(12.1,-7.1)(12,-7)
(11.9,-6.9)(11.8,-7)(11.7,-7.1)(11.6,-7)
(11.5,-6.9)(11.4,-7)
\psset{linewidth=0.5pt}
\psline(13,-7.0)(13,-8.34)
\pscurve(13,-7)(13.3,-6.5)(13.3,-6)(13.15,-5.6)

\pscurve
(13,-7.0)(12,-6.3)(11.8,-5.5)
(12,-4.8)(13,-4.4)
(14.0,-4.9)(14.2,-6.4)
(14.0,-7.0)(13.1,-8.35)
\psset{linewidth=0.5pt}
%
%
\psset{linewidth=1pt, linestyle=dashed}
\pscurve(12.75,-6.87)(13.65,-6.2)(13.9,-5.5)(13.7,-4.9)(13,-4.65)
(12.3,-4.9)(12.1,-5.5)
(12.3,-6)(13,-6.5)(13.3,-6.9)(13.2,-7.4)(12.7,-7.4)(12.2,-6.9)
\pscurve(7.75,-6.87)(8.65,-6.2)(8.9,-5.5)(8.7,-4.9)(8,-4.65)
(7.3,-4.9)(7.1,-5.5)
(7.3,-6)(8,-6.5)(8.3,-6.9)(8.2,-7.4)(7.7,-7.4)(7.2,-6.9)
\pscurve(2.75,-6.87)(3.65,-6.2)(3.9,-5.5)(3.7,-4.9)(3,-4.65)
(2.3,-4.9)(2.1,-5.5)
(2.3,-6)(3,-6.5)(3.3,-6.9)(3.2,-7.4)(2.7,-7.4)(2.2,-6.9)
\psset{linewidth=1pt,linestyle=solid}
\pscurve(12.75,-6.87)(12.7,-7.1)(13,-7.2)(14.15,-6.35)(14.4,-5.6)
(14,-4.42)(13.4,-4.04)(13,-4)
(12.6,-4.04)(12,-4.42)(11.6,-5.60)(11.8,-6.4)(12.2,-6.9)
\pscurve(7.75,-6.87)(7.7,-7.1)(8,-7.2)(9.15,-6.35)(9.4,-5.6)
(9,-4.42)(8.4,-4.04)(8,-4)
(7.6,-4.04)(7,-4.42)(6.6,-5.60)(6.8,-6.4)(7.2,-6.9)
\pscurve(2.75,-6.87)(2.7,-7.1)(3,-7.2)(4.15,-6.35)(4.4,-5.6)
(4,-4.42)(3.4,-4.04)(3,-4)
(2.6,-4.04)(2,-4.42)(1.6,-5.60)(1.8,-6.4)(2.2,-6.9)
%
\psline(11.6,-5.9)(11.42,-5.6)
\psline(11.6,-5.9)(11.78,-5.6)
\psline(12.15,-5.7)(12.12,-6)
\psline(12.15,-5.7)(12.4,-5.9)
\psline(6.6,-5.6)(6.45,-5.9)
\psline(6.6,-5.6)(6.8,-5.85)
\psline(7.0,-5.7)(7.23,-5.95)
\psline(7.23,-5.9)(7.30,-5.62)
\psline(1.6,-5.6)(1.45,-5.9)
\psline(1.6,-5.6)(1.8,-5.85)
\psline(2.0,-5.7)(2.23,-5.9)
\psline(2.23,-5.9)(2.30,-5.62)
\psset{linewidth=0.5pt}
\pscurve(13.15,-5.5)(13.6,-6)(13.6,-7)(13.05,-8.35)
\psecurve(2.4,-6)(2.4,-7)(2.95,-8.35)(2.95,-8.35)
\psset{linewidth=0.5pt,linestyle=dashed}
\pscurve(2.85,-5.5)(2.4,-6)(2.4,-7)(2.95,-8.35)
\psset{linewidth=0.5pt}
\psline(2.35,-6.4)(2.45,-6.6)
\psline(2.35,-6.4)(2.25,-6.6)
\psline(13.65,-6.4)(13.55,-6.6)
\psline(13.65,-6.4)(13.75,-6.6)
\end{pspicture}
\end{center}
\caption{Cycles $\gamma_0$ and $\tilde{\gamma}_p$.} 
\label{fig:non-jump-formula-for-pop}
\end{figure}
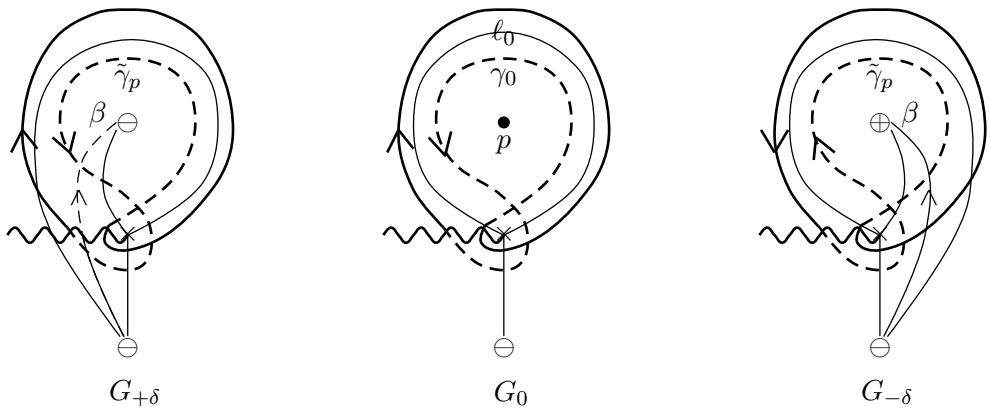


We have the counterpart of
Lemma \ref{lemma:sign-of-simple-cycles}.

\begin{lem} \label{lemma:sign-of-simple-cycles2}
The saddle class $\gamma_0$ coincides with $\varepsilon \tilde{\gamma}_p$.
\end{lem}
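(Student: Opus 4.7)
My plan is to mimic the case-check proof of Lemma \ref{lemma:sign-of-simple-cycles}, since the two lemmas have identical logical structure, only differing in whether the saddle trajectory is regular or degenerate. The two key characterizations to match are that $\gamma_0$ is fixed intrinsically by $\phi$ via the normalization $v_{\gamma_0} = \oint_{\gamma_0}\sqrt{Q_0(z)}\,dz < 0$ (Section \ref{subsec:saddleclass}), while $\tilde{\gamma}_p$ is defined relative to the labeled Stokes graph $G$: its underlying unoriented cycle encircles $p$, and its orientation is determined by the duality condition $\langle \tilde{\gamma}_p,\beta\rangle = +1$, where $\beta$ is a trajectory in the degenerate horizontal strip surrounding $p$, oriented so that $\mathrm{Re}\int \sqrt{Q_0^{(\varepsilon\delta)}(z)}\,dz$ increases in the positive direction (Section \ref{section:orientation}).

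I will treat the two signs separately. For $\varepsilon = +$, so $G=G_{+\delta}$, I will read off from the left panel of Figure \ref{fig:non-jump-formula-for-pop} the $\oplus/\ominus$ labels adjacent to the degenerate horizontal strip, which pin down the positive direction of $\beta$; duality then fixes the orientation of $\tilde{\gamma}_p$ as drawn. Comparing with the middle panel (which depicts $\gamma_0$ with the orientation forced by $v_{\gamma_0}^{(+\delta)} = e^{+i\delta}v_{\gamma_0}$ having negative real part), I will verify that the two orientations coincide, hence $\tilde{\gamma}_p = \gamma_0$. For $\varepsilon = -$, the right panel of the same figure shows that the $\oplus/\ominus$ labels on the boundary of the degenerate horizontal strip are swapped relative to the left panel. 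This reverses the orientation of $\beta$, and therefore of its dual $\tilde{\gamma}_p$, while $\gamma_0$ itself is unchanged; hence $\tilde{\gamma}_p = -\gamma_0$. Combining the two cases yields $\gamma_0 = \varepsilon \tilde{\gamma}_p$.

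The only delicate step is justifying that the $\oplus/\ominus$ assignment really flips as one moves from $G_{+\delta}$ to $G_{-\delta}$ at the relevant endpoints, and that this flip in turn reverses the prescribed orientation of $\beta$; this is already built into the orientation conventions of Section \ref{section:orientation}, so no new computation is needed beyond careful inspection of Figure \ref{fig:non-jump-formula-for-pop}. I do not anticipate any serious obstacle, since the proof is strictly parallel to the regular-saddle case, with the single substantive change that the saddle class now sits inside $H_1(\hat{\Sigma}\setminus\hat{P})$ as a loop around a double pole rather than between two zeros.
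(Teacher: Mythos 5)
Your proposal is correct and follows the same route as the paper, which proves the lemma by simply citing Figure \ref{fig:non-jump-formula-for-pop}; you have merely spelled out the figure-reading (orientation of $\beta$ via the $\oplus/\ominus$ assignment, duality with $\tilde{\gamma}_p$, intrinsic normalization of $\gamma_0$) that the paper leaves implicit.
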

\begin{proof}
This is clear from 
Figure \ref{fig:non-jump-formula-for-pop}.
\end{proof}

Let us use the notation \eqref{eq:Voros1} for the Voros symbols
for $Q^{(\ve\delta)}(z,\eta)$.

The counterpart of
Proposition \ref{prop:signed-DDP-formula}
is as follows.

\begin{prop} \label{prop:jump-at-loop}
For 
any path $\beta \in \Gamma^{\vee}_G$ and
any  cycle $\gamma \in \Gamma_G$, we have 
\begin{align}
\label{eq:W2}
\lim_{\delta\rightarrow +0}
{\mathcal S}[e^{{W}^{(-\ve\delta)}_{\beta}}] &= 
\lim_{\delta\rightarrow +0}
{\mathcal S}
[e^{{W}^{(\ve\delta)}_{\beta}}
\left( 1 - \Bigl(
e^{{V}^{(\ve\delta)}_{\tilde{\gamma}_p}}
\Bigr)^{\varepsilon} 
\right)^{\langle\tilde{\gamma}_p,\beta\rangle}
],\\
\label{eq:V2}
\lim_{\delta\rightarrow +0}
{\mathcal S}[e^{{V}^{(-\ve\delta)}_{\gamma}}] &=
\lim_{\delta\rightarrow +0}
 {\mathcal S}[e^{{V}^{(\ve\delta)}_{\gamma}}
].
\end{align}
\end{prop}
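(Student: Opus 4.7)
The plan is to imitate the proof of Proposition~\ref{prop:signed-DDP-formula}, but with Theorem~\ref{thm:jump1}(a) replaced by Theorem~\ref{thm:jump1}(b), and to feed in the identification between the saddle class $\gamma_0$ and the cycle $\tilde{\gamma}_p$ provided by Lemma~\ref{lemma:sign-of-simple-cycles2}. Since the underlying analytic input (the jump formulas for the Borel sums of Voros symbols under a degenerate saddle trajectory) is already packaged in Theorem~\ref{thm:jump1}(b), what remains is essentially bookkeeping of orientations and of the role of the sign $\varepsilon$.

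First I would split into the two cases $\varepsilon=+$ and $\varepsilon=-$. In the case $\varepsilon=+$, we have $G=G_{+\delta}$, $G'=G_{-\delta}$, and $\gamma_0=\tilde{\gamma}_p$ by Lemma~\ref{lemma:sign-of-simple-cycles2}, so substituting into Theorem~\ref{thm:jump1}(b) and using $\langle\gamma_0,\beta\rangle=\langle\tilde{\gamma}_p,\beta\rangle$ gives \eqref{eq:W2} and \eqref{eq:V2} directly. In the case $\varepsilon=-$, we have instead $G=G_{-\delta}$, $G'=G_{+\delta}$, and $\gamma_0=-\tilde{\gamma}_p$, so I would first invert the first equality in \eqref{eq:jump1}(b) to express ${\mathcal S}[e^{W_{\beta}^{(+\delta)}}]$ in terms of ${\mathcal S}[e^{W_{\beta}^{(-\delta)}}]$ and ${\mathcal S}[e^{V_{\gamma_0}^{(+\delta)}}]$, then rewrite $V_{\gamma_0}^{(+\delta)}=-V_{\tilde{\gamma}_p}^{(+\delta)}$ and move the resulting factor to the other side. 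The exponent $\langle\gamma_0,\beta\rangle$ becomes $-\langle\tilde{\gamma}_p,\beta\rangle$, and combined with the inversion one obtains exactly $\left(1-(e^{V_{\tilde{\gamma}_p}^{(-\delta)}})^{-1}\right)^{\langle\tilde{\gamma}_p,\beta\rangle}$, which matches \eqref{eq:W2} for $\varepsilon=-$.

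For the non-jump statement \eqref{eq:V2} I would simply invoke the second equality in Theorem~\ref{thm:jump1}(b), which gives $\lim_{\delta\to+0}{\mathcal S}[e^{V_\gamma^{(-\delta)}}]=\lim_{\delta\to+0}{\mathcal S}[e^{V_\gamma^{(+\delta)}}]$ for every cycle $\gamma$; no case distinction is needed, and in particular this also yields the identity ${\mathcal S}[e^{V_{\gamma_0}^{(-\delta)}}]={\mathcal S}[e^{V_{\gamma_0}^{(+\delta)}}]$ used silently in the case $\varepsilon=-$ above (this is consistent with the scalar nature of $e^{V_{\gamma_0}}$ noted around \eqref{eq:Vint1}).

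The main obstacle I foresee is the careful sign tracking in the $\varepsilon=-$ case: one must verify that the orientation convention for $\tilde{\gamma}_p$ (fixed by $\langle\tilde{\gamma}_p,\beta\rangle=1$ for an oriented trajectory $\beta$ in the degenerate horizontal strip surrounding $p$) is compatible with the orientation of $\gamma_0$ chosen in Section~\ref{subsec:saddleclass} after the sign $\varepsilon$ is absorbed. Once Lemma~\ref{lemma:sign-of-simple-cycles2} is taken as the dictionary, every remaining step is algebraic manipulation of the formulas in Theorem~\ref{thm:jump1}(b), and there are no new analytic inputs required.
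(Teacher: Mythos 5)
Your proof plan is correct and coincides with the paper's own proof, which simply states that \eqref{eq:W2} "follows from Theorem \ref{thm:jump1} (b) and Lemma \ref{lemma:sign-of-simple-cycles2} by the same argument for Proposition \ref{prop:signed-DDP-formula}" and that \eqref{eq:V2} is the second formula in Theorem \ref{thm:jump1} (b). Your case split on $\varepsilon$, the use of $\gamma_0=\varepsilon\tilde{\gamma}_p$, and the inversion-plus-sign-tracking for $\varepsilon=-$ all reproduce the intended argument.
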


\begin{proof}
The first formula follows from  Theorem \ref{thm:jump1} (b)
and Lemma \ref{lemma:sign-of-simple-cycles2}
by the same argument for Proposition \ref{prop:signed-DDP-formula}.
The second formula is  the same one in
Theorem \ref{thm:jump1} (b).
\end{proof}

Let us examine the integral ${V}^{(\ve\delta)}_{\tilde{\gamma}_p}$
appearing in \eqref{eq:W2}.
As shown in \eqref{eq:Vint1},
 we have 
\begin{align}
\label{eq:Vint2}
V^{(\ve\delta)}_{\tilde{\gamma}_p} &= \oint_{\tilde{\gamma}_p}
\eta\sqrt{Q^{(\ve\delta)}_0(z)}\, dz.
\end{align}
Furthermore, we see in Figure
\ref{fig:non-jump-formula-for-pop} that
the cycle $\tilde{\gamma}_{p}$ winds around $p$ anticlockwise 
and twice
(modulo the $*$-equivalence) on the sheet
 where $p$ has the sign $\oplus$ with respect to the integral of
 $\sqrt{Q^{(\ve\delta)}_0(z)}$.
Thus, the right hand side of \eqref{eq:Vint2} equals to
\begin{align}
4\pi i \eta\operatornamewithlimits{Res}_{z=p_{\oplus}} \sqrt{Q^{(\ve\delta)}_0(z)}\,dz,
\end{align}
where $z=p_{\oplus}$ implies taking the residue at $p$ on the above mentioned sheet.

More generally, we define,
 for any double pole $q$ of $Q^{(\ve\delta)}_0(z)$,
 which is also a double pole of $Q^{(-\ve\delta)}_0(z)$, 
\begin{align}
\tilde{v}_{q} &= 
4\pi i \eta\operatornamewithlimits{Res}_{z=q_{\oplus}} \sqrt{Q^{(\ve\delta)}_0(z)}\,dz,\\
\tilde{v}'_{q} &= 
4\pi i \eta\operatornamewithlimits{Res}_{z=q_{\oplus}} \sqrt{Q^{(-\ve\delta)}_0(z)}\,dz.
\end{align}
The definition makes sense,
because $G$ and $G'$ are saddle-free, so that
$q$ has the definite sign on each sheet
with respect to the corresponding integral.

\begin{rem} 
The integral $\tilde{v}_q$ coincides with the {\em residue at $q$\/}
in \cite[Section 2.4]{Bridgeland13} up to the sign.
\end{rem}

Now we present the mutation formula of the Voros symbols for the signed pops,
where we use the notations \eqref{eq:Voros1}, \eqref{eq:vdef1},
and \eqref{eq:vdef2}.

 \begin{thm}[Mutation formula of the Voros symbols for the signed pop $\kappa^{(\ve)}_p$]
\label{thm:mutationV2}
 For $i=1,\dots,n$ and any double pole $q$ of $G$,
 we have
\begin{align}
\label{eq:v32}
\lim_{\delta\rightarrow +0}
{\mathcal S}[e^{{v}'_{i}}] &=
\lim_{\delta\rightarrow +0}
 {\mathcal S}[e^{{v}_{i}}],
\\
\label{eq:v3}
\lim_{\delta\rightarrow +0}
{\mathcal S}[e^{{\tilde{v}}'_{q}}] &=
\lim_{\delta\rightarrow +0}
 {\mathcal S}[(e^{{\tilde{v}}_{q}})^{1-2\delta_{qp}}],
\\
\label{eq:W3}
\lim_{\delta\rightarrow +0}
{\mathcal S}[e^{{W}'_{i}}] &= 
\lim_{\delta\rightarrow +0}
{\mathcal S}[e^{{W}_{i}}
\left( 1 - (e^{{\tilde{v}}_{p}})^{\varepsilon} 
\right)^{\delta_{ii_p} - \delta_{ij_p}}
],
\\
\label{eq:V3}
\lim_{\delta\rightarrow +0}
{\mathcal S}[e^{{V}'_{i}}] &=
\lim_{\delta\rightarrow +0}
 {\mathcal S}[e^{{V}_{i}}
],
\end{align}
where $i_p$ and $j_p$ are the labels in
\eqref{eq:xpop1}.
\end{thm}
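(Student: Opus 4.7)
The plan is to prove the four identities \eqref{eq:v32}--\eqref{eq:V3} one by one, in close parallel to the proof of Theorem \ref{thm:mutationV1}, combining the geometric identifications $\tau_{G,G'}^{\vee}(\beta'_i)=\beta_i$ and $\tau_{G,G'}(\gamma'_i)=\gamma_i$ of Proposition \ref{prop:cycle1}(b) with the analytic jump formulas of Proposition \ref{prop:jump-at-loop}. The identities \eqref{eq:v32} and \eqref{eq:V3} are the easy ones: the former follows from Lemma \ref{lem:nojump1} applied to $v^{(\ve\delta)}_{\gamma_i}$, while the latter combines $\gamma'_i=\gamma_i$ with the second formula of Proposition \ref{prop:jump-at-loop}, both expressing the non-jumping of the respective quantity under the signed pop.

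For \eqref{eq:v3} I would analyze the residue definition of $\tilde{v}_q$ on each sheet of $\hat{\Sigma}$. When $q\ne p$, the signed pop $\kappa_p^{(\ve)}$ is supported near $p$ and does not disturb the branch structure at $q$, so $\tilde{v}'_q=e^{-2i\ve\delta}\tilde{v}_q$ converges to $\tilde{v}_q$. When $q=p$, the local model (cf.\ Figure \ref{fig:pop4}) interchanges the inner and outer arcs of the self-folded triangle around $p$, which is the geometric manifestation of swapping the $\oplus$ and $\ominus$ sheets above $p$; since the residues of $\sqrt{Q_0}\,dz$ on the two sheets are negatives of each other, this yields $\tilde{v}'_p=-e^{-2i\ve\delta}\tilde{v}_p\to -\tilde{v}_p$ in the limit $\delta\to+0$.

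The substantive identity is \eqref{eq:W3}. Applying the first formula of Proposition \ref{prop:jump-at-loop} to the path $\beta_i$, and using $\tau_{G,G'}^{\vee}(\beta'_i)=\beta_i$ together with the residue identity \eqref{eq:Vint2} which reduces $e^{V^{(\ve\delta)}_{\tilde{\gamma}_p}}$ to $e^{\tilde{v}_p}$, one reduces the claim to the purely topological identity
\begin{equation*}
\langle\tilde{\gamma}_p,\beta_i\rangle=\delta_{ii_p}-\delta_{ij_p}.
\end{equation*}
For $i\ne i_p,j_p$ the support of $\beta_i$ is disjoint from a tubular neighborhood of the degenerate saddle trajectory, so the intersection vanishes. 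For $i=i_p$ or $i=j_p$, the paths $\beta_{i_p}$ and $\beta_{j_p}$ are exactly those drawn in Figure \ref{fig:bg2}, and a direct count against the cycle $\tilde{\gamma}_p$ oriented as in Figure \ref{fig:non-jump-formula-for-pop} gives $+1$ and $-1$ respectively. The main obstacle is this last geometric bookkeeping: the signs depend on choices of representatives modulo the $*$-equivalence in $\tilde{\Gamma}^{\vee}$ and $\tilde{\Gamma}$, and must be reconciled with the sign conventions used to fix the orientation of $\tilde{\gamma}_p$ and of the saddle class $\gamma_0=\ve\tilde{\gamma}_p$ via Lemma \ref{lemma:sign-of-simple-cycles2}. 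The same orientation issue also underpins the sheet-flip claim $p'_\oplus=p_\ominus$ used in \eqref{eq:v3}, which must be extracted from the local structure of the signed pop rather than assumed.
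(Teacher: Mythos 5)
Your proposal is correct and follows essentially the same route as the paper's own proof: \eqref{eq:v32}, \eqref{eq:W3}, \eqref{eq:V3} come from Proposition \ref{prop:cycle1}(b), Proposition \ref{prop:jump-at-loop}, the residue identity \eqref{eq:Vint2}, and the intersection numbers $\langle\tilde{\gamma}_p,\beta_{i_p}\rangle=1$, $\langle\tilde{\gamma}_p,\beta_{j_p}\rangle=-1$ (with the remaining $\langle\tilde{\gamma}_p,\beta_i\rangle=0$ for $i\neq i_p,j_p$), exactly as you describe; and for \eqref{eq:v3} your sheet-swap computation $\tilde{v}'_p=-e^{-2i\ve\delta}\tilde{v}_p$ is the explicit form of the paper's appeal to $\tilde{\gamma}'_p=-\tilde{\gamma}_p$ via Lemma \ref{lemma:sign-of-simple-cycles2}, and the $q\neq p$ case reduces, as you and the paper both note, to the observation that the $\oplus/\ominus$ assignment at $q$ is unaffected because $\ell_0$ is the unique saddle trajectory.
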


\begin{proof}
The formulas
\eqref{eq:v32}, \eqref{eq:W3}, and \eqref{eq:V3}
 are obtained 
 from
 Propositions \ref{prop:cycle1} (b) and
  \ref{prop:jump-at-loop},
and the facts $\langle \tilde{\gamma}_p,\beta_{i_p} \rangle=1$,
 $\langle \tilde{\gamma}_p,\beta_{j_p} \rangle=-1$
in the same way  as the proof of Theorem  \ref{thm:mutationV1}.
Let us prove \eqref{eq:v3}.
First, we consider the  nontrivial case $q=p$.
By Lemma \ref{lemma:sign-of-simple-cycles2},
the cycle $\tilde{\gamma}'_p$ for $G'$ is related to
$\tilde{\gamma}_p$ for $G$ as $\tilde{\gamma}'_p=-\tilde{\gamma}_p$.
Thus,
\begin{align}
e^{\tilde{v}'_p}
=e^{\eta V^{(-\ve\delta)}_{\tilde{\gamma}'_p}}
=e^{-\eta V^{(-\ve\delta)}_{\tilde{\gamma}_p}}.
\end{align}
By  \eqref{eq:Vint2}, there is no jump between $V^{(-\ve\delta)}_{\tilde{\gamma}_p}$
and $V^{(\ve\delta)}_{\tilde{\gamma}_p}$ for $\delta\rightarrow +0$.
Thus, we have
\begin{align}
\lim_{\delta\rightarrow +0}
{\mathcal S}
[e^{-\eta V^{(-\ve\delta)}_{\tilde{\gamma}_p}}]
=
\lim_{\delta\rightarrow +0}
{\mathcal S}
[e^{-\eta V^{(\ve\delta)}_{\tilde{\gamma}_p}}]
=
\lim_{\delta\rightarrow +0}
{\mathcal S}
[(e^{\tilde{v}_p})^{-1}].
\end{align}
Let us consider the case $q\neq p$.
Since we assume that $G_0$ has no saddle trajectory other than
$\ell_0$, the sign $\oplus/\ominus$ of $q$ does not
 change under the signed pop $\kappa^{(\ve)}_p$.
Thus, we have $\lim_{\delta \rightarrow +0}\tilde{v}'_q=
\lim_{\delta \rightarrow +0}\tilde{v}_q$,
and the equality \eqref{eq:v3} follows.
\end{proof}

We note that the mutation of $\tilde{y}_q= e^{\tilde{v}_q}$ by
the signed flips in Section \ref{subsec:mutformula1}
is trivial.
Thus, summarizing Theorems \ref{thm:mutationV1} and \ref{thm:mutationV2},
we obtain  our first main result.

\begin{thm}\label{thm:localmutation}
By the signed flips and the signed pops,
the Voros symbols 
$
x_i=e^{{W}_{i}}$
and
$\hat{y}_i= e^{V_i}$,
together with
${y}_i= e^{v_i}$ and 
$\tilde{y}_q= e^{\tilde{v}_q}$,
mutate as the variables of extended seeds
(in the sense of Section \ref{subsec:local}).
\end{thm}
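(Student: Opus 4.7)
The plan is to assemble Theorem \ref{thm:localmutation} as a direct consequence of Theorems \ref{thm:mutationV1} and \ref{thm:mutationV2} by verifying, case by case, that the limit-Borel-summed Voros symbols under each elementary operation obey exactly the exchange relations that define the signed mutations and signed pops of extended seeds in Section \ref{subsec:local}. Concretely, I would fix the dictionary
\[
y_i \leftrightarrow \lim_{\delta\to +0}\mathcal{S}[e^{v_i}],\quad
x_i \leftrightarrow \lim_{\delta\to +0}\mathcal{S}[e^{W_i}],\quad
\hat y_i \leftrightarrow \lim_{\delta\to +0}\mathcal{S}[e^{V_i}],\quad
\tilde y_q \leftrightarrow \lim_{\delta\to +0}\mathcal{S}[e^{\tilde v_q}],
\]
and then compare both sides of the exchange formulas under this identification.

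For the signed flip $\mu^{(\varepsilon)}_k$, the identities \eqref{eq:v1}, \eqref{eq:W1}, \eqref{eq:V1} of Theorem \ref{thm:mutationV1} are literally \eqref{eq:ymut6}, \eqref{eq:xmut7}, \eqref{eq:ymut7} under the dictionary above; I would only have to note that the Lefschetz-dual relation \eqref{eq:relation-between-x-and-y-Voros} between $V_i$, $W_j$ and $v_i$ is the analytic counterpart of the definition $\hat y_i = y_i \prod_j x_j^{b_{ji}}$, so the consistency between \eqref{eq:W1} and \eqref{eq:V1} matches Proposition \ref{prop:yhat2}. It remains to check that $\tilde y_q$ is invariant under $\mu^{(\varepsilon)}_k$; this follows because a signed flip is a saddle reduction at a regular saddle trajectory, which does not alter the sheet assignment of any double pole $q$, hence leaves every residue $\tilde v_q$ unchanged in the limit $\delta \to +0$.

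For the signed pop $\kappa^{(\varepsilon)}_p$, the identities \eqref{eq:v32}--\eqref{eq:V3} of Theorem \ref{thm:mutationV2} must be matched with \eqref{eq:vmut1} and \eqref{eq:xpop1}, and one has to observe that the nonmonomial factor $(1-\tilde y_p^{\varepsilon})$ appearing in the signed pop of $x$-variables is exactly the jump factor in \eqref{eq:W3}, with exponent $\delta_{i i_p}-\delta_{i j_p}$ matching the local rescaling rule of Section \ref{subsec:local}. The fact that $y_i$ and $\hat y_i$ are invariant under $\kappa^{(\varepsilon)}_p$ in the extended-seed setup corresponds precisely to the trivial actions \eqref{eq:v32} and \eqref{eq:V3}, while the involutive inversion $\tilde y'_p = \tilde y_p^{-1}$ in \eqref{eq:vmut1} is the content of the $q = p$ case of \eqref{eq:v3}.

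The main subtlety, and the one that actually warrants attention in writing the proof, is the consistent treatment of the scalar $\tilde y_q$: it is invariant under the $S^1$-deformation in the signed flip case but gets inverted by a signed pop at $q$. One has to use that $V_{\tilde\gamma_p}$ in \eqref{eq:Vint2} is purely a residue, so $e^{\tilde v_q}$ carries no formal-series content and its only nontrivial transformation comes from the combinatorial reversal $\tilde\gamma'_p = -\tilde\gamma_p$ induced by the sign flip of $p$ across a degenerate saddle trajectory (cf.\ Lemma \ref{lemma:sign-of-simple-cycles2}). Once this is in place, the theorem follows by reading off, in both the flip and the pop case, that the right-hand sides of \eqref{eq:v1}--\eqref{eq:V1} and \eqref{eq:v32}--\eqref{eq:V3} coincide termwise with the exchange relations defining signed mutations and signed pops of extended seeds. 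No further calculation beyond inserting the dictionary is required.
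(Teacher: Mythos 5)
Your proposal follows exactly the route the paper itself takes: the theorem is stated as a summary of Theorems \ref{thm:mutationV1} and \ref{thm:mutationV2}, and the paper's proof amounts precisely to the one-sentence observation that $\tilde{y}_q$ is untouched by a signed flip (the paper phrases this as ``the mutation of $\tilde{y}_q=e^{\tilde{v}_q}$ by the signed flips in Section 7.1 is trivial''), after which the two theorems are simply read off against the exchange relations \eqref{eq:ymut6}, \eqref{eq:xmut7}, \eqref{eq:ymut7}, \eqref{eq:vmut1}, \eqref{eq:xpop1}. Your proposal spells this dictionary out term by term and supplies, as extra justification, that a regular saddle trajectory never changes the $\oplus/\ominus$ assignment at a double pole (so $\tilde v_q$ cannot jump under a flip), and it correctly isolates the $q=p$ inversion $\tilde y'_p=\tilde y_p^{-1}$ as coming from $\tilde\gamma'_p=-\tilde\gamma_p$ via Lemma \ref{lemma:sign-of-simple-cycles2}; none of this differs in substance from what the paper does, so your proof is correct and essentially identical in structure.
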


In the same spirit of Section \ref{subsec:mutformula1},
we reformulate Theorem
\ref{thm:mutationV2} in terms of the Stokes automorphisms for the signed pops.

Let us $\tilde{\bbV}=\tilde{\bbV}(Q^{(\ve\delta)}(z,\eta))$ denote
 the extension of the Voros field $\bbV
 =\bbV(Q^{(\ve\delta)}(z,\eta))$ in Section \ref{subsec:mutformula1}
 by $e^{\tilde{v}_q}$'s.
 We call $\tilde{\bbV}$ the {\em extended Voros field of
 $Q^{(\ve\delta)}(z,\eta)$}.
  We define $\tilde{\bbV}'=\tilde{\bbV}(Q^{(-\ve\delta)}(z,\eta))$
 in the same way.

Again, the isomorphisms of the homology groups
 $\tau_{G,G'}$ and  $\tau^{\vee}_{G,G'}$
 in Proposition \ref{prop:cycle1} (b)
 induce the following field isomorphism
 $\tau^*_{\tilde{\bbV},\tilde{\bbV}'}:\tilde{\bbV}'\rightarrow
\tilde{\bbV}$:
\begin{align}
\label{eq:iso3}
\tau^*_{\tilde{\bbV},\tilde{\bbV}'}(e^{{v}'_i})  = e^{v_i},
\quad
\tau^*_{\tilde{\bbV},\tilde{\bbV}'}(e^{\tilde{v}'_q})  = (e^{\tilde{v}_q})^{1-2\delta_{pq}},
\quad
\tau^*_{\tilde{\bbV},\tilde{\bbV}'}(e^{{W}'_i})  = e^{{W}_i}.
\end{align}
Here we use the same symbol for the isomorphism 
in \eqref{eq:monomial-Voros-y}
and \eqref{eq:monomial-Voros-y},
since the both are 
By Lemma \ref{lem:relation-between-x-and-y-Voros}, we have
\begin{align}
\tau^*_{\tilde{\bbV},\tilde{\bbV}'}(e^{{V}'_i})  = e^{{V}_i}.
\end{align}

We also introduce the field automorphism
${\mathfrak K}_{\tilde{\bbV},p}^{(\varepsilon)} : 
 {\tilde{\bbV}} \rightarrow {\tilde{\bbV}}
 $ as follows.
\begin{align} \label{eq:epsilon-expression-of-DDP-formal2}
\begin{split}
{\mathfrak K}_{\tilde{\bbV},p}^{(\varepsilon)} 
( e^{{v}_{i}})  &=  
e^{{v}_{i}} ,
\\
{\mathfrak K}_{\tilde{\bbV},p}^{(\varepsilon)} 
( e^{{\tilde{v}}_{q}}  )&=  
e^{{\tilde{v}}_{q}},
\\
{\mathfrak K}_{\tilde{\bbV},p}^{(\varepsilon)} 
( e^{{W}_{i}}) &  =
e^{{W}_{i}} 
\left( 1 - (e^{{\tilde{v}}_p})^{\varepsilon} \right)^{\delta_{i i_p}-\delta_{ij_p}}.
 \end{split}
\end{align}
By Lemma \ref{lem:relation-between-x-and-y-Voros}, we have
\begin{align}
{\mathfrak K}_{\tilde{\bbV},p}^{(\varepsilon)} 
( e^{{V}_{i}}) &=
 e^{{V}_{i}}.
\end{align}
We call
${\mathfrak K}_{\tilde{\bbV},p}^{(\varepsilon)}$
the {\em  Stokes automorphism associated with the signed
pop $\kappa^{(\ve)}_p$}.

We denote
\begin{alignat}{3}
\label{eq:yV1}
y_i&= e^{v_i},\quad 
\tilde{y}_q&= e^{\tilde{v}_q},\quad 
x_i&=e^{{W}_{i}} ,\quad
\hat{y}_i&= e^{V_i},
\\
y'_i&=e^{v'_i},\quad 
\tilde{y}'_q&=e^{\tilde{v}'_q},\quad 
x'_i&=e^{{W}'_i} ,~~ 
\hat{y}'_i&= e^{V'_i}.
\end{alignat}

Then, it is easy to check that the following formulas hold.
\begin{align}
\label{eq:st4}
({\mathfrak K}_{\tilde{\bbV},p}^{(\varepsilon)}\circ
\tau^*_{\tilde{\bbV},\tilde{\bbV}'})
(y'_i)
&=
y_i,
\\
\label{eq:st5}
({\mathfrak K}_{\tilde{\bbV},p}^{(\varepsilon)}\circ
\tau^*_{\tilde{\bbV},\tilde{\bbV}'})
(\tilde{y}'_q)
&=
\tilde{y}_q{}^{1-2\delta_{qp}},
\\
\label{eq:st6}
({\mathfrak K}_{\tilde{\bbV},p}^{(\varepsilon)}\circ
\tau^*_{\tilde{\bbV},\tilde{\bbV}'})
(x'_i)
&=
x_i(1-(\tilde{y}_p)^{\ve})^{\delta_{ii_p}-\delta_{ij_p}},
\\
\label{eq:st7}
({\mathfrak K}_{\tilde{\bbV},p}^{(\varepsilon)}\circ
\tau^*_{\tilde{\bbV},\tilde{\bbV}'})
(\hat{y}'_i)
&=
\hat{y}_i.
\end{align}

Theorem
\ref{thm:mutationV2}
is compactly expressed in the following way.

\begin{thm}
\label{thm:Stokes2}
The following equality of operations holds:
\begin{align}
\lim_{\delta\rightarrow +0} \circ\, \mathcal{S}
=
\lim_{\delta\rightarrow +0} \circ \,\mathcal{S} \circ
{\mathfrak K}_{\tilde{\bbV},p}^{(\varepsilon)}\circ
\tau^*_{\tilde{\bbV},\tilde{\bbV}'}.
\end{align}
\end{thm}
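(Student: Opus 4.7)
The plan is to follow exactly the same strategy used in Theorem \ref{thm:Stokes1}, namely to verify that Theorem \ref{thm:Stokes2} is just a compact repackaging of the concrete mutation formulas in Theorem \ref{thm:mutationV2}. Since both sides of the asserted identity are field homomorphisms (or at least well-defined operators) on $\tilde{\bbV}'$ that are then postcomposed with $\mathcal{S}$ and $\lim_{\delta\to+0}$, it suffices to test them on a set of generators of $\tilde{\bbV}'$. A natural generating set is $\{ e^{v'_i},\ e^{\tilde v'_q},\ e^{W'_i}\}_{i,q}$, because by Lemma \ref{lem:relation-between-x-and-y-Voros} the $e^{V'_i}$ are determined by the $e^{W'_j}$ together with the exponential factors $e^{v'_i}$.

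First I would compute, purely at the algebraic level, the action of $ {\mathfrak K}_{\tilde{\bbV},p}^{(\varepsilon)}\circ \tau^*_{\tilde{\bbV},\tilde{\bbV}'}$ on each generator. The isomorphism $\tau^*_{\tilde{\bbV},\tilde{\bbV}'}$ is read off from the geometric mutation \eqref{eq:bgmut1} of simple paths and simple cycles under $\kappa_p^{(\ve)}$, supplemented by the residue rule for $\tilde v_q$ explained around \eqref{eq:Vint2}: the sign of $p$ with respect to $\sqrt{Q_0^{(\pm\ve\delta)}}$ flips exactly at $q=p$ under the pop, giving the exponent $1-2\delta_{qp}$ in \eqref{eq:iso3}. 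The automorphism $ {\mathfrak K}_{\tilde{\bbV},p}^{(\varepsilon)}$ is then applied by its defining formulas \eqref{eq:epsilon-expression-of-DDP-formal2}. Combining the two yields precisely the identities \eqref{eq:st4}–\eqref{eq:st7}, which are straightforward to verify term by term.

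Next I would apply $\mathcal{S}$ and take $\lim_{\delta\to+0}$. Here I invoke Proposition \ref{prop:property-of-Borel-sum}, which tells us that $\mathcal{S}$ commutes with sums, products, and (on the appropriate domain) rational combinations, so the limit of the Borel sum of the composite equals the product of the Borel-summed pieces. Matching the output term by term with Theorem \ref{thm:mutationV2}, we see: $e^{v'_i}\mapsto y_i$ matches \eqref{eq:v32}; $e^{\tilde v'_q}\mapsto \tilde y_q^{1-2\delta_{qp}}$ matches \eqref{eq:v3}; and $e^{W'_i}\mapsto x_i(1-\tilde y_p^{\varepsilon})^{\delta_{ii_p}-\delta_{ij_p}}$ matches \eqref{eq:W3}. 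The identity \eqref{eq:V3} for $e^{V'_i}$ is then a formal consequence of the previous three via the relation \eqref{eq:relation-between-x-and-y-Voros} (and its counterpart for $G'$) together with the fact that $b'_{ij}=b_{ij}$ under a pop.

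The only genuinely delicate point — and the one I would treat carefully rather than as routine — is justifying the interchange of $\lim_{\delta\to+0}$, Borel resummation, and the algebraic operations defining $ {\mathfrak K}_{\tilde{\bbV},p}^{(\varepsilon)}\circ \tau^*_{\tilde{\bbV},\tilde{\bbV}'}$. This is the same technical core already exploited in Theorem \ref{thm:Stokes1}: it ultimately rests on Corollary \ref{cor:summability}, Lemma \ref{lemma:saddle-reduction-of-Voros}, and the extension of $\mathcal{S}$ to $\tilde{\bbV}$ respecting addition, multiplication, and division (in particular handling the factor $1-(\tilde y_p)^{\ve}$, which is a genuine scalar rather than a formal series thanks to \eqref{eq:Vint1}). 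Once this is in place the rest of the proof is, as indicated, a direct comparison of formulas.
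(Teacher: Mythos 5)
Your proposal is correct and follows essentially the same route as the paper: the theorem is presented there as a compact repackaging of Theorem \ref{thm:mutationV2} via the generator identities \eqref{eq:st4}--\eqref{eq:st7}, and your verification on generators, together with the observation that $\mathcal{S}$ and $\lim_{\delta\to+0}$ respect field operations (and that $1-\tilde{y}_p^{\ve}$ is a genuine scalar by \eqref{eq:Vint1}), is exactly the content the paper leaves implicit. The delicate point you flag about interchanging $\lim_{\delta\to+0}$, $\mathcal{S}$, and the field operations is indeed the technical core, and is handled in the paper through the extension of $\mathcal{S}$ to the Voros field and Lemma \ref{lemma:saddle-reduction-of-Voros}.
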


For the completeness,
we also extend the isomorphisms
$\tau^*_{\bbV,\bbV'}: \bbV'\rightarrow \bbV$
and ${\mathfrak S}_{\bbV,k}^{(\varepsilon)}:
\bbV \rightarrow \bbV$  for the signed flips in Section
\ref{subsec:mutformula1}
to the ones
$\tau^*_{\tilde{\bbV},\tilde{\bbV}'}: \tilde{\bbV}'\rightarrow \tilde{\bbV}$
and ${\mathfrak S}_{\tilde{\bbV} ,k}^{(\varepsilon)}:
\tilde{\bbV} \rightarrow \tilde{\bbV}$
in a trivial way:
\begin{align}
\tau^*_{\tilde{\bbV},\tilde{\bbV}'}(\tilde{y}'_q)=\tilde{y}_q,
\quad
{\mathfrak S}_{\tilde{\bbV} ,k}^{(\varepsilon)}(\tilde{y}_q)
=\tilde{y}_q.
\end{align}
Then, we have
\begin{align}
({\mathfrak S}_{\tilde{\bbV},k}^{(\varepsilon)}\circ
\tau^*_{\tilde{\bbV},\tilde{\bbV}'})
(\tilde{y}'_q)
=
\tilde{y}_q,
\end{align}
and
Theorem
\ref{thm:Stokes1} still holds.

\section{Application: Identities of  Stokes automorphisms}
\label{sec:identities}

By combining all results in the previous sections
we derive the identities of  Stokes automorphisms associated with
periods of seeds in cluster algebras.

\subsection{Regular deformation and mutation of potentials}

In Section \ref{sec:mutationofStokes} we introduced  
regular deformations and signed mutations 
for Stokes graphs of Schr{\"o}dinger equations. 
Here we extend them to potential functions.

\begin{defn}
We say that the potential $Q(z,\eta)$ of 
a Schr{\"o}dinger equation \eqref{eq:Sch} 
is {\em saddle-free\/}
if its Stokes graph is saddle-free.
For a pair of two saddle-free potentials
$Q(z,\eta)$ and $Q'(z,\eta)$, we say that 
they are related by a {\em regular deformation of potentials} 
if there exists a  family of potentials
\begin{equation} \label{eq:regular-family-potential}
Q(z,\eta;t) = \sum_{n=0}^{\infty} \eta^{-n}Q_n(z;t)
\quad (0 \le t \le 1) 
\end{equation}
satisfying the following conditions:
\begin{itemize}
\item %
$Q(z,\eta;t)$ is a polynomial in $\eta^{-1}$ 
(i.e., $Q_n(z;t) = 0$ for $n \gg 1$).  
Each coefficient $Q_n(z;t)$ is analytic in $t$ 
and satisfies $Q(z,\eta) = Q(z,\eta;0)$ and 
$Q'(z,\eta) = Q(z,\eta;1)$. 
\item %
For any $t$, $Q(z,\eta;t)$ satisfies 
Assumption 2.3 and 2.4. Moreover, for any $n \ge 0$, 
the pole orders of $Q_n(z;t)$ are independent of $t$. 
\item %
The family 
$\{ \phi_t = Q_0(z;t)dz^{\otimes 2} \mid 0 \leq t \leq 1\}$ 
satisfies Condition \ref{cond:regular1}.
\end{itemize} 
\end{defn}

Let $S_{\rm odd}(z,\eta;t)$ be the formal series 
\eqref{eq:Sodd-and-Seven} defined from the potential 
$Q(z,\eta;t)$ satisfying the above
conditions. 
Since the coefficients of $S_{\rm odd}(z,\eta;t)$ are 
determined by the recursion relation \eqref{eq:recursion}, 
the coefficients of $S_{\rm odd}(z,\eta;t)$ are analytic 
not only in $z$ but also in $t$ as long as 
$S_{-1}(z;t) = \sqrt{Q_0(z;t)}$ never vanishes. 
Namely, they are analytic in $t$ as long as zeros 
and poles of $\phi_t$ (which may depend on $t$) 
do not coincides with $z$. 
Moreover, each coefficient of the Voros symbols are also 
analytic in $t$ since Condition \ref{cond:regular1} 
guarantees that zeros and poles of $\phi_t$ 
never confluence under a regular deformation. 
Note that, if a pair of two zeros (or a pair of 
a zero and a pole) of $\phi_t$ merges 
and some path which defines 
a Voros symbol is pinched by the merging pair at $t = t_0$ 
with some $t_0 \in [0,1]$, then the coefficients 
of the Voros symbols may not be analytic in 
$t$ at the point $t=t_0$ since the integrand 
(i.e., coefficients of $S_{\rm odd}(z,\eta;t)$) 
may have singularities at zeros and poles. 
However, Condition \ref{cond:regular1} guarantees that 
such a point $t_0$ never appears in a regular deformation.

\begin{rem}
Since the Stokes graphs of the Schr{\"o}dinger equations 
whose potentials are given 
by \eqref{eq:regular-family-potential} 
are saddle-free for any $t \in [0,1]$, 
the Voros symbols are Borel summable 
for any $t \in [0,1]$. Therefore, we expect that  
no Stokes phenomenon occurs to the Voros symbols 
under the regular deformation of potentials. 
In other words, for any $0\leq t_0 \leq 1$, 
we conjecture that the following equalities hold:
\begin{align} \label{eq:no-jump-conjecture}
\lim_{t\rightarrow t_0} \mathcal{S}
[e^{W_{\beta}(t)}]
= \mathcal{S}
[e^{W_{\beta} (t_0)}],
\quad
\lim_{t\rightarrow t_0} \mathcal{S}
[e^{V_{\gamma}(t)}]
= \mathcal{S}
[e^{V_{\gamma}(t_0)}].
\end{align}
Here $e^{W_{\beta}(t)}$ and 
$e^{V_{\gamma}(t)}$ denote 
the Voros symbols for the potential $Q(z,\eta;t)$, 
and we have identified the paths and cycles 
by the isomorphism \eqref{eq:iso1} 
induced by the regular deformation of 
the quadratic differentials $\phi_t$. 
 Under the conjecture,
we can identify the Voros fields 
${\mathbb V} = {\mathbb V}(Q(z,\eta))$
and ${\mathbb V}' = {\mathbb V}(Q'(z,\eta))$
by the natural isomorphism 
$e^{W_{i}}\mapsto e^{W'_{i}}
$,
$e^{V_{i}}\mapsto e^{V'_{i}}
$,
if they are related by a regular deformation 
of potentials. 
See also Remark \ref{rem:identity1} later. 
\end{rem}

Next we introduce the signed mutations  
{\em of potentials}. 
\begin{defn}
Let $Q(z,\eta)$ and $Q'(z,\eta)$ be a pair of
saddle-free potentials, and
let $G$ and $G'$ be their labeled Stokes graphs. 
Fix a sign $\varepsilon = \pm$.
Suppose that  there exists a potential 
$Q^{(0)}(z,\eta)$ satisfying the following conditions:
\begin{itemize}
\item %
The Stokes graph $G_0 = G(\phi_0)$
 has a unique 
{\em regular} saddle trajectory,
where $\phi_0$ is the 
quadratic differential associated with the 
potential $Q^{(0)}(z,\eta)$.
\item %
Let $Q^{(\theta)}(z,\eta)$ be the $S^1$-family for 
the potential $Q^{(0)}(z,\eta)$,
and let $\phi_{\theta}$ be the quadratic differential
associated with $Q^{(\theta)}(z,\eta)$.
Choose a sufficiently small $\delta>0$ such that
$G_{\ve\delta}=G(\phi_{\ve\delta})$ and $G_{-\ve\delta}=G(\phi_{-\ve\delta})$
are the saddle reductions of $G_0$.
Then,
 the potentials
$Q(z,\eta)$ and $Q^{(+\varepsilon\delta)}(z,\eta)$
(resp., $Q'(z,\eta)$ and 
$Q^{(-\varepsilon\delta)}(z,\eta)$) are related by 
a regular deformation of potentials. 
\item %
The labeled Stokes graphs $G$ and $G'$ are related by
the signed flip $G'=\mu^{(\ve)}_k(G)$
in the sense of Definition \ref{defn:flipStokes1}.
\end{itemize}
Then, we write $Q'(z,\eta) = \mu_k^{(\varepsilon)}(Q(z,\eta))$,
assuming the labels of $G$ and $G'$.
This defines the
{\em  signed flip $\mu_k^{(\varepsilon)}$ 
for potentials}.  
The {\em  signed pop $\kappa_p^{(\varepsilon)}$ 
for potentials} is defined by the same manner
by considering ``degenerate saddle trajectory"
instead of ``regular saddle trajectory" in the above .
\end{defn}

\subsection{Stokes automorphism for general cycle}

Let $Q(z,\eta)$ be a saddle-free potential,
and let $G$ be its labeled Stokes graph.
Let ${\bbV}={\bbV}(Q(z,\eta))$ be the
 Voros field of $Q(z,\eta)$.
For any cycle $\gamma\in \Gamma_G$,
$k=1,\dots,n$, and sign $\ve$,
we define the field automorphism
${\mathfrak S}_{\bbV,\gamma}^{(\varepsilon)}:{\bbV}
 \rightarrow {\bbV}$
as follows.
\begin{align}
 \label{eq:epsilon-expression-of-DDP-formal3}
\begin{split}
{\mathfrak S}_{\bbV,\gamma}^{(\varepsilon)}(\displaystyle e^{{v}_{i}})  &=  
e^{{v}_{i}} ,
\\
{\mathfrak S}_{\bbV,\gamma}^{(\varepsilon)}( e^{{W}_{i}} ) &=  
e^{{W}_{i}} 
\left( 1 + (e^{{V}_{\gamma}})^{\varepsilon} \right)
^{-\langle \gamma, \beta_i\rangle}.
\end{split}
\end{align}
Thanks to
Proposition \ref{prop:bg3}
and
 Lemma
\ref{lem:relation-between-x-and-y-Voros},
the following formula  holds.
\begin{align}
{\mathfrak S}_{\bbV,\gamma}^{(\varepsilon)}( e^{{V}_{i}} )
&= e^{{V}_{i}}
\left( 1 + (e^{{V}_{\gamma}})^{\varepsilon} \right)
^{-(\gamma,\gamma_i)}.
\end{align}
If we set $\gamma=\gamma_k$, these formulas reduce to those
 of the ``original'' Stokes automorphism ${\mathfrak S}_{\bbV,k}^{(\varepsilon)}$
 for the signed flip $\mu^{(\ve)}_k$
in Section \ref{subsec:mutformula1}.
We call ${\mathfrak S}_{\bbV,\gamma}^{(\varepsilon)}$
the {\em Stokes automorphism for a cycle $\gamma$ with sign $\ve$}.
We note the equality
\begin{align}
\label{eq:stokeseq1}
\mathfrak{S}_{\bbV,\gamma}^{(-)}
=
(\mathfrak{S}_{\bbV,-\gamma}^{(+)})^{-1}.
\end{align}

Let $Q(z,\eta)$ and $Q'(z,\eta)$ be a pair of
saddle-free potentials,
and let $G$ and $G'$ be their labeled Stokes graphs.
Suppose that $Q(z,\eta)$ and $Q'(z,\eta)$ 
are related by
\begin{align}
Q'(z,\eta) =
\begin{cases}
\mu^{(\ve')}_k(Q(z,\eta)) & \mbox{if $\alpha_k$ is flippable in $T$}\\
(\mu^{(\ve')}_k\circ \kappa^{(\ve'')}_{p})(Q(z,\eta))
 & \mbox{otherwise},
\end{cases}
\end{align}
where $T$ is the labeled Stokes triangulation for $G$, $p$ is the puncture
inside the self-folded triangle in $T$ which $\alpha_k$ belongs to,
and $\ve'$ and $\ve''$ are any signs.
Under this assumption,
let $\tau_{G,G'}:\Gamma_{G'} \rightarrow \Gamma_G$ be the one in \eqref{eq:gamut1},
where $\ve$ therein is replaced with $\ve'$.
Also, let $ \tau^*_{\bbV,\bbV'}:\bbV'\rightarrow \bbV$ be
the field automorphism  from
$\bbV=\bbV(Q(z,\eta))$ to $\bbV'=\bbV(Q'(z,\eta))$ defined by
\eqref{eq:monomial-Voros-y} and \eqref{eq:monomial-Voros-x},
where $\ve$ therein is replace with $\ve'$.
In particular, they are independent of the sign $\ve''$.

\begin{prop}
\label{prop:tsp}
 For any $\gamma'\in \Gamma_{G'}$,
 we have the following equality of isomorphisms from
 $\bbV'$ to $\bbV$:
 \begin{align}
\label{eq:ts1}
 \tau^*_{\bbV,\bbV'}\circ
 {\mathfrak S}_{\bbV',\gamma'}^{(\varepsilon)}
 =
 {\mathfrak S}_{\bbV,\tau_{G,G'}(\gamma')}^{(\varepsilon)}\circ \tau^*_{\bbV,\bbV'}.
 \end{align}
 Here,  the sign $\ve$ for $ {\mathfrak S}_{\bbV',\gamma'}^{(\varepsilon)}$
 and the sign $\ve'$ for $ \tau^*_{\bbV,\bbV'}$
 are taken independently.
\end{prop}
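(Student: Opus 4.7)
The plan is to verify the identity of field automorphisms from $\bbV'$ to $\bbV$ by checking it on a convenient generating set, namely $\{e^{v'_i}, e^{W'_i}\}_{i=1}^n$. The action on $e^{V'_i}$ will then follow automatically via Lemma \ref{lem:relation-between-x-and-y-Voros}. On the $v$-type generators the verification is immediate: by \eqref{eq:v4} and \eqref{eq:epsilon-expression-of-DDP-formal3}, both Stokes automorphisms fix every $e^v$-type generator, so both sides of \eqref{eq:ts1} evaluate to $\tau^*_{\bbV,\bbV'}(e^{v'_i})$.

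The substantive content lies in the computation on $e^{W'_i}$. I would expand both composites explicitly: applying $\mathfrak{S}_{\bbV',\gamma'}^{(\ve)}$ first and then $\tau^*_{\bbV,\bbV'}$ yields
\begin{equation*}
\tau^*_{\bbV,\bbV'}(e^{W'_i}) \cdot \bigl(1+\tau^*_{\bbV,\bbV'}(e^{V'_{\gamma'}})^{\ve}\bigr)^{-\langle\gamma',\beta'_i\rangle},
\end{equation*}
while the opposite order, after expanding $\tau^*_{\bbV,\bbV'}(e^{W'_i})$ as a monomial $\prod_j (e^{W_j})^{c_j}$ via \eqref{eq:monomial-Voros-x}, produces
\begin{equation*}
\tau^*_{\bbV,\bbV'}(e^{W'_i}) \cdot \bigl(1+(e^{V_{\tau_{G,G'}(\gamma')}})^{\ve}\bigr)^{-\sum_j c_j \langle\tau_{G,G'}(\gamma'),\beta_j\rangle}.
\end{equation*}
Matching these two expressions reduces to two compatibility checks. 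First, I need $\tau^*_{\bbV,\bbV'}(e^{V'_{\gamma'}}) = e^{V_{\tau_{G,G'}(\gamma')}}$, which I would establish by decomposing $\gamma'$ in the basis $\gamma'_1,\dots,\gamma'_n$, applying \eqref{eq:monomial-Voros-Y} factor by factor, and using additivity of $\gamma \mapsto V_\gamma$; the key observation is that the exponents appearing in \eqref{eq:monomial-Voros-Y} coincide with the coefficients in the cycle mutation \eqref{eq:gamut1}. Second, I need the identity $\langle\gamma',\beta'_i\rangle = \langle\tau_{G,G'}(\gamma'),\tau^\vee_{G,G'}(\beta'_i)\rangle$ combined with the observation that the exponents $c_j$ in \eqref{eq:monomial-Voros-x} are precisely the coefficients in the expansion $\tau^\vee_{G,G'}(\beta'_i) = \sum_j c_j \beta_j$ provided by \eqref{eq:bemut1}; the preservation of the intersection pairing under $\tau_{G,G'}$ and $\tau^\vee_{G,G'}$ is automatic, as both isomorphisms arise from a continuous deformation of $\Sigma$ that moves the zeros and poles of $\phi_t$ without collision.

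One subtlety to address is the case in which $\alpha_k$ is not flippable in $T$ and $\tau^*_{\bbV,\bbV'}$ is defined via a composition with a signed pop $\kappa_p^{(\ve'')}$; here I would appeal to Proposition \ref{prop:cycle1}(b), which tells us that signed pops act trivially on $\Gamma$ and $\Gamma^\vee$. This trivial action ensures that the pop contributes only to the extended Voros field $\tilde{\bbV}$ through $\tilde{y}$-variables and leaves $\tau^*_{\bbV,\bbV'}$ on $\bbV$ unaffected, which gives the claimed independence from $\ve''$. I do not anticipate a serious obstacle here; the main effort will lie in the careful bookkeeping required to line up the monomial exponents in \eqref{eq:monomial-Voros-x} and \eqref{eq:monomial-Voros-Y} with the combinatorial formulas \eqref{eq:bemut1} and \eqref{eq:gamut1} for the mutation of simple paths and simple cycles. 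Once these identifications are made precise, the structural argument via invariance of the intersection pairing closes the proof.
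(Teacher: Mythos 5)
Your proof is correct and follows essentially the same route as the paper's: it suffices to check on the $e^{W'_i}$ (the $e^{v'_i}$ check being trivial since the Stokes automorphisms fix them), and the computation reduces to the invariance of the intersection pairing $\langle\gamma',\beta'_i\rangle = \langle\tau_{G,G'}(\gamma'), \tau^\vee_{G,G'}(\beta'_i)\rangle$ under the homology isomorphisms, together with the compatibility $\tau^*_{\bbV,\bbV'}(e^{V'_{\gamma'}}) = e^{V_{\tau_{G,G'}(\gamma')}}$ obtained by comparing \eqref{eq:monomial-Voros-Y} with \eqref{eq:gamut1}. The paper compresses this to a one-line appeal to the known invariance of the pairing; your version unpacks the bookkeeping that makes the reduction work, and correctly handles the pop case via Proposition \ref{prop:cycle1}(b).
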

\begin{proof}
It is enough to show that the actions of  both hand sides
of \eqref{eq:ts1}  on $e^{W'_i}$ coincide.
By explicit calculation, this is equivalent to the condition
\begin{align}
\langle \tau_{G,G'}(\gamma'), \tau^{\vee}_{G,G'}(\beta_i')\rangle
=
\langle \gamma', \beta_i' \rangle.
\end{align}
This equality is  known (e.g., \cite[Section 3.3]{Nakanishi11c}),
and it is easily verified.
\end{proof}

\subsection{Identities of  Stokes automorphisms}
As the initial data
we choose a saddle-free potential  $Q^0(z,\eta)$ 
on a compact  Riemann surface $\Sigma$.
Let $G^0$ be its labeled Stokes graph,
$T^0$ be the associated Stokes triangulation of the bordered surface $(\bfS,\bfM,\bfA)$,
and $B^0$ be the adjacency matrix of $T^0$.

We consider the cluster algebra with the initial seed $(B^0,x^0,y^0)$.
Let $\vec{k}=(k_1,\dots,k_N)$ be a $\nu$-period of $(B^0,x^0,y^0)$.
Recall that from the sequence of labeled seeds \eqref{eq:seedseq0}, we obtain the sequence 
of labeled Stokes triangulations \eqref{eq:Stokesseq2},
which further induces 
the sequence of labeled extended  seeds
\eqref{eq:seedseq1}
and
the sequence of labeled Stokes graphs
\eqref{eq:Stokesseq0}.
We have the periodicity properties for them in Propositions
\ref{prop:period3} and \ref{prop:bg1}.

Suppose that there is a sequence of  deformations
of  potentials starting from $Q^0(z,\eta)$,
\begin{align}
\label{eq:deformation1}
Q(z,\eta)(1) =Q^0(z,\eta)
\buildrel{\tilde{\mu}^{(\ve_1)}_{k_1}} \over{\rightarrow}
 Q(z,\eta)(2) 
 \buildrel{\tilde{\mu}^{(\ve_2)}_{k_2}} \over{\rightarrow}
  \cdots
 \buildrel{\tilde{\mu}^{(\ve_N)}_{k_N}} \over{\rightarrow}
 Q(z,\eta)(N+1) 
 \buildrel{\tilde{\kappa}} \over{\rightarrow}
 Q(z,\eta)(N+2),
\end{align}
where
 the sign $\ve_t$
is the tropical sign of $y_{k_t}(t)$ for the sequence
\eqref{eq:seedseq0},
and
 $\tilde{\mu}^{(\ve_t)}_{k_t}$ and $\tilde{\kappa}$ are
the ones
in \eqref{eq:mu1} and \eqref{eq:kappa1}
but for potentials.

By Theorem \ref{thm:localmutation},
the periodicity of the labeled extended  seeds in 
\eqref{eq:seedseq1} is realized by Stokes automorphisms
and isomorphisms $\tau^*$ on  the extended Voros fields
$\tilde{\bbV}(t)$ for $Q(z,\eta)(t)$.
Among them, the Stokes automorphisms for the signed pops
induce the local rescaling.
By Proposition \ref{prop:local1} and the definition
of $\tilde{\kappa}$,
${\mathfrak K}_{\tilde{\bbV},p}^{(\varepsilon)}$ 
and
${\mathfrak K}_{\tilde{\bbV},p}^{(-\varepsilon)}$ 
  pairwise cancel and  they are safely removed.
Thus, we obtain  the following identity
of 
Stokes automorphisms
and isomorphisms on  the Voros fields
${\bbV}(t)$ for $Q(z,\eta)(t)$.
\begin{align}
\label{eq:stid2}
 \mathfrak{S}_{\bbV(1),\gamma_{k_1}(1)}^{(\varepsilon_1)}
\tau^*_{\bbV(1),\bbV(2)}
 \mathfrak{S}_{\bbV(2),\gamma_{k_2}(2)}^{(\varepsilon_2)}
\tau^*_{\bbV(2),\bbV(3)}
\cdots
 \mathfrak{S}_{\bbV(N),\gamma_{k_N}(N)}^{(\varepsilon_N)}
\tau^*_{\bbV(N),\bbV(N+1)}
=\nu^*_{\bbV(1),\bbV(N+1)}.
\end{align}
Here, the composition symbol $\circ$ is omitted,
and $\nu^*_{\bbV(1),\bbV(N+1)}:\bbV(N+1) \rightarrow \bbV(1)$ is
the isomorphism defined by $e^{v_{\nu(i)}(N+1)}\mapsto e^{v_i(1)}$,
$e^{W_{\nu(i)}(N+1)}\mapsto e^{W_i(1)}$.

\begin{rem}
\label{rem:identity1}
Assuming that the conjecture 
\eqref{eq:no-jump-conjecture} holds.
Then, the left hand side of the identity 
\eqref{eq:stid2} faithfully expresses
the formula describing the effect of {\em all} 
Stokes phenomena associated with the deformation 
sequence \eqref{eq:deformation1} of potentials 
(where the Stokes phenomena relevant to pops 
are canceled out). That is, the equality 
\eqref{eq:stid2} has an {\em analytic} meaning. 
On the other hand, the equality \eqref{eq:stid2} itself holds 
regardless of the validity of the conjecture 
\eqref{eq:no-jump-conjecture} or even without the existence 
of the deformation sequence \eqref{eq:deformation1},
since it expresses the periodicity of 
the labeled seeds in \eqref{eq:seedseq0}.
\end{rem}

{}From  the identity
\eqref{eq:stid2},
one can derive the  identity among  Stokes automorphisms
acting on the initial Voros field $\bbV(1)$
by pushing forward the Stokes automorphisms 
acting on the Voros fields $\bbV(t)$ for $t>1$.
 This is our second main result.

\begin{thm}
\label{thm:sid1}
The following identity holds.
\begin{align}
\label{eq:stid1}
 \mathfrak{S}_{\bbV(1),\gamma_{k_1}(1)}^{(\varepsilon_1)}
 \mathfrak{S}_{\bbV(1),\tau_{G(1),G(2)}(\gamma_{k_2}(2))}^{(\varepsilon_2)}
 \cdots
 \mathfrak{S}_{\bbV(1),\tau_{G(1),G(N)}(\gamma_{k_N}(N))}^{(\varepsilon_N)}
 =\mathrm{id},
 \end{align}
 where $\tau_{G(1),G(t)}=\tau_{G(1),G(2)}\circ\tau_{G(2),G(3)}
 \cdots \circ\tau_{G(t-1),G(t)}$.
 Furthermore,  let   $c(t)=(c_i(t))_{i=1}^n$
 be the $c$-vector
 of 
 $ y_{k_t}(t)$ for the sequence \eqref{eq:seedseq0}
 with respect to the initial $y$-variables $y_i(1)$,
 i.e.,
 \begin{align}
 \label{eq:cvec2}
 [y_{k_t}(t)]= \sum_{i=1}^n y_ i(1)^{c_i(t)}.
  \end{align}
(See Section \ref{subsec:monomial}.)
Then, the cycle $\tau_{G(1),G(t)}(\gamma_{k_t}(t))$ therein is given by
  \begin{align}
  \label{eq:tau1}
 \tau_{G(1),G(t)}(\gamma_{k_t}(t))=
 \sum_{i=1}^n c_i(t) \gamma_i(1).
 \end{align}
 \end{thm}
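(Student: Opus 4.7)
The plan is to derive the identity \eqref{eq:stid1} from the fundamental identity \eqref{eq:stid2} by iteratively pushing every $\tau^{*}$-isomorphism to the right through the subsequent Stokes automorphism. The key tool is the intertwining relation of Proposition \ref{prop:tsp}: at each step $t$ we rewrite
\begin{equation*}
\tau^{*}_{\bbV(t),\bbV(t+1)} \circ \mathfrak{S}_{\bbV(t+1),\gamma_{k_{t+1}}(t+1)}^{(\ve_{t+1})}
=
\mathfrak{S}_{\bbV(t),\tau_{G(t),G(t+1)}(\gamma_{k_{t+1}}(t+1))}^{(\ve_{t+1})} \circ \tau^{*}_{\bbV(t),\bbV(t+1)}.
\end{equation*}
Repeating this pushes all of the $\tau^{*}$-maps past the Stokes automorphisms, one by one. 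Using the composition property $\tau_{G(1),G(s)} = \tau_{G(1),G(2)} \circ \cdots \circ \tau_{G(s-1),G(s)}$, the $s$-th Stokes automorphism in the resulting product is transformed into one on $\bbV(1)$ labeled by the cycle $\tau_{G(1),G(s)}(\gamma_{k_s}(s)) \in \Gamma_{G(1)}$. What remains at the far right is the composite $\tau^{*}_{\bbV(1),\bbV(2)} \circ \cdots \circ \tau^{*}_{\bbV(N),\bbV(N+1)} = \tau^{*}_{\bbV(1),\bbV(N+1)}$.

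The next step is to identify $\tau^{*}_{\bbV(1),\bbV(N+1)}$ with $\nu^{*}_{\bbV(1),\bbV(N+1)}$; once this is done, the latter cancels with the right-hand side of \eqref{eq:stid2}, yielding \eqref{eq:stid1}. For this, note that by Proposition \ref{prop:cycle1}(b) each signed pop acts as the identity on $\Gamma^{\vee}$ and $\Gamma$; correspondingly, by \eqref{eq:iso3}, the associated $\tau^{*}$ acts trivially on the Voros field $\bbV$ (the $\tilde{y}$-variables, which do change, live only in the extended Voros field). Hence the concluding pop $\tilde{\kappa}$ is invisible at the level of $\bbV$, so $\tau^{*}_{\bbV(1),\bbV(N+1)} = \tau^{*}_{\bbV(1),\bbV(N+2)}$. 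Evaluating the latter on $e^{v_{\nu(i)}(N+2)}$ and $e^{W_{\nu(i)}(N+2)}$ produces Laurent monomials in the initial variables whose exponents record the expansions of $\gamma_{\nu(i)}(N+2)$ and $\beta_{\nu(i)}(N+2)$ in the bases $\{\gamma_j(1)\}$ and $\{\beta_j(1)\}$, respectively; by the $\nu$-periodicity of Proposition \ref{prop:bg1} these expansions reduce to the single basis vectors $\gamma_i(1)$ and $\beta_i(1)$, so $\tau^{*}_{\bbV(1),\bbV(N+2)} = \nu^{*}_{\bbV(1),\bbV(N+2)}$, completing the cancellation.

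For the $c$-vector formula \eqref{eq:tau1}, the plan is to run the same argument in parallel for the mutation of simple cycles \eqref{eq:Stokesseq0} and the mutation of tropical $y$-variables induced by \eqref{eq:seedseq0}. By Proposition \ref{prop:cycle1}(a), under the signed flip $\mu_{k_t}^{(\ve_t)}$ the simple cycles transform by exactly the monomial rule \eqref{eq:gamut1}, which is the additive analog of the tropical-$y$ mutation \eqref{eq:ymut5}, provided the signs coincide with the tropical signs $\ve_t = \ve(y_{k_t}(t))$ --- and this matching is precisely how the sequence \eqref{eq:Stokesseq2} was constructed. Since the signed pops do not contribute to either side, iterating along the sequence gives the identification
\begin{equation*}
\tau_{G(1),G(t)}(\gamma_{k_t}(t)) = \sum_{i=1}^{n} c_i(t)\,\gamma_i(1),
\end{equation*}
where the coefficients $c_i(t)$ are read off from the Laurent-monomial expression \eqref{eq:cvec2} of $[y_{k_t}(t)]$ in the initial tropical $y$-variables. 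The main subtlety throughout is bookkeeping of the sign conventions linking saddle classes, tropical signs, and mutation directions, but all of these have been fixed consistently in Sections \ref{sec:mutationofStokes} and \ref{subsec:mutformula1}, so what remains is a direct verification.
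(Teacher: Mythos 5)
Your proposal is correct and follows essentially the same route as the paper's proof: push each $\tau^{*}$ past the subsequent Stokes automorphism via Proposition \ref{prop:tsp}, observe that the tropical-sign choice makes $\tau_{G(t),G(t+1)}$ act as the logarithm of the tropical $y$-mutation (so \eqref{eq:tau1} follows from \eqref{eq:cvec2}), and cancel the residual $\tau^{*}_{\bbV(1),\bbV(N+1)}$ against $\nu^{*}_{\bbV(1),\bbV(N+1)}$ by periodicity. The one organizational difference is that you establish the cancellation by going out to step $N+2$ and invoking Proposition \ref{prop:bg1} (which concerns the simple cycles after the closing pop $\tilde{\kappa}$), then separately verify that the pop is invisible on $\bbV$; the paper instead stays at $N+1$, proves \eqref{eq:tau1} first, and reads off $\tau_{G(1),G(N+1)}(\gamma_{\nu(i)}(N+1))=\gamma_i(1)$ directly from the $\nu$-periodicity of the seed sequence \eqref{eq:seedseq0}, which is a little shorter and avoids re-examining the pop. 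Both are sound, since Proposition \ref{prop:bg1} is itself a consequence of the same seed periodicity.
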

 
\begin{proof}
We rewrite the left hand side of the identity \eqref{eq:stid2} by repeated application of
Proposition \ref{prop:tsp} in the following manner.
\begin{align}
\begin{split}
&\quad \ 
 \mathfrak{S}_{\bbV(1),\gamma_{k_1}(1)}^{(\varepsilon_1)}
\tau^*_{\bbV(1),\bbV(2)}
 \mathfrak{S}_{\bbV(2),\gamma_{k_2}(2)}^{(\varepsilon_2)}
\tau^*_{\bbV(2),\bbV(3)}
 \mathfrak{S}_{\bbV(3),\gamma_{k_3}(3)}^{(\varepsilon_3)}
\tau^*_{\bbV(3),\bbV(4)}
\cdots\\
&=
 \mathfrak{S}_{\bbV(1),\gamma_{k_1}(1)}^{(\varepsilon_1)}
 \mathfrak{S}_{\bbV(1),\tau_{G(1),G(2)}(\gamma_{k_2}(2))}^{(\varepsilon_2)}
\tau^*_{\bbV(1),\bbV(3)}
 \mathfrak{S}_{\bbV(3),\gamma_{k_3}(3)}^{(\varepsilon_3)}
\tau^*_{\bbV(3),\bbV(4)}
\cdots\\
&=
 \mathfrak{S}_{\bbV(1),\gamma_{k_1}(1)}^{(\varepsilon_1)}
 \mathfrak{S}_{\bbV(1),\tau_{G(1),G(2)}(\gamma_{k_2}(2))}^{(\varepsilon_2)}
 \mathfrak{S}_{\bbV(1),\tau_{G(1),G(3)}(\gamma_{k_3}(3))}^{(\varepsilon_3)}
\tau^*_{\bbV(1),\bbV(4)}
\cdots\\
&=
 \mathfrak{S}_{\bbV(1),\gamma_{k_1}(1)}^{(\varepsilon_1)}
 \mathfrak{S}_{\bbV(1),\tau_{G(1),G(2)}(\gamma_{k_2}(2))}^{(\varepsilon_2)}
\cdots
 \mathfrak{S}_{\bbV(1),\tau_{G(1),G(N)}(\gamma_{k_N}(N))}^{(\varepsilon_N)}
\tau^*_{\bbV(1),\bbV(N+1)}.
\end{split}
\end{align}
Thanks to the choice of the sign $\ve_t=\ve(y_{k_t}(t))$,
$\tau_{G(t),G(t+1)}$ acts as the mutation of the (logarithm of) tropical $y$-variables
for the sequence \eqref{eq:seedseq0}.
See the remark after Proposition \ref{prop:cycle1}.
Thus, the claim  \eqref{eq:tau1} follows from \eqref{eq:cvec2}.
It also implies 
$\tau_{G(1),G(N+1)}(\gamma_{\nu(i)}) = \gamma_i$
due to the $\nu$-periodicity of 
the sequence \eqref{eq:seedseq0}.
Therefore, we  have
$\tau^*_{\bbV(1),\bbV(N+1)}=\nu^*_{\bbV(1),\bbV(N+1)}$,
which cancels
the right hand side of
the identity \eqref{eq:stid2}.
Thus, we obtain  the identity \eqref{eq:stid1}.
\end{proof}

\begin{rem}
The derivation of the identity \eqref{eq:stid1} is parallel to that
of the quantum dilogarithm identities in \cite{Keller11,Kashaev11}.
\end{rem}

\begin{ex}[Pentagon relation (5)]
Let $G=G(1)$ be the initial labeled Stokes graph in
Figure \ref{fig:pentagon4}.
Let $\gamma_1$ and $\gamma_2$
be the simple cycles of $\Gamma_G$.
Let $\bbV=\bbV(1)$ be the initial Voros field.
With the data in \eqref{eq:tropyp1} and \eqref{eq:tropsign2},
the identity \eqref{eq:stid1} reads
\begin{align}
 \mathfrak{S}_{\bbV,\gamma_{1}}^{(+)}
 \mathfrak{S}_{\bbV,\gamma_{1}+\gamma_{2}}^{(+)}
 \mathfrak{S}_{\bbV,\gamma_{2}}^{(+)}
  \mathfrak{S}_{\bbV,-\gamma_{1}}^{(-)}
   \mathfrak{S}_{\bbV,-\gamma_{2}}^{(-)}
   =\mathrm{id}.
 \end{align}
 Using the simplified notation  $\mathfrak{S}_{\bbV,\gamma}^{(+)}
 =  \mathfrak{S}_{\gamma}$
 and the equality  \eqref{eq:stokeseq1},
 the identity is written as
 \begin{align}
 \mathfrak{S}_{\gamma_{1}}
 \mathfrak{S}_{\gamma_{1}+\gamma_{2}}
 \mathfrak{S}_{\gamma_{2}}
  (\mathfrak{S}_{\gamma_{1}})^{-1}
   (\mathfrak{S}_{\gamma_{2}})^{-1}
   =\mathrm{id},
 \end{align}
 or equivalently,
  \begin{align}
   \mathfrak{S}_{\gamma_{2}}
  \mathfrak{S}_{\gamma_{1}}
  =
 \mathfrak{S}_{\gamma_{1}}
 \mathfrak{S}_{\gamma_{1}+\gamma_{2}}
 \mathfrak{S}_{\gamma_{2}}.
 \end{align}
 This is the  identity \eqref{eq:pentagon1} by \cite{Delabaere93}.
\end{ex}

\appendix
\bigskip
\noindent
{\bf Appendix}

\section{Proof of Theorem \ref{thm:DDP-analytic}}
\label{section:proof-of-DDP}
Here we give a proof of Theorem \ref{thm:DDP-analytic}.
Let us recall the situation. 
We consider the case that the Stokes graph 
$G_0 = G(\phi)$ has a {\em unique} saddle trajectory $\ell_0$, 
and it is a {\em regular} saddle trajectory. 
Note that, since there are no saddle trajectory 
other than $\ell_0$, other Stokes curves must flow 
into a point in $P_{\infty}$ at one end. 
To specify the situation, in addition to 
Figure \ref{fig:Stokes-auto},
we take branch cuts and assign $\oplus$ and $\ominus$ 
as in Figure \ref{fig:gamma0-and-alpha}. 
(Note that we can show Theorem \ref{thm:DDP-analytic} 
in the same manner as presented here 
if the signs are assigned differently.)
Then, the saddle class $\gamma_0$ associated with 
$\ell_0$ has the orientation 
shown in Figure \ref{fig:gamma0-and-alpha}.

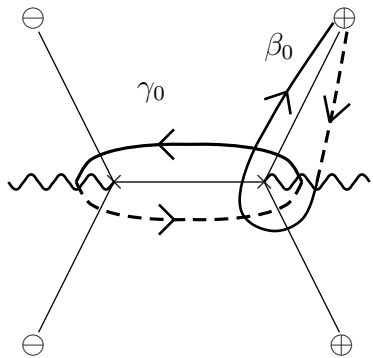
\begin{figure}[h]
\begin{center}
\begin{pspicture}(5,-2.3)(11,2.3)
%
%
\psset{fillstyle=solid, fillcolor=black}
\psset{fillstyle=none}
\rput[c]{0}(5.93,2.18){$\ominus$} 
\rput[c]{0}(5.93,-2.17){$\ominus$} 
\rput[c]{0}(10.07,2.18){$\oplus$} 
\rput[c]{0}(10.03,-2.17){$\oplus$} 
\rput[c]{0}(7,0){\small $\times$}
\rput[c]{0}(9,0){\small $\times$}
\rput[c]{0}(7.5,1.2){$\gamma_0$} 
\psset{linewidth=1.2pt}
\pscurve(6.5,0)(6.7,0.3)(8,0.5)
\pscurve(9.5,0)(9.3,0.3)(8,0.5)
\psset{linewidth=1.2pt, linestyle=dashed}
\pscurve(6.5,0)(6.7,-0.3)(7.95,-0.5)
\pscurve(9.5,0)(9.3,-0.3)(8.05,-0.5)
\psset{linewidth=1pt, linestyle=solid}
\psline(7.6,0.5)(7.8,0.7)
\psline(7.6,0.5)(7.8,0.3)
\psline(7.8,-0.5)(7.6,-0.7)
\psline(7.8,-0.5)(7.6,-0.3)
\rput[c]{0}(9.2,1.8){$\beta_0$} 
\psset{linewidth=1pt}
\pscurve(9.92,2.12)(9.3,1.2)(8.9,0.5)
\pscurve(8.9,0.5)(8.7,-0.4)(9.1,-0.65)(9.5,-0.5)(9.7,0)
\psset{linewidth=1.2pt,linestyle=dashed}
\pscurve(9.7,0)(9.92,1.0)(10.1,2.0)
\psset{linewidth=1pt, linestyle=solid}
\psline(9.3,1.2)(9.0,1.1)
\psline(9.3,1.2)(9.35,0.9)
\psline(9.87,0.8)(10.14,1.04)
\psline(9.87,0.8)(9.8,1.15)
\psset{linewidth=1pt,linestyle=solid}
\pscurve(7,0)(6.9,-0.1)(6.8,0)(6.7,0.1)
(6.6,0)(6.5,-0.1)(6.4,0)
(6.3,0.1)(6.2,0)(6.1,-0.1)(6,0)
(5.9,0.1)(5.8,0)(5.7,-0.1)(5.6,0)
\pscurve(9,0)(9.1,0.1)(9.2,0)(9.3,-0.1)(9.4,0)
(9.5,0.1)(9.6,0)(9.7,-0.1)(9.8,0)(9.9,0.1)(10,0)
(10.1,-0.1)(10.2,0)(10.3,0.1)(10.4,0)
\psset{linewidth=0.5pt}
%
\pscurve(7,0)(8,0)(9,0)
\pscurve(7,0)(6.5,1)(6,2)
\pscurve(7,0)(6.5,-1)(6,-2)
\pscurve(9,0)(9.5,1)(10,2)
\pscurve(9,0)(9.5,-1)(10,-2)
\end{pspicture}
\end{center}
\caption{The saddle class 
$\gamma_0\in H_1(\hat{\Sigma}\setminus\hat{P})$ and 
a path $\beta_0 \in H_1(\hat{\Sigma}\setminus\hat{P}_0,
\hat{P}_{\infty})$. The picture depicts a part of the Stokes 
graph $G_0$ near the saddle trajectory $\ell_0$.} 
\label{fig:gamma0-and-alpha}
\end{figure}

Let $G_{\pm\delta} = G(\phi_{\pm\delta})$ 
be the saddle reductions of $G_0$ (where $\delta>0$ is a 
sufficiently small number), and 
let ${\mathcal S}_{\pm}[e^{V_{\gamma}}]$ 
(resp., ${\mathcal S}_{\pm}[e^{W_{\beta}}]$) 
be the Borel sum of the Voros symbol 
$e^{V_{\gamma}}$ (resp., $e^{W_{\beta}}$)
in the direction $\pm\delta$ 
(see Section \ref{section:saddle-reduction}). 
Here $\gamma \in H_1(\hat{\Sigma}\setminus\hat{P})$ 
is any cycle and $\beta \in H_1(\hat{\Sigma}\setminus
\hat{P}_0,\hat{P}_{\infty})$ is any path. 
Now we will show the equality 
\begin{equation} \label{eq:DDP-analytic-appendix}
{\mathcal S}_{-}[e^{{W}_{\beta}}] = 
{\mathcal S}_{+}[e^{{W}_{\beta}}](1 + {\mathcal S}_{+}
[e^{{V}_{\gamma_0}}])^{-\langle\gamma_0,\beta\rangle},~~
{\mathcal S}_{-}[e^{{V}_{\gamma}}] = {\mathcal S}_{+}
[e^{{V}_{\gamma}}](1 + {\mathcal S}_{+}
[e^{{V}_{\gamma_0}}])^{-(\gamma_0,\gamma)}.
\end{equation}
(i.e., the equality \eqref{eq:DDP-analytic}) 
on a domain containing 
$\{\eta \in {\mathbb R}~|~ \eta \gg 1 \}$. 

Firstly, we show an important result for 
the Borel sums of the Voros symbols. 
\begin{lem} \label{lem:no-jump-formula}
If $\beta$ (resp., $\gamma$) does not intersect with 
the saddle trajectory $\ell_0$, 
the Borel sums ${\mathcal S}_{\pm}[e^{W_{\beta}}]$ 
(resp., ${\mathcal S}_{\pm}[e^{V_{\gamma}}]$) 
does not jump. That is, the equalities 
\begin{equation} \label{eq:no-jump-formula}
{\mathcal S}_{-}[e^{{W}_{\beta}}] = 
{\mathcal S}_{+}[ e^{{W}_{\beta}}]
~~\text{if $\langle\gamma_0,\beta\rangle=0$}, \quad
{\mathcal S}_{-}[e^{{V}_{\gamma}}] = 
{\mathcal S}_{+}[ e^{{V}_{\gamma}}]
~~\text{if $(\gamma_0,\gamma)=0$}
\end{equation}
hold as analytic functions of $\eta$ on 
$\{\eta \in {\mathbb R}~|~\eta \gg 1  \}$.
Especially, the Borel sum of $e^{{V}_{\gamma_0}}$ 
does not jump.
\end{lem}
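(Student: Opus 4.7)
The plan is to reduce the statement to a direct application of Proposition \ref{prop:S1-action-and-summability}, which gives the no-jump formula whenever the path of integration can be kept away from every saddle trajectory of $\phi_\theta$ for $\theta\in[-\delta,+\delta]$. The key observation is that, since $\gamma_0$ is a cycle encircling the lift of the saddle trajectory $\ell_0$ (see Figure \ref{fig:gamma0-and-alpha}), the intersection pairing $\langle\gamma_0,\beta\rangle$ (resp. $(\gamma_0,\gamma)$) computes the algebraic intersection number of $\beta$ (resp. $\gamma$) with the lifts of $\ell_0$ in $\hat{\Sigma}$.

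First I would handle the path case. Assuming $\langle\gamma_0,\beta\rangle=0$, the algebraic intersection of $\beta$ with the lifts of $\ell_0$ vanishes, so I can pick a representative of $\beta$ in $H_1(\hat{\Sigma}\setminus\hat{P}_0,\hat{P}_\infty)$ whose image under $\pi$ does not meet $\ell_0$. Since $\ell_0$ is the unique saddle trajectory of $\phi=\phi_0$ and its saddle reductions $G_{\pm\delta}$ are saddle-free by hypothesis, for sufficiently small $\delta>0$ this representative also avoids every saddle trajectory of $\phi_\theta$ for all $\theta\in[-\delta,+\delta]$ (the only source of saddle trajectories in this range occurs at $\theta=0$, namely $\ell_0$ itself). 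The endpoints of $\beta$ lie in $\hat{P}_\infty$, which is stable under the $S^1$-action, so they satisfy the boundary hypothesis of Proposition \ref{prop:S1-action-and-summability}. Applying that proposition to $W_\beta$ yields ${\mathcal S}_{-\delta}[W_\beta]={\mathcal S}_{+\delta}[W_\beta]$, and exponentiating and passing to the limit $\delta\to+0$ gives the desired equality of Borel sums of $e^{W_\beta}$.

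Next I would treat the cycle case by the same decomposition trick used in the proof of Lemma \ref{lemma:saddle-reduction-of-Voros}: any cycle $\gamma\in H_1(\hat{\Sigma}\setminus\hat{P})$ with $(\gamma_0,\gamma)=0$ can be written as a sum of paths in $H_1(\hat{\Sigma}\setminus\hat{P}_0,\hat{P}_\infty)$ whose endpoints lie in $\hat{P}_\infty$, and since the pairing $\langle\gamma_0,\cdot\rangle$ on paths restricts to $(\gamma_0,\cdot)$ on cycles, one may arrange that each summand $\beta_s$ of the decomposition satisfies $\langle\gamma_0,\beta_s\rangle=0$ as well. Applying the path case to each $\beta_s$ and multiplying the results (using that Borel resummation commutes with products, Proposition \ref{prop:property-of-Borel-sum}(a)) yields the no-jump equality for $e^{V_\gamma}$. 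The special case $\gamma=\gamma_0$ follows immediately from $(\gamma_0,\gamma_0)=0$, which holds since $\gamma_0$ is represented by a simple closed curve (this recovers the ``no-jump for the saddle class'' claim noted after Theorem \ref{thm:loop-type-degeneration-analytic}).

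The main obstacle in this plan is Step 1: carefully verifying that the vanishing of the homological intersection number $\langle\gamma_0,\beta\rangle$ actually guarantees the existence of a geometric representative of $\beta$ whose projection to $\Sigma$ is disjoint from $\ell_0$. One must rule out configurations in which local crossings of $\beta$ with $\ell_0$ cancel algebraically but cannot be removed by an isotopy within $\hat{\Sigma}\setminus\hat{P}_0$ respecting $\hat{P}_\infty$; this requires using the planarity of $\ell_0$ together with the fact that $\ell_0$ has both endpoints at simple zeros (hence at branch points of $\pi$), so that a pair of cancelling crossings on the same sheet can be pushed off across an endpoint of $\ell_0$. Once this topological reduction is in place, the analytic part is merely an invocation of Proposition \ref{prop:S1-action-and-summability}.
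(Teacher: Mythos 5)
Your plan correctly identifies the analytic mechanism — a decomposition into paths with endpoints in $\hat{P}_\infty$ followed by Proposition~\ref{prop:S1-action-and-summability} — and in that respect it matches the paper's own proof. However, you introduce a genuine gap by reading the lemma's hypothesis as the homological condition $\langle\gamma_0,\beta\rangle=0$ and then attempting to \emph{derive} the geometric non-intersection from it. The paper's lemma is stated and proved with the geometric condition as the actual hypothesis: the prose says ``If $\beta$ does not intersect with the saddle trajectory $\ell_0$,'' and the proof applies Corollary~\ref{cor:summability}, which requires precisely that geometric disjointness (it is what makes $W_\beta$ Borel summable at $\theta=0$). The appearance of ``$\langle\gamma_0,\beta\rangle=0$'' in display~\eqref{eq:no-jump-formula} is the form the resulting equality takes as a degenerate case of the DDP formula, not an independent hypothesis being converted into a geometric one. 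When the lemma is invoked later in the proof of Theorem~\ref{thm:DDP-analytic}, it is always applied to specific paths ($\beta_{p_1^*,p_4}$, $\beta_{p_2,p_1}$) whose geometric disjointness from $\ell_0$ is checked directly.

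The step you flag as the main obstacle — that vanishing of the algebraic intersection number $\langle\gamma_0,\beta\rangle$ lets one choose a representative of $\beta$ whose projection avoids $\ell_0$ — is indeed unproved, and the sketched repair (push cancelling crossings off across an endpoint of $\ell_0$) is far from routine: one must work in the relative homology $H_1(\hat{\Sigma}\setminus\hat{P}_0,\hat{P}_\infty)$, respect the branch-cut identifications, and control what happens near the branch points where the lifts of $\ell_0$ terminate. None of this is needed. The same difficulty also creeps into your cycle case, where you additionally assert (without justification) that the decomposition of $\gamma$ into paths can be chosen so that \emph{each} summand has $\langle\gamma_0,\beta_s\rangle=0$; this is not a homological consequence of $(\gamma_0,\gamma)=0$ alone, and again it is automatic once you work with a geometric representative of $\gamma$ disjoint from $\ell_0$ and cut it at poles away from $\ell_0$. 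Once you take the geometric non-intersection as the hypothesis — which is what the lemma states and what the paper proves — the rest of your plan (Corollary~\ref{cor:summability}, the decomposition of Lemma~\ref{lemma:saddle-reduction-of-Voros}, and Proposition~\ref{prop:S1-action-and-summability}) is sound and coincides with the paper's short argument. For the ``especially'' clause, note also that the paper's justification is that $\gamma_0$ itself does not geometrically intersect $\ell_0$ (it encircles it, see Figure~\ref{fig:saddle-class}), not merely that $(\gamma_0,\gamma_0)=0$ by skew-symmetry.
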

\begin{proof}
It follows from the assumption and Corollary 
\ref{cor:summability} that the Voros symbols 
$e^{W_{\beta}}$ and $e^{V_{\gamma}}$ are 
Borel summable (in the direction $0$).
Since the Stokes graph $G_0$  contains
no other saddle trajectory than $\ell_0$, 
we can prove the statement in the same manner 
as in the proof of Lemma 
\ref{lemma:saddle-reduction-of-Voros}.
\end{proof}

Consequently, for a path $\beta$ which never intersects 
with $\ell_0$, both  of the Borel sums 
${\mathcal S}_{+}[e^{{W}_{\beta}}]$ 
and ${\mathcal S}_{-}[e^{{W}_{\beta}}]$
coincide with ${\mathcal S}[e^{{W}_{\beta}}]$ 
($={\mathcal S}_{0}[e^{{W}_{\beta}}]$).  
Similarly, we have 
${\mathcal S}_{\pm}[e^{{V}_{\gamma}}] = 
{\mathcal S}[e^{{V}_{\gamma}}]$ for a cycle $\gamma$ if 
it never intersects with $\ell_0$. 
Below we write 
${\mathcal S}_{\pm}[e^{{W}_{\beta}}] = 
{\mathcal S}[e^{{W}_{\beta}}]$ 
etc. when a path or a cycle 
does not intersect with $\ell_0$. 

The formula \eqref{eq:no-jump-formula} is a part 
of the desired formula \eqref{eq:DDP-analytic-appendix}. 
In what follows we try to show 
\eqref{eq:DDP-analytic-appendix} for the paths and 
the cycles which intersect with $\ell_0$. 
Note that, since any path 
$\beta \in H_1(\hat{\Sigma}\setminus
\hat{P}_0,\hat{P}_{\infty})$ and any cycle 
$\gamma \in H_1(\hat{\Sigma}\setminus\hat{P})$ 
can be written by a finite number of paths 
whose end-points are contained in $\hat{P}_{\infty}$
in the relative homology group 
$H_1(\hat{\Sigma}\setminus\hat{P}_0,\hat{P}_{\infty})$ 
(see the proof of Lemma 
\ref{lemma:saddle-reduction-of-Voros}), 
it suffices to show  \eqref{eq:DDP-analytic-appendix} 
for any such a path $\beta \in H_1(\hat{\Sigma}\setminus
\hat{P}_0,\hat{P}_{\infty})$. 

\begin{lem} 
Let $\beta_0 \in H_1(\hat{\Sigma}\setminus\hat{P}_0,
\hat{P}_{\infty})$ be the path depicted in 
Figure \ref{fig:gamma0-and-alpha}. Then, the 
following equality holds as analytic functions of $\eta$ on 
$\{\eta \in {\mathbb R}~|~\eta \gg 1  \}$:
\begin{equation} \label{eq:pre-DDP-formula}
{\mathcal S}_{-}[e^{{W}_{\beta_0}}] = 
{\mathcal S}_{+}[e^{{W}_{\beta_0}}]
(1 + {\mathcal S}[e^{{V}_{\gamma_0}}]).
\end{equation}
\end{lem}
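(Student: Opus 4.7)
The plan is to realize $\mathcal{S}_{\pm}[e^{W_{\beta_0}}]$ as an explicit quotient of Borel-summed WKB solutions at the two endpoints of $\beta_0$, and then to compare the two quotients by iterating the Voros connection formula (Theorem~\ref{thm:Voros-formula}) along $\beta_0$ in the two saddle-reduced Stokes graphs $G_{+\delta}$ and $G_{-\delta}$.

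First I would choose WKB solutions $\psi^{(p)}_{\pm}(z,\eta)$ normalized at each endpoint $p\in\hat{P}_{\infty}$ of $\beta_0$, as in \eqref{eq:WKBsol-P}. Comparing the two such WKB solutions at the two endpoints of $\beta_0$ and cancelling the common exponential factor $\exp(\pm\eta\int_{\beta_0}\sqrt{Q_0}\,dz)$, one reads off that $e^{W_{\beta_0}}$ is the multiplicative constant relating them along $\beta_0$. By Corollary~\ref{cor:summability}, each $\psi^{(p)}_{\pm}$ is Borel summable on the Stokes regions adjacent to $p$ in each of $G_{\pm\delta}$, so its analytic continuation to the other endpoint of $\beta_0$ can be computed by iterating the Voros connection formula across each Stokes curve crossed.

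Second I would carry out the comparison. In both $G_{+\delta}$ and $G_{-\delta}$ the projection of $\beta_0$ to $\Sigma$ crosses several Stokes curves; the configurations differ only in a neighbourhood of $\ell_0$, where the pair of turning points $a_1,a_2$ bounding $\ell_0$ exchange a Stokes curve under the saddle reduction of $\ell_0$ (cf.\ Figure~\ref{fig:Stokes-auto}). Accordingly, the two products of $(2\times 2)$ triangular connection matrices obtained from Theorem~\ref{thm:Voros-formula} differ only by one extra factor that traces around the two turning points bounding $\ell_0$. By Lemma~\ref{lemma:sign-of-simple-cycles} and a direct computation with \eqref{eq:Voros-formula-1}--\eqref{eq:Voros-formula-2}, this extra factor simplifies, after taking Borel sums, to a scalar multiplier of $1+\mathcal{S}[e^{V_{\gamma_0}}]$ on the entry corresponding to $e^{W_{\beta_0}}$, yielding the claimed formula \eqref{eq:pre-DDP-formula}.

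The main obstacle is the careful bookkeeping near $\ell_0$: both endpoints of $\ell_0$ are simple turning points where the double cover $\hat{\Sigma}\to\Sigma$ ramifies, three Stokes curves meet, and the anti-invariance property \eqref{eq:covering-involution} introduces opposite signs on the two sheets. Getting the correct sign in front of $\mathcal{S}[e^{V_{\gamma_0}}]$---in particular, confirming that the coefficient is $+1$ rather than $-1$, the latter being what would instead produce the pop jump of Theorem~\ref{thm:loop-type-degeneration-analytic}---requires a consistent use of the orientation \eqref{eq:saddle-orientation} for $\gamma_0$, the normalization of the intersection pairing \eqref{eq:braket1}, and the specific placement of branch cuts and sign assignments in Figure~\ref{fig:gamma0-and-alpha}.
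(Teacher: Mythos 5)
Your plan is close to the paper's own argument: formulate a connection problem across the saddle trajectory in each of $G_{+\delta}$ and $G_{-\delta}$, iterate the Voros connection formula (Theorem~\ref{thm:Voros-formula}), and note that in $G_{-\delta}$ the path from one Stokes region to the other crosses two Stokes curves (emanating from the two turning points bounding $\ell_0$) rather than one, so that transporting the normalization from one turning point to the other introduces $e^{V_{\gamma_0}}$ and gives the extra multiplier $(1+\mathcal{S}[e^{V_{\gamma_0}}])$ when the two connection formulas are compared. This is precisely what the paper does via the two connection problems in Figure~\ref{fig:two-connection-problems}.

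However, one load-bearing step is missing, and it does not follow from Corollary~\ref{cor:summability}. That corollary gives Borel summability of the pole-normalized WKB solution in each direction $\pm\delta$ \emph{separately}; what the comparison actually requires is that the two Borel sums \emph{coincide}, i.e.\ that no Stokes phenomenon occurs to the pole-normalized solution as $\theta$ rotates through $0$. That is exactly Proposition~\ref{prop:S1-action-and-summability}, which applies here because the normalization path from the pole to the reference point stays inside a Stokes region that never touches the saddle trajectory for any intermediate $\theta$, while the analogous statement fails for the turning-point normalization. Without this no-jump step, the products of triangular connection matrices you form in $G_{+\delta}$ and in $G_{-\delta}$ are expressed in frames you have not yet related, so the ratio $\mathcal{S}_{-}[e^{W_{\beta_0}}]/\mathcal{S}_{+}[e^{W_{\beta_0}}]$ has not been isolated. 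Once you invoke Proposition~\ref{prop:S1-action-and-summability} and use the relation $\psi_{\pm,a}=e^{\pm W_{\beta_0}/2}\,\psi_{\pm,p}$ between the turning-point and pole normalizations to bring the Voros coefficient into the connection multipliers, the argument closes as in the paper.
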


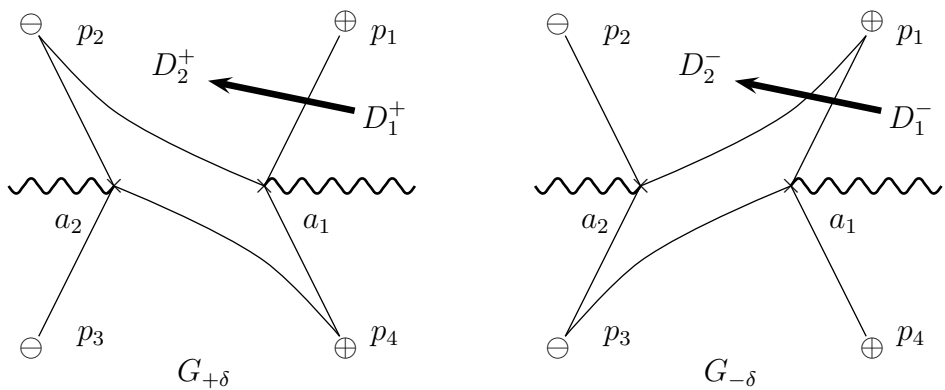
\begin{figure}[h]
\begin{center}
\begin{pspicture}(0.5,-2.5)(13,2.5)
%
%
\psset{fillstyle=solid, fillcolor=black}
\psset{fillstyle=none}
\rput[c]{0}(5.08,2.2){$\oplus$} 
\rput[c]{0}(5.08,-2.15){$\oplus$}
\rput[c]{0}(0.9,2.15){$\ominus$} 
\rput[c]{0}(0.9,-2.15){$\ominus$}
\rput[c]{0}(2,0){\small $\times$}
\rput[c]{0}(4,0){\small $\times$}
\rput[c]{0}(3.2,-2.5){$G_{+\delta}$} 
\rput[c]{0}(5.6,0.9){$D^{+}_1$} 
\rput[c]{0}(2.8,1.6){$D^{+}_2$} 
\rput[c]{0}(5.6,2.0){$p_1$} 
\rput[c]{0}(1.7,2.0){$p_2$}
\rput[c]{0}(1.7,-2.0){$p_3$}  
\rput[c]{0}(5.6,-2.0){$p_4$} 
\rput[c]{0}(4.7,-0.5){$a_1$} 
\rput[c]{0}(1.4,-0.5){$a_2$} 
\rput[c]{0}(12.08,2.2){$\oplus$} 
\rput[c]{0}(12.08,-2.15){$\oplus$}
\rput[c]{0}(7.9,2.15){$\ominus$} 
\rput[c]{0}(7.9,-2.15){$\ominus$}
\rput[c]{0}(9,0){\small $\times$}
\rput[c]{0}(11,0){\small $\times$}
\rput[c]{0}(10.2,-2.5){$G_{-\delta}$} 
\rput[c]{0}(12.6,0.9){$D^{-}_1$} 
\rput[c]{0}(9.8,1.6){$D^{-}_2$} 
\rput[c]{0}(12.6,2.0){$p_1$} 
\rput[c]{0}(8.7,2.0){$p_2$}
\rput[c]{0}(8.7,-2.0){$p_3$}  
\rput[c]{0}(12.6,-2.0){$p_4$} 
\rput[c]{0}(11.7,-0.5){$a_1$} 
\rput[c]{0}(8.4,-0.5){$a_2$} 
\psset{linewidth=1pt,linestyle=solid}
\pscurve(2,0)(1.9,-0.1)(1.8,0)(1.7,0.1)(1.6,0)(1.5,-0.1)(1.4,0)
(1.3,0.1)(1.2,0)(1.1,-0.1)(1,0)(0.9,0.1)(0.8,0)(0.7,-0.1)(0.6,0)
\pscurve(4,0)(4.1,0.1)(4.2,0)(4.3,-0.1)(4.4,0)
(4.5,0.1)(4.6,0)(4.7,-0.1)(4.8,0)(4.9,0.1)(5,0)
(5.1,-0.1)(5.2,0)(5.3,0.1)(5.4,0)(5.5,-0.1)(5.6,0)
(5.7,0.1)(5.8,0)(5.9,-0.1)(6.0,0)
\pscurve(9,0)(8.9,-0.1)(8.8,0)(8.7,0.1)(8.6,0)
(8.5,-0.1)(8.4,0)(8.3,0.1)(8.2,0)(8.1,-0.1)(8,0)
(7.9,0.1)(7.8,0)(7.7,-0.1)(7.6,0)
\pscurve(11,0)(11.1,0.1)(11.2,0)(11.3,-0.1)(11.4,0)
(11.5,0.1)(11.6,0)(11.7,-0.1)(11.8,0)(11.9,0.1)(12,0)
(12.1,-0.1)(12.2,0)(12.3,0.1)(12.4,0)(12.5,-0.1)
(12.6,0)(12.7,0.1)(12.8,0)(12.9,-0.1)(13,0)
\psset{linewidth=2.5pt}
\psline[arrows=->](5.2,1.0)(3.25,1.4) 
\psset{linewidth=2.5pt}
\psline[arrows=->](12.2,1.0)(10.25,1.4) 
\psset{linewidth=0.5pt}
%
\pscurve(2,0)(4,-1)(5,-2)
\pscurve(2,0)(1.5,1)(1,2)
\pscurve(2,0)(1.5,-1)(1,-2)
\pscurve(4,0)(2,1)(1,2)
\pscurve(4,0)(4.5,1)(5,2)
\pscurve(4,0)(4.5,-1)(5,-2)
\pscurve(9,0)(11,1)(12,2)
\pscurve(9,0)(8.5,1)(8,2)
\pscurve(9,0)(8.5,-1)(8,-2)
\pscurve(11,0)(9,-1)(8,-2)
\pscurve(11,0)(11.5,1)(12,2)
\pscurve(11,0)(11.5,-1)(12,-2)
\end{pspicture}
\end{center}
\caption{Two connection problems.} 
\label{fig:two-connection-problems}
\end{figure}

\begin{proof}
Let us consider two connection problems for the WKB solutions 
indicated in Figure \ref{fig:two-connection-problems} 
which depicts a part of the Stokes graph $G_{+\delta}$ 
and $G_{-\delta}$. The first one is the connection problem 
from  the Stokes region $D_1^{+}$ to $D_2^{+}$ in 
the Stokes graph $G_{+\delta}$, while the second one 
is the connection problem from  
the Stokes region $D_1^{-}$ to $D_2^{-}$ in 
the Stokes graph $G_{-\delta}$ along the thick paths 
depicted in Figure \ref{fig:two-connection-problems}. 

Take the WKB solutions 
\begin{equation} \label{eq:app-WKB-a1}
\psi_{\pm,a_1}(z,\eta) = 
\frac{1}{\sqrt{S_{\rm odd}(z,\eta)}}
\exp\left(\pm\int_{a_1}^{z}
S_{\rm odd}(z,\eta) dz\right).
\end{equation}
normalized at the turning point $a_1$ 
depicted in Figure \ref{fig:two-connection-problems}.
Since the saddle reductions $G_{\pm\delta}$ are saddle-free, 
Corollary \ref{cor:summability} ensures that 
the Borel sum of the WKB solutions are well-defined 
on each Stokes region of $G_{\pm\delta}$. 
We denote by $\Psi_{\pm,a_1}^{D_1^+}$ etc.\ 
the Borel sum of $\psi_{\pm,a_1}(z,\eta)$ in 
the Stokes region $D_1^+$ etc. 
Then, using Theorem \ref{thm:Voros-formula},  we have 
the following formula for the first connection problem:
\begin{eqnarray}
\begin{cases} \label{eq:conn-formula-plus}
\Psi^{D_1^+}_{+,a_1} = \Psi^{D_2^+}_{+,a_1} 
+ i \Psi^{D_2^+}_{-,a_1}, \\ 
\Psi^{D_1^+}_{-,a_1} = \Psi^{D_2^+}_{-,a_1}.
\end{cases}
\end{eqnarray}
On the other hand, in the second connection problem 
we have to cross two Stokes curves emanating from 
$a_1$ and $a_2$, respectively, as in 
Figure \ref{fig:two-connection-problems}.
In order to use Theorem \ref{thm:Voros-formula} 
on the Stokes curve emanating from $a_2$, 
we need to change the normalization of 
the WKB solutions from \eqref{eq:app-WKB-a1} to 
\begin{equation} \label{eq:app-WKB-a2}
\psi_{\pm,a_2}(z,\eta) = 
\frac{1}{\sqrt{S_{\rm odd}(z,\eta)}}
\exp\left(\pm\int_{a_2}^{z}
S_{\rm odd}(z,\eta)dz\right)
\end{equation}
which is normalized at $a_2$. Using the relation 
\begin{equation}
\psi_{\pm, a_{1}}(z,\eta)=
\exp\left(\pm\int_{a_{1}}^{a_{2}}
S_{\rm odd}(z,\eta)dz\right)\psi_{\pm, a_{2}}(z,\eta)  
= \exp\left(\pm \frac{1}{2}V_{\gamma_0}(\eta)\right) 
\psi_{\pm, a_{2}}(z,\eta)
\end{equation}
(here $\gamma_0$ is the cycle depicted in 
Figure \ref{fig:gamma0-and-alpha}) of the WKB solutions 
with different normalizations, we have the following 
formula for the second connection problem 
(see \cite[Section 3]{Kawai05}):
\begin{eqnarray}
\begin{cases} \label{eq:conn-formula-minus}
\Psi^{D_1^-}_{+,a_1} = \Psi^{D_2^-}_{+,a_1} 
+ i \hspace{+.1em} (1 + {\mathcal S}[e^{V_{\gamma_0}}]) 
\hspace{+.1em} \Psi^{D_2^-}_{-,a_1}, \\ 
\Psi^{D_1^-}_{-,a_1} = \Psi^{D_2^-}_{-,a_1}.
\end{cases}
\end{eqnarray}
Thus, we obtain the two formulas 
\eqref{eq:conn-formula-plus} and 
\eqref{eq:conn-formula-minus}.

Next, let us rewrite the formulas 
\eqref{eq:conn-formula-plus} and 
\eqref{eq:conn-formula-minus} to the formulas 
for the WKB solutions 
\begin{equation} \label{eq:app-WKB-p1}
\psi_{\pm,p_1}(z,\eta) = 
\frac{1}{\sqrt{S_{\rm odd}(z,\eta)}}
\exp\left\{ \pm\left(\eta\int_{a_1}^{z}
\sqrt{Q_{0}(z)}dz+\int_{p_1}^z
S_{\rm odd}^{\rm reg}(z,\eta)dz\right) \right\}
\end{equation}
normalized at $p_1 \in \hat{P}_{\infty}$,
 which  
is an end-point of a Stokes curve emanating from $a_1$ 
as depicted in Figure \ref{fig:two-connection-problems}.
The WKB solutions \eqref{eq:app-WKB-a1} and 
\eqref{eq:app-WKB-p1} are related as 
\begin{equation} \label{eq:change-of-normalization-app}
\psi_{\pm, a_{1}}(z,\eta) = 
\exp\left(\pm \int_{a_{1}}^{p_1}
S^{\rm reg}_{\rm odd}(z,\eta)dz \right)\psi_{\pm,p_1}
= \exp\left(\pm \frac{1}{2} W_{\beta_0}(\eta) \right)
\psi_{\pm,p_1}(z,\eta),
\end{equation}
where $\beta_0$ is the path designated in Figure 
\ref{fig:gamma0-and-alpha}. Therefore, it follows 
from \eqref{eq:change-of-normalization-app},
\eqref{eq:conn-formula-plus} and 
\eqref{eq:conn-formula-minus} that 
the following equalities hold:
\begin{eqnarray}
\begin{cases} \label{eq:conn-formula-p1-plus}
\Psi^{D_1^+}_{+,p_1} = \Psi^{D_2^+}_{+,p_1} + 
i~{\mathcal S}_{+}[e^{-{W}_{\beta_0}}]~
\Psi^{D_2^+}_{-,p_1}, \\[+.2em] 
\Psi^{D_1^{+}}_{-,p_1} = \Psi^{D_2^{+}}_{-,p_1}.
\end{cases} \\[+.2em]
\begin{cases} \label{eq:conn-formula-p1-minus}
\Psi^{D_1^-}_{+,p_1} = \Psi^{D_2^-}_{+,p_1} + i 
\left(1 + {\mathcal S}[e^{{V}_{\gamma_0}}]\right) 
{\mathcal S}_{-}[e^{-{W}_{\beta_0}}]~
\Psi^{D_2^-}_{-,p_1}, \\[+.2em] 
\Psi^{D_1^{-}}_{-,p_1} = \Psi^{D_2^{-}}_{-,p_1}.
\end{cases}
\end{eqnarray}
Taking $\delta>0$ sufficiently small, 
we may assume that, for a fixed $z_1 \in D_{1}^{\pm}$ 
(resp., $z_2 \in D_{2}^{\pm}$) the path from $p_1$ to $z$,
which normalize the WKB solution \eqref{eq:app-WKB-p1} 
when $z$ lies in a neighborhood of $z_1$ (resp., $z_2$), 
is admissible in any direction $\theta$ satisfying 
$-\delta \le \theta \le + \delta$. Therefore, Proposition 
\ref{prop:S1-action-and-summability} implies that  
\begin{equation} \label{eq:no-PSP-formula-app}
\Psi^{D_1^{+}}_{\pm,p_1} = 
\Psi^{D_1^{-}}_{\pm,p_1}, \quad 
\Psi^{D_2^{+}}_{\pm,p_1} = 
\Psi^{D_2^{-}}_{\pm,p_1}
\end{equation}
holds as analytic functions of both $z$ and 
$\eta$ for a sufficiently large $\eta \gg 1$. 
Therefore, comparing the connection multipliers 
in \eqref{eq:conn-formula-p1-plus}
and \eqref{eq:conn-formula-p1-minus} and 
using the equality \eqref{eq:no-PSP-formula-app}, 
we obtain \eqref{eq:pre-DDP-formula}.
\end{proof}

\begin{rem}
Since the WKB solutions \eqref{eq:app-WKB-a1} are 
normalized along a path which intersects with the 
saddle trajectory $\ell_0$, we can not expect 
similar equalities as \eqref{eq:no-PSP-formula-app} 
holds for the Borel sums of \eqref{eq:app-WKB-a1}. 
\end{rem}

\begin{figure}[h]
\begin{center}
\begin{pspicture}(0.5,-2.5)(13,2.5)
%
%
\psset{fillstyle=solid, fillcolor=black}
\psset{fillstyle=none}
\rput[c]{0}(5.08,2.2){$\oplus$} 
\rput[c]{0}(5.08,-2.15){$\oplus$}
\rput[c]{0}(0.9,2.15){$\ominus$} 
\rput[c]{0}(0.9,-2.15){$\ominus$}
\rput[c]{0}(2,0){\small $\times$}
\rput[c]{0}(4,0){\small $\times$}
\rput[c]{0}(3.2,-2.5){$G_{+\delta}$} 
\rput[c]{0}(5.6,2.0){$p_1$} 
\rput[c]{0}(1.7,2.15){$p_2$}
\rput[c]{0}(1.7,-2.0){$p_3$}  
\rput[c]{0}(5.6,-2.0){$p_4$} 
\rput[c]{0}(4.7,-0.5){$a_1$} 
\rput[c]{0}(1.4,-0.5){$a_2$} 
\rput[c]{0}(5.8,+0.7){$\beta_{p^\ast_1, p_4}$} 
\rput[c]{0}(3.1,-0.9){$\beta_{p_4, p_2}$} 
\rput[c]{0}(3.7,+1.8){$\beta_{p_2, p_1}$} 
\rput[c]{0}(12.08,2.2){$\oplus$} 
\rput[c]{0}(12.08,-2.15){$\oplus$}
\rput[c]{0}(7.9,2.15){$\ominus$} 
\rput[c]{0}(7.9,-2.15){$\ominus$}
\rput[c]{0}(9,0){\small $\times$}
\rput[c]{0}(11,0){\small $\times$}
\rput[c]{0}(10.2,-2.5){$G_{-\delta}$} 
\rput[c]{0}(12.6,2.0){$p_1$} 
\rput[c]{0}(8.7,2.15){$p_2$}
\rput[c]{0}(8.7,-2.0){$p_3$}  
\rput[c]{0}(12.6,-2.0){$p_4$} 
\rput[c]{0}(11.7,-0.5){$a_1$} 
\rput[c]{0}(8.4,-0.5){$a_2$} 
\rput[c]{0}(12.8,+0.7){$\beta_{p^\ast_1, p_4}$} 
\rput[c]{0}(10.1,-0.9){$\beta_{p_4, p_2}$} 
\rput[c]{0}(10.6,+1.8){$\beta_{p_2, p_1}$} 
\psset{linewidth=1pt,linestyle=solid}
\pscurve(2,0)(1.9,-0.1)(1.8,0)(1.7,0.1)
(1.6,0)(1.5,-0.1)(1.4,0)(1.3,0.1)(1.2,0)
(1.1,-0.1)(1,0)(0.9,0.1)(0.8,0)(0.7,-0.1)(0.6,0)
\pscurve(4,0)(4.1,0.1)(4.2,0)(4.3,-0.1)(4.4,0)
(4.5,0.1)(4.6,0)(4.7,-0.1)(4.8,0)(4.9,0.1)(5,0)
(5.1,-0.1)(5.2,0)(5.3,0.1)(5.4,0)(5.5,-0.1)(5.6,0)
(5.7,0.1)(5.8,0)(5.9,-0.1)(6.0,0)
\pscurve(9,0)(8.9,-0.1)(8.8,0)(8.7,0.1)(8.6,0)
(8.5,-0.1)(8.4,0)(8.3,0.1)(8.2,0)(8.1,-0.1)(8,0)
(7.9,0.1)(7.8,0)(7.7,-0.1)(7.6,0)
\pscurve(11,0)(11.1,0.1)(11.2,0)(11.3,-0.1)(11.4,0)
(11.5,0.1)(11.6,0)(11.7,-0.1)(11.8,0)(11.9,0.1)(12,0)
(12.1,-0.1)(12.2,0)(12.3,0.1)(12.4,0)(12.5,-0.1)
(12.6,0)(12.7,0.1)(12.8,0)(12.9,-0.1)(13,0)
\psset{linewidth=0.5pt}
\pscurve(2,0)(3.95,-1)(4.85,-2)
\pscurve(2,0)(1.5,1)(0.9,1.95)
\pscurve(2,0)(1.5,-1)(1,-2)
\pscurve(4,0)(2.05,1)(1.05,2)
\pscurve(4,0)(4.5,1)(5,2)
\pscurve(4,0)(4.5,-1)(5.00,-1.95)
\pscurve(9,0)(11,1)(12,2)
\pscurve(9,0)(8.5,1)(8,2)
\pscurve(9,0)(8.5,-1)(8,-2)
\pscurve(11,0)(9,-1)(8,-2)
\pscurve(11,0)(11.5,1)(12,2)
\pscurve(11,0)(11.5,-1)(12,-2)
\psset{linewidth=1pt}
\pscurve(5.1,0)(5.1,-1)(5.1,-1.95)
\pscurve(4.90,-1.9)(3.45,-0.2)(2.45,0.3)(1.02,1.95)
\pscurve(1.06,2.1)(3.5,1.0)(4.9,2.1)
\psline(5.1,-1.3)(5.3,-1.1) \psline(5.1,-1.3)(4.9,-1.1)
\psline(3,0.05)(3.25,0.15)\psline(2.95,0.05)(3.15,-0.25)
\psline(3.2,1)(3.0,1.25)\psline(3.2,1)(2.92,0.85)
\pscurve(12.1,0)(12.1,-1)(12.1,-1.95)
\pscurve(11.85,-2.1)(10.45,-0.2)(9.45,0.3)(8.05,2.0)
\pscurve(8.06,2.1)(10.0,1.0)(11.9,2.1)
\psline(12.1,-1.3)(12.3,-1.1) 
\psline(12.1,-1.3)(11.9,-1.1)
\psline(10,0.05)(10.25,0.15)
\psline(9.95,0.05)(10.15,-0.25)
\psline(10.0,1)(9.77,1.22)\psline(10.0,1)(9.77,0.82)
\psset{linewidth=1pt, linestyle=dashed}
\pscurve(5.1,2)(5.1,1)(5.1,0)
\pscurve(12.1,2)(12.1,1)(12.1,0)
\end{pspicture}
\end{center}
\caption{Decomposition of the path $\beta_0$.} 
\label{fig:decomposition-of-paths}
\end{figure}
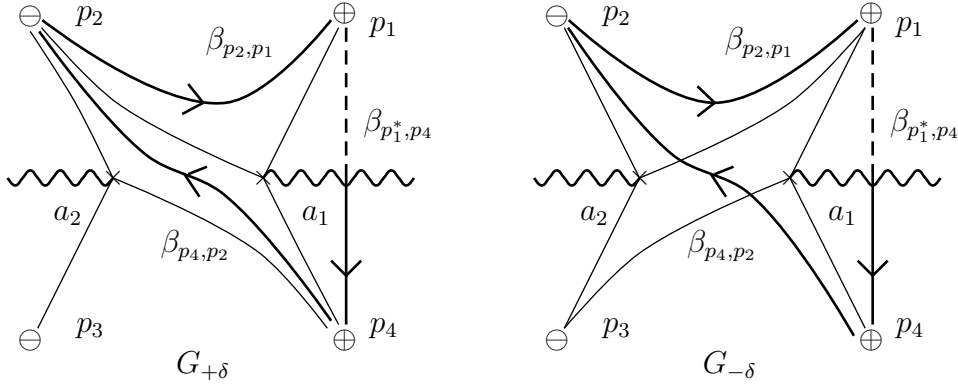

The equality \eqref{eq:pre-DDP-formula} is also 
one of the desired formula \eqref{eq:DDP-analytic-appendix} 
since the intersection number is 
$\langle\gamma_0,\beta_0\rangle=-1$ as 
depicted in Figure \ref{fig:gamma0-and-alpha}. 
From this relation we can derive 
\eqref{eq:DDP-analytic-appendix} for any path. 
The path $\beta_0$ in Figure \ref{fig:gamma0-and-alpha} 
has the following decomposition in 
$H_1(\hat{\Sigma}\setminus\hat{P}_0,\hat{P}_{\infty})$: 
\begin{equation} \label{eq:decomposition-of-beta}
\beta_0 = \beta_{p_1^{*},p_4} + \beta_{p_4,p_2} + \beta_{p_2,p_1} 
\end{equation}
as depicted in 
Figure \ref{fig:decomposition-of-paths}. 
Here $p_1^{\ast}$  represents the point on the second 
sheet of $\hat{\Sigma}$ corresponding to $p_1$,
and a dashed line is a path on the second sheet of $\hat{\Sigma}$.
In the decomposition \eqref{eq:decomposition-of-beta}
the path $\beta_{p_4,p_2}$ intersects 
with the saddle trajectory $\ell_0$ and, 
on the other hand, the paths $\beta_{p_1^{\ast},p_4}$ 
and $\beta_{p_2,p_1}$ never intersects with 
any saddle trajectories of $\phi$ 
since the Stokes graph $G_0$ does not 
has any saddle trajectory except for $\ell_0$. 
Thus, using the equality \eqref{eq:no-jump-formula}, 
the ratio of the Borel sums of 
the Voros symbol $e^{W_{\beta_0}}$ is given by
\begin{equation} 
\label{eq:pre-DDP-formula2}
\frac{{\mathcal S}_{-}[e^{{W}_{\beta_0}}]}
{{\mathcal S}_{+}[ e^{{W}_{\beta_0}}]} = 
\frac{{\mathcal S}[e^{{W}_{\beta_{p_1^*,p_4}}}]
{\mathcal S}_{-}[e^{{W}_{\beta_{p_4,p_2}}}]
{\mathcal S}[e^{{W}_{\beta_{p_2,p_1}}}]}
{{\mathcal S}[e^{{W}_{\beta_{p_1^*,p_4}}}]
{\mathcal S}_{+}[ e^{{W}_{\beta_{p_4,p_2}}}]
{\mathcal S}[e^{{W}_{\beta_{p_2,p_1}}}]} =
\frac{{\mathcal S}_{-}[e^{{W}_{\beta_{p_4,p_2}}}]}
{{\mathcal S}_{+}[ e^{{W}_{\beta_{p_4,p_2}}}]}.
\end{equation}
Together with \eqref{eq:pre-DDP-formula} 
we have the formula \eqref{eq:DDP-analytic-appendix} 
for $\beta_{p_4,p_2}$:
\begin{equation} \label{eq:pre-DDP-formula3}
{\mathcal S}_{-}[e^{{W}_{\beta_{p_4,p_2}}}] = 
{\mathcal S}_{+}[ e^{{W}_{\beta_{p_4,p_2}}}] 
(1 + {\mathcal S}[e^{{V}_{\gamma_0}}]).
\end{equation} 
Since any path and any cycle intersecting with $\ell_0$ 
can be expressed as a sum of 
$\pm \beta_{p_4,p_2}$
and some paths which never intersect with $\ell_0$.
Thus, \eqref{eq:DDP-analytic-appendix} 
holds for any path and any cycle.
Thus we have proved Theorem \ref{thm:DDP-analytic}.



\section{Proof of Theorem 
\ref{thm:loop-type-degeneration-analytic}}
\label{section:proof-of-AIT}
Here we give a proof of 
Theorem \ref{thm:loop-type-degeneration-analytic}. 
The proof presented here is different from 
that of \cite{Aoki14}, and a more sophisticated 
proof will be presented there.  

\subsection{Settings}
\label{section:setting-appB}

Here we recall the situation and 
explain the idea for the proof of Theorem 
\ref{thm:loop-type-degeneration-analytic}. 

We consider the case that the Stokes graph 
$G_0 = G(\phi)$ 
has a {\em unique} saddle trajectory $\ell_0$, 
and it is a {\em degenerate} saddle trajectory
around a double pole $p$. 
Denote by $D_0$ the degenerate ring domain 
whose boundary consists of $\ell_0$ and $p$. 
To specify the situation, in addition to 
Figure \ref{fig:Stokes-auto},
we take branch cuts and assign $\oplus$ and $\ominus$ 
as in Figure \ref{fig:gamma0-and-beta-B}. 
In this case we can not assign the sign for $p$ 
because there is no trajectory which flows to $p$ 
(recall that a degenerate ring domain is 
swept by closed trajectories).
Then, the saddle class $\gamma_0$ associated with 
$\ell_0$ has the orientation 
shown in Figure \ref{fig:gamma0-and-beta-B}.
As indicated in Figure \ref{fig:gamma0-and-beta-B}, 
$\gamma_0$ is $\ast$-equivalent to 
the sum of two closed cycles around 
the double pole $p$; one lies on the first sheet 
and the other lies on the second sheet. 
Therefore, we have
\begin{equation} \label{eq:Voros-at-double-pole-app}
V_{\gamma_0}(\eta) = \oint_{\gamma_0}
S_{\rm odd}(z,\eta)\,dz 
= - 4 \pi i \eta 
\operatornamewithlimits{Res}_{z=p} 
\sqrt{Q_0(z)}\,dz.
\end{equation}
(See \eqref{eq:Vint1}.)
Moreover, $\gamma_0$ is contained in the kernel
of the intersection form $(~,~)$ on 
$H_1(\hat{\Sigma}\setminus\hat{P})$.

\begin{figure}
\begin{center}
\begin{pspicture}(6.4,-8.5)(14.2,-3.8)
%
\psset{linewidth=0.5pt}
\psset{fillstyle=solid, fillcolor=black}
\pscircle(8,-5.5){0.08}
\pscircle(13,-5.5){0.08}
\psset{linewidth=0.5pt}
\psset{fillstyle=none}
\rput[c]{0}(8,-6.98){\small $\times$}
\rput[c]{0}(8,-4.3){\small $\ell_{0}$}
\rput[c]{0}(8,-8.5){\small $\ominus$}
\rput[c]{0}(8,-5.8){\small $p$} 
\rput[c]{0}(8,-4.9){\small $\gamma_0$}
\rput[c]{0}(10.3,-6.1){\small $\equiv$} 
\rput[c]{0}(13,-6.98){\small $\times$}
\rput[c]{0}(13,-8.5){\small $\ominus$}
%
\psset{linewidth=1pt,linestyle=solid}
\psset{linewidth=1pt}
\pscurve(8,-7)(7.9,-7.1)(7.8,-7)
(7.7,-6.9)(7.6,-7)(7.5,-7.1)(7.4,-7)
(7.3,-6.9)(7.2,-7)(7.1,-7.1)(7,-7)
(6.9,-6.9)(6.8,-7)(6.7,-7.1)(6.6,-7)
(6.5,-6.9)(6.4,-7)
\pscurve(13,-7)(12.9,-7.1)(12.8,-7)
(12.7,-6.9)(12.6,-7)(12.5,-7.1)(12.4,-7)
(12.3,-6.9)(12.2,-7)(12.1,-7.1)(12,-7)
(11.9,-6.9)(11.8,-7)(11.7,-7.1)(11.6,-7)
(11.5,-6.9)(11.4,-7)
\psset{linewidth=0.5pt}
\psline(8,-7.0)(8,-8.34)
\pscurve(8,-7.0)(9,-6.3)(9.2,-5.5)
(9,-4.8)(8,-4.3)(7,-4.8)
(6.8,-5.5)(7,-6.3)(8,-7.0)
\psline(13,-7.0)(13,-8.34)
\pscurve(13,-7.0)(14,-6.3)(14.2,-5.5)
(14,-4.8)(13,-4.3)(12,-4.8)
(11.8,-5.5)(12,-6.3)(13,-7.0)
%
\psset{linewidth=1pt, linestyle=dashed}
%
\pscurve(7.75,-6.87)(8.65,-6.2)(8.9,-5.5)
(8.7,-4.9)(8,-4.65)(7.3,-4.9)(7.1,-5.5)(7.3,-6)
(8,-6.5)(8.3,-6.9)(8.2,-7.4)(7.7,-7.4)(7.2,-6.9)
%
%
\pscircle(13,-5.5){0.5}
\psset{linewidth=1pt,linestyle=solid}
\pscurve(7.75,-6.87)(7.7,-7.1)(8,-7.2)(9.15,-6.35)
(9.4,-5.6)(9,-4.42)(8.4,-4.04)(8,-4)(7.6,-4.04)
(7,-4.42)(6.6,-5.60)(6.8,-6.4)(7.2,-6.9)
\psline(6.6,-5.6)(6.45,-5.9)
\psline(6.6,-5.6)(6.8,-5.85)
\psline(7.0,-5.7)(7.23,-5.95)
\psline(7.23,-5.9)(7.30,-5.62)
\pscircle(13,-5.5){0.8}
\psline(12.2,-5.5)(12.07,-5.75)
\psline(12.2,-5.5)(12.4,-5.7)
\psline(12.53,-5.47)(12.4,-5.23)
\psline(12.53,-5.47)(12.75,-5.28)
\end{pspicture}
\end{center}
\caption{The saddle class 
$\gamma_0\in H_1(\hat{\Sigma}\setminus\hat{P})$. 
The picture depicts a part of the Stokes graph 
$G_0$ near the saddle trajectory $\ell_0$.} 
\label{fig:gamma0-and-beta-B}
\end{figure}
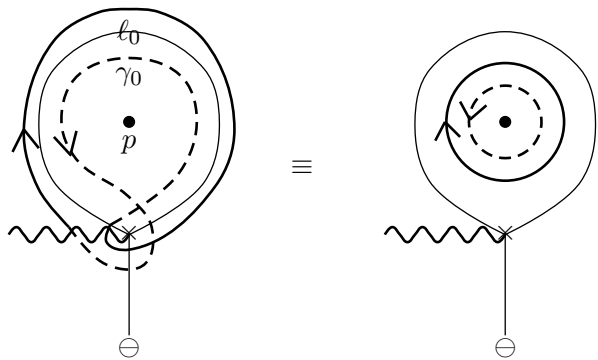

Fix a sufficiently small number 
$\delta_0>0$ and consider 
the saddle reductions
$G_{\pm\delta_0} = G(\phi_{\pm\delta_0})$ 
of $G_0$. Let  ${\mathcal S}_{\pm}[e^{V_{\gamma}}]$ 
(resp., ${\mathcal S}_{\pm}[e^{W_{\beta}}]$) 
be the Borel sum of the Voros symbol 
$e^{V_{\gamma}}$ (resp., $e^{W_{\beta}}$)
in the direction $\pm\delta_0$. 
Here $\gamma \in H_1(\hat{\Sigma}\setminus\hat{P})$ 
is any cycle and $\beta \in H_1(\hat{\Sigma}\setminus
\hat{P}_0,\hat{P}_{\infty})$ is any path. 
Now we will show the equalities 
\begin{equation} \label{eq:AIT-analytic-appendix}
{\mathcal S}_{-}[e^{{W}_{\beta}}] = 
{\mathcal S}_{+}[e^{{W}_{\beta}}]
(1 - {\mathcal S}_{+}[e^{{V}_{\gamma_0}}])
^{\langle\gamma_0,\beta\rangle},~~
{\mathcal S}_{-}[e^{{V}_{\gamma}}] = 
{\mathcal S}_{+}[e^{{V}_{\gamma}}].
\end{equation}
(i.e., the equality 
\eqref{eq:loop-type-degeneration-analytic}) 
on a domain containing 
$\{\eta \in {\mathbb R}~|~ \eta \gg 1 \}$. 
Note that the Borel sum of $e^{V_{\gamma_0}}$
coincides with itself; 
${\mathcal S}_{+}[e^{{V}_{\gamma_0}}] = e^{V_{\gamma_0}}$
since $V_{\gamma_0}$ is not a formal series but a scaler
in view of \eqref{eq:Voros-at-double-pole-app}.
By the same argument in the proof of
Lemma \ref{lem:no-jump-formula}, 
the equality \eqref{eq:AIT-analytic-appendix} 
holds for any path $\beta$ and any cycle $\gamma$
which never intersect with $\gamma_0$. 
Especially, since $(\gamma_0,\gamma) = 0$ 
for any cycle $\gamma$, \eqref{eq:AIT-analytic-appendix} 
holds for any cycle $\gamma$. 
Therefore, it suffices to show 
\eqref{eq:AIT-analytic-appendix} for any path 
$\beta \in H_1(\hat{\Sigma}\setminus
\hat{P}_0,\hat{P}_{\infty})$ intersecting with $\gamma_0$. 
The path $\beta_p$ depicted in Figure 
\ref{fig:path-beta-p} is a typical example 
of such a path. 

\begin{figure}
\begin{center}
\begin{pspicture}(0,-9.1)(16.7,-3.8)
%
\psset{linewidth=0.5pt}
\psset{fillstyle=solid, fillcolor=black}
\pscircle(8,-5.5){0.08}
\pscircle(8.7,-6.2){0.05}
\pscircle(3.7,-6.2){0.05}
\pscircle(13.7,-6.2){0.05}
\psset{linewidth=0.5pt}
\psset{fillstyle=none}
\rput[c]{0}(3,-6.98){\small $\times$}
\rput[c]{0}(3,-5.5){\small $\ominus$}
\rput[c]{0}(3,-8.5){\small $\ominus$}
\rput[c]{0}(3.1,-9.1){\small $G_{+\delta_0}$}
\rput[c]{0}(3.7,-5.8){\small $z_0$} 
\rput[c]{0}(8,-6.98){\small $\times$}
\rput[c]{0}(8.1,-9.1){\small $G_{0}$} 
\rput[c]{0}(8,-8.5){\small $\ominus$}
\rput[c]{0}(8.7,-5.8){\small $z_0$} 
\rput[c]{0}(8,-5.1){\small $p$} 
\rput[c]{0}(8.4,-7.3){\small $a$} 
\rput[c]{0}(7.5,-6.){\small $\beta_p$} 
\rput[c]{0}(13,-6.98){\small $\times$}
\rput[c]{0}(13,-5.5){\small $\oplus$}
\rput[c]{0}(13,-8.5){\small $\ominus$}
\rput[c]{0}(13.1,-9.1){\small $G_{-\delta_0}$}
\rput[c]{0}(13.7,-5.8){\small $z_0$}  
%
\psline(3,-7.0)(3,-8.34)
\pscurve(3,-7)(2.7,-6.5)(2.7,-6)(2.85,-5.6)
\psset{linewidth=0.5pt,linestyle=solid}
\pscurve(3,-7.0)(4,-6.3)(4.2,-5.5)(4,-4.8)(3,-4.4)
(2.0,-4.9)(1.8,-6.4)(2.0,-7.0)
\psset{linewidth=0.5pt,linestyle=solid}
\psecurve(1.8,-6.4)(2.0,-7.0)(2.9,-8.35)(2.9,-8.35)
\psset{linewidth=1pt,linestyle=solid}
\pscurve(3,-7)(2.9,-7.1)(2.8,-7)
(2.7,-6.9)(2.6,-7)(2.5,-7.1)(2.4,-7)
(2.3,-6.9)(2.2,-7)(2.1,-7.1)(2,-7)
(1.9,-6.9)(1.8,-7)(1.7,-7.1)(1.6,-7)
(1.5,-6.9)(1.4,-7)
\pscurve(8,-7)(7.9,-7.1)(7.8,-7)
(7.7,-6.9)(7.6,-7)(7.5,-7.1)(7.4,-7)
(7.3,-6.9)(7.2,-7)(7.1,-7.1)(7,-7)
(6.9,-6.9)(6.8,-7)(6.7,-7.1)(6.6,-7)
(6.5,-6.9)(6.4,-7)
\psset{linewidth=0.5pt}
\psline(8,-7.0)(8,-8.34)
\pscurve(8,-7.0)(9,-6.3)(9.2,-5.5)
(9,-4.8)(8,-4.3)(7,-4.8)
(6.8,-5.5)(7,-6.3)(8,-7.0)
\psset{linewidth=1pt}
\pscurve(13,-7)(12.9,-7.1)(12.8,-7)
(12.7,-6.9)(12.6,-7)(12.5,-7.1)(12.4,-7)
(12.3,-6.9)(12.2,-7)(12.1,-7.1)(12,-7)
(11.9,-6.9)(11.8,-7)(11.7,-7.1)(11.6,-7)
(11.5,-6.9)(11.4,-7)
\pscurve(8,-5.5)(8.25,-6.5)(8.2,-7.1)(8.1,-7.2)
(8,-7.25)(7.9,-7.2)(7.8,-7.1)(7.75,-6.9)
\psline(8.25,-6.5)(8.13,-6.65)
\psline(8.25,-6.5)(8.37,-6.62)
\psset{linewidth=0.5pt}
\psline(13,-7.0)(13,-8.34)
\pscurve(13,-7)(13.3,-6.5)(13.3,-6)(13.15,-5.6)
\pscurve
(13,-7.0)(12,-6.3)(11.8,-5.5)
(12,-4.8)(13,-4.4)
(14.0,-4.9)(14.2,-6.4)
(14.0,-7.0)(13.1,-8.35)
\psset{linewidth=0.5pt}
%
%
\psset{linewidth=1pt, linestyle=dashed}
\pscurve(8,-5.5)(7.85,-6)(7.75,-6.9)
\psset{linewidth=1pt,linestyle=solid}
\psset{linewidth=0.5pt}
%
%
%
\end{pspicture}
\end{center}
\caption{The saddle reduction of $G_0$. 
These pictures are schematic ones, and an 
actual saddle reduction is given in 
Figure \ref{fig:spirals}.} 
\label{fig:path-beta-p}
\end{figure}
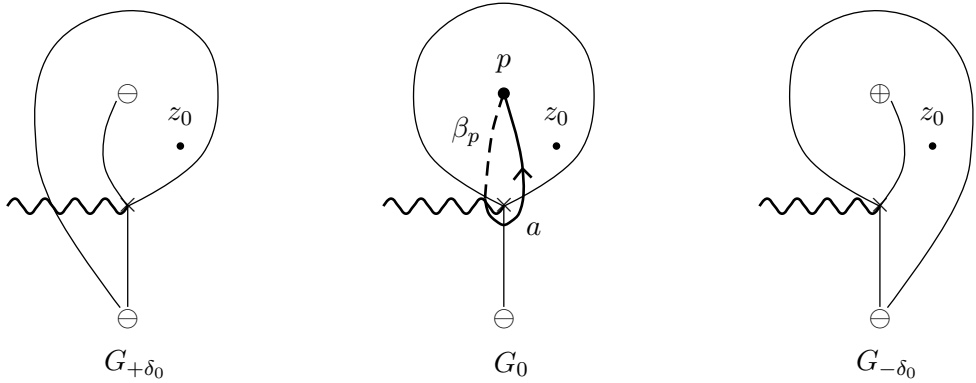

Before the derivation of the formula 
\eqref{eq:AIT-analytic-appendix}, 
we give a remark on the property of Stokes 
curves in the saddle reductions of a degenerate 
saddle trajectory. Figure \ref{fig:spirals} 
depicts examples of Stokes curves 
for some directions. 
For any direction $\theta \ne 0$ near $0$, 
there exists a Stokes curve emanating from 
$a$ forms a logarithmic spiral and flows into $p$. 
As we vary the direction $\theta$, the Stokes curve 
hits any point $z \in D_0$. 
Moreover, as $\theta$ tends to $0$, any point 
$z \in D_0$ is hit by the Stokes curve
{\em infinitely} many times, and the 
degenerate saddle trajectory appears as 
the limit $\theta\rightarrow0$ of the Stokes curve. 

\begin{figure}
  \begin{minipage}{0.33\hsize}
  \begin{center} 
  \includegraphics[width=50mm]
  {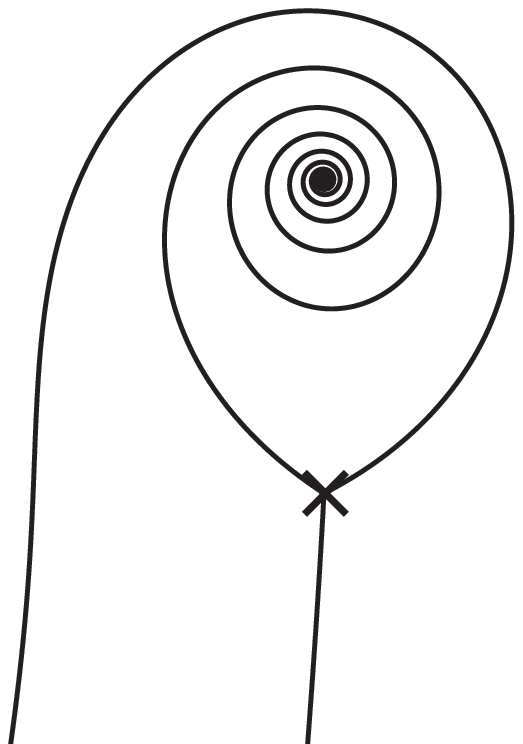} \\
  {$\theta=+\delta_0$.} 
  \end{center}
  \end{minipage} 
  \begin{minipage}{0.33\hsize}
  \begin{center}
  \includegraphics[width=50mm]
  {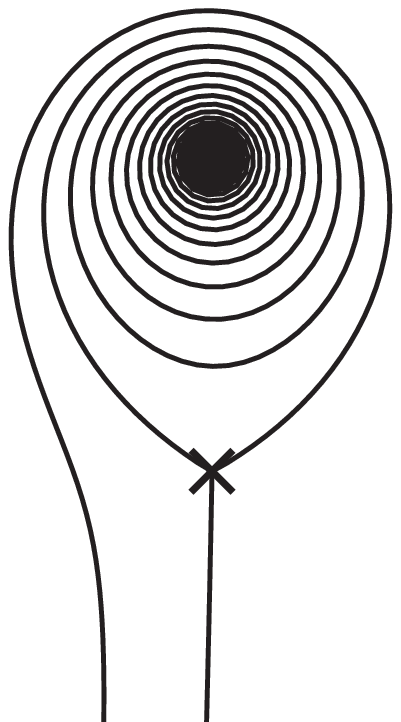} \\
  {$0<\theta<+\delta_0$.} 
  \end{center}
  \end{minipage}  
  \begin{minipage}{0.33\hsize}
  \begin{center}
  \includegraphics[width=50mm]
  {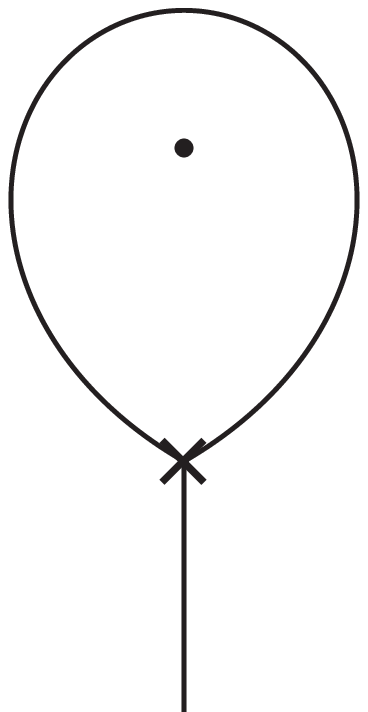} \\
  {$\theta=0$.} 
  \end{center}
  \end{minipage}  
  \caption{Stokes graphs in the limit 
  $\theta\rightarrow+0$. Any point in the degenerate 
  ring domain is hit by a Stokes curve infinitely 
  many times as $\theta$ tends to $0$. Similar pictures 
  appear in the opposite limit $\theta\rightarrow-0$.}
  \label{fig:spirals}
\end{figure}

Fix a point $z_0 \in D_0$ (``reference point")
and take a small disc $U$ containing $z_0$ and 
included in $D_0$. We assume that any point 
$U$ is not on the Stokes graphs $G_{\pm\delta_0}$.
We will take certain WKB solutions defined on $U$, 
and compare their Borel sums in the directions 
$+\delta_0$ and $-\delta_0$. 
The comparison will be used to 
derive the formula \eqref{eq:AIT-analytic-appendix}
similarly to Appendix \ref{section:proof-of-DDP}.

However, the situation here is more complicated 
than that of Appendix \ref{section:proof-of-DDP}. 
Recall that, when $z$ lies on a Stokes curve 
in a direction $\theta$, WKB solutions may not 
be Borel summable in the direction $\theta$. 
In other wards, the Stokes phenomenon occurs 
to WKB solutions in such a direction $\theta$.
Therefore, to compare the Borel 
sum in the directions $-\theta_0$ and $\theta_0$, 
we have to deal with {\em infinitely} many Stokes phenomena 
occurring to WKB solutions since the reference point 
$z_0$ is hit by Stokes curves infinitely many times. 
The situation is analyzed in 
Lemma \ref{lemma:no-jump-appB} below. 

In our computation, we choose the reference point $z_0$ 
near the Stokes curve which forms 
a boundary of the degenerate horizontal strip around 
$p$ in the Stokes graphs $G_{+\delta_0}$
(see Figure \ref{fig:path-beta-p}). 
We may also assume the following: 
Take a straight half line starting from 
$p$ and passing through $z_0$. The point $z_0$ divides 
the half line into two parts; one is the segment 
between $p$ and $z_0$ and the other is a half line 
starting from $z_0$. Then, we assume that latter part 
never intersects with the Stokes curve which forms 
the logarithmic spiral in $G_{+\delta_0}$ and 
in $G_{-\delta_0}$. 
This guarantees that, the Borel sum of 
WKB solutions defined near $z_0$ can be analytically 
continued to the Stokes region which is adjacent to 
the degenerate horizontal strip along a straight path
without crossing the logarithmic spirals 
(see Figure \ref{fig:two-connection-appB}). 
We will also compare the result of the analytic 
continuations of Borel sum of WKB solutions along 
two paths indicated in 
Figure \ref{fig:two-connection-appB} in Lemma 
\ref{lemma:two-connection-problems} below.  
Combining Lemma \ref{lemma:two-connection-problems}
and Lemma \ref{lemma:no-jump-appB}, finally we 
obtain the formula \eqref{eq:AIT-analytic-appendix}.

\subsection{Derivation of the formulas in 
Theorem \ref{thm:loop-type-degeneration-analytic}}

In the computation, we use the WKB solutions 
\begin{equation} \label{eq:appB-WKB-p}
\psi_{\pm,p}(z,\eta) = 
\frac{1}{\sqrt{S_{\rm odd}(z,\eta)}}
\exp\left\{ \pm\left(\eta\int_{a}^{z}
\sqrt{Q_{0}(z)}\,dz+\int_{p}^z
S_{\rm odd}^{\rm reg}(z,\eta)\,dz\right) \right\}
\end{equation}
defined on $U$, which are normalized at the double pole 
$p$ along a path from $p$ to $z$. 
Here $a$ is the turning point where $\ell_0$ emanates. 
For any $z \in U$, we can choose the path from $p$ 
to $z$ in \eqref{eq:appB-WKB-p} so that it never 
intersects with $\ell_0$, and hence $\psi_{\pm,p}$ 
is Borel summable in the direction 0. Denote by 
$\Psi_{\pm,p}^{(G_0)}$ the Borel sum defined on $U$. 
We will use $\Psi_{\pm,p}^{(G_0)}(z,\eta)$ 
in order to compare the Borel sums 
$\Psi_{\pm,p}^{(G_{+\delta_0})}(z,\eta)$
and $\Psi_{\pm,p}^{(G_{-\delta_0})}(z,\eta)$ 
of $\psi_{\pm,p}(z,\eta)$ in the direction $+\delta_0$ 
and $-\delta_0$ defined on $U$, respectively.

We also deal with another WKB solutions 
\begin{equation} \label{eq:appB-WKB-TP}
\psi_{\pm}(z,\eta) = 
\frac{1}{\sqrt{S_{\rm odd}(z,\eta)}}
\exp\left(\pm \int_{a}^{z}
S_{\rm odd}(z,\eta)\,dz \right)
\end{equation}
which are normalized at the turning point $a$ 
since the Stokes phenomena for $\psi_{\pm}$
are easy to describe. 
We fix the normalization of \eqref{eq:appB-WKB-TP}
so that the path from $a$ to $z$ is defined by
\begin{equation}
\int_{a}^{z}
S_{\rm odd}(z,\eta)\,dz = 
\int_{\gamma_z^{(1)}} S_{\rm odd}(z,\eta)\,dz,
\end{equation}
where the path $\gamma_{z}^{(1)}$ is depicted 
in Figure \ref{fig:normalizationss-appB}. 
Although we have illustrated the path $\gamma_{z}^{(1)}$ 
so that the one of its end point is the turning point $a$, 
the integral should be regarded as a contour integral 
(see Section \ref{subsec:WKB}). Then, the WKB solutions 
$\psi_{\pm}$ and $\psi_{\pm,p}$ satisfy 
\begin{equation} \label{eq:Voros-beta-p-appB}
\psi_{\pm}(z,\eta) = 
\exp\left(\pm \frac{1}{2}
W_{\beta_p}(\eta) \right) 
\psi_{\pm,p}(z,\eta),
\end{equation}
where $W_{\beta_p}$ is the Voros coefficient 
for the path $\beta_p$ depicted in 
Figure \ref{fig:path-beta-p}. 
Denote by $\Psi_{\pm}^{(G_{+\delta_0})}$ 
(resp., $\Psi_{\pm}^{(G_{-\delta_0})}$)
the Borel sum of $\psi_{\pm}$ 
in the direction $+\delta_0$ (resp., $-\delta_0$)
defined on $U$. Note that $\psi_{\pm}$ may not be 
Borel summable since the path $\gamma_z^{(1)}$ 
intersects with $\ell_0$. 

\begin{figure}
\begin{minipage}{0.47\hsize}
\begin{center}
\begin{pspicture}(10.5,-9.)(14.5,-3.5)
%
\psset{linewidth=0.5pt}
\psset{fillstyle=solid, fillcolor=black}
\pscircle(14,-6){0.05}
\psset{linewidth=0.5pt}
\psset{fillstyle=none}
%
\rput[c]{0}(13,-6.98){\small $\times$}
\rput[c]{0}(13,-5.5){\small $\oplus$}
\rput[c]{0}(13,-8.5){\small $\ominus$}
\rput[c]{0}(13.8,-7.){\small $\gamma_{z}^{(1)}$}
\rput[c]{0}(12.9,-4.95){\small $\gamma_{z}^{(2)}$}
\rput[c]{0}(14.25,-6.){\small $z$}
\rput[c]{0}(12.75,-7.2){\small $a$}
\psset{linewidth=1pt}
\pscurve(13,-7)(12.9,-7.1)(12.8,-7)
(12.7,-6.9)(12.6,-7)(12.5,-7.1)(12.4,-7)
(12.3,-6.9)(12.2,-7)(12.1,-7.1)(12,-7)
(11.9,-6.9)(11.8,-7)(11.7,-7.1)(11.6,-7)
(11.5,-6.9)(11.4,-7)
\pscurve(14,-6)(13.7,-6.7)(13,-7)
\psline(13.7,-6.7)(13.45,-6.7)
\psline(13.7,-6.7)(13.62,-6.93)
\pscurve(14,-6)(13.95,-5.4)(13.9,-5.25)(13.8,-5.05)
(13.4,-4.75)(13,-4.65)(12.6,-4.75)(12.2,-5.18)(12.1,-5.5)
(12.2,-6.2)(13,-6.98)
\psline(13.7,-4.95)(13.45,-4.94)
\psline(13.7,-4.95)(13.6,-4.7)
\psset{linewidth=0.5pt,linestyle=solid}
\psline(13,-7.0)(13,-8.34)
\pscurve(13,-7)(13.3,-6.5)(13.3,-6)(13.15,-5.6)
\pscurve(13,-7.0)(12,-6.3)(11.8,-5.5)
(12,-4.8)(13,-4.4)
(14.0,-4.9)(14.2,-6.4)
(14.0,-7.0)(13.1,-8.35)
\end{pspicture}
\caption{Normalization paths for WKB solutions 
(the picture depicts the Stokes graph in the direction 
$-\delta_0$).} 
\label{fig:normalizationss-appB}
\end{center}
\end{minipage} \hspace{-2.5em} 
\begin{minipage}{0.55\hsize}
\begin{center}
\begin{pspicture}(1,-9.1)(8,-3.8)
%
\psset{linewidth=0.5pt}
\psset{fillstyle=solid, fillcolor=black}
\pscircle(3.7,-6.2){0.05}
\pscircle(4.6,-6.2){0.05}
\psset{linewidth=0.5pt}
\psset{fillstyle=none}
\rput[c]{0}(3,-6.98){\small $\times$}
\rput[c]{0}(3,-5.5){\small $\ominus$}
\rput[c]{0}(3,-8.5){\small $\ominus$}
\rput[c]{0}(3.1,-9.1){\small $G_{+\delta_0}$}
\rput[c]{0}(3.7,-5.8){\small $z_0$} 
\rput[c]{0}(4.6,-5.8){\small $z_1$} 
%
\psline(3,-7.0)(3,-8.34)
\pscurve(3,-7)(2.7,-6.5)(2.7,-6)(2.85,-5.6)
\psset{linewidth=0.5pt,linestyle=solid}
\pscurve(3,-7.0)(4,-6.3)(4.2,-5.5)(4,-4.8)(3,-4.4)
(2.0,-4.9)(1.8,-6.4)(2.0,-7.0)
\psset{linewidth=0.5pt,linestyle=solid}
\psecurve(1.8,-6.4)(2.0,-7.0)(2.9,-8.35)(2.9,-8.35)
\psset{linewidth=1pt,linestyle=solid}
\pscurve(3,-7)(2.9,-7.1)(2.8,-7)
(2.7,-6.9)(2.6,-7)(2.5,-7.1)(2.4,-7)
(2.3,-6.9)(2.2,-7)(2.1,-7.1)(2,-7)
(1.9,-6.9)(1.8,-7)(1.7,-7.1)(1.6,-7)
(1.5,-6.9)(1.4,-7)
\psset{linewidth=2.5pt}
\psline[arrows=->](3.8,-6.2)(4.5,-6.2)  
\psset{linewidth=0.5pt}
\psset{fillstyle=solid, fillcolor=black}
\pscircle(7.7,-6.2){0.05}
\pscircle(8.6,-6.2){0.05}
\psset{linewidth=0.5pt}
\psset{fillstyle=none}
\rput[c]{0}(7,-6.98){\small $\times$}
\rput[c]{0}(7,-5.5){\small $\oplus$}
\rput[c]{0}(7,-8.5){\small $\ominus$}
\rput[c]{0}(7.1,-9.1){\small $G_{-\delta_0}$}
\rput[c]{0}(7.7,-5.8){\small $z_0$}  
\rput[c]{0}(8.6,-5.8){\small $z_1$}  
%
\psset{linewidth=1pt}
\pscurve(7,-7)(6.9,-7.1)(6.8,-7)
(6.7,-6.9)(6.6,-7)(6.5,-7.1)(6.4,-7)
(6.3,-6.9)(6.2,-7)(6.1,-7.1)(6,-7)
(5.9,-6.9)(5.8,-7)(5.7,-7.1)(5.6,-7)
(5.5,-6.9)(5.4,-7)
\psset{linewidth=0.5pt}
\psline(7,-7.0)(7,-8.34)
\pscurve(7,-7)(7.3,-6.5)(7.3,-6)(7.15,-5.6)
\pscurve
(7,-7.0)(6,-6.3)(5.8,-5.5)
(6,-4.8)(7,-4.4)
(8.0,-4.9)(8.2,-6.4)
(8.0,-7.0)(7.1,-8.35)
\psset{linewidth=2.5pt}
\psline[arrows=->](7.8,-6.2)(8.5,-6.2) 
\end{pspicture}
\end{center}
\caption{Two connection problems.} 
\label{fig:two-connection-appB} 
\end{minipage} 
\end{figure}

\begin{lem} \label{lemma:two-connection-problems}
The Borel sums $\Psi_{\pm}^{(G_{+\delta_0})}$ 
and $\Psi_{\pm}^{(G_{-\delta_0})}$ of $\psi_{\pm}$
are related as follows:
\begin{eqnarray}
\begin{cases} \label{eq:Psi-plus-minus-app} 
\Psi_{+}^{(G_{+\delta_0})}=(1-e^{V_{\gamma_0}})
\Psi_{+}^{(G_{-\delta_0})}
-i\Psi_{-}^{(G_{-\delta_0})}, \\[+.2em] 
\Psi_{-}^{(G_{+\delta_0})} = -ie^{V_{\gamma_0}}
\Psi_{+}^{(G_{-\delta_0})}+\Psi_{-}^{(G_{-\delta_0})}.
\end{cases} 
\end{eqnarray}
\end{lem}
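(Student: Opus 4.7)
The plan is to compute the transformation between $\Psi_{\pm}^{(G_{+\delta_0})}$ and $\Psi_{\pm}^{(G_{-\delta_0})}$ at $z_0$ by analytically continuing both Borel-summed WKB solutions to a common auxiliary point $z_1$ lying outside the degenerate ring domain, and then comparing the resulting expressions. This parallels the two-connection-problem strategy used in Appendix \ref{section:proof-of-DDP}: in each of $G_{+\delta_0}$ and $G_{-\delta_0}$ the straight segment from $z_0$ to $z_1$ (depicted in Figure \ref{fig:two-connection-appB}) crosses a finite sequence of Stokes curves of the logarithmic spiral around $p$, each of which contributes a Stokes transformation via Theorem \ref{thm:Voros-formula}; at $z_1$ the two Borel sums $\Psi_{\pm}(z_1)$ coincide by Proposition \ref{prop:S1-action-and-summability}, so equating the two resulting expressions back at $z_0$ yields \eqref{eq:Psi-plus-minus-app}.

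Concretely, I would choose $z_1$ in a Stokes region whose boundary contains the Stokes curve forming the outermost arc of the spiral, and such that the normalization path for $\psi_\pm$ at $z_1$ avoids $\ell_0$ for every intermediate direction $\theta\in[-\delta_0,+\delta_0]$. In $G_{+\delta_0}$ the spiral sections crossed by the segment from $z_0$ to $z_1$ all carry the same $\oplus/\ominus$ signature and, by \eqref{eq:Voros-formula-1}/\eqref{eq:Voros-formula-2}, contribute a finite product of upper-triangular Stokes matrices $\bigl(\begin{smallmatrix}1 & -i\alpha_n\\ 0 & 1\end{smallmatrix}\bigr)$; in $G_{-\delta_0}$, where the spiral is oriented oppositely, the opposite signs produce a product of lower-triangular matrices $\bigl(\begin{smallmatrix}1 & 0\\ -i\beta_n & 1\end{smallmatrix}\bigr)$. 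The $n$-th Stokes curve crossed along the spiral differs from the first by $n-1$ additional windings around $p$, so by \eqref{eq:appB-WKB-TP} and the residue identity \eqref{eq:Voros-at-double-pole-app} each extra winding inserts a factor $e^{V_{\gamma_0}}$ into the relevant normalization integral, giving $\alpha_n=\beta_n=e^{(n-1)V_{\gamma_0}}$ up to sheet bookkeeping. The resulting geometric sums $\sum_{n\ge 1} e^{(n-1)V_{\gamma_0}}=(1-e^{V_{\gamma_0}})^{-1}$ converge because $|e^{V_{\gamma_0}}|$ is exponentially small for $\eta\gg 1$, and collapse each product to a single triangular matrix.

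Equating the two expressions for $\Psi_{\pm}^{(G_{\pm\delta_0})}(z_1)$ and solving for $\Psi_{\pm}^{(G_{+\delta_0})}(z_0)$ in terms of $\Psi_{\pm}^{(G_{-\delta_0})}(z_0)$ yields exactly the matrix identity in \eqref{eq:Psi-plus-minus-app}, whose factorization
\begin{equation*}
\begin{pmatrix}1-e^{V_{\gamma_0}} & -i\\ -i\,e^{V_{\gamma_0}} & 1\end{pmatrix}=\begin{pmatrix}1 & -i\\ 0 & 1\end{pmatrix}\begin{pmatrix}1 & 0\\ -i\,e^{V_{\gamma_0}} & 1\end{pmatrix}
\end{equation*}
transparently reflects the upper-triangular contribution of the $+\delta_0$ spiral and the lower-triangular one of the $-\delta_0$ spiral.

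The main obstacle is the careful bookkeeping of branch cuts and sheet labels on $\hat{\Sigma}$ at each spiral crossing: a crossing of a branch cut interchanges $\Psi_+$ and $\Psi_-$ via \eqref{eq:covering-involution}, and the $\oplus/\ominus$ sign at the far endpoint of each spiral Stokes curve (which alternates along the spiral as successive windings land on opposite sheets) must be tracked to select between \eqref{eq:Voros-formula-1} and \eqref{eq:Voros-formula-2} at every step. A secondary subtlety is ensuring that the product of finitely many Stokes matrices is actually independent of the choice of $\delta_0$---as it must be, since neither side of \eqref{eq:Psi-plus-minus-app} depends on $\delta_0$---and this independence is precisely the content of the geometric-series cancellation producing the closed-form factor $(1-e^{V_{\gamma_0}})^{-1}$.
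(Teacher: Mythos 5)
Your overall strategy --- two connection problems from $z_0$ to a common auxiliary point $z_1$, followed by comparison at $z_1$ via Proposition \ref{prop:S1-action-and-summability} --- is the same one the paper uses, and your final matrix factorization
$\bigl(\begin{smallmatrix}1-e^{V_{\gamma_0}} & -i\\ -ie^{V_{\gamma_0}} & 1\end{smallmatrix}\bigr)
=\bigl(\begin{smallmatrix}1 & -i\\ 0 & 1\end{smallmatrix}\bigr)
\bigl(\begin{smallmatrix}1 & 0\\ -ie^{V_{\gamma_0}} & 1\end{smallmatrix}\bigr)$
is exactly the product of the connection matrices \eqref{eq:connection1-appB} and the inverse of \eqref{eq:connection2-appB}. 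However, your account of how each triangular factor arises is wrong, and it is wrong in a way that would actually give the wrong answer if carried out.

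You claim the segment from $z_0$ to $z_1$ ``crosses a finite sequence of Stokes curves of the logarithmic spiral'' in each direction and that summing the resulting Stokes factors over windings produces the geometric series $(1-e^{V_{\gamma_0}})^{-1}$. But the paper chooses $z_1$ in the Stokes region \emph{adjacent} to the one containing $z_0$, so the segment crosses exactly \emph{one} Stokes curve in each of $G_{+\delta_0}$ and $G_{-\delta_0}$; there is no product to telescope and no geometric series to sum. Moreover, if you were summing products of upper- (resp.\ lower-) triangular factors $\bigl(\begin{smallmatrix}1 & -i\alpha_n\\ 0 & 1\end{smallmatrix}\bigr)$, the off-diagonal entries of the two collapsed matrices would both be $-i\sum_n \alpha_n = -i(1-e^{V_{\gamma_0}})^{-1}$, and then the product $\bigl(\begin{smallmatrix}1 & -ic\\ 0 & 1\end{smallmatrix}\bigr)\bigl(\begin{smallmatrix}1 & 0\\ -id & 1\end{smallmatrix}\bigr)=\bigl(\begin{smallmatrix}1-cd & -ic\\ -id & 1\end{smallmatrix}\bigr)$ would have $(1,1)$-entry $1-(1-e^{V_{\gamma_0}})^{-2}$, which is not $1-e^{V_{\gamma_0}}$. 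The off-diagonal entries $-i$ and $-ie^{V_{\gamma_0}}$ in \eqref{eq:Psi-plus-minus-app} are precisely \emph{not} geometric sums.

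The missing idea is the change of normalization path. In $G_{+\delta_0}$ the crossed Stokes curve emanates from $a$ along the path $\gamma^{(1)}_z$, so Theorem \ref{thm:Voros-formula} applies directly and gives $\bigl(\begin{smallmatrix}1 & -i\\ 0 & 1\end{smallmatrix}\bigr)$. In $G_{-\delta_0}$ the analogous Stokes curve is the one reached after winding once around $p$, so one must first pass from $\gamma^{(1)}_z$ to the normalization path $\gamma^{(2)}_z$ that goes around $p$; the difference of the two integrals of $S_{\rm odd}$ is $\pm\tfrac12 V_{\gamma_0}$ by \eqref{eq:Voros-at-double-pole-app}, so the conjugation by $\exp(\pm\tfrac12 V_{\gamma_0})$ is what inserts the single factor $e^{V_{\gamma_0}}$ into the lower-triangular matrix. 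The infinite geometric series you are reaching for does occur, but it belongs to Lemma \ref{lemma:no-jump-appB}(ii), where one tracks the infinitely many Stokes phenomena accumulated as $\theta\to 0$; it plays no role in the present lemma, which concerns a single crossing at each fixed $\theta=\pm\delta_0$.
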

\begin{proof}
Fix another reference point $z_1$ in the Stokes 
region adjacent to the Stokes region containing $z_0$ 
in $G_{\pm\delta_0}$, and let us consider 
two connection problems for the WKB solutions from 
$z_0$ to $z_1$ as indicated 
in Figure \ref{fig:two-connection-appB}. 
Taking $\delta_0$ sufficiently small, 
we may take $z_1$ so that it does not lie on 
any Stokes curves in any direction $\theta$ 
satisfying $-\delta_0 \le \theta \le +\delta_0$. 
Let $\Psi_{\pm,z_1}^{(G_{+\delta_0})}$ 
(resp., $\Psi_{\pm,z_1}^{(G_{-\delta_0})}$)
be the Borel sum of $\psi_{\pm}$ 
in the direction $+\delta_0$ (resp., $-\delta_0$)
defined in a neighborhood of $z_1$. 
Then, Theorem \ref{thm:Voros-formula} implies that, 
for the connection problem for $G_{+\delta_0}$, we have 
\begin{eqnarray}
\begin{cases} \label{eq:connection1-appB} 
\Psi_{+}^{(G_{+\delta_0})}=
\Psi_{+,z_1}^{(G_{+\delta_0})}
-i\Psi_{-,z_1}^{(G_{+\delta_0})}, \\[+.2em] 
\Psi_{-}^{(G_{+\delta_0})} = 
\Psi_{-,z_1}^{(G_{+\delta_0})}.
\end{cases} 
\end{eqnarray}
Next let us consider the connection problem for 
$G_{-\delta_0}$. In this case, in order to use 
Theorem \ref{thm:Voros-formula} on the Stokes curve 
in question, we have to change the normalization 
of the WKB solution; 
{Theorem \ref{thm:Voros-formula} is valid 
for the WKB solution normalized at $a$ 
along the Stokes curve in question.} 
Namely, the formula \eqref{eq:Voros-formula-2}
in Theorem \ref{thm:Voros-formula} holds for 
the Borel sum of the WKB solutions 
\begin{equation}
\phi_{\pm}(z,\eta) = 
\frac{1}{\sqrt{S_{\rm odd}(z,\eta)}}
\exp\left(\pm \int_{\gamma_z^{(2)}}
S_{\rm odd}(z,\eta)\,dz \right)
\end{equation}
whose integration path $\gamma_{z}^{(2)}$ 
is depicted in Figure 
\ref{fig:normalizationss-appB}. Thus we have
\begin{eqnarray}
\begin{cases} 
\Phi_{+}^{(G_{-\delta_0})}=
\Phi_{+,z_1}^{(G_{-\delta_0})}, \\[+.2em] 
\Phi_{-}^{(G_{-\delta_0})} = 
\Phi_{-,z_1}^{(G_{-\delta_0})}
+i\Phi_{+,z_1}^{(G_{-\delta_0})},
\end{cases} 
\end{eqnarray}
where $\Phi_{\pm}^{(G_{-\delta_0})}$ are the 
Borel sum of $\phi_{\pm}$. Since $\psi_{\pm}$ 
and $\phi_{\pm}$ are related as
\begin{equation}
\phi_{\pm} = \exp\left( \pm\frac{1}{2}
\int_{\gamma_0}S_{\rm odd}(z,\eta)\,dz \right)\psi_{\pm}
= \exp\left(\pm\frac{1}{2}V_{\gamma_0}\right)\psi_{\pm},
\end{equation}
we have the following formula for the Borel sum 
of $\psi_{\pm}$:
\begin{eqnarray}
\begin{cases} \label{eq:connection2-appB} 
\Psi_{+}^{(G_{-\delta_0})}=
\Psi_{+,z_1}^{(G_{-\delta_0})}, \\[+.2em] 
\Psi_{-}^{(G_{-\delta_0})} = 
\Psi_{-,z_1}^{(G_{-\delta_0})} 
+ ie^{V_{\gamma_0}}\Psi_{+,z_1}^{(G_{-\delta_0})}.
\end{cases} 
\end{eqnarray}

Let us compare the right-hand sides of 
\eqref{eq:connection1-appB} and 
\eqref{eq:connection2-appB}. 
Since we take $z_1$ outside of the saddle 
trajectory $\ell_0$, 
we can deform the path from $a$ to $z_1$ 
so that it never touches with $\ell_0$ 
(see Figure \ref{fig:path-deformation-appB}).
Therefore, when $z$ lies on a neighborhood of $z_1$, 
the WKB solutions are Borel summable 
in the direction $0$ even if the saddle trajectory 
$\ell_0$ appears. This implies that, as we vary the 
direction $\theta$ in $-\delta_0\le\theta\le+\delta_0$, 
no Stokes phenomenon occurs 
to $\psi_{\pm}$ near $z_1$. 
Thus we have 
\begin{equation}
\Psi_{\pm,z_1}^{(G_{+\delta_0})} = 
\Psi_{\pm,z_1}^{(G_{-\delta_0})}.
\end{equation}
Comparing \eqref{eq:connection1-appB} and 
\eqref{eq:connection2-appB}, we obtain 
\eqref{eq:Psi-plus-minus-app}.
\end{proof}

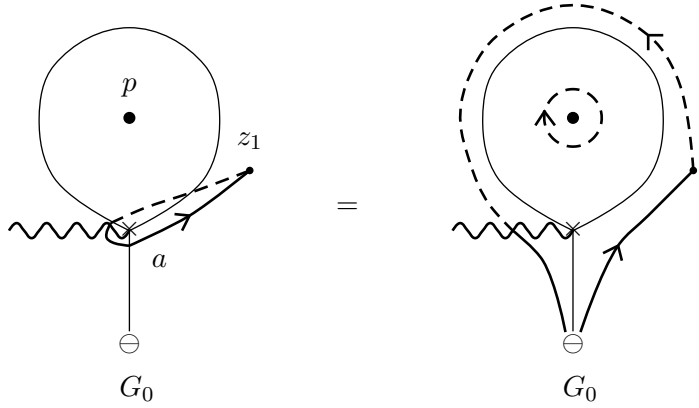
\begin{figure}
\begin{center}
\begin{pspicture}(6,-9.1)(10.,-3.8)
%
\psset{linewidth=0.5pt}
\psset{fillstyle=solid, fillcolor=black}
\pscircle(8,-5.5){0.08}
\pscircle(9.6,-6.2){0.05}
\psset{linewidth=0.5pt}
\psset{fillstyle=none}
\rput[c]{0}(8,-6.98){\small $\times$}
\rput[c]{0}(8.1,-9.1){\small $G_{0}$} 
\rput[c]{0}(8,-8.5){\small $\ominus$}
\rput[c]{0}(9.6,-5.8){\small $z_1$} 
\rput[c]{0}(8,-5.1){\small $p$} 
\rput[c]{0}(8.4,-7.4){\small $a$} 
\psset{linewidth=1pt,linestyle=solid}
\pscurve(8,-7)(7.9,-7.1)(7.8,-7)
(7.7,-6.9)(7.6,-7)(7.5,-7.1)(7.4,-7)
(7.3,-6.9)(7.2,-7)(7.1,-7.1)(7,-7)
(6.9,-6.9)(6.8,-7)(6.7,-7.1)(6.6,-7)
(6.5,-6.9)(6.4,-7)
\psset{linewidth=0.5pt}
\psline(8,-7.0)(8,-8.34)
\pscurve(8,-7.0)(9,-6.3)(9.2,-5.5)
(9,-4.8)(8,-4.3)(7,-4.8)
(6.8,-5.5)(7,-6.3)(8,-7.0)
\psset{linewidth=1pt}
\pscurve(9.6,-6.2)(8.8,-6.8)(8,-7.2)
\pscurve(8,-7.2)(7.7,-7.1)(7.75,-6.9)
\psline(8.8,-6.8)(8.6,-6.78)
\psline(8.8,-6.8)(8.7,-7)
\psset{linewidth=0.5pt}
\psset{linewidth=1pt, linestyle=dashed}
\pscurve(9.6,-6.2)(8.8,-6.5)(8.2,-6.7)(7.75,-6.9)
\psset{linewidth=1pt,linestyle=solid}
\psset{linewidth=0.5pt}
\end{pspicture}
\hskip50pt
\begin{pspicture}(6,-9.1)(10.,-3.8)
%
\psset{linewidth=0.5pt}
\psset{fillstyle=solid, fillcolor=black}
\pscircle(8,-5.5){0.08}
\pscircle(9.6,-6.2){0.05}
\psset{linewidth=0.5pt}
\psset{fillstyle=none}
\rput[c]{0}(5,-6.7){\small $=$}
\rput[c]{0}(8,-6.98){\small $\times$}
\rput[c]{0}(8.1,-9.1){\small $G_{0}$} 
\rput[c]{0}(8,-8.5){\small $\ominus$}
\psset{linewidth=1pt,linestyle=solid}
\pscurve(8,-7)(7.9,-7.1)(7.8,-7)
(7.7,-6.9)(7.6,-7)(7.5,-7.1)(7.4,-7)
(7.3,-6.9)(7.2,-7)(7.1,-7.1)(7,-7)
(6.9,-6.9)(6.8,-7)(6.7,-7.1)(6.6,-7)
(6.5,-6.9)(6.4,-7)
\psset{linewidth=0.5pt}
\psline(8,-7.0)(8,-8.34)
\pscurve(8,-7.0)(9,-6.3)(9.2,-5.5)
(9,-4.8)(8,-4.3)(7,-4.8)
(6.8,-5.5)(7,-6.3)(8,-7.0)
\psset{linewidth=1pt}
\pscurve(9.6,-6.2)(9,-6.8)(8.6,-7.2)(8.1,-8.35)
\pscurve(7.9,-8.35)(7.6,-7.4)(7.2,-7)
\psline(8.6,-7.2)(8.65,-7.4)
\psline(8.6,-7.2)(8.4,-7.3)
\psline(9,-4.4)(9.2,-4.4)
\psline(9,-4.4)(9.0,-4.6)
\psline(7.63,-5.4)(7.5,-5.6)
\psline(7.61,-5.4)(7.76,-5.6)
\psset{linewidth=0.5pt}
\psset{linewidth=1pt, linestyle=dashed}
\pscircle(8,-5.5){0.4}
\pscurve(9.6,-6.2)(9.4,-5)(9,-4.4)(8,-4)(7,-4.4)
(6.6,-5)(6.5,-5.5)(6.6,-6)(6.85,-6.6)(7.2,-7)
\psset{linewidth=1pt,linestyle=solid}
\psset{linewidth=0.5pt}
\end{pspicture}
\end{center}
\caption{Deformation of a path.} 
\label{fig:path-deformation-appB}
\end{figure}

It follows from the first equality of 
\eqref{eq:Psi-plus-minus-app} and 
\eqref{eq:Voros-beta-p-appB}, we have
\begin{equation} \label{eq:compare-WKB-slutions-appB}
{\cal S}_{+}[e^{W_{\beta_p}/2}]
\Psi_{+,p}^{(G_{+\delta_0})} = 
(1-e^{V_{\gamma_0}})
{\cal S}_{-}[e^{W_{\beta_p}/2}]
\Psi_{+,p}^{(G_{-\delta_0})}-
i{\cal S}_{-}[e^{-W_{\beta_p}/2}]
\Psi_{-,p}^{(G_{-\delta_0})}.
\end{equation}
The desired equality \eqref{eq:AIT-analytic-appendix}
will be follows from \eqref{eq:compare-WKB-slutions-appB}
and the following lemma.

\begin{lem} \label{lemma:no-jump-appB}
(i) The Borel sum 
${\cal S}_{\theta}[\psi_{+,p}]$ 
does not depend on the direction $\theta$ 
satisfying $0 \le \theta \le \delta_0$.  
Especially, we have 
\begin{equation} \label{eq:limit-formula-1}
\Psi_{+,p}^{(G_{+\delta_0})} = 
\Psi_{+,p}^{(G_0)}
\end{equation}
for $z \in U$ and 
$\eta \in \{\eta \in {\mathbb R}~|~ \eta \gg 1 \}$.
Similarly, we have
\begin{equation} \label{eq:limit-formula-2}
\Psi_{-,p}^{(G_{-\delta_0})} = 
\Psi_{-,p}^{(G_0)}
\end{equation}
for $z \in U$ and 
$\eta \in \{\eta \in {\mathbb R}~|~ \eta \gg 1 \}$.

\noindent
(ii) For the Borel sums $\Psi_{+,p}^{(G_{-\delta_0})}$
and $\Psi_{+,p}^{(G_{0})}$ of the WKB solution
$\psi_{+,p}$, the equality
\begin{equation} \label{eq:limit-formula-3}
\Psi_{+,p}^{(G_{-\delta_0})} = \Psi_{+,p}^{(G_{0})} 
+ i (1-e^{V_{\gamma_0}})^{-1}
{\cal S}_{-}[e^{-W_{\beta_p}}] \Psi_{-,p}^{(G_{0})}
\end{equation}
holds for $z \in U$ and 
$\eta \in \{\eta \in {\mathbb R}~|~ \eta \gg 1 \}$.
\end{lem}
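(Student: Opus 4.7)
The plan is to track the successive Stokes phenomena undergone by $\psi_{\pm,p}$ as the direction $\alpha$ of Borel resummation rotates through $[-\delta_0,\delta_0]$. The essential geometric input (illustrated in Figure \ref{fig:spirals}) is that for each $\alpha \in (-\delta_0,\delta_0)\setminus\{0\}$ the unique Stokes curve emanating from $a$ is a logarithmic spiral winding infinitely often around $p$, with the winding density diverging as $\alpha \to 0$. A local analysis near $p$ shows that the fixed reference point $z_0 \in D_0$ lies on the spiral for a sequence of critical angles $\delta_0 \ge \alpha_0^+ > \alpha_1^+ > \cdots \to 0^+$ indexed by the number of complete windings of the spiral between $a$ and $z_0$, together with a symmetric sequence $\alpha_n^- \to 0^-$.

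First I would carry out a sign analysis. On each of the open intervals $(0,\delta_0]$ and $[-\delta_0,0)$ the sign assigned to the end point $p$ of the spiral is constant; inspection of Figure \ref{fig:gamma0-and-beta-B} together with the sign convention of Section \ref{section:orientation} shows that it equals $\ominus$ for $\alpha > 0$ and $\oplus$ for $\alpha < 0$ (the sign flips because the winding direction reverses as $\alpha$ crosses zero). Applying Theorem \ref{thm:Voros-formula} via Remark \ref{remark:Voros-formula-is-Stokes} at each critical angle then shows that $\psi_{+,p}$ is unaffected at every $\alpha_n^+$ while $\psi_{-,p}$ is unaffected at every $\alpha_n^-$. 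Combined with Proposition \ref{prop:S1-action-and-summability} on each sub-interval between consecutive critical angles, this establishes part (i): $\mathcal{S}_\alpha[\psi_{+,p}]$ is constant on $(0,\delta_0]$ and $\mathcal{S}_\alpha[\psi_{-,p}]$ is constant on $[-\delta_0,0)$, with the respective boundary values at $\alpha \to 0^\pm$ coinciding with $\Psi_{\pm,p}^{(G_0)}$ (the limits exist because no singularities of the relevant Borel transforms lie on the positive real axis, $S_{\rm odd}^{\rm reg}$ being integrable at $p$ by Proposition \ref{proposition:integrability}).

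For part (ii) I would then compute the jump that $\mathcal{S}_\alpha[\psi_{+,p}]$ acquires at each $\alpha_n^-$. Applying the Stokes phenomenon formula to $\psi_\pm$ normalized at $a$ along the $n$-winding spiral path and converting to $\psi_{\pm,p}$ via \eqref{eq:Voros-beta-p-appB}, together with the observation that $\oint_{\gamma_0} S_{\rm odd}^{\rm reg}\,dz = 0$ by Proposition \ref{proposition:integrability} (so that each additional winding of $\gamma_0$ contributes to the exponent only through the principal part $\eta\sqrt{Q_0}$), should yield a jump of the form
\[
\psi_{+,p} \;\longmapsto\; \psi_{+,p} \,+\, i\,e^{-W_{\beta_p}}\bigl(e^{V_{\gamma_0}}\bigr)^{n}\,\psi_{-,p}.
\]
Summing the Borel sums of these jumps over $n \ge 0$ produces the geometric series $(1-e^{V_{\gamma_0}})^{-1}$, whose convergence for $\eta \gg 1$ is guaranteed by $\mathrm{Re}(V_{\gamma_0}/\eta) < 0$---a consequence of \eqref{eq:Voros-at-double-pole-app} together with the orientation convention \eqref{eq:saddle-orientation}. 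Combined with $\Psi_{-,p}^{(G_{-\delta_0})} = \Psi_{-,p}^{(G_0)}$ from part (i), this will give \eqref{eq:limit-formula-3}.

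The principal obstacle I anticipate is the rigorous justification of the accumulation of infinitely many Stokes phenomena at $\alpha = 0$. While Proposition \ref{prop:S1-action-and-summability} handles each sub-interval between consecutive critical angles, one must verify that the infinite sum of connection contributions converges uniformly in $\eta$ on $\{\eta \in {\mathbb R} \mid \eta \gg 1\}$ and that its limit correctly identifies the total change in $\mathcal{S}_\alpha$ as $\alpha$ traverses $[-\delta_0,0)$. Equivalently, one must check that $\lim_{\alpha \to 0^-} \mathcal{S}_\alpha[\psi_{+,p}]$ exists and equals $\Psi_{+,p}^{(G_0)}$; this in turn reduces to the sign analysis of the first step, but making the passage to the limit rigorous---in particular, identifying the telescoped series with the Borel sum of a single closed-form geometric-series expression---will be the delicate technical step.
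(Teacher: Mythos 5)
Your proposal follows essentially the same route as the paper's proof: part~(i) rests on the identical sign argument (the sign assigned to $p$ is $\ominus$ throughout $0<\theta\le\delta_0$ and $\oplus$ throughout $-\delta_0\le\theta<0$, so by the formulation of Remark \ref{remark:Voros-formula-is-Stokes} $\psi_{+,p}$ acquires no jump on the first half-interval and $\psi_{-,p}$ none on the second), and part~(ii) uses the same enumeration of critical directions $\theta_n$ with a per-jump factor $e^{nV_{\gamma_0}}$ that resums to $(1-e^{V_{\gamma_0}})^{-1}$ by the orientation condition $v_{\gamma_0}<0$.

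The one step you explicitly leave open — making rigorous the accumulation of infinitely many Stokes jumps as $\theta\to0^-$ and identifying the telescoped series with a single Borel sum — is precisely where the paper supplies the remaining content, and you should be aware that without it the argument is incomplete. The paper's resolution is to switch from the ``rotate the direction'' picture to the Borel-plane picture: one fixes $z\in U$, writes $\Psi_{+,p}^{(G_{-\delta_0})}-\Psi_{+,p}^{(G_0)}$ as the difference of two Laplace transforms of the Borel transform $\psi_{+,B}(z,y)$ along the rays $L_{+\delta_0}$ and $L_0$, and decomposes that difference into a convergent sum of contour integrals $\int_{C_n}e^{-\eta y}\psi_{+,B}(z,y)\,dy$ around the singularities $\omega_n(z)$ accumulating on the positive real axis (Figure \ref{fig:contour-integrals-appB}). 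Each $\int_{C_n}$ is then matched with the $n$-th recursion jump $i\,e^{nV_{\gamma_0}}\,\mathcal{S}_{-}[e^{-W_{\beta_p}}]\,\Psi_{-,p}^{(G_0)}$ obtained from \eqref{eq:Psi-p-recursion}, and part~(i) is used to replace $\Psi_{-,p}[n+1]$ by $\Psi_{-,p}^{(G_0)}$. This sidesteps the need to take a limit of Borel sums over a sequence of directions: the decomposition is a one-shot identity at fixed $\eta\gg1$, and convergence of $\sum_n e^{nV_{\gamma_0}}$ follows directly from $v_{\gamma_0}<0$. You correctly identified the crux but would still need to supply this mechanism (or an equivalent) to close the proof.
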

\begin{proof}
(i).
As is explained in Subsection \ref{section:setting-appB}, 
when we vary the direction $\theta$, any point 
$z \in D_0$ is hit by a Stokes curve, and which 
may yield the Stokes phenomenon to WKB solutions. 
However, such Stokes phenomena 
do {\em not} occur to the $\psi_{+,p}$ 
as long as we vary the direction $\theta$ in
$0 < \theta \le \delta_0$ by the following reason. 
As we vary $\theta$ in $0 < \theta \le \delta_0$, 
the sign of $p$ never changes and is always $\ominus$. 
Hence, Stokes phenomena do {\em not} occur to 
$\psi_{+,p}$ even if $z_0$ lies on the Stokes curve. 
{(See \eqref{eq:Voros-formula-Stokes-2} in Remark 
\ref{remark:Voros-formula-is-Stokes}.) }
This guarantees that 
$\psi_{+,p}$ is Borel summable in any direction 
$\theta$ satisfying $0 < \theta \le \delta_0$. 
Furthermore, $\psi_{+,p}$ is also Borel 
summable in the direction $0$ as we have noted above. 
Therefore, by the same argument in the proof of 
Proposition \ref{prop:S1-action-and-summability}, 
we have proved the equality \eqref{eq:limit-formula-1}. 

On the other hand, since the sign of $p$ in the Stokes 
graph of direction $\theta$ satisfying 
$-\delta_0 \le \theta < 0$ is always $\oplus$, 
the Stokes phenomenon do not occur to 
$\psi_{-,p}$.
{(See \eqref{eq:Voros-formula-Stokes-1} in Remark 
\ref{remark:Voros-formula-is-Stokes}.)}
Thus the equality \eqref{eq:limit-formula-2} is 
derived in the same manner.
\par

(ii).
In the proof, we use a different expression of the 
Borel sum of WKB solutions. Let us write the WKB 
solution $\psi_{\pm,p}$ in the following form:
\begin{equation}
\psi_{\pm,p}(z,\eta) = \exp\left(\pm\eta s(z)\right)
\sum_{k=0}^{\infty}\eta^{-k-1/2}
\psi_{\pm,k}(z), \quad
s(z) = \int_{a}^z \sqrt{Q_0(z)}\,dz.
\end{equation}
Then, the Borel sum of $\psi_{\pm,p}$ 
in a direction $\theta$ is given by 
\begin{equation} \label{eq:Borel-sum-appB}
\int_{\mp s(z)}^{\infty e^{-i\theta}}
e^{-\eta y} \psi_{\pm,B}(z,y)\,dy,
\end{equation}
where 
\begin{equation}
\psi_{\pm,B}(z,y) = \sum_{k=0}^{\infty}
\frac{\psi_{\pm,k}(z)}{\Gamma(k+1/2)}
(y\pm s(z))^{k-1/2}
\end{equation}
is the Borel transform of $\psi_{\pm,p}$.
The expression \eqref{eq:Borel-sum-appB} coincides 
with the definition of the Borel sum 
(of formal series 
{\em with exponential factors})
adopted in Definition 
\ref{def:Borel-summability}.
The difference of the Borel sums 
$\Psi_{+,p}^{(G_{-\delta_0})}$ 
and $\Psi_{+,p}^{(G_{0})}$ in question 
is that of the path of Laplace integral 
in \eqref{eq:Borel-sum-appB}. 
That is, $\Psi_{+,p}^{(G_{-\delta_0})}$ 
is given by the Laplace transform of 
$\psi_{+,B}(z,y)$ along the half line 
$L_{+\delta_0} = 
\{-s(z)+re^{i\delta_0}\in{\mathbb C}~|~r\ge 0\}$, 
while $\Psi_{+,p}^{(G_{0})}$
is given by the Laplace transform of the same 
function $\psi_{+,B}(z,y)$ along the half line 
$L_{0} = 
\{-s(z)+r\in{\mathbb C}~|~r\ge 0\}$ 
in the $y$-plane. 

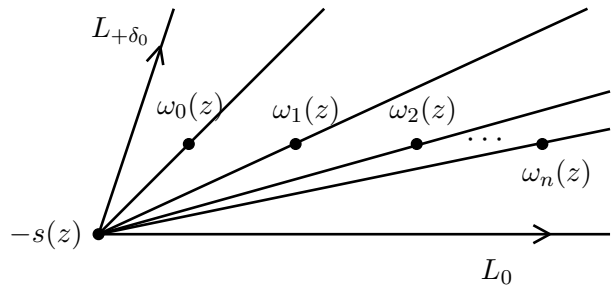
\begin{figure}
\begin{center}
\begin{pspicture}(2,-7.8)(10.,-3.8)
%
\psset{linewidth=0.5pt}
\psset{fillstyle=solid, fillcolor=black}
\pscircle(3,-7.5){0.08}
\pscircle(4.2,-6.3){0.08}
\pscircle(5.62,-6.3){0.08}
\pscircle(7.23,-6.3){0.08}
\pscircle(8.9,-6.3){0.08}
\psset{linewidth=0.5pt}
\psset{fillstyle=none}
\rput[c]{0}(2.3,-7.5){\small $-s(z)$}
\rput[c]{0}(4.23,-5.8){\small $\omega_0(z)$}
\rput[c]{0}(5.77,-5.85){\small $\omega_1(z)$}
\rput[c]{0}(7.32,-5.85){\small $\omega_2(z)$}
\rput[c]{0}(9.1,-6.7){\small $\omega_n(z)$}
\rput[c]{0}(8.1,-6.25){ $\cdots$} 
\rput[c]{0}(8.3,-8.0){\small $L_{0}$} 
\rput[c]{0}(3.3,-4.8){\small $L_{+\delta_0}$}
\psset{linewidth=1pt,linestyle=solid}
\psline(3,-7.5)(9.8,-7.5)
\psline(3,-7.5)(4,-4.5) 
\psline(9,-7.5)(8.78,-7.38)
\psline(9,-7.5)(8.78,-7.62)
\psline(3.82,-5)(3.64,-5.2) 
\psline(3.82,-5)(3.86,-5.25)
\psset{linewidth=1pt,linestyle=solid}
\psline(3,-7.5)(6,-4.5)
\psline(3,-7.5)(9.5,-4.5)
\psline(3,-7.5)(9.8,-5.6)
\psline(3,-7.5)(9.8,-6.1)
\end{pspicture}
\end{center}
\caption{Singular points of $\psi_{+,B}(z,y)$ 
in the $y$-plane.} 
\label{fig:Laplace-integral-path-appB}
\end{figure}

To compare the Borel sums $\Psi_{+,p}^{(G_{-\delta_0})}$ 
and $\Psi_{+,p}^{(G_{0})}$, we investigate the Stokes phenomena
occurring in directions in $0 < \theta \le \delta_0$. 
Contrary to the case of (i) of 
Lemma \ref{lemma:no-jump-appB}, when a Stokes curve 
in a direction $\theta$ satisfying $0 < \theta \le \delta_0$
hits a point $z$, 
the Stokes phenomenon occurs to $\psi_{+,p}$ 
since the sign at $p$ is $\oplus$ 
when $-\delta_0 \le \theta < 0$ 
(see \eqref{eq:Voros-formula-1}).  
In other wards, when a point $z$ lies on a Stokes 
curve in a direction $\theta$, a singular point 
of $\psi_{+,B}(z,y)$ lies on the half line 
$\{-s(z)+re^{-i\theta}\in{\mathbb C}~|~r\ge 0 \}$.
{(See \eqref{eq:Voros-formula-Stokes-1} in Remark 
\ref{remark:Voros-formula-is-Stokes}). }
As is explained in Subsection \ref{section:setting-appB}, 
a Stokes curve hits a point $z \in U$ infinitely 
many times when $\theta$ tends to $0$. Therefore, 
$\psi_{+,B}(z,y)$ has infinitely many 
singular points $\{\omega_n(z) \}_{n=0}^{\infty}$
in the $y$-plane (see Figure 
\ref{fig:Laplace-integral-path-appB}), 
and these singular points cause the Stokes phenomena 
for $\psi_{+,p}$. Hence, we have to deal with 
the infinitely many Stokes phenomena to compare 
the Borel sums $\Psi_{+,p}^{(G_{-\delta_0})}$ 
and $\Psi_{+,p}^{(G_{0})}$.

Let $\{\theta_n\}_{n=0}^{\infty}$ be the increasing 
sequence of directions in $-\delta_0 < \theta < 0$ 
such that the reference point $z_0$ lies on 
a Stokes curve in the directions. These directions 
depend on $z_0$, and are characterized 
by the following conditions:
\begin{itemize}
\item %
For any $n \ge 0$, the point $z_0$ lies on 
the Stokes curve in the direction $\theta_n$
emanating from $a$ and ending at $p$. The Stokes curve 
reaches at $z_0$ after turning around $p$ $n$-times 
(see Figure \ref{fig:spirals-theta-n}).
\item 
For any $n \ge 0$ and any $\theta$ satisfying 
$\theta_{n-1} < \theta < \theta_{n}$, $z_0$ does not 
lie on the Stokes curve in the direction $\theta$. 
Here we take $\theta_{-1} = -\delta_0$. 
\end{itemize}
The direction $\theta_n$ is nothing but 
$-\arg(s(z_0) + \omega_n(z_0))$ (see 
Figure \ref{fig:Laplace-integral-path-appB}). 

\begin{figure}
\begin{pspicture}(1,0)(15.,0)
\psset{fillstyle=solid, fillcolor=black}
\pscircle(4.59,-2.6){0.06}
\pscircle(9.75,-2.6){0.06}
\pscircle(15.0,-2.6){0.06}
\rput[c]{0}(4.9,-2.7){\small $z_0$}
\end{pspicture} \\
  \begin{minipage}{0.33\hsize}
  \begin{center} 
  \includegraphics[width=55mm]
  {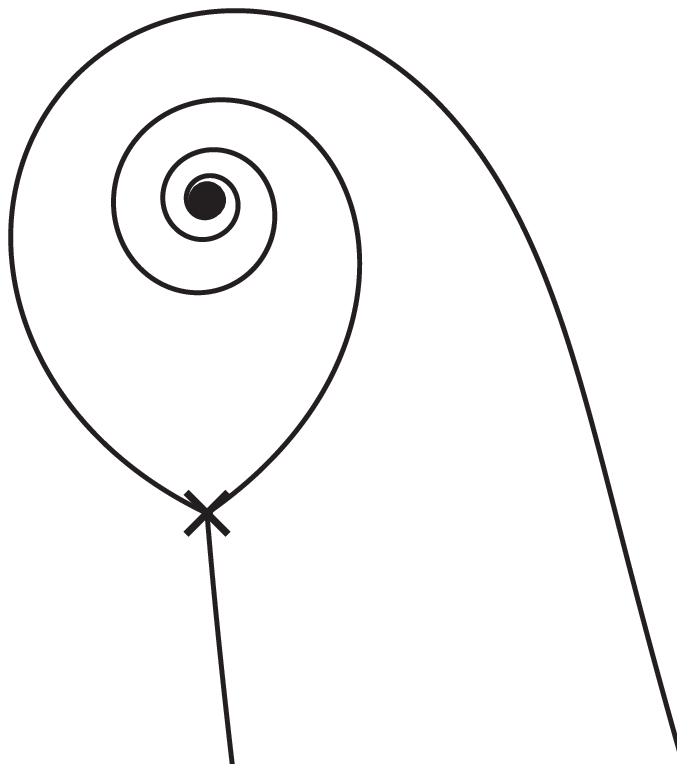} \\
  {$\theta=\theta_{n-1}$.} 
  \end{center}
  \end{minipage} 
  \begin{minipage}{0.33\hsize}
  \begin{center}
  \includegraphics[width=55mm]
  {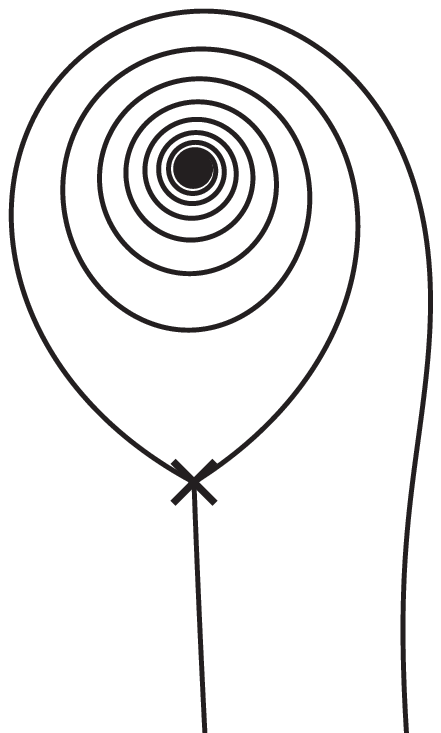} \\
  {$\theta_{n-1}<\theta<\theta_{n}$.} 
  \end{center}
  \end{minipage}  
  \begin{minipage}{0.33\hsize}
  \begin{center}
  \includegraphics[width=55mm]
  {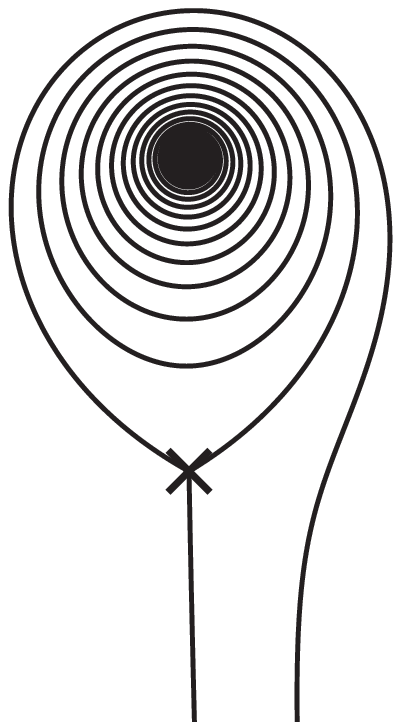} \\
  {$\theta=\theta_{n}$.} 
  \end{center}
  \end{minipage}  
  \caption{The directions $\theta_n$. 
  Here $\theta$ is the directions of Stokes curves.
  These pictures depict the case of $n=1$.}
  \label{fig:spirals-theta-n}
\end{figure}

For any $n \ge 0$, define the Borel sum 
$\Psi_{\pm}[n]$ of the WKB solutions $\psi_{\pm}$
defined on $U$ as follows. Fix a direction $\theta$ 
satisfying $\theta_{n-1}<\theta<\theta_{n}$. 
Since $z_0$ does not lie on Stokes curves 
in the direction $\theta$, 
there exists a small disc $U' \subset U$ containing 
$z_0$ and any point in $U'$ does not lie on Stokes 
curves in the direction $\theta$. Then, take the Borel 
sum $\Psi_{\pm}[n]$ in the direction $\theta$ defined 
on $U'$. Denote by the same letter $\Psi_{\pm}[n]$ 
the analytic continuation of the Borel sum to $U$.
Then, $\Psi_{\pm}[n]$ are independent of the choice of 
direction $\theta$ satisfying 
$\theta_{n-1}<\theta<\theta_{n}$ since they give 
the same germ of holomorphic function at $z_0$. 
Define the Borel sum 
$\Psi_{\pm,p}[n]$ of $\psi_{\pm,p}$ defined on $U$
by the same manner.
Especially, 
$\Psi_{\pm}[0] =\Psi_{\pm}^{(G_{-\delta_0})}$ and  
$\Psi_{\pm,p}[0] = \Psi_{\pm,p}^{(G_{-\delta_0})}$
hold.  

By a similar computation as 
\eqref{eq:connection2-appB}, 
the factor $e^{n V_{\gamma_0}}$ appears 
in the coefficient of the formula between 
$\Psi_{\pm}[n]$ and $\Psi_{\pm}[n+1]$ 
for each $n\ge0$ as follows:
\begin{eqnarray}
\begin{cases} 
\Psi_{+}[n] = \Psi_{+}[n+1] + 
i e^{n V_{\gamma_0}} \Psi_{-}[n+1], \\[+.2em] 
\Psi_{-}[n] = \Psi_{-}[n+1]~(=\Psi_{-}[0]).
\end{cases}
\end{eqnarray}
This formula is translated to the formula 
for $\Psi_{\pm,p}[n]$ as follows:
\begin{eqnarray}
\begin{cases} \label{eq:Psi-p-recursion} 
\Psi_{+,p}[n] = \Psi_{+,p}[n+1] + i e^{n V_{\gamma_0}} 
{\cal S}_{-}[e^{-W_{\beta_p}}]\Psi_{-,p}[n+1], \\[+.2em] 
\Psi_{-,p}[n] = \Psi_{-,p}[n+1]~(=\Psi_{-,p}[0]).
\end{cases} 
\end{eqnarray}
Here the factor ${\cal S}_{-}[e^{-W_{\beta_p}}]$ 
appears as the consequence of the difference 
\eqref{eq:Voros-beta-p-appB} of the 
normalization between $\psi_{\pm}$ and $\psi_{\pm,p}$. 
Since the difference of $\Psi_{+,p}[n]$ and 
$\Psi_{+,p}[n+1]$ are expressed as the integral 
along the contour $C_n$ encircling the singular 
point $\omega_n(z)$ 
(see Figure \ref{fig:contour-integrals-appB}), 
the equality \eqref{eq:Psi-p-recursion} implies that
\begin{equation}
\int_{C_n} e^{-\eta y}\psi_{+,B}(z,y)dy = 
i e^{n V_{\gamma_0}} 
{\cal S}_{-}[e^{-W_{\beta_p}}]\Psi_{-,p}^{(G_{0})}.
\end{equation}
Here we have used \eqref{eq:limit-formula-2}. 
Therefore, we have 
\begin{eqnarray}
\Psi_{+,p}^{(G_{-\delta_0})} - 
\Psi_{+,p}^{(G_{0})} & = & \sum_{n=0}^{\infty}
\int_{C_n} e^{-\eta y}\psi_{+,B}(z,y)dy \nonumber \\
& = & 
i\left(\sum_{n=0}^{\infty} e^{n V_{\gamma_0}}\right) 
{\cal S}_{-}[e^{-W_{\beta_p}}]\Psi_{-,p}^{(G_{0})}.
\end{eqnarray}
Thus, as the consequence of the infinitely 
many Stokes phenomena, the infinite sum 
$\sum_{n=0}^{\infty}e^{n V_{\gamma_0}}$ 
appears in the difference between 
$\Psi_{+,p}^{(G_{-\delta_0})}$ and 
$\Psi_{+,p}^{(G_0)}$. The infinite series converges 
and the factor $(1-e^{V_{\gamma_0}})^{-1}$ appears
since the real part of $\oint_{\gamma_0}\sqrt{Q_0(z)}dz$ 
is negative. Thus we obtain \eqref{eq:limit-formula-3}.
\end{proof}

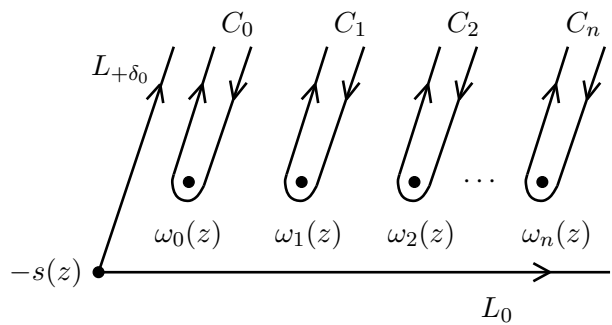
\begin{figure}
\begin{center}
\begin{pspicture}(2,-7.8)(10.,-3.8)
%
\psset{linewidth=0.5pt}
\psset{fillstyle=solid, fillcolor=black}
\pscircle(3,-7.5){0.08}
\pscircle(4.2,-6.3){0.08}
\pscircle(5.7,-6.3){0.08}
\pscircle(7.2,-6.3){0.08}
\pscircle(8.9,-6.3){0.08}
\psset{linewidth=0.5pt}
\psset{fillstyle=none}
\rput[c]{0}(2.3,-7.5){\small $-s(z)$}
\rput[c]{0}(4.2,-7.0){\small $\omega_0(z)$}
\rput[c]{0}(5.8,-7.0){\small $\omega_1(z)$}
\rput[c]{0}(7.3,-7.0){\small $\omega_2(z)$}
\rput[c]{0}(9.1,-7.0){\small $\omega_n(z)$}
\rput[c]{0}(8.1,-6.3){\small $\cdots$} 
\rput[c]{0}(8.3,-8.0){\small $L_{0}$} 
\rput[c]{0}(3.3,-4.8){\small $L_{+\delta_0}$}
\rput[c]{0}(4.85,-4.2){\small $C_0$} 
\rput[c]{0}(6.35,-4.2){\small $C_1$} 
\rput[c]{0}(7.85,-4.2){\small $C_2$} 
\rput[c]{0}(9.45,-4.2){\small $C_n$} 
\psset{linewidth=1pt,linestyle=solid}
\psline(3,-7.5)(9.8,-7.5)
\psline(3,-7.5)(4,-4.5) 
\psline(9,-7.5)(8.78,-7.38)
\psline(9,-7.5)(8.78,-7.62)
\psline(3.82,-5)(3.64,-5.2) 
\psline(3.82,-5)(3.86,-5.25)
\psline(4,-6.2)(4.54,-4.5)
\pscurve(4,-6.2)(3.98,-6.3)(4.05,-6.5)(4.2,-6.55)
(4.3,-6.5)(4.4,-6.35)
\psline(4.4,-6.35)(5.03,-4.5)%
\psline(4.38,-5)(4.2,-5.2)
\psline(4.38,-5)(4.42,-5.25)
\psline(4.795,-5.2)(4.75,-4.95)
\psline(4.795,-5.2)(4.97,-4.98)
\psline(5.5,-6.2)(6.04,-4.5)
\pscurve(5.5,-6.2)(5.48,-6.3)(5.55,-6.5)(5.7,-6.55)
(5.8,-6.5)(5.9,-6.35)
\psline(5.9,-6.35)(6.53,-4.5)%
\psline(5.88,-5)(5.7,-5.2)
\psline(5.88,-5)(5.92,-5.25)
\psline(6.295,-5.2)(6.25,-4.95)
\psline(6.295,-5.2)(6.47,-4.98)
\psline(7.0,-6.2)(7.54,-4.5)
\pscurve(7.0,-6.2)(6.98,-6.3)(7.05,-6.5)(7.2,-6.55)
(7.3,-6.5)(7.4,-6.35)
\psline(7.4,-6.35)(8.03,-4.5)%
\psline(7.38,-5)(7.2,-5.2)
\psline(7.38,-5)(7.42,-5.25)
\psline(7.795,-5.2)(7.75,-4.95)
\psline(7.795,-5.2)(7.97,-4.98)
\psline(8.7,-6.2)(9.24,-4.5)
\pscurve(8.7,-6.2)(8.68,-6.3)(8.75,-6.5)(8.9,-6.55)
(9.0,-6.5)(9.1,-6.35)
\psline(9.1,-6.35)(9.73,-4.5)%
\psline(9.08,-5)(8.9,-5.2)
\psline(9.08,-5)(9.12,-5.25)
\psline(9.495,-5.2)(9.45,-4.95)
\psline(9.495,-5.2)(9.67,-4.98)
\end{pspicture}
\end{center}
\caption{{Contour integrals representing the 
difference of $\Psi_{+,p}^{(G_{-\delta_0})}$
and $\Psi_{+,p}^{(G_{0})}$}.} 
\label{fig:contour-integrals-appB}
\end{figure}

Now we derive the desired formula 
\eqref{eq:AIT-analytic-appendix}
for the path $\beta_p$. 
Substituting \eqref{eq:limit-formula-1}, 
\eqref{eq:limit-formula-2} and 
\eqref{eq:limit-formula-3} into 
\eqref{eq:compare-WKB-slutions-appB}, we obtain
\begin{equation}
{\cal S}_{+}[e^{W_{\beta_p}/2}]
\Psi_{+,p}^{(G_{0})} = 
(1-e^{V_{\gamma_0}})
{\cal S}_{-}[e^{W_{\beta_p}/2}]
\Psi_{+,p}^{(G_{0})}. 
\end{equation}
Note that the coefficient of $\Psi_{-,p}^{(G_{0})}$
has been canceled out. Hence we have
\begin{equation} \label{eq:AIT-for-beta-p-appB}
{\cal S}_{-}[e^{W_{\beta_p}}] = 
{\cal S}_{+}[e^{W_{\beta_p}}]
(1-e^{V_{\gamma_0}})^{-2}.
\end{equation}
This is the desired formula 
\eqref{eq:AIT-analytic-appendix}
for the path $\beta_p$ since 
$\langle \gamma_0,\beta_p \rangle = -2$.

The formula \eqref{eq:AIT-analytic-appendix}
for general path $\beta$ is derived from 
\eqref{eq:AIT-for-beta-p-appB}. For example, 
let us take a path $\beta_a$ from $p$ to 
another pole in Figure \ref{fig:betap-and-any-path-appB}, 
which satisfies $\langle\gamma_0,\beta_a \rangle = -1$.
Note that, as indicated in Figure 
\ref{fig:betap-and-any-path-appB}, 
the path $\beta_p$ have the decomposition 
$\beta_p \equiv 2\beta_a + \beta_b$ 
under the $*$-equivalence. 
Here $\beta_b$ is a path which never intersects 
with $\ell_0$, and hence 
\begin{equation} 
{\cal S}_{-}[e^{W_{\beta_b}}] = 
{\cal S}_{+}[e^{W_{\beta_b}}].
\end{equation}
Then, it follows from 
\eqref{eq:AIT-for-beta-p-appB} that 
\begin{equation} 
{\cal S}_{-}[e^{2W_{\beta_a}}] = 
{\cal S}_{+}[e^{2W_{\beta_a}}]
(1-e^{V_{\gamma_0}})^{-2}
\end{equation}
holds. Taking the square root, we have 
\begin{equation} \label{eq:AIT-for-beta-a-appB}
{\cal S}_{-}[e^{W_{\beta_a}}] = 
\pm{\cal S}_{+}[e^{W_{\beta_a}}]
(1-e^{V_{\gamma_0}})^{-1}.
\end{equation}
We can conclude the sign $\pm$ in 
\eqref{eq:AIT-for-beta-a-appB} is $+$ 
because both of 
${\cal S}_{-}[e^{W_{\beta_a}}]$ and 
${\cal S}_{+}[e^{W_{\beta_a}}]$ have the same 
asymptotic expansion $e^{W_{\beta_a}}$
when $\eta \rightarrow +\infty$. 
Thus we have \eqref{eq:AIT-analytic-appendix} 
for the path $\beta_a$. 
Since any path intersecting with $\ell_0$ 
can be expressed as a sum of 
$\pm \beta_{a}$ and some paths which never 
intersect with $\ell_0$.
Thus, \eqref{eq:AIT-analytic-appendix} 
holds for any path, and we have proved Theorem 
\ref{thm:loop-type-degeneration-analytic}.

\begin{figure}
\begin{center}
\begin{pspicture}(6,-9.1)(10.,-3.8)
%
\psset{linewidth=0.5pt}
\psset{fillstyle=solid, fillcolor=black}
\pscircle(8,-5.5){0.08}
\psset{linewidth=0.5pt}
\psset{fillstyle=none}
\rput[c]{0}(8,-6.98){\small $\times$}
\rput[c]{0}(8.1,-9.1){\small $G_{0}$} 
\rput[c]{0}(8,-8.5){\small $\ominus$}
\rput[c]{0}(7.5,-6.){\small $\beta_p$} 
\psset{linewidth=1pt,linestyle=solid}
\pscurve(8,-7)(7.9,-7.1)(7.8,-7)
(7.7,-6.9)(7.6,-7)(7.5,-7.1)(7.4,-7)
(7.3,-6.9)(7.2,-7)(7.1,-7.1)(7,-7)
(6.9,-6.9)(6.8,-7)(6.7,-7.1)(6.6,-7)
(6.5,-6.9)(6.4,-7)
\psset{linewidth=0.5pt}
\psline(8,-7.0)(8,-8.34)
\pscurve(8,-7.0)(9,-6.3)(9.2,-5.5)
(9,-4.8)(8,-4.3)(7,-4.8)
(6.8,-5.5)(7,-6.3)(8,-7.0)
\psset{linewidth=1pt}
\pscurve(8,-5.5)(8.25,-6.5)(8.2,-7.1)(8.1,-7.2)
(8,-7.25)(7.9,-7.2)(7.8,-7.1)(7.75,-6.9)
\psline(8.25,-6.5)(8.13,-6.65)
\psline(8.25,-6.5)(8.37,-6.62)
\psset{linewidth=0.5pt}
\psset{linewidth=1pt, linestyle=dashed}
\pscurve(8,-5.5)(7.85,-6)(7.75,-6.9)
\psset{linewidth=1pt,linestyle=solid}
\psset{linewidth=0.5pt}
\end{pspicture}
\hskip50pt
\begin{pspicture}(6,-9.1)(10.,-3.8)
%
\psset{linewidth=0.5pt}
\psset{fillstyle=solid, fillcolor=black}
\pscircle(8,-5.5){0.08}
\psset{linewidth=0.5pt}
\psset{fillstyle=none}
\rput[c]{0}(8,-6.98){\small $\times$}
\rput[c]{0}(8.1,-9.1){\small $G_{0}$} 
\rput[c]{0}(8,-8.5){\small $\ominus$}
\rput[c]{0}(4.8,-6.7){\small $=$}
\rput[c]{0}(8.2,-6.3){\small $\beta_{a}$} 
\rput[c]{0}(7.5,-5.3){\small $-\beta_{a}^*$} 
\rput[c]{0}(9.5,-4.2){\small $\beta_{b}$} 
\psset{linewidth=1pt,linestyle=solid}
\pscurve(8,-7)(7.9,-7.1)(7.8,-7)
(7.7,-6.9)(7.6,-7)(7.5,-7.1)(7.4,-7)
(7.3,-6.9)(7.2,-7)(7.1,-7.1)(7,-7)
(6.9,-6.9)(6.8,-7)(6.7,-7.1)(6.6,-7)
(6.5,-6.9)(6.4,-7)
\psset{linewidth=0.5pt}
\psline(8,-7.0)(8,-8.34)
\pscurve(8,-7.0)(9,-6.3)(9.2,-5.5)
(9,-4.8)(8,-4.3)(7,-4.8)
(6.8,-5.5)(7,-6.3)(8,-7.0)
\psset{linewidth=1pt}
\pscurve(7.75,-6.9)(7.8,-7.5)(7.95,-8.35)
\pscurve(8.1,-8.35)(8.6,-7.2)(9.2,-6.5)(9.5,-5.5)
(9.2,-4.7)(8.8,-4.3)(8,-4.05)(7.2,-4.3)(6.8,-4.7)
(6.55,-5.5)(6.8,-6.5)(7.2,-7)
\pscurve(7.5,-7)(7.5,-6.4)(7.7,-5.7)(7.95,-5.5)
\psline(8.6,-7.2)(8.65,-7.4)
\psline(8.6,-7.2)(8.4,-7.3)
\psline(7.8,-7.4)(7.9,-7.2)
\psline(7.78,-7.4)(7.68,-7.19)
\psline(7.57,-6)(7.43,-6.14)
\psline(7.58,-5.99)(7.65,-6.18)
\psset{linewidth=0.5pt}
\psset{linewidth=1pt,linestyle=dashed}
\pscurve(8,-5.5)(7.85,-6)(7.75,-6.9)
\pscurve(7.2,-7)(7.4,-7.4)(7.8,-8.5)
\pscurve(7.85,-8.4)(7.65,-7.6)(7.5,-7)
\psset{linewidth=0.5pt}
\end{pspicture}
\end{center}
\caption{The path $\beta_p$ and a typical path $\beta_a$
which intersects $\ell_0$.} 
\label{fig:betap-and-any-path-appB}
\end{figure}
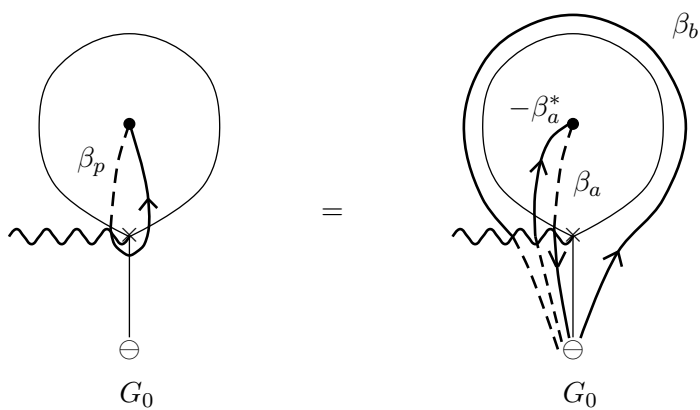


\section*{References}
\bibliography{../../biblist/biblist.bib}

\end{document}